\renewcommand\subsubsection{\@startsection {subsubsection}{1}{\z@}%
                                   {-3.5ex \@plus -1ex \@minus -.2ex}%
                                   {2.3ex \@plus.2ex}%
                                   {\normalfont\bf}}
\renewcommand\subsection{\@startsection {subsection}{1}{\z@}%
                                   {-3.5ex \@plus -1ex \@minus -.2ex}%
                                   {2.3ex \@plus.2ex}%
                                   {\normalfont\bf}}
\renewcommand\part{\@startsection {part}{1}{\z@}%
                                   {3.5ex \@plus -1ex \@minus -.2ex}%
                                   {2.3ex \@plus.2ex}%
                                   {\center\large\bf}}
\newtheorem{theorem}{Theorem}[subsection]
\newtheorem{prop}[theorem]{Proposition}
\newtheorem{lemma}[theorem]{Lemma}
\newtheorem{cor}[theorem]{Corollary}
\newtheorem{corollary}[theorem]{Corollary}
\newtheorem{image}[theorem]{Figure}
\numberwithin{equation}{section}
\theoremstyle{definition}
\newtheorem{definition}[theorem]{Definition}
\newtheorem{defn}[theorem]{Definition}
\newtheorem{notation}[theorem]{Notation}
\newtheorem{example}[theorem]{Example}
\newtheorem{non-example}[theorem]{Non-example}
\newtheorem{remark}[theorem]{Remark}
\newtheorem{notation/term}[theorem]{Notation/Terminology}
\newtheorem{construction}[theorem]{Construction}
\newtheorem{observation}[theorem]{Observation}
\newtheorem{warning}[theorem]{Warning}
\newtheorem{convention}[theorem]{Convention}
\newtheorem{conjecture}[theorem]{Conjecture}
\newtheorem{question}[theorem]{Question}
\theoremstyle{remark}
\newcommand{\nc}{\newcommand}
\nc{\DMO}{\DeclareMathOperator}
\nc{\tensor}{\otimes}
\nc{\unit}{{1}}
\nc{\del}{\partial}
\DMO{\ass}{\sf{Alg}}
\DMO{\conf}{\sf{Conf}}
\DMO{\Conf}{\sf{Conf}}
\DMO{\open}{\sf{Open}}
\DMO{\Mod}{\mathsf{-Mod}}
\DMO{\Exit}{\sf Exit}
\DMO{\cech}{\mathsf{Cech}}
\DMO{\fun}{\mathsf{Fun}}
\DMO{\sbar}{\mathsf{Bar}}
\DMO{\coend}{\mathsf{Coend}}
\DMO{\modpair}{\mathsf{ModPair}}
\DMO{\maps}{\mathsf{Maps}}
\DMO{\emb}{\mathsf{Emb}}
\DMO{\Ab}{\mathsf{Ab}}
\DMO{\chain}{\mathsf{Chain}}
\DMO{\ob}{\mathsf{obj}}
\DMO{\lan}{\mathsf{Lan}}
\DMO{\orderI}{ {\pi_0\cI^{\sqcup}_{\mathsf{or}}}}
\DMO{\id}{\sf id}
\nc{\diskover}{(\disk_1^{\partial,\fr})_{/[-1,1]}}
\nc{\snglrall}{\mathsf{Snglr}^{\mathsf{csm}}}
\nc{\Sp}{\mathcal{S}p}
\DeclareMathOperator{\Ent}{\sf Entr}
\DeclareMathOperator{\Link}{\sf Link}		
\DeclareMathOperator{\Aut}{\sf Aut}
\DeclareMathOperator*{\colim}{\sf colim}
\DeclareMathOperator{\Fun}{\sf Fun}
\DeclareMathOperator{\Map}{\sf Map}
\DMO{\cmpct}{cmpt}
\DMO{\depth}{{\sf depth}}
\DMO{\snglrr}{Snglr}
\nc{\power}{\mathsf{Power}}
\DeclareMathOperator{\Cat}{\sf Cat_\infty}
\DeclareMathOperator{\shv}{\sf Shv}
\DeclareMathOperator{\op}{\sf op}
\DeclareMathOperator{\Top}{\mathsf{Top}}
\DeclareMathOperator{\Emb}{\mathsf{Emb}}
\DeclareMathOperator{\Diff}{\mathsf{Diff}}
\DeclareMathOperator{\PL}{\mathsf{PL}}
\DeclareMathOperator{\spaces}{\cS\mathsf{paces}}
\DeclareMathOperator{\disk}{\cD\mathsf{isk}}
\DeclareMathOperator{\fr}{\sf fr}
\DeclareMathOperator{\RFbn}{\sf RFib}
\DeclareMathOperator{\Fin}{\mathsf{Fin}}
\DeclareMathOperator{\Strat}{\cS{\sf trat}}
\nc{\Stratd}{{\mathsf{Strat}}}
\DeclareMathOperator{\diff}{\mathsf{Diff}}
\DeclareMathOperator{\mfld}{\cM\mathsf{fld}}
\DeclareMathOperator{\snglr}{\cS\mathsf{nglr}}
\DeclareMathOperator{\Snglr}{\cS\mathsf{nglr}}
\DeclareMathOperator{\bsc}{\cB\mathsf{sc}}
\DeclareMathOperator{\Bsc}{\cB\mathsf{sc}}
\DeclareMathOperator{\unzip}{\mathsf{Unzip}}
\DeclareMathOperator{\Psh}{\mathsf{PShv}}
\DeclareMathOperator{\psh}{\mathsf{PShv}}
\DeclareMathOperator{\Shv}{\mathsf{Shv}}
\nc{\snglrdC}{\mathsf{Snglr}^{C^0}}
\nc{\snglrd}{\mathsf{Snglr}}
\nc{\bscd}{\mathsf{Bsc}}
\nc{\mfldd}{\mathsf{Mfld}}
\nc{\Entd}{{\sf Entr^\delta}}
\DeclareMathOperator{\opens}{{\sf Op}}
\DeclareMathOperator{\cbl}{\sf cbl}
\DeclareMathOperator{\Sing}{\mathsf{Sing}}
\DeclareMathOperator{\Ran}{\mathsf{Ran}}
\DeclareMathOperator{\oo}{\infty}
\newcommand{\ra}{\rightarrow}
\newcommand{\la}{\leftarrow}
\newcommand{\xra}{\xrightarrow}
\newcommand{\xla}{\xleftarrow}
\newcommand{\ov}{\overline}
\newcommand{\un}{\underline}
\newcommand{\w}{\widetilde}
\newcommand{\into}{\hookrightarrow}
\newcommand{\Set}{\mathsf{Set}}
\newcommand{\StTop}{\mathsf{StTop}}
\newcommand{\lag}{\langle}
\newcommand{\rag}{\rangle}
\newcommand{\kink}{\mathsf{Kink}}
\newcommand{\Kan}{\sf Kan}
\newcommand{\kan}{\sf Kan}
\newcommand{\POSet}{\mathsf{Poset}}
\newcommand{\tr}{\triangleright}
\newcommand{\tl}{\triangleleft}
\newcommand{\stopen}{\mathsf{StTop}^{\mathsf{open}}}
\newcommand{\pushout}{\ar@{}[dr]|{\text{\pigpenfont R}}}
\newcommand{\pullback}{\ar@{}[dr]|{\text{\pigpenfont J}}}
\newcommand{\sm}{\mathsf{Sm}}
\newcommand{\bdelta}{\boldsymbol\Delta}
\def\cA{\mathcal A}\def\cB{\mathcal B}\def\cC{\mathcal C}\def\cD{\mathcal D}
\def\cE{\mathcal E}\def\cF{\mathcal F}
\def\cI{\mathcal I}\def\cJ{\mathcal J}\def\cK{\mathcal K}\def\cL{\mathcal L}
\def\cM{\mathcal M}\def\cP{\mathcal P}
\def\cS{\mathcal S}\def\cT{\mathcal T}
\def\cU{\mathcal U}
\def\AA{\mathbb A}\def\DD{\mathbb D}
\def\NN{\mathbb N}\def\PP{\mathbb P}
\def\RR{\mathbb R}
\def\ZZ{\mathbb Z}
\def\sB{\mathsf B}\def\sC{\mathsf C}\def\sD{\mathsf D}
\def\sG{\mathsf G}
\def\sL{\mathsf L}
\def\sN{\mathsf N}\def\sO{\mathsf O}
\def\sS{\mathsf S}\def\sT{\mathsf T}
\def\bDelta{\mathbf\Delta}
\def\fP{\mathfrak P}
\begin{document}

\title{Local structures on stratified spaces}
\author{David Ayala}
\author{John Francis}
\author{Hiro Lee Tanaka}
\date{}

\address{Department of Mathematics\\Montana State University\\Bozeman, MT 59717}
\email{david.ayala@montana.edu}
\address{Department of Mathematics\\Northwestern University\\Evanston, IL 60208}
\email{jnkf@northwestern.edu}
\address{Department of Mathematics\\Harvard University\\Cambridge, MA 02138}
\email{hirolee@math.harvard.edu}
\thanks{DA was partially supported by ERC adv.grant no.228082, by the NSF under Award 0902639, and by the NSF Award 0932078 000 while residing at the MSRI for Spring 2014. JF was supported by the NSF under Award 0902974 and Award 1207758; part of this paper was written while JF was a visitor at Paris 6 in Jussieu. HLT was supported by an NSF Graduate Research Fellowship, by the Northwestern University Office of the President, by the Centre for Quantum Geometry of Moduli Spaces, and by the NSF under Award DMS-1400761.}

\begin{abstract}
We develop a theory of conically smooth stratified spaces and their smooth moduli, including a notion of classifying maps for tangential structures. We characterize continuous space-valued sheaves on these conically smooth stratified spaces in terms of tangential data, and we similarly characterize $1$-excisive invariants of stratified spaces.  These results are based on the existence of open handlebody decompositions for conically smooth stratified spaces, an inverse function theorem, a tubular neighborhood theorem, an isotopy extension theorem, and functorial resolutions of singularities to smooth manifolds with corners.  
\end{abstract}

\keywords{Stratified spaces. Singular manifolds. Constructible sheaves. Topological quantum field theory. Configuration spaces. Ran spaces. $\oo$-Categories. Resolution of singularities. Handlebodies.}

\subjclass[2010]{Primary 57N80. Secondary 57P05, 32S60, 55N40, 57R40.}

\maketitle

\tableofcontents

\section{Introduction}

Our present work is a foundational exposition of a theory of stratified spaces founded on the new notion of conical smoothness. 
There are already several foundations for stratified spaces, each deriving from the differing purposes of several renowned inquiries. 
Our purpose and investment in the theory of stratifications derives from the following conviction: stratifications form a basis for locality in topological quantum field theory, where the homotopy type of a moduli space of stratifications on a manifold forms the source of all local invariants of the manifold. As such, our theory deviates from the preexisting theories so as to simultaneously accommodate two priorities: smooth geometry and robust behavior in families.

\smallskip

Stratified spaces received their modern inception at the hands of Whitney, Thom, and Mather -- see \cite{whitney1}, \cite{whitney2}, \cite{thom}, and \cite{mather, mather73} -- due to questions of generic behavior of spaces of solutions of algebraic and analytic equations. Whitney was concerned with the triangulability of algebraic varieties; he showed that a singular variety admits a decomposition by smooth nonsingular manifolds~\cite{whitney0}. Thom was concerned with dynamics, such as when a smooth map $M\ra N$ was differentiably stable -- in particular surrounded in the space $C^{\oo}(M,N)$ by mappings with diffeomorphic level sets. One of his questions was when differentiable stability is generic; this was then solved in a series of papers by Mather -- see \cite{goresky} for a historical survey.

\smallskip

This geometric theory developed further in the intersection homology and stratified Morse theory of Goresky \& MacPherson in \cite{goreskymacpherson}, \cite{goreskymacpherson2}, and \cite{stratmorse}. Their work continues Whitney's, as it extends bedrock results such as Poincar\'e duality and the Lefschetz hyperplane theorem to singular varieties. Other sources of expansion have been the study of constructible sheaves, of (micro)supports, and of $D$-modules on manifolds; see \cite{kashiwaraschapira} and references therein. A different development still of the theory was given by Baas and Sullivan, see \cite{baas}, who defined cobordism of manifolds with singularities; their purpose was to define generalized homology theories whose cycles were singular manifolds.

\smallskip

In these studies, the focus of these authors is largely on stratified spaces, one at a time -- there is little development of the theory in {\it families}, either on maps between stratified spaces, or on continuous families of stratified spaces themselves. 
Many reasons exist for this omission. For Whitney, stratifications were a tool to study algebraic varieties, where there already exists an excellent theory of families. Another reason is that the naive notion of a map of Whitney stratified spaces (i.e., a stratum preserving map which is smooth on strata separately), quickly leads to pathologies in families: the total space of a naive bundle of Whitney stratified spaces over a Whitney stratified base need not itself be Whitney stratified. Examples follow as a consequence of pseudo-isotopy theory, which we will explain below.

\smallskip

There does, however, exist a homotopy-theoretic theory of stratifications, advanced by Siebenmann \cite{siebenmann} and Quinn \cite{quinn}, which possesses robust behavior in families. Siebenmann constructed spaces of stratified maps between his locally-cone stratified sets, and he showed these spaces have good local and homotopy-theoretic behavior. In particular, he proved local contractibility, which gives an isotopy extension theorem as well as the existence of classifying spaces for fiber bundles. Quinn likewise proved an isotopy extension theorem valid in higher dimensions. These results have no counterpart in the geometric theory following Whitney. However, the homotopy-theoretic stratifications are insufficient for our more geometric arguments due to basic features as nonexistence of tubular neighborhoods (see \cite{normal}) and absence of transversality.

\smallskip

Our goal in this work is thus a theory of smoothly stratified spaces which is well-behaved in families, possessing the fine geometric features of Whitney's theory on objects and the robust features of Siebenmann's theory in families, which combine so as to give strong regularity in families. In particular, we require well-behaved spaces of smooth maps, embeddings, and tangential structures (such as orientations, spin structures, or framings) on stratified spaces.

\smallskip

We satisfy these requirements by the introduction of conical smoothness of maps. The notion of conical smoothness is intrinsic and makes no reference to an ambient smooth manifold, as in Whitney's definition. It implies strong regularity along closed strata -- in particular, the Whitney conditions -- so that there exist tubular neighborhoods along singularity loci. We prove an inverse function theorem, which simultaneously implies many of the excellent features in the geometric and topological theories: on the geometric side, it should imply an openness of transversality like that due to Trotman \cite{trotman} (see Conjecture \ref{conj.transversality}); like on the topological side, it implies an isotopy extension theorem, which here follows by standard arguments from the existence of tubular neighborhoods in our conically smooth theory. 

\smallskip

Most essentially, conical smoothness allows us to prove that the natural map 
\begin{equation}
\label{eqn.cone}
\Aut(X)\xra{~\simeq~} \Aut\bigl(\sC(X)\bigr)
\end{equation}
is a homotopy equivalence for each compact conically smooth stratified space $X$. That is, the space of conically smooth automorphisms of an open cone $\sC(X):= X\times[0,1)\amalg_{X\times\{0\}}\{0\}$ is homotopy equivalent to the space of conically smooth automorphisms of the link $X$ around the cone point. (See \S\ref{sec:endomorphisms}.) As a consequence, our theory has the advantage of avoiding the aforementioned pathologies. 
In contrast, the naive theory of smooth families is marbled with pseudo-isotopies, as we now explain. 
For $Z$ a smoothly stratified space, let $\Aut^{\sf naive}(Z)$ be the subspace of those stratum-preserving homeomorphisms which restrict to diffeomorphisms on each stratum (a weaker condition than conical smoothness). Consider the basic example of $Z=\sC(X)$ with $X$ a compact smooth manifold; so $Z=\sC(X)$ is a stratified space with two strata, the cone point $\{0\}$ and its complement $X\times (0,1)$. 
In this case, restriction to the complement of the cone point defines an isomorphism 
\[
\Aut^{\sf naive}\bigl(\sC(X)\bigr) \xra{~\cong~} \Diff^+\bigl(X\times (0,1) \bigr)
\]
with the subspace $\Diff^+(X\times(0,1)) \subset \Diff(X\times (0,1))$ consisting of those components that preserve the \emph{ends} of $X\times (0,1)$: a diffeomorphism $g$ of $X\times (0,1)$ belongs to this subspace 
if, for each point $x\in X$, the limit of the composite $g: \{x\}\times (0,1)\ra X\times (0,1) \ra [0,1)$ is zero as $t\in(0,1)$ tends to zero. 

Now, choose a smooth manifold $K$ as it fits into the solid diagram
\[
\xymatrix{
&&
{\sf BDiff}(X)  \ar[d]^-{-\times \id_{(0,1)}}  \ar[drr]^-{\sC(-)}
&&
\\
K  \ar@{-->}[urr]^-{\nexists}  \ar[rr]  
&&
{\sf BDiff}^+(X\times (0,1))  
&&
{\sf BAut}^{\sf naive}(\sC(X))  \ar[ll]_-{\cong}
}
\]
yet for which there does not exist a dashed lift, as indicated.
Such a diagram exists because the
product map $\Diff(X) \ra \Diff^+(X\times (0,1))$ is not in general a homotopy equivalence, due to the relation of $\Diff^+(X\times (0,1))$ with pseudo-isotopies of $X$. This diagram classifies a fiber bundle $E\to K$ whose fibers are naively isomorphic to $\sC(X)$ and whose fiberwise link, which is a fibration $E_0 \ra K$ with fibers homotopy equivalent to $X$, is not concordant to a smooth fiber bundle.
In summary, the naive theory of smooth families of stratified spaces leads to links which lack smooth structure. (Compare with the results of Hughes--Taylor--Weinberger--Williams~\cite{htww}.) 
Consequently, there
can be no functorial resolutions of singularities for this naive theory. 
Our conically smooth theory is designed to remedy this: we prove that links in our theory are again conically smooth and that there is a functorial resolution of singularities.

\smallskip

The above discussion argues for the essential correctness of the homotopy type of spaces of conically smooth automorphisms. In our theory, we also consider spaces of conically smooth embeddings $\Emb(X,Y)$ and spaces of all conically smooth maps $\Strat(X,Y)$. The construction of spaces of conically smooth embeddings between stratified spaces allows for a definition of a tangent bundle for a stratified space: it is the value of a functor
\[
\tau\colon \snglr \longrightarrow \Psh(\bsc)
\]
from stratified spaces and open embeddings among them, to presheaves of spaces on basic singularity types. This specializes to the usual concept in the case of a smooth $n$-manifold for the following reason: the structure of the tangent bundle of a smooth $n$-manifold $M$ is equivalent to the structure of the presheaf of spaces $\Emb(-,M)$, defined by smooth embeddings into $M$, on the singularity type $\RR^n$. This equivalence is implemented by the equivalence between ${\sf GL}(\RR^n)$-bundles and $\Emb(\RR^n,\RR^n)$-bundles due to the homotopy equivalence of topological monoids ${\sf GL}(\RR^n)\simeq \Emb(\RR^n,\RR^n)$.

\smallskip

The relative version of our main result, Theorem~\ref{theorem.exit-tangent}, states that for a stratified space in our theory, there are the following equivalences:
\begin{equation}
\label{eqn.pshv}
\shv^{\cbl}(X)~ \simeq ~\psh\bigl(\Ent(X)\bigl)~ \simeq~ \shv\bigl(\snglr_{/X}\bigr)~.
\end{equation}
The first term consists of sheaves on the underlying topological space of $X$ which are constructible with respect to a given filtration; the second is presheaves on the enriched category $\Ent(X)$ of basic singularity types embedded into $X$; the third is the enriched overcategory of all stratified spaces openly embedded into $X$, where a morphism is an isotopy of embeddings over $X$. 
In the latter two cases, our (pre)sheaves are continuous with respect to the topology we have endowed on embedding sets. 
We thus advance (\ref{eqn.pshv}) as a second confirmation of our construction of spaces of conically smooth maps between stratified spaces; it shows that this topological enrichment comes along for free when one considers constructible sheaves. It also proves Corollary \ref{exitpathequivalence}, that our category $\Ent(X)$ of basic singularity types embedded in $X$ is equivalent to the opposite of the exit-path $\oo$-category of $X$ defined by Lurie \cite{HA}, after MacPherson and Treumann \cite{treumann}. For a third confirmation, for the homotopy type of all stratified maps, see (\ref{eqn.striation}).

\smallskip

To prove these results, we tailor a significant amount of differential topology for stratified spaces, so as to make parametrized local-to-global arguments over stratified spaces as one can over usual smooth manifolds. 
To give one example, we prove in Theorem \ref{open-handles} that stratified spaces have open handlebody decompositions; this is analogous to Smale's theorem, used in the proof of the h-cobordism theorem \cite{smale}, that usual smooth manifolds have handlebody decompositions.  (A similar outcome can likely be managed through the work of Goresky--MacPherson~\cite{stratmorse}) on stratified Morse theory.)
To give another example, we produce as Proposition~\ref{tot-unzip} a functorial resolution of singularities procedure, the unzipping construction; this is a useful technique for maneuvering between stratified spaces and manifolds with corners, where classical differential topology can be applied. 
While this resolution of singularities has been examined in other contexts of stratified spaces (see~\cite{almp} for an account, after unpublished work of Richard Melrose), our treatment is tailored to achieve the homotopy coherence required for Part 2, as will be overviewed shortly.  
We conceive this package of results as a d\'evissage of stratified structures in the sense of Grothendieck \cite{esquisse}. Any number of general results about stratified spaces can be proved by applying these techniques:  induction on depth and resolution of strata, via unzipping, to manifolds with corners.

\begin{remark}\label{oo.conventions}
In this work, we use Joyal's {\it quasi-category} model of $\oo$-category theory \cite{joyal}. 
Boardman \& Vogt first introduced these simplicial sets in \cite{bv} as weak Kan complexes, and their and Joyal's theory has been developed in great depth by Lurie in \cite{HTT} and \cite{HA}, our primary references; see the first chapter of \cite{HTT} for an introduction. We use this model, rather than model categories or simplicial categories, because of the great technical advantages for constructions involving categories of functors, which are ubiquitous in this work. 
More specifically, we work inside of the quasi-category associated to the model category of Joyal.  In particular, each map between quasi-categories is understood to be an iso- and inner-fibration; and (co)limits among quasi-categories are equivalent to homotopy (co)limits with respect to Joyal's model structure.

We will also make use of categories, as well as categories enriched in topological spaces or Kan complexes, such as the $\Kan$-enriched category $\snglr$ of stratified spaces and conically smooth open embeddings among them. These are comparable via the following functors:
\[
\Bigl\{{\rm topological} \medspace {\rm categories} \Bigr\}\overset{{\sf Sing}}\longrightarrow
\Bigl\{\Kan\text{-}{\rm enriched} \medspace {\rm categories} \Bigr\}\overset{\sN}\longrightarrow
\Cat
\]
The first functor assigns to a topological category $\cC$ the $\Kan$-enriched category with the same objects and with enrichment defined, for any two objects $x$ and $y$ in $\cC$, as
\[
\Sing\cC(x,y):=\Sing\bigl(\cC(x,y)\bigr),\]
the singular complex of the topological space of maps from $x$ to $y$. The second functor, the simplicial nerve $\sN$, is a generalization of the usual nerve of a category; for details see \S1.1.5 of \cite{HTT}, after \cite{cordier}. In the following, by a functor $\cS \ra \cC$ from a $\Kan$-enriched category such as $\snglr$ to an $\oo$-category $\cC$ we will always mean a functor $\sN\cS \ra \cC$ from the simplicial nerve, suppressing the $\sN$ from the notation. In particular, see Convention \ref{defn.enrichment}. In this way, we will in particular regard an ordinary category as an $\oo$-category, by taking its nerve.

Both of these two functors are, in a suitable sense, equivalences. In particular, every $\oo$-category is equivalent to one coming from a topological category. So the reader uncomfortable with $\oo$-categories can substitute the words ``topological category" for ``$\oo$-category" wherever they occur in this paper to obtain the correct sense of the results, but they should then bear in mind the proviso that technical difficulties may then abound in making the statements literally true.
\end{remark}

We now describe the linear contents of our work in detail. The paper splits in three parts. The first gives definitions, in which we make precise the notion of a stratified space and structures on it. The second part characterizes the category of tangential structures as 1-excisive functors. The third part is devoted to what one might call the differential topology of stratified spaces, in which we prove that every finitary stratified space is the interior of some compact stratified space (possibly with boundary), and that many questions about stratified spaces can be reduced to a question about smooth manifolds with corners, via resolutions of singularity. The following sections review the contents of each of these parts in more detail; we recommend the reader read this moderately thorough summary before delving into the main body.

\subsection{Overview of Part 1: stratified spaces and tangential structures}
We begin in \S\ref{section.C0-singular-manifolds} by defining the notion of a {\em topological}, or $C^0$, stratified space. A $C^0$ stratified space is always a paracompact topological space with a specified filtration (see Remark \ref{rem.filtration}), but not all filtered spaces are examples. Just as a topological manifold must look locally like $\RR^n$, a $C^0$ stratified space must look locally like a space of the form
\begin{equation}
\label{eqn.local-cone}
 \RR^n \times \sC(X).
\end{equation}
Here, $X$ is a compact, lower-dimensional, $C^0$ stratified space, and $\sC(X)$ is the open cone on $X$. Thus, the definition is inductive. For example, if $X = \emptyset$, its open cone is a single point, so this local structure is the structure of $\RR^n$ -- i.e., the local structure of a topological $n$-manifold. If $X$ is a point, its open cone is a copy of $\RR_{\geq 0}$, and the local structure is that of a topological manifold with boundary. Since spaces of the form $\RR^n \times \sC(X)$ are the basic building blocks of $C^0$ stratified spaces, we call them {\em $C^0$ basics}.

In \S\ref{section.singular-manifolds}, we define the notion of a {\em conically smooth atlas} on a $C^0$ stratified space, and define a {\em stratified space} as a $C^0$ stratified space equipped with such an atlas. As one might expect, one recovers the usual notion of a smooth atlas by considering stratified spaces whose neighborhoods are all of the form (\ref{eqn.local-cone}) for $X = \emptyset$.
 Underlying the notion of an atlas is the notion of a {\em conically smooth} embedding between stratified spaces, which we also define in \S\ref{section.singular-manifolds}. 

By the end of \S\ref{section.C0-singular-manifolds} and \S\ref{section.singular-manifolds}, we will have constructed two categories: the category of stratified spaces, $\snglrd$, and the full subcategory of basic stratified spaces, $\bscd$. Objects of $\bscd$ are stratified spaces whose underlying topological space is $\RR^n \times \sC(X)$, with an atlas induced by an atlas of $X$. We call these ``basics'' because they are the basic building blocks of stratified spaces: Every object of $\snglr$ looks locally like an object of $\bsc$. We may also refer to them as singularity types. Finally, the morphisms in these categories are given by open embeddings compatible with their atlases.

We can enrich these categories over Kan complexes, and we will denote their associated $\infty$-categories $\bsc$ and $\snglr$. These discrete and enriched categories are related by the following diagram:
\[\xymatrix{
\bscd\ar[d]\ar@{^{(}->}[r]^\iota&\snglrd\ar[d]\\
\bsc~\ar@{^{(}->}[r]^\iota&~\snglr\\
}
\]
where the functor $\snglrd \ra \snglr$ is induced by the inclusion of the underlying set on morphism spaces, equipped with the discrete topology. 

\begin{example}
The inclusion $\iota: \bsc \into \snglr$ has an analogue in the smooth setting.
Let $\mfld_n$ be the $\infty$-category of smooth $n$-manifolds, whose morphisms are open, smooth embeddings between them, and whose topology on mapping spaces is the compact-open $C^\infty$ topology (see Remark \ref{oo.conventions}). Let $\sD_n$ denote the $\infty$-category with a single object called $\RR^n$, whose endomorphisms are the space of smooth, open embeddings from $\RR^n$ to itself. There is a pullback diagram
\[
 \xymatrix{
 \sD_n \ar[r] \ar[d]
 & \mfld_n \ar[d]\\
 \bsc \ar[r]^-{\iota}
 & \snglr
 }
\]
where all arrows are fully faithful inclusions of $\infty$-categories.
\end{example}

\begin{remark}
We expect that a Whitney stratified space is an example of a conically smooth stratified space. 
See Conjecture~\ref{whitneys-count}.
\end{remark}

\subsubsection{The tangent classifier, and structures on stratified spaces}
Equipped with the categories $\bsc$ and $\snglr$, we can define what it means to put a {\em structure} on a stratified space. 
Let us first consider the smooth case. In the smooth setting, examples of structures are given by reducing the structure group of a tangent bundle to some group $G$ with a homomorphism to $\sO(n)$. Importantly, these examples can be expressed in the language of homotopy theory:
Let $B = \sB G$, and consider the fibration $B \to \sB\sO(n)$ induced by the group homomorphism to $\sO(n)$.
Given a smooth $n$-manifold $X$, one has a natural map $\tau_X : X \to \sB\sO(n)$ classifying the tangent bundle of $X$, and one can define a structure on $X$ to be a lift of $\tau_X$ to $B$:
\begin{equation}
\label{eqn.classical-structures}
 \xymatrix{
 & B \ar[d] \\
 X \ar[r]_-{\tau_X} \ar[ur]
 & \sB\sO(n).
 }
\end{equation}
The space of such lifts is the space of structures one can put on $X$. To define the notion of a structure on a stratified space, we generalize both $\tau_X$ and $B \to \sB\sO(n)$ to the stratified setting. 

We begin with $\tau_X$. Phrased more universally, $\tau_X$ defines a functor $\tau:\mfld_n \to \spaces_{/\sB\sO(n)}$ which sends a smooth manifold $X$ to the classifying map $\tau_X$. By the Grothendieck construction, a space living over $\sB\sO(n)$ is the same data as a {\em functor} from the $\infty$-groupoid $\sB\sO(n)$ to the $\infty$-category of spaces. By replacing $X \to \sB\sO(n)$ with a Kan fibration, one may consider this functor to be contravariant -- i.e., as a presheaf on $\sB\sO(n)$. Moreover, there is an equivalence of $\infty$-categories $\sD_n \simeq \sB\sO(n)$, essentially by taking an embedding to its derivative at the origin; see Theorem~\ref{theorem.basics-easy}. Hence, the data of $X \to \sB\sO(n)$ is the same data as a presheaf on the $\infty$-category $\sD_n$, and this defines an equivalence of $\infty$-categories
\begin{equation}
\label{eqn.presheaf-spaces-over}
 \psh(\sD_n) \simeq \spaces_{/\sB\sO(n)}
\end{equation}
between presheaves on $\sD_n$, and spaces living over $\sB\sO(n)$.
Since the singular version of $\sD_n$ is precisely $\bsc$, this suggests that a generalization of $\tau_X$ is given by associating to every $X$ a presheaf over $\bsc$. There is a natural candidate for such a family:

\begin{definition}[Tangent classifier]\label{def:tangents}
\label{definition.tangent-classifier}
\label{def.tangent-classifier}
The \emph{tangent classifier} is the composite functor
	\begin{equation}
	\label{eqn.tangent-classifier}
	\tau \colon\snglr \xra{\text{Yoneda}} \Psh(\snglr) \xra{\iota^\ast} \Psh(\bsc).
	\end{equation}
In particular, given a stratified space $X$, the functor $\tau(X)$ assigns to each basic $U$ the space of conically smooth open embeddings $U \into X$.
\end{definition}

\begin{example}
When $\tau$ is restricted to $\mfld_n \subset \snglr$, we recover  from the smooth setting the functor $\mfld_n \to \spaces_{/\sB\sO(n)}$. To see this, we will construct a pullback square
\[
 \xymatrix{
 \mfld_n \ar[r] \ar[d]
 & \psh(\sD_n)\ar[d] \\
 \snglr \ar[r]^-\tau
 & \psh(\bsc).
 }
\]
In Corollary~\ref{classical-tangent}, we prove that the composite $\mfld_n \to \psh(\sD_n) \simeq \spaces_{/\sB\sO(n)}$ recovers the usual functor taking a manifold $X$ to the map $\tau_X: X \to \sB\sO(n)$.
\end{example}

In the following definition of the enter-path $\oo$-category $\Ent(X)$, we use the {\it unstraightening construction}, a version of the Grothendieck construction for $\infty$-categories: for every presheaf $\cF\in \psh(\cC)$, one can construct an $\infty$-category $\cE$ with a {\em right fibration} $\cE \to \cC$. By construction, $\cE$ is the $\oo$-overcategory
\[
\cE = \cC_{/\cF}:=\cC\underset{\psh(\cC)}\times\psh(\cC)_{/\cF}
\]
consisting of pairs $c\in\cC$ with a natural transformation from the Yoneda image of $c$ to $\cF$.
This unstraightening construction defines an equivalence between presheaves on $\cC$ and right fibrations over $\cC$. (We review this in \S\ref{section.right-fibration}). 

\begin{defn}
\label{defn.E(X)}
For $X$ a conically smooth stratified space, the \emph{enter-path $\oo$-category} is
\[
\Ent(X) := \bsc_{/X}
\]
the $\oo$-overcategory of basics embedded in $X$.
The right fibration
	\begin{equation}\label{hat}
	\tau_X: \Ent(X) = \bsc_{/X} \to \bsc
	\end{equation}
is the forgetful functor to basics; or, equivalently, $\tau_X$ is the unstraightening of the tangent classifier $\tau(X): \bsc^{\op}\ra \spaces$ of Definition \ref{def:tangents}.
The space $\Ent_U(X)$ is the fiber of $\tau_X$ over $U \in \bsc$.  
The right fibration
	\begin{equation}
	\Entd(X) := \bscd_{/X} \to \bscd
	\end{equation}
is the unstraightening of the restriction of the discrete tangent classifier $\tau(X)$ to the discrete category of basics, i.e., the composite $\bscd^{\op} \ra \snglrd^{\op}\xra{\Emb(-,X)} \Set$.
\end{defn}

Now that we have generalized $\tau_X$ to the stratified setting, we likewise generalize the notion of the structure $B \to \sB\sO(n)$. By the same reasoning as in (\ref{eqn.presheaf-spaces-over}), we replace the space $\sB$ over $\sB\sO(n)$ by a presheaf on $\bsc$ -- that is, by a right fibration to $\bsc$.

\begin{definition}[Category of basics]\label{def.category-of-basics}
An \emph{$\infty$-category of basics} is a right fibration $\cB\to \bsc$.  
\end{definition}

\begin{definition}[$\cB$-manifolds]
Fix an $\infty$-category of basics $\cB\to\bsc$. The $\infty$-category of $\cB$-manifolds is the pullback
\[
\xymatrix{
\mfld(\cB)  \ar[r]  \ar[d]
&
(\RFbn_{\bsc})_{/\cB}  \ar[d]
\\
\snglr  \ar[r]^-{\tau}
&
\RFbn_{\bsc}~.
}
\]
Explicitly, a $\cB$-manifold is a pair $(X,g)$ where $X$ is a stratified space and 
	\[
	\xymatrix{
	&
	\cB  \ar[d]
	\\
	\Ent(X) \ar[r]_-{\tau_X}  \ar[ur]^-g
	&
	\bsc
	}
	\]
is a lift of the tangent classifier.
Unless the structure $g$ is notationally topical, we will denote a $\cB$-manifold $(X,g)$ simply by its underlying stratified space, $X$.  
\end{definition}

\begin{example}\label{two-examples}
In the stratified setting, the choice of $\cB$ has a new flavor which is not present in the smooth setting. Rather than simply putting a structure (like an orientation) on a stratified space, $\cB$ can also restrict the kinds of singularities that are allowed to appear.
\begin{enumerate}
\item The inclusion $\sD_n \to \bsc$ is an $\infty$-category of basics, and $\mfld(\sD_n)$ is precisely the category of smooth $n$-manifolds. 

\item Let $\cB \to \bsc$ be the composite $\sB\sS\sO(n) \to \sB\sO(n) \simeq \sD_n \to \bsc$. The resulting $\oo$-category $\mfld(\cB)$ is that of smooth, {\em oriented}, $n$-manifolds. Morphisms are open embeddings respecting the orientation. 
\end{enumerate}
\end{example}

\begin{remark}
Example~\ref{two-examples} illustrates that there are two conceptual roles that an $\infty$-category of basics plays.
The first role is as a declared list of singularity types; this list has the property that if a manifold is allowed to look locally like $U$, it is allowed to look locally like any $V$ admitting an open embedding to $U$.  
The second role is as additional structure on each singularity type in this list (such as an orientation, or a map to a background space).  This additional structure pulls back along inclusions of basics.
In Theorem~\ref{theorem.basics-easy}, we will articulate in what sense $\bsc$ is a twisting of a poset (one basic may openly embed into another, but not vice-versa unless the two are equivalent) with a collection of $\infty$-groupoids (the automorphisms of each singularity type). 
It then follows formally that any $\infty$-category of basics $\cB \to \bsc$ factors as a sequence of right fibrations $\cB \to \ov{\cB} \to \bsc$ with the first essentially surjective, and the second fully faithful.  
\end{remark}

\begin{remark}
Tangential structures play an important role in the theory of factorization homology by influencing the algebraic structures we consider. For instance, factorization homology for oriented 1-manifolds requires the input data of a unital associative algebra, while factorization homology for unoriented 1-manifolds takes as input a unital, associative algebra with an involution. Generalizing the notion of structures to the stratified setting allows us to impose such algebraic structures on module actions as well.
\end{remark}

\subsection{Overview of Part 2: structures are 1-excisive}

While the definitions of Part 1 were natural, they were not justified by universal properties. We provide such characterizations in Part 2, and we view these characterizations as some of the main results of our work.

First, consider the smooth case. While we tacitly stated that interesting structures on a smooth manifold $X$ can be understood as lifts of the map $\tau_X: X \to \sB\sO(n)$, one can actually characterize such structures using a different property.

Namely, let $\cF: \mfld_n^{\op} \to \spaces$ be a functor of $\infty$-categories -- i.e., a space-valued presheaf. By definition, $\cF$ respects the topology of embedding spaces. Moreover, if we want to consider structures that glue together, we should require that $\cF$ be a {\em sheaf} on the usual site of smooth manifolds. Hence, by a structure, one should mean an object $\cF \in \shv(\mfld_n)$. We have the following comparison to the notion of $\spaces_{/\sB\sO(n)}$, as with (\ref{eqn.presheaf-spaces-over}).

\begin{theorem}\label{theorem.smooth-basic-sheaf}
Let $\shv(\mfld_n)$ denote the $\infty$-category of sheaves on $\mfld_n$ with values in spaces. There is an equivalence of $\infty$-categories
\[
 \shv(\mfld_n)
 \simeq
 \spaces_{/\sB\sO(n)} .
\]
\end{theorem}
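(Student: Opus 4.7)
The plan is to factor through $\psh(\sD_n)$: first identify $\psh(\sD_n) \simeq \spaces_{/\sB\sO(n)}$, then show restriction along the inclusion $j \colon \sD_n \hookrightarrow \mfld_n$ yields an equivalence $\shv(\mfld_n) \simeq \psh(\sD_n)$. The first identification is the easier half. The scaling isotopy $e \mapsto e_t$ with $e_t(x) = e(tx)/t$ deformation-retracts $\Emb(\RR^n, \RR^n)$ onto $\sO(n)$, so every morphism of $\sD_n$ is invertible up to homotopy and $\sD_n \simeq \sB\sO(n)$ as $\oo$-categories (the unstratified case of Theorem~\ref{theorem.basics-easy}). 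Since $\sB\sO(n)$ is an $\oo$-groupoid, $\sD_n^{\op} \simeq \sD_n$, and straightening-unstraightening gives
\[
\psh(\sD_n) \simeq \Fun(\sB\sO(n),\spaces) \simeq \spaces_{/\sB\sO(n)}.
\]

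For the second identification, consider the adjunction $j_! \dashv j^*$ between $\psh(\sD_n)$ and $\shv(\mfld_n)$, with $j_!$ equal to left Kan extension along $j$ followed by sheafification. The key geometric input is the classical good cover theorem: every smooth $n$-manifold $M$ admits an open cover $\{U_\alpha\}$ with each $U_\alpha$ and each non-empty finite intersection $U_{\alpha_0} \cap \cdots \cap U_{\alpha_k}$ diffeomorphic to $\RR^n$. Together with the multiplicativity $\cF(U \sqcup V) \simeq \cF(U) \times \cF(V)$ --- a consequence of the sheaf condition applied to the two-piece cover of a disjoint union --- descent along the Cech nerve of a good cover writes $\cF(M)$ as a limit of values of $\cF$ at objects of $\sD_n$. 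This shows the counit $j_! j^* \cF \to \cF$ is an equivalence; the unit $\cP \to j^* j_! \cP$ is an equivalence because $(\sD_n)_{/\RR^n}$ has $\id_{\RR^n}$ as a terminal object, so $j^* j_! \cP(\RR^n) \simeq \cP(\RR^n)$.

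The main obstacle will be carrying out the descent step precisely in the quasi-categorical framework: one must verify that hypercovers by disjoint unions of Euclidean opens are cofinal among all hypercovers of $M$, so that no higher gluing data is lost in the passage $\cF \mapsto j^* \cF$, and that sheafification acts as the identity on left Kan extensions coming from $\sD_n$. This requires careful bookkeeping of higher coherences in the Cech nerve but is essentially the standard comparison lemma. Once set up, the theorem follows formally by composing the two equivalences.
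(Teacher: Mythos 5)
Your overall strategy---factor through $\psh(\sD_n)\simeq\spaces_{/\sB\sO(n)}$ and then compare sheaves on $\mfld_n$ with presheaves on $\sD_n$---is the same as the paper's, which establishes this theorem as the $\cB=\sD_n$ case of Theorem~\ref{theorem.structured-versions}, via Theorem~\ref{theorem.sheaves-basics}. Your first half (using the scaling retraction to identify $\sD_n\simeq\sB\sO(n)$ and then straightening) is exactly what the paper does. For the second half, however, you have chosen the opposite Kan extension from the paper, and this is where a genuine gap appears. You work with $j_!\dashv j^*$ where $j_!=L\circ\mathrm{Lan}_j$ (sheafification after left Kan extension), and you argue the unit $\cP\to j^*j_!\cP$ is an equivalence because $(\sD_n)_{/\RR^n}$ has a terminal object. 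That argument shows $j^*\mathrm{Lan}_j\cP\simeq\cP$, but it says nothing about the sheafification $L$: the unit for $j_!$ factors as $\cP\xra{\sim} j^*\mathrm{Lan}_j\cP\to j^*L\mathrm{Lan}_j\cP$, and the second map is an equivalence only if sheafification leaves the value of $\mathrm{Lan}_j\cP$ at $\RR^n$ unchanged. Since $\RR^n$ has nontrivial covers in $\mfld_n$, this is not formal. Similarly, your argument that Cech descent along a good cover determines $\cF(M)$ proves that $j^*$ is \emph{conservative} on sheaves, which is a genuinely useful and correct step, but by itself it does not imply the counit $j_!j^*\cF\to\cF$ is an equivalence without first repairing the unit.

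The paper avoids this issue by using the \emph{right} Kan extension $\iota_*$ (so the adjunction is $\iota^*\dashv\iota_*$), and the crucial technical input is Lemma~\ref{E-covers}: for every covering sieve $\cU$ of $X$, the map $\colim_{O\in\cU}\Ent(O)\to\Ent(X)$ is an equivalence of right fibrations over $\bsc$. This is precisely what guarantees that $\iota_*\cP$ is already a sheaf, with no sheafification required, so the unit is an equivalence directly from fully faithfulness of $\iota$. The content of that lemma is exactly what you wave toward in your final paragraph as the ``standard comparison lemma,'' but note that it is not mere bookkeeping: the paper proves it stratum by stratum and, in the smooth part (your case), by citing Segal's Proposition~4.1 and/or the Dugger--Isaaksen hypercover theorem (\cite{segal-sheaves},~\cite{dugger-isaaksen}). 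Your good-cover-with-contractible-intersections observation is the right geometric input and would be a perfectly good way to feed into that cited result, but you should either switch to $\iota_*$ and prove the analogue of Lemma~\ref{E-covers} directly, or (if you insist on $j_!$) separately establish that the localization $L$ preserves values on objects of $\sD_n$ when applied to $\mathrm{Lan}_j\cP$. As written, the unit step is incomplete.
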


\begin{remark}
This result is new is presentation only. The essential idea is an old one, which underlies Smale--Hirsch immersion theory, scanning maps, and the parametrized h-principle in general.
\end{remark}

By (\ref{eqn.presheaf-spaces-over}), the equivalence in Theorem~\ref{theorem.smooth-basic-sheaf} may be written as an equivalence 
\begin{equation}
\label{eqn.smooth-presheaves}
 \shv(\mfld_n) \simeq \psh(\sD_n).
\end{equation}
In particular, a presheaf on basics induces a sheaf on the site of all smooth $n$-manifolds. Our first result of Part 2 is a natural generalization of this observation. 
Note that the full inclusion $\iota \colon \bsc\hookrightarrow\snglr$ defines an adjunction
\begin{equation}\label{extend-basics}
 \iota^\ast \colon \Psh(\snglr) \rightleftarrows \Psh(\bsc)\colon \iota_\ast
\end{equation}
with the right adjoint given by right Kan extension. 

\begin{theorem}[Continuous structures are tangential (absolute case)]\label{sheaves-basics}
\label{theorem.sheaves-basics}
The adjunction~(\ref{extend-basics}) restricts as an equivalence of $\infty$-categories
\[
 \Shv(\snglr)~\simeq~\Psh(\bsc)~.
\]
\end{theorem}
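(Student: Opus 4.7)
The plan is to verify that the adjunction $(\iota^\ast, \iota_\ast)$ restricts to the claimed equivalence by checking three standard facts: the counit $\iota^\ast \iota_\ast \to \id_{\Psh(\bsc)}$ is an equivalence (so that $\iota_\ast$ is fully faithful); $\iota_\ast$ carries $\Psh(\bsc)$ into $\Shv(\snglr)$; and every sheaf $F \in \Shv(\snglr)$ lies in the essential image of $\iota_\ast$, equivalently the unit $F \to \iota_\ast\iota^\ast F$ is an equivalence. The first is automatic, because $\iota\colon \bsc \hookrightarrow \snglr$ is a fully faithful inclusion of $\infty$-categories.

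The geometric input driving the other two is that the basics form a \emph{basis} for the Grothendieck topology on $\snglr$. Unpacking the local-model axiom of Section~\ref{section.singular-manifolds}, every stratified space $X$ admits an open cover by conically smooth embeddings $U_\alpha \hookrightarrow X$ with each $U_\alpha$ a basic, and any pairwise intersection $U_\alpha \cap U_\beta$, being an open stratified subspace, is again covered by basics. In $\infty$-categorical language this says the forgetful map $\bsc_{/X} \to \snglr_{/X}$ is sufficiently final that limits of a presheaf extended from $\bsc$ agree with limits over $\snglr_{/X}$.

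With this in hand both remaining statements follow by the same bookkeeping. For $\iota_\ast G \in \Shv(\snglr)$, I would use the pointwise formula $\iota_\ast G(X) \simeq \lim_{(U\hookrightarrow X)\in \bsc_{/X}} G(U)$; given an open cover $\{X_i \hookrightarrow X\}$ with \v{C}ech nerve $X_\bullet$, iterating the cover-by-basics construction on each intersection $X_{i_0}\cap\cdots\cap X_{i_n}$ produces a diagram cofinal in $\bsc_{/X}$, and interchanging limits yields $\iota_\ast G(X) \simeq \Tot \iota_\ast G(X_\bullet)$. For the unit, fix a sheaf $F$ and cover $X$ by basics $\{U_\alpha\}$; iteratively refining pairwise intersections gives a hypercover $V_\bullet \to X$ by basics, hyperdescent for $F$ gives $F(X) \simeq \Tot F(V_\bullet)$, and the same cofinality identifies this totalization with $\lim_{\bsc_{/X}} F|_{\bsc} = \iota_\ast\iota^\ast F(X)$.

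The main obstacle is the cofinality step — that $\bsc_{/X}$ is a fine enough basis to compute sheaf values. Beyond the bare local-model axiom (every $X$ is locally basic), this requires that basic neighborhoods remain well-behaved under intersection and restriction in an ambient stratified space, which uses the conically smooth structure of Section~\ref{section.singular-manifolds} in an essential way. Once this local-to-global refinement is in place, the rest of the argument is a formal unwinding of right Kan extensions together with \v{C}ech/hypercover descent and the standard comparison between sheaves on a site and sheaves on a basis.
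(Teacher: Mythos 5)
Your overall scheme---counit automatic, $\iota_\ast$ lands in sheaves, unit an equivalence on sheaves---matches the paper's architecture, and you correctly observe that the force of the statement is that $\iota_\ast$ carries \emph{all} of $\Psh(\bsc)$ into $\Shv(\snglr)$. But the step you flag as the "main obstacle" is not the kind of bookkeeping your sketch supplies, and the gap is exactly where the content of the theorem lives.

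Note what the statement claims: $\Shv(\snglr) \simeq \Psh(\bsc)$, with no descent condition on the $\bsc$ side. A pure "sheaves on a site agree with sheaves on a basis" argument---which is what "refine every cover by basics and interchange limits" proves---would give $\Shv(\snglrd) \simeq \Shv(\bscd)$ (this is precisely Lemma~\ref{basics-basis} in the paper, and its proof is the one-line observation that $\bscd$ is a basis). The disappearance of the sheaf condition on the enriched side is a separate, geometric fact. The paper records it as Lemma~\ref{E-covers}: for any covering sieve $\cU$ of $X$, the map
\[
\colim_{O\in\cU}\,\Ent(O) \longrightarrow \Ent(X)
\]
is an equivalence in $\RFbn_{\bsc}\simeq\Psh(\bsc)$. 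This cosheaf property of $\Ent(-)$ is what makes $\iota_\ast G$ a sheaf for an \emph{arbitrary} presheaf $G$, since $\iota_\ast G(X) \simeq \Map_{\RFbn_{\bsc}}(\Ent(X),G)$ carries the displayed colimit to a limit. Your replacement claim---that the \v{C}ech-refined-by-basics diagram is cofinal in $\bsc_{/X}$---is logically equivalent to this cosheaf statement, but it is \emph{not} a formal consequence of basics being closed under passing to smaller opens. Already for $X=\RR^n$ the claim amounts to the contractibility of $\snglr(\RR^n,\RR^n)$, which is a theorem about the homotopy type of an embedding space. In the paper this is handled via Lemma~\ref{exit-class}, identifying $\Ent_{[U]}(X)\simeq\Sing(X_{[U]})$ stratum by stratum; its proof is a genuine application of the isotopy extension theorem, together with the Segal/Dugger--Isaaksen fact that open covers compute the homotopy type of a manifold. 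You gesture at conical smoothness being "essential," but locate it in "basic neighborhoods remain well-behaved under intersection," which is far weaker than what is actually needed: control over the homotopy type of the embedding Kan complexes themselves.

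There is a second, smaller problem. For the unit you invoke hyperdescent for $F$ along a hypercover by basics, but the sheaf condition used throughout the paper is \v{C}ech descent for covering sieves, which does not automatically upgrade to hyperdescent absent a hypercompleteness hypothesis. The paper sidesteps this entirely: it reduces the unit question, via Lemma~\ref{basics-basis}, to whether $\iota^\ast\cF\to\iota^\ast\iota_\ast\iota^\ast\cF$ is an equivalence, which is the triangle identity $\iota^\ast\iota_\ast\simeq\id$. That route neither needs nor invokes hypercovers. So: correct skeleton, but your proof as written substitutes a formal refinement argument for the geometric cosheaf lemma that actually carries the theorem, and adds an unjustified hyperdescent step along the way.
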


\begin{remark}
The above result says that a sheaf on $\snglrd$ that is \emph{continuous}, by which we mean it is equipped with an extension to $\snglr$, is given from a fiberwise structure on tangent `bundles', which is a structure dependent only on the basics.
\end{remark}

There is also a {\em relative} version of Theorem~\ref{theorem.sheaves-basics}, for the $\infty$-category $\snglr_{/X}$ of stratified spaces living over a fixed stratified space $X$. In the terminology of Goodwillie--Weiss manifold calculus~\cite{goodwillie-weiss}, one might think of Theorem~\ref{theorem.sheaves-basics} as a ``context-free'' version of the relative case. A beautiful outcome of the relative case is that, since we are working relative to a stratified space $X$, one can ask what homotopical data the {\em stratification} on $X$ carries. It turns out that continuous sheaves which respect the stratification on $X$ (i.e., representations of the exit-path category of $X$) are precisely the sheaves on $\snglr_{/X}$.

To state the relative version of Theorem~\ref{theorem.sheaves-basics}, fix a stratified space $X$. Recall that a sheaf $\cF$ on $X$ is called {\em constructible} if its restriction to every stratum is locally constant. (See Definition~\ref{def.cbl-sheaf} for a precise definition.) We denote by $\shv^{\cbl}(X) \subset \shv(X)$ the full subcategory of constructible sheaves on $X$. 

\begin{theorem}[Continuous structures are tangential (relative case)]
\label{exit-tangent}
\label{theorem.exit-tangent}
Fix a stratified space $X$. 
There are natural equivalences of $\infty$-categories
\[
\Shv^{\sf cbl}(X)~\underset{\rm Cbl}\simeq~\Psh\bigl(\Ent(X)\bigr)~\underset{\rm Rel}\simeq~\Shv(\snglr_{/X}).
\]
\end{theorem}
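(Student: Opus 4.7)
The plan is to establish the two equivalences separately, since they rely on quite different inputs. For the relative equivalence labelled $\mathrm{Rel}$, my approach is to slice-relativize Theorem~\ref{theorem.sheaves-basics} over $X$. The overcategory $\snglr_{/X}$ is again an $\infty$-category of stratified spaces (its objects being open embeddings into $X$), and its subcategory of basics is precisely $\bsc_{/X} = \Ent(X)$ by the remark following Definition~\ref{defn.E(X)}. The adjunction (\ref{extend-basics}) of the absolute case lifts to an adjunction
\[
 \iota_X^\ast \colon \Psh(\snglr_{/X}) \rightleftarrows \Psh(\Ent(X)) \colon (\iota_X)_\ast,
\]
and I would argue the ingredients of the absolute proof apply verbatim in the slice: every object $W \hookrightarrow X$ of $\snglr_{/X}$ is covered by basics over $X$ (one covers $W$ itself by basics, then restricts the structure map); right Kan extension from $\Ent(X)$ produces a presheaf satisfying descent for the Grothendieck topology of open-embedding covers; and the unit/counit are equivalences on the sheaf side. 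The verifications reduce pointwise over $X$ to the absolute statement.

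For the constructible equivalence labelled $\mathrm{Cbl}$, the strategy is to construct the comparison functor and then reduce to a purely local computation on basics. The natural map sends a constructible sheaf $\cF$ on the underlying topological space of $X$ to the presheaf $\bigl(U \hookrightarrow X\bigr) \mapsto \Gamma(U;\cF|_U)$ on $\Ent(X)$, which is continuous in the embedding variable by conical smoothness and constructibility. To invert this, I would set up a parallel descent argument on both sides with respect to the Grothendieck topology of open covers by basic opens: the $\Shv^{\cbl}$ side descends because constructibility is local, and the $\Psh(\Ent(X))$ side descends because $\Ent(X)$ decomposes as a colimit of $\Ent(U_\alpha)$ for any basis of basic opens $U_\alpha \subset X$.

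Given descent, the entire equivalence collapses to checking $\Shv^{\cbl}(U) \simeq \Psh(\Ent(U))$ for each individual basic $U = \RR^n \times \sC(Y)$. Here conical smoothness provides a stratum-preserving deformation retraction of $U$ onto its cone axis $\RR^n \times \{\ast\}$, which any constructible sheaf must respect up to equivalence. An induction on depth, with base case $U = \RR^n$ (where constructible sheaves are locally constant, hence constant, hence a single space), identifies $\Shv^{\cbl}(U)$ with presheaves on the automorphism $\infty$-groupoid $\Aut(U)$; by Theorem~\ref{theorem.basics-easy} this matches $\Psh(\Ent(U))$, since $\Ent(U)$ is essentially the overcategory of basics mapping into $U$, controlled at the cone point by $\Aut(U)$ together with the inductive data on the link $Y$.

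The main obstacle will be this local identification on a single basic. It is the one step that cannot be handled by formal categorical manipulation: it genuinely depends on the geometric properties of conically smooth stratified spaces established earlier, in particular the homotopy equivalence $\Aut(X) \xrightarrow{\simeq} \Aut(\sC(X))$ for the cone, and the inductive resolution of singularities via the unzipping construction (Proposition~\ref{tot-unzip}) together with open handle decompositions (Theorem~\ref{open-handles}) needed to propagate the local equivalence to the gluing/descent step. Once these are in hand, the two equivalences fit together naturally and yield the stated zig-zag.
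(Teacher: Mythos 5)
Your overall strategy matches the paper's: establish both equivalences by showing all three $\infty$-categories satisfy descent, reduce to the case of a single basic, and induct on depth using the cone structure. Your treatment of the $\mathrm{Rel}$ equivalence is correct and is exactly what the paper does (the paper's proof is stated as ``identical to that of Theorem~\ref{sheaves-basics}, replacing $\bsc$ and $\snglr$ by $\bsc_{/X}$ and $\snglr_{/X}$''), and your reduction of $\mathrm{Cbl}$ to basics via descent is also the right move.

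However, your sketch of the local identification on a single basic $U = \RR^n\times\sC(Y)$ is incorrect as stated, and this is the heart of the $\mathrm{Cbl}$ equivalence. You claim that a stratum-preserving deformation retraction onto the cone axis, combined with an induction, ``identifies $\Shv^{\sf cbl}(U)$ with presheaves on the automorphism $\infty$-groupoid $\Aut(U)$.'' This is false whenever $U$ has positive depth: $\Ent(U)$ is \emph{not} an $\infty$-groupoid (Corollary~\ref{exit-corollaries} says it is an $\infty$-groupoid exactly when $U$ is an ordinary smooth manifold), and correspondingly $\Shv^{\sf cbl}(U)$ does not reduce to spaces with an $\Aut(U)$-action. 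Retracting onto the cone axis loses the data of the sheaf on the link. What actually holds is the cone identity
\[
\Shv^{\sf cbl}\bigl(\sC(Y)\bigr)~\simeq~\Shv^{\sf cbl}(Y)^{[1]_\ast}~,
\]
i.e., a constructible sheaf on $\sC(Y)$ is the data of a constructible sheaf $\cG$ on $Y$, a space $S$ (the cone-point stalk), \emph{and} a map from the constant sheaf at $S$ into $\cG$ (the cospecialization). The matching statement on the other side is $\Ent(\sC(Y))\simeq\Ent(Y)^{\triangleright}$. These are the contents of Lemmas~\ref{Shv-cones} and~\ref{psh-E-cones}, and they are what carry the induction; simply invoking $\Aut(U)$ discards the nontrivial morphisms of the exit-path category. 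You also invoke Theorem~\ref{open-handles} and the unzip construction for the descent step, but the descent here works with arbitrary open covers (via Lemma~\ref{E-covers} and Lemma~\ref{shv-sheaf}, which rest on partitions of unity and isotopy extension) rather than collar-gluing decompositions; open handlebody decompositions enter the $1$-excisiveness theorem, not this one.
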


\begin{remark}
The equivalence {\rm Cbl} will identify a constructible sheaf $\cF$ with the presheaf on $\Ent(X)$ sending an object $j: U \into X$ to $\cF(j( U ))$.
Hence {\rm Cbl} identifies the locally constant sheaves inside $\Shv^{\sf cbl}(X)$ with those presheaves on $\Ent(X)$ which factor through the smallest $\infty$-groupoid containing it.
\end{remark}

\begin{cor}\label{exit-corollaries}
Let $X$ be a stratified space.  
There is a natural equivalence of spaces $\sB(\Ent(X)) \simeq X$ between the classifying space of the $\infty$-category $\Ent(X)$ to the underlying space of $X$.  
Furthermore, $X$ is an ordinary smooth manifold if and only if $\Ent(X)$ is an $\infty$-groupoid.
\end{cor}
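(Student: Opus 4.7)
The plan is to extract both statements from the middle equivalence $\mathrm{Cbl}\colon \Shv^{\sf cbl}(X) \simeq \Psh(\Ent(X))$ of Theorem~\ref{theorem.exit-tangent}.

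For the first assertion, I would represent $\sB\Ent(X)$ by the functor it corepresents on spaces and match it with $X$ through the equivalence $\mathrm{Cbl}$. Given a space $S$, let $\underline{S}$ denote simultaneously the constant presheaf on $\Ent(X)$ at $S$ and the constant sheaf on $X$ at $S$; write $\underline{\pt}$ for the terminal object on either side. On the presheaf side, a constant-functor calculation gives
$$\Map_{\Psh(\Ent(X))}(\underline{\pt},\underline{S}) \;\simeq\; \lim_{\Ent(X)^{\op}} S \;\simeq\; \Map(\sB\Ent(X),S),$$
while on the sheaf side, since $X$ is paracompact and locally contractible, the derived global sections of the constant sheaf yield $\Map_{\Shv^{\sf cbl}(X)}(\underline{\pt},\underline{S}) \simeq \Map(X,S)$. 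A key check is that $\mathrm{Cbl}$ identifies these constant objects; using the explicit description in the remark following Theorem~\ref{theorem.exit-tangent} that $\mathrm{Cbl}$ sends $\cF$ to $(j\colon U\into X)\mapsto \cF(j(U))$, together with the contractibility of every basic $U$, the constant sheaf at $S$ corresponds to the constant presheaf at $S$. Yoneda in $\spaces$ then delivers $\sB\Ent(X)\simeq X$, naturally in $X$.

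For the second assertion, I would use the identification $\Ent(X) \simeq \bsc_{/X}$. If $X$ is a smooth $n$-manifold, then for any basic $U = \RR^{n-k}\times\sC(Z)$ admitting an open conically smooth embedding into $X$, the link $Z$ must be empty, since no point of $X$ has positive depth. So $\Ent(X)$ is equivalent to the right fibration $(\sD_n)_{/X} \to \sD_n$. Because $\sD_n \simeq \sB O(n)$ is an $\infty$-groupoid (Theorem~\ref{theorem.basics-easy}) and right fibrations over $\infty$-groupoids are themselves $\infty$-groupoids, $\Ent(X)$ is an $\infty$-groupoid. Conversely, if $X$ has a point $p$ of positive depth $k$, choose a basic neighborhood $U = \RR^{n-k}\times\sC(Z)$ about $p$ with $Z\neq\emptyset$; the open subset $U\setminus (\RR^{n-k}\times\{\text{cone point}\})$ admits an open conically smooth embedding from $\RR^n$, producing a morphism $\RR^n \to U$ in $\Ent(X)$. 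Any hypothetical inverse $U\to\RR^n$ would violate local preservation of depth at the cone point, so this morphism has no inverse and $\Ent(X)$ fails to be an $\infty$-groupoid.

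I expect the most subtle step to be justifying that $\mathrm{Cbl}$ carries constant sheaves to constant presheaves and that $\Map_{\Shv^{\sf cbl}(X)}(\underline{\pt},\underline{S})$ genuinely computes $\Map(X,S)$. The first uses contractibility of basics together with the explicit fiberwise formula for $\mathrm{Cbl}$; the second uses that for our paracompact, locally contractible stratified $X$, the derived global sections of a constant sheaf correctly model mapping into the classifying space of its stalk. Both are essentially standard, but each calls on hypotheses on $X$ that must be verified for stratified spaces in the sense of this paper.
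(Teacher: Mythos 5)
Your proof is correct, and because the paper does not spell out an argument for this corollary (stating it directly after Theorem~\ref{theorem.exit-tangent}), your route via the equivalence $\mathrm{Cbl}$ is the natural one given the paper's structure. Both halves are sound. A few comments.

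For the first assertion, the reduction to comparing $\Map_{\Psh(\Ent(X))}(\underline{\pt},\underline{S})$ with $\Map_{\Shv^{\cbl}(X)}(\underline{\pt},\underline{S})$ works, and you correctly flag the two nontrivial inputs: that $\mathrm{Cbl}$ matches the constant sheaf at $S$ with the constant presheaf at $S$ (which holds because each $j(U)\subset X$ is contractible, being isomorphic to a basic $\RR^i\times\sC(Z)$, so $\Gamma(j(U);\underline{S})\simeq S$), and that $\Gamma(X;\underline{S})\simeq\Map(\Sing(X),S)$. The latter is the statement that $X$ has the homotopy type of its shape, which holds here because $X$ is paracompact (built into Definition~\ref{defn.stopen}) and locally contractible (basics contract to their centers). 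An equivalent, slightly more economical phrasing of the same argument: the constant-diagram functor $\spaces\to\Psh(\Ent(X))$ has left adjoint the colimit, and $\colim(\underline{\pt})\simeq\sB\Ent(X)$; on the sheaf side the constant-sheaf functor $\spaces\to\Shv(X)$ has left adjoint computing the shape of $X$, which is $\Sing(X)$ for paracompact locally contractible $X$; since $\mathrm{Cbl}$ commutes with the constant-object functors, it identifies the two left adjoints, hence $\sB\Ent(X)\simeq\Sing(X)$.

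For the second assertion, your argument is the intended one. Two small points of precision. First, when $X$ is a smooth $n$-manifold, invariance of domain rules out embeddings $\RR^m\into X$ for $m\neq n$ as well, so $\Ent(X)$ is concentrated over $\sD_n$ exactly as you say; and a right fibration over an $\infty$-groupoid (here $\sD_n\simeq\sB\sO(n)$, by Theorem~\ref{theorem.basics-easy}) is a Kan fibration, so $\Ent(X)$ is an $\infty$-groupoid. Second, for the converse, the cleanest justification that your morphism $(\RR^n\into X)\to(U\into X)$ is not invertible is that right fibrations are conservative, so it suffices to observe that the underlying map $\RR^n\to U$ in $\bsc$ strictly increases depth and hence is not an equivalence by Theorem~\ref{theorem.basics-easy}(2); alternatively, as you note, $\bsc(U,\RR^n)=\emptyset$ outright when $\depth(U)>0$. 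Either phrasing closes the argument.
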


\begin{remark}\label{better-tangent}
Corollary~\ref{exit-corollaries} gives that $\Ent(X)$ is not an $\infty$-groupoid whenever $X$ is \emph{not} an ordinary smooth manifold; and furthermore that the map $\Ent(X) \to \bsc$ retains more information than the map of spaces $\sB(\tau_X)\colon \sB(\Ent(X)) \simeq X \to \sB(\bsc)$ from the underlying space of $X$ to the classifying space of the $\infty$-category $\bsc$ of singularity types. 
For instance, the map of spaces $\sB(\tau_X)$ does \emph{not} classify a fiber bundle per se since the `fibers' are not all isomorphic.
Even so, the functor $\tau_X$ does classify a sheaf of locally free $\RR_{\geq 0}$-modules, and this sheaf is $S$-constructible.
\end{remark}

\begin{remark}\label{exit-path-category}
Let $X = (X\xra{S} P)$ be a stratified space of bounded depth.
Consider the simplicial set $\Sing_S(X)$ for which a $p$-simplex is a map of stratified topological spaces
\[
(\Delta^p\to[p])\longrightarrow (X\to P)
\]
where we use the \emph{standard} stratification $\Delta^p\to[p]$ given by $(\{0,\dots,p\}\xra{t} [0,1])\mapsto {\sf Max}\{i\mid t_i\neq 0\}$. 
This simplicial set is defined in~\S A.6 of~\cite{HA} where it is shown to be a quasi-category, and it is referred to as the \emph{exit-path $\infty$-category} of the underlying stratified topological space of $X$.  
It will be obvious from the definitions that this underlying stratified space $X\xra{S} P$ is \emph{conically stratified} in the sense of Definition~A.5.5 of~\cite{HA}, and the underlying topological space $X$ is locally compact.  
And so Theorem~A.9.3 of \cite{HA} can be applied, which states an equivalence $\Psh\bigl(\Sing_S(X)^{\op}\bigr) \simeq \Shv^{\sf cbl}(X)$.
By inspection, both $\Sing_S(X)$ and $\Ent(X)$ are idempotent complete.
Through Theorem~\ref{exit-tangent} we conclude:
\end{remark}

\begin{corollary}\label{exitpathequivalence}
Let $X=(X\xra{S} P)$ be a stratified space.
There is an equivalence of $\infty$-categories
\[
\Sing_S(X)^{\op} \simeq \Ent(X)~.
\]
In other words, the enter-path $\oo$-category is equivalent to the opposite of the exit-path $\oo$-category.
\end{corollary}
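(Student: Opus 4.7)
The plan is to produce the desired equivalence by composing two already-available equivalences of presheaf $\infty$-categories and then descending along the Yoneda embedding. Remark~\ref{exit-path-category} verifies the hypotheses of Theorem~A.9.3 of \cite{HA} for the underlying stratified topological space $X \xrightarrow{S} P$: it is conically stratified (this is essentially tautological from our definitions, since every stratified space is locally modeled on $\RR^n \times \sC(X)$) and locally compact (since $X$ is paracompact with such local models). Therefore Lurie's result gives a canonical equivalence $\Psh\bigl(\Sing_S(X)^{\op}\bigr) \simeq \Shv^{\sf cbl}(X)$. On the other hand, Theorem~\ref{exit-tangent} (specifically the equivalence {\rm Cbl}) provides $\Psh\bigl(\Ent(X)\bigr) \simeq \Shv^{\sf cbl}(X)$.

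Composing these two equivalences yields an equivalence of $\infty$-categories
\[
\Phi \colon \Psh\bigl(\Sing_S(X)^{\op}\bigr) \xrightarrow{~\simeq~} \Psh\bigl(\Ent(X)\bigr).
\]
To extract an equivalence of the underlying small $\infty$-categories, I would invoke the intrinsic characterization of the image of the Yoneda embedding. For any small $\infty$-category $\cC$, the full subcategory of representable presheaves in $\Psh(\cC)$ coincides, up to idempotent completion, with the full subcategory of \emph{completely compact} objects, i.e. those $F$ for which $\Map_{\Psh(\cC)}(F,-)$ preserves all small colimits (cf.\ \cite{HTT}, \S5.1.6). This subcategory is intrinsic to $\Psh(\cC)$ as an $\infty$-category. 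Hence $\Phi$ restricts to an equivalence between the full subcategories of completely compact objects, which by the idempotent-completeness of both $\Sing_S(X)$ and $\Ent(X)$ (noted in Remark~\ref{exit-path-category}) are precisely $\Sing_S(X)^{\op}$ and $\Ent(X)$ respectively, via the Yoneda embeddings.

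The main subtle point, and the one I would devote most care to, is ensuring that the composed equivalence $\Phi$ truly does match the two Yoneda embeddings up to idempotent completion — equivalently, that both of the constituent equivalences are equivalences of \emph{presentable} $\infty$-categories (preserving small colimits), so that the invariantly-defined subcategory of completely compact objects is preserved. For the equivalence {\rm Cbl} of Theorem~\ref{exit-tangent} this is visible from its construction as left Kan extension along a fully faithful inclusion; for Theorem~A.9.3 of \cite{HA} it is part of the statement that the equivalence is implemented by a colimit-preserving functor. Once these naturality properties are in hand, the descent to $\Sing_S(X)^{\op} \simeq \Ent(X)$ is formal, and no further geometric input about stratified spaces is required beyond what Theorem~\ref{exit-tangent} has already supplied.
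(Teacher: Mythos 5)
Your proof is correct and matches the paper's implicit argument in Remark~\ref{exit-path-category} and Corollary~\ref{cor.completely-compact}: combine Theorem~A.9.3 of \cite{HA} with Theorem~\ref{exit-tangent}(Cbl), then recover the small $\infty$-categories from the presheaf categories as the completely compact objects, using idempotent completeness. The one point of overcaution: the worry that the composite equivalence might fail to preserve completely compact objects is unfounded, since any equivalence of $\infty$-categories automatically preserves all colimits (and hence the colimit-preservation property defining complete compactness), with no need to separately verify that the constituent equivalences are presentable-category equivalences.
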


Let $\cC$ be an $\infty$-category. An object $c\in \cC$ is \emph{completely compact} if the copresheaf $\cC(c,-)$ preserves (small) colimits. The following gives an intrinsic characterization of the exit-path category, as opposed to a construction of it.

\begin{cor}
\label{cor.completely-compact}
Through this equivalence, the essential image of the Yoneda functor $\Ent(X) \hookrightarrow \Psh\bigl(\Ent(X)\bigr) \simeq \Shv^{\sf cbl}(X)$ is the full $\oo$-subcategory consisting of the completely compact objects.  
\end{cor}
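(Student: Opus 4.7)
The plan is to derive this directly from Theorem~\ref{exit-tangent} together with the standard characterization of completely compact objects in a presheaf $\infty$-category. Under the equivalence $\Shv^{\sf cbl}(X) \simeq \Psh(\Ent(X))$ supplied by Theorem~\ref{exit-tangent}, the Yoneda embedding $\Ent(X) \hookrightarrow \Psh(\Ent(X))$ is identified with the composite functor named in the corollary. Because completely compact objects are preserved and reflected by any equivalence of $\infty$-categories, it suffices to verify that inside $\Psh(\Ent(X))$ the essential image of Yoneda coincides with the subcategory of completely compact presheaves.

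First I would observe that every representable $y(c) \in \Psh(\cC)$ over a small $\infty$-category $\cC$ is completely compact: by the Yoneda lemma, $\Map_{\Psh(\cC)}(y(c),-)$ is naturally equivalent to evaluation at $c$, and evaluation preserves arbitrary colimits because colimits of presheaves are computed pointwise. Since the class of completely compact objects is closed under retracts, every retract of a representable is likewise completely compact. Conversely, writing any $F \in \Psh(\cC)$ as the tautological colimit of representables indexed by the slice $\cC_{/F}$ and applying $\Map_{\Psh(\cC)}(F,-)$ to this colimit diagram, the identity $\id_F$ must factor through some representable $y(c)$ whenever $F$ is completely compact, producing a retraction of $y(c)$ onto $F$. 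Thus the completely compact objects in $\Psh(\Ent(X))$ are exactly the retracts of representables.

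Finally, I would invoke the idempotent completeness of $\Ent(X)$ recorded in Remark~\ref{exit-path-category} to upgrade ``retract of a representable'' to ``representable.'' A retract $F$ of $y(c)$ is classified by an idempotent endomorphism of $y(c)$, and fully faithfulness of Yoneda identifies this data with an idempotent endomorphism of $c$ in $\Ent(X)$. Idempotent completeness of $\Ent(X)$ ensures this idempotent splits, yielding an object $c' \in \Ent(X)$ together with an equivalence $F \simeq y(c')$. Transporting back along the equivalence of Theorem~\ref{exit-tangent} then completes the identification of the essential image of $\Ent(X) \hookrightarrow \Shv^{\sf cbl}(X)$ with the completely compact objects. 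The only delicate ingredient is the equality ``completely compact $=$ retract of representable'' in a presheaf $\infty$-category, which is standard (compare HTT \S5.1.6), so I do not anticipate any serious obstacle beyond bookkeeping.
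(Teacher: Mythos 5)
Your proposal is correct and matches the argument the paper leaves implicit: the corollary is stated without proof, resting exactly on the equivalence from Theorem~\ref{exit-tangent}, the standard identification (HTT~\S5.1.6.8) of completely compact presheaves with retracts of representables, and the idempotent completeness of $\Ent(X)$ recorded in Remark~\ref{exit-path-category} to upgrade retracts to honest representables. The only point worth making explicit in the ``identity factors through a stage'' step is that $\pi_0$ preserves colimits of spaces, so a point of a colimit lifts to some stage of the diagram, but this is a routine part of the HTT reference you cite.
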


In~\S\ref{prove-prop-loc} we will prove the following related result.  
\begin{prop}\label{localization}
Let $X$ be a stratified space. 
Consider the subcategory $W_X\subset \Entd(X)$ consisting of the same objects but only those morphisms $(U\hookrightarrow X) \hookrightarrow (V\hookrightarrow X)$ for which $U$ and $V$ are abstractly isomorphic as stratified spaces.  
Then functor 
$
c\colon \Entd(X) \to \Ent(X)
$
witnesses an equivalence of $\infty$-categories from the localization
\[
\Entd(X)[W_X^{-1}] \xra{~\simeq~} \Ent(X)~.
\]
In particular, $\Entd(X) \to \Ent(X)$ is final.
\end{prop}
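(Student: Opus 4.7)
My plan is to identify $c\colon \Entd(X)\to \Ent(X)$ as a Dwyer--Kan localization at $W_X$; the finality conclusion then follows by an explicit slice-contraction argument. Since $c$ is a bijection on objects, the content is entirely about morphism spaces.

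First, I would check that $c$ sends every morphism in $W_X$ to an equivalence. Under the identification $\Ent(X)\simeq \bsc_{/X}$, such a morphism is a conically smooth open embedding $f\colon U\hookrightarrow V$ over $X$ between basics which are abstractly isomorphic. Every basic $\RR^n\times \sC(Z)$ carries a canonical scaling $\RR_{>0}$-action rescaling the Euclidean factor and the cone parameter simultaneously, and this action produces an isotopy from $f$ to an automorphism of $V$. Hence $f$ is already an equivalence in $\bsc$, and $c(f)$ is an equivalence in $\Ent(X)$.

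Second, I would verify the universal property by comparing hom-spaces at each pair of objects $(U\hookrightarrow X)$ and $(V\hookrightarrow X)$. By the structural description of $\bsc$ recorded in Theorem~\ref{theorem.basics-easy} (a twisting of the poset of isomorphism classes of basics by the $\infty$-groupoids $\sB\Aut(U)$), the mapping space $\bsc_{/X}\bigl((U\hookrightarrow X),(V\hookrightarrow X)\bigr)$ decomposes as a disjoint union, indexed by isotopy classes of open embeddings $U\hookrightarrow V$ over $X$, of components each equivalent to the corresponding $\Aut$-torsor. On the localization side, using Lurie's marked-simplicial-set model, a morphism in $\Entd(X)[W_X^{-1}]$ is represented by a zigzag
\[
(U\hookrightarrow X)~\xla{~w~}~(U'\hookrightarrow X)~\xra{~\iota~}~(V\hookrightarrow X)
\]
with $w\in W_X$ a rescaled-inward self-embedding of $U$ and $\iota$ a genuine inclusion in $\bscd$; the scaling isotopy shows the space of such representatives in a fixed path component is contractible, identifying the two hom-spaces.

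Third, for the finality conclusion it suffices to check that the pullback $\Entd(X)\times_{\Ent(X)} \Ent(X)_{Y/}$ is weakly contractible for each $Y\in \Ent(X)$. Writing $Y=(V\hookrightarrow X)$, each object $(Z,\alpha\colon Y\to Z)$ of this pullback is connected by an explicit zigzag, again produced by the scaling isotopy on $V$, to the initial object $(Y,\id_Y)$. The principal obstacle is step two: the coherent bookkeeping required to match a hammock or marked-simplicial presentation of the localization with the enriched mapping spaces of $\bsc_{/X}$. This comparison ultimately rests on the structure theorem for $\bsc$ already developed in the paper, with the additional work of promoting the pointwise scaling isotopy to a coherent system of contractions on the embedding spaces.
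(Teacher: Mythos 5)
Your first step (checking that $c$ inverts $W_X$) is correct and in fact is essentially Theorem~\ref{theorem.basics-easy}(4), which the paper cites for exactly this purpose. Your third step is unnecessary: a Dwyer--Kan localization $\cC\to\cC[W^{-1}]$ is automatically final, so the finality conclusion is a formal consequence of the localization statement and does not require a separate slice-contraction argument.

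The genuine gap is in your second step, which you yourself flag as the ``principal obstacle,'' and it is indeed where all of the content lies. Two specific problems. First, your proposed description of the mapping space $\bsc_{/X}\bigl((U\hookrightarrow X),(V\hookrightarrow X)\bigr)$ as a disjoint union of $\Aut$-torsors indexed by isotopy classes is not correct as stated: a morphism in the slice is an embedding $U\hookrightarrow V$ \emph{together with} an isotopy witnessing compatibility over $X$, so the mapping space is a homotopy fiber of embedding spaces, not a torsor decomposition; Theorem~\ref{theorem.basics-easy} controls $\bsc$ itself, but does not by itself yield the slice decomposition you assert. Second, representing morphisms in $\Entd(X)[W_X^{-1}]$ by length-two zigzags $\xleftarrow{w}\;\xrightarrow{\iota}$ presupposes a calculus of fractions (an Ore-type condition) for $W_X\subset\Entd(X)$, which you never verify; and even with such a calculus, ``the space of representatives in a fixed path component is contractible'' is precisely the hard statement, not something the scaling isotopy gives you for free, since the representatives live in a colimit over a zigzag category whose homotopy type must be controlled.

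The paper routes around both issues entirely. Rather than attempting a direct hammock comparison, it (i) identifies the underlying $\infty$-groupoid of $\Ent(X)$ as $\coprod_{[U]}X_{[U]}$ via Lemma~\ref{exit-class}, and shows $\sB W_X^{[U]}\simeq X_{[U]}$ by observing that the relevant slice categories of $W_X^{[U]}$ are filtered (hence have contractible classifying spaces) and invoking the Dugger--Isaksen hypercover theorem; and then (ii) handles morphism spaces by exhibiting compatible fiber sequences over the groupoid of objects and applying Quillen's Theorem~B, whose hypothesis is checked using precisely the scaling-to-isomorphism observation you also noticed. This replaces the uncontrolled zigzag bookkeeping with two checkable conditions (filteredness of slices, and the Theorem~B hypothesis). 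Your plan has the right decomposition into steps, but without an argument substituting for the Quillen~B / hypercover machinery, step two does not go through.
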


In Definition~\ref{def.maps} we give the definition of a \emph{refinement} $\w{X}\to X$ between stratified spaces, which is an articulation of a \emph{finer} stratification than the given one on $X$.  
We will prove the following result in~\S\ref{prove-prop-loc}.  
\begin{prop}\label{refinements-localize}
Let $\w{X}\xra{r} X$ be a refinement between stratified spaces. Then there is a canonical functor
\[
\Ent(\w{X}) \longrightarrow \Ent(X)
\]
which is a localization.  
\end{prop}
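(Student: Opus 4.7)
The plan is to construct the functor using the exit-path model (Corollary~\ref{exitpathequivalence}), and then verify the localization property by applying Theorem~\ref{theorem.exit-tangent} to reduce everything to a straightforward statement about constructible sheaves.

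\textbf{Construction of the functor.} A refinement $r\colon(\widetilde X\xra{\widetilde S}\widetilde P)\to(X\xra{S} P)$ is, by definition, a homeomorphism of underlying topological spaces together with a monotone map of posets $\widetilde P\to P$ rendering the evident square commutative. Post-composition with $r$ therefore sends any exit-path simplex $(\Delta^p\to[p])\to(\widetilde X\to\widetilde P)$ to an exit-path simplex $(\Delta^p\to[p])\to(X\to P)$, defining a map of simplicial sets $r_\ast\colon\Sing_{\widetilde S}(\widetilde X)\to\Sing_S(X)$. Under the equivalences of Corollary~\ref{exitpathequivalence}, this yields the desired functor $r^\Ent\colon\Ent(\widetilde X)\to\Ent(X)$.

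\textbf{Reduction via Theorem~\ref{theorem.exit-tangent}.} A functor $F\colon\cC\to\cD$ of $\infty$-categories is a localization precisely when it is essentially surjective and the pullback $F^\ast\colon\Psh(\cD)\to\Psh(\cC)$ is fully faithful. Essential surjectivity of $r^\Ent$ is immediate from the exit-path description: $r_\ast$ is the identity on $0$-simplices (= points of the common underlying space $\widetilde X=X$), hence is a bijection on objects. For fully faithfulness of $(r^\Ent)^\ast$, I apply Theorem~\ref{theorem.exit-tangent} to identify $\Psh(\Ent(X))\simeq\Shv^{\sf cbl}(X)$ and $\Psh(\Ent(\widetilde X))\simeq\Shv^{\sf cbl}(\widetilde X)$; under these equivalences, $(r^\Ent)^\ast$ will correspond to the pullback $r^\ast\colon\Shv^{\sf cbl}(X)\to\Shv^{\sf cbl}(\widetilde X)$ of constructible sheaves along $r$.

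\textbf{Full faithfulness on constructible sheaves.} Since $r$ is a homeomorphism of underlying topological spaces and each $\widetilde S$-stratum is contained in some $S$-stratum, any sheaf on the common underlying space that is locally constant along the $S$-strata is automatically locally constant along the finer $\widetilde S$-strata. Thus $r^\ast$ exhibits $\Shv^{\sf cbl}(X)$ as a full subcategory of $\Shv^{\sf cbl}(\widetilde X)$, and inclusion of a full subcategory is always fully faithful.

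\textbf{Main obstacle.} The chief technical hurdle is justifying the compatibility step: that the pullback $(r^\Ent)^\ast$, induced by the exit-path functor $r^\Ent$, corresponds under the equivalences of Theorem~\ref{theorem.exit-tangent} to the sheaf-pullback $r^\ast$. This requires unwinding the explicit recipe of the equivalence {\rm Cbl}—which sends a constructible sheaf $\cF$ on $X$ to the presheaf $(U\into X)\mapsto\cF(U)$ on $\Ent(X)$—and exploiting the fact that $r_\ast$, and hence $r^\Ent$, preserves the underlying image set of any basic embedding. Once this identification is in hand, the fully faithfulness established in the previous paragraph concludes the argument.
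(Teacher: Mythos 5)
Your approach is genuinely different from the paper's: the paper constructs the functor $\Ent(\w{X})\to\Ent(X)$ directly from the $\sf Image$ functor into $\opens(X)$ (observing that a refinement of a basic is a basic, so $\Entd(\w{X})$ sits inside $\Entd(X)$ as a full subcategory of the poset of opens) and then invokes Proposition~\ref{localization}; you instead build the functor via exit-path categories and try to deduce the localization property from Theorem~\ref{theorem.exit-tangent}(Cbl) and a presheaf characterization of localizations. The routes are not circular (Section~\ref{sec.proof-cstbl} does not depend on this proposition), but your route rests on two unverified pieces, and the first one is a real gap, not a routine ``unwinding.''

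The obstruction you flag is genuine, and I do not see how to close it by the inspection you sketch. The paper's equivalence $\Gamma_X\colon\Psh\bigl(\Ent(X)\bigr)\to\Shv^{\sf cbl}(X)$ is constructed as a natural transformation of functors $(\snglrd)^{\op}\to\Cat$, so its naturality is established \emph{only} with respect to conically smooth open embeddings. A refinement $\w{X}\to X$ is not an open embedding, so the commutativity of the square
\[
\xymatrix{
\Psh\bigl(\Ent(X)\bigr) \ar[r]^-{(r^{\Ent})^\ast} \ar[d]_-{\Gamma_X} & \Psh\bigl(\Ent(\w{X})\bigr) \ar[d]^-{\Gamma_{\w{X}}} \\
\Shv^{\sf cbl}(X) \ar[r]^-{r^\ast} & \Shv^{\sf cbl}(\w{X})
}
\]
is not supplied by the paper. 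Worse, if you try to verify it object-wise you find yourself needing, for each basic open $\w{O}\subset\w{X}$ with coarsening $O$, that the functor $\Ent(\w{O})\to\Ent(O)$ is \emph{initial} (so that $\Map_{\Ent(X)}\bigl(\Ent(\w{O}),\cE\bigr)\simeq\Map_{\Ent(X)}\bigl(\Ent(O),\cE\bigr)$); localizations do have this limit-preservation property, but that is precisely the instance of the proposition you are trying to prove, applied to $\w{O}\to O$. So the ``unwinding'' risks a circularity. Relatedly, Corollary~\ref{exitpathequivalence} only produces an equivalence $\Sing_S(X)^{\op}\simeq\Ent(X)$ up to contractible choice and is itself deduced from Theorem~\ref{theorem.exit-tangent}(Cbl); the paper never establishes that this equivalence is natural along refinements, so ``the desired functor $r^\Ent$'' is not actually canonical in your construction, nor is it visibly the same as the functor in the statement. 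Finally, the characterization you invoke --- $F$ is a localization iff $F$ is essentially surjective and $F^\ast$ on presheaves is fully faithful --- is true but is a theorem that should be cited (it is not in the paper), and its proof is not trivial; the paper itself avoids it entirely, proving Proposition~\ref{localization} by directly checking equivalence on underlying $\infty$-groupoids and on mapping spaces. Your sheaf-theoretic observation in the third paragraph (that coarse constructibility implies fine constructibility, so $r^\ast$ is a full inclusion) is correct and is the heart of why the statement should be true, but by itself it does not discharge the compatibility step.
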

\begin{remark}\label{enter-path-refinements}
Proposition~\ref{refinements-localize} can be interpreted nicely through Corollary~\ref{exitpathequivalence} as a conceptually obvious statement.  
Namely, consider a refinement $\w{X}\to X$.
Consider the collection of those paths in the exit-path category of $\w{X}$ that immediately exit a stratum of $\w{X}$ that is not present in $X$.
The statement is that inverting these paths results in the exit-path category of $X$.  
\end{remark}

While Theorem~\ref{theorem.sheaves-basics} and Theorem~\ref{theorem.exit-tangent} have been stated for the unstructured case, we also prove them for the category of $\cB$-manifolds for an arbitrary structure $\cB$. 

\begin{theorem}[Structured versions]\label{structured-versions}
\label{theorem.structured-versions}
Let $\cB$ be an $\infty$-category of basics.
The following statements are true.
\begin{enumerate}
\item There is an equivalence of $\infty$-categories
 \[
  \Shv\bigl(\mfld(\cB)\bigr) ~\simeq~\Psh(\cB)~.
 \]
\item Let $X= (X\xra{S}P, \cA,g)$ be a $\cB$-manifold. There is are canonical equivalences of $\infty$-categories
 \[
  \Shv^{\sf cbl}(X)~\underset{\rm Cbl}\simeq~\Psh\bigl(\Ent(X)\bigr)~\underset{\rm Rel}\simeq~\Shv\bigl(\mfld(\cB)_{/X}\bigr)~.  
 \]
\end{enumerate}
\end{theorem}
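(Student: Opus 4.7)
The plan is to bootstrap from the unstructured statements of Theorems~\ref{theorem.sheaves-basics} and~\ref{theorem.exit-tangent} by exploiting the defining right-fibration property of $\cB\to\bsc$. Under the equivalence $\Shv(\snglr)\simeq\Psh(\bsc)$ of Theorem~\ref{theorem.sheaves-basics}, the right fibration $\cB\to\bsc$ corresponds to a sheaf $\widehat\cB\in\Shv(\snglr)$ whose value on a stratified space $X$ is the space of $\cB$-structures on $X$, i.e. the space of lifts of $\tau_X$ through $\cB\to\bsc$. Inspecting the pullback square defining $\mfld(\cB)$ identifies the forgetful functor $\mfld(\cB)\to\snglr$ with the right fibration classified by $\widehat\cB$; in other words, $\mfld(\cB)$ is the Grothendieck construction of $\widehat\cB$.

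For part (2), fix a $\cB$-manifold $(X,g)$. Any open embedding $Y\hookrightarrow X$ of the underlying stratified space carries a canonical compatible $\cB$-structure given by pulling $g$ back along the induced functor $\Ent(Y)\to\Ent(X)$. Consequently the forgetful functor $\mfld(\cB)_{/X}\to\snglr_{/X}$ is a trivial right fibration, hence an equivalence; moreover the structured entry category coincides with the $\Ent(X)$ appearing in Theorem~\ref{theorem.exit-tangent}. The Cbl-equivalence then carries over immediately since $\Shv^{\sf cbl}(X)$ depends only on the underlying stratification, and the Rel-equivalence follows from $\snglr_{/X}\simeq \mfld(\cB)_{/X}$.

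For part (1), the argument is topos-theoretic. The slice $\Shv(\snglr)_{/\widehat\cB}$ is again an $\infty$-topos, equivalent to sheaves on the slice site $\snglr_{/\widehat\cB}$ with its induced topology. But by the Grothendieck-construction presentation of $\mfld(\cB)$, this slice site is precisely $\mfld(\cB)$, so $\Shv(\mfld(\cB))\simeq \Shv(\snglr)_{/\widehat\cB}$. The unstructured equivalence transports this to $\Psh(\bsc)_{/\cB}$, and this in turn identifies with $\Psh(\cB)$ via the standard slice-versus-presheaf comparison, which is applicable because $\cB\to\bsc$ is a right fibration. Composing the three equivalences proves part (1).

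The main technical obstacle will be verifying that the Grothendieck topology on $\mfld(\cB)$ --- whose covers are open covers by $\cB$-basics --- agrees with the slice topology induced from $\snglr$ via $\mfld(\cB)\to\snglr$. This amounts to checking that open covers of the underlying stratified space of a $\cB$-manifold lift uniquely to $\cB$-structured open covers, which is once more an instance of the right-fibration property. If one wishes to sidestep this subtlety, an alternative route is to mimic the proof of Theorem~\ref{theorem.sheaves-basics} directly: show that $\cB$-basics form a basis for the topology on $\mfld(\cB)$ (combining the open handlebody decomposition of Theorem~\ref{open-handles} with the unique lifting of $\cB$-structure along open inclusions) and check that the restriction-extension adjunction along $\cB\hookrightarrow \mfld(\cB)$ restricts to an equivalence between $\Shv(\mfld(\cB))$ and $\Psh(\cB)$.
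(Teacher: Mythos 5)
Your proposal is correct, and part (2) is essentially the paper's own argument: your observation that the forgetful functor $\mfld(\cB)_{/X}\to\snglr_{/X}$ is a trivial right fibration (so an equivalence), along with the companion statement $\cB_{/(X,g)}\simeq\bsc_{/X}$, is exactly what the paper records as Observation~\ref{E-no-B} and then feeds into Theorem~\ref{theorem.exit-tangent}.

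For part (1), however, you take a genuinely different route. The paper's proof is terse and direct: it observes that $\cB\to\mfld(\cB)$ is a pullback of $\bsc\to\snglr$ along right fibrations, and then reads off the equivalence $\Shv(\mfld(\cB))\simeq\Psh(\cB)$ from the definition of $\Shv(\mfld(\cB))$ as a pullback (Definition~\ref{definition.B-sheaf}) together with Theorem~\ref{theorem.sheaves-basics}. Your approach instead packages the content topos-theoretically: identify $\cB$ with a sheaf $\widehat{\cB}\in\Shv(\snglr)$, recognize $\mfld(\cB)$ as the Grothendieck construction of $\widehat{\cB}$, use the slice-topos formula $\Shv(\snglr)_{/\widehat{\cB}}\simeq\Shv(\mfld(\cB))$, transport across Theorem~\ref{theorem.sheaves-basics}, and finish with the slice-versus-presheaf comparison $\Psh(\bsc)_{/\cB}\simeq\Psh(\cB)$, which is precisely Lemma~\ref{lemma.rfbn-over}. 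Both arguments are valid and both turn crucially on $\cB\to\bsc$ being a right fibration. Your topos-theoretic version buys conceptual clarity (and makes the structured theorem a visibly formal consequence of the unstructured one plus standard $\infty$-topos facts), at the cost of having to confirm that the paper's ad hoc sheaf condition on $\mfld(\cB)$ --- lifted open covers of the underlying stratified space --- matches the slice topology on the Grothendieck construction of $\widehat{\cB}$. You correctly flag this as the one point needing care; since the paper's definition of $\Shv(\mfld(\cB))$ is precisely as the pullback along restriction/Kan extension over $\snglr$, this does match, so the argument goes through. Your fallback (mimicking the proof of Theorem~\ref{theorem.sheaves-basics} directly on the $\cB$-site) would also work and is closer in spirit to what the paper does.
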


\begin{remark}
Note that Theorem~\ref{theorem.smooth-basic-sheaf} follows from Theorem~\ref{structured-versions} as the case $\cB= \sD_n$.
\end{remark}

\subsubsection{Excisiveness}
The final result of Part 2 further simplifies matters. 
A priori, to verify that a presheaf on $\cB$-manifolds is a sheaf amounts to checking the sheaf condition for \emph{arbitrary} open covers of $\cB$-manifolds.
Theorem~\ref{theorem.1-excision} below shows that a (continuous) presheaf on $\cB$-manifolds can be verified as a sheaf by only checking the sheaf condition for a much simpler class of open covers: collar-gluings and sequential unions.  
Roughly speaking, a {\em collar-gluing} of a stratified space $X$ is a decomposition
\[
 X = X_{\geq -\infty} \bigcup_{\RR \times X_0} X_{\leq \infty}
\]
of $X$ into two stratified spaces, $X_{\geq -\infty}$ and $X_{\leq \infty}$, whose intersection is identified with a product stratified space, $\RR \times X_0$. The situation to keep in mind is with $X_{\leq \infty}$ and $X_{\geq -\infty}$ manifolds with common boundary $X_0$, and $X$ is obtained by gluing along this common boundary after choosing collars for the boundary. We view this as the appropriate analogue of composing cobordisms in the stratified setting. See Definition~\ref{defn.collar-gluing} for details.

The following terminology is inspired by Goodwillie--Weiss calculus \cite{goodwillie-weiss}. We replace the $\infty$-category $\spaces$ by a general $\infty$-category $\cC$.

\begin{definition}[$\amalg$-excisive]\label{def.coprod-excisive}
Let $\cC$ be an $\infty$-category that admits finite pushouts and sequential colimits.
Let $\cB$ be an $\infty$-category of basics. 
The $\infty$-category of $\amalg$-excisive functors is the full subcategory 
\[
\Fun_{\amalg\text{-}\sf exc}\bigl(\mfld(\cB), \cC)~\subset ~\Fun\bigl(\mfld(\cB), \cC\bigr)
\]
consisting of those functors $F$ that satisfy the following conditions:
\begin{itemize}
\item If $X= X_{\geq -\infty}\underset{\RR\times \partial}\bigcup X_{\leq \infty}$ is a collar-gluing among $\cB$-manifolds, then the diagram in $\cC$
 \[
 \xymatrix{
 F(\RR\times \partial)  \ar[r]  \ar[d]
 &
 F(X_{\leq \infty})  \ar[d]
 \\
 F(X_{\geq -\infty})  \ar[r]
 &
 F(X)
 }
 \]
is a pushout.
\item Let $X_0\subset X_1\subset \dots \subset X$ be a sequence of open subsets of a $\cB$-manifold such that $\underset{i\geq 0} \bigcup X_i = X$.
 The diagram in $\cC$
 \[
 F(X_0)\to F(X_1)\to \dots \to F(X)
 \]
 witnesses $F(X)$ as the sequential colimit.  
\end{itemize}
We use the notation
\[
\Fun_{1\text{-}{\sf exc}}(\mfld(\cB)^{\op}, \spaces) ~:= \Bigl(\Fun_{\amalg\text{-}\sf exc}\bigl(\mfld(\cB), \spaces^{\op})\Bigr)^{\op}~\subset~\Psh(\mfld(\cB))
\]
for the $\infty$-category of \emph{$1$-excisive functors (valued in spaces)}.  
\end{definition}

\begin{theorem}[Structures are 1-excisive functors]
\label{theorem.1-excision} 
Let $\cB$ be an $\infty$-category of basics.
Let $\cC$ be an $\infty$-category that admits finite pushouts and sequential colimits. 
There is an equivalence of $\infty$-categories
 \[
  \Fun(\cB,\cC)~\simeq~\Fun_{\amalg\text{-}{\sf exc}}\bigl(\mfld(\cB) , \cC\bigr)~.
 \]
\end{theorem}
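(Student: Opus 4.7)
The strategy is to exhibit the equivalence via the restriction--Kan extension adjunction
\[
\iota_! \colon \Fun(\cB,\cC) \rightleftarrows \Fun\bigl(\mfld(\cB),\cC\bigr) \colon \iota^*,
\]
where $\iota\colon \cB \hookrightarrow \mfld(\cB)$ is the natural fully faithful inclusion of basics. The left Kan extension $\iota_!$ exists because $\cC$ admits the required colimits, and fully-faithfulness of $\iota$ makes the unit $G \to \iota^*\iota_! G$ an equivalence; indeed, for $U\in\cB$ the pair $(U,\id_U)$ is terminal in the slice $\cB_{/U}$, so the pointwise colimit formula for $\iota_!$ reduces to $G(U)$. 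The theorem thus reduces to two claims: (a) $\iota_! G$ is $\amalg$-excisive for every $G \in \Fun(\cB,\cC)$; and (b) the counit $\iota_!\iota^* F \to F$ is an equivalence whenever $F$ is $\amalg$-excisive.

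Both claims are powered by the open handlebody decomposition of Theorem~\ref{open-handles}: every finitary $\cB$-manifold is an iterated collar-gluing of basics, and every $\cB$-manifold is a sequential open union of finitary opens. To prove (a), I would apply the pointwise formula $(\iota_! G)(X) \simeq \colim_{V \in \cB_{/X}} G(V)$, so that the content becomes showing the assignment $X \mapsto \cB_{/X}$ converts collar-gluings into pushouts of $\infty$-categories and sequential unions into sequential colimits. The conically smooth tubular neighborhood theorem and isotopy-extension (which hold in our setting precisely because of conical smoothness) imply that any basic embedded into a collar-glued $X = X_{\geq -\infty} \cup_{\RR\times\partial} X_{\leq\infty}$ can be pushed off the collar into one of the two sides, up to a contractible space of isotopies. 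Claim (b) is then a formal consequence: both $F$ and $\iota_!\iota^* F$ are $\amalg$-excisive (the latter by (a)), they agree on $\cB$ via the unit equivalence, and a handle induction along the decomposition in Theorem~\ref{open-handles} propagates the equivalence to every $\cB$-manifold.

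The principal obstacle is the technical verification in (a) that $X \mapsto \cB_{/X}$ is $\amalg$-excisive in the sense above. Concretely, one must show that for each collar-gluing the comparison map from the pushout $\cB_{/X_{\geq -\infty}} \cup_{\cB_{/\RR\times\partial}} \cB_{/X_{\leq\infty}}$ to $\cB_{/X}$ is final, and analogously for sequential unions; the former is a genuine input from the differential topology of stratified spaces developed in Part III, resting on tubular neighborhoods and isotopy-extension to squeeze an embedded basic into one side of the collar. A secondary subtlety is that the collar-gluings appearing in a handlebody decomposition must be collar-gluings in $\mfld(\cB)$, which requires that the $\cB$-structure restricts coherently along collars and along the projection $\RR\times\partial \to \partial$; this is immediate from the pullback definition of $\mfld(\cB)$ together with functoriality of the tangent classifier $\tau$ under open embeddings. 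Once these inputs are in place, the equivalence follows by the handle induction outlined above.
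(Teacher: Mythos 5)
Your overall strategy is the one the paper uses: run the $(\iota_!,\iota^*)$ restriction/Kan-extension adjunction, show $\iota_!$ lands in $\amalg$-excisive functors by checking that the slice $\cB_{/X} \simeq \Ent(X)$ converts collar-gluings to pushouts and sequential unions to sequential colimits, then bootstrap the counit equivalence from basics to all $\cB$-manifolds using the open handlebody decomposition of Theorem~\ref{open-handles}.

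However, there is a genuine gap at the very first step. You assert that ``the left Kan extension $\iota_!$ exists because $\cC$ admits the required colimits,'' but $\cC$ is assumed to admit only finite pushouts and sequential colimits, while the pointwise formula $(\iota_!G)(X)\simeq\colim_{\cB_{/X}}G$ is a colimit over an $\infty$-category $\cB_{/X}$ that is not of that shape. This is exactly the issue the paper resolves: Proposition~\ref{tangent-finite} (which itself rests on Theorem~\ref{open-handles}(1), i.e.\ the handlebody decomposition) shows the tangent classifier factors through the \emph{seq-finite} presheaves $\Psh_{\sf seq\text{-}fin}(\bsc)$, so that $\iota_!\cA$ can be computed as $\colim\circ\Psh_{\sf seq\text{-}fin}(\cA)\circ\Ent$, a composite that only ever asks $\cC$ for finite pushouts and sequential colimits. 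Without this factoring, $\iota_!$ is not known to exist, and the entire adjunction argument has no starting point. In short, you invoke $\amalg$-excision and the handlebody decomposition only in steps (a) and (b); the paper has to invoke them already to make sense of the left adjoint.

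A smaller imprecision: you justify the pushout claim for $\cB_{/X}$ by appealing to tubular neighborhoods and isotopy extension to ``push a basic off the collar.'' The paper's actual mechanism is the sheaf property of $\Ent(-)$ (Lemma~\ref{E-covers}, proved via partitions of unity and a covering/finality argument in the style of Segal and Dugger--Isaksen), combined with the purely combinatorial observation in Lemma~\ref{collar-pushout} that the span $(X_{\geq-\infty}\leftarrow\RR\times\partial\to X_{\leq\infty})$ is final in the sieve generated by the collar-gluing cover; no tubular-neighborhood argument enters at that point. Your picture gives good intuition for \emph{why} the sheaf condition is plausible, but as stated it would still need to be upgraded to a finality or hypercover argument before it yields the required pushout of right fibrations.
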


\subsection{Overview of Part 3: differential topology of stratified spaces}
The last third of the paper is devoted to proving some basic results in what one might call the differential topology of stratified spaces. The first main result is Theorem~\ref{tot-unzip}, which we state later for sake of exposition. Roughly, the theorem says that any stratified space has a functorial {\em resolution of singularities} by a manifold with corners. The upshot is that one can reduce many questions about stratified spaces to the setting of manifolds with corners. 
As usual, many arguments boil down to a partition of unity argument, which in turns relies on the paracompactness of stratified spaces.

We then define the notion of a {\em finitary} stratified space, which is roughly a stratified space which can be obtained from a basic open after a finite number of handle attachments. (The idea of being a finitary space is a well-known condition that also appears in Goodwillie calculus, where analytic functors are only determined on finitary spaces, unless one assumes that a functor commutes with filtered colimits.) See Definition~\ref{def.finitary} for details.

In the smooth case, one can prove that a finitary smooth manifold is diffeomorphic to the interior of some {\em compact} smooth manifold with boundary, and vice versa. After defining a notion of a stratified space with boundary, we prove the last result of Part 3, which we restate later as Theorem~\ref{open-handles}:

\begin{theorem}[Open handlebody decompositions]
\label{handlebody}
\label{theorem.handlebody}
Let $X$ be a stratified space.
\begin{enumerate}
 \item There is a sequence of open subsets $X_0\subset X_1\subset \dots\subset X$ with $\underset{i\geq 0}\bigcup X_i = X$ and each $X_i$ finitary.  
 \item Suppose there is a compact stratified space with boundary $\ov{X}$ and an isomorphism ${\sf int}(\ov{X}) \cong X$ from its interior.
 Then $X$ is finitary.  
\end{enumerate}
\end{theorem}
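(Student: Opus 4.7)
The plan is to reduce both statements to classical handle theory on manifolds with corners via the unzipping construction of Proposition~\ref{tot-unzip}, which functorially resolves a stratified space $X$ into a manifold with corners $\unzip(X)$. The key compatibility to establish first is that open handle attachments in the stratified sense correspond, under the zip quotient, to ordinary open handle attachments on manifolds with corners. Concretely, a collar-gluing of $X$ along an open of the form $\RR^k\times \sC(L)$ lifts to a collar-gluing of $\unzip(X)$ along $\RR^k \times \unzip(\sC(L))$, and the latter is an honest cornered handle by induction on depth in the structure of the resolution. Conversely, applying the zip quotient to a handle decomposition of the resolution produces an open handlebody decomposition of $X$ starting from a basic.

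For Part (2), if $X = {\sf int}(\ov X)$ with $\ov X$ a compact stratified space with boundary, then $\unzip(\ov X)$ is a compact manifold with corners and $\unzip(X)$ is its interior. Standard Morse theory on compact manifolds with corners, applied to a Morse function that restricts to the height coordinate on a boundary collar, produces a finite handle decomposition of $\unzip(\ov X)$. Passing through the correspondence above yields a finite open handlebody decomposition of $X$ starting from a basic, so $X$ is finitary in the sense of Definition~\ref{def.finitary}.

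For Part (1), I would first construct a proper conically smooth function $f\colon X \to [0,\infty)$. Its existence follows from paracompactness: cover $X$ by countably many basics, use conically smooth partitions of unity (developed earlier in the paper), and glue coordinatewise height functions. After a stratum-by-stratum perturbation of $f$ so that its lift to $\unzip(X)$ is Morse, select a sequence of regular values $c_0 < c_1 < \dots \to \infty$. Each sublevel set $\ov{X_i} := f^{-1}[0,c_i]$ is a compact stratified space with boundary whose interior $X_i := f^{-1}[0,c_i)$ is open in $X$. By Part (2) each $X_i$ is finitary, and by construction $\bigcup_{i\geq 0} X_i = X$.

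The principal obstacle is constructing a conically smooth function on $X$ that is simultaneously proper and Morse-like in a sense compatible with the stratification -- in particular, one must verify that the perturbations needed to achieve genericity stratum-by-stratum preserve conical smoothness along deeper strata, so that the resulting $f$ lifts sensibly to the resolution $\unzip(X)$. This is essentially an inductive application of classical Morse-function existence on manifolds with corners combined with the conical regularity ensured by our atlases. Once such an $f$ is in hand, the functoriality of unzip and the handle-correspondence make both conclusions follow cleanly.
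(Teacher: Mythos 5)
Your Part (1) proposal is essentially the paper's route: construct a proper conically smooth function, take sublevel sets at regular values, and reduce to Part (2). Two small comments. First, the perturbation-to-Morse step you flag as the "principal obstacle" is unnecessary for the exhaustion -- you only need density of regular values, which follows from Sard's theorem applied stratum-by-stratum (finitely many strata contribute near any compact set, by properness); the paper's Lemma~\ref{open-dense} and Lemma~\ref{cut-manifold} handle this without ever making $f$ Morse. Second, your $f\colon X\to[0,\infty)$ should really map to $\RR$ with regular values $a_i\to-\infty$, $b_i\to\infty$, so that $0$ doesn't have to be handled separately.

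Your Part (2), however, departs from the paper and has a genuine gap. You want to do Morse theory on the compact manifold with corners $\unzip(\ov X)$ and then "pass through the correspondence" to get an open handlebody decomposition of $X$. The difficulty is that the quotient $\pi\colon\unzip(X)\to X$ is not an open map: an open set of $\unzip(X)$ that meets a face $\partial_T\unzip(X)$ but does not contain the entire fiber $\pi^{-1}(x)$ over some $x$ in a deep stratum has non-open image in $X$ (already for $X=\sC(Z)$, with $\pi\colon\RR_{\geq 0}\times Z\to\sC(Z)$, an open neighborhood of a single point of $\{0\}\times Z$ maps to a non-open set). So handles attached along faces of $\unzip(\ov X)$ do not push down to open subsets of $X$, let alone to basics. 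Relatedly, the collar-gluing structure you need downstairs is governed by a weakly constructible map $X\to\ov\RR$; a generic Morse function on $\unzip(\ov X)$ does not descend through $\pi$ (it would have to be constant on the positive-dimensional fibers of $\pi$ over deep strata), and the lift of a conically smooth function on $X$ is generically Morse--Bott rather than Morse, with critical submanifolds along faces. The paper avoids all of this by inducting on depth: Proposition~\ref{tubular-neighborhoods} yields the collar-gluing $X\cong\sC(\pi_k)\cup_{\RR\times\sL_k(X)}(X\setminus X_k)$ in which each piece either has strictly smaller depth (so $\unzip_k(\ov X)$ is compact with smaller interior depth, reducing to the inductive hypothesis) or is a conically smooth fiber bundle with compact fibers over a compact depth-zero base (so Lemma~\ref{finitary-closures} applies); the Morse-theoretic base case is only run once, at depth zero. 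To salvage your approach you would need to establish the advertised correspondence between collar-gluings of $X$ and those of $\unzip(X)$ in \emph{both} directions, and in particular explain how to choose Morse data on $\unzip(\ov X)$ constant along $\pi$-fibers -- which is essentially the content of the paper's depth-induction anyway.
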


\subsection{Descendant works}

We briefly describe where these present results are and will be used in a number of successor papers.

\smallskip

 {\it Factorization homology of stratified spaces} \cite{aft2}, extends the domain of definition of factorization homology to take values on these more general objects, conically smooth stratified spaces. Motivated by mathematical physics, we incorporate stratifications so as to allow for defects and higher-dimensional operators in factorization homology, such as Wilson loops in Chern--Simons. We develop the present work further in this sequel by proving, for a conically smooth stratified space $X$, the equivalence
\[
\Ent\bigl(\Ran (X)\bigr) \simeq \disk(\bsc)^{\sf surj}_{/X}
\]
between the enter-path $\oo$-category of the Ran space of $X$ (Definition \ref{def.ran}) and the $\oo$-category of embedded finite disjoint unions of basics in $X$ (where embeddings are required to induce surjections on $\pi_0$). This allows for the geometry of the Ran space to be applied to factorization homology, since factorization homology is, by definition, a colimit indexed by this category of embedded disks. This result is used further in {\it Zero-pointed manifolds} \cite{ZP} and its successor {\it Poincar\'e/Koszul duality} \cite{PKD}, in order to establish a Poincar\'e duality theorem for factorization homology.

\smallskip

{\it A stratified homotopy hypothesis} \cite{striation} forms a second sequel to the present work. There it is proved that $\Strat$, the $\oo$-category of conically smooth stratified spaces and conically smooth maps among them, embeds fully faithfully into $\oo$-categories. More precisely, enter-paths define a functor
\begin{equation}
\label{eqn.striation}
\Ent: \Strat \longrightarrow \Cat \simeq \cS{\sf tri} \subset \Shv(\Strat) 
\end{equation}
which is fully faithful: for any two conically smooth stratified spaces $X$ and $Y$, the space of conically smooth maps from $X$ to $Y$ is homotopy equivalent to the space of functors from $\Ent(X)$ to $\Ent(Y)$. Via the restricted Yoneda functor, this embedding realizes $\Cat$ as a full $\oo$-subcategory of space-valued sheaves on conically smooth stratified spaces; and its essential image $\cS{\sf tri}$ can be characterized as {\it striation sheaves}: isotopy-invariant constructible sheaves that satisfy certain additional descent conditions, such as descent for blow-ups.
 
\smallskip

{\it A stratified homotopy hypothesis} develops the present work in a second direction: whereas here we exhibit a theory of smooth moduli of stratified spaces, \cite{striation} does likewise for a theory of {\it singular moduli} of stratified spaces. That is, we here define the notion of smooth fiber bundle of conically smooth stratified spaces: we think of a $K$-parametrized smooth moduli of stratified spaces to be a conically smooth fiber bundle whose base is the smooth manifold $K$. There is then a natural homotopy equivalence
\[
\Bigl\{{\rm conically \medspace smooth \medspace }X\text{-}{\rm bundles \medspace over \medspace} K\Bigr\}\simeq \Map\bigl(K,{\sf BAut}(X)\bigr)
\]
for each conically smooth stratified space $X$. In \cite{striation}, there is an identification of the maximal $\oo$-groupoid of $\Snglr$ with the disjoint union $\coprod_{[X]} {\sf BAut}(X)$ taken over the isomorphism classes $[X]$ of conically smooth stratified spaces. This then gives an equivalence
\[
\Bigl\{{\rm conically \medspace smooth \medspace fiber \medspace bundles \medspace over}\medspace K\Bigr\} \simeq \Map\bigl(K, \Snglr\bigr)
\]
between the space of all conically smooth fiber bundles with smooth base $K$ with and the space of functors from (the $\oo$-category associated to the space) $K$ to the $\oo$-category $\Snglr$. In further studying singular moduli, we consider a $K$-parameterized singular family of stratified spaces to be a {\it constructible bundle} whose base is a conically smooth stratified space $K$. There exists an $\oo$-category $\cB{\sf un}$ which classifies such constructible bundles, in that there is an equivalence
\[
\Bigl\{{\rm constructible \medspace bundles \medspace over \medspace} K\Bigr\} \simeq \Map\bigl(\Ent(K)^{\op}, \cB{\sf un}\bigr)
\]
which restricts to the equivalence above via a fully faithful functor $\Snglr \hookrightarrow \cB{\sf un}$. The previously alluded to moduli space of stratifications of a manifold $M$ can be now defined to be a full $\oo$-subcategory of $\cB{\sf un}_{/M}$, consisting of those morphisms from a stratified space $\widetilde{M}$ to $M$ which are a diffeomorphism on underlying smooth manifolds (but where the stratification of $\widetilde{M}$ can vary). This is the basis for {\it Factorization homology I: higher categories} \cite{Emb1A}, which develops a notion of homology indexed by such a moduli space of stratifications, and which is proposed as the essential construction for all extended topological quantum field theories -- see the introduction of \cite{Emb1A}.

\subsection{Conjectures}

The following are problems raised by this work which we suggest for the interested reader or student.

\begin{conjecture}[Conically smooth approximation]
The $C^0$ analogue of the enrichment of Definition \ref{self-enrichment} defines a $\Kan$-enriched category $ \Strat^{C^0}$ of $C^0$ stratified spaces, and the natural forgetful functor of $\Kan$-enriched categories
\[
\Strat \longrightarrow \Strat^{C^0}
\]
is fully faithful up to homotopy. That is, for any two conically smooth stratified spaces $X$ and $Y$, the map
\[
\Strat(X,Y) \longrightarrow \Strat^{C^0}(X,Y)
\]
is a homotopy equivalence of Kan complexes.

\end{conjecture}

The preceding is the conically smooth analogue of the crucial result in differential topology that the map
\[
C^{\oo}(M,N) \longrightarrow \Map(M,N)
\]
is a homotopy equivalence for smooth manifolds $M$ and $N$; this can be proven from Weierstrass approximation.

\begin{conjecture}\label{conj.transversality}
Thom transversality holds for conically smooth maps. That is, for $X$, $Y$ and $Z$ conically smooth stratified spaces and $Z\hookrightarrow Y$ a conically smooth closed embedding, then the inclusion
\[
\Strat_{\pitchfork Z}(X,Y)\longrightarrow \Strat(X,Y)
\]
induces a surjection on connected components. Here $\Strat_{\pitchfork Z}(X,Y)$ is the subspace of conically smooth maps $\Strat(X,Y)$ (see Lemma \ref{mapping-Kans}) consisting of those which are transverse to $Z$, i.e., those conically smooth maps $g:X\ra Y$ for which $X_p$ and $Z_{gp}$ intersect transversely in $Y_{gp}$ for every stratum $X_p\subset X$.
\end{conjecture}

The preceding is not immediately comparable with the transversality result of, for example, \cite{trotman} for two reasons: first, we ask that every conically smooth map is isotopic via conically smooth maps to one which is transverse to the sub-stratified space; second, we have not provided comparison results for conical smoothness versus the Whitney conditions. This leads to the following:

 \begin{conjecture}\label{whitneys-count}
 The following classes of spaces admit conically smooth stratifications: 
 \begin{enumerate}
\item real and complex algebraic varieties;
\item orbifolds;
\item Whitney stratified spaces.
\end{enumerate}
\end{conjecture}

We imagine, for instance, that this last class could be approached in the following way: the Thom--Mather Theorem~\cite{mather} ensures that a Whitney stratified manifold $M$ is locally homeomorphic to stratified spaces of the form $\RR^{n-k}\times \sC(X)$, and thus -- by induction on the dimension of $\sC(X)$ -- is an example of a $C^0$ stratified space. It would then remain to prove that these local homeomorphisms can be taken to restrict to local diffeomorphisms on each stratum; and further, that Whitney's conditions $A$ and $B$ give that these local diffeomorphisms can be further taken to be conically smooth, thereby exhibiting a conically smooth atlas.

\smallskip

Let $M$ be a smooth $n$-manifold.
For each finite cardinality $k$, consider the \emph{Ran space of $M$} of Definition \ref{def.ran}. This is the stratified space 
\[
\Ran_{\leq k}(M)~:=~ \Bigl( \{\emptyset \neq S\subset M\mid |S|\leq k\} \xra{~S\mapsto |S|~} \NN^{\op}\Bigr)
\]
consisting of nonempty subsets of $M$ whose cardinality is bounded above by $k$, stratified by cardinality.  
Consider the $\infty$-category
\[
\Ent\bigl(\Ran(M)\bigr) := \varinjlim \Ent\bigl(\Ran_{\leq k}(M)\bigr)
\]
which is the direct limit of the enter-path $\infty$-categories from Definition~\ref{defn.E(X)}.
Forgetting the ambient manifold defines a functor
\begin{equation}\label{value.tau}
\Ent\bigl(\Ran(M)\bigr) \longrightarrow  \Fin^{\sf surj}~,~ \qquad (S\subset M)\mapsto S~,
\end{equation}
to the category of nonempty finite sets and surjections among them.  
The fiber of this composite functor over $I\in \Fin^{\sf surj}$ is the space $\Conf_I(M)$ of injections from $I$ into $M$.
For example, for $I=\{1,2\}$ and $M=\RR^n$, this fiber $\Conf_2(\RR^n)\simeq S^{n-1}$ is homotopy equivalent to the $(n-1)$-sphere.

For each finite cardinality $k$, there are natural morphisms
\begin{equation}\label{interpolate}
\Top(n) 
\longrightarrow 
\Aut_{/\Fin^{\sf surj}}\Bigl(\Ent\bigl(\Ran(\RR^n)\bigr)\Bigr)
\longrightarrow
\Aut_{/\Fin^{\sf surj}_{\leq k}}\Bigl(\Ent\bigl(\Ran_{\leq k}(\RR^n\bigr)\Bigr)
\end{equation}
among group-like topological monoids.
In the preceding expression, $\Top(n)$ is the topological group of homeomorphisms of $\RR^n$, and $\Fin^{\sf surj}_{\leq k}\subset \Fin^{\sf surj}$ is the full subcategory consisting of those finite sets whose cardinality is bounded above by $k$.

\begin{remark}\label{spivak}
In the case $k=2$, the sequence~(\ref{interpolate}) interpolates between $\Top(n)$ and ${\sf hAut}(S^{n-1})$, the topological monoid of self-homotopy equivalences of the $(n-1)$-sphere.  

\end{remark}

\begin{conjecture}\label{not.Top}
The natural map
\[
\Top(n) \longrightarrow \Aut_{/\Fin^{\sf surj}}\Bigl(\Ent\bigl(\Ran(\RR^n)\bigr)\Bigr)
\]
\begin{itemize}
\item is a homotopy equivalence for $n \leq 3$;
\item is not a homotopy equivalence for $n \geq 4$.
\end{itemize}
\end{conjecture}
\noindent
We regard this last conjecture as being of special interest in the case $n=3$.

Now, consider the sections of the functor~(\ref{value.tau}):
\[
\w{\sT}M~:=~\Gamma\Bigl( \Ent\bigl(\Ran(M)\bigr) \to  \Fin^{\sf surj}\Bigr)~.
\]
Evaluation at the terminal object defines a functor
\begin{equation}\label{w.T}
{\sf ev}_\ast\colon \w{\sT}M\longrightarrow M
\end{equation}
to the underlying $\infty$-groupoid of $M$.
The fiber of this functor over $x\in M$ is canonically identified
\[
\w{\sT}_xM~\simeq ~  \Ent\bigl(\Ran(\sT_xM)\bigr)
\]
as the enter-path $\infty$-category of the Ran space of the tangent space of $M$ at $x$; the monodromy action of $\Omega_xM$ on this fiber is by automorphisms of $\Ent\bigl(\Ran(\sT_xM)\bigr)$ over $\Fin^{\sf surj}$.  

In other words, the functor~(\ref{w.T}) is classified by a map between $\infty$-groupoids:
\begin{equation}\label{w.tau}
M\longrightarrow {\sf BAut}_{/\Fin^{\sf surj}}\Bigl(\Ent\bigl(\Ran(\RR^n)\bigr)\Bigr)~.
\end{equation}
Through Remark~\ref{spivak}, the map $(\ref{w.tau})$ recovers the unstable Spivak tangent bundle of $M$; our Conjecture~\ref{not.Top}, then, is that $(\ref{w.tau})$ does \emph{not} a priori recover the micro-tangent bundle of $M$. 
This leads us to the following, somewhat informal, question.
\begin{question}
How much familiar algebraic topology about a smooth $n$-manifold $M$ can be recovered solely by the $\infty$-category $\Ent\bigl(\Ran(M)\bigr)$ as it is equipped with the functor~(\ref{value.tau}) to $\Fin^{\sf surj}$?
For instance, can the fact that $M$ is a Poincar\'e duality space be verified solely through this data?
If so, can the $\sL$-theoretic obstructions be shown to vanish solely through this data?
\end{question}

\subsection*{Acknowledgements} JF thanks everyone at Buzz: Killer Espresso.

\clearpage
\part{Stratified spaces, basics, and tangential structures}
\section{$C^0$ Stratified spaces}\label{section:c0-singular-manifolds}
\label{section.C0-singular-manifolds}

\subsection{Definition}
Given a smooth manifold $X$, one can forget its smooth atlas to obtain the underlying topological manifold. When one does the same for a smooth manifold with {\em corners}, we lose the data of the natural stratification on $X$ -- namely, the stratification by whether $x \in X$ is a point in the interior, face, or a higher codimension corner. Our viewpoint is that this stratification is a legitimate structure one can retain, even after forgetting the $C^\infty$ atlas -- more generally, that there is a notion of a $C^0$ stratified space, whose singularities are encoded by a stratification. We also demand that all singularities look locally like a thickened cone -- this is a philosophy that goes back at least to Thom~\cite{thom}.

\begin{remark}
Until \S\ref{section.kan-enrichment}, all the categories we consider will be {\em discrete} (i.e., enriched in sets) categories. We write $\snglrd$, $\mfldd$, $\bscd$ for the discrete categories of conically smooth stratified spaces, manifolds, and basics. In \S\ref{section.kan-enrichment}, we will enrich these categories over Kan complexes, and we will denote the corresponding $\infty$-categories by $\snglr$, $\mfld$, and $\bsc$.
\end{remark}

\begin{definition}[Posets as spaces]\label{poset-space}
Let $P$ be a poset. We consider it a topological space by declaring $U \subset P$ to be open if and only if it is closed upwards -- this means if $a \in U$, then any $b \geq a$ is also in $U$. Note that a map $P \to P'$ is continuous if and only if it is a map of posets. This determines a functor
\[
\POSet \hookrightarrow \Top
\]
which is fully faithful, and preserves limits.  
\end{definition}

\begin{definition}[Stratified space]
\label{def.stratified-space}
\label{def.strat-space}
For $P$ a poset, a {\em $P$-stratified space} is a topological space $X$ together with a continuous map $S \colon X \to P$, which we call a stratification of $X$. We will often refer to $X$ as the underlying topological space. 
A stratified topological space is a $P$-stratified space for some $P$. 
\end{definition}

\begin{remark}
As far as we are aware, this definition first appears in Lurie's Higher Algebra~\cite{HA}, Appendix A.5.
\end{remark}

\begin{notation}
When the context will not lead to confusion, we will write a stratified space $(S\colon X\to P)$ simply as its underlying topological space $X$.
\end{notation}

\begin{example}
Let $X_{\leq k} $ be the $k$-skeleton of a CW complex $X$. One has a stratification $X \to \ZZ_{\geq 0}$ by sending $X_{\leq k} \smallsetminus X_{\leq k-1}$ to $k \in \ZZ_{\geq 0}$. 
\end{example}

\begin{example}\label{example.product-poset}
Let $(X \xra{S} P)$ and $(X' \xra{S'} P')$ be stratified topological spaces.
Then $(X\times X'\xra{S\times S'} P\times P')$ is another stratified topological space. 
Note that the product poset has the partial order $(p, p') \leq (q,q') \iff (p\leq q) \& (p' \leq q')$.
\end{example}

\begin{example}\label{example.corner}
Let $[1]$ be the poset $\{0<1\}$ and let $\RR_{\geq 0} \to [1]$ be the map sending $0 \mapsto 0$ and $t \mapsto 1$ for $ t > 0$. By taking $n$-fold products, one obtains a stratified space
	\[
	\RR_{\geq 0}^{\times n} \to [1]^{n}
	\]
where the stratifying poset is an $n$-cube. This is a $C^0$ version of an {\em $\langle n \rangle$-manifold} as previously studied by Laures~\cite{laures} and J\"anich~\cite{janich}.
\end{example}

\begin{remark}
\label{rmk.usual-stratifications}
Thom's notion of a topological stratification~\cite{thom} is a special case of Definition \ref{def.stratified-space}. Namely, a filtration by closed subsets $\emptyset \subset X_{\leq 0} \subset \ldots \subset X_{\leq n} = X$ defines a map $X \to \ZZ_{\geq 0}$, and the $i$th stratum is given by $X_{\leq i} - X_{\leq i-1}$. For further reading on the history of stratifications, we also refer the reader to \S1.0 of Goresky and MacPherson~\cite{goreskymacpherson}.
\end{remark}

\begin{definition}[Stratified map]\label{def.strat-map}
Let $(X\to P)$ and $(X'\to P')$ be two stratified spaces.
A \emph{continuous stratified map} $f\colon (X\to P) \to (X'\to P')$ is a commutative diagram of topological spaces
	\begin{equation}
	\label{eqn.maps-of-XP}
	\xymatrix{
	X \ar[r] \ar[d]
	&	X'\ar[d] 
	\\
	P \ar[r]
	& P' .
	}
	\end{equation}
Note there is an obvious notion of composition, given by composing the horizontal arrows.
\end{definition}

\begin{definition}[Open embeddings of stratified spaces]
A continuous stratified map $f:X\ra Y$, as in Definition \ref{def.strat-map}, is a \emph{stratified open embedding} if:
\begin{itemize}
\item $f: X\ra Y$ is an open embedding of topological spaces; and
\item the restriction $f_|: X_p \ra Y_{fp}$ is an open embedding of topological manifolds for every element $p\in P$ of the poset indexing the stratification of $X$.
\end{itemize}
The category
\[
	\stopen
	\]
has objects which are stratified topological spaces whose underlying space is second countable and Hausdorff;  morphisms are stratified open embeddings.
\end{definition}

\begin{example}
Let $\Delta^2 \to [2] =  \{0 < 1 < 2\}$ be the stratification sending the interior of a $k$-cell to the number $k$. Fix some continuous map $f: [2] \to [1]$. This defines another stratified space $(\Delta^2 \to [1])$ by post-composing with $f$. The induced morphism
	\[
	\xymatrix{
	\Delta^2 \ar[r]^{\sf id} \ar[d]
	&	\Delta^2\ar[d] 
	\\
	[2] \ar[r]^f
	& [1]
	}
	\]
is a continuous map between the stratified spaces $(\Delta^2 \to [2])$ and $(\Delta^2 \to [1])$, but is not an open embedding of stratified spaces, since the bottom arrow is not an open embedding.
\end{example}

\begin{remark}
\label{remark.open-cover}
Note there is a natural notion of an open cover: a collection of morphisms $\{(U_i \to P_i) \to (X \to P)\}_{i \in I}$ is an open cover whenever both $\{U_i \to X\}_{i \in I}$ and $\{P_i \to P\}_{i \in I}$ are.
\end{remark}

The following construction plays a central role, as the singularities we consider all look like a cone over another stratified space (possibly thickened by a copy of $\RR^i$).

\begin{definition}[Cones]
Let $(X\to P)$ be a stratified topological space.  
The \emph{cone} $\sC(X\to P) = \bigl(\sC(X) \to \sC(P)\bigr)$ is as follows.  
The underlying space is the pushout topological space
	\[
	\sC(X) := \ast \coprod_{\{0\}\times X} \RR_{\geq 0} \times X.
	\]
The poset $\sC(P)$ is given by formally adjoining a minimal element $\ast$. It can likewise be written as a pushout in posets
	\[
	\sC(P) := \ast \coprod_{\{0\} \times P} [1] \times P.
	\]
The stratification $\RR_{\geq 0} \to [1]$, given by $0\mapsto 0$ and $0<t\mapsto 1$, induces a stratification $\sC(X) \to \sC(P)$ as the map between pushouts. (In particular, this map sends $\ast \mapsto \ast$.)  
\end{definition}

Now we define the category of $C^0$ stratified spaces and give examples.

\begin{definition}[$C^0$ stratified spaces]
\label{defn.stopen}
Consider the full subcategories $\cT \subset \stopen$ satisfying the following five properties:
	\begin{enumerate}
	\item \label{prop.epty} $\cT$ contains the object $(\emptyset \to \emptyset)$.
	\item \label{prop.cones} If $(X \to P)$ is in $\cT$ and both $X$ and $P$ are compact, then $\sC(X \to P)$ is in $\cT$.
	\item \label{prop.products} If $(X \to P)$ is in $\cT$, then the stratified space $(X \times \RR \to P)$, with stratification given by the composition $X \times \RR \to X \to P$, is in $\cT$. 
	\item \label{prop.subs} If $(U \to P_U) \to (X \to P)$ is an open embedding of stratified spaces, and if $(X \to P)$ is in $\cT$, then $(U \to P_U)$ is.
	\item \label{prop.covers} If $(X \to P)$ is a stratified space admitting an open cover by objects in $\cT$, then $(X \to P)$ is in $\cT$.
	\end{enumerate}	
Let $\bigcap_{\cT} \cT\subset \stopen$ be the smallest full subcategory satisfying the above properties (1)--(5). We define
	\[
	\snglrdC \subset ~{}~\bigcap_{\cT} \cT~{}~ \subset ~ \stopen
	\]
to be the full subcategory consisting of those objects whose underlying space $X$ is paracompact. A \emph{$C^0$ stratified spaces} is an object of the category $\snglrdC$.
\end{definition}

\begin{example}\label{example.topological-manifolds}
The combination of (\ref{prop.epty}) and (\ref{prop.cones}) guarantees that the point, $\ast = \sC(\emptyset\to\emptyset)$, is in $\cT$. Further, (\ref{prop.products}) ensures that $\RR^n$ with trivial stratification, $(\RR^n \to \ast)$, is in $\cT$, and any open subset of $\RR^n$ is in $\cT$ by (\ref{prop.subs}). As a result, (\ref{prop.covers}) ensures that any topological manifold is in $\cT$. Since this is true for all $\cT$, the category $\snglrdC$ contains -- as a full subcategory -- the category of topological manifolds and open embeddings among them. These are precisely those $C^0$ stratified spaces $(X \to P)$ for which $P$ has a single element.
\end{example}

\begin{example}
Regard a compact topological manifold $X$ as a $C^0$ stratified space $(X \to \ast )$. Then the cone $\sC(X \to \ast)$ is a $C^0$ stratified space admitting an open embedding from the stratified space $(\RR_{>0}\times X \to \ast)$.
\end{example}

\begin{remark}\label{remark.surjective-stratification}
The stratification map $X \to P$ is always a surjection for $(X \to P) \in \snglrdC$.  This is seen easily by verifying that the category $\cT^{\mathsf{surj}} \subset \stopen$ spanned by objects for which the stratification is a surjection satisfies properties (\ref{prop.epty}) through (\ref{prop.covers}).
\end{remark}

\subsection{Basics for $C^0$ stratified spaces}
Just as $\RR^n$ controls the local behavior of a topological manifold, there are distinguished $C^0$ stratified spaces that control local behavior; that is, which characterize the possible singularities of an object in $\snglrdC$. Lemma~\ref{lemma.topological-basis} formalizes this intuition.

\begin{definition}[$C^0$ basics]\label{def:top-bsc}
A \emph{$C^0$ basic} is a $C^0$ stratified space of the form $\RR^i\times \sC(Z)$ where $i\geq 0$, $\RR^i = (\RR^i \to \ast)$ is given the trivial stratification, and $Z$ is a compact $C^0$ stratified space.  
\end{definition}

Note that if $Z$ is an object of $\snglrdC$, then (\ref{prop.cones}) and (\ref{prop.products}) guarantee that $\RR^i \times \sC(Z)$ is as well. 
When the parameters $i$ and $Z$ are not relevant, we will typically denote $C^0$ basics with letters such as $U$, $V$, and $W$.

\begin{lemma}[Basics form a basis]\label{topological-basis}\label{lemma.topological-basis}
Let $X$ be a stratified second countable Hausdorff topological space.
Consider the collection of open embeddings
	\[
	\{U \hookrightarrow X\}
	\]
where $U$ ranges among $C^0$ basics.
Then this collection forms a basis for the topology of $X$ if and only if $X$ is a $C^0$ stratified space.
\end{lemma}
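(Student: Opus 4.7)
The plan is to prove the two implications of this iff separately. Write $\cT_\star \subset \stopen$ for the full subcategory consisting of those second countable Hausdorff stratified spaces for which the collection of $C^0$ basic open embeddings forms a basis of the topology.

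For the direction \emph{basis $\Rightarrow$ $C^0$ stratified space}, I would first observe that every $C^0$ basic $\RR^i \times \sC(Z)$ lies in $\bigcap_\cT \cT$: for each $\cT$ satisfying (1)--(5), the compact $C^0$ stratified space $Z$ lies in $\cT$, so axiom (2) gives $\sC(Z) \in \cT$, and $i$-fold application of axiom (3) gives $\RR^i \times \sC(Z) \in \cT$. Assuming the basics form a basis for $X$, they in particular cover $X$, so axiom (5) applied inside each such $\cT$ yields $X \in \bigcap_\cT \cT$. To conclude $X \in \snglrdC$, I would verify paracompactness: basics are locally compact Hausdorff (use compactness of $Z$ to produce a compact neighborhood of the cone point), local compactness is a local property, and a second countable, locally compact Hausdorff space is paracompact.

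For the direction \emph{$C^0$ stratified space $\Rightarrow$ basis}, the plan is to verify that $\cT_\star$ itself satisfies axioms (1)--(5), which will force the inclusion $\snglrdC \subset \bigcap_\cT \cT \subset \cT_\star$. Axiom (1) is vacuous and axiom (5) is immediate since the basis property is purely local. Axiom (4) is clear: given an open $V \hookrightarrow X$ and a point $x \in V$, a basic neighborhood of $x$ in $X$ that is contained in $V$ (which exists by the basis hypothesis on $X$) is also a basic neighborhood of $x$ in $V$. For axiom (3), given $(x,s) \in X \times \RR$ and an open neighborhood of it, I would shrink to a product $U \times I$ with $U \subset X$ a basic neighborhood of $x$ and $I \subset \RR$ an open interval; then $U \times I \cong U \times \RR \cong \RR^{i+1} \times \sC(Z)$ is again a basic.

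The main obstacle, and the most delicate verification, is axiom (2): that $\sC(X \to P) \in \cT_\star$ whenever $(X \to P) \in \cT_\star$ with $X$ and $P$ compact. For a point $(t,x) \in \sC(X)$ with $t > 0$, an open neighborhood can be taken inside the cylinder $\RR_{>0} \times X$, where the argument of axiom (3) applies to produce a basic neighborhood. For the cone point $\ast$, I would exploit the pushout topology on $\sC(X) = \ast \coprod_{\{0\} \times X} \RR_{\geq 0} \times X$: compactness of $X$ implies that any open neighborhood of $\{0\} \times X$ in $\RR_{\geq 0} \times X$ contains a tube $[0,\epsilon) \times X$, so the sets $\sC_\epsilon(X) := \ast \cup \bigl((0,\epsilon) \times X\bigr)$ form a neighborhood basis of $\ast$ in $\sC(X)$. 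Each $\sC_\epsilon(X)$ is isomorphic as a stratified space to $\sC(X)$ via the rescaling $t \mapsto \epsilon t/(1+t)$, and $\sC(X)$ is itself a $C^0$ basic (with $i = 0$ and $Z = X$). Thus the cone point admits a neighborhood basis of basic opens, completing the verification of axiom (2) and hence the proof.
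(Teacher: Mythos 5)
Your proof is correct and follows essentially the same strategy as the paper's: prove the forward direction by observing that a basis gives an open cover by basics (which lie in $\bigcap_\cT \cT$), and prove the converse by showing the collection of spaces with the basis property itself satisfies closure properties (1)--(5). The verifications of (1), (3), (4), (5) match the paper's, and for (2) your rescaling $t\mapsto \epsilon t/(1+t)$ plays the same role as the paper's appeal to conical self-embeddings of $\sC(X)$ forming a local base at the cone point. One place you are actually more careful than the paper: the paper's forward direction ends at ``$X\in\bigcap_\cT\cT$'' without noting that paracompactness (needed to conclude $X\in\snglrdC$) follows from second countability plus the local compactness of $C^0$ basics, whereas you address this explicitly. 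One small point worth making explicit in your verification of axiom (2): invoking ``$\sC(X)$ is itself a $C^0$ basic with $Z=X$'' requires knowing that $X$ is a compact $C^0$ stratified space, which follows from the forward direction together with the hypothesis $X\in\cT_\star$ compact; it is worth flagging that you are using the already-established forward implication at this point, exactly as the paper tacitly does.
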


\begin{proof}
If the collection forms a basis, then it is in particular an open cover, and it follows that $X$ is a $C^0$ stratified space by Property (\ref{prop.covers}).

Now consider the collection $\cT$ of such $X$ for which $\{U \into X\}$ forms a basis for its topology.  
We must show $\cT$ has properties (\ref{prop.epty})-(\ref{prop.covers}).
\begin{enumerate}
	\item[(\ref{prop.epty})] Certainly $\emptyset \in \cT$.
	\item[(\ref{prop.products})] Let $X\in \cT$.  Then the collection of product open embeddings $\RR\times U \hookrightarrow \RR\times X$ forms a basis for the topology of $\RR\times X$.  
	It follows that $\RR\times X\in \cT$.  
	\item[(\ref{prop.cones})] Let $X \in \cT$ and suppose it is compact.  
	The collection of open embeddings $\RR^0\times \sC(X) \to \sC(X)$ which are the identity of the $X$-coordinate, form a local base for the topology of the cone point $\ast$.  Because $\sC(X)\smallsetminus \ast = (0,1)\times X$, the point just above implies $\sC(X) \in \cT$.  
	\item[(\ref{prop.subs})] Let $Y\hookrightarrow X$ be an open embedding and suppose $Y\in \cT$.  
	From the definition of a basis for a topology, we see $Y\in \cT$.
	\item[(\ref{prop.covers})] Let $\cU$ be an open cover of $X$ and suppose $\cU \subset \cT$.  
	From the definition of a basis for a topology, we see $X\in \cT$.  
\end{enumerate}
\end{proof}

\begin{remark}\label{conically-stratified}
A $C^0$ stratified space $(X\to P)$ is necessarily \emph{conically stratified} in the sense of Definition~A.5.5 of~\cite{HA}.
\end{remark}

\subsection{Strata}
We prove that the strata of $C^0$ stratified spaces are also $C^0$ stratified spaces.

\begin{defn}[Consecutive maps and $X_{|Q}$]\label{defn.consecutive}
An injection $j: Q \hookrightarrow P$ of posets is {\em consecutive} if:
\begin{itemize}
\item $j$ is full, i.e., the inequality $x \leq y$ holds in $Q$ if and only if the inequality $j(x)\leq j(y)$ holds in $P$.
\item If $p$ and $p''$ are in the image of $j$, then the set 
$
 \{p' \in P \,|\, p \leq p' \leq p'' \}
$ is also contained in the image of $j$.
\end{itemize}
We denote by $X_{|Q} = (X_{|Q} \to Q)$ the pullback of $X$ along $Q \to P$.
\end{defn}

\begin{remark}
Note that being consecutive is a property stable under pullbacks.
\end{remark}

\begin{lemma}\label{strata}
\label{lemma.strata}
If $Q\hookrightarrow P$  is consecutive, the pullback $X_{|Q}$ is a $C^0$ stratified space. Further, the map $X_{|Q} \hookrightarrow X$ is the inclusion of a sub-stratified space: $X_{|Q} \to X$ is a homeomorphism onto its image.
\end{lemma}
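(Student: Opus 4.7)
The second assertion is immediate from the universal property of pullbacks in $\Top$: the underlying set of $X_{|Q}$ is $\{x \in X : S(x) \in Q\}$, equipped with the subspace topology from $X$, so $X_{|Q} \hookrightarrow X$ is tautologically a homeomorphism onto its image.

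For the first assertion, the plan is to show that the full subcategory $\cT_{\star} \subset \stopen$ consisting of those $(X \to P)$ for which $X_{|Q}$ is a $C^0$ stratified space for every consecutive $Q \hookrightarrow P$ itself satisfies properties~(1)--(5) of Definition~\ref{defn.stopen}; by minimality of $\bigcap_{\cT}\cT$ this will give $\bigcap_{\cT}\cT \subset \cT_{\star}$, after which paracompactness of $X_{|Q}$ must be checked separately. Two observations drive the argument. First, every consecutive $Q \subset P$ decomposes as $Q = Q^{\uparrow} \cap Q^{\downarrow}$, where $Q^{\uparrow} = \{p \in P : \exists q \in Q,\, q \leq p\}$ is upward closed (open in $P$) and $Q^{\downarrow} = \{p \in P : \exists q \in Q,\, p \leq q\}$ is downward closed (closed in $P$); convexity gives the equality. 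Consequently $X_{|Q} = X_{|Q^{\uparrow}} \cap X_{|Q^{\downarrow}}$ is locally closed in $X$, a fact I will reuse for paracompactness. Second, intersecting a consecutive subposet with an upward or downward closed subposet preserves consecutiveness, and compositions of consecutive inclusions are consecutive, so the formalism propagates.

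Conditions~(1), (3), (4), (5) are routine. For~(3), $(\RR \times X)_{|Q} = \RR \times X_{|Q}$ on the nose, reducing to (3) for $X_{|Q}$. For~(4), any open embedding $(U \to P_U) \hookrightarrow (X \to P)$ exhibits $P_U$ as an upward closed (hence consecutive) subposet of $P$, so a consecutive $Q \hookrightarrow P_U$ composes to a consecutive inclusion into $P$, and $U_{|Q}$ is identified with an open subspace of the $C^0$ stratified space $X_{|Q}$. For~(5), given an open cover $\{(U_i \to P_i) \hookrightarrow (X \to P)\}$ by objects of $\cT_{\star}$, each $Q \cap P_i \hookrightarrow P_i$ is consecutive, so $\{(U_i)_{|Q \cap P_i}\}$ is an open cover of $X_{|Q}$ by $C^0$ stratified spaces. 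The crux is~(2): given a compact $X \in \cT_{\star}$ and a consecutive $Q \hookrightarrow \sC(P) = \{*\} \sqcup P$, either $* \notin Q$, in which case $\sC(X)_{|Q} \cong \RR_{>0} \times X_{|Q}$ is handled by~(3), or $* \in Q$, in which case convexity forces $Q \cap P$ to be downward closed in $P$. In the latter case $X_{|Q \cap P}$ is a closed subspace of the compact $X$, hence a compact $C^0$ stratified space by the hypothesis $X \in \cT_{\star}$; one then verifies that the subspace topology on $\sC(X)_{|Q} \subset \sC(X)$ agrees with the pushout (cone) topology on $\sC(X_{|Q \cap P})$, exhibiting $\sC(X)_{|Q} = \sC(X_{|Q \cap P})$ as a $C^0$ stratified space via~(2) applied to $X_{|Q \cap P}$.

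It remains to verify paracompactness of $X_{|Q}$. A parallel induction on~(1)--(5) shows that the underlying space of every object of $\snglrdC$ is locally compact Hausdorff: basics $\RR^i \times \sC(Z)$ are locally compact since $Z$ is compact, and local compactness is preserved under the remaining operations. Since local compactness is inherited by locally closed subspaces of Hausdorff spaces, the locally closed inclusion $X_{|Q} \subset X$ makes $X_{|Q}$ locally compact Hausdorff; being also second countable, $X_{|Q}$ is paracompact. The main technical obstacle I foresee is the topological comparison in case~(2) --- matching the pushout-defined cone topology on $\sC(X_{|Q \cap P})$ with the subspace topology inherited from $\sC(X)$ --- while the remaining steps amount to bookkeeping about consecutive inclusions.
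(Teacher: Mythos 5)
Your proof is correct and follows essentially the same strategy as the paper's: one verifies that the collection of $(X\to P)$ for which the conclusion holds is closed under the generating operations (1)--(5), with the only substantive step being the cone case, where both you and the paper reduce $\sC(X)_{|Q}$ (when $\ast\in Q$) to $\sC(X_{|Q\cap P})$ with $X_{|Q\cap P}$ a compact $C^0$ stratified space by the inductive hypothesis. You are somewhat more careful than the paper on two points --- you record that $X_{|Q}$ is locally closed in $X$ via $Q = Q^{\uparrow}\cap Q^{\downarrow}$ and use it to verify paracompactness of $X_{|Q}$ explicitly, and in step~(4) you correctly identify $U_{|Q}$ with the open subspace $U\cap X_{|Q}$ rather than with $X_{|Q}$ itself --- neither of which changes the overall architecture of the argument.
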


\begin{remark}
If $Q \into P$ is not consecutive, one can show that $X_{|Q}$ need not even be a $C^0$ stratified space. See Example~\ref{example.bsc-2}.
\end{remark}

\begin{proof}[Proof of Lemma~\ref{lemma.strata}.]
Let $\cT$ be the collection of $C^0$ stratified spaces for which the first statement is true.  
\begin{enumerate}
	\item[(\ref{prop.epty})] Certainly $\emptyset \in \cT$.
	\item[(\ref{prop.products})] Let $(X\to P)\in \cT$.  Let $Q\hookrightarrow P$ be as in the hypothesis.  Then $(\RR\times X)_{|Q} = \RR\times X_{|Q}$.  So $(\RR\times X \to P)\in \cT$.  
	\item[(\ref{prop.subs})] Fix $(X\to P)\in \cT$. Let $(U\to P')\hookrightarrow (X\to P)$ be an open embedding and $Q\hookrightarrow P'$ be as in the hypothesis.
	Then $U_{|Q} = X_{|Q}$, and so $(U_{|Q} \to Q)\in \cT$.  
	\item[(\ref{prop.covers})] Let $\cU$ be an open cover of $(X\to P)$ with $\cU\subset \cT$.  
	Let $Q\hookrightarrow P$ be as in the hypothesis.
	Then the collection $\{U_{|Q}\to P'\cap Q\mid (U\to P')\in \cU\}$ is an open cover of $(X_{|Q} \to Q)$; and so $(X\to P)\in \cT$.  
	\item[(\ref{prop.cones})] Let $(X\to P)\in \cT$ with $X$ compact.  
	Let $Q\hookrightarrow \sC(P)$ be as in the hypothesis. 
	Identify the underlying set of $Q$ with its image in $\sC(P)$.
	Denote by $Q_\ast \subset Q$ the smallest open set in $Q$ such that $\ast \in Q_\ast$. There is the open cover 
	\[
	Q = Q\smallsetminus \ast \bigcup_{Q_\ast \smallsetminus \ast} Q_\ast
	\]  
	which gives the open cover $\sC(X)_{|Q} = \sC(X)_{|Q\smallsetminus \ast} \bigcup_{\sC(X)_{|Q_\ast \smallsetminus \ast}} \sC(X)_{|Q_\ast}$.
	It is thus enough to show that each term in this cover is a $C^0$ stratified space.  
	The first term is equal to $(\RR_{>0}\times X)_{|Q\smallsetminus \ast}$, which is a stratified space by (\ref{prop.products}).
	Likewise for the middle term.
	The last term is isomorphic to $\sC(X_{|Q_\ast \smallsetminus \ast})$ -- since $X_{|Q_\ast \smallsetminus \ast}$ is compact, it remains to see that it is also a $C^0$ singular. This follows because $X\in \cT$ and the inclusion $(Q_\ast \smallsetminus \ast) \into P$ is consecutive. 
\end{enumerate}
The second statement follows from the definition of stratified topological spaces -- since $Q \into P$ is consecutive, $X_{|Q}$ inherits the subspace topology from $X$. \end{proof}

As an example, for any element $p \in P$ and $(X \to P)$ a $C^0$ stratified space; let $X_p$ and $X_{\leq p}$ denote the obvious stratified spaces, and denote by $X_{\nless p}$ the stratified subspace of points of $X$ whose image in $P$ is not less than $p$. 

\begin{corollary}
For any $p\in P$, the stratified spaces
	\[
	X_{\leq p}~\qquad ~\text{ and } ~\qquad ~ X_{p} ~\qquad ~\text{ and }~ \qquad ~X_{\nless p}
	\]
are $C^0$ stratified spaces; the middle stratified space $X_p$ is, in particular, an ordinary topological manifold.  
\end{corollary}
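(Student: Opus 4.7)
My plan is to reduce the corollary to the already-established Lemma~\ref{lemma.strata}, which gives that $X_{|Q}$ is a $C^0$ stratified space whenever $Q \hookrightarrow P$ is a consecutive inclusion. The three sub-stratified spaces in question are precisely the pullbacks of $X$ over the subposets $Q_{\leq p} := \{q \in P : q \leq p\}$, $Q_p := \{p\}$, and $Q_{\nless p} := \{q \in P : q \nless p\}$. Thus the first order of business is to check each of these three subset inclusions is consecutive in the sense of Definition~\ref{defn.consecutive}, and then Lemma~\ref{lemma.strata} handles the rest.

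Fullness is immediate since each subposet inherits the restricted order from $P$. For convexity: the case $Q_{\leq p}$ follows from transitivity, since if $q_1 \leq q' \leq q_2 \leq p$ then $q' \leq p$; the case $Q_p$ is trivial; and for $Q_{\nless p}$, if $q_1, q_2$ both satisfy $q_i \nless p$ but some intermediate $q'$ with $q_1 \leq q' \leq q_2$ satisfied $q' < p$, then $q_1 \leq q' < p$ would force $q_1 < p$, contradicting $q_1 \in Q_{\nless p}$. Thus all three inclusions are consecutive, and Lemma~\ref{lemma.strata} immediately yields that each of $X_{\leq p}$, $X_p$, and $X_{\nless p}$ is a $C^0$ stratified space.

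The only remaining task is to verify that $X_p$ is in fact an ordinary $C^0$ manifold. My approach is to apply Lemma~\ref{lemma.topological-basis} to $X_p$, now that it is known to be a $C^0$ stratified space. That lemma produces a basis for the topology of $X_p$ by stratified open embeddings of the form $\RR^i \times \sC(Z) \hookrightarrow X_p$, where $Z$ is a compact $C^0$ stratified space. Each such embedding restricts on stratifying posets to an open embedding $\sC(P_Z) \hookrightarrow Q_p = \{p\}$, which is in particular an injection into a one-point set. Since $\sC(P_Z)$ always contains its adjoined cone point, this forces $\sC(P_Z)$ itself to be a singleton, and hence $P_Z = \emptyset$. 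By Remark~\ref{remark.surjective-stratification} the stratification $Z \to P_Z$ is surjective, so $Z = \emptyset$ and the basic neighborhood is simply $\RR^i \times \sC(\emptyset) = \RR^i$. Hence $X_p$ is locally Euclidean, i.e., a $C^0$ manifold. I do not anticipate any real obstacle here: once one notices that the singleton stratifying poset of $X_p$ collapses every basic neighborhood to its unstratified cone-point factor, the argument is essentially bookkeeping.
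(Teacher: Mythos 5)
Your proof is correct and matches the paper's implicit approach: the corollary is stated without proof immediately after Lemma~\ref{lemma.strata}, and your verification that each of the three subposets is a consecutive inclusion supplies the intended justification. For the assertion that $X_p$ is an ordinary $C^0$ manifold, you could also simply cite Example~\ref{example.topological-manifolds}, which already records that $C^0$ stratified spaces over a one-element poset are exactly the topological manifolds; your Lemma~\ref{lemma.topological-basis} argument re-derives that characterization from scratch.
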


\begin{remark}\label{rem.filtration}
For $X\ra P$ a $C^0$ stratified space, there is a canonical map of posets $P\ra \NN$ which assigns to $p\in P$ the dimension ${\sf dim}(X_p)$ of the topological manifold $X_p$. In this way, there is a natural filtration of a $C^0$-stratified space by dimension. We elaborate on this in Lemma \ref{dim-depth}.
\end{remark}

\begin{remark}
For $X\ra P$ a $C^0$ stratified space, one can consider the set $Q$ of connected components of the strata of $X$. The set $Q$ obtains a natural poset structure from $X$, by imposing a relation $q\leq q'$ exactly if there is a containment $X_q\subset \ov{X_{q'}}$ of the connected component $X_q$ in the closure of the component $X_{q'}$. This defines a map of posets $Q \ra P$, and little would be lost from the present theory if this map were required (in the definition of a $C^0$ stratified space) to be an isomorphism.
\end{remark}

\subsection{Dimension and depth}
\label{sec:dim-depth}
Now we define fundamental local invariants of a $C^0$ stratified space. Recall the notion of {\em Lebesgue covering dimension}.
For example, the Lebesgue covering dimension of an $n$-dimensional manifold is $n$ provided $n\geq -1$. 

\begin{definition}[Dimension]
Let $X= (X\to P)$ be a nonempty $C^0$ stratified space.
The \emph{local dimension} of $X$ at $x\in X$, denoted as ${\sf dim}_x(X)$, is the covering dimension of $X$ at $x$.  
The \emph{dimension} of $X$ to be
	\[
	{\sf dim}(X) = \underset{x \in X}{\sf sup} {\sf dim}_x(X).
	\]
Finally, if the local dimension of $X$ is constantly $n$, we say $X$ has \emph{pure} dimension $n$.  
We adopt the convention that the dimension of $\emptyset$ is $-1$.	
\end{definition}

\begin{remark}
One can equally consider {\em inductive dimension} rather than covering dimension. The two dimensions do not differ on the classes of spaces we consider, except for the empty manifold (depending on conventions).
\end{remark}

\begin{remark}
Local dimension is not locally constant. For example, let $Z = {\sf pt} \coprod S^1$ be the topological manifold given by the disjoint union of a point with a circle. Then the $C^0$ stratified space $\sC(Z)$ is connected, but has local dimension 1 near ${\sf pt}$, and 2 near $S^1$.
\end{remark}

\begin{definition}[Depth]\label{def:depth}
Let $X= (X\xra{S} P)$ be a nonempty $C^0$ stratified space.
The \emph{local depth} of $X$ at $x$ is
	\[
	\depth_x(X) := {\sf dim}_x(X) - {\sf dim}_x(X_{Sx}),
	\]
i.e., the difference between the local dimension of $X$ at $x$, and the local dimension at $x$ of the stratum of $X$ in which $x$ belongs. The \emph{depth} of $X$, denoted as $\depth(X)$, is the supremum over the local depths of $X$.  
We adopt the convention that the depth of $\emptyset$ is $-1$. 
\end{definition}

\begin{example}[Cones and depth]\label{example.cone-depth}
Let $Z$ be an ordinary $n$-dimensional topological manifold which is compact. As before, we regard it as a $C^0$ stratified space by the constant stratification $Z \to [0]$. Then the cone point $\ast \in \sC(Z)$ is a point of depth $(n+1)$, and all other points are points of depth $0$.
\end{example}

\begin{example}[Products and depth]\label{example.product-depth}
If $X$ is a $C^0$ stratified space of depth $k$ and dimension $n$, then $\RR^N \times X$ is a $C^0$ stratified space of depth $k$ and dimension $N+n$. More specifically, any point $(\vec v, x) \in \RR^N \times X$ has depth $\depth_x(X)$ and dimension $N + {\sf dim}_x(X)$. 
\end{example}

\begin{example}\label{non-decreasing}
Let $U = \RR^i \times \sC(Z)$ be a $C^0$ basic.  
Combining the previous two examples, $\depth(U) = {\sf dim}(Z)+1$.
This observation will be used repeatedly later on, when we define conically smooth structures inductively.
\end{example}

\begin{remark}\label{non-increasing}\label{remark.non-increasing}
From the definitions, we see that if $X\hookrightarrow Y$ is an open embedding between $C^0$ stratified spaces, then
	\[
	{\sf dim}(X) \leq {\sf dim}(Y)~\qquad\text{ and }\qquad ~{}~{}~{}~
	\depth(X) \leq \depth(Y)~.
	\]
\end{remark}

Following this observation, consider the poset 
	\[
	\PP \subset (\ZZ )^{\op} \times (\ZZ)^{\op}
	\]
whose elements are those pairs of integers $(k,n)$ which obey the inequality $-1\leq k \leq n$. Note that the partial order means $(k,n)\leq (k',n')$ if and only if $k\geq k'$ and $n \geq n'$. 

\begin{remark}
For any $i \in \ZZ$, the translation map 
	\[
	(k,n) \mapsto 
	\begin{cases}
		(k,-1)~, & n+i \leq -1 \\
		(k,n+i)~, & \text{otherwise}
	\end{cases}
	\] 
is a continuous and open map.
\end{remark}

\begin{lemma}\label{dim-depth}
Let $(X\to P)$ be a $C^0$ stratified space.
Assigning to each element $x\in X$ its local depth $k_x$ and its local dimension $n_x$ defines a continuous map $X \to \PP$.
Furthermore, this map factors uniquely through a map of posets $P\to \PP$.  
\end{lemma}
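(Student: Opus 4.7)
The plan is to prove the lemma in three steps: continuity, constancy on strata, and monotonicity.

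For continuity, note that a subbasis for the topology on $\PP$ consists of the upward-closed sets $U_{(k_0,n_0)} := \{(k,n)\in \PP \mid k\leq k_0,\ n\leq n_0\}$, so it suffices to show that for each $x_0 \in X$ with $(k_{x_0},n_{x_0}) \leq (k_0,n_0)$, an entire open neighborhood of $x_0$ satisfies the same bounds. By Lemma~\ref{topological-basis}, I can choose a basic neighborhood $U = \RR^i\times \sC(Z) \hookrightarrow X$ of $x_0$; by shrinking, I arrange that $x_0$ corresponds to the cone point $(0,\ast)$. Combining Examples~\ref{example.cone-depth}, \ref{example.product-depth}, and \ref{non-decreasing}, the cone point realizes the maximal local dimension and depth of $U$, namely $n_{x_0} = \dim(U) = i + \dim(Z)+1$ and $k_{x_0} = \depth(U) = \dim(Z)+1$. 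For every other $x\in U$, local invariance of dimension and depth together with Remark~\ref{remark.non-increasing} (applied to a smaller basic neighborhood of $x$ inside $U$) yields $n_x \leq \dim(U) = n_{x_0}$ and $k_x \leq \depth(U) = k_{x_0}$. Thus $U \subseteq \{x : (k_x,n_x) \leq (k_0,n_0)\}$, giving continuity.

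For the factorization, I need that $(k_x,n_x)$ depends only on $Sx \in P$. In the basic $U = \RR^i\times \sC(Z) \ni x$ with $x$ as cone point, the intersection $U \cap X_{Sx}$ equals the apex stratum $\RR^i\times \{\ast\}$, on which the same computation gives the constant value $(\dim(Z)+1,\ i+\dim(Z)+1)$. So $(k,n)$ is locally constant on each $X_p$. By induction on depth (using that strata of a basic $\RR^i\times \sC(Z)$ are, apart from the apex, of the form $\RR^i\times \RR_{>0}\times Z_q$ for lower-depth strata $Z_q$ of $Z$, whose local invariants are constant by the inductive hypothesis), the value is determined by the local conical structure at any representative of $X_p$, and compatibility on overlapping basic charts forces it to agree across $X_p$. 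Uniqueness of the induced map $P\to \PP$ follows from the surjectivity of $X\to P$ recorded in Remark~\ref{remark.surjective-stratification}.

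For monotonicity, suppose $p \leq p'$ in $P$, and pick $x \in X_p$ with basic neighborhood $U = \RR^i\times \sC(Z)$ centered at $x$. Then $X_{p'} \cap U$ lies in a stratum of the form $\RR^i\times \RR_{>0}\times Z_q$ for a stratum $Z_q$ of $Z$ with image $p' > p$ in $\sC(P) \to P$. The resulting values are $n_{p'} = i+1+\dim(Z_q) \leq i+\dim(Z)+1 = n_p$ and $k_{p'} = \depth(Z_q) \leq \dim(Z)+1 = k_p$, so $(k_p,n_p) \leq_{\PP} (k_{p'},n_{p'})$, confirming that $P \to \PP$ is a map of posets.

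The main obstacle is the global constancy step: the basic-level argument straightforwardly delivers local constancy of $(k,n)$ on each stratum $X_p$, but promoting this to a single value per stratum hinges on the induction on depth and on the fact that distinct basic charts must assign compatible local dimensions on their intersections. Once this is in hand, the remaining monotonicity and uniqueness claims follow cleanly from the cone structure and from the surjectivity of the stratification map.
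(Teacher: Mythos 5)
Your proof takes a genuinely different route from the paper's. The paper runs its standard $\cT$-induction: it lets $\cT\subset\snglrdC$ be the full subcategory of those $(X\to P)$ for which the lemma's conclusion holds, and verifies that $\cT$ satisfies properties (\ref{prop.epty})--(\ref{prop.covers}) of Definition~\ref{defn.stopen}, so that $\snglrdC\subset\cT$. The only substantive computation in that argument is the cone case, which is essentially the same local computation you perform at the apex of $\RR^i\times\sC(Z)$; the difference is that the $\cT$-machinery handles the bookkeeping about open covers and subspaces once and for all, whereas you argue point-by-point with centered basic charts. This buys you a more concrete argument for continuity, but it pushes the global bookkeeping onto you, and that is where the gaps are.

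The first issue is minor: the claim that every $x_0\in X$ admits a basic chart $\RR^i\times\sC(Z)\into X$ with $x_0$ on the cone locus $\RR^i\times\{\ast\}$ is true but not an immediate consequence of Lemma~\ref{topological-basis}. You have to iterate: if a basic chart contains $x_0$ off the cone locus, pass to a basic sub-neighborhood of $x_0$ inside the piece $\RR^i\times\RR_{>0}\times Z$, and repeat, using that local depth is finite and strictly decreasing along this process. The paper's $\cT$-argument never needs to produce a centered chart, so it avoids this step; your argument needs it and should say it.

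The second issue is a real gap. You correctly show that $(k_x,n_x)$ is constant on the portion of a stratum $X_p$ visible in any single basic chart, and then appeal to ``compatibility on overlapping basic charts'' to conclude constancy on all of $X_p$. But two points in distinct connected components of $X_p$ need never lie in a common chart (nor in any chain of overlapping charts), so overlap compatibility gives you nothing across components. The same problem infects your monotonicity step: you take $p\leq p'$, pick a centered chart $U$ at some $x\in X_p$, and read the inequality off $X_{p'}\cap U$ --- but nothing guarantees $X_{p'}$ meets $U$, nor that whichever component of $X_{p'}$ does meet $U$ governs the value assigned to $p'$. To close this you would need to argue that the map $X\to\PP$ is actually constant on fibers of $X\to P$, and that is precisely the content you are trying to establish, so the argument as written is circular at the global level. (As an aside: the paper's verification of property~(\ref{prop.covers}) says ``the map $h$ factors through $P$ because it does so for each $(U_i\to P_i)$,'' which elides the same point; so this is partly a feature of the lemma's statement. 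Nevertheless, a careful proof should flag it rather than appeal to chart compatibility that isn't there.)
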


\begin{remark}\label{remark.respect-stratification}
Given an open embedding $(X \to P) \to (X' \to P')$, the composition $X \to X' \to \PP$ is equal to the map $X \to \PP$ because local depth and local dimension are local invariants. As a consequence, an open embedding of $C^0$ stratified spaces always induces a commutative triangle
	\[
	\xymatrix{
	P \ar[rr]  \ar[dr]
	&&P' \ar[dl]
	\\
	& \PP &
	}
	\]
fitting below the diagram (\ref{eqn.maps-of-XP}). That is, any diagram in $\snglrdC$ takes place over the poset $\PP$.
\end{remark}

\begin{remark}
While the map $X \to \PP$ is a useful organizing tool, the map $P \to \PP$ is not typically an injection (for instance, when $X$ can contain different singularity types of equal depth and dimension).
\end{remark}

\begin{proof}[Proof of Lemma~\ref{dim-depth}.]
First, note that any factoring map $P \to \PP$ is uniquely determined because the stratification $X \to P$ is a surjection by Remark~\ref{remark.surjective-stratification}.

Now consider the full subcategory $\cT\subset \snglrdC$ of those $C^0$ stratified spaces $X$ for which the lemma is true.
It is routine to verify that $\cT$ possesses the five properties (1)-(5): 
	\begin{enumerate}
	\item The empty manifold $(\emptyset \to \emptyset)$ has a unique map to $\PP$.
	\item Let $(X \to P)$ be a $C^0$ stratified space. Example~\ref{example.product-depth} shows that the points $(t,x) \in \sC(X)\smallsetminus\{\ast\}$ are sent to $(\depth_x(X), {\sf dim}_x(X)+1)$. Using that $a: P \to \sC(P)$ is an injection, we obtain an induced map $\sC(P)\smallsetminus\{\ast\} \xra{a^{-1}} P \to \PP \xra{+(0,1)} \PP$. We extend this composition to a map $\sC(P) \to \PP$ via Example~\ref{example.cone-depth}, which shows that the cone point $\ast \in \sC(X)$ is sent to $(\depth(X)+1,{\sf dim}(X)+1)$.
	The map $\sC(P) \to \PP$ is obviously a map of posets, and hence continuous. Thus the composition $X \to P \to \PP$ is continuous.
	\item Example~\ref{example.product-depth} shows that the map $\RR^i \times X \to \PP$ factors as a sequence of maps
		\[
		\xymatrix{
		\RR^i \times X \ar[r]
		& X \ar[r]
		&  \PP \ar[r]^{+(0,i)}
		&  \PP
		}
		\]
	where the last map is translation by $i$ in the dimension component. Since this last map is continuous, the composition is.
	\item Given any open embedding $(U \to P_U) \to (X \to P)$, we set $U \to \PP$ to be the composition $U \to X \to P \to \PP$. This agrees with the definition $x \mapsto (\depth_x U, {\sf dim}_x U)$ because local depth and local dimension are preserved under open embeddings. Since $U \to P_U$ is a surjection and $U \to X \to P$ is the same map as $U \to P_U \to P$, the composition $P_U \to P \to \PP$ is a factorization.
	\item Let $\{j_i: (U_i \to P_i) \to (X \to P)\}$ be an open cover, and $h: X \to \PP$ the depth-dimension map. Any open set $Q \subset P$ has a pre-image given by
		\[
		h^{-1}(Q) = \bigcup_i U_i \cap j_i^{-1}(h^{-1}(Q)).
		\]
	This is a union of open sets. The map $h$ factors through $P$ because it does so for each $(U_i \to P_i)$.
	\end{enumerate} 
\end{proof}

\begin{definition}[$\snglrdC_{\leq k , \leq n}$]\label{def:snglr-n}
Let $-1\leq k\leq n$ be integers.  
The full subcategory 
	\[
	\snglrdC_{k,n} \subset \snglrdC
	\]
consists of the $C^0$ stratified spaces of depth exactly $k$ and of \emph{pure} dimension exactly $n$. Similarly, we denote by
	\[
	\snglrdC_{\leq k , n}~\subset ~\snglrdC_{\leq k , \leq n} ~\supset ~ \snglrdC_{k,\leq n}~,
	\]
the obvious full subcategories. For instance, the last consists of those $C^0$ stratified spaces that need not have pure dimension, with dimension at most $n$ and depth exactly $k$. Finally, we let
	\[
	\snglrdC_{\leq k, \leq \infty}
	\]
be the full subcategory of $C^0$ stratified spaces with depth at most $k$.
We will utilize this category repeatedly in \S\ref{section.singular-manifolds}.
\end{definition}

\begin{cor}\label{inclusion-adjoint}\label{cor.inclusion-adjoint}
Let $-1\leq k\leq k'$ and $-1\leq n\leq n'$ be integers.  
The full inclusion
\[
\snglrdC_{\leq k , \leq n}~ \hookrightarrow~ \snglrdC_{\leq k', \leq n'}
\]
has a right adjoint implementing a localization.   
\end{cor}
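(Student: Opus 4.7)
The plan is to construct the right adjoint explicitly by truncating along the depth-dimension map of Lemma~\ref{dim-depth}. Given $X \in \snglrdC_{\leq k', \leq n'}$ with depth-dimension map $h_X \colon X \to \PP$, consider the subset $\PP_{\leq k,\leq n} := \{(a,b) \in \PP \mid a \leq k \text{ and } b \leq n\}$. Because the partial order on $\PP$ has $(a',b') \geq (a,b)$ precisely when $a' \leq a$ and $b' \leq b$, this subset is upward-closed and therefore open in the poset topology on $\PP$. I set $R(X) := h_X^{-1}(\PP_{\leq k,\leq n}) \subset X$ equipped with the restricted stratification: this is open in $X$ by continuity of $h_X$, is a $C^0$ stratified space by property~(\ref{prop.subs}) of Definition~\ref{defn.stopen}, and by construction lies in $\snglrdC_{\leq k, \leq n}$.

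Next I would verify functoriality and the adjunction. By Remark~\ref{remark.respect-stratification}, any open embedding $f \colon X \to X'$ in $\snglrdC_{\leq k', \leq n'}$ intertwines the depth-dimension maps and hence restricts to an open embedding $R(f) \colon R(X) \to R(X')$; this assembles $R$ into a functor. For the adjunction, given $Y \in \snglrdC_{\leq k, \leq n}$ and an open embedding $g \colon Y \hookrightarrow X$ into some $X \in \snglrdC_{\leq k', \leq n'}$, the relation $h_X \circ g = h_Y$ forces $g(Y) \subset R(X)$, yielding a unique factorization $g = \iota_R \circ \tilde{g}$ through the inclusion $\iota_R \colon R(X) \hookrightarrow X$. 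The resulting natural bijection between $\Hom(\iota Y, X)$ and $\Hom(Y, R(X))$ exhibits the adjunction $\iota \dashv R$, where $\iota$ denotes the full inclusion of the statement.

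Finally, for any $X$ already in $\snglrdC_{\leq k, \leq n}$ the map $h_X$ takes values in $\PP_{\leq k, \leq n}$, so $R(X) = X$; equivalently, the unit $Y \to R\iota(Y)$ is the identity, confirming that $\iota$ is fully faithful. Consequently the adjunction $\iota \dashv R$ presents $\snglrdC_{\leq k, \leq n}$ as a coreflective subcategory of $\snglrdC_{\leq k', \leq n'}$, and the right adjoint $R$ is a localization functor, inverting precisely the class of open inclusions $R(X) \hookrightarrow X$. I do not anticipate a real obstacle: the only non-formal input is the continuity of the depth-dimension map, which has already been packaged by Lemma~\ref{dim-depth}, and the closure of $\snglrdC$ under open sub-stratified spaces supplied by property~(\ref{prop.subs}).
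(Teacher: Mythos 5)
Your proof is correct and follows essentially the same approach as the paper: truncate along the depth-dimension map $h_X \colon X \to \PP$ of Lemma~\ref{dim-depth} to define the right adjoint $R$, verify functoriality via Remark~\ref{remark.respect-stratification}, and check the universal property via the counit $R(X) \hookrightarrow X$. The only small variation is that you note $\PP_{\leq k,\leq n}$ is \emph{open} (upward-closed) in $\PP$, so $R(X) = h_X^{-1}(\PP_{\leq k,\leq n})$ is an open subspace and property~(\ref{prop.subs}) applies directly, whereas the paper observes that the inclusion of posets is a consecutive injection and invokes Lemma~\ref{strata}; both are valid and land in the same place.
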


\begin{proof}
That the inclusion is full is obvious, as both categories are defined as full subcategories of $\snglrdC$. We define the right adjoint $R$ by the assignment $(X\to P) \mapsto (X_{\leq k , \leq n} \to P_{\leq k, \leq n})$. Specifically, we have a pullback of posets
	\begin{equation}
	\label{pull-back<=k}
	\xymatrix{
	P_{\leq k, \leq n} \ar[r] \ar[d]
	& P \ar[d]
	\\
	\{(k'',n'')\}_{k''\leq k, n''\leq n} \ar[r]
	& \PP
	}
	\end{equation}
where the right vertical map is given by Lemma~\ref{dim-depth}.
The map $P_{\leq k, \leq n} \to P$ is a consecutive injection, so let
\[
(X_{\leq k, \leq n} \to P_{\leq k, \leq n} )
:=
X_{|P_{\leq k, \leq n}}.
\]
Lemma~\ref{strata} says this is a $C^0$ stratified space.

Since any open embedding of $C^0$ stratified spaces respects the maps to $\PP$ by Remark~\ref{remark.respect-stratification}, we send any open embedding $f: (X \to P) \to (X' \to P')$ to its restriction $f|_{X_{\leq k, \leq n}}$. By the universal property of pullbacks (\ref{pull-back<=k}) (or by checking), this assignment respects compositions.

There is an open embedding $(X_{\leq k, \leq n} \to P_{\leq k, \leq n})\hookrightarrow (X\to P)$ implementing a functor $\epsilon: R \to 1$, which is easily seen to satisfy the universal property of a counit.
\end{proof}

We now collect some results about how Properties (1)-(5) interact with depth and dimension. We leave the proofs as an exercise to the reader, as most of them follow from previous examples.

\begin{prop}\label{cone-increase}\label{prop.induction}
The collection of subcategories of $\snglrdC$ introduced in Definition~\ref{def:snglr-n} has the following properties:
\begin{enumerate}
	\item[(\ref{prop.epty})] $\emptyset \in \snglrdC_{\leq k , \leq n}$ for every $k \geq -1, n \geq -1$.
	\item[(\ref{prop.cones})] Let $X\in \snglrdC_{\leq k, n}$ be compact.  Then $\sC(X) \in \snglrdC_{n+1 ,n+1}$~.
	\item[(\ref{prop.products})] Let $X\in \snglrdC_{k , n}$ be nonempty.  Then $\RR\times X\in \snglrdC_{k , n+1}$~.
	\item[(\ref{prop.subs})] Let $X \in \snglrdC_{\leq k , \leq n}$ and let $U\hookrightarrow X$ be an open embedding.  Then $U\in \snglrdC_{\leq k , \leq n}$~.
	\item[(\ref{prop.covers})] Let $\cU$ be an open cover of $X$ with $\cU \subset \ob \snglrdC_{\leq k , \leq n}$.  Then $X\in \snglrdC_{\leq k , \leq n}$~.
\end{enumerate}
Among these properties, (\ref{prop.cones}) is the only one that increases depth, and it does so strictly.  
\end{prop}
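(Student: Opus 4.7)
The plan is to verify each of (\ref{prop.epty})--(\ref{prop.covers}) by direct inspection, using the pointwise formulas for local depth and local dimension recorded in Examples~\ref{example.cone-depth}, \ref{example.product-depth}, and \ref{non-decreasing}, together with the monotonicity statement of Remark~\ref{remark.non-increasing}. Since everything reduces to pointwise computations which have essentially already been made, there is no genuine obstacle; the only calculation meriting care is the cone-point contribution in (\ref{prop.cones}).

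Property (\ref{prop.epty}) is immediate from the convention ${\sf dim}(\emptyset) = -1 = \depth(\emptyset)$. For (\ref{prop.products}), Example~\ref{example.product-depth} supplies the pointwise identities $\depth_{(v,x)}(\RR \times X) = \depth_x(X)$ and ${\sf dim}_{(v,x)}(\RR \times X) = {\sf dim}_x(X)+1$, so pure dimension is preserved and depth is preserved exactly. Property (\ref{prop.cones}) is the substantive case. Fix a compact $X \in \snglrdC_{\leq k, n}$. Away from the cone point, the open embedding $\RR_{>0} \times X \hookrightarrow \sC(X) \smallsetminus \{\ast\}$ together with (\ref{prop.products}) shows every such point has local depth at most $k$ and local dimension exactly $n+1$. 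At the cone point $\ast$, the cone point is the adjoined minimum of $\sC(P)$, so by construction it is preimaged by the singleton stratum $\sC(X)_{\ast} = \{\ast\}$, which has local dimension $0$; on the other hand ${\sf dim}_{\ast}(\sC(X)) = n+1$ (either by Example~\ref{non-decreasing} or a direct argument via the local base $\RR^0 \times \sC(X) \hookrightarrow \sC(X)$ together with paracompactness). Therefore $\depth_\ast(\sC(X)) = n+1$. Since $k \leq n < n+1$, the depth and the dimension of $\sC(X)$ are both realized at the cone point, giving $\sC(X) \in \snglrdC_{n+1, n+1}$.

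Properties (\ref{prop.subs}) and (\ref{prop.covers}) are both immediate from Remark~\ref{remark.non-increasing}: the bounds ``${\sf dim} \leq n$'' and ``$\depth \leq k$'' are equivalent to pointwise bounds on the local invariants, which by definition are local and preserved by open embeddings; hence they descend along open sub-embeddings and ascend along open covers. The final assertion, that only (\ref{prop.cones}) strictly increases depth, is then a direct reading of what has been shown: (\ref{prop.epty}) is vacuous, (\ref{prop.products}) preserves depth on the nose, and (\ref{prop.subs}) and (\ref{prop.covers}) only bound depth from above, whereas the cone-point calculation yields $\depth(\sC(X)) = n+1 > k \geq \depth(X)$.
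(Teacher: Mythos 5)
Your proof is correct, and it follows exactly the approach the paper signals: the authors leave Proposition~\ref{prop.induction} as an exercise citing "previous examples," and you carry that out by reducing each property to the pointwise depth/dimension formulas of Examples~\ref{example.cone-depth}, \ref{example.product-depth}, \ref{non-decreasing} and the monotonicity of Remark~\ref{remark.non-increasing}. The one substantive computation, that $\depth_\ast(\sC(X)) = {\sf dim}_\ast(\sC(X)) - {\sf dim}_\ast(\{\ast\}) = (n+1) - 0 = n+1$ while the complement $\RR_{>0} \times X$ contributes depth at most $k \leq n$, is handled correctly and establishes both the purity of dimension and the strict depth increase asserted in the final clause.
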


\section{Stratified spaces}
\label{section.singular-manifolds}

In this portion of the paper we transition from $C^0$ stratified spaces to {\em conically smooth stratified spaces} -- these are $C^0$ stratified spaces with a choice of an atlas with appropriate regularity among transition maps, just as smooth manifolds are topological manifolds with smooth atlases. This regularity is what we call {\em conical smoothness}. 

After defining conically smoothness by induction, we prove some basic properties of the category of conically smooth stratified spaces. Just as the category of smooth manifolds (and smooth embeddings) admits an enrichment over Kan complexes, we prove that the category of conically smooth stratified spaces (and conically smooth embeddings) admits a Kan enrichment. We prove the category admits products, and that the set of atlases forms a sheaf on the site of $C^0$ stratified spaces. We then conclude the section with examples of conically smooth stratified spaces, with a categorical framework for examining those spaces with specified singularity types, and by introducing the appropriate analogue of composing cobordisms in the stratified setting.

\subsection{Differentiability along $\RR^i$}\label{sec.conical-smoothness}

We begin by introducing the stratified version of a derivative. Let $Z$ be a compact stratified topological space, and consider the stratified space $U = \RR^i \times \sC(Z)$. 
We denote its points by $(u, [s,z])$, where $(u,s,z) \in \RR^i \times \RR_{\geq 0} \times Z$ -- if $Z=\emptyset$ it is understood that $[s,z] = \ast$.  
Consider the identification
	\begin{equation}
	\label{eqn.tangent-cones}
	\sT\RR^i \times \sC(Z) \cong \RR^i_{\vec v} \times \RR^i \times \sC(Z) =  \RR^i_{\vec v} \times U = \{(\vec v, u, [s,z])\} .
	\end{equation}
We have a homeomorphism
	\[
	\gamma: \RR_{> 0} \times \sT\RR^i \times \sC(Z)
	\to \RR_{> 0} \times \sT\RR^i \times \sC(Z)
	\]
given by the equation
	\[
	(t,
	\vec v,u,[s,z]) \mapsto (t, t \vec v + u, u, [ts,z]).
	\]
One can consider $\gamma$ as a map $(t, u) \mapsto \bigl(\gamma_{t,u}\colon \RR^i_{\vec v}\times \sC(Z) \to \RR^i_{\vec v} \times \sC(Z)\bigr)$, and one has the identities 
	\[
	\gamma_{t_2, u_2} \circ \gamma_{t_1,u_1} = \gamma_{t_2 t_1, u_2 + t_2 u_1}
	\qquad \qquad
	(\gamma_{t,u})^{-1} = \gamma_{\frac{1}{t}, -\frac{u}{t}}.
	\]

\begin{remark}
The map $\gamma$ embodies the concept of scaling and translating -- we see it as capturing what remains of a vector space structure on a basic.  
\end{remark}

\begin{defn}[$f_\Delta$]
\label{defn.f-delta}
Let  $Z = (Z\to P)$ and $Z'=(Z'\to P')$ be compact stratified topological spaces. 
Let $f\colon U = \RR^i\times \sC(Z) \to \RR^{i'}\times \sC(Z')$ be a continuous stratified map for which $\sC(P) \to \sC(P')$ sends the cone point to the cone point.  
We denote by $f_{|\RR^i}: \RR^i \times \{\ast \} \to \RR^{i'} \times \{\ast\}$ 
the restriction to the cone point stratum. 
Consider the map
	\[
	\xymatrix{
	\RR^i_{\vec v} \times U \ar[rr]^{f_{|\RR^i}\times f}
	&& \RR^{i'}_{\vec v'} \times U'.
	}
	\]
Using the identification (\ref{eqn.tangent-cones}), we define $f_\Delta$ to be the map
	\[
	f_\Delta:
	\xymatrix{
	\RR_{>0} \times \sT\RR^i \times \sC(Z)
	\ar[rrr]^{\id_{\RR_{>0}} \times f_{|\RR^i} \times f}
	&&&
	\RR_{>0} \times \sT\RR^{i'} \times \sC(Z').
	}
	\]
\end{defn}

\begin{example}\label{example.ordinary-derivatives}
If $Z = Z' = \emptyset$, then $f$ is simply a function from $\RR^i$ to $\RR^{i'}$, and 
\begin{equation}\label{eqn.difference-quotient}
 (\gamma^{-1} \circ f_\Delta \circ \gamma)  
 (t,\vec v, u) =
 \left(t, {\frac{f(t \vec v + u) - f(u)} {t}}, f(u) \right).
\end{equation}
Note that the middle term is simply the usual difference quotient.
Then $f$ is $C^1$ if the limit as $t \to 0$ exists everywhere -- i.e., if there is a continuous extension
\[
\xymatrix{
\RR_{\geq 0} \times \sT\RR^i  \ar@{-->}[rr]^{\w{\sD}f}
&&
\RR_{\geq 0} \times \sT\RR^{i'}
\\
\RR_{>0} \times \sT\RR^i \ar[u] \ar[rr]^{\gamma^{-1} \circ f_\Delta \circ \gamma}
&&
\RR_{>0} \times  \sT\RR^{i'} .\ar[u]
}
\]
 
For example, when $i=i'=1$, the existence of $\w{\sD}f$ implies the existence of a right derivative ${\sf lim}_{t \to 0^+} {\frac 1 t} ( f(t+ u) - f(u))$ at every $u \in \RR$. Moreover, this right derivative is a continuous function of $u$ since $\w{\sD}f$ is. This ensures that $f$ is differentiable, and continuously so.

In higher dimensions, the condition that ${\sf lim}_{t \to 0^+} {\frac 1 t} (f(t\vec v+ u) - f(u))$ exists for all $u$ and $\vec v$ likewise implies that all partial derivatives exist. Moreover, these partials must depend continuously on $u$ and $\vec v$ since $\w{\sD}f$ is continuous; hence $f$ must be differentiable. As a consequence, one can conclude that the restriction of $\w{\sD} f$ to $t=0$ defines fiberwise {\em linear} maps $\RR^i_{\vec v} \times \{u\} \to \RR^{i'}_{\vec v'} \times \{f(u)\}$.

\end{example}

\begin{definition}[Conically smooth along]\label{def:cone-sm}
Let $Z = (Z\to P)$ and $Z'=(Z'\to P')$ be compact stratified topological spaces.
Let $f\colon \RR^i\times \sC(Z) \to \RR^{i'}\times \sC(Z')$ be a continuous stratified map.
We say $f$ is $C^1$ \emph{along $\RR^i$} if $\sC(P) \to \sC(P')$ sends the cone point to the cone point, and if there is a continuous extension
	\[
	\xymatrix{
	\RR_{\geq 0} \times \sT\RR^i \times \sC(Z)  \ar@{-->}[rr]^{\w{\sD}f}
	&&
	\RR_{\geq 0} \times \sT\RR^{i'} \times \sC(Z')
	\\
	\RR_{>0} \times \sT\RR^i \times \sC(Z)  \ar[u] \ar[rr]^{\gamma^{-1} \circ f_\Delta \circ \gamma}
	&&
	\RR_{>0} \times  \sT\RR^{i'} \times \sC(Z'). \ar[u]
	}
	\]
	
If the extension exists, it is unique by continuity. We denote the restriction of $\w{\sD}f$ to $t=0$  by
	\begin{equation}\label{eqn.derivative}
	\sD f: \sT\RR^i \times \sC(Z)  \to  \sT\RR^{i'} \times \sC(Z').
	\end{equation}
Finally, we let $\sD_u f$ denote the composite $\RR^i_{\vec v} \times \{u\} \times \sC(Z) \to \RR^{i'}_{\vec v'} \times \{f(u,\ast)\} \times \sC(Z') \to \RR^{i'}_{\vec v'} \times \sC(Z')$ where last map is projection. For instance, on $\vec v := (\vec v, \ast) \in \RR^i_{\vec v} \times \sC(Z)$, 
	\[
	\sD_{u} f(\vec v) :=
	\underset{t \to 0}{\sf lim} \gamma_{ {\frac 1 t}, {\frac 1 t} f(u)} \circ f \circ \gamma_{t, -u}(\vec v) \in \RR^{i'}_{\vec v'} \times \sC(Z)
	\]
for any $u \in \RR^i$. 
For $r>1$, we declare that $f$ is {\em $C^r$ along $\RR^i$} if it is $C^1$ along $\RR^i$ and if $\sD f: \RR^i \times \RR^i \times \sC(Z) \to \RR^{i'} \times \RR^{i'} \times \sC(Z')$ is $C^{r-1}$ along $\RR^i \times \RR^i$. We say that $f$ is {\em conically smooth along $\RR^i$} if it is $C^r$ along $\RR^i$ for every $r \geq 1$.

\end{definition}

\begin{remark}
The notion of conical smoothness {\em along $\RR^i$} is less restrictive than the notion of {\em conical smoothness}, which we will define in Definition~\ref{defn.conical-smoothness}. In fact, being conically smooth along $\RR^i$ imposes little regularity on the behavior of $f$ away from $\RR^i \times \{\ast\}$. As an example, if $i = 0$ and $Z = \ast$, the condition that $f: \RR_{\geq 0} = \sC(Z) \to \sC(Z)$ be $C^1$ along $\RR^i$ is the condition that $f$ have a right derivative at the cone point $\ast = 0 \in \RR_{\geq 0}$. And in this case, being $C^1$ along the cone point implies that $f$ is conically smooth along the cone point, as $\sD f: \sC(Z) \to \sC(Z)$ is just a linear function with slope given by the right derivative of $f$ at 0. In Definition~\ref{defn.conical-smoothness}, we will require that $f$ also be smooth away from the cone point.
\end{remark}

\begin{remark}\label{remark.usual-tangent-bundle-map}
Notice that the map $\sD f$ fits into a commutative diagram of stratified spaces
	\[
	\xymatrix{
	\RR^i_{\vec v}\times U \ar[r]^-{\sD f}  \ar[d]
	&
	\RR^{i'}_{\vec v'} \times U' \ar[d]
	\\
	\RR^i_{\vec v}  \ar[r]^-{f_{|\RR^i}}  
	&
	\RR^{i'}_{\vec v'}~.
	}
	\]
By Example~\ref{example.ordinary-derivatives}, the restriction of $\sD f$ to $\sT\RR^i \times \{\ast\}$ factors through $\sT\RR^{i'}$ as the map of ordinary tangent bundles induced by the ordinary (smooth) map $f_{|\RR^i}\colon \RR^i \to \RR^{i'}$.  
\end{remark}

\begin{example}
The identity map is conically smooth along $\RR^i$, and $\w{\sD}(\id)$ is the identity map on $\RR_{\geq 0} \times \sT\RR^{i} \times \sC(Z)$. More generally, let $h: [0,\infty) \times Z \to Z'$ be continuous and assume $f$ lifts to a map of the form $\w{f}(u,s,z) = ( g(u), s, h_s(z)) \in \RR^{i'}\times\RR_{\geq 0} \times Z'$ for $s \geq 0$. Then $\w{\sD}f|_{t=0}$ is given by
	\[
	(1, \vec v, u, [s,z])
	\mapsto
	(1, \sD g_u(\vec v), g(u), [s,h_0(z)]).
	\]
\end{example}

\begin{example}
Let $Z$ be a point and $f: \sC(Z) \to \sC(Z)$ be given by $f(s) = s^k$. Then (i) if $k<1$, $\w{\sD }f$ does not exist, (ii) if $k=1$, then $\w{\sD } f$ is the identity map as in the previous example, and (iii) if $k>1$, $\w{\sD }f|_{t = 0}$ is the constant map to the cone point.
\end{example}

\begin{example}
While there is a polar coordinates identification of underlying topological spaces $\sC(S^{n-1})\cong \RR^n$, conical smoothness at the cone point $\ast$ is weaker than smoothness at the origin of $\RR^n$.  Indeed, for $f\colon S^{n-1} \cong S^{n-1}$ an arbitrary homeomorphism, the map of stratified spaces $\sC(f)\colon \sC(S^{n-1}) \to \sC(S^{n-1})$ is conically smooth at the cone point $\ast$.  
\end{example}

\begin{lemma}[Chain rule]\label{smooth-compose}
Let $U\xra{f} V \xra{g}W$ be maps of stratified spaces between $C^0$ basics of equal depths.  
Suppose both $f$ and $g$ are conically smooth along their respective Euclidean coordinates.
Then $gf\colon U \to W$ is conically smooth along its Euclidean coordinates, and $\sD g\circ \sD f = \sD(gf)$.  
\end{lemma}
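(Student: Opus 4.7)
The plan is to reduce to the purely algebraic identity $(gf)_\Delta = g_\Delta \circ f_\Delta$, which is immediate from Definition~\ref{defn.f-delta}: each $(-)_\Delta$ is a product of three maps on the factors $\RR_{>0}$, $\RR^i_{\vec v}$, and $U$, and such factorwise products respect composition. Since $f$ and $g$ both send the cone point to the cone point, so does $gf$, and $(gf)_{|\RR^i} = g_{|\RR^{i'}} \circ f_{|\RR^i}$ holds on the nose, making $(gf)_\Delta$ well-defined.

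For the base case (that $gf$ is $C^1$ along $\RR^i$), set $\phi_f := \gamma^{-1} \circ f_\Delta \circ \gamma$ and similarly $\phi_g$ and $\phi_{gf}$. The identity above then gives $\phi_{gf} = \phi_g \circ \phi_f$. Note that $\phi_f$ preserves the $t$-coordinate, since $\gamma$, $\gamma^{-1}$, and $f_\Delta$ each do; by density of $\RR_{>0}$ in $\RR_{\geq 0}$, the continuous extension $\w{D}f$ likewise preserves this coordinate, so $\w{D}f$ lands in the source of $\w{D}g$. Hence $\w{D}g \circ \w{D}f$ is a well-defined continuous extension of $\phi_g \circ \phi_f = \phi_{gf}$, and by uniqueness of continuous extensions from a dense subset it is the required $\w{D}(gf)$. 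Restricting to $t = 0$ yields $D(gf) = Dg \circ Df$.

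To upgrade to $C^r$, and so to conical smoothness, I induct on $r$. For the inductive step, view $Df$ and $Dg$ as maps of basics $\RR^{2i} \times \sC(Z) \to \RR^{2i'} \times \sC(Z')$ and $\RR^{2i'} \times \sC(Z') \to \RR^{2i''} \times \sC(Z'')$; each is $C^{r-1}$ along its Euclidean factor by the hypothesis that $f$ and $g$ are $C^r$. The inductive hypothesis then gives that $Dg \circ Df = D(gf)$ is $C^{r-1}$ along $\RR^{2i}$, which is exactly the condition for $gf$ to be $C^r$ along $\RR^i$. Passing to all $r$ yields conical smoothness.

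I expect the only non-automatic point to be verifying that $Df$ itself sends cone points to cone points, so that the inductive hypothesis can be applied to $Df$ and $Dg$. This follows by direct inspection: computing the limit $\lim_{t\to 0}\phi_f(t,\vec v, u, \ast)$ along the cone-point stratum gives $\bigl(Df_{|\RR^i}(u)(\vec v), f_{|\RR^i}(u), \ast\bigr)$, in agreement with Remark~\ref{remark.usual-tangent-bundle-map}, so the cone-point stratum of the source is carried into that of the target.
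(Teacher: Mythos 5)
Your proof is correct and supplies exactly the details the paper waves at with ``directly follows that of the chain rule of multivariable calculus.'' You take the same approach: the identity $(gf)_\Delta = g_\Delta \circ f_\Delta$ (immediate from Definition~\ref{defn.f-delta} once one notes $f$ carries the cone-point stratum into the cone-point stratum, so that $(gf)_{|\RR^i}=g_{|\RR^{i'}}\circ f_{|\RR^i}$ on the nose) is the stratified analogue of the difference-quotient algebra in the classical proof, the continuous extensions compose by density of $\RR_{>0}\subset\RR_{\geq 0}$, and the verification via Remark~\ref{remark.usual-tangent-bundle-map} that $Df$ again sends cone points to cone points is precisely what is needed to run the $C^r$ bootstrap.
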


\begin{proof}
The proof directly follows that of the chain rule of multivariable calculus.  
\end{proof}

\begin{lemma}[Inverse function theorem (weak version)]\label{ivt}
Let $f: U \to U'$ be a conically smooth map between $C^0$ basics of equal depth; assume $f$ is injective and open. Suppose $\sD f\colon \RR^i_{\vec v} \times U \to \RR^{i'}_{\vec v'} \times U'$ is also an open embedding.  Then there is an open embedding $i\colon U'\hookrightarrow U'$ which is conically smooth along $\RR^{i'}$, whose image lies in the image of $f$, such that the composite map
\[
U' \xra{i} U' \xra{f^{-1}} U
\]
is conically smooth along $\RR^{i'}$.  
\end{lemma}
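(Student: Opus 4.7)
My plan is to build $i$ by composing a smooth open embedding of $\RR^{i'}$ into $V := f_{|\RR^i}(\RR^i)$ with a smoothly varying, \emph{bounded}, rescaling of the cone direction into a cone-neighborhood of the cone-stratum image of $f$, and then to verify conical smoothness of $g := f^{-1}\circ i$ by exploiting functoriality of $h\mapsto \gamma^{-1}h_\Delta\gamma$ under composition, together with the open-embedding hypothesis on $Df$. To set this up, I first reduce at the cone stratum: by Remark~\ref{remark.usual-tangent-bundle-map}, the restriction of $Df$ to the $\sC(Z)$-cone-point stratum factors through $T\RR^{i'}$ as the classical differential $D(f_{|\RR^i})$ of the ordinary smooth map $f_{|\RR^i}\colon\RR^i\to\RR^{i'}$. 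Since $Df$ is an open embedding, so is this restriction; the classical (smooth) inverse function theorem then forces $i = i'$, shows that $f_{|\RR^i}$ is a smooth open embedding onto its open image $V\subset\RR^{i'}$, and produces a smooth inverse $V\to\RR^i$.

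\textbf{Construction of $i$.} Pick any smooth open embedding $\phi\colon \RR^{i'}\hookrightarrow V$. Openness of $f(U)\subset U'$ together with compactness of $Z'$ produces, for each $v\in\RR^{i'}$, some $\delta(v)>0$ with $\{\phi(v)\}\times\{[s,z']\mid s<\delta(v)\}\subset f(U)$; a standard partition-of-unity argument on the paracompact $\RR^{i'}$ lets one choose such $\delta\colon \RR^{i'}\to\RR_{>0}$ smoothly. Fix also a smooth increasing bijection $\psi\colon\RR_{\geq 0}\to[0,1)$ with $\psi(0)=0$ and $\psi'(0)=1$, for instance $s\mapsto s/(1+s)$, and define
\[
i\colon U'\to U',\qquad (v,[s,z'])\longmapsto \bigl(\phi(v),\,[\delta(v)\,\psi(s),\,z']\bigr).
\]
By inspection $i$ is a stratified open embedding into $f(U)$. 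It is conically smooth along $\RR^{i'}$: the $\RR^{i'}$-component is smooth in $v$, and the cone-rescaling $s\mapsto \delta(v)\psi(s)$ vanishes to first order at $s=0$ with linear part $\delta(v)$, so $\gamma^{-1}i_\Delta\gamma$ extends continuously at $t=0$ to $Di(\vec v,v,[s,z'])=(D\phi_v(\vec v),\,\phi(v),\,[\delta(v)\,s,\,z'])$; iterating, each $D^r i$ has the same ``linear in the cone direction'' form and is thus conically smooth.

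\textbf{Main step: conical smoothness of $g = f^{-1}\circ i$.} The assignment $h\mapsto h_\Delta$ is functorial under composition, hence so is $h\mapsto \gamma^{-1}h_\Delta\gamma$; applied to $f\circ g = i$ this yields, for every $t>0$,
\[
\bigl(\gamma^{-1}f_\Delta\gamma\bigr)\circ\bigl(\gamma^{-1}g_\Delta\gamma\bigr)\;=\;\gamma^{-1}i_\Delta\gamma.
\]
By hypothesis the left factor extends continuously to $t=0$ as the open embedding $Df$, which is then a homeomorphism onto its open image with continuous inverse; by the previous step the right-hand side extends to $Di$; and since $\mathrm{image}(i)\subset\mathrm{image}(f)$ we have $\mathrm{image}(Di)\subset\mathrm{image}(Df)$. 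Solving the identity at $t=0$ thus produces the continuous extension $\bigl(\gamma^{-1}g_\Delta\gamma\bigr)\big|_{t=0} = (Df)^{-1}\circ Di$, proving $g$ is $C^1$ along $\RR^{i'}$. Higher regularity follows by induction on $r$: granting $g$ is $C^{r-1}$, the chain rule (Lemma~\ref{smooth-compose}) applied to $Df\circ Dg = Di$ identifies $Dg = (Df)^{-1}\circ Di$, whose $C^{r-1}$ regularity follows from the inductive hypothesis together with the explicit fiberwise-linear form of $Df$ (combining $f_{|\RR^i}^{-1}$ from the first step with the cone-scaling datum of $Df$). The main obstacle is exactly this induction: at each order one must ensure that ``inverting the open embedding $Df$'' yields a map of regularity matching what has already been established for $g$, which ultimately rests on the fact that $Df$ has an explicit fiberwise-linear form along the cone direction that lets its inverse be read off directly from $f_{|\RR^i}^{-1}$ and the cone-scaling datum, rather than by a circular appeal to the very inverse function theorem under proof.
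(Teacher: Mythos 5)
Your construction of $i$ is a welcome and correct piece of explicitness: the paper disposes of this by the throwaway remark that conically smooth self-embeddings of $U'$ form a base for the topology near $\RR^{i'}$, while you actually exhibit one, and your computation of $Di$ (and the higher $D^r i$) from $\phi$, $\delta$, $\psi$ checks out. The trouble is in the step you call the \emph{Main step}, which is exactly the part the paper hides behind the phrase ``the proof directly follows that of the inverse function theorem of multivariable calculus,'' and your argument there does not actually discharge the obligation.

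Two specific problems. First, the inclusion $\mathrm{image}(Di)\subset\mathrm{image}(Df)$ does not follow from $\mathrm{image}(i)\subset\mathrm{image}(f)$. For each $t>0$ one does have $\mathrm{image}(D_t i)\subset\mathrm{image}(D_t f)$ (where $D_t h := \gamma_t^{-1}h_\Delta\gamma_t$), but such inclusions need not survive a pointwise limit of the maps involved. As it happens the conclusion is true here, but for a different reason: both $Df$ and $Di$ are ``scale-invariant'' in the sense of the equation~(\ref{linear-equation}), and the argument of Lemma~\ref{linear-is-iso} shows such a scale-invariant open injection is \emph{surjective} onto each fiber $\RR^{i'}_{\vec v'}\times\sC(Z')$; thus both images equal $T\RR^{i'}|_V\times\sC(Z')$. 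You should give this reason, not the stated one.

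Second, and more seriously, ``solving the identity at $t=0$'' is not a valid inference. You know $\bigl(\gamma^{-1}f_\Delta\gamma\bigr)\circ\bigl(\gamma^{-1}g_\Delta\gamma\bigr)=\gamma^{-1}i_\Delta\gamma$ on $\RR_{>0}\times T\RR^{i'}\times\sC(Z')$, you know the outer two maps extend continuously to $t=0$, and you know $Df$ is a homeomorphism onto its open image. What is needed is that the middle factor also converges as $t\to0^+$, and this does not follow formally: writing $q(t)=\bigl(\gamma^{-1}g_\Delta\gamma\bigr)(t,p)$, the identity forces any \emph{subsequential} limit of $q(t)$ to be $(Df)^{-1}\bigl(Di(p)\bigr)$, but one must first rule out $q(t)$ escaping to infinity (in $T\RR^i$ or in the $\sC(Z)$-direction). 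That boundedness is precisely what the contraction/Lipschitz estimate in the classical inverse function theorem provides; invertibility of the slice $Df$ at $t=0$ does not, by itself, give quantitative control of $(D_t f)^{-1}$ uniformly for small $t>0$. Your proof needs this estimate (or a degree-theoretic substitute after applying $\unzip$) spelled out, and the same gap propagates into the higher-order bootstrap, which you flag as the ``main obstacle'' but do not actually resolve: the asserted ``explicit fiberwise-linear form'' of $(Df)^{-1}$ still needs the quantitative IFT input to confirm that inverting $Df$ does not degrade regularity at each stage.
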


\begin{proof}
The map $f^{-1}\colon U' \dashrightarrow U$ is an open embedding, where it is defined.  
Conically smooth self-embeddings $U'\hookrightarrow U'$ form a base for the topology about $\RR^{i'}$ -- i.e., any other neighborhood of $\RR^{i'} \times \{\ast\}$ contains the image of a smooth self-embedding. So we must show $f^{-1}$ is conically smooth along $\RR^{i'}$, where it is defined.  
The proof of this directly follows that of the inverse function theorem of multivariable calculus.  
\end{proof}

\begin{cor}\label{composition-open}
Let $f\colon U \to V$ be an open embedding between $C^0$ basics which is conically smooth along the Euclidean coordinates of $U$.  
Suppose $\sD f\colon \RR^i_{\vec v}\times U \to \RR^j_{\vec v}\times V$ is also an open embedding.
Then $\sD^k f \colon (\RR^i_{\vec v})^{\times k} \times U \to (\RR^j_{\vec v})^{\times k} \times V$ is an open embedding for all $k\geq 0$.  
\end{cor}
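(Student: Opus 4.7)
I would proceed by induction on $k$; the cases $k=0$ (which is just $f$) and $k=1$ (which is the hypothesis on $Df$) are immediate. For the inductive step with $k \geq 2$, the plan is to combine the weak inverse function theorem (Lemma~\ref{ivt}) with the chain rule (Lemma~\ref{smooth-compose}) to propagate the open-embedding property through iterated derivatives.

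Explicitly, Lemma~\ref{ivt} applied to $f$ produces a conically smooth open self-embedding $\iota\colon V \hookrightarrow V$ (conically smooth along $\RR^j$), with image inside $f(U)$, such that $g := f^{-1} \circ \iota$ is conically smooth along $\RR^j$. By construction, $f \circ g = \iota$ on the domain of $g$. Iteratively applying Definition~\ref{def:cone-sm} gives that $f$, $g$ and $\iota$ admit all iterated derivatives, and differentiating the identity $f \circ g = \iota$ via the chain rule (Lemma~\ref{smooth-compose}) yields, for every $m \leq k$,
\[
D^m f \circ D^m g \;=\; D^m \iota
\]
on the relevant common domain.

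The crux is then to establish that $D^m \iota$ is itself an open embedding for each $m$; granting this, the identity $D^k f \circ D^k g = D^k \iota$ exhibits $D^k f$ as injective on the image of $D^k g$ with image containing the open set $\mathrm{im}(D^k \iota)$, which together with continuity forces $D^k f$ to be an open embedding. To establish openness of $D^m \iota$, I would exploit the flexibility in the choice of $\iota$ from Lemma~\ref{ivt}: by shrinking, $\iota$ can be arranged to be $C^0$-close to the identity of $V$ (indeed, isotopic to it through conically smooth open embeddings), so that it satisfies the hypotheses of the corollary with $U = V$, and the induction can be run for $\iota$ in parallel with $f$. A secondary route is to bypass $\iota$ and argue directly: since $Df$ is an open embedding, so is its restriction to the Euclidean stratum $\RR^i$, and the classical inverse function theorem together with the iterated conical-smoothness equations allows one to construct the inverse $D^k g$ of $D^k f$ on a neighborhood of the cone-point stratum.

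The main obstacle I anticipate is showing that the auxiliary $\iota$ produced by Lemma~\ref{ivt} can be chosen to satisfy the iterated openness condition needed at each inductive stage. Unpacking the proof of Lemma~\ref{ivt} should make this manageable, since on each compact piece of $V$ one can arrange $\iota$ arbitrarily close to the identity in the conically smooth topology, so its iterated derivatives will inherit the open-embedding property by perturbation from those of the identity.
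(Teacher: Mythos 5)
Your overall strategy---combine the weak inverse function theorem (Lemma~\ref{ivt}) with the chain rule (Lemma~\ref{smooth-compose}) applied to the identity $f\circ g=\iota$, and iterate---is a reasonable reading of how this corollary is meant to follow from the two preceding lemmas. But as written there are two real gaps, and the second one is not merely a ``main obstacle'' you can defer: it is where the proof actually lives.

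First, the coverage issue: the identity $D^k f\circ D^k g=D^k\iota$ only exhibits $D^k f$ as an open embedding on the image of $D^k g$, not on all of $(\RR^i_{\vec v})^{\times k}\times U$. The phrase ``which together with continuity forces $D^k f$ to be an open embedding'' is not an argument. One needs to say: as $\iota$ varies, the sets $D^k g\bigl(\mathrm{dom}(D^k g)\bigr)$ cover a neighborhood of the cone locus of the source, and away from the cone locus $f$ restricts to an ordinary smooth open embedding whose $Df$ is an ordinary open immersion, so the statement there is classical.

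Second, and more seriously, you reduce to the claim that $D^k\iota$ is an open embedding, and the proposed resolution---choose $\iota$ $C^0$-close to the identity and ``inherit the open-embedding property by perturbation''---does not go through. In the conically smooth setting, $C^0$- or even $C^1$-closeness to the identity gives no control over the behavior of the higher iterated derivatives $D^k\iota$ at the cone locus; there is no off-the-shelf $C^k$-stability to invoke, so there is nothing to perturb from. What one should say instead is concrete: the proof of Lemma~\ref{ivt} shows that $\iota$ may be taken to be \emph{any} conically smooth self-embedding of $V$ with small enough image, and in particular one may take $\iota=\gamma_{t,0}$ (the scaling map from \S\ref{sec.conical-smoothness}), since the images $\{\gamma_{t,0}(V)\mid t>0\}$ form a base for the topology about the cone locus. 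For such $\iota$ one computes directly that $D\gamma_{t,0}=\gamma_{t,0}$ in the relevant sense, whence $D^k\iota$ is again a scaling, manifestly an open embedding, and the double induction collapses. Without a definite choice of this kind, the argument is circular: applying the corollary to $\iota$ to get $D^k\iota$ open already presupposes the hypothesis ($D\iota$ open) for $\iota$, which Lemma~\ref{ivt} as stated does not supply.
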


\subsection{The definition of conically smooth stratified spaces}
Based on the concept of \emph{conical smoothness along $\RR^i$} of the previous section, we give a definition of conically smooth stratified spaces as $C^0$ stratified spaces equipped with a maximal {\em conically smooth} atlas.  
The advantages of conically smooth stratified spaces over $C^0$ stratified spaces are the familiar ones from ordinary differential topology; namely, consequences of transversality. 

Before we give the definitions, we outline the construction for the benefit of the reader. The two main players are a sheaf $\sm$ of atlases, and a category $\bscd$ of basic singularity types.

\begin{enumerate}
	\item It is convenient to define an equivalence relation on the set of possible atlases. This allows us to define a presheaf 
		\[
		\sm: (\snglrdC)^{\op} \to \Set
		\] 
	where an element $\cA \in \sm(X)$ will be a choice of such an equivalence class. As will be obvious, an equivalence class $\cA$ will always have a preferred representative given by a {\em maximal} atlas. We will hence often refer to an element $\cA$ as a {\em maximal atlas} for $X$.	We will prove in Lemma~\ref{sheaf} that this presheaf $\sm$ is in fact a sheaf.
	\item We introduce the notion of {\em basic} conically smooth stratified spaces. While smooth manifolds admit covers by manifolds that are all diffeomorphic to $\RR^n$, stratified spaces do not. 
	Even so, conically smooth stratified spaces are locally standard -- every point has a small neighborhood dictating the singularity type of the point, and every singularity type is an object of the category $\bscd$. We call these building blocks {\em basics}. At the end of the day, one should think of $\bscd$ as a full subcategory of $\snglrd$, the category of all conically smooth stratified spaces. We will prove it is, after all definitions are complete, in Proposition~\ref{prop.basics-are-singular}.
	\item As we saw in Proposition~\ref{prop.induction} and Remark~\ref{remark.non-increasing}, the notions of depth and dimension give us convenient parameters for inductive definitions. 
Following this intuition, we define $\sm$ and $\bscd$ inductively by {\em depth}. When we write $\bscd_{\leq k, \leq n}$, we mean the category consisting of basics with depth $\leq k$ and dimension $\leq n$. In the induction, we will define $\bscd_{\leq k+1, \leq \infty}$ by assuming we have defined $\bscd_{\leq k, \leq \infty}$ and the presheaf $\sm$ on $\snglrdC_{\leq k, \leq \infty}$.
Then we will declare 
		\[
		\bscd = \bigcup_{k \geq -1} \bscd_{\leq k, \leq \infty}.
		\]
	\item To implement the induction, it will be convenient to introduce a functor
		\begin{equation}\label{eqn.cross-R}
		\RR \times - : \bscd_{\leq k, \leq \infty} \to \bscd_{\leq k, \leq \infty}
		\end{equation}
	that takes a basic $U$ to the basic $\RR \times U$. Note that we have also constructed an endofunctor $(\RR \times -)$ defined on $\snglrdC$, which we do not distinguish in notation from the functor defined on $\bscd$. The functor on $\bscd_{\leq k, <\infty}$ will induce a natural transformation
		\begin{equation}\label{eqn.cross-R-nat-trans}
		\sm(-) \to \sm(\RR \times -) 
		\end{equation}
	for $C^0$ stratified spaces of depth $\leq k$. The philosophy is that an atlas on $X$ determines an atlas on $\RR \times X$.
\end{enumerate}

\begin{definition}[The inductive hypothesis]
When we {\em assume the inductive hypothesis for $k \in \ZZ_{\geq -1}$}, we assume three things have been defined: (i) The category $\bscd_{\leq k, \leq \infty}$, (ii) the presheaf $\sm$ on the full subcategory $(\snglrdC_{\leq k, \leq \infty})^{\op} \subset (\snglrdC)^{\op}$, and (iii) the functor (\ref{eqn.cross-R}) along with the natural transformation (\ref{eqn.cross-R-nat-trans}).
\end{definition}

\begin{definition}[Base case: the empty manifold]
Let $\emptyset \in \snglrdC_{\leq -1, \leq \infty}$ be the empty $C^0$ stratified space. 
We declare the empty manifold to have a unique conically smooth atlas:
	\[
	\sm(\emptyset) = \{\ast\}
	\]
and that there are no basics of negative depth. That is,
	\[
	\bscd_{\leq -1, \leq \infty} = \emptyset
	\] 
is the empty category.
\end{definition}

\begin{remark}
The inductive hypothesis is satisfied for $k=-1$, as the functor (\ref{eqn.cross-R}) and the natural transformation (\ref{eqn.cross-R-nat-trans}) are uniquely determined.
\end{remark}

\begin{definition}[Basics]\label{defn.basics}
Assume the inductive hypothesis for $k$. An object of $\bscd_{\leq k+1, \leq \infty }$ is a pair
	\[
	(U,\cA_Z)
	\]
	consisting of a $C^0$ basic $U = \RR^i\times \sC(Z)$ of depth at most $k+1$, together with a maximal atlas $\cA_Z\in \sm(Z)$.

Let $U= (\RR^i\times \sC(Z),\cA_Z)$ and $V= \bigl(\RR^j \times \sC(Y),\cA_Y\bigr)$ be two objects of $\bscd_{\leq k+1, \leq \infty}$.
A morphism from $U$ to $V$ is the datum of an open embedding $f\colon U \hookrightarrow V$ of underlying stratified topological spaces which satisfies the following properties.  
\begin{enumerate}

	\item If, on the level of posets, $f$ sends the cone point to the cone point, then we require the following properties.
	\begin{itemize}
		\item $f$ is conically smooth along $\RR^i$.
		\item The map $\sD f\colon \RR^i_{\vec v} \times U \to \RR^j_{\vec v} \times V$ is injective.  
		\item There is an equality 
			\begin{equation}
			\label{eqn.atlas-preserved}
			\cA_{|f^{-1}(V\smallsetminus \RR^j)} = (f_{|f^{-1}(V\smallsetminus \RR^j)})^\ast \cA_{|V\smallsetminus \RR^j}. 
			\end{equation}
	\end{itemize}
	\item If, on the level of posets, $f$ does \emph{not} send the cone point to the cone point, then $f$ factors as open embeddings of stratified topological spaces $f\colon U \xra{~f_0~} \RR^j\times \RR_{>0}\times Y \hookrightarrow V$.  
	We require that $\{(U,f_0)\}$ is  is a member of an atlas which represents $\cA_{\RR^j\times \RR_{>0}\times Y}$
\end{enumerate}

We call such a morphism $f$ a \emph{conically smooth} open embedding.
\end{definition}

\begin{remark}
Note that to make sense of equation (\ref{eqn.atlas-preserved}), we have utilized the natural transformation (\ref{eqn.cross-R-nat-trans}): the maximal atlas on $V \smallsetminus \RR^j = \RR^j \times Y$, for instance, is induced by the maximal atlas on $Y$.   We postpone the definition of this natural transformation until Definition~\ref{defn.products-j}, after we have defined the notion of an atlas.
\end{remark}

\begin{example}
Pick an $f$ that sends the cone point to the cone point at the level of posets. By Remark~\ref{remark.usual-tangent-bundle-map}, $\sD f$ restricts to a map of tangent bundles $\sT\RR^i \times \{\ast\} \to \sT\RR^j \times \{\ast\}$. The injectivity condition in (1) ensures that the restriction to the zero section $f_{|\RR^i}: \RR^i \into \RR^j$ must be an embedding. In Lemma~\ref{only-equivalences}, we will in fact see that if $f$ is a conically smooth embedding sending the cone point to the cone point at the level of posets, $U$ and $V$ must be isomorphic stratified spaces.
\end{example}

\begin{definition}
We will refer to an object of $\bscd_{\leq k+1, \leq \infty}$ as a \emph{basic} (of depth at most $k+1$).  
We will often denote an object $(U,\cA_Z)$ of $\bscd_{\leq k+1, \leq \infty}$ by its underlying stratified topological space $U$, with the element $\cA_Z$ understood.  
\end{definition}

\begin{example}
Note that (\ref{eqn.atlas-preserved}) is satisfied for $k=-1$ automatically, as  $V \smallsetminus \RR^j = \emptyset$, and $\sm (\emptyset)$ has a unique element. Hence the category $\bscd_{\leq 0, \leq \infty}$ is equivalent to the category whose objects are $\RR^i$ for $i \in \ZZ_{\geq 0}$, and whose morphisms are smooth open embeddings with respect to the standard smooth structures. In particular, there are no morphisms between $\RR^i$ and $\RR^{i'}$ for $i \neq i'$.
\end{example}

\begin{lemma}[Basics form a basis for basics]\label{lemma.small-opens}
Let $U = \RR^i \times \sC(Z)$ be a basic of depth $k+1$. 
Then the collection of open subsets 
	\[
	\{\varphi(V) \,| \, V \xra{\varphi} U \text{ \em{ is a morphism in} } \bscd_{\leq k+1, \leq \infty}\}
	\]
forms a basis for the topology of $U$. 
\end{lemma}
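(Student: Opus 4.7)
The plan is to proceed by induction on the depth $k+1$, at each step splitting the local analysis according to whether the point $u \in U = \RR^i \times \sC(Z)$ lies on the cone locus $\RR^i \times \{\ast\}$ or in its complement $\RR^i \times \RR_{>0} \times Z$. The base case $k+1 = 0$ (so $U = \RR^i$) is standard smooth manifold theory: affine smooth charts have images forming a basis for the topology of $\RR^i$. For the inductive step, fix $u \in U$ and an open neighborhood $W$ of $u$; I will construct a basic $V \in \bscd_{\leq k+1, \leq \infty}$ and a morphism $\phi \colon V \to U$ in $\bscd_{\leq k+1, \leq \infty}$ with $u \in \phi(V) \subset W$.

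For a cone-locus point $u = (u_0, \ast)$, I take $V := U$ and use a self-embedding of the form
\[
\phi(v, [s, z]) := \bigl(u_0 + \lambda(v),\, [f(s), z]\bigr),
\]
where $\lambda \colon \RR^i \xrightarrow{\cong} B(0, \delta)$ is a smooth diffeomorphism with non-singular derivative at the origin (e.g.\ $\lambda(v) = \delta v / \sqrt{1 + |v|^2}$) and $f \colon \RR_{\geq 0} \xrightarrow{\cong} [0, \epsilon)$ is a smooth diffeomorphism with $f(0) = 0$ and $f'(0) > 0$ (e.g.\ $f(s) = \epsilon s / (1 + s)$). Choosing $\delta$ and $\epsilon$ small enough ensures $\phi(U) \subset W$, since sets of the form $B(u_0, \delta) \times \bigl([0, \epsilon) \times Z / (\{0\} \times Z)\bigr)$ form a neighborhood basis of $u$. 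Verifying case~(1) of Definition~\ref{defn.basics} amounts to direct computation using Definition~\ref{def:cone-sm}: $\phi$ sends the cone-point stratum to itself, is conically smooth along $\RR^i$, has injective $D\phi$ (since $D\lambda$ is injective and $f'(0) > 0$), and preserves the atlas $\cA_Z$ since it acts as the identity in the $Z$-coordinate.

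For a non-cone-locus point $u = (x, [s_0, z_0])$ with $s_0 > 0$, the point lies in the open subset $\RR^i \times \RR_{>0} \times Z \subset U$, which by Example~\ref{non-decreasing} is a stratified space of depth $\depth(Z) \leq k$. By the inductive hypothesis, $Z$ carries a maximal atlas $\cA_Z$; applying the lemma inductively at depth $\leq k$, I select a basic $V_Z \in \bscd_{\leq k, \leq \infty}$ and a chart $\psi \colon V_Z \hookrightarrow Z$ drawn from $\cA_Z$ with $z_0 \in \psi(V_Z)$ arbitrarily small. Combining with a smooth chart $\xi \colon \RR^{i+1} \hookrightarrow \RR^i \times \RR_{>0}$ around $(x, s_0)$ yields a product chart
\[
\phi \colon \RR^{i+1} \times V_Z \xrightarrow{\xi \times \psi} \RR^i \times \RR_{>0} \times Z \hookrightarrow U,
\]
which case~(2) of Definition~\ref{defn.basics} recognizes as a morphism in $\bscd_{\leq k+1, \leq \infty}$ provided the pair $(\RR^{i+1} \times V_Z,\, \xi \times \psi)$ belongs to the atlas on $\RR^i \times \RR_{>0} \times Z$ induced from $\cA_Z$ via the natural transformation~(\ref{eqn.cross-R-nat-trans}).

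The main obstacle is this last step --- verifying that such product charts genuinely belong to the induced atlas --- which requires unpacking the (forthcoming) definition of the natural transformation~(\ref{eqn.cross-R-nat-trans}) to confirm that product atlases truly contain product charts. The cone-locus case is comparatively routine, reducing to explicit differentiation and the observation that $\phi$ is trivial in the $Z$-coordinate.
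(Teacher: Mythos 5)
Your cone-locus construction matches the paper's exactly (modulo naming: the paper uses $r\colon \RR^i \to R$ and a smooth embedding $g\colon \RR_{\geq 0}\to[0,s_0)$ where you use $\lambda$ and $f$), so that part is fine. The divergence — and the gap you correctly flag — is in the non-cone-locus case. The paper's argument is shorter and sidesteps your obstacle entirely: since $x$ lies in $\RR^i\times\RR_{>0}\times Z = U\setminus\RR^i$, and this stratified space carries the induced atlas by construction, the cover property of an atlas (not the basis property) directly furnishes a chart $\psi\colon U'\hookrightarrow \RR^i\times\RR_{>0}\times Z$ with $x\in\psi(U')$. This $\psi$ is tautologically a morphism in $\bscd_{\leq k+1,\leq\infty}$ by case~(2) of Definition~\ref{defn.basics}, no verification needed. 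Then, because $\depth(U')\leq k$, the inductive hypothesis is applied to the \emph{basic} $U'$ (not to $Z$) to shrink: there is a morphism $W\to U'$ in $\bscd_{\leq k,\leq\infty}$ with image containing $\psi^{-1}(x)$ and contained in $\psi^{-1}(V\cap\psi(U'))$, and the composite $W\to U'\xra{\psi}U$ is the desired chart.

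Two specific issues with your version. First, you apply the lemma inductively ``to $Z$'' to get a small chart $\psi\colon V_Z\hookrightarrow Z$ — but $Z$ need not be a basic, so the lemma does not directly apply; you would need to first invoke the cover property of $\cA_Z$ to land inside some chart $V'_Z\hookrightarrow Z$ and then apply the lemma to the basic $V'_Z$. Second, and more substantively, your explicit product chart $\xi\times\psi$ (with $\xi$ a generic smooth open embedding rather than the identity on the $\RR^{i+1}$ factor) is not literally of the form $\id\times\phi$ produced by Definition~\ref{defn.products-j}, so you would need to show it belongs to an \emph{equivalent} atlas — say by factoring $\xi\times\psi = \bigl((\id_{\RR^i}\times\alpha)\times\psi\bigr)\circ(\xi'\times\id_{V_Z})$ with $\alpha\colon\RR\cong\RR_{>0}$ a fixed diffeomorphism and $\xi'$ a smooth self-embedding of $\RR^{i+1}$, and checking that $\xi'\times\id_{V_Z}$ is a morphism in $\bscd_{\leq k,\leq\infty}$. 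This is bridgeable but takes real work, and the paper's route avoids it; your approach trades the paper's appeal to an abstract atlas chart for an explicit construction that then requires re-verifying atlas membership, which is a net loss in economy.
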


\begin{proof}
We must show that for any open set $V \subset U$ and any $x \in V$, there exists a morphism $i: W \to U$ in $\bscd_{\leq k+1, \leq \infty}$ such that $x \in i(W)$ and $i(W) \subset V$. 
We proceed by induction on $k$.

Assume $x$ is in the cone stratum $\RR^i \times \{*\}$. 
Then we can choose some $s_0 \in \RR_{>0}$, and some open ball $R \subset \RR^i$, such that $V$ contains the open set 
	\[
	R \times \{[s,z] \,  | 0\leq s < s_0, \, z \in Z\}.
	\]
Fixing an smooth open embedding $r: \RR^i \to R$,
and choosing a decreasing smooth embedding $g: \RR_{\geq 0} \to [0,s_0)$, we set $i: W = U \to U$ to be the map $(u,[s,z]) \mapsto (r(u),[g(s),z])$.

If $x$ is not in the cone stratum $\RR^i \times \{\ast\}$, we can choose a basic chart $\psi: U' \to \RR^i\times \RR_{>0} \times Z \subset \RR^i\times \sC(Z)$ containing $x$, and we may assume $V$ to be contained in $\psi(U')$. Since necessarily $\depth(U')\leq k$, the proof is finished by induction.
\end{proof}

\begin{definition}[Atlases]
Assume the category $\bscd_{\leq k+1,\leq\infty}$ has been defined, and let $X\in \snglrdC_{\leq k+1, \leq \infty}$.
An {\em atlas for $X$} is a collection of pairs 
	\[
	\w{\cA} = \{\bigl( U, \varphi\colon U \hookrightarrow X\bigr)\}~,
	\]
in which $U \in \bscd_{\leq k+1, \leq \infty}$ and $\varphi$ an open embedding of stratified topological spaces, and this collection must satisfy the following properties.
\begin{itemize}
	\item[{\bf Cover:}]
	$\w{\cA}$ is an open cover of $X$.  
	\item[{\bf Atlas:}]
	For each pair $(U,\varphi),(V,\psi)\in \w{\cA}$ and $x\in \varphi(U)\cap\psi(V)$ there is a diagram 
		$
			U\xla{f} W\xra{g} V
		$
	in $\bscd_{\leq k+1, \leq \infty}$
	such that the resulting diagram of maps of stratified topological spaces
		\begin{equation}\label{eqn.atlas}
			\xymatrix{
			W  \ar[r]^g  \ar[d]^f
			&
			V  \ar[d]^\psi
			\\
			U  \ar[r]^\varphi
			&
			X
			}
		\end{equation}
	commutes and the image $\varphi f(W) = \psi g(W)$ contains $x$.  
\end{itemize}
Two atlases $\w{\cA}$ and $\w{\cA}'$ are declared to be equivalent if their union $\w{\cA}\cup \w{\cA}'$ is also an atlas.
\end{definition}

\begin{lemma}\label{lemma.atlas-transitivity}
The relation 
	\[
	\w{\cA} \sim \w{\cA'} \iff \w{\cA} \cup \w{\cA'} \text{ is an atlas}
	\]
is an equivalence relation.
\end{lemma}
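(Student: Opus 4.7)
Reflexivity and symmetry are immediate: $\w{\cA}\cup\w{\cA}=\w{\cA}$ is already an atlas, and union is commutative. The only real content is transitivity, which I focus on. Assume $\w{\cA}\cup\w{\cA'}$ and $\w{\cA'}\cup\w{\cA''}$ are atlases; I must show that $\w{\cA}\cup\w{\cA''}$ satisfies the Cover and Atlas axioms. Cover is trivial, and the Atlas axiom needs checking only in the mixed case $(U,\phi)\in\w{\cA}$, $(V,\psi)\in\w{\cA''}$, $x\in\phi(U)\cap\psi(V)$. The plan is to use $\w{\cA'}$ as a bridge.

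Choose $(U',\phi')\in\w{\cA'}$ with $x\in\phi'(U')$, possible since $\w{\cA'}$ is a cover. Applying the Atlas axiom of $\w{\cA}\cup\w{\cA'}$ at $x$ produces a basic $W_1$ with morphisms $f_1\colon W_1\to U$ and $g_1\colon W_1\to U'$ in $\bscd$ such that $\phi f_1=\phi'g_1$ and $x\in\phi f_1(W_1)$. Similarly, the Atlas axiom of $\w{\cA'}\cup\w{\cA''}$ yields morphisms $f_2\colon W_2\to U'$ and $g_2\colon W_2\to V$ in $\bscd$ with $\phi'f_2=\psi g_2$ and $x\in\phi'f_2(W_2)$. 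Writing $y:=(\phi')^{-1}(x)\in U'$, the open set $g_1(W_1)\cap f_2(W_2)\subset U'$ contains $y$; Lemma~\ref{lemma.small-opens} then supplies a basic $W$ and a morphism $h\colon W\to U'$ in $\bscd$ with $y\in h(W)\subset g_1(W_1)\cap f_2(W_2)$.

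The main obstacle, and the heart of the argument, is to lift $h$ to morphisms $\tilde\alpha:=g_1^{-1}\circ h\colon W\to W_1$ and $\tilde\beta:=f_2^{-1}\circ h\colon W\to W_2$ \emph{in the category $\bscd$}. Set-theoretically these lifts are well-defined since $g_1$ and $f_2$ are injective open embeddings. To upgrade them to morphisms in $\bscd$, I verify the axioms of Definition~\ref{defn.basics}: conical smoothness of the local inverses of $g_1$ and $f_2$ on the open subsets containing $h(W)$ comes from the weak inverse function theorem (Lemma~\ref{ivt}) applied along the cone-point strata; composing with $h$ is then conically smooth by the chain rule (Lemma~\ref{smooth-compose}). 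Injectivity of $D\tilde\alpha$ and $D\tilde\beta$ is inherited from injectivity of $Dh$ (using the chain rule $Dh=Dg_1\circ D\tilde\alpha$ and similarly for $\tilde\beta$), and the atlas-compatibility condition~(\ref{eqn.atlas-preserved}) on the deep strata is pulled back through $h$ using the natural transformation~(\ref{eqn.cross-R-nat-trans}) supplied by the inductive hypothesis.

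Granting the lifts, the composites $f_1\circ\tilde\alpha\colon W\to U$ and $g_2\circ\tilde\beta\colon W\to V$ are morphisms in $\bscd$ by the chain rule, and a direct calculation gives
\[
\phi\circ(f_1\circ\tilde\alpha)\;=\;\phi\circ f_1\circ g_1^{-1}\circ h\;=\;\phi'\circ h\;=\;\psi\circ g_2\circ f_2^{-1}\circ h\;=\;\psi\circ(g_2\circ\tilde\beta),
\]
with $x=\phi'(y)\in\phi'(h(W))$ lying in the common image. This produces the required diagram~(\ref{eqn.atlas}) for $(U,\phi)$ and $(V,\psi)$ at $x$, completing the verification that $\w{\cA}\cup\w{\cA''}$ is an atlas and hence transitivity of the relation.
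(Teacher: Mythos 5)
Your proof is correct and follows essentially the same route as the paper's: transitivity is the only substantive part; you bridge through $\w{\cA'}$, intersect the images of the two bridging basics inside $U'$, invoke Lemma~\ref{lemma.small-opens} to produce $W\to U'$, and then use the weak Inverse Function Theorem (Lemma~\ref{ivt}) together with the chain rule and the depth induction to lift $W$ into both $W_1$ and $W_2$. The paper states the final step (``an application of Lemma~\ref{lemma.small-opens} and the Inverse Function Theorem completes the proof'') without elaboration; you usefully unpack what that application actually buys, namely that the local inverses $g_1^{-1}$ and $f_2^{-1}$ are conically smooth where defined so the lifts $\tilde\alpha,\tilde\beta$ land in $\bscd$, with the atlas-compatibility clause~(\ref{eqn.atlas-preserved}) handled by the inductive hypothesis on depth.
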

\begin{proof}

The only non-obvious point is whether the relation is transitive.
We proceed by induction on depth, assuming the relation to be transitive for spaces of depth $\leq k$.

Assume that $\w{\cA_0} \cup \w{\cA_1}$ and $\w{\cA_1}\cup\w{\cA_2}$ are atlases, and fix some $x \in \varphi_0(U_0) \cap \varphi_2(U_2)$ for charts $(U_j,\varphi_j)$ in  $\w{\cA_j}$. By the covering property of $\w{\cA_1}$, the point $x$ is also in the open set $\varphi_1(U_1)$ for some chart in $\w{\cA_1}$. Since we have assumed $\w{\cA_j} \sim \w{\cA_{j+1}}$, there are basics $W_{01}$ and $W_{12}$ fitting into the commutative diagram
	\[
	\xymatrix{
	&W_{01} \ar[r] \ar[d]^{f_{01}} &U_0\ar[dd]^{\varphi_0}
	\\
	W_{12} \ar[r]^{f_{12}} \ar[d] &U_1 \ar[dr]^{\varphi_1}
	\\
	U_2 \ar[rr]^{\varphi_2} && X
	}
	\]
where the maps to the $U_j$ are morphisms in $\bscd_{\leq k+1, \leq \infty}$. We have now reduced to the problem of exhibiting morphisms $W \to W_{01}, W_{12}$ fitting into the upper-left corner of the commutative diagram. By taking $V \subset U_1$ to be the open subset given by $f_{01}(W_{01}) \cap f_{12}(W_{12})$, an application of Lemma~\ref{lemma.small-opens} and the Inverse Function Theorem (Lemma~\ref{ivt}) completes the proof.
\end{proof}

\begin{definition}[The sheaf of conically smooth structures]
Assume $\bscd_{\leq k+1, \leq \infty}$ has been defined, and let $X\in \snglrdC_{\leq k+1, \leq \infty}$.
An element $\cA\in \sm(X)$ is an equivalence class $\cA = [\w{\cA}]$ of atlases. 

Given a stratified open embedding $f: X \to X'$, the map $\sm(X') \to \sm(X)$ is the map induced by pulling back atlases. Specifically, every chart $\varphi: U \to X'$ in $\w\cA'$ defines an open set $\varphi^{-1}(\varphi(U) \cap f(X))$ inside $U$. Applying Lemma~\ref{lemma.small-opens} to these open sets for every chart, we obtain an atlas on $X$. The equivalence class of this atlas is independent of the choices made when we apply Lemma~\ref{lemma.small-opens}.
\end{definition}

\begin{remark}
Note that the pull-back of an atlas $\w{\cA'}$ is not defined in any natural way, but the pull-back of the equivalence class $\cA'$ is. 
\end{remark}

\begin{remark}\label{maximal}
Let $X$ be a $C^0$ stratified space, and let $\cA\in \sm(X)$.  
Then the set of representatives of $\cA$ is a poset, ordered by inclusion of collections. Moreover, this poset has a maximal element given by the  union $\bigcup_{\w{\cA}\in \cA} \w{\cA}$. Because of this, we will often identify an element of $\sm(X)$ with a maximal atlas for $X$.
\end{remark}

\begin{example}\label{example.smooth-manifolds}
If $X$ is a $C^0$ stratified space with a single stratum, it is a topological manifold by Example~\ref{example.topological-manifolds}. An element $\cA \in \sm(X)$ is a choice of (equivalence class of) smooth atlas on $X$, which one can think of as a maximal smooth atlas on $X$.
\end{example}

\begin{example}
Let $X = U =  \RR^4\times \sC(\emptyset)$, and consider two different one-element atlases: an atlas $\w{\cA}$ consisting of the identity $\varphi = \id: \RR^4 \to X$, and another atlas $\w{\cA'}$, where the chart $\psi: \RR^4 \to X$ is a homeomorphism that is not a diffeomorphism. Then $\w{\cA}$ and $\w{\cA'}$ are not equivalent atlases: at any point $x \in \RR^4$ for which $\psi$ is not smooth, it is impossible to find smooth maps $g, f$ as in (\ref{eqn.atlas}) such that $\psi \circ g$ and $\varphi \circ f$ are both smooth.
\end{example}

\begin{definition}[Products with $\RR$ and the natural transformation]\label{defn.products-j}
The functor
\[
 (\RR \times -): \bscd_{\leq k+1, \leq \infty} \to \bscd_{\leq k+1, \leq \infty}
\] 
(which also appears in  (\ref{eqn.cross-R})) is defined on objects and morphisms by the assignments
	\[
	(U,\cA_Z)
	\mapsto 
	(\RR \times U, \cA_Z)~,
	\qquad
	\qquad
	(f: U \to V)
	\mapsto
	(\id_{\RR} \times f: \RR \times U \to \RR \times V).
	\]  
Finally, an atlas on a $C^0$ stratified space $X$ induces an atlas on $\RR \times X$ by sending any chart $(U, \varphi)$ to the induced chart $(\RR \times U, \id_\RR \times \varphi)$. This defines a natural map of sets
	\[
	\sm(X) \to \sm(\RR \times X).
	\]
\end{definition}

\begin{definition}\label{defn.depth-k-snglr}
Assume we have defined the presheaf $\sm$ on the full subcategory $\snglrdC_{\leq k+1,\leq \infty}$. We denote by
	\[
	\snglrd_{\leq k+1, \leq \infty}
	\]
the category whose objects are pairs $(X,\cA)$ where $X \in \snglrdC_{\leq k+1,\leq \infty}$ and $\cA \in \sm(X)$. A morphism $(X,\cA) \to (X',\cA')$ is a stratified open embedding $f\colon X\hookrightarrow X'$ for which $\cA_X = f^\ast \cA_{X'}$.
\end{definition}

\begin{example}\label{example.first-smooth-manifolds}
As a special case, we let
	\[
	\mfldd := \snglrd_{\leq 0, \leq \infty}.
	\]
Parsing the definition, this category's objects are smooth manifolds, and its morphisms are smooth open embeddings. (See Example~\ref{example.smooth-manifolds}.)
\end{example}

In principle this inductive definition is finished, but we must account for stratified spaces who may have unbounded depth -- i.e., who do not lie in $\snglrd_{\leq k, \leq \infty}$ for any finite $k$.

\begin{defn}[$\sm$ for stratified spaces with unbounded depth]
\label{defn.Sm}
Let 
\[
 \snglrdC_{<\infty,\leq \infty} = \bigcup_{k \geq -1} \snglrdC_{k,\leq \infty}
\]
be the category of all $C^0$ stratified spaces with bounded depth. We extend $\sm$ by the right Kan extension
\[
 \xymatrix{
 (\snglrdC_{< \infty, \leq \infty})^{\op} \ar[r]^-{\sm}  \ar@{^{(}->}[d]
 & \Set \\
 (\snglrdC)^{\op}.
  \ar@{-->}[ur]
 }
\]
and refer to this extension as $\sm$ as well.
\end{defn}

We prove that $\sm$ is a sheaf in Lemma~\ref{lemma.sm-sheaf}.

\begin{definition}[$\snglrd$]\label{def:snglr}\label{defn.snglr}
We define the category 
\[
\snglrd
\]
of \emph{conically smooth stratified spaces} and embeddings. An object is a pair $(X,\cA)$ consisting of a $C^0$ stratified space and an element $\cA \in \sm(X)$. A morphism $(X,\cA) \to (X',\cA')$ is a stratified open embedding $f\colon X\hookrightarrow X'$ for which $\cA_X = f^\ast \cA_{X'}$.
\end{definition}

\begin{remark}
Unless the discussion requires otherwise, we will denote a conically smooth stratified space $(X,\cA)$ simply by its underlying stratified topological space $X$.  
We will refer to a section $\cA\in \sm(X)$ as a \emph{conically smooth structure on $X$}.  
\end{remark}

The following is a conically smooth analogue of Lemma~\ref{topological-basis}.

\begin{prop}[Basics form a basis]\label{basics.basis}
\label{prop.basics-form-a-basis}
For $X$ a conically smooth stratified space, the collection $\{U\hookrightarrow X\}$, of all conically smooth open embeddings from basics into $X$, forms a basis for the topology of $X$.  
\end{prop}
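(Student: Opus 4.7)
The plan is to reduce the statement to the analogous fact for basics (Lemma~\ref{lemma.small-opens}) by first passing to a chart from a maximal atlas on $X$.

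Suppose $W\subset X$ is open and $x\in W$. I would first invoke the cover property of the maximal atlas $\w{\cA}$ representing the conically smooth structure on $X$ (Remark~\ref{maximal}): there is a chart $(U,\phi)\in \w{\cA}$ with $U\in \bscd$ and $x\in \phi(U)$. The open subset $\phi^{-1}(W)\subset U$ contains $\phi^{-1}(x)$, so by Lemma~\ref{lemma.small-opens} there is a morphism $i\colon V \to U$ in $\bscd$ whose image contains $\phi^{-1}(x)$ and is contained in $\phi^{-1}(W)$. Setting $\psi := \phi\circ i$ produces an open embedding of stratified topological spaces $V\hookrightarrow X$ with $x\in \psi(V)\subset W$, which is the right topological shape for a basis element.

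The remaining task is to certify that $\psi$ qualifies as a conically smooth open embedding, i.e., that it is a morphism $V\to X$ in $\snglrd$ from $V$ equipped with its tautological conically smooth structure. By Remark~\ref{maximal} this amounts to showing that $\w{\cA}\cup\{(V,\psi)\}$ is again an atlas, so it suffices to verify the overlap condition between $(V,\psi)$ and an arbitrary chart $(U',\phi')\in \w{\cA}$. Given such a chart and an overlap point $y$, the atlas condition applied to $(U,\phi)$ and $(U',\phi')$ produces a basic $W''$ together with morphisms $W''\to U$ and $W''\to U'$ in $\bscd$ whose common image in $X$ contains $y$. Shrinking $W''$ via Lemma~\ref{lemma.small-opens} so that its image in $U$ lies inside $i(V)$, the Inverse Function Theorem (Lemma~\ref{ivt}) allows me to factor the restricted map $W''\to U$ through $i\colon V\to U$ as a morphism $W''\to V$ in $\bscd$, supplying the required commutative square for the pair $(V,\psi), (U',\phi')$.

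The main obstacle is this compatibility check, which rests on inverting the morphism $i\colon V\hookrightarrow U$ on its image and recognizing the inverse as conically smooth; this is precisely where Lemma~\ref{ivt} is essential, in direct analogy with the transitivity argument of Lemma~\ref{lemma.atlas-transitivity}. Once that verification is in place, the rest is a formal translation of the $C^0$ statement (Lemma~\ref{topological-basis}) into the conically smooth setting through the single auxiliary chart $(U,\phi)$.
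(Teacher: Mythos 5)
Your proof is correct and takes the same essential route as the paper's: cover $X$ by atlas charts, shrink inside a chart via Lemma~\ref{lemma.small-opens}, and compose. The verification in your second paragraph is sound but can be short-circuited, since charts are $\snglrd$-morphisms by construction of the sheaf $\sm$ and morphisms of $\bscd$ are $\snglrd$-morphisms by Proposition~\ref{prop.basics-are-singular}, so the composite $\phi\circ i$ is a conically smooth open embedding simply by composition in $\snglrd$, without re-running the atlas-compatibility and inverse-function-theorem argument.
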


\begin{proof}
Because $X$ admits an atlas, then this collection of embeddings forms an open cover.
The result then follows from Lemma~\ref{lemma.small-opens}.
\end{proof}

\subsection{Auxiliary maps and $\Stratd$}
We introduce the concept of a {\em conically smooth} map between conically smooth stratified spaces. This is a more general concept than that of morphisms in $\snglrd$, just as smooth maps are more general than smooth open embeddings. 

\begin{definition}[Conically smooth]\label{def:c-infty}\label{defn.conical-smoothness}
Let $X$ and $Y$ be conically smooth stratified spaces.
Let $f\colon X \to Y$ be a continuous stratified map between their underlying stratified topological spaces.  
By induction on the depth of $Y$, we define what it means for $f$ to be \emph{conically smooth}.  
If $\depth(Y)=-1$ then $X = \emptyset = Y$ and we deem the unique such $f$ to be conically smooth.

Suppose $X = U= \RR^i \times \sC(Z)$ and $Y = V=\RR^j \times \sC(Y)$ are basics.
In this case, say $f$ is \emph{conically smooth} if it has the following properties.
\begin{enumerate}
\item If, on the level of posets, $f$ does \emph{not} send the cone point to the cone point, then $f$ factors as $f\colon U \xra{~f_0~} \RR^j\times \RR_{>0}\times Y \hookrightarrow \RR^j\times \sC(Y)$.
In this case, we require that $f_0$ is conically smooth -- this is well-defined by induction.

\item
If, on the level of posets, $f$ sends the cone point to the cone point, then we require that 
$f$ is conically smooth along $\RR^i$, and the restriction $f^{-1}(V\smallsetminus \RR^j) \to V\smallsetminus \RR^j$ is conically smooth.  
\end{enumerate}
For $X$ and $Y$ general, say $f$ is \emph{conically smooth} if for each pair of charts $\varphi\colon U \hookrightarrow X$ and $\psi\colon V \hookrightarrow Y$ with $f(\varphi(U)) \subset \psi(V)$, the composition 
\[
\psi^{-1} f \varphi \colon U \to V
\]
is conically smooth.
\end{definition}

\begin{example}
If $V$ has depth 0, then $V \smallsetminus \RR^j$ is empty. Hence a conically smooth map $U \to V$ is any map which is conically smooth along $\RR^i$. In particular, if $U$ also has depth 0, a conically smooth map $U \to V$ is simply a smooth map from $\RR^i$ to $\RR^j$. (As a consequence, if $X$ and $Y$ have depth 0, then a conically smooth map in our sense is just a smooth map in the usual sense.)
\end{example}

\begin{example}\label{ex.sub-man}
A smooth map $M\to M'$ between smooth manifolds is an example of a conically smooth map.
\end{example}

\begin{example}\label{ex.inc-is-smooth}
If $(X\to P)$ is a conically smooth stratified space with $Q\subset P$ a consecutive sub-poset, then the inclusion $X_{|Q}\hookrightarrow X$ is a conically smooth map. See Lemma \ref{lemma.strata-smooth}.
\end{example}

The following basic property, that conically smooth maps are closed under composition, follows by a routine induction argument on depth.  
\begin{prop}\label{c-infty-compose}
For $X \xra{f} Y \xra{g} Z$ conically smooth maps, the composite $gf$ is again conically smooth.
\end{prop}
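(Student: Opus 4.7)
\emph{Proof proposal.} The plan is to induct on $\depth(Z)$. The base case $\depth(Z)=-1$ forces $Z=\emptyset$ and hence $X=Y=\emptyset$, so the claim is vacuous. For the inductive step, assume the proposition whenever the target has depth $\leq k$, and let $\depth(Z)=k+1$. Since conical smoothness is chart-local (Definition~\ref{def:c-infty}) and basics form a basis for the topology of any stratified space (Proposition~\ref{basics.basis}), after passing to compatible basic charts about any $x\in X$, $f(x)$, $gf(x)$ it suffices to treat $X=U$, $Y=V$, $Z=W$ basics, say $U=\RR^i\times\sC(Z_U)$, $V=\RR^j\times\sC(Z_V)$, $W=\RR^l\times\sC(Z_W)$.

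I would then case-analyze the behavior of the underlying poset maps at the cone points. If $g$ does not send $\ast_V$ to $\ast_W$, a poset chase using minimality of $\ast_V$ shows that the image of $g$ avoids $\RR^l\times\{\ast\}$; so $g$ factors conically smoothly as $V\to W\setminus\RR^l\hookrightarrow W$, and the inductive hypothesis applied to the factorization of $gf$ into the depth-$k$ target $W\setminus\RR^l$ closes this case. If $g$ preserves the cone point but $f$ does not, then $f$ factors as $X\to V\setminus\RR^j\hookrightarrow V$ with $f_0$ conically smooth, so $gf=(g|_{V\setminus\RR^j})\circ f_0$; splitting further on whether $gf$ preserves cones reduces either to the previous case or to the cone-preserving case treated next.

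In the remaining case both $f$ and $g$ preserve cone points, and there are two clauses of Definition~\ref{def:c-infty} to verify for $gf$. Smoothness of $gf$ along $\RR^i$ follows directly from the chain rule (Lemma~\ref{smooth-compose}). For the second clause, since $g$ sends $\RR^j\times\{\ast\}$ into $\RR^l\times\{\ast\}$, one has $g^{-1}(W\setminus\RR^l)\subset V\setminus\RR^j$, and consequently $(gf)^{-1}(W\setminus\RR^l)\subset f^{-1}(V\setminus\RR^j)$. On this open subset $f$ restricts to a conically smooth map into $g^{-1}(W\setminus\RR^l)$ (by the cone-preserving clause applied to $f$), while $g$ restricts conically smoothly from $g^{-1}(W\setminus\RR^l)$ into the depth-$k$ space $W\setminus\RR^l$; the inductive hypothesis then yields conical smoothness of the composite restriction. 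The main obstacle is purely the combinatorial bookkeeping produced by the cone-preserving/non-preserving bifurcation in the definition: the analytic content is wholly absorbed into Lemma~\ref{smooth-compose}, and every branch in which a cone point fails to be preserved strictly reduces target depth via the factorization through the complement of the cone stratum, which is what drives the induction to terminate.
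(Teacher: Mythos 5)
Your high-level plan is exactly what the paper indicates — the paper states only that the proposition ``follows by a routine induction argument on depth,'' and your reduction to basics via Proposition~\ref{basics.basis} followed by the cone-point case analysis is the right way to flesh that out. Cases~1 and~3 are handled correctly. But Case~2 contains a genuine gap.

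In Case~2 ($g$ cone-preserving, $f$ not), you claim that the subcase where $gf$ \emph{does} send cone point to cone point ``reduces to the cone-preserving case treated next.'' It does not: Case~3 assumes that \emph{both} $f$ and $g$ send cone point to cone point, and here $f$ does not. This subcase is non-vacuous — for instance $U=\RR_{\geq 0}$, $V=W=\RR_{\geq 0}$, $f(s)=s+1$ (not cone-preserving), $g(s)=s(s-1)^2$ (cone-preserving), gives $gf(s)=s^3+s^2$, which sends $0\mapsto 0$ — so it must actually be treated. The obstruction is that verifying the $C^\infty$-along-$\RR^i$ condition for $gf$ cannot proceed via Lemma~\ref{smooth-compose}, since that chain rule requires both maps to be conically smooth along their Euclidean coordinates, and $f$ fails this (having no cone point to cone point condition). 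Worse, this subcase also undermines your closing claim that ``every branch in which a cone point fails to be preserved strictly reduces target depth'': here the cone point of $W$ \emph{is} preserved by $gf$, so your single induction on $\depth(Z)$ does not advance. The standard fix is to strengthen the induction to one on $\depth(Y)+\depth(Z)$ (or a lexicographic variant): in the problematic subcase one writes $gf=(g|_{V\smallsetminus\RR^j})\circ f_0$ and observes that, after covering $V\smallsetminus\RR^j$ by basics, the intermediate basic has strictly smaller depth than $V$, so the combined quantity drops even though the target depth does not. As a secondary point, note that Lemma~\ref{smooth-compose} as stated carries an ``equal depths'' hypothesis; in Case~3 with both maps cone-preserving the depths of $U,V,W$ need not agree (cone-preserving conically smooth maps, unlike open embeddings treated in Lemma~\ref{only-equivalences}, need not preserve depth), so strictly speaking you are invoking it outside its stated scope, though the chain rule argument itself does not use equality of depths.
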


\begin{example}\label{ex.diagonal}

Let $X$ be a conically smooth stratified space, and consider the product stratified space of \S\ref{sec.products}.
Then the diagonal map 
\[
X\xra{\sf diag} X\times X
\]
is an example of a conically smooth map.
To see this, it is enough to work locally and assume $X=U = \RR^i\times \sC(Z)$ is a basic.  
Inspecting the structure of $X\times X$ as a conically smooth stratified space (\S\ref{sec.products}), the problem amounts to showing the map to the join $Z \xra{\sf diag} Z\times Z \xra{\{\frac{1}{2}\}} Z\star Z$ is conically smooth.
After Proposition~\ref{c-infty-compose}, this follows by induction on depth, with the case of depth zero an instance of Example~\ref{ex.sub-man}.  
\end{example}

In the next section we will prove the following compatibility between the definition of a morphism in Definition~\ref{defn.snglr} and {\em conically smooth} stratified open embeddings:

\begin{prop}\label{open-smooth}
Let $(X,\cA_X)$ and $(X',\cA_{X'})$ be conically smooth stratified spaces. 
Let $f\colon X\hookrightarrow X'$ be a stratified open embedding.  
Then $f$ is conically smooth if and only if $f^*(\cA_{X'}) = \cA_X$.
\end{prop}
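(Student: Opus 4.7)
The plan is induction on $\depth(X')$. The base case $\depth(X') = -1$ forces $X = X' = \emptyset$, where both conditions hold trivially. For the inductive step, assume the proposition whenever $\depth(X') \leq k$, and let $X, X'$ have depth at most $k+1$. Both sides of the stated equivalence are local on $X$: conical smoothness by the general clause of Definition~\ref{def:c-infty}, and equality of (maximal) atlases because $\w\cA \sim \w\cA'$ is a chart-by-chart condition (Lemma~\ref{lemma.atlas-transitivity}). Using Proposition~\ref{basics.basis}, I cover $X$ by basic charts $\phi \colon U \hookrightarrow X$ in $\cA_X$ such that each $f \phi(U)$ lies in a basic chart $\psi \colon V \hookrightarrow X'$ in $\cA_{X'}$. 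Thus the proposition reduces to the following: \emph{for basics $U, V$ and a stratified open embedding $g = \psi^{-1} f \phi \colon U \to V$, $g$ is conically smooth in the sense of Definition~\ref{def:c-infty} if and only if $g$ is a morphism in $\bscd_{\leq k+1, \leq \infty}$ in the sense of Definition~\ref{defn.basics}}.

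For this basic-level claim I split into the two cases that appear in both definitions. If $g$ does not send the cone point of $U$ to the cone point of $V$, then both definitions demand a factorization $g \colon U \xrightarrow{g_0} \RR^j \times \RR_{>0} \times Y \hookrightarrow V$ and a single condition on $g_0$: conical smoothness of $g_0$ in one case, and the chart $(U, g_0)$ being a member of the maximal atlas of $\RR^j \times \RR_{>0} \times Y$ in the other. Since the target $\RR^j \times \RR_{>0} \times Y$ has depth $\leq k$, the inductive hypothesis delivers the desired equivalence.

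The cone-point preserving case is the substantive one. Both definitions require $g$ to be conically smooth along $\RR^i$ in the sense of Definition~\ref{def:cone-sm}. Beyond this, the conical smoothness definition adds that the restriction $g \colon g^{-1}(V \setminus \RR^j) \to V \setminus \RR^j$ be conically smooth, while the $\bscd$ definition adds that $Dg$ be injective and that this same restriction preserve the atlases on each side. Because $V \setminus \RR^j = \RR^j \times \RR_{>0} \times Y$ has depth $\leq k$, the inductive hypothesis precisely identifies ``restriction is conically smooth'' with ``restriction preserves atlases''. It remains to check that the injectivity of $Dg$ is automatic once $g$ is a conically smooth open embedding preserving the cone point: by Remark~\ref{remark.usual-tangent-bundle-map}, the restriction of $Dg$ to $T\RR^i \times \{\ast\}$ is the ordinary differential of the smooth open embedding $g_{|\RR^i} \colon \RR^i \to \RR^j$ and hence injective; on the complement, injectivity is read off the limiting formula $Dg = \lim_{t \to 0}\gamma^{-1} \circ g_\Delta \circ \gamma$ from Definition~\ref{def:cone-sm}, using that $g$ itself is injective and that the cone-rescaling $\gamma$ is a homeomorphism.

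The main obstacle is this last point, the automatic injectivity of $Dg$ for a stratified open embedding which is conically smooth along $\RR^i$; once it is in hand, the remainder of the argument is a bookkeeping exercise matching the two inductive definitions clause by clause against the inductive hypothesis.
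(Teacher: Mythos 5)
Your strategy---induction on depth, locality via the sheaf property of $\sm$, and a clause-by-clause comparison of Definition~\ref{defn.basics} with Definition~\ref{def:c-infty}---is the natural one, and you have correctly isolated the one place where the two definitions do not obviously match: the injectivity of $Dg$. But your argument that this injectivity is automatic does not work, and the claim itself is false. The limiting formula $Dg = \lim_{t\to 0}\gamma^{-1}\circ g_\Delta\circ\gamma$ expresses $Dg$ as a pointwise limit of the maps $\gamma_{1/t,0}\circ g\circ\gamma_{t,0}$; each of these is injective because $g$ is, but a pointwise limit of injective maps need not be injective, and here it genuinely can fail to be. Concretely, take $U=V=\sC(\ast)=\RR_{\geq 0}$ and $g(s)=s^2$. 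This is a stratified open embedding that sends the cone point to the cone point; its restriction to $\RR_{>0}$ is smooth, and the computation $\gamma^{-1}\circ g_\Delta\circ\gamma(t,s)=(t,ts^2)$ shows $\w{D}g$ extends continuously to $t=0$ with $Dg\equiv 0$, so $g$ is conically smooth along $\RR^0$, and the higher conditions $C^r$ are checked the same way since $Dg$ is constant. Thus $g$ satisfies Definition~\ref{def:c-infty}, yet $Dg$ is the constant map to the cone point, so it is not injective and $g$ is not a morphism in $\bscd$; correspondingly $g^{-1}(s)=s^{1/2}$ would have to be a chart in $g^*\cA_{\RR_{\geq 0}}$, and it is not even $C^1$ along $\RR^0$. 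The same phenomenon persists for $i\geq 1$, e.g.\ $g(u,s)=(u,s^2)$ on $\RR\times\sC(\ast)$: the restriction of $Dg$ to $T\RR^i\times\{\ast\}$ is injective exactly as you say, but $Dg$ collapses the cone coordinate $\sC(Z)$, so injectivity on the base does not propagate to injectivity on $T\RR^i\times U$.

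This means the step you flagged as ``the main obstacle'' is a real obstruction, not a bookkeeping gap: under a literal reading of Definitions~\ref{def:cone-sm} and~\ref{def:c-infty}, the ``only if'' direction of Proposition~\ref{open-smooth} is contradicted by $g(s)=s^2$. So a correct proof cannot deduce injectivity of $Dg$ from conical smoothness of $g$ alone; either the proposition must be read as implicitly restricting to open embeddings $g$ whose inverse (where defined) is also conically smooth---which excludes $g(s)=s^2$ and does force $Dg$ to be invertible via Lemma~\ref{smooth-compose}---or the definition of conical smoothness for open embeddings must be strengthened to build in invertibility of the derivative at cone points. The remainder of your proof (base case, locality, and the matching of the non-cone-preserving and complementary-restriction clauses against the inductive hypothesis) is sound modulo this point, and would go through once the injectivity issue is resolved by one of these hypotheses. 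As a minor note, the locality of the condition $f^*\cA_{X'}=\cA_X$ is better justified by citing Lemma~\ref{lemma.sm-sheaf} ($\sm$ is a sheaf) rather than Lemma~\ref{lemma.atlas-transitivity}.
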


There is the following fundamental result.  
\begin{lemma}[Inverse function theorem]\label{ivt-again}
Let $f\colon X\to Y$ be a conically smooth map between conically smooth stratified spaces, and let $x\in X$ be a point.
Then there is an open neighborhood $x\in O\subset X$ for which the restriction $f_{|O}\colon O\to Y$ is an open embedding if and only if there are centered coordinates $(U,0)\xra{\varphi} (X,x)$ and $(V,0)\xra{\psi}(Y,f(x))$ for which the derivative of the composite $\sD_0 (\psi^{-1}f\varphi) \colon \sT_0 U \to \sT_0 V$ is an isomorphism.  
\end{lemma}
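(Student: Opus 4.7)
The plan is to reduce, via the atlas structure of Proposition~\ref{basics.basis}, to the local situation where $X = U = \RR^i\times \sC(Z)$ and $Y = V = \RR^j\times \sC(Y')$ are basics, $x = 0$ is the cone point, and $f\colon U \to V$ is a conically smooth map. Once in this local setting, I would argue by induction on $\depth(Y)$, the base case $\depth(Y)=0$ being the classical inverse function theorem for smooth maps of open subsets of Euclidean spaces.

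For the ``only if'' direction, assume $f_{|O}$ is an open embedding. After shrinking and precomposing with centered charts we may assume $f\colon U\to V$ is a conically smooth open embedding with $f(0)=0$. Since open embeddings are non-increasing on local depth (Remark~\ref{non-increasing}) and the cone point is the unique point of maximal local depth in $U$, the map $f$ sends the cone-point stratum of $U$ into the cone-point stratum of $V$, so case~(1) of Definition~\ref{defn.basics} applies and $Df$ is defined and injective. The weak inverse function theorem (Lemma~\ref{ivt}) then supplies a conically smooth local inverse $g$ with $g\circ f = \id$ on a neighborhood of $0$, and the chain rule (Lemma~\ref{smooth-compose}) yields $D_0 g\circ D_0 f = \id$, exhibiting $D_0 f$ as an isomorphism.

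For the ``if'' direction, suppose $D_0 f\colon T_0 U \to T_0 V$ is an isomorphism. Because $D_0 f$ respects the cone structure, it restricts to a linear isomorphism $\RR^i_{\vec v}\cong \RR^j_{\vec v'}$ (forcing $i=j$) and to an isomorphism of the cone factors; in particular $f$ sends the cone point to the cone point. The goal is to upgrade pointwise invertibility at $0$ to the statement that $Df\colon \RR^i_{\vec v}\times U \to \RR^i_{\vec v'}\times V$ is an open embedding in a neighborhood of $(\vec 0,0)$, at which point Lemma~\ref{ivt} produces the desired open neighborhood $O$ on which $f$ is itself an open embedding. On the $\RR^i$ factor this amounts to the classical inverse function theorem applied to the smooth map $f_{|\RR^i}$; transverse to $\RR^i$, one feeds the induced map of links into the inductive hypothesis (the link has strictly smaller depth, cf.\ Example~\ref{non-decreasing}), and then uses the scaling equivariance $\gamma_{t_2,u_2}\circ\gamma_{t_1,u_1}=\gamma_{t_2 t_1,\,u_2+t_2 u_1}$ built into the definition of $Df$ (Definition~\ref{def:cone-sm}) to spread local invertibility at the cone point to a conical neighborhood.

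The main obstacle is precisely this last step in the ``if'' direction: propagating invertibility of $D_0 f$ from a single cone point to invertibility of $Df$ on a whole neighborhood. In ordinary calculus this is immediate from continuity of the derivative, but stratified continuity (as encoded in Definition~\ref{def:cone-sm}) only controls $Df$ along $\RR^i$. Combining the classical Euclidean inverse function theorem on the $\RR^i$-factor with the inductive application on the link requires that the two derivative computations agree on the overlap, and this compatibility is exactly what the scaling identities satisfied by $\gamma$ provide. Once this spreading-out step is achieved, the remainder of the argument is a routine assembly of the chain rule, Lemma~\ref{ivt}, and induction on depth.
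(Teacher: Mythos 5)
Your plan for the "if" direction rests on a misreading of Lemma~\ref{ivt}. You write that once you have shown $Df$ to be an open embedding near $(\vec 0,0)$, "Lemma~\ref{ivt} produces the desired open neighborhood $O$ on which $f$ is itself an open embedding." But Lemma~\ref{ivt} takes as a \emph{hypothesis} that $f$ is already injective and open; its conclusion is only that (after shrinking) the inverse is conically smooth along the Euclidean coordinates. It does not, and cannot, deduce local openness of $f$ from properties of $Df$ — that implication is exactly the content of the inverse function theorem you are trying to prove, so invoking Lemma~\ref{ivt} here is circular. Even if your inductive argument succeeded in showing $Df$ is an open embedding, you would still be missing the step that promotes pointwise invertibility of $D_0 f$ to local openness (and local injectivity) of $f$ itself.

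The paper fills exactly this gap with a direct compactness-and-scaling argument that does not appear in your sketch. After reducing to centered basics $(U,0)\to(V,0)$, it observes that $D_0 f = \lim_{t\to 0}\gamma_{1/t,0}\circ f\circ\gamma_{t,0}$. Taking an open $O\ni 0$ with compact closure, $D_0 f(O)$ is open (since $D_0 f$ is an isomorphism), so one can choose $0\in P$ with $\ov{P}$ compact inside $D_0 f(O)$. The limit expression then gives, for $t$ small enough, $P\subset \gamma_{1/t,0}\,f\,\gamma_{t,0}(O)$, hence $\gamma_{t,0}(P)\subset f(\gamma_{t,0}(O))$. Since the sets $\gamma_{t,0}(O)$ form a neighborhood base at $0\in U$, this exhibits $f$ as open near $0$, and then Lemma~\ref{ivt} supplies the conically smooth inverse. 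The appeal to Lemma~\ref{ivt} comes \emph{after} openness is established, not as a means of establishing it. Your reduction to basics and your "only if" sketch (conically smooth inverse plus the chain rule) are fine, and your instinct about the role of the $\gamma$-scaling identities is in the right neighborhood, but the inductive-on-depth route you outline for the "if" direction is both underdeveloped (the "induced map of links" and the gluing of the two derivative computations are not actually set up) and, more seriously, it leans on Lemma~\ref{ivt} for something that lemma does not deliver.
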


\begin{proof}
The ``only if'' statement is manifest.
Suppose $\sD_x f$ is an isomorphism.
The problem is a local one, so we can assume $(X,x)=(U,0)$ and $(Y,f(x))=(V,0)$ are centered basics.
After Lemma~\ref{ivt}, it is enough to show that $f\colon U \to V$ is open near $0\in U$.  
Because $\sD_0f$ is an isomorphism, then in particular it is open.
So for each open neighborhood $0\in O\subset U$ such that $0\in \sD_0f(O)\subset V$ is an open neighborhood.
Because $V$ is locally compact and Hausdorff, there is an open neighborhood $0\in P\subset \ov{P} \subset \sD_0 f(O)$ whose closure is compact and contained in this image open subset.  
Because $\sD_0f = \underset{t\to 0} {\sf lim} \gamma_{\frac{1}{t},0} \circ f\circ \gamma_{t,0}$, then for $t$ large enough there remains the inclusion $P\subset \gamma_{\frac{1}{t},0}\circ f\circ \gamma_{t,0}(O)$, and thereafter the inclusion $\gamma_{t,0}(P) \subset f\bigl(\gamma_{t,0}(O)\bigr)$.  
Provided $O$ has compact closure, then the collection of open subsets $\bigl\{\gamma_{t,0}(O)\mid t\geq 0\bigr\}$ form a base for the topology about $0\in U$.
We conclude that $f$ restricts to an open map on some neighborhood of $0\in U$, as desired.  
\end{proof}

\begin{definition}[$\Stratd$]\label{def.strat}
The category of \emph{conically smooth stratified spaces},
\[
\Stratd~,
\]
has objects which are conically smooth stratified spaces; its morphisms are conically smooth maps. 
\end{definition}

\begin{observation}\label{admits-pullbacks}
We make three observations about $\Stratd$.
\begin{itemize}
\item
Proposition~\ref{c-infty-compose} grants that morphisms in $\Stratd$ indeed compose.  
\item 
Proposition~\ref{open-smooth} grants that $\snglrd \subset\Stratd$ is the subcategory consisting of the same objects and only those morphisms which are open embeddings.  
\item 
Corollary~\ref{snglr-products}, to come, grants that $\Stratd$ admits finite products.  
\end{itemize}
\end{observation}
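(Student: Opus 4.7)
The three clauses of the observation are each attributed to auxiliary results, so the plan splits accordingly. For the first clause (composability), I would prove Proposition~\ref{c-infty-compose} by induction on $\depth(Z)$, the depth of the target of the composite $X \xra{f} Y \xra{g} Z$. The base case $\depth(Z)=-1$ is vacuous. For the inductive step, I would pass to charts: pick a basic chart $\psi\colon V \hookrightarrow Z$ around $g(f(x))$ and charts $\phi\colon U\hookrightarrow X$, $\phi'\colon U'\hookrightarrow Y$ compatible with $f$ and $g$ under $\psi$. This reduces the question to composition of conically smooth maps between basics, which in turn splits into two cases depending on whether each map sends cone points to cone points. When both do, the claim is the chain rule Lemma~\ref{smooth-compose}. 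When one of them does not, Definition~\ref{defn.conical-smoothness}(1) factors that map through $\RR^k \times \RR_{>0} \times Y$, which has strictly smaller depth than the corresponding basic, and the inductive hypothesis finishes the argument.

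For the second clause, I would prove Proposition~\ref{open-smooth}: a stratified open embedding $f\colon (X,\cA_X)\hookrightarrow (X',\cA_{X'})$ is conically smooth if and only if $f^*\cA_{X'}=\cA_X$. Here I induct on $\depth(X')$ and reduce to the case where $X=U$ and $X'=V$ are basics by means of the chart bases of Proposition~\ref{basics.basis}. In that case, comparing Definition~\ref{defn.basics} and Definition~\ref{defn.conical-smoothness}, one sees that being a morphism in $\bscd$ and being a conically smooth open embedding differ only in the injectivity requirement on $Df$; but for a map that is already an open embedding at the topological level, this injectivity follows from the inverse function theorem, Lemma~\ref{ivt-again}. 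Conversely, charts of $\cA_{X'}$ pull back along $f$ to open embeddings of basics into $X$ which, by the inductive hypothesis, represent $\cA_X$.

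For the third clause, I would construct a product on $\Stratd$ and verify the universal property, thereby proving the forthcoming Corollary~\ref{snglr-products}. At the level of $C^0$ stratified spaces, the product $X\times Y$ is stratified by the product poset $P\times P'$ as in Example~\ref{example.product-poset}. The conically smooth structure is built locally: given basic charts $U=\RR^i\times \sC(Z)\hookrightarrow X$ and $V=\RR^j\times \sC(W)\hookrightarrow Y$, I would use the homeomorphism $\sC(Z)\times \sC(W) \cong \sC(Z\star W)$, where $Z\star W$ is the join, to present $U\times V$ as a basic $\RR^{i+j}\times \sC(Z\star W)$ whose cone link has strictly smaller depth than either $U$ or $V$. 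By induction on total depth, $Z\star W$ carries a canonical conically smooth structure inherited from $\cA_Z$ and $\cA_W$, and these local product atlases patch by the sheaf property of $\sm$ (Lemma~\ref{lemma.sm-sheaf}) to yield an element of $\sm(X\times Y)$. The universal property follows because projection maps are conically smooth by inspection, and a map $W\to X\times Y$ is conically smooth iff its coordinates are, which can be checked locally on charts.

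The main obstacle is the third clause: the homeomorphism $\sC(Z)\times \sC(W)\cong \sC(Z\star W)$ is elementary at the topological level, but promoting it to an identification of conically smooth structures requires defining the join $Z\star W$ as a stratified space and verifying inductively that the conical-smoothness axioms are preserved under this construction. The first two clauses are essentially formal unwindings of the inductive definitions in Section~\ref{section.singular-manifolds}.
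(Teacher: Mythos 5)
Your sketches for the first two clauses are consistent with what the paper does (or indicates): the paper explicitly states that Proposition~\ref{c-infty-compose} "follows by a routine induction argument on depth," and your case split along Definition~\ref{defn.conical-smoothness} together with the chain rule Lemma~\ref{smooth-compose} is the natural way to carry that out; similarly the reduction of Proposition~\ref{open-smooth} to the basic case by Proposition~\ref{basics.basis} and the inverse function theorem is a reasonable plan.

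The third clause, however, has a genuine gap. You propose to induct on depth, asserting that $Z\star W$ "has strictly smaller depth than either $U$ or $V$." This is false. Recall from Example~\ref{non-decreasing} that for a basic $U=\RR^i\times\sC(Z)$ one has $\depth(U)=\dim(Z)+1$, whereas the depth of a join can be much larger than the depths of its factors. A concrete failure: take $Z=W=\Delta^1$ (with the simplex stratification of Example~\ref{example.simplices}). Then $\depth(U)=\depth(V)=\dim(\Delta^1)+1=2$, while $Z\star W=\Delta^1\star\Delta^1\cong\Delta^3$ has depth $3$ (the vertices are points of depth $3$). Thus $\depth(Z\star W)=3>2$, and your proposed induction does not terminate. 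Relatedly, the phrase "$Z\star W$ carries a canonical conically smooth structure inherited from $\cA_Z$ and $\cA_W$" conceals the real work: a smooth structure on a join is not simply inherited but must itself be built locally, and those local models are again products (the join admits the open cover $Z\star W\cong \sC(Z)\times W \bigcup_{Z\times W\times\RR} Z\times\sC(W)$), so one is caught in a mutual recursion between the product construction and the join construction.

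The paper's proof of Lemma~\ref{products} resolves both issues by inducting on \emph{dimension}, not depth, and by interleaving the two constructions. One first reduces to $X=\sC(Z)$, $Y=\sC(W)$ with $Z,W$ nonempty, notes that $\dim(Z\star W)=\dim X+\dim Y-1<\dim(X\times Y)$ so the induction on the dimension of the product applies to produce the structure on $\sC(Z\star W)$, and then, to advance the induction from dimension $k$ to $k+1$, separately establishes the statement for joins of dimension $k$ using the open cover above (all three pieces of which are products of dimension $\leq k$, already handled). This double induction "products of dim $k$ $\Rightarrow$ joins of dim $k$ $\Rightarrow$ products of dim $k+1$" is the mechanism your proposal is missing; replacing "total depth" with this dimension bookkeeping would repair the argument.
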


\subsection{Basic properties of $\snglrd$}
By Remark~\ref{remark.open-cover}, there is a natural notion of open cover, hence a Grothendieck topology, for $\snglrdC$.
\begin{lemma}\label{sheaf}
\label{lemma.sm-sheaf}
The presheaf $\sm\colon (\snglrdC)^{\op} \to \Set$ is a sheaf.  
\end{lemma}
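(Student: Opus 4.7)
The plan is to verify the sheaf property in two stages: first establish it on the full subcategory $\snglrdC_{<\infty,\leq\infty}$ of bounded-depth objects by induction on depth, then deduce the general case from Definition~\ref{defn.Sm}, which defines $\sm$ on all of $\snglrdC$ as the right Kan extension of this restriction.

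For the bounded-depth step, fix a cover $\{U_i \to X\}_{i\in I}$ with $X\in \snglrdC_{<\infty,\leq\infty}$ and a compatible family $\cA_i\in\sm(U_i)$, meaning $\cA_i|_{U_i\cap U_j} = \cA_j|_{U_i\cap U_j}$ for all $i,j$. Choose a representative atlas $\widetilde\cA_i$ for each $\cA_i$, and let $\widetilde\cA$ be the union of the $\widetilde\cA_i$, where each chart $\phi\colon V\hookrightarrow U_i$ is viewed as a chart into $X$ by post-composing with the inclusion $U_i\hookrightarrow X$. The covering property of $\widetilde\cA$ is immediate. For the atlas property, given two charts $(V,\phi)\in\widetilde\cA_i$ and $(V',\phi')\in\widetilde\cA_j$ and a point $x\in\phi(V)\cap\phi'(V')\subset U_i\cap U_j$, both $\widetilde\cA_i|_{U_i\cap U_j}$ and $\widetilde\cA_j|_{U_i\cap U_j}$ are representatives of the same class $\cA_i|_{U_i\cap U_j}$, so their union is itself an atlas for $U_i\cap U_j$ by the argument of Lemma~\ref{lemma.atlas-transitivity}. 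This furnishes a basic $W$ together with morphisms $W\to V$ and $W\to V'$ making the diagram~(\ref{eqn.atlas}) commute, after invoking Lemma~\ref{lemma.small-opens} to replace restricted charts by honest basic charts and the Inverse Function Theorem (Lemma~\ref{ivt}) to lift the resulting maps into the original $V$ and $V'$. Uniqueness is handled by the same machinery: if two atlases $\widetilde\cA$ and $\widetilde\cA'$ on $X$ both restrict to $\cA_i$ on each $U_i$, then locally on each $U_i$ the union $\widetilde\cA\cup\widetilde\cA'$ pulls back to the union of two representatives of the same class, hence is an atlas on $U_i$; gluing these local atlas conditions shows $\widetilde\cA\cup\widetilde\cA'$ is an atlas on $X$, so $[\widetilde\cA]=[\widetilde\cA']$ in $\sm(X)$.

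For the general case, Definition~\ref{defn.Sm} gives $\sm(X)\cong\lim_{(Y\hookrightarrow X)}\sm(Y)$, with the limit taken over all open embeddings $Y\hookrightarrow X$ with $Y$ of bounded depth. Given an open cover $\{U_i\to X\}$ and compatible sections $\cA_i\in\sm(U_i)$, I will construct the required element of $\sm(X)$ chart-by-chart: for each $Y\hookrightarrow X$ with $Y$ bounded-depth, the pullback $\{Y\cap U_i\to Y\}$ is an open cover of $Y$ whose pieces are again bounded-depth (by Proposition~\ref{prop.induction}), so the previous step produces a unique $\cA_Y\in\sm(Y)$ restricting to $\cA_i|_{Y\cap U_i}$ for each $i$. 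Compatibility of the assignment $Y\mapsto\cA_Y$ under open embeddings $Y\hookrightarrow Y'$ is forced by the uniqueness clause applied to the cover $\{Y\cap U_i\to Y\}$; uniqueness of the resulting element of $\sm(X)$ is analogous.

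The principal obstacle is the atlas property in the bounded-depth existence argument: the hypothesis $\cA_i|_{U_i\cap U_j}=\cA_j|_{U_i\cap U_j}$ is only an equality of equivalence classes of atlases, so extracting a literal basic $W$ with morphisms $W\to V$ and $W\to V'$ satisfying~(\ref{eqn.atlas}) requires carefully combining Lemma~\ref{lemma.atlas-transitivity} (transitivity of the equivalence relation), Lemma~\ref{lemma.small-opens} (basics form a basis for basics), and the Inverse Function Theorem (Lemma~\ref{ivt}) to promote the equivalence of atlas classes on restrictions to an actual basic $W$ mapping compatibly to the original pair of charts. The depth induction enters because Lemma~\ref{lemma.atlas-transitivity} itself is proven by induction on depth, so invoking it at depth $\leq k+1$ presumes that the current claim has been established at depths $\leq k$.
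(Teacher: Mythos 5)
Your argument is correct and uses the same core construction as the paper: take the union of representative atlases over the cover, verify it satisfies the cover and atlas conditions using Lemma~\ref{lemma.small-opens}, Lemma~\ref{lemma.atlas-transitivity}, and the inverse function theorem, and observe uniqueness similarly. You are somewhat more careful than the paper in one respect: the paper's proof speaks of ``atlases on $X$'' even for $X$ of unbounded depth, where $\sm(X)$ is a priori only defined as a right Kan extension, whereas you explicitly first establish the sheaf condition on $\snglrdC_{<\infty,\leq\infty}$ and then transport it through the limit formula of Definition~\ref{defn.Sm} --- this cleanly closes a gloss in the original. One small inaccuracy in your exposition: your closing paragraph asserts that the depth induction is forced because Lemma~\ref{lemma.atlas-transitivity} ``presumes that the current claim has been established at depths $\leq k$,'' but this is not so --- Lemma~\ref{lemma.atlas-transitivity} (like Lemma~\ref{lemma.small-opens} and Lemma~\ref{ivt}) is proven by its own self-contained induction and makes no appeal to the sheaf property of $\sm$. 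Consequently, the bounded-depth step needs no further induction of its own; the union-of-representatives argument works directly at any fixed bounded depth, which is how the paper presents it. This does not affect the validity of your proof, only the stated motivation for its structure.
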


\begin{proof}

Let $\cU\subset \snglrdC_{/X}$ be a covering sieve of $X$ (Definition \ref{def.sieve}).  
We must show that the universal map of sets
	\[
	\sm(X) \longrightarrow \sm(\cU) := \underset{O\in \cU}{\sf lim} \sm(O)
	\]
is an isomorphism.
To see that this map is injective, notice that if two atlases $\w{\cA}$ and $\w{\cA}'$ on $X$ restrict to equivalent atlases on each $O\in\cU$, then they are equivalent.
To see that the map is surjective, let $(\cA_O)\in \sm(\cU)$. Choose a representative $\w{\cA}_O$ for each $\cA_O$, and consider the collection $\w{\cA} := \underset{O\in \cU}\bigcup \{U\xra{\varphi} O \hookrightarrow X\mid (U,\varphi)\in \w{\cA}_O\}$.  
This collection is an atlas on $X$ (for instance, by applying Lemma~\ref{lemma.small-opens}).  Moreover, its restriction to each $O\in \cU$ is equivalent to $\w{\cA}_O$.  
\end{proof}

The category of basics should be thought of as a full subcategory of all conically smooth stratified spaces. We justify this with the following:

\begin{prop}\label{prop.basics-are-singular}
There is a fully faithful functor
	\[
	\bscd \hookrightarrow \snglrd.
	\]
\end{prop}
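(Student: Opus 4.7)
The plan is to define a functor $\Phi : \bscd \to \snglrd$ that is the identity on underlying stratified topological spaces, and then verify faithfulness and fullness in turn.

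On objects, I would send a basic $(U, \cA_Z)$ with $U = \RR^i \times \sC(Z)$ to the stratified space obtained by equipping the $C^0$ basic $U$ with a canonical maximal atlas $\cA_U \in \sm(U)$. Since $U \setminus \RR^i \cong \RR^i \times \RR_{>0} \times Z$ has strictly smaller depth than $U$ (Example~\ref{non-decreasing}), the inductive hypothesis together with Definition~\ref{defn.products-j} and pullback along the open inclusion $\RR^i \times \RR_{>0} \times Z \hookrightarrow \RR^i \times \RR \times Z$ endows $U \setminus \RR^i$ with a canonical element of $\sm(U \setminus \RR^i)$ determined by $\cA_Z$. Adjoining the identity chart $(U, \id_U)$ then produces a representative atlas on all of $U$, and I would define $\cA_U$ to be its equivalence class.

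On morphisms, I would send $f : (U, \cA_Z) \to (V, \cA_Y)$ to itself, noting that $f$ is by definition already a stratified open embedding of underlying $C^0$ data. The atlas-preservation condition (\ref{eqn.atlas-preserved}) in case (1) of Definition~\ref{defn.basics}, respectively the chart-factorization requirement in case (2), together with the sheaf property of $\sm$ (Lemma~\ref{lemma.sm-sheaf}), would force $f^* \cA_V = \cA_U$, so $f$ qualifies as a morphism in $\snglrd$. Faithfulness would then be automatic, since morphisms in both categories are determined by their underlying continuous maps between stratified topological spaces.

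For fullness, given a morphism $g : \Phi(U) \to \Phi(V)$ in $\snglrd$, I would invoke Proposition~\ref{open-smooth} to conclude that $g$ is a conically smooth open embedding. If the induced poset map does not send the cone point of $\sC(P_Z)$ to the cone point of $\sC(P_Y)$, then $g$ factors through the open substratified space $\RR^j \times \RR_{>0} \times Y \hookrightarrow V$, whose inclusion belongs to $\cA_V$; this places $g$ in case (2) of Definition~\ref{defn.basics}. In the cone-preserving case, Definition~\ref{defn.conical-smoothness} unpacks to give conical smoothness of $g$ along $\RR^i$, and restricting $g^* \cA_V = \cA_U$ to $U \setminus \RR^i$ yields the equality (\ref{eqn.atlas-preserved}). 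The main obstacle is the injectivity of $Dg : \RR^i_{\vec v} \times U \to \RR^j_{\vec v} \times V$ demanded by case (1) of Definition~\ref{defn.basics}: for this I would apply Lemma~\ref{ivt-again} at each point of the $\RR^i$-stratum to conclude that $D_x g$ is an isomorphism there, and then use the $\gamma$-scaling structure of Section~\ref{sec.conical-smoothness} to promote this pointwise non-degeneracy to global injectivity of $Dg$.
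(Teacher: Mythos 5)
Your overall plan---equip each basic $U$ with the canonical atlas (essentially Lemma~\ref{cone-transformation}), send morphisms to themselves, and verify faithfulness and fullness---matches the paper's. The atlas construction and the faithfulness step are fine. But the fullness argument, as written, is circular: you invoke Proposition~\ref{open-smooth} to conclude that an $\snglrd$-morphism $g$ between basics is conically smooth, yet the paper states Proposition~\ref{open-smooth} only with the note that it ``will be proved in the next section,'' and no independent proof appears---its content is precisely what is established by Lemma~\ref{cone-transformation} together with the present Proposition~\ref{prop.basics-are-singular}. Citing it here begs the question.

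The paper's actual route to fullness is to exploit the maximality clause proved inside Lemma~\ref{cone-transformation}: the maximal atlas $\cA_U$ is by construction the collection of all $\bscd$-morphisms into $U$, and that proof shows directly that any open embedding between basics which is locally compatible with the atlases is itself a $\bscd$-morphism. So once $g^*\cA_V=\cA_U$, fullness falls out of the definition of a morphism in $\snglrd$, with the injectivity of $Dg$ packaged inside that maximality argument rather than verified separately. To repair your write-up, replace the appeal to Proposition~\ref{open-smooth} by an unwinding of the atlas condition through the maximality step of Lemma~\ref{cone-transformation}, using the weak inverse function theorem (Lemma~\ref{ivt}) and the chain rule (Lemma~\ref{smooth-compose}) to pass from the pullback charts $g^{-1}\psi$, $\psi\in\cA_V$, to $g$ itself. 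Your supplementary argument for injectivity of $Dg$ via Lemma~\ref{ivt-again} and $\gamma$-scaling is plausible in outline but as sketched does not discharge the claim: $Dg\colon\RR^i_{\vec v}\times U\to\RR^j_{\vec v}\times V$ lies over a diffeomorphism on the $\RR^i$-stratum (Remark~\ref{remark.usual-tangent-bundle-map}), so the issue is fiberwise injectivity of $D_u g$, and you would need to argue---in the spirit of Lemma~\ref{linear-is-iso}---that $D_ug$, being a ``linear'' limit of the open embeddings $\gamma_{1/t,\,\cdot}\circ g\circ\gamma_{t,-u}$, is open and hence injective.
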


\begin{example}
Restricting to the full subcategories $\bscd_{\leq 0, \leq \infty} \to \snglrd_{\leq 0, \leq \infty}$, we recover that the category spanned by the smooth manifolds $\RR^i$, $i \geq 0$, (where morphisms are smooth open embeddings between them) is a full subcategory of the category of all smooth manifolds (where morphisms are smooth open embeddings between them).
\end{example}

To prove Proposition~\ref{prop.basics-are-singular}, we must first produce an atlas on every basic. This philosophy was of course implicit when we defined $\bscd_{\leq k+1, \leq \infty}$ in Definition~\ref{defn.basics}, and we makes the atlas explicit in the following Lemma.

\begin{lemma}\label{cone-transformation}
Let $\sm_{|\sf{cpt}}$ be the restriction of $\sm$ to the full subcategory of compact stratified spaces.
For each $i\geq 0$ there is a natural transformation
	\[
	\sm_{|\sf cpt}(-) \longrightarrow \sm\bigl(\RR^i \times \sC(-)\bigr).
	\]
\end{lemma}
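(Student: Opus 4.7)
The plan is to define the natural transformation by sending each maximal atlas $\cA_Z \in \sm_{|\sf{cpt}}(Z)$ to the equivalence class of the tautological single-chart atlas $\w\cA := \{(\RR^i\times\sC(Z),\id)\}$ on $\RR^i\times\sC(Z)$, where the domain of the unique chart is regarded as the basic $(\RR^i\times\sC(Z),\cA_Z) \in \bscd$ supplied by Definition~\ref{defn.basics}. This is tautologically an atlas: the cover condition is witnessed by the identity, and the atlas-compatibility condition for a single chart paired with itself is witnessed by $W = \RR^i\times\sC(Z)$ with transition maps $f = g = \id$. The assignment factors through the equivalence relation on atlases and so defines the proposed map of sets $\sm_{|\sf{cpt}}(Z) \to \sm(\RR^i\times\sC(Z))$.

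For naturality, let $\phi\colon (Z,\cA_Z)\to(Z',\cA_{Z'})$ be a morphism in $\snglrd$ between compact stratified spaces. The key step is verifying that the induced map
\[
F := \id_{\RR^i}\times\sC(\phi) \;\colon\; (\RR^i\times\sC(Z),\cA_Z) \longrightarrow (\RR^i\times\sC(Z'),\cA_{Z'})
\]
is itself a morphism in $\bscd$. Granted this, the pullback of the source single-chart atlas along $F$ admits $(\RR^i\times\sC(Z),\id)$ as a chart: take $F$ itself as the basic covering $F(\RR^i\times\sC(Z))$ in the pullback construction, and compose with $F^{-1}$. Hence the pullback atlas is equivalent to $\w\cA$, and the naturality square commutes.

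The verification that $F$ is a morphism in $\bscd$ has three ingredients. On posets, $F$ sends the cone-point stratum to itself. Conical smoothness of $F$ along $\RR^i$ is a direct computation: since the scaling $\gamma_{t,0}[s,z] = [ts,z]$ commutes with $\sC(\phi)[s,z] = [s,\phi(z)]$, one finds $(\gamma^{-1} F_\Delta \gamma)(t,\vec v,u,[s,z]) = (t,\vec v,u,[s,\phi(z)])$ for all $t > 0$; this extends continuously to $t = 0$, showing $F$ is $C^1$ along $\RR^i$, and iterating yields $C^r$ for all $r$, while injectivity of $DF$ follows from that of $\phi$. The remaining atlas-pullback equation~(\ref{eqn.atlas-preserved}) concerns the restriction of $F$ to the non-cone locus $\RR^i\times\RR_{>0}\times Z \to \RR^i\times\RR_{>0}\times Z'$, which is simply $\id\times\id\times\phi$. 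Under the identification $\RR^i\times\RR_{>0}\cong\RR^{i+1}$, equation~(\ref{eqn.atlas-preserved}) then reduces, via the assumption $\phi^*\cA_{Z'} = \cA_Z$, to the naturality of the chart-level transformation $\sm(-)\to\sm(\RR\times-)$ of Definition~\ref{defn.products-j}, iterated $i+1$ times; this latter naturality is manifest from the explicit formula $(U,\psi)\mapsto(\RR\times U,\id_\RR\times\psi)$.

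The principal obstacle is conceptual rather than computational: verification of~(\ref{eqn.atlas-preserved}) couples this lemma to the naturality of the auxiliary transformation $\sm(-)\to\sm(\RR\times-)$ at the same inductive depth. The proof therefore proceeds by induction on $\depth(Z)$ in step with the inductive definition of $\bscd$ and $\sm$; the base case $Z = \emptyset$ (where $\RR^i\times\sC(\emptyset) = \RR^i$) is trivial.
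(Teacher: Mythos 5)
Your proof is correct and takes essentially the same approach as the paper: both define the induced atlas on $\RR^i\times\sC(Z)$ by declaring $\bscd$-morphisms into $(\RR^i\times\sC(Z),\cA_Z)$ to be charts, and both induct on $\depth(Z)$ in step with the inductive definitions of $\bscd$ and $\sm$. You exhibit the resulting element of $\sm(\RR^i\times\sC(Z))$ via the single tautological chart rather than the paper's full maximal atlas $\{V\hookrightarrow U\}$, and you spell out the naturality check (that $\id_{\RR^i}\times\sC(\phi)$ is a $\bscd$-morphism, including the computation of $\gamma^{-1}F_\Delta\gamma$), which the paper's proof asserts is ``evident, by induction.''
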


\begin{proof}
Let $U =( \RR^i\times \sC(Z) , \cA_Z)$ be a basic.  
We must exhibit a section of $\cA_U\in \sm\bigl(\RR^i\times \sC(Z) \bigr)$.
We proceed by induction on $\depth(U) = {\sf dim}(Z)+1$.  
The base case is the map $\sm(\emptyset) = \ast \to \sm(\RR^i)$ which plucks out the standard smooth structure on $\RR^i$.  
By induction, conically smooth open embeddings $V\hookrightarrow U\smallsetminus \RR^i$ form a basis for the topology of $U$.
Conically smooth open embeddings $U \hookrightarrow U$ form a base for the topology along $\RR^i\subset U$.
It follows that the collection $\cA_U:= \{V \to U\}$ of morphisms in $\bscd$ forms an atlas for $U$.  
We will show that this collection is a basis for the underlying topological space of $U$.  
Let $f\colon V\hookrightarrow U$ be an open embedding between basics.  Write $V=\RR^j\times \sC(Y)$.  
Suppose there is a collection of conically smooth open embeddings $W \hookrightarrow V$ which cover $V$ and for which each composition $W\hookrightarrow V \hookrightarrow U$ is conically smooth.
It follows that $V$ is conically smooth along $\RR^j$.  
By induction, the map $f^{-1}(U\smallsetminus \RR^i) \hookrightarrow U\smallsetminus \RR^i$ is conically smooth.  
It follows that $\cA_U$ is maximal.  
The functoriality of the assignment $\cA_Z \mapsto \cA_U$ among stratified homeomorphisms in the variable $Z$ is evident, by induction.  
\end{proof}

\begin{proof}[Proof of Proposition~\ref{prop.basics-are-singular}]
An object of $\bscd$ is a pair $(\RR^i \times \sC(Z), \cA_Z)$. Since it comes equipped with a conically smooth atlas for $Z$, we can apply Lemma~\ref{cone-transformation} to define the functor on objects: an atlas for the basic is given by the image of $\cA_Z$ under the natural transformation of the Lemma.

We must now prove that any morphism between basics is a morphism in $\snglrd$ -- that is, that a morphism $f: U \to V$ of basics satisfies the property $f^*\cA_V = \cA_U$. This is proven in a similar method as in the proof of Lemma~\ref{cone-transformation}: by induction on the depth of $V$ (which, by Remark~\ref{remark.non-increasing}, is always $\geq$ the depth of $U$). 

That it is faithful follows because the composite $\bscd \to \snglrd \to \snglrdC$ is faithful.  
That it is full follows from the definition of morphisms in $\snglrd$.
\end{proof}

Via the faithful functor $\snglrd \to \snglrdC$, all definitions and results from \S\ref{sec:dim-depth} have an evident modification for conically smooth stratified spaces. Since depth and strata will serve important functions later on, we highlight two analogues here. 

Below is the \emph{conically smooth} analogue of Lemma~\ref{lemma.strata}.

\begin{lemma}\label{strata-smooth}
\label{lemma.strata-smooth}
Let $X=(X\to P,\cA)$ be a conically smooth stratified space, and let $Q\hookrightarrow P$ be a consecutive map of posets. The $C^0$ stratified space $X_{|Q}= (X_{|Q} \to Q)$ naturally inherits a conically smooth structure with respect to which $X_{|Q} \to X$ is a conically smooth map.  
In particular, for $p\in P$, the $C^0$ stratified spaces 
\[
X_{\leq p}~,\qquad X_p~,\qquad X_{\nless p}
\]
are sub-stratified spaces of $X$, and $X_p$ is moreover an ordinary smooth manifold.  
\end{lemma}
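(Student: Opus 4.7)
The plan is to proceed by induction on $\depth(X)$, constructing a conically smooth atlas on $X_{|Q}$ by pulling back charts from an atlas representing $\cA$. By Lemma~\ref{lemma.strata}, $X_{|Q}$ is already known to be a $C^0$ stratified space; what must be produced is an element $\cA_Q\in\sm(X_{|Q})$, together with a verification that the inclusion $X_{|Q}\hookrightarrow X$ is conically smooth. The base case $\depth(X)=-1$ is vacuous, as is the case $Q=\emptyset$.

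For the inductive step, fix a representative atlas $\w{\cA}$ for $\cA$. Given any chart $\phi\colon U=\RR^i\times\sC(Z)\hookrightarrow X$ in $\w{\cA}$, the induced map $\sC(P_Z)\to P$ determines a consecutive sub-poset $Q_U:=\sC(P_Z)\times_P Q$. There are two cases. \emph{(a)} If the cone point $\ast\in\sC(P_Z)$ does not lie in $Q_U$, then $Q_U\subset P_Z$ and $U_{|Q_U}$ is an open sub-$C^0$-stratified-space of $\RR^i\times\RR_{>0}\times Z$. This ambient product has depth equal to $\depth(Z)<\depth(U)\leq\depth(X)$, and it carries an atlas obtained by iterating the natural transformation $\sm(-)\to\sm(\RR\times-)$ of Definition~\ref{defn.products-j} applied to $\cA_Z$; the inductive hypothesis applied to this ambient space equips $U_{|Q_U}$ with an atlas. \emph{(b)} If $\ast\in Q_U$, then since $\ast$ is the minimum of $\sC(P_Z)$, the consecutive condition forces $Q_U=\{\ast\}\sqcup Q_Z$ for a downward-closed $Q_Z\subset P_Z$; downward-closed subsets are closed in the poset topology, so $Z_{|Q_Z}$ is a compact closed sub-$C^0$-stratified-space of $Z$. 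Since $\depth(Z)<\depth(X)$, induction endows $Z_{|Q_Z}$ with an atlas $\cA_{Z_{|Q_Z}}$, and then $U_{|Q_U}=\RR^i\times\sC(Z_{|Q_Z})$ becomes a basic with the atlas supplied by Lemma~\ref{cone-transformation}.

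Define $\w{\cA}_Q$ to be the collection of all conically smooth embeddings $V\hookrightarrow U_{|Q_U}$ from basics $V$ (using the atlases constructed above), post-composed with the open embedding $U_{|Q_U}\hookrightarrow X_{|Q}$, as $(U,\phi)$ ranges over $\w{\cA}$. Proposition~\ref{basics.basis} applied inside each $U_{|Q_U}$ together with the fact that $\{\phi(U)\}$ covers $X$ shows that $\w{\cA}_Q$ is an open cover of $X_{|Q}$. The compatibility~(\ref{eqn.atlas}) for two such charts follows from the corresponding compatibility within $\w{\cA}$: a point of overlap lies in $\phi_1(U_1)\cap\phi_2(U_2)$, where we obtain a basic $W\hookrightarrow U_1,U_2$ of the original atlas, and then $W_{|Q_W}$ supplies the required common refinement (using the inverse function theorem Lemma~\ref{ivt-again} inductively in each of the cases (a),(b)). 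Independence of the equivalence class $\cA_Q=[\w{\cA}_Q]$ on the choice of representative $\w{\cA}$ follows from Lemma~\ref{lemma.atlas-transitivity}. To see that $X_{|Q}\hookrightarrow X$ is conically smooth, it suffices to work in charts: locally the inclusion is of the form $U_{|Q_U}\hookrightarrow U$, which in case (a) is an open inclusion into a stratum-wise embedded open subset (hence conically smooth by induction), and in case (b) is the inclusion $\RR^i\times\sC(Z_{|Q_Z})\hookrightarrow\RR^i\times\sC(Z)$, for which conical smoothness along $\RR^i$ is immediate from the definition and inductive over $Z$ on the complement of $\RR^i$.

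For the final assertions, the subsets $P_{\leq p}$, $\{p\}$, and $P_{\nless p}$ are each consecutive in $P$: for $P_{\nless p}$, if $q_1\leq q'\leq q_2$ with $q_1,q_2\nless p$ but $q'<p$, then $q_1\leq q'<p$ forces $q_1<p$, a contradiction. Applied to $Q=\{p\}$, every chart falls into case (b) with $Q_Z=\emptyset$, producing charts of the form $\RR^i\times\sC(\emptyset)=\RR^i$; hence $X_p$ is of depth zero, and its atlas consists of ordinary smooth manifold charts, so $X_p$ is a smooth manifold. The principal obstacle I anticipate is the verification of the atlas compatibility condition~(\ref{eqn.atlas}) between a case-(a) chart for one basic of $\w{\cA}$ and a case-(b) chart for another, where the common refinement crosses the cone locus; this is managed by first refining to a chart $W$ of the original atlas whose image lies in the overlap and then invoking the inverse function theorem inductively.
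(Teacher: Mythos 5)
Your proposal follows essentially the same route as the paper's proof: induct on depth, reduce to a basic $U=\RR^i\times\sC(Z)$, and split on whether the consecutive subposet $Q_U\subset\sC(P_Z)$ contains the cone point; in case (b) pass to the downward-closed (hence compact) $Z_{|Q_Z}$ and invoke Lemma~\ref{cone-transformation}. The care you take assembling the locally constructed atlases into a single element of $\sm(X_{|Q})$, and checking the compatibility condition~(\ref{eqn.atlas}) across charts, is precisely the step the paper compresses to ``it follows,'' so your expansion is welcome but does not constitute a different argument.

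One small error in the final paragraph: with $Q=\{p\}$, it is \emph{not} the case that every chart falls into case (b). A chart $\phi\colon U\hookrightarrow X$ whose cone point maps into a stratum strictly below $X_p$ will have $p$ corresponding to some non-minimal element $p'\in P_Z\subset\sC(P_Z)$, so $Q_U=\{p'\}$ misses the cone point and falls into case (a). The conclusion survives: case (a) then produces $\RR^i\times\RR_{>0}\times Z_{p'}$, and $Z_{p'}$ is an ordinary smooth manifold by the inductive hypothesis (the ``in particular'' of the lemma applied to $Z$), so every chart of the induced atlas on $X_p$ still has depth zero. Fix the justification, not the statement.
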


\begin{proof}
The statement is a local one, so we can assume ${\sf dim}(X)<\infty$.  
We proceed by induction on $\depth(X)$.  
If $\depth(X)\leq 0$, then no distinct elements of $P$ are related, and the statement is trivially true. 

Suppose $X = U=\RR^i\times \sC(Z)$ is a basic.  Write the stratification $Z\to P_0$ and let $Q\hookrightarrow \sC(P_0)$ be as in the hypothesis.  Identify the underlying set $Q\subset \sC(P_0)$ with its image.
If $\ast \not \in Q$, $U|_{Q}$ equals $\RR^i\times \RR_{>0}\times Z_{|Q}$, and naturally inherits a smooth atlas, by induction.
If $\ast \in Q$, note that $Z_{|Q-\ast}$ is downward closed (and $Q-\ast$ is still consecutive in $P$) so  $Z_{|Q-\ast} \subset Z$ is a closed subspace (and therefore compact).  So $X|_{Q} \cong \RR^i \times \sC(Z|_{Q - \ast})$ is a $C^0$ stratified space.  
Since $Z$ has lower depth than $X$, by induction $Z_{|Q-\ast}$ naturally inherits the structure of a smooth atlas, and therefore so does $\RR^i\times \sC(Z_{|Q-\ast})$ by Lemma~\ref{cone-transformation}.  
For $X$ general, consider an atlas $\{(\varphi\colon U\hookrightarrow X\}$.  
Let $Q\hookrightarrow P$ be as in the hypothesis.  
Then $\{U_{|Q} \hookrightarrow X_{|Q}\}$ is an open cover of $X_{|Q}$.  
It follows that $X_{|Q}$ canonically inherits the conically smooth structure, with respect to which $X_{|Q} \to X$ is conically smooth.  
\end{proof}

\begin{cor}\label{depth-smooth}
Let $X$ be a conically smooth stratified space.
For each $k\geq -1$, the locus 
	\[
	X_k := \{x\in X\mid \depth_x(X) = k\} ~\subset ~X
	\]
of depth $k$ points of $X$ is a union of strata of $X$, and is thus itself an ordinary smooth manifold.  
\end{cor}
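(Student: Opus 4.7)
The plan is to apply Lemma~\ref{lemma.strata-smooth} to a suitably chosen subposet $P_k\subset P$ and then verify that the resulting sub-stratified space has depth zero. Write the stratification of $X$ as $X\xra{S}P$. By Lemma~\ref{dim-depth} the local depth function $x\mapsto \depth_x(X)$ factors through a poset map $P\to \PP$, so depth is constant along each stratum. Setting $P_k:=\{p\in P\mid \text{points of }X_p\text{ have local depth }k\}$, one obtains the set-theoretic identity $X_k = X_{|P_k}$, which already establishes that $X_k$ is a union of strata of $X$.

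Next I would check that $P_k\hookrightarrow P$ is consecutive in the sense of Definition~\ref{defn.consecutive}. Fullness is automatic from the induced ordering. For the interval condition, recall that the depth coordinate of $\PP\subset (\ZZ)^{\op}\times(\ZZ)^{\op}$ carries the opposite of the usual order on $\ZZ$, so the map $P\to\PP$ being a poset map forces $p\mapsto \depth(p)$ to be monotone non-increasing on $P$. Thus whenever $p\leq p'\leq p''$ in $P$ with $p,p''\in P_k$, one has $k = \depth(p)\geq \depth(p')\geq \depth(p'') = k$, which pins $\depth(p')=k$ and so $p'\in P_k$. Lemma~\ref{lemma.strata-smooth} then endows $X_k=X_{|P_k}$ with the structure of a stratified space for which $X_k\hookrightarrow X$ is conically smooth.

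It remains to prove $X_k\in \snglrd_{\leq 0,\leq\infty}=\mfldd$, i.e., that $\depth_x(X_k)=0$ for all $x\in X_k$; this is the main content of the corollary, and it is a purely local computation. For $x\in X_k$, Proposition~\ref{basics.basis} together with the centering procedure implicit in the proof of Lemma~\ref{lemma.small-opens} provides a conically smooth open embedding $\phi\colon U\hookrightarrow X$ with $U=\RR^i\times\sC(Z)$ a basic and $\phi(0,\ast)=x$. By Example~\ref{non-decreasing}, the depth at the cone point of $U$ equals $\dim(Z)+1$, so $\dim(Z)=k-1$. By Example~\ref{example.product-depth}, any point $\phi(u,[t,z])$ with $t>0$ has depth $\depth_z(Z)\leq \dim(Z)=k-1<k$. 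Hence $\phi^{-1}(X_k)=\RR^i\times\{\ast\}$, showing that $X_k$ is locally modelled on $\RR^i$ near $x$; combined with the fact (Lemma~\ref{lemma.strata-smooth}) that the stratum of $x$ in $X_k$ also has local dimension $i$ at $x$, this yields $\depth_x(X_k)=i-i=0$.

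The main obstacle is the existence of a basic chart \emph{centered} at $x$, i.e., with the cone point of $U$ mapping to $x$. This is not stated verbatim in the excerpt but falls out of an induction on depth contained in the proof of Lemma~\ref{lemma.small-opens}; once it is in hand, the remainder of the argument is routine bookkeeping with depth and dimension on basic singularity types.
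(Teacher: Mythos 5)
Your proof is correct and fills in the details of what the paper leaves implicit: the paper presents this as a direct corollary of Lemma~\ref{lemma.strata-smooth} with no written proof, and your argument supplies exactly the two verifications that are needed. Specifically, you (i) identify $X_k$ with $X_{|P_k}$ for $P_k\subset P$ the fiber of the depth coordinate of the poset map $P\to\PP$ from Lemma~\ref{dim-depth}, check consecutiveness using the monotonicity of depth along $P$, and apply Lemma~\ref{lemma.strata-smooth}; and (ii) perform the local depth computation in a centered basic chart to see that the resulting stratified space has depth zero, hence is an ordinary smooth manifold. The one point you flag yourself --- that every point admits a basic chart centered at it --- is indeed the lone ingredient not quoted verbatim in the excerpt, but it is correct (and is used elsewhere in the paper, e.g.\ in Lemma~\ref{ivt-again}): it follows by induction on depth from Proposition~\ref{basics.basis}, since if $x$ is not on the cone locus of a chart $\RR^j\times\sC(Y)$ then it lies in the lower-depth open $\RR^j\times\RR_{>0}\times Y$ and one iterates.
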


\begin{example}\label{strata-cbl}
If $X=(X\to P)$ is conically smooth with $Q\subset P$ a consecutive subposet, then the inclusion $X_{|Q} \to X$ is conically smooth and is \emph{constructible} in the sense of Definition~\ref{def.maps}.  
\end{example}

\subsubsection{Products of conically smooth stratified spaces}\label{sec.products}
For two spaces $Z$ and $W$, let $Z \star W$ denote their join. Recall that $\emptyset \star W \cong W$, and there is a homeomorphism
	\begin{equation}\label{cone-join}
	\sC(Z)\times \sC(W) \cong \sC(Z\star W).
	\end{equation}
For example, if $Z$ and $W$ are nonempty, denote a point in $Z \star W$ by $[z,a,w]$ where $a$ is in the closed interval $[0,1]$. A homeomorphism is given by 
\[
 ([t,z],[s,w])\mapsto [s+t, [z,\frac{t}{s+t},w]].
\]
Note that if $Z$ and $W$ are stratified (empty or not), there is a natural stratification on $Z \star W$ making (\ref{cone-join}) an isomorphism of conically smooth stratified spaces.

\begin{lemma}[Products]\label{products}
Let $X$ and $Y$ be $C^0$ stratified spaces.  
Then the product stratified space 
$
 X\times Y
$
is a $C^0$ stratified space, and there is a natural transformation 
\[
 \sm(-)\times \sm(-) \to \sm(-\times -).
\]
If both $X$ and $Y$ are compact, then the join 
$	
 X\star Y
$
is a compact $C^0$ stratified space. Moreover, there is a natural transformation 
	\[
	\sm_{|\sf cpt}(-)\times \sm_{|\sf cpt} (-) \to \sm_{|\sf cpt}(-\star -).
	\]
\end{lemma}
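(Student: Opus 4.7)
I would prove both assertions simultaneously by induction on dimension. Write $(P_n)$ for the product statement applied to $X,Y\in\snglrdC$ with $\dim X + \dim Y \leq n$, and $(J_n)$ for the join statement applied to compact $Z,W\in\snglrdC$ with $\dim Z + \dim W \leq n$. The base cases $(P_0)$ (products of discrete $0$-manifolds) and $(J_{-1})$ (which reduces to $\emptyset\star W\cong W$) are trivial. The inductive step consists of two implications proved in tandem: $(J_{n-1})\Rightarrow(P_n)$ and $(P_{n+1})\Rightarrow(J_n)$.

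For $(J_{n-1})\Rightarrow(P_n)$, I cover $X$ and $Y$ by $C^0$ basic charts $U=\RR^i\times\sC(Z_1)$ and $V=\RR^j\times\sC(Z_2)$ using Lemma~\ref{lemma.topological-basis}. The homeomorphism~(\ref{cone-join}) supplies
\[
U\times V~\cong~\RR^{i+j}\times\sC(Z_1\star Z_2)~.
\]
Since $\dim Z_1+\dim Z_2\leq(\dim U-1)+(\dim V-1)\leq n-2$, the inductive hypothesis $(J_{n-1})$ makes $Z_1\star Z_2$ into a compact $C^0$ stratified space equipped with a natural atlas; properties (2) and (3) of Definition~\ref{defn.stopen} then promote $U\times V$ to a $C^0$ basic, with atlas supplied by Lemma~\ref{cone-transformation}. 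These charts cover $X\times Y$, so property (5) of Definition~\ref{defn.stopen} yields that $X\times Y$ lies in $\snglrdC$; paracompactness, Hausdorffness, and second countability are inherited from the factors (using that second countable regular spaces are paracompact). The desired element of $\sm(X\times Y)$ is defined as the equivalence class of the collection of all such chart products $U\times V\hookrightarrow X\times Y$; the atlas property and naturality in $X$ and $Y$ follow from the sheaf property of $\sm$ (Lemma~\ref{sheaf}) and the compatibility of atlas pullback along open embeddings.

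For $(P_{n+1})\Rightarrow(J_n)$, I use the open cover
\[
Z\star W~=~(Z\times\sC(W))~\cup~(\sC(Z)\times W)
\]
whose intersection is identified with $Z\times\RR_{>0}\times W$. Compactness of $Z\star W$ follows from its presentation as a quotient of the compact space $Z\times[0,1]\times W$. Each open in the cover is a product of two $C^0$ stratified spaces with dimension sum at most $n+1$, so $(P_{n+1})$ endows each with a natural $C^0$ stratified structure and atlas. On the overlap both atlases restrict to the product of $\cA_Z$ and $\cA_W$ with the standard smooth structure on $\RR_{>0}$ (built using the $\RR\times-$ functor from Definition~\ref{defn.products-j}), and the sheaf property of $\sm$ (Lemma~\ref{sheaf}) then glues them to the required atlas on $Z\star W$.

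The principal technical obstacle is verifying the compatibility required at each inductive step: the two conically smooth atlases on $U\times V\cong\RR^{i+j}\times\sC(Z_1\star Z_2)$ -- the one obtained as the product of the basic atlases of $U$ and $V$, and the one obtained by applying Lemma~\ref{cone-transformation} to the join atlas on $Z_1\star Z_2$ -- must be shown to be equivalent. This coherence is the essential content of the two claimed natural transformations, and it is checked by unwinding Definition~\ref{defn.basics} in concert with the inductive hypothesis; once it is in place, naturality of both constructions in the stratified space arguments is routine from the pullback compatibility of atlases along open embeddings.
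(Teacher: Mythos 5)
Your proof takes essentially the same route as the paper's: mutual induction driven by the cone--join identity $\sC(Z_1)\times\sC(Z_2)\cong\sC(Z_1\star Z_2)$ for the product step and the open cover $Z\star W\cong(\sC(Z)\times W)\cup(Z\times\sC(W))$ for the join step, with Lemma~\ref{cone-transformation} and the sheaf property of $\sm$ supplying the atlases. However, there is a genuine defect in the stated inductive scheme. The implication pair $(J_{n-1})\Rightarrow(P_n)$ and $(P_{n+1})\Rightarrow(J_n)$ does not bootstrap: rewriting the second as $(P_n)\Rightarrow(J_{n-1})$, the two combine only to the tautology $(P_n)\Leftrightarrow(J_{n-1})$, and from the base cases $(P_0),(J_{-1})$ there is no way to reach $(P_1)$ or $(J_0)$ --- each requires the other. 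Fortunately your own dimension count saves the argument: you observe $\dim Z_1+\dim Z_2\leq n-2$, so the product step in fact only invokes $(J_{n-2})$, not $(J_{n-1})$. With the implication restated as $(J_{n-2})\Rightarrow(P_n)$, the chain
\[
(P_0),\ (J_{-1})\ \Longrightarrow\ (P_1)\ \Longrightarrow\ (J_0)\ \Longrightarrow\ (P_2)\ \Longrightarrow\ (J_1)\ \Longrightarrow\ \cdots
\]
does make progress, and the proof goes through. (The paper avoids this wrinkle by indexing on $\dim(X\times Y)=\dim(X\star Y)$, which gives the manifestly non-circular chain $P_k\Rightarrow J_k\Rightarrow P_{k+1}$.) Aside from this bookkeeping correction, the substance of your argument matches the paper's.
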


\begin{cor}\label{basic-products}
Let $U$ and $V$ be $C^0$ basics. Then the $C^0$ stratified space $U\times V$ is a $C^0$ basic. If $U$ and $V$ be basics,  the product $U\times V$ is naturally a basic.
\end{cor}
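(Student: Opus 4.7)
The plan is to reduce the corollary directly to Lemma~\ref{products} together with the join/cone identity~(\ref{cone-join}). First I would write the two $C^0$ basics in standard form, $U = \RR^i \times \sC(Z)$ and $V = \RR^j \times \sC(W)$ with $Z,W$ compact $C^0$ stratified spaces, and reshuffle the product via the evident homeomorphism
\[
U \times V \;=\; \RR^i \times \sC(Z) \times \RR^j \times \sC(W) \;\cong\; \RR^{i+j} \times \bigl(\sC(Z) \times \sC(W)\bigr).
\]
Applying~(\ref{cone-join}), which is both a homeomorphism and an isomorphism of stratified spaces (as noted immediately after~(\ref{cone-join})), this becomes $\RR^{i+j} \times \sC(Z \star W)$. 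Since Lemma~\ref{products} provides that $Z \star W$ is a compact $C^0$ stratified space whenever $Z$ and $W$ are, this exhibits $U \times V$ as a $C^0$ basic in the sense of Definition~\ref{def:top-bsc}.

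For the smooth statement, I would upgrade this in the obvious way. Given basics $U = (\RR^i \times \sC(Z), \cA_Z)$ and $V = (\RR^j \times \sC(W), \cA_W)$ in the sense of Definition~\ref{defn.basics}, the conically smooth atlases $\cA_Z$ and $\cA_W$ pair under the natural transformation
\[
\sm_{|\sf cpt}(-) \times \sm_{|\sf cpt}(-) \longrightarrow \sm_{|\sf cpt}(-\star -)
\]
of Lemma~\ref{products} to produce a canonical atlas $\cA_{Z \star W} \in \sm_{|\sf cpt}(Z \star W)$. The resulting pair $(\RR^{i+j} \times \sC(Z \star W), \cA_{Z\star W})$ is, by definition, an object of $\bscd$; transporting along the isomorphism of the previous paragraph produces the desired basic structure on $U \times V$. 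Naturality in $(U,V)$ follows from the naturality clause in Lemma~\ref{products}.

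There is essentially no obstacle here beyond invoking Lemma~\ref{products}; all the substantive content — that the underlying space is a compact $C^0$ stratified space, and that atlases transport across the join — is packaged in that lemma. The only point that warrants care is checking that the associativity and commutativity reshuffle on Euclidean factors is compatible with the standard smooth atlases on $\RR^i$, $\RR^j$, $\RR^{i+j}$, so that the identification $U \times V \cong \RR^{i+j} \times \sC(Z \star W)$ is not merely a homeomorphism but an isomorphism of stratified spaces covered by the constructed atlas; this is immediate from the definition of the product atlas on a Euclidean factor as described in Definition~\ref{defn.products-j}.
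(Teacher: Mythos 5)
Your proof is correct and matches the route the paper intends (the paper gives no explicit proof for this corollary, but it is stated immediately after Lemma~\ref{products} precisely because the identification $U\times V \cong \RR^{i+j}\times\sC(Z\star W)$ via~(\ref{cone-join}) and the two natural transformations in that lemma are all that is needed). Your closing remark about checking compatibility of the Euclidean reshuffle with the atlases is the right thing to flag; it is indeed immediate from Definition~\ref{defn.products-j}.
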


\begin{cor}\label{snglr-products}
The category of $C^0$ stratified spaces, and continuous stratified maps among them, admits products.
Likewise, the category $\Stratd$ of conically smooth stratified spaces, and conically smooth maps among them, admits products.  
\end{cor}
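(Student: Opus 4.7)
The plan is to construct products explicitly and verify the universal property, relying on Lemma~\ref{products} for existence and Corollary~\ref{basic-products} to reduce conical-smoothness questions to the case of basics.

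First, given $X=(X\to P,\cA_X)$ and $Y=(Y\to Q,\cA_Y)$, I would declare $X\times Y$ to be the stratified topological space $X\times Y \to P\times Q$ of Example~\ref{example.product-poset}. Lemma~\ref{products} says the underlying object lies in $\snglrdC$, and the natural transformation $\sm(-)\times\sm(-)\to\sm(-\times-)$ provided by the same lemma endows it with a canonical conically smooth atlas coming from $(\cA_X,\cA_Y)$. I would then verify that the projections $\pi_X\colon X\times Y\to X$ and $\pi_Y\colon X\times Y\to Y$ are morphisms in the respective categories: for continuous stratified maps this is immediate from the product structure of the underlying topological space and poset, and for conical smoothness one argues locally via Proposition~\ref{prop.basics-form-a-basis}, reducing to the case where $X=U$ and $Y=V$ are basics where $U\times V$ is again a basic by Corollary~\ref{basic-products}.

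Next I would check the universal property. Given continuous stratified (resp.~conically smooth) maps $f\colon Z\to X$ and $g\colon Z\to Y$, the pair $(f,g)\colon Z\to X\times Y$ is the unique continuous map of topological spaces and of posets whose postcompositions with $\pi_X$ and $\pi_Y$ recover $f$ and $g$. For continuous stratified maps this is just the universal property of products in $\Top$ and $\POSet$ applied compatibly. For conical smoothness, one restricts to a chart $U\to Z$ and chooses charts $V\to X$ and $W\to Y$ containing the relevant images; the question becomes whether a continuous stratified map $U\to V\times W$ is conically smooth iff its two components are, where now $V$, $W$, and $V\times W$ are all basics.

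The main work is this last local step, which I would settle by induction on $\depth(V\times W)=\depth(V)+\depth(W)$ (the depth formula follows from Example~\ref{example.cone-depth}, Example~\ref{example.product-depth}, and the cone-join homeomorphism (\ref{cone-join})). The base case (depth zero) reduces to the standard componentwise characterization of smoothness for maps into a product of Euclidean spaces. For the inductive step, I would unwind Definition~\ref{defn.conical-smoothness} separately on each stratum: on the deepest stratum (the product of the two cone points), conical smoothness along the Euclidean coordinates is a statement about the extension $\w{D}f$ of Definition~\ref{def:cone-sm}, which by linearity of the map $\gamma$ in each factor decomposes as a pair consisting of the corresponding extensions for the two components; on the complement, the cone-join identification (\ref{cone-join}) exhibits $(V\times W)\setminus(\RR^{i}\times\RR^{j}\times\{\ast\})$ as built from strata of strictly smaller depth, so the inductive hypothesis applies. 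The anticipated obstacle is precisely this compatibility between conical smoothness and the cone-join identification — once it is in hand, both the factorization of $\w{D}(f,g)$ and the uniqueness of the induced map follow formally.
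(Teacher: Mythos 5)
The paper states Corollary~\ref{snglr-products} without proof, leaving the verification of the universal property implicit; your proposal supplies it using the same toolkit the paper deploys for the closely related claim in Example~\ref{ex.diagonal} (that the diagonal $X\to X\times X$ is conically smooth): take the product from Lemma~\ref{products}, reduce to basics, and induct on depth via the cone-join identification~(\ref{cone-join}). The $\w{D}$-decomposition at the heart of your inductive step is sound: under~(\ref{cone-join}) the scaling/translation map $\gamma$ for $V\times W$ does factor as $\gamma_V\times\gamma_W$ over the shared $\RR_{>0}$ coordinate, $h_\Delta$ decomposes componentwise, and a continuous extension into a product exists precisely when each factor extends, so $\w{D}(h_1,h_2)$ exists and equals $(\w{D}h_1,\w{D}h_2)$ modulo these identifications.

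Two points in the inductive step should be sharpened. First, you state the inductive hypothesis for maps $U\to V\times W$ with $V$, $W$ basics, yet you apply it to the restriction of $(h_1,h_2)$ over $(V\times W)\smallsetminus(\RR^{i}\times\RR^{j}\times\{\ast\})$, which is not a basic. The missing step is to cover this complement by $(V\smallsetminus\RR^i)\times W$ and $V\times(W\smallsetminus\RR^j)$, each in turn covered by products $V'\times W'$ of basics with $\depth(V')+\depth(W')<\depth(V)+\depth(W)$; since conical smoothness is checked chart by chart (final clause of Definition~\ref{defn.conical-smoothness}), the inductive hypothesis then applies to each such product chart. Second, Definition~\ref{defn.conical-smoothness} branches on whether the map sends the cone locus of $U$ into the cone locus of $V\times W$. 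Since the latter is $\RR^i\times\{\ast\}\times\RR^j\times\{\ast\}$, this happens precisely when \emph{both} $h_1$ and $h_2$ do, so clause~(1) rather than clause~(2) governs whenever at least one factor misses its cone locus, and there one drops to strictly smaller total depth directly (again by covering with products). Neither point is an obstruction, but as written ``the inductive hypothesis applies'' glosses over them.
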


\begin{proof}[Proof of Lemma~\ref{products}] 
Because $C^0$ stratified spaces admit open covers by $C^0$ stratified spaces with finite dimension (Lemma~\ref{basics.basis}), it is enough to prove the statement assuming each of the dimensions of $X$ and $Y$ are finite. We perform induction on the dimension of $X \times Y$ and $X \star Y$ -- specifically, the statement for ${\sf dim} X \times Y = k$ will imply the statement for ${\sf dim} X \star Y = k$, which then implies the statement for ${\sf dim} X \times Y = k+1$, and so forth. The base case $k=0$ is obvious since discrete spaces are $C^0$ stratified, and also admit a unique maximal atlas. (Likewise for the case $k=-1$, since $\emptyset$ admits a unique atlas.)

Through Lemma~\ref{topological-basis}, the collection of product open embeddings
\[
\{\bigl(\RR^i\times \sC(Z) \bigr) \times \bigl(\RR^j\times \sC(W) \bigr)  \hookrightarrow X\times Y\}
\]
forms a basis for the topology of $X\times Y$. Thus, to verify that $X \times Y$ is $C^0$ stratified with an induced atlas, it is sufficient to consider the case $X=\RR^i\times \sC(Z)$ and $Y = \RR^j\times \sC(W)$. If $i+j \geq 1$, $\RR^{i+j} \times \sC(Z) \times \sC(W) \cong \RR^{i+j} \times \sC(Z \star W)$, which is a $C^0$ stratified space with an atlas by Lemma~\ref{cone-transformation} and induction on dimension. So we may assume $i=j=0$. Finally, we may assume both $Z$ and $W$ are nonempty since $\sC(\emptyset) \times \sC(W) = \sC(W)$ is a basic.

Then ${\sf dim}(Z) = {\sf dim} X-1$ and ${\sf dim}(W) = {\sf dim} Y-1$.  Since ${\sf dim}(Z\star W) = {\sf dim} X + {\sf dim} Y -1 < {\sf dim} X \times Y$,  by induction $Z \star W$ is a $C^0$ stratified space with a conically smooth atlas. Since $Z\star W$ is further compact, $\sC(Z)\times \sC(W) \cong \sC(Z \star W)$ is a $C^0$ stratified space with a conically smooth atlas by property (\ref{prop.cones}) and Lemma~\ref{cone-transformation}. 

It remains to prove that if all stratified spaces of the form $X \times Y$ with dimension $k$ are $C^0$ stratified with an atlas, then so are stratified spaces of the form $Z \star W$ of dimension $k$. Observe that $Z \star W$ admits an open cover by three sets:
\[
 Z \star W 
 \cong
 \sC(Z) \times W \bigcup_{Z \times W \times \RR} Z \times \sC(W).
\]

The middle term is a $C^0$ stratified space with atlas by Lemma~\ref{cone-transformation} and induction, since ${\sf dim} Z \times W < {\sf dim} Z \star W$.  The two other terms are $C^0$ stratified spaces with an atlas because they are products, of dimension $k$. One simply notes that the inclusions $Z \times W \times \RR \into \sC(Z) \times W$, $Z \times \sC(W)$ are conically smooth open embeddings by definition. 

\end{proof}

\subsection{Examples of basic opens and conically smooth stratified spaces}\label{section:singularExamples}

\example[Stratified spaces of dimension $0$ are countable discrete spaces]{
When $n=0$, we see that $\bscd_{0,0} = \{(\RR^0\to \ast)\}$ is the terminal category consisting of the stratified space $\RR^0 = \RR^0\times \sC(\emptyset)$, equipped with the unique atlas on $\emptyset$.  So a $0$-dimensional conically smooth stratified space is a countable set, equipped with its unique atlas.
In other words, $\snglrd_{0,0}$ is the category of countable sets and injections.
}

\example[Smooth manifolds]{
As defined in Example~\ref{example.first-smooth-manifolds},
conically smooth stratified spaces of depth 0 are the same as smooth manifolds.
Specifically, $\bscd_{0,n}$ is a $\Kan$-enriched category consisting of a single object (see Lemma \ref{mapping-Kans}), which is the smooth manifold
\[
 \RR^n \times \sC(\emptyset) = \RR^n~;
\] 
and whose Kan complex of morphisms $\Emb(\RR^n,\RR^n)$ is the singular complex of the space of smooth embeddings and smooth isotopies among them. 
In this case, $\snglrd_{0,n}$ is the familiar category of smooth $n$-manifolds and smooth embeddings among them, endowed with a $\Kan$-enrichment consistent with the compact-open $C^\infty$ topology of smooth embedding spaces.
}

\example[Cones on compact manifolds]{
Manifestly, for $M$ a closed smooth manifold, then the open cone $\sC(M)$ canonically inherits a conically smooth structure.  
}

\example[Stratified spaces of dimension 1 are graphs]{
The first instance of a singularity occurs with $\bscd_{\leq 1, 1}$ -- basics of pure dimension 1, and depth at most 1.
Explicitly, 
	\[
	\ob \bscd_{\leq 1,1} = \{\RR\}\sqcup \{\RR^0 \times \sC(J) \mid \emptyset \neq J \text{ finite}\}
	\] 
where the underlying space of a basic is either $\RR$, or is the noncompact `spoke' $\sC(J)$. 
We think of it as an open neighborhood of a vertex of valence $|J|$. 

The category $\snglrd_{1}$ is summarized as follows.  Its objects are (possibly non-compact, possibly empty) graphs with countably many vertices, edges, and components. A graph with no vertices (which is a $1$-dimensional stratified space of depth 0) is simply a smooth 1-manifold. 
}

\begin{example}[Simplices]
\label{example.simplices}
The topological simplex $\Delta^n$ can be given the structure of a conically smooth stratified space: note there is a homeomorphism of stratified spaces $\Delta^n \cong \Delta^{n-1} \star \Delta^0$, then appeal to Lemma~\ref{products}.  
\end{example}

\begin{example}[Simplicial complexes]\label{example.simplicial-complex}
Let $\sS$ be a finite simplicial complex such that, for a fixed $n$, every simplex of dimension $<n$ is in the boundary of a simplex of dimension $n$. We now describe how to naturally endow the geometric realization $|\sS|$ with the structure of a compact $n$-dimensional conically smooth stratified space. (We leave it to the reader to verify that, when $\sS = \Delta^n$, the following construction yields the same maximal atlas as specified above.)

For any simplex $\sigma \subset \sS$, let $K_\sigma$ be the union of all $n$-simplices that contain $\sigma$, and $U_\sigma$ the interior of $K_\sigma$. Then $U_\sigma$ can be identified with $\RR^{n-k} \times \sC(L_\sigma)$ where $L_\sigma=\mathsf{Link}(\sigma)$ and $(n-k)$ is the dimension of $\sigma$. By induction on dimension, the link is a compact $(k-1)$-dimensional conically smooth stratified space, so the collection $\{U_\sigma\}_{\sigma \subset \sS}$ forms an open cover of $|\sS|$ by stratified basics.

For any two simplices $\sigma$ and $\sigma'$, let $F$ be the highest-dimensional simplex such that $\sigma \cup \sigma' \subset F$. Then $U_\sigma \cap U_\sigma' = U_F$ (when $F$ is empty, this intersection is empty as well). So to verify that $\{U_\sigma\}$ forms an atlas, one need only verify that the topological embedding $U_F \into U_\sigma$ is in fact a conically smooth embedding. If $\depth U_F = 0$ (i.e., ${\sf dim} F = n$) this is obvious, as $U_F \into U_\sigma$ is the inclusion of Euclidean space into a depth-zero stratum of $U_\sigma$. Noting that $\mathsf{Link}(F) \subset \mathsf{Link}( \sigma)$ is an embedding into a higher-depth basic whenever $\sigma \neq \sigma'$, we are finished by induction on depth. 
\end{example}

\begin{example}[Permutable corners]\label{example.perm.corners}
For a fixed $n$, consider the subcategory of $\bscd_{\leq n, n}$ spanned by the objects of the form $\{\RR^{n-k} \times \sC(\Delta^{k-1})\}_{0 \leq k \leq n}$. Here, $\Delta^{k-1}$ has the atlas specified in the examples above, and by $\Delta^{-1}$, we mean the empty manifold. Note that we have a conically smooth identification $\RR^{n-k}\times \sC(\Delta^{k-1}) \cong \RR^{n-k}\times [0,\infty)^k$.
If a $n$-dimensional conically smooth stratified space $M$ is given an atlas consisting only of charts from objects in this subcategory, we call $M$ a {\em manifold with permutable corners}. Manifolds with permutable corners will be examined more systematically as Example~\ref{boundary}. 

An example of a $2$-manifold with permutable corners is the teardrop in Figure~\ref{image.teardrop}.
Here are two examples of a $2$-manifold with permutable corners that are isomorphic in $\snglrd_2$, with the atlases implicit:
\begin{itemize}
\item The closure of the first quadrant in $\RR^2$, given by $M = \{(x_1,x_2) \, | \, x_1,x_2\geq 0\}$.
\item The complement of the open third quadrant in $\RR^2$, given by $M' = \{\text{either $x_1$ or $x_2$ is $\geq 0$}\}$.
\end{itemize}
Note that there is no smooth embedding $f: \RR^2 \to \RR^2$ such that $f(M) = M'$, so no identification of these subspaces can be witnessed through an ambient smooth embedding of $\RR^2$. Regardless $M \cong M'$ in $\snglrd_2$.
This is in contrast to other definitions of manifolds with corners, such as~\cite{laures}. See also \S\ref{section.corners}.
\end{example}

\begin{example}[Embedded submanifolds]\label{example.embedded}
Let $M^n$ be a smooth $n$-manifold and $L^{n-k} \subset M^n$ a properly embedded $(n-k)$-dimensional smooth submanifold. This data is an example of a $n$-dimensional conically smooth stratified space of depth $k$ as follows; we will denote it as $(L\subset M)$.  
The stratification map $M\xra{S} \{n-k<n\}$ is given by setting $S^{-1}\{n-k\} = L$ and $S^{-1}\{n\} = M\smallsetminus L$. To verify conical smoothness, notice that $M$ is covered by stratified open embeddings from the stratified spaces $\RR^n \to \{n\}$ and $\RR^{n-k}\times \sC(S^{k-1}) \to \{n-k<n\}$.
So $M$ is thus equipped with the atlas consisting of those stratified open embeddings $\{\RR^n \xra{\varphi} M\}\cup \{\RR^{n-k}\times \sC(S^{k-1}) \xra{\psi} M\}$ for which $\RR^n\xra{\varphi}M\smallsetminus L$ is smooth, $\psi_|\colon \RR^{n-k}\to L$ is smooth, and $\psi_|\colon \RR^{n-k}\times (\RR_{>0}\times S^{k-1}) \to M\smallsetminus L$ is smooth.  
It is routine to verify that this indeed defines a conically smooth atlas.  
Notice that the smooth structures of $M\smallsetminus L$ and $L$ are exploited, but the entirety of the smooth structure around $L$ is not.  
The map $(L\subset M) \to M$ is \emph{conically smooth} in the sense of Definition~\ref{defn.conical-smoothness}, and is a typical example of a \emph{refinement} in the sense of Definition~\ref{def.maps}.
\end{example}

\begin{example}[$\bscd_2$]
\label{example.bsc-2}
The underlying space of an object of $\bscd_{2}$ is $\RR^2$, $\RR\times \sC(J)$ with $J$ a nonempty finite set, or $\sC(Y)$ where $Y$ is a nonempty compact graph.  
Heuristically, an object of $\snglrd_{2}$ is a space which is locally equivalent to one of the three types of stratified spaces.
An example of such an object is the geometric realization of a (countable) simplicial complex for which each simplex is contained in some $2$-simplex. (See Example~\ref{example.simplicial-complex}.) In Figure~\ref{image.coneDelta1} we also picture a basic open obtained from the 1-skeleton of the 2-simplex. Another example of an object of $\snglrd_2$ is a nodal surface -- here the depth $2$ points are the nodes and there are no depth $1$ points. 

In Figure~\ref{image.tetra-1-skeleton} we consider $\sC(Y)$ when $Y$ is the $1$-skeleton of the tetrahedron $\Delta^3$. The stratification $\sC(Y) \to P = \{(2,2) \leq (1,2) \leq (0,2)\}\cong [2]$ is given by sending a point to its local depth and dimension. Consider a poset $Q = \{0\leq 2\}$, and the non-consecutive inclusion $Q \into P = [2]$. Then $X|_Q$ is not even a $C^0$ stratified space, as the neighborhood around $\ast$ does not admit a neighborhood of the form $\RR^i \times \sC(X)$ for a compact space $X$. This shows the necessity of consecutiveness in Lemmas~\ref{lemma.strata} and \ref{lemma.strata-smooth}.
\end{example}

\begin{figure}
		\[
		\xy
		\xyimport(8,8)(0,0){\includegraphics[height=0.8in]{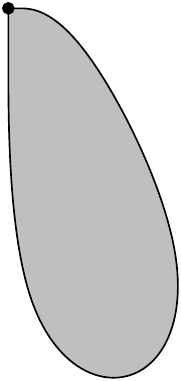}}
		\endxy
		\]
	\begin{image}\label{image.teardrop}
	
	The teardrop is a 2-manifold with corners, and in particular a 2-dimensional conically smooth stratified space of depth 2. The unique point of depth 2 is the tip of the teardrop. The interior, indicated by grey, contains only points of depth 0, and the solid border consists of points of depth 1.
	\end{image}
\end{figure}

\begin{figure}
		\[
		\xy
		\xyimport(8,8)(0,0){\includegraphics[width=3in]{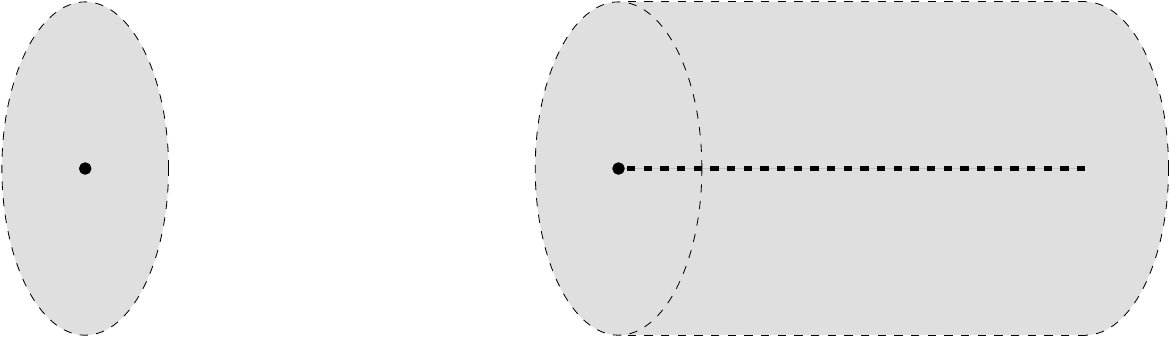}}
	,(0.6,-1)*+{(a)}
	,(6,-1)*+{(b)}
		\endxy
		\]
	\begin{image}\label{image.coneS1}
	(a) The basic $\sC(S^1)$. The marked point is the cone point $\ast \in \sC(S^1)$ (of depth 2) and the dashed boundary indicates this space is noncompact. (b) The basic open $U= \RR^1 \times \sC(S^1)$. $U$ has depth 2, and is a local model for a 1-manifold embedded in a 3-manifold.
	\end{image}
\end{figure}

\begin{figure}
		\[
		\xy
		\xyimport(8,8)(0,0){\includegraphics[height=1in]{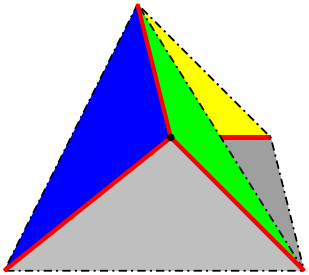}}
		\endxy
		\]
	\begin{image}\label{image.tetra-1-skeleton}
	A drawing of the basic open $\sC(Y)$, where $Y$ is the compact $1$-dimensional conically smooth stratified space given by the $1$-skeleton of a tetrahedron. This basic open has a single point of depth 2, depicted as a black dot at the center. 
	Each colored face -- in grey, blue, et cetera --  is a region whose points all have depth zero. The red solid lines indicate points of depth 1 -- the subspace of depth 1 points is a union of copies of $\RR$, one copy for each vertex of $Y$. 
		\end{image}
\end{figure}

\begin{figure}
		\[
		\xy
		\xyimport(8,8)(0,0){\includegraphics[width=2in]{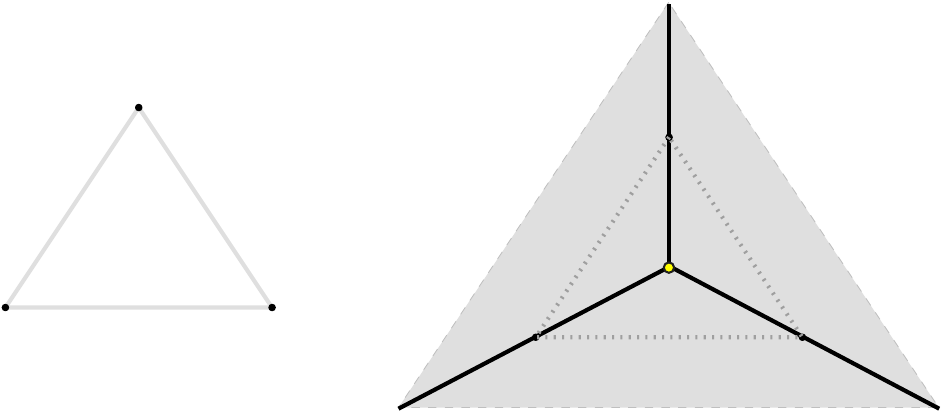}}
	,(1.2,-1)*+{(a)}
	,(5.8,-1)*+{(b)}
		\endxy
		\]
	\begin{image}\label{image.coneDelta1}
	(a) A $1$-dimensional conically smooth stratified space $X$ given by the natural stratification on the 1-skeleton of $\Delta^2$. The black vertices represent depth 1 points, while the grey edges contain only depth 0 points. (b) The basic open $\RR^2 \times \sC(X)$, with the original manifold $X$ drawn using dotted lines. The central point is a point of depth 2, and the other colors encode the same depths as before -- the black edges are depth 1, while the grey areas are depth 0 (i.e., smooth) points. Note this also encodes the singularity type found at a corner of the surface of a cube.
	\end{image}
\end{figure}

\begin{example}
For each $0\leq k\leq n$, there is a standard identification of the underlying space $U^n_{S^{k-1}} \cong \RR^n$ through, say, polar coordinates.  
However, we point out that $U^n_{S^{k-1}}\cong U^n_{S^{l-1}}$ implies $k=l$.  
This follows immediately from the definition of morphisms between basics.  (See~\S\ref{sec:endomorphisms} for a thorough discussion.)
\end{example}

\begin{example}[Global quotients]\label{ex.orbifolds}
Let $X$ be a conically smooth stratified space.
Let $G$ be a finite group acting on $X$ so that, for each subgroup $H\subset G$, the invariants $X^H \subset X$ is a constructible embedding (see Definition~\ref{def.maps}).  
We now explain how the quotient $X_G$ canonically inherits a conically smooth structure.

Because $G$ is finite, we can assume that $X$ has bounded depth, on which we will induct.  
Let $x\in X$ and denote the subgroup $G_x := \{g\mid g\cdot x=x\}$.  
Again, the finiteness of $G$ grants that there is a pair of centered charts $(U,0) \hookrightarrow (\w{U},0)\hookrightarrow (X,x)$ for which $G_x\cdot U\subset \w{U}$.  
Through the conclusions of~\S\ref{sec:endomorphisms}, there results a map $G_x \to {\sf GL}(U)$.
We conclude that there exists a centered chart $(U',0)\hookrightarrow (X,x)$ that is $G_x$-equivariant.  
So the problem of constructing the conically smooth structure on $X_G$ reduces to the case that $X=U=\RR^n\times \sC(Z)$ and the homomorphism $G \to \Aut(U)$ factors through $\sO(\RR^n)\times \Aut(Z)$ (see Definition~\ref{other-endos}).  
Splitting off the trivial representation, $\RR^n=\RR^T\times \RR^A$, we conclude an action of $G$ on $S^{A-1}\star Z$ that induces the given action on $U=\RR^n\times \sC(Z) \cong \RR^T \times \sC\bigl(S^{A-1}\star Z\bigr)$.  
By induction on depth, $(S^{A-1}\star Z)_G$ has a conically smooth structure.  
And so $U_G = \RR^T\times \sC\bigl((S^{A-1}\star Z)_G\bigr)$ has a conically smooth structure. 

\end{example}

\example[Isotropy stratification]\label{global-orbi}{
Let $G$ be a finite group.
Let $G\xra{\rho} \sG\sL(\RR^n)$ be a homomorphism.
Regard the set $\cP_{\sf quot}(G):= \{H\subset G\}$ of subgroups of $G$ as a poset, ordered by reverse inclusion. 
The map of sets $\RR^n \to \cP_{\sf quot}(G)$ given by $x\mapsto \{g\mid gx=x\}$ is continuous, and the standard smooth structure on $\RR^n$ equips this stratified topological space with a conically smooth structure.

Because $G$ is finite, it acts orthogonally on $\RR^n\cong \RR^{T}\times \RR^{A}$, where here we have made visible the trivial representation.  
In particular, $G$ acts on the unit sphere $S^{A-1}$.
The above stratification on $\RR^n$ is invariant with respect to scaling by a positive real number, and so its restriction to $S^{A-1}$ inherits the conically smooth structure. 
An inductive argument on the dimension of $S^{A-1}$ (see Example~\ref{ex.orbifolds}) verifies that the stratified topological space which is the coinvariants $(S^{A-1})_G$ canonically inherits the conically smooth structure.  
And so, $(\RR^n)_G \cong \RR^T \times \sC\bigl((S^{A-1})_G\bigr)$ is a basic stratified space. 

Likewise, let $X$ be an orbifold.
A choice of an orbifold atlas for $X$ witnesses an open cover of the underlying topological space of $X$, with each inclusion in the open cover being conically smooth.
There results a stratification of $X$ together with a conically smooth atlas.  
}

\begin{example}[Coincidences]\label{ex.coinc}
Let $I$ be a finite set.
Consider the poset $\cP_{\sf quot}(I)$ for which an element is an equivalence relation $\fP\subset I\times I$, with $\fP\leq \fP'$ meaning $\fP'\subset \fP$.  
Let $P$ be a poset. 
Consider the poset $\cP_{\sf quot}(I)\wr P$ for which an element is a pair $(\fP,\un{c})$ consisting of an equivalence relation on $I$ together with a map of underlying sets $I_{/\fP} \xra{\ov{c}} P$, with $(\fP,\un{c})\leq (\fP',\un{c}')$ meaning $\fP'\subset \fP$ and $\ov{c}q \leq \ov{c}'$ -- here $q\colon I_{/\fP'} \to I_{/\fP}$ is the quotient map.
Notice the maps of posets
\[
\cP_{\sf quot}(I) \leftarrow \cP_{\sf quot}(I)\wr P \to P^I~.  
\]

Let $X=(X\xra{S} P)$ be a conically smooth stratified space.
There is the map of underlying sets 
\[
X^{cI}~:=~ \bigl(X^I \to \cP_{\sf quot}(I)\wr P\bigr)
\]
given by $(I\xra{c}X)\mapsto \bigl(\{(i,i')\mid c(i)=c(i')\}, [i]\mapsto Sc(i) \bigr)$.  
Immediate is that this map is continuous, and therefore defines a stratified topological space.
Also clear is that a conically smooth map $X\to Y$ and a map $I\to J$ together induce a continuous stratified map $X^{cJ} \to Y^{cI}$, compatibly as these two types of maps compose.  
Furthermore, if $X\to Y$ is constructible or open then so is $X^{cJ}\to Y^{cI}$.  
\begin{prop}\label{coinc-exist}
Let $I$ be a finite set, and let $X=(X\xra{S}P)$ be a conically smooth stratified space.
The following statements are true.
\begin{enumerate}
\item The stratified topological space $X^{cI} = \bigl(X^I\to\cP_{\sf quot}(I)\wr P\bigr)$ has a standard structure of a conically smooth stratified space with respect to which the stratified continuous map $X^{cI} \to X^I$ is conically smooth.  
\item In the case that $X=U$ is a basic, then so is $U^{cI}$.  
\item Furthermore, for $X\hookrightarrow Y$ a conically smooth embedding and $I\to J$ is a map of sets, then the induced map $X^{cJ}\to Y^{cI}$ is a conically smooth embedding.  
\end{enumerate}
\end{prop}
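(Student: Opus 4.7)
The plan is to prove parts (1) and (2) simultaneously by induction on $\depth(X)$, and then deduce part (3) as a local consequence. The essential content is in (2); once $U^{cI}$ is shown to be a basic for every basic $U$, part (1) will reduce to verifying cocycle compatibility of the resulting atlases on overlapping basic charts via the sheaf property of $\sm$ from Lemma~\ref{sheaf}, and part (3) will reduce to a coordinate check inside basics.

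For part (2), let $U = \RR^n \times \sC(Z)$ and write a point of $U^I$ as $(\vec v_i, [s_i, z_i])_{i \in I}$. Introduce the centroid $\vec V := \tfrac{1}{|I|}\sum_i \vec v_i \in \RR^n$, the displacements $\vec w_i := \vec v_i - \vec V$, and the radius $\sigma := \bigl(\sum_i |\vec w_i|^2 + \sum_i s_i^2\bigr)^{1/2}$. The diagonal scaling operation $\gamma_{t,0}$ of Definition~\ref{defn.f-delta} applied simultaneously in all $I$ coordinates multiplies $\sigma$ by $t$, and combined with translation by $\vec V$ along $\RR^n$ yields a stratified homeomorphism
\[
U^I \;\cong\; \RR^n \times \sC(L), \qquad L := \bigl\{\sigma = 1,\ \textstyle\sum_i \vec w_i = 0\bigr\},
\]
matching the coincidence stratification on the left with the cone stratification on the right, with the deepest stratum $\RR^n \times \{*\}$ corresponding to the all-at-cone-point diagonal. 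Compactness of $Z$ makes $L$ compact. To upgrade $L$ to a smoothly stratified space, cover it by the open sets $L_{i_0} := \{s_{i_0} > 0\} \cap L$ for $i_0 \in I$ together with $L_{\mathrm{far}} := \{\sum_i |\vec w_i|^2 > 0\} \cap L$; these cover $L$ because $\sigma = 1$ forces at least one of the two sums to be positive. On each $L_{i_0}$, solving $\sigma = 1$ for the coordinate $s_{i_0}$ produces a conically smooth chart in which the remaining data factors through $Z^{cI\setminus\{i_0\}}$ (of depth strictly less than $\depth(U)$), invoking the outer induction. On $L_{\mathrm{far}}$, the restriction of the coincidence stratification is the trace on the unit sphere of a braid-type diagonal arrangement in $\RR^{n(|I|-1)}$, fibered over cones on proper subsets of $Z$; the underlying arrangement is Whitney stratified, hence smoothly stratified by Example~\ref{whitneys-count}, while the cone fibration is handled by induction on $|I|$. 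Overlap compatibility of the charts reduces via Lemma~\ref{smooth-compose} to the conical smoothness of the operations $\gamma_{t,u}$. Then Lemma~\ref{cone-transformation} upgrades $\RR^n \times \sC(L)$ to a basic, establishing (2).

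For part (1), cover $X$ by basic charts $\{\phi_\alpha \colon U_\alpha \hookrightarrow X\}$; each function $\alpha \colon I \to \{\alpha\}$ indexing these charts gives an open embedding $\prod_i U_{\alpha(i)} \hookrightarrow X^I$, and these embeddings cover $X^I$. When $\alpha$ is constant equal to some $\alpha_0$, the image is $(U_{\alpha_0})^{cI}$ as a basic open subset of $X^{cI}$ by part (2); when $\alpha$ is non-constant, the product $\prod_i U_{\alpha(i)}$ meets only those strata of $X^{cI}$ in which coincidences happen within fibers of $\alpha$, and the coincidence stratification factorizes as a product over the fibers of $\alpha$, each factor handled inductively by (2). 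Cocycle compatibility of atlases on overlaps follows from part (3) applied to transition maps between basics. Lemma~\ref{sheaf} then assembles a canonical conically smooth structure on $X^{cI}$, and conical smoothness of $X^{cI} \to X^I$ is immediate from the chart-by-chart description. Part (3) is local on both source and target, so reduces to the case of basics $U \hookrightarrow V$ and a map of sets $I \to J$, where in the coordinates $(\vec V, \sigma, \ldots)$ of part (2) the induced map $U^{cJ} \to V^{cI}$ is assembled coordinatewise from $U \hookrightarrow V$, hence is conically smooth by naturality of the centroid/displacement/radius decomposition and is an embedding since $U \hookrightarrow V$ is.

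The main obstacle is the verification that $L$ is a smoothly stratified space in part (2): three layers of non-smoothness --- the cone singularity of $Z$, the coincidence locus on the $I$-tuple, and the sphere condition $\sigma = 1$ --- must be reconciled into a single conically smooth atlas. The nested induction on $\depth(X)$ and $|I|$, combined with the promotion of Whitney stratifications to smoothly stratified spaces via Example~\ref{whitneys-count} and the conical smoothness of the scaling operations $\gamma_{t,u}$, is what carries this through.
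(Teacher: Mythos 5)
Your overall strategy matches the paper's: the centroid/radius decomposition and the diagonal $\gamma$-action are exactly what the paper uses to produce the compact stratified space $L$ and the homeomorphism $\RR^n\times\sC(L)\cong U^{cI}$, and both proofs run a double induction on $\depth(X)$ and $|I|$ with the open cover by $\prod_{[i]\in I/\fP}(O_{[i]})^{c[i]}$. However, the step where you try to make explicit the smoothly stratified structure on $L$ does not hold up. On $L_{i_0}=\{s_{i_0}>0\}\cap L$, solving $\sigma=1$ for $s_{i_0}$ leaves you with all of $(\vec v_j)_j$, $(s_j)_{j\neq i_0}$, and $(z_j)_{j\in I}$, and the pulled-back coincidence stratification still records coincidences involving the $i_0$-th coordinate (the equation $c_{i_0}=c_j$ becomes $\vec v_{i_0}=\vec v_j$, $z_{i_0}=z_j$, $s_j=\sqrt{1-\cdots}$ in the remaining coordinates). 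This is not a stratification that factors through $Z^{cI\setminus\{i_0\}}$ in any evident way; the index $i_0$ has not been dropped, and the cone-stratum data of $U$ remains fully entangled with the coincidence data. The description of $L_{\mathrm{far}}$ as ``the trace on the unit sphere of a braid-type diagonal arrangement'' has the same problem: the stratification of $U^{cI}$ is not a product of a diagonal arrangement in $\RR^{n(|I|-1)}$ with something depending on $Z$ alone, because the coincidence $c_i=c_j$ couples the $\RR^n$-coordinates to the $\sC(Z)$-coordinates. The paper's terse ``through which $L$ can be seen to inherit the structure of a smoothly stratified space'' is really relying on the fact that $L$ lies in the complement of the deepest stratum of $U^{cI}$, which is covered by the sets $\prod(O_{[i]})^{c[i]}$ of strictly smaller depth or strictly smaller $|[i]|$, hence already basics by the double induction; your chart-by-chart replacement does not reproduce this.

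Two smaller points. First, your defining condition $\sum_i\vec w_i=0$ for $L$ is vacuous, since $\vec w_i:=\vec v_i-\vec V$ with $\vec V$ the centroid forces $\sum_i\vec w_i=0$ identically; what you want is $\vec V=0$ together with $\sigma=1$, which is the fiber over $(0,1)$ of the paper's sum-and-scale map. Second, in part (3) the appeal to ``naturality of the centroid/displacement/radius decomposition'' is not valid: a conically smooth embedding $U\hookrightarrow V$ does not send centroids to centroids or preserve the scale, so $f^{c(I\to J)}$ has no simple form in these coordinates. The paper instead tests conical smoothness of $U^{cJ}\to V^{cI}$ against the composite $U^{cJ}\to U^J\xra{f^{I\to J}}V^I$, using that $U^{cJ}\to U^J$ (constructed in part (2)) is conically smooth and that $f^{I\to J}$ is a product of conically smooth maps; that argument is what you should reach for here.
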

\begin{proof}
Let $\cU$ be an open cover of $X$.
Consider the collection 
\[
\bigl\{ \underset{[i]\in I_{/\fP}}\prod (O_{[i]})^{c[i]}\mid \fP\in \cP_{\sf quot}(I)\text{ and }I_{/\fP}\xra{O_-} \cU\text{ is a map of sets} \bigr\}
\]
of subsets of $X^{cI}$. 
This collection is an open cover, by inspection.  
So the Statement~(1) and Statement~(3) follow from Statement~(2) together with showing that, for $U\hookrightarrow V$ a conically smooth embedding between basics, the map $U^{cJ} \to V^{cI}$ is conically smooth embedding.  
We focus on Statement~(2).  
Write $U=\RR^n\times \sC(Z)\cong \RR^n\times\bigl(\ast\underset{\{0\}\times Z}\coprod \RR_{\geq 0}\times Z\bigr)$.
Consider the map
\[
U^{cI} \to \RR^n\times \RR_{\geq 0}
\]
given by assigning to $I\xra{c}\RR^n\times \sC(Z)$ the sum and the scale:
\[
\Bigl(~\underset{i\in I} \sum {\sf pr}_{\RR^n}(c_i)\in \RR^n~{}~,~{}~\underset{i\in I} \sum \lVert {\sf pr}_{\RR^n}(c_i)\rVert^2+ \sum_{i\in I} | {\sf pr}_{\RR_{\geq 0}}(c_i)|^2~\Bigr)
\]
-- this map is conically smooth, by direct inspection.  
Define the sub-stratified topological space $L$ as the fiber of this map over the point $(0,1)$.  
The compactness of $Z$ grants the properness of this map, and thereafter the compactness of $L$.  
The map $\gamma\colon \RR^n\times \RR_{\geq 0} \times U \to U$ at the beginning of~\S\ref{sec.conical-smoothness}) implements translation and scaling, and acts diagonally on $U^{cI}$ to give the conically smooth map 
\[
\Gamma \colon \RR^n\times \RR_{\geq 0} \times U^{cI} \to U^{cI}~.
\]
From the construction of $L$, the restriction 
$
\Gamma\colon \RR^n\times \RR_{\geq 0} \times L\longrightarrow U^{cI}
$
descends to a stratified homeomorphism $\RR^n\times \sC(L) \cong U^{cI}$, through which $L$ can be seen to inherit the structure of a conically smooth stratified space.   
Because $L$ is compact, this witnesses $U^{cI}$ as a basic, thereby proving Statement~(2).

Now let $U\xra{f}V$ be a conically smooth embedding between basics, and let $I\to J$ be a map of sets.
The conical smoothness of the diagonally defined map $f^{c(I\to J)}\colon U^{cJ}\to V^{cI}$ can be tested by the conical smoothness of the composite map $U^{cJ} \to U^J \xra{f^{I\to J}}V^I$.  
This is the case because $f^{I\to J}$ is conically smooth and because, by construction, the map $U^{cI} \to U^I$ is conically smooth.
That $f^{c(I\to J)}$ is an embedding immediately follows because $f^{I\to J}$ is an embedding.   
\end{proof}

Notice that the standard $\Sigma_I$-action on the stratified topological space $X^{cI}$ is conically smooth.
Furthermore, for each subgroup $H\subset \Sigma_I$, the invariants $(X^{cI})^H\hookrightarrow X^{cI}$ is a constructible embedding from a conically smooth stratified space.  
\end{example}

\begin{remark}
Note that the map $X^{cI} \to X^I$ is a homeomorphism of underlying topological spaces (and so, is a \emph{refinement} in the sense of Definition~\ref{def.maps}).  
\end{remark}

\subsection{Classes of maps}
Recall the notion of a conically smooth map from Definition~\ref{defn.conical-smoothness}. There are several sub-classes of such maps that are useful to isolate. In the definition of constructible bundles, which is an inductive definition, we will make use of the links from Definition \ref{def.Y-link}.

\begin{definition}\label{def.maps}
Let $f\colon (X\xra{S} P) \to (Y\xra{T}Q)$ be a conically smooth map among conically smooth stratified spaces with underlying posets $P$ and $Q$, with $X_q :=f^{-1}Y_q$ the sub-stratified space of $X$ which is the inverse image of the stratum $Y_q$, for each $q\in Q$.
\begin{itemize}
\item[] \textbf{(Consecutive) inclusion of strata:}
The map $f$ is an \emph{inclusion of strata} if $P\ra Q$ is a subposet and the diagram of topological spaces
\[
\xymatrix{
X  \ar[r]^-f  \ar[d]_S
&
Y  \ar[d]^T
\\
P  \ar[r]^-{{f}}
&
Q
}
\]
is a pullback.
If, in addition, the map $\ov{f}$ is consecutive (in the sense of Definition~\ref{defn.consecutive}), we say $f$ is a \emph{consecutive inclusion of strata}.  
\item[] \textbf{Refinement:}
The map $f$ is a \emph{refinement} if it is a homeomorphism of underlying topological spaces, and for each $p\in P$ the restriction $f_{|}\colon X_{p} \to Y_{{f}p}$ is an isomorphism onto its image (as smooth manifolds). 

\item[] \textbf{Proper embedding:}
The map $f$ is a \emph{proper embedding} if there is a factorization of $f$ as a composite $X\xra{\w{f}} \w{Y} \xra{r} Y$ in which $\w{f}$ is a consecutive inclusion of strata and $r$ is a refinement.  
\item[] \textbf{Submersion:} 
Consider the collection $\{O_\alpha\times U \hookrightarrow X\}_{\alpha \in A}$ indexed by commutative diagrams of conically smooth maps
\[
\xymatrix{
O_\alpha \times U  \ar[r]   \ar[d]_-{\sf pr}
&
X  \ar[d]^-f 
\\
U  \ar[r]
&
Y
}
\]
in which the horizontal arrows are open embeddings, and the left vertical map is the projection.  
The map $f$ is a \emph{submersion} if this collection is a basis for the topology of $X$.

\item[] \textbf{Bundle:}
Consider the collection $\{U \hookrightarrow Y\}_{\beta \in B}$ indexed by pullback diagrams of conically smooth maps
\[
\xymatrix{
F_\beta \times U  \ar[r]   \ar[d]_-{\sf pr}
&
X  \ar[d]^-f 
\\
U  \ar[r]
&
Y
}
\]
in which the horizontal arrows are open embeddings, and the left vertical map is the projection.  
The map $f$ is a \emph{conically smooth fiber bundle} if this collection is a basis for the topology of $Y$.

\item[] \textbf{Weakly constructible bundle:}
The map $f$ is a \emph{weakly constructible bundle} if, for each $q\in Q$, the restriction $f_|\colon X_{q}\to Y_q$ is a bundle.  
\item[] \textbf{Constructible bundle:}
The map $f$ is a \emph{constructible bundle} if, by induction on depth:
\begin{itemize}
\item For $\depth(X)=0$, the map $f$ is a smooth fiber bundle of manifolds;
\item For $\depth(X)>0$, then:
\begin{itemize}
\item $f$ is a weakly constructible bundle; and
\item for each $q\in Q$ the natural map
\[
\Link_{X_{q}}(X)\longrightarrow X_{q}\underset{Y_q}\times \Link_{Y_q}(Y)
\]
is a constructible bundle.
\end{itemize}
\end{itemize}
\item[] \textbf{Pre-constructible bundle:}
A continuous map $X\xra{g} Y$ is a \emph{(weakly) pre-constructible bundle} if there is a diagram of conically smooth maps $X\xla{r} \w{X} \xra{\w{g}} Y$ with $r$ a refinement and $\w{g}$ a (weakly) constructible bundle such that the diagram in topological spaces
\[
\xymatrix{
&\w{X}\ar[dr]^{\w{g}}\ar[dl]_r\\
X\ar[rr]^{g}&&Y}
\]
is commutative.

\end{itemize}
\end{definition}

\begin{remark}
Note that for $Z\subset X$ a consecutive inclusion of strata, the depth of $\Link_Z(X)$ is strictly less than the depth of $X$. Consequently, the above inductive definition of a constructible bundle is well-defined.
\end{remark}

Example~\ref{example.embedded} and Example~\ref{coinc-exist} demonstrate some typical examples of refinements.  
In Example~\ref{strata-cbl} we saw examples of weakly constructible bundles which are not fiber bundles.  
In~\S\ref{collargluingssection} we will examine an important class of weakly pre-constructible bundles: \emph{collar-gluings} (see Remark~\ref{cg-as-cbl}).

\begin{example}\label{bundle-examples}
\begin{itemize}
\item[]
\item An ordinary smooth fiber bundle is an example of a conically smooth fiber bundle.

\item The projection from a product $X\times F \to X$ is a conically smooth fiber bundle.

\item Let $L \xra{\pi} X$ be a proper conically smooth fiber bundle.  
Then the \emph{fiberwise cone} 
\[
\sC(\pi)~:=~ X \underset{\{0\}\times L}\coprod (\RR_{\geq 0}\times L)
\]
canonically inherits the conically smooth structure, and the projection $\sC(\pi) \to X$ is a conically smooth fiber bundle whose fibers are isomorphic to basics.  
Notice the cone-locus section $X\to \sC(\pi)$.  

\end{itemize}
\end{example}

\subsection{The Ran space}
We endow the space of finite subsets $S\subset X$ of a connected conically smooth stratified space with bounded cardinality $|S|\leq i$ with the structure of a conically smooth stratified space.  
Fix a stratified space $X$. 
Recall Example~\ref{ex.coinc} introducing the stratified space $X^{cI}$.
Consider the sub-stratified space 
\[
X^{sI}\subset X^{cI}
\]
consisting of those $I\to X$ for which the map to connected components $I\to [X]$ is surjective -- this subspace is a connected component.  
Furthermore, for each surjection $I\to J$ the evident induced map $X^{sJ} \to X^{sI}$ is conically smooth -- this is direct from the definition of $X^{cI}$.  
In fact, this map is a \emph{constructible embedding} in the sense of Definition~\ref{def.maps}.  
These values assemble as a functor
\[
X^{s\bullet} \colon (\Fin^{\sf surj})^{\op} \longrightarrow \Stratd
\]
from nonempty finite sets and surjective maps to conically smooth stratified spaces and conically smooth maps.

\begin{definition}[$\Ran_{\leq i}(X)$]\label{def.ran}
Let $i$ be a cardinality.  
Let $X$ be a $C^0$ stratified space.  
The \emph{Ran space of $X$} is the $C^0$ stratified space that is the colimit
\[
\Ran_{\leq i}(X) ~:=~ \colim\Bigl((\Fin^{{\sf surj}, \leq i})^{\op}\xra{X^{s\bullet}} \Stratd \to \StTop\Bigr)~.
\]
In the case that $i$ is infinite, we omit the subscript and write $\Ran(X)$.  
\end{definition}

\begin{remark}
A wonderful result of Bott~(\cite{bott}) demonstrates a homeomorphism $\Ran_{\leq 3}(S^1) \cong S^3$.  
In~\cite{tuffley}, this result is extended to show $\Ran_{\leq 2k+1}(S^1) \cong S^{2k+1}$.  
\end{remark}

\begin{remark}
For $X$ connected, the underlying topological space $\Ran(X)$ is weakly contractible.
This is originally due to Curtis \& Nhu in \cite{curtisnhu}; Beilinson \& Drinfeld give a concise proof in \cite{bd} and they credit Jacob Mostovoy with having independently found a similar proof.
\end{remark}

\begin{remark}
Let $X = (X\xra{S} P)$ be a stratified topological space.
The stratified topological space $\Ran_{\leq i}(X)$ is stratified by the poset $\ZZ_{\geq 0}^P$ consisting of maps of sets $P\xra{c} \ZZ_{\geq 0}$ for which $\sum_{p\in P} c_p \leq i$, with $c\leq c'$ meaning the subset $\{p\mid c_p>c'_p\}$ contains no maxima of $P$.  
\end{remark}

Notice that a $\Ran_{\leq i}(X)$ is functorial among conically smooth inclusions in the argument $X$ and relations in the argument $i$.   
Notice also that, for $i\leq j$, the map
\[
\Ran_{\leq i}(X) \longrightarrow \Ran_{\leq j}(X)
\]
is a consecutive inclusion among stratified topological spaces.  
Also notice that, for $I$ a finite set, the map 
\[
{\sf Image}\colon X^{cI} \longrightarrow  \Ran_{\leq |I|}(X)
\]
is a quotient map, and that it factors through the coinvariants $X^{cI}\to (X^{cI})_{\Sigma_I}$.  
\begin{prop}
Let $i$ be a finite cardinality.
Let $X$ be a conically smooth stratified space.
Then the Ran space $\Ran_{\leq i}(X)$ canonically inherits the structure of a conically smooth stratified space.
Moreover, for each conically smooth embedding $f\colon Y\hookrightarrow X$, and each inequality $j\leq i$, the map
\[
\Ran_{\leq j}(Y) ~\hookrightarrow ~\Ran_{\leq i}(X)
\]
is a conically smooth embedding.  
Furthermore, if $X=U$ is a basic, then so is $\Ran_{\leq i}(U)$.  
\end{prop}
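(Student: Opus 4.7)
The plan is to first establish the basic case $X=U$, showing $\Ran_{\leq i}(U)$ is itself a basic, and then globalize via an atlas argument. The basic case mirrors the proof of Proposition~\ref{coinc-exist}(2), proceeding by double induction on $i$ and on the depth of $U$.

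For the basic case with $U = \RR^n\times \sC(Z)$, the translation-scaling map $\Gamma$ acts diagonally on finite subsets, yielding a stratified map
\[
\tilde{\Gamma}\colon \RR^n\times \RR_{\geq 0}\times \Ran_{\leq i}(U) \longrightarrow \Ran_{\leq i}(U)
\]
which at $t=0$ collapses every $S$ to the singleton $\{(u,\ast)\}$. The goal is to construct a conically smooth map $\rho\colon \Ran_{\leq i}(U) \to \RR^n\times \RR_{\geq 0}$ recording a translation-scale, compatible with $\tilde{\Gamma}$ in the appropriate sense, whose fiber $L$ over $(0,1)$ is a compact smoothly stratified space (its stratified structure constructed by the inductive hypothesis applied to strictly smaller-depth pieces, and its compactness inherited from compactness of $Z$ via the surjection $U^{ci}\twoheadrightarrow \Ran_{\leq i}(U)$ together with properness of the analogous map on $U^{ci}$). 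Then $\tilde{\Gamma}$ descends to a stratified homeomorphism $\RR^n\times \sC(L)\cong \Ran_{\leq i}(U)$, and Lemma~\ref{cone-transformation} exhibits $\Ran_{\leq i}(U)$ as a basic.

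For the general case, given a stratified space $X$ and a finite subset $S = \{x_1,\dots,x_j\}\subset X$ with $j\leq i$, choose pairwise disjoint conically smooth open embeddings of basics $V_\alpha \hookrightarrow X$ with $x_\alpha\in V_\alpha$. For positive integers $i_\alpha$ with $\sum_\alpha i_\alpha \leq i$, the evident map
\[
\underset{\alpha}\prod \Ran_{\leq i_\alpha}(V_\alpha) \hookrightarrow \Ran_{\leq i}(X)
\]
is a conically smooth open embedding whose source is a basic (by the basic case together with Corollary~\ref{basic-products}). Varying $S$, the $V_\alpha$, and the $i_\alpha$, these charts cover $\Ran_{\leq i}(X)$ and form an atlas, with transitions conically smooth by Proposition~\ref{c-infty-compose}. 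The functoriality statement for $Y\hookrightarrow X$ and $j\leq i$ is immediate from naturality of this chart construction; the induced map is an embedding because the underlying map on topological spaces is one.

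The main obstacle is the construction of $\rho$ in the basic case. In contrast with the coincidence variety $U^{cI}$ of Proposition~\ref{coinc-exist}, naive averaging formulas such as $\sum u_i$ or $\sum \|u_i\|^2 + |s_i|^2$ do not descend to $\Ran_{\leq i}(U)$, because a single finite subset $S\subset U$ admits many lifts to $U^{ci}$ with differing multiplicity patterns, producing different coordinate sums. The correct construction must use invariants depending only on the underlying set $S$ and remaining continuous under the merging of points between strata of differing cardinality; verifying that such set-theoretic invariants are \emph{conically smooth} with respect to the smoothly stratified structure of $L$ built inductively is the central technical step.
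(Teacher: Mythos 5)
Your proposal takes a genuinely different route from the paper's: you attempt a direct atlas argument, with charts $\prod_\alpha\Ran_{\leq i_\alpha}(V_\alpha)$ near a $j$-point configuration, together with a cone presentation of $\Ran_{\leq i}(U)$ for basic $U$. The paper instead exhibits $\Ran_{\leq i}(X)$ as a pushout of $\Ran_{<i}(X)$ against the finite quotient $(X^{sI})_{\Sigma_I}$ along a common constructible subspace, by a combinatorial analysis of $\Fin^{{\sf surj},\leq i}$; it handles the quotient by Example~\ref{ex.orbifolds}, handles $\Ran_{<i}(X)$ by induction on $i$, and glues with Lemma~\ref{some-pushouts}, without ever producing a chart atlas. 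Your globalization step has a gap beyond the one you acknowledge: the maps $\prod_\alpha\Ran_{\leq i_\alpha}(V_\alpha)\to\Ran_{\leq i}(X)$ are injective but \emph{not open}. Cardinality is lower semi-continuous on the Ran space, so near a configuration $T'$ with $|T'\cap V_\alpha|=i_\alpha$ and $|T'|<i$ there are arbitrarily close configurations obtained by splitting a point of $T'$ inside $V_\alpha$, satisfying the global bound $\leq i$ but violating the local bound $\leq i_\alpha$ inside $V_\alpha$. Concretely, with $X=\RR$, $i=3$, $V_1=(-1,1)$, $V_2=(4,6)$ and $(i_1,i_2)=(2,1)$, the configuration $\{0,5\}$ lies in the image but $\{0,5-\epsilon,5+\epsilon\}$ does not. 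The honest local model near a $j$-point configuration is $\Ran_{\leq i}(V_1\sqcup\cdots\sqcup V_j)$ with a \emph{global} cardinality constraint; this is not a product of Ran spaces, and showing it to be a basic is essentially the original problem restated.

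The obstacle you do flag in the basic case is real: the sum-and-scale coordinates of Proposition~\ref{coinc-exist}(2) are sensitive to multiplicity, so $\sum_{i}{\sf pr}_{\RR^n}(c_i)$ changes when $c$ is replaced by $c\circ p$ for a non-injective surjection $p$, and they do not descend along $U^{cI}\twoheadrightarrow\Ran_{\leq|I|}(U)$. You are right to be skeptical here; the paper asserts that these coordinates do factor, and your concern about that claim appears warranted. But you identify the obstacle without overcoming it. Absent a concrete, conically smooth, $\Gamma$-equivariant map $\Ran_{\leq i}(U)\to\RR^n\times\RR_{\geq 0}$ with compact unit fiber, the basic case is unproved, and the chart argument then has nothing to cover with. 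The paper's pushout route through Example~\ref{ex.orbifolds} and Lemma~\ref{some-pushouts} establishes the stratified structure on $\Ran_{\leq i}(X)$ without this ingredient, invoking the factoring only for the final assertion that $\Ran_{\leq i}(U)$ is a basic.
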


\begin{proof}
We explain the following pushout diagram among stratified topological spaces
\[
\xymatrix{
\underset{I\to J} \colim ~X^{sJ}  \ar[r]  \ar[d]
&
\underset{I} \colim ~X^{sI}  \ar[d]
\\
\underset{L}\colim ~X^{sL}  \ar[r]
&
\underset{K}\colim ~X^{sK}~.
}
\]
\begin{itemize}
\item The bottom right colimit is indexed by the category $\Fin^{{\sf surj},\leq i}$ whose objects are finite sets of cardinality at most $i$, and whose morphisms are surjective maps among them.
Recognize this colimit as $\Ran_{\leq i}(X)$.  
\item The bottom left colimit is indexed by the category $\Fin^{{\sf surj},<i}$ whose objects are finite sets of cardinality strictly less than $i$, and whose morphisms are surjective maps among them.  
Recognize this colimit as $\Ran_{<i}(X)$.  
\item The top right colimit is indexed by the category $\Fin^{{\sf surj}, =i}$ whose objects finite sets of cardinality $i$, and whose morphisms are isomorphisms among them.
Recognize this colimit as $(X^{sI})_{\Sigma_I}$ for some choice of finite set $I$ of cardinality $i$.  
\item The top left colimit is indexed by the category $\Fin^{{\sf surj},<i=}$ whose objects are surjective maps $I\to J$ from a set of cardinality $i$ to one of strictly less cardinality, and whose morphisms are maps of such maps which are isomorphisms on the coordinate $I$ and surjections on the coordinate $J$.  
\end{itemize}
The arrows in the diagram are evident.
To see that the diagram is pushout amounts to a comparison of indexing categories.  

By inspection, there is an equivalence of categories:
\[
\Fin^{{\sf surj},<i}  \underset{\Fin^{{\sf surj},<i=}\times\{0\}}\coprod \Fin^{{\sf surj},<i=}\times[1]  \underset{\Fin^{{\sf surj},<i=}\times\{1\}}\coprod \Fin^{{\sf surj},=i}~{}~ \xra{~\simeq~}~{}~ \Fin^{{\sf surj},\leq i}~.
\]
This identification witnesses a colimit over the target of this functor as such an iterated pushout of restricted colimits.
Because $\Fin^{{\sf surj},<i=}\to \Fin^{{\sf surj},<i}$ is coCartesian, and the functor $\Fin^{{\sf surj},<i=}\to \Fin^{{\sf surj},=}$ is Cartesian, this iterated pushout can be simplified as a pushout.
For the case at hand, this gives the above square and justifies that it is pushout.

Let us argue that the top horizontal map is a constructible embedding (see Definition~\ref{def.maps}).  
The salient point to check is that the map is injective.
So consider two maps $I\twoheadrightarrow J \xra{c} X$ and $I'\twoheadrightarrow J' \xra{c'} X$ for which there is an isomorphism $I\cong I'$ over $X$.
Because of the surjective-injective factorization system on maps among spaces, there is a $I'' \twoheadrightarrow J'' \xra{c''} X$ with $c''$ injective, and morphisms $(I\to J) \to (I'' \to J'') \la (I'\to J')$ in $\Fin^{{\sf surj},<i=}$ taking place over $X$.  
Injectivity follows.
By further inspection, it is a constructible topological embedding.  
From Example~\ref{ex.orbifolds}, $(X^{sI})_{\Sigma_I}$ has a standard structure of a conically smooth stratified space.  
Thereafter, the top left term in the diagram inherits the structure of a conically smooth sub-stratified space.
That the left vertical map is conically smooth follows by direct inspection.  
By induction on $i$, we assume $\Ran_{<i}(X)$ has the structure of a conically smooth stratified space; the base cases $i=0,1$ being trivially true.  
The result follows from Lemma~\ref{some-pushouts}. The functoriality is immediate from the construction.  
For the final statement, let $U$ be a basic. The argument verifying that $\Ran_{\leq i}(U)$ is a basic is the same argument as that verifying that $U^{cI}$ is a basic (Proposition~\ref{coinc-exist}(2)), for the map $U^{cI} \to \RR^n \times \RR_{\geq 0}$ constructed there factors through the quotient map $U^{cI} \to \Ran_{\leq |I|}(U)$.  
\end{proof}

In the following, we use the notion of a weakly constructible bundle, which we defined in Definition~\ref{def.maps}.

\begin{lemma}\label{some-pushouts}
Let $B\xla{p} E\xra{i} X$ be a diagram of weakly constructible bundles 
among conically smooth stratified spaces.
Suppose $p$ is proper and $i$ is an embedding.  
Then the pushout among stratified spaces and conically smooth maps among them
\[
B\underset{E}\coprod X
\]
exists in $\Stratd$, and its underlying stratified topological space agrees with the pushout in ${\sf StTop}$.  
\end{lemma}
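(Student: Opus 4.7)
The plan is to construct $Y := B\underset{E}\coprod X$ by first forming the pushout in stratified topological spaces, then equipping it with a conically smooth atlas, and finally verifying the universal property in $\Stratd$. The underlying set is the topological pushout, with induced stratification by the pushout of stratifying posets; denote the quotient map $\pi\colon B\sqcup X \twoheadrightarrow Y$. The two claimed conclusions -- existence in $\Stratd$ and agreement of underlying stratified topological spaces -- will follow simultaneously from producing the atlas over this fixed topological pushout.

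Verifying that $Y$ is a $C^0$ stratified space reduces to producing basic charts at each point. At $\pi(b)$ with $b\in B\setminus p(E)$ and at $\pi(x)$ with $x\in X\setminus i(E)$, a basic neighborhood is inherited directly from $B$ or $X$. The substantive case is a point $\pi(b)$ with $b\in p(E)$, where the entire image of the compact fiber $F_b := p^{-1}(b)$ in $X$ becomes identified with the single point $\pi(b)$. Fix such a $b$. By properness of $p$ together with the constructible bundle structure, I would choose a basic neighborhood $U\hookrightarrow B$ of $b$ over which $p$ admits a trivialization $p^{-1}(U)\cong U\times F_b$ in a neighborhood of $F_b$. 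Since $i\colon E\hookrightarrow X$ is a constructible embedding in the sense of Definition~\ref{def.maps}, the image $i(U\times F_b)\subset X$ admits a conical normal neighborhood isomorphic to a fiberwise cone $\sC(\pi_{\sL})$ over $U\times F_b$, in the sense of Example~\ref{bundle-examples}, for a proper conically smooth fiber bundle $\pi_{\sL}\colon \sL\to U\times F_b$ whose fibers are the links of $i$. In the pushout this neighborhood is glued to $U$ by contracting each fiber $\{u\}\times F_b$ to $u$; the resulting neighborhood of $\pi(b)$ in $Y$ is naturally identified with $U\times \sC(L_b)$, where $L_b := \sL|_{\{b\}\times F_b}$ is compact (being a proper bundle over the compact $F_b$). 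By Lemma~\ref{products}, this is a basic.

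The collection of all such charts -- those inherited from $B$ and $X$ together with the ones constructed at the gluing locus -- forms an atlas for $Y$: compatibility on overlapping charts follows from the Inverse Function Theorem (Lemma~\ref{ivt-again}) applied to the conically smooth transition maps, which are determined by the conical smoothness of $p$ and $i$. The universal property in $\Stratd$ is then immediate from the universal property of the topological pushout: a pair of conically smooth maps $B\to Z$ and $X\to Z$ agreeing on $E$ determines a continuous stratified map $Y\to Z$ whose conical smoothness is verified chart-by-chart, automatic on the charts pulled back from $B$ and $X$ and requiring only a routine check on the cone charts $U\times\sC(L_b)$ using the conical smoothness of the two given maps.

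The main obstacle is the local model near the image of each fiber $F_b$: exhibiting a compact link $L_b$ and identifying a neighborhood of $\pi(b)\in Y$ with the basic $U\times\sC(L_b)$. This is precisely where the proper and constructibility hypotheses on $p$ together with the embedding hypothesis on $i$ are jointly essential. I expect to carry out this identification by induction on $\depth(B)$, with the fiberwise cone construction of Example~\ref{bundle-examples} furnishing the inductive step and Lemma~\ref{lemma.sm-sheaf} ensuring that the locally produced atlases assemble into a global conically smooth structure on $Y$.
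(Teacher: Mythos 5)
Your overall plan --- form the topological pushout and cover it by charts coming from $B$, from $X\setminus i(E)$, and from the gluing locus --- is in the right spirit, and you correctly identify the fiberwise-cone regular neighborhood of $E$ inside $X$ as the essential geometric input. But the step where you build the chart at $\pi(b)$ contains a genuine gap.

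You assert that properness together with the constructible bundle structure let you choose a basic $U\hookrightarrow B$ about $b$ over which $p$ trivializes as $p^{-1}(U)\cong U\times F_b$ near $F_b$. This fails for a general constructible bundle. Constructibility only guarantees that $p$ restricts to a fiber bundle over each \emph{stratum} of $B$; the fiber type typically jumps across strata, so no basic neighborhood of a positive-depth point of $B$ admits such a trivialization. A concrete obstruction: take $B=\sC(Z)$, $E=\RR_{\geq 0}\times Z$, and $p$ the defining quotient (a proper constructible bundle). Then $F_\ast=\{0\}\times Z$ is all of $Z$, while $F_{[t,z]}$ for $[t,z]\neq\ast$ is a single point, and no neighborhood of $F_\ast$ in $E$ is a product with a basic about $\ast$. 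Consequently the proper fiber bundle $\pi_{\sL}$ over $U\times F_b$ that you invoke need not exist, and the claimed local model $U\times\sC(L_b)$ is not what a neighborhood of $\pi(b)$ in the pushout looks like when $b$ has positive depth in $B$.

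The paper's argument avoids this by never attempting a pointwise or stratum-wise trivialization of $p$. One elaborates Proposition~\ref{tubular-neighborhoods} to obtain, globally over $E$, a proper constructible bundle $L\to E$ (the link of $E\subset X$) together with an open embedding of its fiberwise cone $\sC_E(L)\hookrightarrow X$. The pushout is then exhibited as an open cover
\[
B\underset{E}\coprod X ~\cong~ \sC_B(L)\underset{\RR\times L}\bigcup \bigl(X\setminus E\bigr)~,
\]
where $\sC_B(L)$ is the fiberwise cone of the \emph{composite} constructible bundle $L\to E\xra{p}B$. In other words, the pushout replaces the cone-locus section $E\to\sC_E(L)$ with the cone-locus section $B\to\sC_B(L)$ all at once, rather than fiber by fiber. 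A neighborhood of $\pi(b)$ in $Y$ is then $\sC_B(L)$ restricted over a basic $U\hookrightarrow B$ about $b$, which is the fiberwise cone of the constructible (not fiber) bundle $L|_{p^{-1}(U)}\to U$ --- in general not a product $U\times\sC(L_b)$. Showing this restriction is itself a basic is where an induction on depth does enter, but it is a different induction from the one your local trivialization suggests, and it would need to be carried out to close the gap.
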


\begin{proof}
An elaboration of Proposition~\ref{tubular-neighborhoods} grants the existence of a conically smooth regular neighborhood of $E\subset X$, which is to say there is a proper weakly constructible bundle $L\to E$ together with an open embedding from its fiberwise cone $\sC_E(L)\hookrightarrow X$. 
So we recognize the pushout 
\[
B\underset{E}\coprod X \cong B\underset{E}\coprod\bigl(\sC_E(L) \underset{\RR\times L}\bigcup X\smallsetminus E\bigr)~.
\]
And so we witness an open cover of the pushout $B\underset{E}\coprod X \cong \sC_B(L) \underset{\RR\times L}\bigcup  X\smallsetminus E$ involving the fiberwise cone of the composite $L\to E\to B$.  
\end{proof}

\section{Basic properties and the tangent classifier}
\label{section.basics-properties}

\subsection{$\snglr$ as an $\infty$-category}
\label{section.kan-enrichment}
 
 We describe a natural enrichment of $\snglrd$ over $\Kan$, the monoidal category of Kan complexes with Cartesian product. Afterward, see Convention \ref{defn.enrichment}, we will regard it as an $\oo$-category by applying the simplicial nerve functor to this $\Kan$-enriched category.
This $\Kan$-enrichment comes from a more primitive enrichment over the category of set-valued presheaves on $\Stratd$ with Cartesian product. This notion stems from the following definition:

\begin{definition}\label{def.parametrized}
For $X$, $Y$, and $Z$ conically smooth stratified spaces, then
\[
\Strat_Z(X,Y):= \bigl\{f\colon Z\times X \to Z\times Y\mid {\sf pr}_Z \circ f = {\sf pr}_Z\bigr\}
\]
is the set of commutative diagrams of conically smooth maps
\[
\xymatrix{
Z\times X\ar[rr]^f\ar[dr]_{{\sf pr}_Z}&&Z\times Y\ar[dl]^{{\sf pr}_Z}\\
&Z}
\]
where ${\sf pr}_Z$ are the projections onto $Z$.
\end{definition}

The Observation~\ref{admits-pullbacks} that $\Stratd$ admits finite products, and that conically smooth maps compose, validates the following definition.
\begin{definition}[$\Strat$ and $\snglr$]\label{self-enrichment}
$\Strat$ is the following category enriched over the set-valued presheaves $\Fun(\Stratd^{\op}, \Set)$. An object is a conically smooth stratified space. For $X$ and $Y$ stratified spaces, the presheaf of maps $\Strat(X,Y)\in \Fun(\Stratd^{\op},\Set)$ assigns values
\[
\Strat(X,Y) := \Bigl(Z\mapsto \Strat_Z(X,Y)\Bigr)
\]
as defined in Definition \ref{def.parametrized}.
Likewise, $\snglr$ is the category enriched over $\Fun(\Stratd^{\op}, \Set)$ for which: an object is a stratified space; and for $X$ and $Y$ stratified spaces, then the presheaf of maps is
\[
\snglr(X,Y) := \Bigl(Z\mapsto \bigl\{f \in \Strat_Z(X,Y) \mid f \text{ is an open embedding}\}\Bigr)~.
\]
We use the notation 
\[
\mfld~,~\bsc~\subset~ \snglr
\]
for the full sub-enriched categories consisting of the objects of $\mfldd$ and $\bscd$, respectively.  
\end{definition}

\begin{definition}[$\Delta^p_e$]\label{def.extended-simplex}
Define the \emph{standard cosimplicial manifold} as the functor
\[
\Delta^\bullet_e\colon \bdelta \longrightarrow \Stratd
\]
given by assigning
\[
[p]\mapsto \Delta^p_e := \Bigl\{ t\colon \{0,\dots,p\} \to \RR \ \Big| \ \sum_{i=0}^p t_i = 1\Bigr\}~.
\]
\end{definition}
Note that $\Delta^p_e$ is a smooth manifold which is noncanonically diffeomorphic to $\RR^{p}$.

Restriction along $\bdelta\xra{\Delta^\bullet_e} \Stratd$ defines a product preserving functor 
\begin{equation}\label{restrict-enrichment}
(-)_{|\bdelta}\colon \Fun\bigl(\Stratd^{\op}, \Set\bigr) \to {\sf sSet}
\end{equation}
where ${\sf sSet}:=\Fun(\bDelta^{\op}, {\sf Set})$ is the category of simplicial sets.

\begin{lemma}\label{mapping-Kans}
Let $X$ and $Y$ be conically smooth stratified spaces.
Both of the simplicial sets $\Strat(X,Y)_{|\bdelta}\colon [p]\mapsto \Strat_{\Delta^p_e}(X,Y)$ and $\snglr(X,Y)_{|\bdelta} \colon [p]\mapsto \snglr_{\Delta^p_e}(X,Y)$ are Kan complexes.  
\end{lemma}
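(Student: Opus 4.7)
The plan is to prove horn filling for both simplicial sets, which is exactly what it means for them to be Kan complexes.

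First, I unwind the definitions. A $p$-simplex of $\Strat(X,Y)_{|\bdelta}$ is an element of $\Strat_{\Delta^p_e}(X,Y)$, i.e., a conically smooth map $F \colon \Delta^p_e \times X \to \Delta^p_e \times Y$ over $\Delta^p_e$. Writing $F(t,x) = (t,g(t,x))$, this is equivalent data to a conically smooth map $g \colon \Delta^p_e \times X \to Y$. The $j$-th face of $F$ is the restriction along the affine embedding $\partial_j \colon \Delta^{p-1}_e \hookrightarrow \Delta^p_e$ which identifies $\Delta^{p-1}_e$ with the hyperplane $\{t_j = 0\}\subset \Delta^p_e$. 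A horn $\Lambda^n_k \to \Strat(X,Y)_{|\bdelta}$ therefore consists of a compatible family $\{g_j\}_{j \neq k}$ of conically smooth maps $\Delta^{n-1}_e \times X \to Y$; by the horn compatibility conditions these assemble into a single conically smooth $g \colon W \times X \to Y$, where $W := \bigcup_{j \neq k}\{t_j = 0\} \subset \Delta^n_e$. A horn filler is equivalent to a conically smooth extension $\tilde g \colon \Delta^n_e \times X \to Y$ of $g$.

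Next I construct the extension, by induction on $n$. The key geometric input is that each face $\{t_j = 0\} \subset \Delta^n_e$ admits a smooth \emph{affine} retraction $\pi_j \colon \Delta^n_e \to \{t_j = 0\}$ (transfer the $t_j$-coordinate onto the $t_k$-coordinate while preserving $\sum t_i = 1$). Pick any $j_0 \neq k$ and form the conically smooth map $g^{(0)} := g_{j_0}\circ(\pi_{j_0}\times \id_X) \colon \Delta^n_e \times X \to Y$, which already agrees with $g_{j_0}$ on the face $\{t_{j_0}=0\}$. The remaining task is to correct $g^{(0)}$ on the other faces $\{t_j=0\}$, $j \in \{0,\dots,n\}\setminus\{k,j_0\}$. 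Using smooth bump functions in the coordinate $t_j$ on the base $\Delta^n_e$ (which is an affine manifold, so partition-of-unity arguments apply to the \emph{source} coordinates), we may glue $g^{(0)}$ to the pullback $g_j\circ(\pi_j\times \id_X)$ inside a small collar of the face $\{t_j = 0\}$. Crucially, the bump functions are applied in the simplex direction and not the target direction, so no vector-space structure on $Y$ is required and conical smoothness of the resulting map is automatic by the chain rule (Lemma~\ref{smooth-compose}) and Proposition~\ref{c-infty-compose}. Compatibility on the corners of $W$ -- where several faces meet -- is guaranteed by the horn compatibility of the $g_j$'s, which one verifies by restricting the inductive construction to the appropriate sub-horn in lower dimension.

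For the $\snglr$ case, one runs the same construction starting from horn data in which each $\sigma_j$ is an open embedding. Since openness of a conically smooth map is a local condition and passes under small perturbations in the source coordinates, the gluing via bump functions described above --- done sufficiently close to the horn and using the conical scaling $\gamma$ from \S\ref{sec.conical-smoothness} to control behavior near singular strata of $Y$ --- produces an open embedding over $\Delta^n_e$. Equivalently, one invokes the isotopy extension theorem available in the conically smooth category (a consequence of the existence of tubular neighborhoods along closed strata, as discussed in the introduction) to promote the conically smooth extension into one that remains an open embedding fiberwise.

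The main obstacle is the face-by-face gluing in the extension step: although each face admits its own affine retraction, combining these retractions coherently at the corners of $W$ is delicate. The inductive setup is arranged so that at each stage the compatibility that must be checked reduces to a lower-dimensional horn-filling problem to which the inductive hypothesis applies; the affine structure of the extended simplex $\Delta^n_e$ (as opposed to a compact simplex) is essential here, since it provides unrestricted room to apply the cutoffs and affine retractions.
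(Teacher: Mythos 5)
The paper and your proposal take quite different routes, and yours has a genuine gap in the central gluing step.

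The paper never attempts direct horn-filling for arbitrary stratified $X$ and $Y$: it first reduces to $X$ of bounded depth via the sheaf condition, then runs an induction on depth using the tubular-neighborhood theorem (Proposition~\ref{tubular-neighborhoods}) to show the restriction $\Strat(X,Y)\to\Strat(X_{\sf d},Y)$ to the deepest stratum is a Kan fibration, then decomposes the target $Y$ stratum-by-stratum, and finally invokes the classical fact that $C^\infty(M,N)$ is a Kan complex for ordinary smooth manifolds $M,N$. For the $\snglr$ case, it uses the filler produced in the $\Strat$ case, applies the Inverse Function Theorem (Lemma~\ref{ivt-again}) to identify a neighborhood of the horn on which this filler is fiberwise an open embedding, and then composes with a smooth self-embedding $\alpha\colon\Delta^p_e\hookrightarrow\Delta^p_e$ to land in that neighborhood. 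In both cases the strategy is reduction and reparametrization, not interpolation.

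Your argument, by contrast, attempts a direct construction, and its key step is broken. You build $g^{(0)}=g_{j_0}\circ(\pi_{j_0}\times\id)$ and then propose to ``glue $g^{(0)}$ to the pullback $g_j\circ(\pi_j\times\id_X)$ inside a small collar of the face $\{t_j=0\}$'' using bump functions ``in the simplex direction,'' claiming ``no vector-space structure on $Y$ is required.'' But $g^{(0)}$ and $g_j\circ(\pi_j\times\id_X)$ do \emph{not} agree on (a neighborhood of) the face $\{t_j=0\}$: $g^{(0)}$ restricted there factors through the codimension-two corner $\{t_j=0\}\cap\{t_{j_0}=0\}$, while $g_j$ depends on all of the face coordinates. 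Gluing two genuinely different maps into a stratified space requires some means of interpolating between their values in $Y$, and bump functions applied purely to the source provide none. (This is precisely why the classical proof that $C^\infty(M,N)$ is Kan is not content-free; it relies either on exponential charts on $N$ or on a careful reparametrization argument that you do not supply.) The $\snglr$ case in your proposal compounds the problem: ``openness passes under small perturbations'' is not true without uniform control (injectivity can fail), and the invocation of $\gamma$ and of isotopy extension is too vague to repair this. If you want a self-contained horn-filling argument, you would need to replace the bump-function gluing by a genuine source-only reparametrization, i.e., a smooth self-map of $\Delta^n_e$ compatible with the face inclusions defining the horn — and that is a nontrivial piece of geometry that you would have to build rather than assume.
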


\begin{proof}
We first show that $\Strat(X,Y)$ is a Kan complex.
For $\cU \subset \snglrd_{/X}$ a covering sieve (see Definition \ref{def.sieve}), the canonical map of simplicial sets $\Strat(X,Y) \ra \underset{O\in \cU}{\sf lim} \Strat(O,Y)$ is an isomorphism.  
So we can assume that $X$ has bounded depth.

Consider the sub-stratified space $X_{\sf d}\subset X$ consisting of the deepest strata.  
The inclusion $X_{\sf d} \hookrightarrow X$ has a conically smooth tubular neighborhood by Proposition~\ref{tubular-neighborhoods}.
It is then routine that the restriction map of simplicial sets $\Strat(X,Y) \to \Strat(X_{\sf d},Y)$ is a Kan fibration. 
We are thus reduced to the case that $X=M$ is an ordinary smooth manifold.
Write the stratifying poset $Y=(Y\to Q)$. 
Then the evident map of simplicial sets $\underset{q\in Q} \prod \Strat(M,Y_q) \ra \Strat(M,Y)$ is an isomorphism. For each factor, both $M$ and $Y_q$ are ordinary smooth manifolds, and $\Strat(M,Y_q) = C^{\infty}(M,Y_q)$ is a Kan complex.

We now show that the sub-simplicial set $\snglr(X,Y)\subset \Strat(X,Y)$ is a Kan complex.  
Choose $0\leq i \leq p$.
Choose a $p$-simplex $\sigma\colon \Delta[p]\to \Strat(X,Y)$, which we write as $X\times \Delta^p_e\xra{f}Y\times \Delta^p_e$, such that for each subset $i\notin S\subset [p]$ the composite $\Delta[S]\hookrightarrow \Delta[p]\to \Strat(X,Y)$ factors through $\snglr(X,Y)$.  
Lemma~\ref{ivt-again} grants that, for each such $S$, there is an open neighborhood $\Delta^S\subset O_S\subset \Delta^p$ for which the restriction $f_{t} \colon X\times\{t\} \to Y\times \{t\}$ is a conically smooth open embedding for each $t\in O_S$.
Choose a smooth embedding $\alpha \colon \Delta^p_e\hookrightarrow \Delta^p_e$ extending the identity map on each $\Delta^S$ whose image lies in the union $\underset{i\notin S\subset [p]}\bigcup O_S\subset \Delta^p_e$.
The composition $f\circ \alpha\colon X\times \Delta^p_e \to Y\times \Delta^p_e$ now defines a filler in $\snglr(X,Y)$ of the horn $\sigma_{|\Lambda_i[p]}$.  
\end{proof}

\begin{convention}\label{defn.enrichment}
The $\Fun(\Stratd^{\op}, \Set)$-enrichments of the categories in Definition~\ref{self-enrichment} restrict to $\Kan$-enrichments -- this is a consequence of the product preserving functor~(\ref{restrict-enrichment}) and Lemma~\ref{mapping-Kans}.
In a standard manner, for instance via the simplicial nerve functor (see \S1.1.5 of~\cite{HTT}), we then regard these $\Kan$-enriched categories as $\infty$-categories.
We double-book notation and again write
\[
\Strat,
\qquad
\snglr,
\qquad
\mfld,
\qquad
\bsc
\]
for these respective $\infty$-categories.  
Note in particular that $\bsc$ is a full $\infty$-subcategory of $\snglr$ via an iteration of Proposition~\ref{prop.basics-are-singular}. 
\end{convention}

\begin{remark}\label{remark.kan-enrichments}
Most natural functors in the non-enriched setting admit a $\sf Kan$-enrichment, simply by declaring $F(\Delta^\bullet_e\times X) = \Delta^\bullet_e\times F(X)$ for any such functor $F$.
\end{remark}

\subsection{Interlude: presheaves and right fibrations}
\label{section.right-fibration}
Before we move on, we set some notation and recall some general facts about the Grothendieck construction for $\infty$-categories.

\begin{defn}
For any $\infty$-category $\cC$, then 
\[
 \psh(\cC):= \fun(\cC^{\op},\spaces)
\] 
is the $\infty$-category of contravariant functors from $\cC$ to the $\infty$-category $\spaces$. A {\em presheaf} on $\cC$ is an object of $\psh(\cC)$.
\end{defn}

\begin{defn}
Let $f: \cE \to \cC$ be a functor of $\infty$-categories. Then we say $f$ is a {\em right fibration} if the diagram
\[
 \xymatrix{
 \Lambda^p_i \ar[r]  \ar[d]
 &
 \cE  \ar[d]
 \\
 \Delta^p  \ar[r]  \ar@{-->}[ur]
 &
 \cC
 }
\]
can be filled for any $0 < i \leq p$. Here, $\Lambda^p_i \into \Delta^p$ is the usual inclusion of a horn into the $p$-simplex.
Given two right fibrations $\cE\to \cC$ and $\cE'\to \cC$ the simplicial set $\Fun_\cC(\cE,\cE')$ of maps over $\cC$ has a maximal sub-Kan complex, naturally in $\cE$ and $\cE'$.
And so, there is the $\Kan$-enriched category of right fibrations over $\cC$
\[
\RFbn_\cC~,
\]
which we regard as an $\infty$-category.  
\end{defn}

\begin{remark}
A model-independent way to define a right fibration is to require that $f$ has the right lifting property with respect to all {\em final} functors $\cK_0 \to \cK$.
\end{remark}

In \S2.2.1 of~\cite{HTT}, Lurie defines an $\infty$-categorical version of the Grothendieck construction, which he calls the {\em unstraightening construction}. This produces a right fibration $\cE \to \cC$ from any presheaf on $\cC$. He then goes onto prove:

\begin{theorem}[Proposition 2.2.3.11, Theorem 2.2.1.2 of~\cite{HTT}]
\label{right-fibration}
The unstraightening construction induces an equivalence of $\infty$-categories
\begin{equation}\label{eqn.right-fibrations}
\Psh(\cC) \xra{~\simeq~} \RFbn_\cC.
\end{equation}
\end{theorem}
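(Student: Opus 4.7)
The statement in question is Lurie's straightening/unstraightening equivalence, so my plan is essentially to describe the construction explicitly and indicate why it is an equivalence, deferring the hard technical work to \cite{HTT}. I would present the result as the combination of an explicit construction with a model-categorical comparison.

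First, I would describe the unstraightening functor at the level of objects. Given a presheaf $\cF \colon \cC^{\op} \to \spaces$, one forms its \emph{category of elements}, whose objects are pairs $(c, x)$ with $c \in \cC$ and $x$ a point of $\cF(c)$, and whose morphisms $(c, x) \to (c', x')$ are maps $f \colon c \to c'$ in $\cC$ together with a path in $\cF(c)$ from $\cF(f)(x')$ to $x$. Made precise as a simplicial set via the usual Grothendieck-like formula, this yields $\int_\cC \cF \to \cC$, and one checks directly from the horn-filling definition that this forgetful map is a right fibration: horns $\Lambda^p_i \subset \Delta^p$ with $0 < i \leq p$ lift because the fibers are Kan complexes and morphisms act contravariantly via composition in $\cC$ and the action on $\cF$. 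In the reverse direction, given a right fibration $p \colon \cE \to \cC$, one sends $c \in \cC$ to the fiber $p^{-1}(c)$, which is automatically a Kan complex by the right fibration property; the contravariant functoriality in $c$ is produced from the lifting property applied to $\Delta^1 \to \cC$.

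Next, I would set up the comparison as a Quillen equivalence of model categories and invoke \cite{HTT}. Model $\psh(\cC)$ as the $\infty$-categorical localization of the projective (equivalently, injective) model structure on simplicial presheaves on the homotopy coherent realization of $\cC$, and model $\RFbn_\cC$ as the $\infty$-categorical localization of the \emph{contravariant model structure} on $\sSet_{/\cC}$, whose fibrant objects are precisely the right fibrations. The key classical input (Proposition 2.2.3.11 of \cite{HTT}) is that the unstraightening construction $\psh(\cC) \to \sSet_{/\cC}$, together with its left adjoint (the straightening functor), is a Quillen equivalence between these two model structures. Theorem 2.2.1.2 of \cite{HTT} upgrades this to the statement that the induced functor of underlying $\infty$-categories is the equivalence (\ref{eqn.right-fibrations}).

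The only nontrivial verification, and the main technical obstacle, is that straightening and unstraightening preserve the appropriate weak equivalences and fibrations; this is precisely what occupies Section 2.2 of \cite{HTT} and I would not reproduce it here. For our purposes it suffices to note three consequences that the proof furnishes: the fiber of the right fibration associated to $\cF$ over $c \in \cC$ is canonically identified with $\cF(c)$; the mapping space $\Map_\cC(\cE, \cE')$ between right fibrations agrees with the space of natural transformations between the corresponding presheaves; and the construction is natural in $\cC$ (so that pullback of right fibrations corresponds to restriction of presheaves). These three properties are what we will actually use in the sequel, and together with the cited Quillen equivalence they identify the unstraightening as the stated equivalence of $\infty$-categories.
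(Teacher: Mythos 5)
Your proposal is correct and takes essentially the same route as the paper: the paper states this result purely as a citation to Proposition 2.2.3.11 and Theorem 2.2.1.2 of \cite{HTT} and supplies no independent proof, and your argument likewise defers the substance (the Quillen equivalence between the contravariant model structure on $\sSet_{/\cC}$ and simplicial presheaves) to those same references. Your added exposition of the category of elements, the fiber description, and the naturality properties is accurate and harmless but not part of what the paper itself records.
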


We will make use of this equivalence freely.

Since being a right fibration is a right lifting property, it is stable under pullbacks:

\begin{prop}
\label{prop.right-fibrations-pull-back}
In the pullback square
\[
 \xymatrix{
 A' \ar[r] \ar[d]^{f'}
 & A \ar[d]^f \\
 B' \ar[r]
 & B
 }
\]
if $f$ is a right fibration, so is $f'$.
\end{prop}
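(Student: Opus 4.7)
The plan is to observe that being a right fibration is characterized by a right lifting property (against the horn inclusions $\Lambda^p_i \hookrightarrow \Delta^p$ for $0 < i \leq p$), and right lifting properties are always stable under pullback. This is a purely formal diagram-chase.

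Concretely, I would start with an arbitrary lifting problem for $f'$:
\[
\xymatrix{
\Lambda^p_i \ar[r]^-{g} \ar[d]
& A' \ar[d]^{f'} \\
\Delta^p \ar[r]^-{h} \ar@{-->}[ur]
& B'
}
\]
with $0 < i \leq p$. Postcomposing $g$ with the top horizontal map $A' \to A$ of the given pullback square, and $h$ with the bottom horizontal map $B' \to B$, produces a lifting problem for $f$. Since $f$ is a right fibration by hypothesis, there exists a filler $\w{\ell}\colon \Delta^p \to A$ making both triangles commute.

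Now the two maps $\w{\ell}\colon \Delta^p \to A$ and $h \colon \Delta^p \to B'$ agree after further composition to $B$ (both give the composite of $h$ with $B' \to B$), so by the universal property of the pullback square they assemble into a unique map $\ell \colon \Delta^p \to A'$ with $f' \circ \ell = h$ and whose composite with $A' \to A$ is $\w{\ell}$. To see that $\ell$ fills the original horn, it suffices to check that $\ell|_{\Lambda^p_i} = g$, which again follows from the universal property of the pullback: the two maps $\ell|_{\Lambda^p_i}$ and $g$ from $\Lambda^p_i$ to $A'$ have the same composite to $A$ (namely $\w{\ell}|_{\Lambda^p_i}$, which equals $(A' \to A) \circ g$ by construction) and the same composite to $B'$ (namely $h|_{\Lambda^p_i} = f' \circ g$), so they coincide.

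There is no serious obstacle here; the only thing to be a little careful about is invoking the \emph{strict} (1-categorical) universal property of the pullback square, which is legitimate because we may model the pullback on the nose as a pullback of simplicial sets (pullbacks of right fibrations in the Joyal model structure are already homotopy pullbacks, since right fibrations are in particular isofibrations). Thus the result follows immediately.
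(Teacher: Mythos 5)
Your proof is correct and takes exactly the approach the paper indicates: the paper states the proposition without a detailed proof, prefacing it with the one-line observation that being a right fibration is a right lifting property and hence stable under pullback, which is precisely what you spell out. Your caveat about working with the strict simplicial-set pullback is sound and harmless; the paper implicitly does the same.
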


\begin{remark}
The above is just a statement that functors compose: a functor $B^{\op} \to \spaces$ induces a functor $(B')^{\op} \to \spaces$ by pre-composing with $B' \to B$.
\end{remark}

\begin{example}
In particular, the Yoneda embedding $\cC \to \psh(\cC)$ determines a right fibration, and for any object $X \in \cC$, the right fibration is given by the forgetful functor $\cC_{/X} \to \cC$. When $\cC = \snglr$, an edge in $\snglr_{/X}$ is given by a diagram
\[
\xymatrix{
 Y \ar[rr]^g  \ar[dr]_f
 && Y' \ar[dl]^{f'} \\
 & X &
}
\]
in the $\infty$-category $\snglr$. Explicitly, it is the data of the three open, conically smooth embeddings $g, f, f'$ as indicated in the diagram, together with an {\em isotopy} from $f' \circ g$ to $f$.
\end{example}

We will use the following in the proof of Lemma~\ref{Gamma-compares}.

\begin{lemma}\label{lemma.rfbn-over}
Let $\cE \xra{p} \cC$ be a right fibration over a quasi-category.
Composition gives an equivalence of $\infty$-categories
\begin{equation}
-\circ p \colon \RFbn_\cE  \xra{~\simeq~}(\RFbn_\cC)_{/\cE}~.
\end{equation}
\end{lemma}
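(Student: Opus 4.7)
The plan is to produce an explicit inverse to $-\circ p$ and verify both functors are well-defined. The functor $-\circ p$ itself makes sense because composition of right fibrations is a right fibration: right fibrations are characterized by the right lifting property against horn inclusions $\Lambda^n_i \hookrightarrow \Delta^n$ for $0<i\leq n$, a property manifestly closed under composition. Conversely, given an object $(r\colon \cG \to \cC,\, f\colon \cG \to \cE)$ of $(\RFbn_\cC)_{/\cE}$ (with $pf=r$), the candidate inverse simply returns $f\colon \cG \to \cE$. Granting that this lands in $\RFbn_\cE$, the two assignments are mutually inverse on the nose: $q \mapsto (pq,q) \mapsto q$ on one side, and $(r,f) \mapsto f \mapsto (pf,f) = (r,f)$ on the other. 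The induced equivalence on mapping spaces is then automatic, since on both sides the mapping spaces are spaces of maps of simplicial sets over $\cC$ compatible with the projections to $\cE$.

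The crux is thus the ``left cancellation'' claim that $f\colon \cG \to \cE$ is a right fibration whenever $p$ and $r=pf$ are. Given a lifting problem
\[
\xymatrix{
\Lambda^n_i \ar[r]^-a \ar[d] & \cG \ar[d]^f \\
\Delta^n \ar[r]^-b & \cE
}
\]
with $0<i\leq n$, postcomposition with $p$ yields a lifting problem against the right fibration $r$, which produces $\tilde b\colon \Delta^n \to \cG$ with $\tilde b_{|\Lambda^n_i} = a$ and $r\tilde b = pb$. Now $f\tilde b$ and $b$ are both simplices of $\cE$ agreeing on $\Lambda^n_i$ and projecting to the common simplex $pb$ of $\cC$; the \emph{essential} uniqueness of lifts through the right fibration $p$ -- i.e.\ the contractibility of the relevant fiber of $\Fun(\Delta^n,\cE)\to\Fun(\Lambda^n_i,\cE)\times_{\Fun(\Lambda^n_i,\cC)}\Fun(\Delta^n,\cC)$ -- then allows one to replace $\tilde b$ by a new filler $\hat b$ satisfying $f\hat b = b$ on the nose. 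Upgrading from a homotopy of lifts to a strict equality is the principal obstacle.

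As a cleaner alternative, one can invoke the straightening equivalence of Theorem~\ref{right-fibration}: $\RFbn_\cE \simeq \Psh(\cE)$, and $(\RFbn_\cC)_{/\cE} \simeq \Psh(\cC)_{/\hat\cE}$ where $\hat\cE\colon \cC^{\op} \to \spaces$ is the presheaf classifying $p$. The lemma then reduces to the standard equivalence $\Psh(\cE) \simeq \Psh(\cC)_{/\hat\cE}$: a presheaf $F$ on $\cE$ is equivalently the data of a space $F(e)$ for each object $e$ of $\cE$, and since an object of $\cE$ is a pair $(c,x\in\hat\cE(c))$, such an $F$ is the same as a presheaf on $\cC$ equipped with a natural projection to $\hat\cE$. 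This bypasses the left-cancellation step, replacing it with a formal fact about slices of presheaf categories and the Grothendieck construction.
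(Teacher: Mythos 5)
The ``left cancellation'' you identify as the crux is not merely difficult to establish strictly -- it is false, and no horn-filling argument will salvage it. Take $\cC = \Delta^0$, let $\cE$ be the nerve of the free-living isomorphism (a Kan complex on two vertices $0,1$), and let $f\colon \cG = \Delta^0 \to \cE$ be the inclusion of vertex $0$. Then $p\colon \cE\to\Delta^0$ and $r=pf\colon \Delta^0\to\Delta^0$ are both right fibrations, but $f$ is not: the edge of $\cE$ from $1$ to $0$ restricts on $\Lambda^1_1$ to a map factoring through $\cG$, yet the edge itself admits no lift through $f$ because the vertex $1$ lies outside its image. So your candidate inverse does not land in $\RFbn_\cE$, and your first argument collapses. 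More conceptually: an object of $(\RFbn_\cC)_{/\cE}$ is merely a map to $\cE$ over $\cC$, not a right fibration over $\cE$, and these notions genuinely differ.

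Your second route via straightening is correct and genuinely different from the paper's. You reduce, under the unstraightening equivalences of Theorem~\ref{right-fibration}, to the identification of presheaves on the unstraightening of a presheaf with the slice over that presheaf. The paper instead embeds $\RFbn_\cE$ and $(\RFbn_\cC)_{/\cE}$ fully faithfully into the ambient slices $(\Cat)_{/\cE}$ and $\bigl((\Cat)_{/\cC}\bigr)_{/\cE}$, where the comparison is the formal iterated-slice equivalence; this gives full faithfulness of $-\circ p$. Essential surjectivity is then obtained by fibrant replacement in Joyal's right model structure on ${\sf sSet}_{/\cE}$ -- which is exactly what repairs the failure of left cancellation: given $X\to\cE$ over $\cC$ with $X\to\cC$ a right fibration, one does not claim $X\to\cE$ is already a right fibration, but factors it as a contravariant weak equivalence followed by one. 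Both routes work; yours delegates the content to the slice-presheaf identification (and to checking that it matches $-\circ p$ under straightening, which you leave implicit), while the paper's stays internal to the model category already in play.
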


\begin{proof}
The functor fits into a diagram among $\infty$-categories
\[
\xymatrix{
\RFbn_\cE  \ar[rr]^-{-\circ p}  \ar[d]
&&
(\RFbn_\cC)_{/\cE}  \ar[d]
\\
(\Cat)_{/\cC}  \ar[rr]^-{-\circ p}
&&
\bigl((\Cat)_{/\cC}\bigr)_{/\cE}~.
}
\]
The vertical functors are fully faithful, by definition.
The bottom horizontal functor is an equivalence, formally.  
It follows that the top horizontal functor is fully faithful. 
The result follows upon verifying that the top horizontal functor is essentially surjective.

Let $X\to \cC$ be a right fibration and let $X\to \cE$ be a map over $\cC$.  
Through Joyal's right model structure on ${\sf sSet}_{/\cE}$~(\cite{HTT}, \S 2), there is a factorization $X\xra{\simeq} \w{X} \to \cE$ which by a weak equivalence followed by a right fibration.  
We conclude that the functor is essentially surjective.
\end{proof}

\subsection{Computing $\bsc$}\label{sec:endomorphisms}
The main result of this section, Theorem~\ref{basic-facts}, indicates that an $\infty$-category of basics is a reasonable datum to specify in practice -- the result articulates in what sense the $\infty$-category $\bsc$ is a poset (of singularity types) up to $\infty$-groupoid ambiguity. 
Write 
\[
[\bsc] := \{[U]\mid U\in \bsc\}
\]
for the set of isomorphism classes of objects of $\bscd$, the ordinary (un-enriched) $\infty$-category of basics.
It is equipped with the relation $\{([U],[V])\mid \exists~ U\to V\}$ consisting of those pairs of isomorphism classes for which there is a morphism between representatives. We prove in Theorem~\ref{theorem.basics-easy}(\ref{enum.basic-partial-order}) that this defines a partial order relation. 
\begin{theorem}[Basics are easy]\label{basic-facts}
\label{theorem.basics-easy}
\begin{enumerate}
\item[]

\item For each object $U = \RR^i\times \sC(Z)\in \bsc$, the inclusion of Kan monoids
\[
\sO(\RR^i)\times \Aut(Z) \hookrightarrow \bsc(U,U)
\]
is a homotopy equivalence.  
In particular, $\bsc(U,U)$ is group-like.  
\item
Let $U\xra{f} V$ be a morphism in $\bsc$.
Then exactly one of the following is true:
\begin{enumerate}
\item $f$ is an equivalence in the $\infty$-category $\bsc$.  
\item The depth of $U$ is strictly less than the depth of $V$.
\end{enumerate}

\item\label{enum.basic-partial-order}
The relation on $[\bsc]$ is a partial order. 

\item The canonical functor between $\infty$-categories $[-]\colon \bsc\to [\bsc]$ is conservative.
In other words, a morphism $U\to V$ in the $\infty$-category $\bsc$ is an equivalence if and only if $U$ and $V$ are isomorphic in $\bscd$.

\item The map of sets $\depth \colon [\bsc]\to \ZZ_{\geq 0}$ is a map of posets.    In other words, the existence of a morphism $U\to V$ implies $\depth(U) \leq \depth(V)$.  
\end{enumerate}
\end{theorem}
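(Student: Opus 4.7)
The approach is simultaneous induction on $k = \depth(U)$, with the core content residing in parts (1) and (2); parts (3)--(5) will then follow by formal manipulations. The base case $k = 0$ restricts to $\bscd_{\leq 0, \leq \infty}$, whose objects are Euclidean spaces $\RR^n$ and whose morphisms are smooth open embeddings: (1) is the classical equivalence $\Emb(\RR^n, \RR^n) \simeq \sO(\RR^n)$ via the retraction $f \mapsto f_s(x) := s^{-1} f(sx)$ followed by Gram--Schmidt, and (2)--(5) hold vacuously since no morphism in $\bscd_{\leq 0, \leq \infty}$ changes dimension. For the inductive step, assume the theorem for basics of depth $\leq k$ and consider $U = \RR^i \times \sC(Z)$, $V = \RR^j \times \sC(Y)$ of depth $k+1$.

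For (1), any $f \in \bsc(U, U)$ preserves depth locally by Remark~\ref{remark.non-increasing}, so it must carry the unique top-depth stratum $\RR^i \times \{*\}$ into itself and therefore sends the cone point to the cone point at the poset level. The flow $\gamma_{t,0}$ then provides a conically smooth isotopy
\[
(0,1] \times U \to U, \qquad (s, x) \mapsto (\gamma_{1/s, 0} \circ f \circ \gamma_{s, 0})(x),
\]
continuously extending at $s = 0$ to the derivative $D_0 f$. The composition law $\gamma_{t_2, 0} \circ \gamma_{t_1, 0} = \gamma_{t_2 t_1, 0}$ shows $D_0 f$ is strictly $\gamma_{s,0}$-invariant, and unpacking this scale-invariance together with the cone-point-preserving condition forces the coordinate shape $(v, [s, z]) \mapsto (Lv + \beta(z) s, [\delta(z) s, \phi(z)])$ for some linear $L$, conically smooth $\beta, \delta$, and self-embedding $\phi \colon Z \to Z$; a convex-combination isotopy sending $\beta \to 0$ and $\delta \to 1$ then reduces $D_0 f$ to the product $L \times \sC(\phi)$. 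Openness forces $L \in \mathsf{GL}(\RR^i)$, and compactness of $Z$ (so $\phi(Z)$ is clopen) together with the inductive hypothesis places $\phi \in \Aut(Z)$; a polar-decomposition retraction $\mathsf{GL}(\RR^i) \to \sO(\RR^i)$ completes the proof.

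For (2), if $f$ does not preserve the cone point on posets, then the definition of a morphism in $\bsc$ factors it through $\RR^j \times \RR_{>0} \times Y$, which has depth $\depth(Y) = \depth(V) - 1$, placing us in case (b). Otherwise the scaling construction of (1) applies: $f$ is isotopic to $L \times \sC(\phi)$ with $L \colon \RR^i \to \RR^j$ a linear open embedding and $\phi \colon Z \to Y$ a conically smooth open embedding. Assuming $\depth(U) = \depth(V)$ forces $\dim(Z) = \dim(Y)$ and thereafter $i = j$; a standard invariance-of-domain argument stratum-by-stratum (each stratum of $\phi(Z)$ is clopen in the corresponding stratum of $Y$ by compactness) upgrades $\phi$ to a stratified isomorphism, whereby $f$ becomes equivalent in $\bsc$ to the isomorphism $L \times \sC(\phi) \in \bscd(U, V)$.

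Part (5) is a direct reading of (2). For (4), given any morphism $f \colon U \to V$ with $[U] = [V]$, applying (5) both to $f$ and to any witnessing isomorphism $V \to U$ in $\bscd$ forces $\depth(U) = \depth(V)$, so case (a) of (2) applies and $f$ is an equivalence; the converse is trivial. Reflexivity and transitivity of the relation on $[\bsc]$ are immediate from identities and compositions, and antisymmetry in (3) combines (5), (2)(a), and (4) in the same way. The main technical obstacle is the passage from strict $\gamma$-invariance of $D_0 f$ to the explicit coordinate expression underlying both (1) and (2): while scale-invariance follows formally from the group law for $\gamma$, extracting the product decomposition requires carefully pinning down how $D_0 f$ can depend on the link coordinate $z$, which is most cleanly handled by first reducing to the case $i = 0$ and then exploiting the cone-point condition together with the stratumwise structure of $\sC(Z)$.
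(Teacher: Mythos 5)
Your proof takes essentially the same route as the paper's: reduce $\bsc(U,U)$ to $\sO(\RR^i)\times\Aut(Z)$ via the derivative-at-the-origin retraction (the content of Lemmas~\ref{deriv-linear} and~\ref{derivative}), identify a cone-preserving morphism of equal depth as an isomorphism via compactness of the link and openness at the cone point (Lemma~\ref{only-equivalences}), and deduce parts (3)--(5) formally from (2) and (5). Two cautions are worth recording. First, the coordinate decomposition $(v,[s,z])\mapsto(Lv+\beta(z)s,\,[\delta(z)s,\,\phi(z)])$ is \emph{not} forced by $\gamma_{s,0}$-equivariance alone -- that only gives homogeneity of the components in $(u,s)$ for each fixed $z$, and in particular does not force linearity in $u$. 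What you actually need is the full relation $D_0f\circ\gamma_{t,v}=\gamma_{t,(D_0f(v))_{\RR^i}}\circ D_0f$ for all $(t,v)\in\RR_{>0}\times\RR^i$, which is precisely what the paper verifies in its proof of Lemma~\ref{deriv-linear}; with that relation in hand your extraction of the shape and the convex-combination retraction to $L\times\sC(\phi)$ do go through. Second, the outer induction on depth is never actually consumed: the conclusion $\phi\in\Aut(Z)$ requires only that $D_0f$ is surjective (the paper's Lemma~\ref{linear-is-iso}, again from the scale-equivariance), plus compactness of $Z$ and openness at the cone point -- no hypothesis about lower-depth basics enters -- so the induction framing can be dropped without loss.
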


After Lemma~\ref{lemma.small-opens}, Theorem~\ref{basic-facts}(\ref{enum.basic-partial-order}) gives the following reassuring result.
\begin{cor}[Unique local types]\label{rem.unique-locals}
Let $X$ be a stratified space and let $x\in X$ be a point.
Let $(U,0) \hookrightarrow (X,x) \hookleftarrow (V,0)$ be two coordinate charts about $x$.  
Then there is an isomorphism $U \cong V$.
In particular, writing $U=\RR^i\times \sC(Z)$ and $V=\RR^j\times \sC(Y)$, we have $i=j$ and $Z\cong Y$.  
\end{cor}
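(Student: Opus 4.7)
The plan is to use the atlas axiom on $X$ to produce a common basic refinement of the two charts, and then to show via depth considerations that both legs of this refinement are equivalences in $\bsc$.

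Denote the given charts by $\phi\colon U \hookrightarrow X$ and $\psi\colon V \hookrightarrow X$ with $\phi(0)=x=\psi(0)$. Since both belong to the maximal atlas on $X$ (Remark~\ref{maximal}) and $x \in \phi(U)\cap\psi(V)$, the atlas axiom~(\ref{eqn.atlas}) produces a diagram
\[
U \xla{f} W \xra{g} V
\]
in $\bscd$ with $\phi\circ f = \psi\circ g$ and with $x$ in the common image. The composite $\phi\circ f$ is an open embedding, hence injective, so there is a unique $0_W \in W$ with $\phi f(0_W)=x$; separate appeals to injectivity of $\phi$ and $\psi$ then give $f(0_W)=0$ and $g(0_W)=0$.

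Next I would verify that $f$ and $g$ are equivalences in $\bsc$. By Theorem~\ref{basic-facts}(2), for $f$ it suffices to show $\depth(W)=\depth(U)$. Because depth is a local invariant preserved by open embeddings, $\depth_{0_W}(W) = \depth_{0}(U)$, and Examples~\ref{example.cone-depth}--\ref{example.product-depth} identify this common value with $\depth(U)$ since $0$ is the cone point of the basic $U=\RR^i\times \sC(Z)$. Combining Remark~\ref{remark.non-increasing}'s inequality $\depth(W) \leq \depth(U)$ (from the open embedding $f$) with $\depth(W) \geq \depth_{0_W}(W) = \depth(U)$ yields the desired equality, so $f$ does not strictly increase depth and is therefore an equivalence in $\bsc$. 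The identical argument, applied at $0_W\mapsto 0 \in V$, shows $g$ is an equivalence.

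The composite $U \xra{f^{-1}} W \xra{g} V$ is therefore an equivalence in $\bsc$, and Theorem~\ref{basic-facts}(4) upgrades this to an honest isomorphism $U\cong V$ in $\bscd$. For the in-particular clause: any such isomorphism preserves local depth, so it restricts to an isomorphism between the deepest strata $\RR^i\times\{\ast\}\cong \RR^j\times\{\ast\}$, forcing $i=j$. The restriction to the complement of the deepest stratum then gives an isomorphism $\RR^i\times \RR_{>0}\times Z \cong \RR^i\times \RR_{>0}\times Y$ of stratified spaces; identifying $Z$ (respectively $Y$) with the link of the cone stratum, i.e.\ with the slice at any fixed radius, yields $Z\cong Y$. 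The only mildly delicate point in the whole argument is the bookkeeping that produces $W$ with $0_W$ mapping to the basepoints of $U$ and $V$; once that is secured, everything falls out of Theorem~\ref{basic-facts}.
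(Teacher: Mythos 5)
Your proof is correct and follows essentially the same route the paper intends: the paper's ``proof'' is the one-line remark just before the corollary (``After Lemma~\ref{lemma.small-opens}, Theorem~\ref{basic-facts}(\ref{enum.basic-partial-order}) gives the following reassuring result''), which in effect asks the reader to produce a common basic refinement $W$ about $x$ and exploit the depth dichotomy of Theorem~\ref{basic-facts}. You do exactly that, sourcing $W$ from the atlas axiom rather than from Lemma~\ref{lemma.small-opens}, and appealing to parts~(2) and~(4) of Theorem~\ref{basic-facts} in place of the poset antisymmetry in part~(3). These are equivalent uses of the same underlying Lemma~\ref{lemma.only-equivalences}, so the differences are cosmetic.

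One small caution on the final paragraph: from an abstract isomorphism $\RR^i\times\RR_{>0}\times Z \cong \RR^i\times\RR_{>0}\times Y$ it does not follow formally that $Z\cong Y$, since the isomorphism need not respect the product decomposition (this is a genuine issue for, say, topological manifolds). You don't actually need that step, though --- the implication $(4)\Rightarrow(5)$ of Lemma~\ref{lemma.only-equivalences}, which you have already invoked indirectly through Theorem~\ref{basic-facts}, gives $i=j$ and $Z\cong Y$ directly; its proof uses the conically smooth lift $\w{f}$ near the cone locus and compactness of $Z$, which is the correct way to extract $Z\cong Y$ from the isomorphism.
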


For the rest of this section we fix a basic $U = \bigl( \RR^i\times \sC(Z),\cA_Z\bigr)$.  
As usual, we will identify $\RR^i$ as the stratum $\RR^i\times \ast \subset U$.
As so, we will abbreviate the element $(0,\ast) \in \RR^i\times \sC(Z)$ by simply writing $0$ and refer to it as the \emph{origin}.
We are about to examine the Kan monoid $\bsc(U,U)$ of endomorphisms of $U$.  
We write $\sO(\RR^i)$ for the orthogonal group, and $\Aut(Z)$ for the group Kan complex of automorphisms of $Z$ in $\snglr$. 

Recall the map $\gamma \colon \Sing^{\sf{sm}}(\RR_{\geq 0}\times \RR^i) \to \bsc(U,U)$, adjoint to the map from~\S\ref{sec.conical-smoothness}.

\begin{definition}\label{other-endos}
We define the sub-Kan monoids
\begin{equation}\label{basic-equivalences}
\bsc(U,U) \supset \bsc^0(U,U) \supset \Aut^0(U)\supset {\sf GL}(U) \supset \sO(U)
\end{equation}
as follows.  Let $f: \Delta^p_e \times U  \to \Delta^p_e \times U$ be a $p$-simplex of $\bsc(U,U)$.  
\begin{itemize}
	\item
	This $p$-simplex belongs to $\bsc^0(U,U)$ provided $f(t,0) = (t,0)$ for all $t\in \Delta^p_e$.  
	\item
	This $p$-simplex belongs to $\Aut^0(U)$ provided $f(t,0) = (t,0)$ and $f_{|\{t\}\times U}$ is an isomorphism for each $t\in \Delta^p_0$.	
	\item
	This $p$-simplex belongs to ${\sf GL}(U)$ if it satisfies the equation
	\begin{equation}\label{linear-equation}
	T\circ \gamma_{t,v} = \gamma_{t,(Tv)_{\RR^i}}\circ  T
	\end{equation}
	for all $(t,v)\in \RR_{\geq 0}\times \RR^i$ -- here we have used the subscript ${}_{\RR^i}$ to indicate the projection to $\RR^i$. 
	The case $Z = \emptyset$ reduces to the consideration of those open embeddings $T: \RR^i \to \RR^i$ for which $T(t\vec u + \vec v) = t T\vec u + T\vec v$ (i.e., elements of ${\sf GL}(\RR^i)$).
	See Lemma~\ref{linear-is-iso}.
	\item
	$\sO(U) := \sO(\RR^i) \times \Aut(Z)$. This is regarded as a submonoid of $\bsc^0(U,U)$ by acting on the evident coordinates of $U = \RR^i\times \sC(Z)$.  That its image lies in ${\sf GL}(U)$ is a simple exercise.
	\end{itemize}
\end{definition}

\begin{lemma}\label{linear-is-iso}
There is an inclusion ${\sf GL}(U) \subset \Aut^0(U)$ of sub-Kan complexes of $\bsc(U,U)$.  
Moreover, if $S,T\in {\sf GL}(U)$ then $T\circ S \in {\sf GL}(U)$.  
\end{lemma}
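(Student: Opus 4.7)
The plan is to exploit equation~(\ref{linear-equation}) in two stages: first to fix the origin (so the map lies in $\bsc^0(U,U)$), and then to show bijectivity by a scaling argument; closure under composition is a direct manipulation afterward. All considerations will be pointwise in the simplex coordinate, so I work throughout with a single $T\in{\sf GL}(U)$ at the level of $0$-simplices and assemble the result for $p$-simplices at the end.

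I start by unpacking structure. Specializing~(\ref{linear-equation}) to the parameter pair $(0,v)$: since $\gamma_{0,v}$ collapses all of $U$ onto the point $(v,\ast)\in\RR^i\subset U$ (and likewise for $\gamma_{0,(Tv)_{\RR^i}}$), the equation forces $T(v,\ast)$ to lie on the stratum $\RR^i\times\{\ast\}$ and defines a self-map $\tau\colon\RR^i\to\RR^i$ via $T(v,\ast)=(\tau(v),\ast)$. Evaluating~(\ref{linear-equation}) at points $(u,\ast)\in\RR^i\times\{\ast\}$ yields $\tau(tu+v)=t\tau(u)+\tau(v)$ for all $t\geq 0$ and $u,v\in\RR^i$; taking $u=v=0$ gives $\tau(0)=0$, so $T(0,\ast)=(0,\ast)$, while the cases $v=0$ and $t=1$ exhibit $\tau$ as $\RR_{\geq 0}$-homogeneous and additive, hence linear, and in ${\sf GL}(\RR^i)$ since $T$ is an open embedding. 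Next, setting $v=0$ in~(\ref{linear-equation}) yields the commutation $T\circ\gamma_{t,0}=\gamma_{t,0}\circ T$ for all $t>0$, so $T(U)$ is invariant under the $\RR_{>0}$-scaling action; as $T(U)$ is an open neighborhood of $(0,\ast)$ and the scaling orbit of any $x\in U$ converges to $(0,\ast)$ as $t\to 0^+$, some $\gamma_{t_0,0}(x)$ lies in $T(U)$, and invariance under $\gamma_{t_0^{-1},0}$ then forces $x\in T(U)$. Thus $T$ is a conically smooth open bijection of $U$.

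To upgrade this to an isomorphism in $\bsc$, I need $T^{-1}$ to be conically smooth, for which I will apply the Inverse Function Theorem (Lemma~\ref{ivt}) in charts; the remaining hypothesis is that $DT$ be an open embedding. This is the main obstacle, and my plan is to propagate~(\ref{linear-equation}) through the construction of $Df$ in Definition~\ref{def:cone-sm} (via $\gamma$-conjugation) to an analogous linearity for $DT$, whereupon the same scaling argument applies at the tangent level. Finally, closure under composition is a direct calculation: writing $\sigma,\tau\colon\RR^i\to\RR^i$ for the linear maps associated to $S,T\in{\sf GL}(U)$, I compute
\[
(T\circ S)\circ\gamma_{t,v}=T\circ\gamma_{t,\sigma(v)}\circ S=\gamma_{t,\tau(\sigma(v))}\circ(T\circ S),
\]
and $\tau\circ\sigma$ equals the map $v\mapsto((TS)v)_{\RR^i}$ by the pointwise identification above, so $T\circ S$ satisfies~(\ref{linear-equation}) and lies in ${\sf GL}(U)$.
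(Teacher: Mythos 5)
Your arguments for origin-fixing and surjectivity are sound, and track the paper's closely: the key observations that $T$ commutes with the scaling action $\gamma_{t,0}$ and that the images $\gamma_{t,0}(O)$ shrink to a local base at the origin are exactly the ones the paper uses. Your computation of $\tau\circ\sigma$ for the composition claim is also correct (and the paper leaves that clause to the reader).

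The genuine gap is in the step you flag yourself: ``To upgrade this to an isomorphism in $\bsc$, I need $T^{-1}$ to be conically smooth\dots This is the main obstacle, and my plan is to propagate\dots'' You identify the right obligation, but you never discharge it -- what you give is a plan, not an argument. Worse, the plan as stated would not close the gap even if carried out: Lemma~\ref{ivt} only produces conical smoothness of $T^{-1}\circ i$ along $\RR^i$ for some auxiliary shrinking embedding $i$, so you would still need to patch this local conclusion up to all of $U$ and all depths. The paper sidesteps this entirely, and you should too. A morphism in $\bscd$ (equivalently, after Proposition~\ref{prop.basics-are-singular}, in $\snglrd$) is an open embedding $f$ of stratified topological spaces satisfying $f^\ast\cA' = \cA$; once you know $T$ is also surjective, $T$ is a homeomorphism and $(T^{-1})^\ast\cA = (T^{-1})^\ast T^\ast \cA = \cA$, so $T^{-1}$ is again a morphism, and Proposition~\ref{open-smooth} then gives conical smoothness of $T^{-1}$ for free. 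This is why the paper says only ``we need to show\dots surjective'' and stops there. The inverse function theorem is not the right tool here, and invoking it replaces a one-line formal observation with an unfinished analytic argument.
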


\begin{proof}
$T(0)=0$ because $\gamma_{t,0}(0) = 0$.
We need to show that each morphism $U\xra{T} U$ in ${\sf GL}(U)$ is a surjective map of underlying topological spaces.  This follows from the defining equation~(\ref{linear-equation}) after the following two observations:
\begin{itemize}
\item the collection $\{0\in O\subset U\}$, of open neighborhoods of the origin with compact closure, covers $U$;
\item for each open neighborhood of the origin $0\in O\subset U$ with compact closure, the collection of images $\{\gamma_{t,0}(O) \mid t>0\}$ is a local base for the topology about the origin $0\in U$.
\end{itemize}
The second statement is immediate by inspecting the defining equation~(\ref{linear-equation}).   
\end{proof}

Recall from Definition~\ref{def:cone-sm} the \emph{derivative} $\sD_0f\colon U \to U$ of a conically smooth self-open embedding $f\colon U\to U$.  
\begin{lemma}\label{deriv-linear}
\label{lemma.deriv-linear}
The assignment $f\mapsto \sD_0 f$ describes a homomorphism of Kan monoids
\[
\sD_0\colon \bsc^0(U,U) \longrightarrow {\sf GL}(U)~.
\]
Moreover, this map is a deformation retraction of Kan complexes.  
\end{lemma}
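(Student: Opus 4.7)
The lemma makes three assertions: (i) for each $f \in \bsc^0(U,U)$, the derivative $D_0 f$ actually lies in ${\sf GL}(U)$; (ii) $D_0$ respects composition; and (iii) the inclusion ${\sf GL}(U) \hookrightarrow \bsc^0(U,U)$ is a deformation retract with retraction $D_0$.

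The plan for all three parts exploits the explicit formula $D_0 f = \lim_{t \to 0^+} \gamma_{1/t, 0} \circ f \circ \gamma_{t, 0}$, which is well-defined by conical smoothness of $f$ along $\RR^i$ (Definition~\ref{def:cone-sm}) and the assumption $f(0) = 0$. For (i), the scale-equivariance $D_0 f \circ \gamma_{s, 0} = \gamma_{s, 0} \circ D_0 f$ (for $s > 0$) falls out of the monoid law $\gamma_{t_2, u_2} \circ \gamma_{t_1, u_1} = \gamma_{t_2 t_1, u_2 + t_2 u_1}$ after the change of variables $t \mapsto t/s$. Writing $\gamma_{s, v} = \gamma_{1, v} \circ \gamma_{s, 0}$, the full equation~(\ref{linear-equation}) then reduces to the translation identity $D_0 f \circ \gamma_{1, v} = \gamma_{1, (D_0 f(v))_{\RR^i}} \circ D_0 f$; this captures the asymptotic linearity of $f$ near the origin along $\RR^i$, and will follow from the existence of the continuous extension $\w{D}f$ at $t = 0$, together with the stratum-preserving property $f(\RR^i \times \{\ast\}) \subset \RR^i \times \{\ast\}$ and the identification (Remark~\ref{remark.usual-tangent-bundle-map}) of $Df|_{T\RR^i \times \{\ast\}}$ with the ordinary derivative of the smooth map $f_{|\RR^i}$. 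That $D_0 f$ is an open embedding is then a consequence of the scale-invariance just established (which reduces the claim to arbitrarily small neighborhoods of the origin) combined with the injectivity of $Df$ (part of the definition of morphisms in $\bsc$) and the openness of $f$ itself. The main obstacle will be verifying the translation identity, as it is where the $\sC(Z)$-coordinate behavior of the limit must be controlled.

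For (ii), the chain rule (Lemma~\ref{smooth-compose}) gives $D(g \circ f) = Dg \circ Df$; restricting to the fiber over the origin and using $f(0) = 0$ yields $D_0(g \circ f) = D_0 g \circ D_0 f$, so $D_0$ is a homomorphism of Kan monoids.

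For (iii), define a homotopy $H \colon \Sing^{\sf{sm}}(\RR_{\geq 0}) \times \bsc^0(U, U) \to \bsc^0(U, U)$ by $H(s, f) = \gamma_{1/s, 0} \circ f \circ \gamma_{s, 0}$ for $s > 0$, extended at $s = 0$ by $H(0, f) = D_0 f$. For each $s > 0$, conjugation by $\gamma_{s, 0}$ (a conically smooth automorphism of $U$) preserves the property of being an open embedding fixing the origin, so $H(s, f) \in \bsc^0(U, U)$; continuity at $s = 0$ is the defining property of $D_0 f$, combined with (i). Composing with a smooth path $\rho \colon [0,1] \to \RR_{\geq 0}$ satisfying $\rho(0) = 1$ and $\rho(1) = 0$ produces a simplicial homotopy $\Delta[1] \times \bsc^0(U, U) \to \bsc^0(U, U)$ from $\id$ to $i \circ D_0$. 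Finally, for $f \in {\sf GL}(U)$, equation~(\ref{linear-equation}) with $v = 0$ gives $f \circ \gamma_{s, 0} = \gamma_{s, 0} \circ f$, hence $H(s, f) = f$ identically; thus $H$ is constant on ${\sf GL}(U)$, confirming that it is a deformation retraction.
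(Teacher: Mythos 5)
Your overall architecture matches the paper's: establish $D_0 f \in {\sf GL}(U)$, use the chain rule for the monoid homomorphism property, and exhibit the conjugation homotopy $(s,f)\mapsto \gamma_{1/s,0}\circ f\circ \gamma_{s,0}$ as the deformation retraction, with $D_0 T = T$ for $T\in{\sf GL}(U)$ showing constancy on the subcomplex. Two points of comparison are worth flagging.

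First, you decompose the ${\sf GL}$-equation~(\ref{linear-equation}) into a scale part ($D_0 f\circ\gamma_{s,0}=\gamma_{s,0}\circ D_0 f$, which you derive cleanly from the monoid law and a change of variables in the limit) and a translation part ($D_0 f\circ\gamma_{1,v}=\gamma_{1,(D_0 f(v))_{\RR^i}}\circ D_0 f$). The decomposition is legitimate, since $\gamma_{s,v}=\gamma_{1,v}\circ\gamma_{s,0}$. But you flag the translation identity as ``the main obstacle'' and then only gesture at a proof (``will follow from the existence of $\w{D}f$, the stratum-preserving property, and Remark~\ref{remark.usual-tangent-bundle-map}''). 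That gesture is where nearly all the content of the lemma lives: the paper's proof is, in its entirety, a nine-line chain of $\gamma$-manipulations establishing exactly $D_0 f\circ\gamma_{t,v}=\gamma_{t,D_0 f(v)}\circ D_0 f$, and the delicate part is precisely interchanging the limits in $t$, $s$, and the auxiliary parameter while tracking how $\gamma_{1,\frac{1}{s}f(-sv)}$ converges to a translation by $D_0 f(v)$. Your proposal names the right ingredients but does not carry out the computation; as written this is a gap, not a proof.

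Second, you add two things the paper's proof is silent on, both to your credit: the explicit verification that $D_0(g\circ f)=D_0 g\circ D_0 f$ via Lemma~\ref{smooth-compose}, and the observation that one must check $D_0 f$ is actually a morphism in $\bsc$ (an open embedding) before ${\sf GL}(U)$ membership makes sense. Your argument for openness --- scale-invariance lets you restrict attention to arbitrarily small neighborhoods of the origin, then invoke openness of $f$ there --- is the right idea but is stated too briefly to count as a verification (limits of open embeddings need not be open a priori). Still, raising the issue is a genuine improvement over the paper's implicit treatment.
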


\begin{proof}
The string of equalities
\begin{eqnarray}
\sD_0f \circ \gamma_{t,v}
& = &
\bigl(\underset{s\to 0}{\sf lim} \gamma_{\frac{1}{s},0} f \gamma_{s,0}\bigr)\circ \gamma_{t,v} 
\nonumber
\\
& = & 
\underset{s\to 0}{\sf lim} \gamma_{\frac{1}{s},0} f \gamma_{ts,sv}  
\nonumber
\\
& = & 
\underset{s\to 0}{\sf lim} \Bigl(\bigl( \gamma_{1,-\frac{1}{s}f(-sv)} \gamma_{1,\frac{1}{s}f(-sv} \bigr) \circ \bigl( \gamma_{\frac{1}{s},0} f \gamma_{st,sv}\bigr)\Bigr)
\nonumber
\\
& = & 
\underset{s\to 0}{\sf lim} \Bigl( \bigl(\gamma_{1,-\frac{1}{s} f(-sv)}\bigr)  \circ \bigl( \gamma_{\frac{1}{s},0} \gamma_{1,\frac{1}{s}f(-sv)} f \gamma_{st,sv} \bigr)\Bigr)
\nonumber
\\
& = & 
\Bigl(\underset{r\to 0}{\sf lim} \gamma_{1,\frac{1}{r}f(rv)} \Bigr) \circ \Bigl( \underset{s\to 0}{\sf lim} \bigl( \gamma_{\frac{1}{s},0} \circ ( \underset{p\to 0}{\sf lim} \gamma_{1,f(-pv)} f \gamma_{st,pv}) \bigl) \Bigr)
\nonumber
\\
& = & 
\gamma_{1,\sD_0f(v)} \circ \bigl( \underset{s\to 0}{\sf lim} \gamma_{\frac{1}{s},0} f \gamma_{st}\bigr)
\nonumber
\\
& = & 
\gamma_{1,\sD_0f(v)} \circ \bigl(\underset{s'\to 0}{\sf lim} \gamma_{\frac{t}{s},0} f \gamma_{s,0} \bigr)
\nonumber
\\
& = & 
\gamma_{1,\sD_0f(v)}\gamma_{t,0} \circ \bigl( \underset{s'\to 0}{\sf lim} \gamma_{\frac{1}{s},0} f \gamma_{s,0}\bigr)
\nonumber
\\
& = & 
\gamma_{t,\sD_0f(v)}\circ  \sD_0f
\nonumber
\end{eqnarray}
verifies that $\sD_0f \in {\sf GL}(U)$.
Moreover, $T\in {\sf GL}(U)$ implies $\gamma_{\frac{1}{t},0} T \gamma_{t,0} = T$ for all time $t>0$, so $\sD_0T = T$, meaning $\sD_0$ is a retract.  
The path of maps $\Sing^{\sf{sm}}([0,1])\times \bsc^0(U,U) \to \bsc^0(U,U)$ given by $(s,f)\mapsto \gamma_{\frac{1}{s},0} f \gamma_{s,0}$ is a homotopy from $\sD_0$ to the identity map.  
\end{proof}

\begin{lemma}\label{derivative}
Each of the inclusions in~(\ref{basic-equivalences}) is a homotopy equivalence of Kan complexes.  
\end{lemma}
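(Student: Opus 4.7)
The plan is to address each of the four inclusions in~(\ref{basic-equivalences}) separately, building on Lemma~\ref{deriv-linear}.

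For the outermost inclusion $\bsc^0(U,U) \hookrightarrow \bsc(U,U)$, the strategy is translation along $\RR^i$. Because $\RR^i \times \{\ast\}$ is the unique deepest stratum of $U = \RR^i \times \sC(Z)$, and depth is preserved by open embeddings (Remark~\ref{remark.respect-stratification} together with Lemma~\ref{dim-depth}), any morphism $f \in \bsc(U,U)$ must satisfy $f(0) = (v_0, \ast)$ for some $v_0 \in \RR^i$. The assignment $(r, f) \mapsto \gamma_{1, -rv_0} \circ f$, natural in $\Delta^p_e$-families, interpolates between $f$ at $r = 0$ and a map sending the origin to the origin at $r = 1$, and is constant on $\bsc^0(U,U)$. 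This exhibits $\bsc^0(U,U)$ as a strong deformation retract of $\bsc(U,U)$.

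For the middle inclusions ${\sf GL}(U) \hookrightarrow \Aut^0(U) \hookrightarrow \bsc^0(U,U)$, I would exploit Lemma~\ref{deriv-linear}, which provides a deformation retraction of $\bsc^0(U,U)$ onto ${\sf GL}(U)$ via the homotopy $s \mapsto \gamma_{\frac{1}{s},0} \circ f \circ \gamma_{s,0}$. Since a composition of stratified isomorphisms is a stratified isomorphism, and $D_0 f \in {\sf GL}(U) \subset \Aut^0(U)$ by Lemma~\ref{linear-is-iso}, the restriction of the same homotopy to $\Aut^0(U)$ stays within $\Aut^0(U)$; this gives a deformation retraction of $\Aut^0(U)$ onto ${\sf GL}(U)$. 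Applying two-out-of-three to the chain ${\sf GL}(U) \hookrightarrow \Aut^0(U) \hookrightarrow \bsc^0(U,U)$ then shows $\Aut^0(U) \hookrightarrow \bsc^0(U,U)$ is an equivalence.

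For the innermost inclusion $\sO(U) \hookrightarrow {\sf GL}(U)$, I would first solve the defining equation~(\ref{linear-equation}). Setting $v' = 0, t = 1$ yields $T(v, [s, z]) = \gamma_{1, L(v)}(T(0, [s, z]))$ where $L := T|_{\RR^i} \in {\sf GL}(\RR^i)$, and setting $v = v' = 0$ shows $T(0, [ts, z]) = \gamma_{t, 0}(T(0, [s, z]))$, so $T(0, [s, z])$ is determined by its value at $s = 1$. Writing $T(0, [1, z]) = (k(z), [h(z), g(z)])$ for conically smooth $k \colon Z \to \RR^i$, $h \colon Z \to \RR_{>0}$, and $g \in \Aut(Z)$, one obtains the parameterization
\[
T(v, [s, z]) = \bigl(L(v) + sk(z),~ [sh(z),~ g(z)]\bigr),
\]
and hence an identification of Kan complexes
\[
{\sf GL}(U) ~\cong~ {\sf GL}(\RR^i) \times \Aut(Z) \times \Strat(Z, \RR^i) \times \Strat(Z, \RR_{>0})
\]
under which $\sO(U) = \sO(\RR^i) \times \Aut(Z)$ corresponds to $\sO(\RR^i) \times \Aut(Z) \times \{0\} \times \{1\}$. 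The classical Gram--Schmidt process retracts ${\sf GL}(\RR^i)$ onto $\sO(\RR^i)$, the straight-line homotopy $k_r = (1-r)k$ retracts the third factor to zero, and $h_r = (1-r)h + r$ retracts the fourth to the constant function $1$; concatenating these yields the desired deformation retraction of ${\sf GL}(U)$ onto $\sO(U)$.

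The main technical obstacle lies in this last step: verifying that the decomposition really lands in, and is reconstructed from, conically smooth data (in particular that $k, h, g$ inherit conical smoothness from $T$), and checking that each intermediate map in the three homotopies still satisfies the scaling equation~(\ref{linear-equation}) and remains a conically smooth open embedding. Both are direct consequences of the explicit formulas and the closure of conical smoothness under the operations involved, but require some careful bookkeeping.
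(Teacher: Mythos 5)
Your proof is correct and follows the same overall strategy as the paper: exhibit each inclusion in~(\ref{basic-equivalences}) as a strong deformation retract, using translation by $\gamma_{1,\cdot}$ for the outer step, the rescaling homotopy $\gamma_{1/s,0}\circ f\circ\gamma_{s,0}$ of Lemma~\ref{deriv-linear} (together with two-out-of-three) for the middle two, and a Gram--Schmidt-plus-rescaling homotopy for the inner step. The place where your argument adds real clarity is $\sO(U)\subset{\sf GL}(U)$: your explicit solution of the scaling equation~(\ref{linear-equation}), yielding the parameterization $T(v,[s,z]) = \bigl(L(v)+sk(z),\,[sh(z),g(z)]\bigr)$ and the ensuing identification of ${\sf GL}(U)$ with ${\sf GL}(\RR^i)\times\Aut(Z)\times\Strat(Z,\RR^i)\times\Strat(Z,\RR_{>0})$, is a worthwhile clarification of the paper's in-coordinates retraction formula, which as printed omits the $sk(z)$ contribution in the $\RR^i$-coordinate and whose $\sC(Z)$-coordinate $T(0,[0,z])_{\sC(Z)}$ is identically the cone point rather than $g(z)$. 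Your verification that every tuple $(L,k,h,g)$ with $L$ invertible, $h>0$, $g\in\Aut(Z)$ both satisfies~(\ref{linear-equation}) and is an open embedding is precisely what guarantees that each of your three coordinate homotopies stays inside ${\sf GL}(U)$ -- the bookkeeping you rightly flag is exactly this check, and it goes through.
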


\begin{proof}
It is enough to witness the following deformation retractions:
\begin{itemize}
\item
There is a deformation retraction of $\bsc(U,U)$ onto $\bsc^0(U,U)$ given by $(t,f)\mapsto \gamma_{1,tf(0)} \circ f$.

\item 
The deformation retract of $\bsc^0(U,U)$ onto ${\sf GL}(U)$ is Lemma~\ref{deriv-linear}.  
\item
There is a deformation retraction of ${\sf GL}(U)$ onto $\sO(U)$ given in coordinates as
	\[
	(t,T)
	\mapsto
	\Bigl( 
	(u, [s,z])
	\mapsto ( \mathsf{GrSm}_t(\sD_0 T_{|\RR^i})(u) ,
	(1-t) + t \sD_0 T[s,z]_\RR, 
	T(0,[0,z])_{\sC(Z)}\Bigr] 
	\Bigr)
	\]
-- here $\mathsf{GrSm}_t$ is the Gram--Schmidt deformation retraction at time $t$, and the subscript notation is used to denote the projection to the named coordinate. 
\end{itemize}
\end{proof}

\begin{lemma}\label{only-equivalences}
\label{lemma.only-equivalences}
Let $U=\RR^i\times \sC(Z)$ and $V=\RR^j\times \sC(Y)$ be basics.
Suppose there exists a conically smooth open embedding $U\hookrightarrow V$.
Then the following are equivalent.
\begin{enumerate}
\item There is an equality of depths $\depth(U) = \depth(V)$.
\item There is a conically smooth open embedding $f\colon U \hookrightarrow V$ for which the intersection $f(U)\cap \RR^j\subset V$ is nonempty.
\item The inclusion of the isomorphisms
\[
{\sf Iso}(U,V) \xra{~\simeq~} \bsc(U,V)
\]
is a weak equivalence of Kan complexes.
\item There is an isomorphism $U\cong V$.  
\item There is an equality $i=j$ and an isomorphism of stratified spaces $Z\cong Y$.  
\end{enumerate}
\end{lemma}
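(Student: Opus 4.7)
The plan is to run the cycle of implications $(5) \Rightarrow (4) \Rightarrow (3) \Rightarrow (4) \Rightarrow (2) \Leftrightarrow (1)$ as routine steps, and carry the bulk of the content through the single substantive implication $(2) \Rightarrow (5)$ via the derivative at the origin.

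\emph{Routine implications.} For $(5) \Rightarrow (4)$, an isomorphism $h\colon Z \cong Y$ together with $i = j$ assembles into $\id_{\RR^i} \times \sC(h)\colon U \cong V$. For $(4) \Rightarrow (3)$, any iso $\phi\colon U \cong V$ gives a Kan equivalence $\bsc(U, U) \simeq \bsc(U, V)$ by post-composition, carrying $\Iso(U,U)$ to $\Iso(U,V)$; by Lemma~\ref{derivative}, $\sO(U) \hookrightarrow \Iso(U,U) \hookrightarrow \bsc(U, U)$ is a chain of weak equivalences. For $(3) \Rightarrow (4)$, the standing hypothesis of the lemma supplies a morphism $U \to V$, so $\bsc(U, V)$ is non-empty; by (3), so is $\Iso(U, V)$. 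For $(4) \Rightarrow (2)$, any iso $U \cong V$ carries the unique deepest stratum $\RR^i$ of $U$ onto the unique deepest stratum $\RR^j$ of $V$. Both directions of $(1) \Leftrightarrow (2)$ rest on pointwise preservation of local depth under open embeddings (Remark~\ref{remark.non-increasing}): the deepest stratum of $U$ maps into the deepest stratum of $V$ precisely when their depths coincide.

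\emph{The hard implication $(2) \Rightarrow (5)$.} I would first choose $f\colon U \hookrightarrow V$ with $f(U) \cap \RR^j \neq \emptyset$, and compose with translations in $\RR^i$ and $\RR^j$ (automorphisms of $U$ and $V$) to arrange $f(0) = 0$. A pure-dimension count yields $i = j =: n$: local dimensions are preserved by open embeddings so $\dim(U) = \dim(V)$; either $Z = Y = \emptyset$ (whence invariance of domain applies) or $\dim(Z) + 1 = \depth(U) = \depth(V) = \dim(Y) + 1$, which combined with the dimension equality forces $i = j$. For $Z \cong Y$, I would form the derivative $D_0 f\colon U \to V$ of Definition~\ref{def:cone-sm}. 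The computation in the proof of Lemma~\ref{lemma.deriv-linear} never invokes that source and target coincide, only cone-point preservation and conical smoothness along $\RR^n$, and so generalizes to the linear scaling relation
\[
D_0 f \circ \gamma_{t, v}^U ~=~ \gamma_{t, (D_0 f)(v)_{\RR^n}}^V \circ D_0 f .
\]
Unpacking this, $D_0 f$ takes the form $(\vec v, \xi) \mapsto (A \vec v + \phi(\xi), \psi(\xi))$ for a linear map $A\colon \RR^n \to \RR^n$ (the ordinary derivative of $f|_{\RR^n}$ at $0$) and scaling-equivariant maps $\phi\colon \sC(Z) \to \RR^n$, $\psi\colon \sC(Z) \to \sC(Y)$ with $\psi(\ast) = \ast$. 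Since $f$ is an open embedding at $0$ and $Df$ is injective by Definition~\ref{defn.basics}, the inverse function theorem (Lemma~\ref{ivt-again}) shows $D_0 f$ is an open embedding on a neighborhood of $0 \in U$; its image is scaling-invariant, open in $V$, and contains $0$, so since $\gamma_{t, 0}^V$ contracts $V$ onto $\{0\}$ as $t \to 0$, the image exhausts $V$. Thus $D_0 f\colon U \cong V$ is a global isomorphism, which forces $A \in {\sf GL}(\RR^n)$ and makes $\psi\colon \sC(Z) \cong \sC(Y)$ a cone-point-preserving isomorphism; restricting $\psi$ to the link yields the desired isomorphism $Z \cong Y$.

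\emph{Main obstacle.} The delicate step is promoting the local openness of $D_0 f$ near $0$ (supplied by the inverse function theorem) to a global isomorphism $U \cong V$. The decisive observation is that a scaling-invariant open neighborhood of $0$ in a basic must exhaust the basic, so the scaling-equivariance of $D_0 f$ transports local openness to global surjectivity; injectivity is immediate from injectivity of $Df$, and the specific $\RR^n$-linear form of $D_0 f$ then lets us read off $\psi\colon \sC(Z) \cong \sC(Y)$.
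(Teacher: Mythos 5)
Your proof is correct in its conclusions and its routine implications match the paper's, but the key implication is carried through a genuinely different argument. The paper closes the cycle via $(1)\Rightarrow(5)$: it lifts $f\colon U\hookrightarrow V$ to a conically smooth open embedding $\w{f}\colon\RR^i\times\RR_{\geq 0}\times Z \hookrightarrow \RR^j\times\RR_{\geq 0}\times Y$ (the boundary of what later becomes the unzip), restricts to $\{(0,0)\}\times Z\hookrightarrow\{(f(0),0)\}\times Y$, and then uses that $Z$ is compact --- an open embedding of a compact space is a clopen inclusion of components, and missing a component would contradict openness of the induced cone map. Your route $(2)\Rightarrow(5)$ instead passes through the derivative $D_0 f$: after establishing the scaling identity $D_0 f\circ\gamma^U_{t,v}=\gamma^V_{t,(D_0f)(v)_{\RR^n}}\circ D_0 f$, you decompose $D_0 f(\vec v,\xi)=(A\vec v+\phi(\xi),\psi(\xi))$, argue that $D_0 f$ is a global isomorphism, and extract $\psi\colon \sC(Z)\cong\sC(Y)$, hence $Z\cong Y$ by descending along the $\RR_{>0}$-action. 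What the paper's approach buys is brevity via compactness; what yours buys is an explicit isomorphism produced from the derivative data, which is closer in spirit to how one proves the smooth-manifold analogue. One spot where your write-up is circuitous rather than wrong: you invoke Lemma~\ref{ivt-again} to argue $D_0 f$ is a local open embedding near $0$ and then bootstrap to global bijectivity via scaling-invariance of the image. Applying the IVT \emph{to $D_0 f$} at $0$ is circular (since $D_0(D_0 f)=D_0 f$ by scaling-equivariance), whereas the \emph{only-if} direction of Lemma~\ref{ivt-again} applied to $f$ itself already delivers that $D_0 f\colon T_0U\to T_0V$ (which is $U\to V$ as basics) is an isomorphism; the scaling bootstrap is then unnecessary, and you can pass directly to reading off that $A$ is invertible and $\psi$ is a bijection. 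A second small matter to tighten in a final version: ``restricting $\psi$ to the link'' should really say that scaling-equivariance of $\psi$ makes $\psi|_{\RR_{>0}\times Z}$ an $\RR_{>0}$-equivariant isomorphism $\RR_{>0}\times Z\cong\RR_{>0}\times Y$, which descends along the (conically smoothly split) quotient to an isomorphism $Z\cong Y$; $\psi$ need not send $\{1\}\times Z$ to $\{1\}\times Y$ on the nose.
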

\begin{proof}
We show $(5) \implies (4) \implies (3) \implies (2) \implies (1)\implies (5)$.  
That $(5)\implies (4)$ is obvious.  
$(4)\implies (3)$ through Lemma~\ref{derivative}.  
That $(3)\implies (2)$ is obvious, from the definition of morphisms in $\bsc$.
That $(2)\implies (1)$ follows because the existence of an open embedding $\RR^i\times \sC(Z) \hookrightarrow \RR^j \times \sC(Y)$ implies $Z$ and $Y$ have the same dimension.  
Assume $(1)$.  Let $f\colon U\hookrightarrow V$ be a conically smooth open embedding.
Directly after Definition~\ref{def:cone-sm} of \emph{conically smooth}, there is a conically smooth open embedding $\w{f} \colon \RR^i\times \RR_{\geq 0}\times Z \hookrightarrow \RR^i\times \RR_{\geq 0}\times Y$ over $f$.  
Restricting $\w{f}$ gives the conically smooth open embedding $Z =  \{(0,0)\}\times Z \hookrightarrow \{(f(0),0)\}\times Y = Y$.  Because $Z$ is compact, this map is an inclusion of components.  
If this map misses a connected component, the map of cones $\{0\}\times \sC(Z) \to \{f(0)\}\times \sC(Y)$ is not open.  
We therefore conclude $(5)$.  
\end{proof}

\begin{proof}[Proof of Theorem~\ref{basic-facts}]

(1) This is Lemma~\ref{derivative}.

(2) This is an immediate consequence of Lemma~\ref{only-equivalences}.

(3) The existence of identity morphisms implies the named relation contains the diagonal relation.
Composition of morphisms witnesses transitivity for the named relation.  Lemma~\ref{only-equivalences} verifies that $[U]=[V]$ whenever $[U]\leq [V]$ and $[V]\leq [U]$.  
This proves that $[\bsc]$ is a poset. Hence $[\bsc]$ defines a category, whose nerve is an $\infty$-category. Since the non-emptiness of $\bscd(U,V)$ is equivalent to the non-emptiness of $\pi_0\bsc(U,V)$, one has a well-defined functor of $\infty$-categories $\bsc \to [\bsc]$ which factors through the homotopy category of $\bsc$.

(4) Let $U\xra{f}V$ be a morphism with $U\cong V$.  Lemma~\ref{only-equivalences} gives that $f$ is an equivalence in the $\infty$-category $\bsc$.

(5) Lemma~\ref{only-equivalences} immediately verifies that $\depth\colon [\bsc]\to \NN$ is a map of posets. 
\end{proof}

\subsection{Tangent classifiers, smooth and singular}
Recall the {\em tangent classifier} from Definition~\ref{def.tangent-classifier} , and the right fibration $\tau_X : \Ent(X) \to \bsc$ from Definition~\ref{defn.E(X)}.

\begin{definition}[$\Ent_{[U]}(X)$]\label{class-fiber}
Let $U=\RR^i\times \sC(Z)$ be a basic.  
Denote the pullback $\infty$-categories
\[
\xymatrix{
\Ent_{[U]}(X)  \ar[r]  \ar[d]_-{(\tau_{X})_|}
&
\Ent(X)  \ar[d]^-{\tau_X}
\\
\bsc_{[U]}  \ar[r]  \ar[d]
&
\bsc  \ar[d]^-{[-]}
\\
\{[U]\}  \ar[r]
&
[\bsc].
}
\]
Since $\tau_X$ is a right fibration, so is $(\tau_X)_|$, by Proposition~\ref{prop.right-fibrations-pull-back}.
\end{definition}

\begin{remark}[$E_U$ versus $E_{[U]}$]
Recall also that we defined the $\infty$-category $E_U(X)$ in Definition~\ref{defn.E(X)}. This is the fiber of $\Ent(X) \to \bsc$ over the vertex $\{U\} \into \bsc$, and is in fact an $\infty$-groupoid (i.e., Kan complex) because $\Ent(X) \to \bsc$ is a right fibration.  Meanwhile, $E_{[U]}(X)$ is the fiber over the entire sub-groupoid $\bsc_{[U]} \simeq {\sf BAut}(U)$. (Up to equivalence, this is a $\Kan$-enriched category with a single object, whose endomorphism Kan complex is $\Aut(U)$.) For example:
\begin{enumerate}
\item $E_{[U]}(U)$ has a terminal object given by the identity morphism of $U$, while the equivalence classes of objects in $E_U(U)$ are in bijection with isotopy classes of self-embeddings $U \into U$. 
\item If $X$ is a smooth $n$-manifold, the image of the right fibration $\Ent(X) \to \bsc$ lands in the subcategory $\sD \simeq \sB\sO(n) \simeq {\sf BAut}(\RR^n)$. Setting $U = \RR^n$, $\Ent_U(X)$ is homotopy equivalent to the frame bundle of $X$, while $\Ent_{[U]}(X)$ is homotopy equivalent to $X$ itself. 
\end{enumerate}
\end{remark}

Here is an immediate consequence of Theorem~\ref{basic-facts}. 
 
\begin{cor}
\label{cor.max-groupoid-bsc}
The $\infty$-category $\underset{[U]\in [\bsc]}\coprod \bsc_{[U]}$ is the maximal $\infty$-subgroupoid of $\bsc$.
Furthermore, for each conically smooth stratified space $X$ the $\infty$-category $\underset{[U]\in \bsc}\coprod \Ent_{[U]}(X)$ is the maximal $\infty$-subgroupoid of $\Ent(X)$.  
\end{cor}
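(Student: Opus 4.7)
The plan is to identify the maximal sub-$\infty$-groupoid of each $\infty$-category by characterizing its equivalences, using the structural results packaged in Theorem~\ref{theorem.basics-easy}.

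For the first statement, I would reason as follows. By Theorem~\ref{theorem.basics-easy}(3) the set $[\bsc]$ is a poset, so the only isomorphisms in (the nerve of) $[\bsc]$ are the identities. Combined with Theorem~\ref{theorem.basics-easy}(4), which asserts that the projection $\bsc\to [\bsc]$ is conservative, this shows that a morphism $f\colon U\to V$ in $\bsc$ is an equivalence if and only if $[U]=[V]$ in $[\bsc]$. In particular, every morphism in each fiber $\bsc_{[U]}$ is an equivalence, so $\bsc_{[U]}$ is an $\infty$-groupoid; and since every equivalence of $\bsc$ must lie in one of the $\bsc_{[U]}$, the coproduct $\underset{[U]\in[\bsc]}\coprod\bsc_{[U]}$ exhausts the maximal sub-$\infty$-groupoid of $\bsc$.

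For the second statement, I would invoke the standard fact that any right fibration is conservative: a morphism $e\to e'$ in $\Ent(X)$ is an equivalence if and only if its image $\tau_X(e)\to \tau_X(e')$ is an equivalence in $\bsc$ (since $\tau_X$ is a right fibration by Definition~\ref{defn.E(X)}). Combined with the first statement, this identifies the maximal sub-$\infty$-groupoid of $\Ent(X)$ with the pullback of $\tau_X$ along the inclusion $\underset{[U]}\coprod\bsc_{[U]}\hookrightarrow \bsc$ of the maximal sub-groupoid of the base. Since pullbacks distribute over coproducts in $\Cat_\infty$, and since $\Ent_{[U]}(X)$ is by Definition~\ref{class-fiber} precisely the pullback of $\tau_X$ along $\bsc_{[U]}\hookrightarrow \bsc$, this pullback equals $\underset{[U]\in[\bsc]}\coprod\Ent_{[U]}(X)$, as claimed.

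The argument is entirely formal given Theorem~\ref{theorem.basics-easy}; I expect no substantive obstacle. The only non-trivial input is the standard fact that right fibrations detect equivalences, which is why the analysis of the base $\bsc$ propagates to the total space $\Ent(X)$ without further work.
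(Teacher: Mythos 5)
Your argument is correct and is essentially the intended one: the paper declares the corollary an "immediate consequence" of Theorem~\ref{basic-facts} and gives no written proof, and your write-up correctly unwinds why it is immediate — item (4) of that theorem characterizes the equivalences of $\bsc$ as exactly the intra-$[U]$ morphisms, and the conservativity of right fibrations transports this to $\Ent(X)$.
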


Recall from Lemma~\ref{dim-depth} that a stratified space $(X\to P)$ determines a map of posets $P \to \PP$. But rather than projecting to $\PP = \{(\depth,{\sf dim})\}$, one can remember the singularity type of a specific point in $X$. The following result gives a stratification $X\to [\bsc]$ for any conically smooth stratified space $X$. 

\begin{lemma}\label{lem.unique-locals}
Let $X$ be a conically smooth stratified space with stratification $X \to P$. Then the composite map $X \to P \to \PP$ factors through a continuous map to the poset $[\bsc]$. The map to $[\bsc]$ respects open embeddings $X \to X'$ -- i.e., the composite $X \to X' \to [\bsc]$ is equal to the map $X \to [\bsc]$.
\end{lemma}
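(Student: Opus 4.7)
The approach is to define $\sigma\colon X\to[\bsc]$ pointwise by $\sigma(x)=[U]$ for any basic $U$ admitting a centered chart $(U,0)\hookrightarrow (X,x)$; well-definedness is exactly Corollary~\ref{rem.unique-locals}. Since local depth and local dimension are preserved by centered charts, the set-map $[U]\mapsto(\depth U,\dim U)$ factors the composite $X\to P\to \PP$ as $X\xra{\sigma}[\bsc]\to \PP$.

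The heart of the proof is continuity. Proposition~\ref{prop.basics-form-a-basis} supplies a basis for the topology of $X$ consisting of images $\phi(U)$ of basic charts $\phi\colon U\hookrightarrow X$; it therefore suffices to show that each such $\phi(U)$ is carried by $\sigma$ into the downward-closed principal segment $\{[W]\in[\bsc]\mid [W]\leq[U]\}$, in the partial order of Theorem~\ref{theorem.basics-easy}(3). Given $u\in U$, Lemma~\ref{lemma.small-opens} produces a morphism $V\to U$ of basics centered at $u$; composition with $\phi$ is a centered chart for $\phi(u)\in X$, whence $\sigma(\phi(u))=[V]$ and the morphism $V\to U$ witnesses the inequality $[V]\leq[U]$. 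Since the principal downward-closed subsets generate the poset topology on $[\bsc]$ compatible with the projection to $\PP$, the preimage under $\sigma$ of an open set is a union of basic-chart images, hence open.

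Naturality under an open embedding $f\colon X\hookrightarrow X'$ is tautological: composing a centered chart at $x$ with $f$ is a centered chart at $f(x)$, so $\sigma'\circ f=\sigma$ on the nose.

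The one delicate point is the convention for the poset topology on $[\bsc]$. The natural set-map $[\bsc]\to\PP$, $[U]\mapsto (\depth U,\dim U)$, reverses the order of Theorem~\ref{theorem.basics-easy}(3): a morphism $U\to V$ implies $\depth U\leq\depth V$ and $\dim U\leq\dim V$, whereas $\PP$ was defined inside $\ZZ^{\op}\times\ZZ^{\op}$. One must therefore topologize $[\bsc]$ via its opposite poset—equivalently, regard $\sigma$ as landing in $[\bsc]^{\op}$—so that downward-closed principal segments in the original order serve as the open sets. Once this convention is fixed, the argument above is direct.
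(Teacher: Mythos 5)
Your proof is correct, and its central mechanism --- that any centered chart $\phi\colon(U,0)\hookrightarrow(X,x)$ satisfies $\sigma(\phi(U))\subset\{[W]\mid[W]\leq[U]\}$, so preimages of downward-closed sets are open --- is a direct, one-pass replacement for the paper's argument by induction on depth. The paper first establishes continuity on a basic $U$ of depth $d+1$ (observing that any ``open'' set meeting the cone stratum must contain the entire image of $\alpha$), then propagates to general $X$ by a basic-cover argument and a further induction; your version collapses these stages into the single poset-theoretic fact about principal downward segments.

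Your remark about the topology convention is a genuine catch, not a stylistic quibble. With the order on $[\bsc]$ given by $[U]\leq[V]\iff\exists\,U\to V$, and with Definition~\ref{poset-space} declaring opens to be upward-closed, the assignment $[U]\mapsto(\depth U,{\sf dim}\,U)$ is order-\emph{reversing} as a map to $\PP\subset\ZZ^{\op}\times\ZZ^{\op}$, so the asserted factorization $X\to[\bsc]\to\PP$ through continuous maps cannot hold with $[\bsc]$ topologized according to Definition~\ref{poset-space} as written. The paper's own proof tacitly uses downward-closed sets as the opens of $[\bsc]$ (its key sentence --- ``any open set containing $[U]$ must contain the whole image of $\alpha$'' --- is describing exactly the downward closure of $[U]$), so it is implicitly working in $[\bsc]^{\op}$, which is precisely your proposed repair. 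One small citation note: at the step $\sigma(\phi(u))=[V]$ you want a \emph{centered} chart at $u$, which Lemma~\ref{lemma.small-opens} as stated does not quite guarantee; but since the argument only needs the weaker inequality $\sigma(\phi(u))\leq[V]\leq[U]$, which holds for any chart through $u$, nothing breaks.
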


\begin{proof}
Fix $x \in X$. Consider the set of all $U \in \bsc$ such that $x$ is in the image of some open embedding $U \into X$. Since the $U$ form a basis for $X$ by Proposition~\ref{prop.basics-form-a-basis}, the set of $U$ is nonempty. Since $x$ has a local depth, the depth of all such $U$ is bounded by $\depth(x)$. Hence there is an element of maximal depth, $U_0$. By Lemma~\ref{lemma.only-equivalences}, this element is unique, and this gives a map of sets $X \to [\bsc]$ sending  $x \mapsto U_0$. 

Now we prove the map $X \to [\bsc]$ is continuous. If $X$ is empty, this is trivially true. By induction, assume that $X \to [\bsc]$ is continuous for all singular $X$ of depth $d$. Then the map $\alpha: U \to [\bsc]$ is continuous for all singular basics $U$ of depth $d+1$, as follows: any point $x \in \RR^i \times \{\ast\} \subset \RR^i \times \sC(Z)$ is sent to $[U] \in [\bsc]$. By definition of the order on $[\bsc]$, any open set containing $U$ must contain the whole image of $\alpha$, so the pre-image of any such open set (being all of $U$) is open. Any other open set in $\alpha(U)$ consists of $[V] \in [\bsc]$ with strictly smaller depth, so the inductive hypothesis guarantees that their pre-images are open in $\RR^i \times \RR_{>0} \times Z \subset U$. Now we prove $\alpha: X \to [\bsc]$ is continuous for all singulars of depth $d+1$. 

Choose an open cover of $X$ by basics, $\{\varphi_i: U_i \into X\}$. Note that the map $\alpha_{U_i} : U_i \to [\bsc]$ is the composite $\alpha \circ \varphi_i: U_i \into X \to [\bsc]$. Fix an open set $W \subset [\bsc]$. Then 
\[
 \alpha^{-1}(W) \cap \varphi_i(U_i) = \varphi_i \left( 
 (\alpha \circ \varphi_i)^{-1}(W)
 \right)
 \]
Note that $(\alpha \circ \varphi_i)^{-1}(W) = \alpha_{U_i}^{-1}(W) \subset U_i$, so it is open in $U_i$ because the map $\alpha_{U_i}$ is continuous for basics. The result follows by remembering that $\varphi_i$ is an open embedding.

Now that we have proven continuity for all conically smooth stratified spaces with bounded depth (and hence all basics), it follows that $\alpha$ is continuous for conically smooth stratified spaces. Simply repeat the argument from the previous paragraph verbatim.
\end{proof}

\begin{definition}[$X_{[U]}$]\label{class-stratum}
Let $U=\RR^i\times \sC(Z)$ be a basic.  
Denote the pullback topological space
\[
\xymatrix{
X_{[U]} \ar[r]  \ar[d]
&
X  \ar[d]
\\
\{[U]\}  \ar[r]
&
[\bsc]
}
\]
and refer to it as the $[U]$-stratum -- it is the locus of those points in $X$ about which embeddings $U\hookrightarrow X$ form a local base.
It follows from Corollary~\ref{strata-smooth} that $X_{[U]}$ is a smooth manifold of dimension $i:={\sf dim}(U)-\depth(U)$.
\end{definition}

\begin{example}
If $U$ is a basic, then then the natural map $U_{[U]} \into U$ is the inclusion $\RR^i \into \RR^i \times \sC(Z)$.
\end{example}

\begin{prop}
\label{prop.stratification-functorial}
Fix a singular basic $U$. Then the assignment
\[
 (-)_{[U]}: \snglr \to \mfld
\]
is a functor of $\kan$-enriched categories. More generally, given a consecutive embedding $Q \into [\bsc]$, the assignment $(-)_{Q}: \snglr \to \snglr$ is a functor of $\kan$-enriched categories.
\end{prop}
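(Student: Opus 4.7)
The plan is to construct the functor in stages: first on objects, then on $1$-morphisms, and finally on the $\Kan$-enriched structure, after which the special case $Q = \{[U]\}$ for a singular basic $U$ will be seen to land in $\mfld$.

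For the construction on objects, fix a consecutive embedding $Q \hookrightarrow [\bsc]$. Lemma~\ref{lem.unique-locals} produces, for each $X = (X\to P)\in \snglr$, a continuous map $X \to [\bsc]$ that factors through the stratifying poset as a map of posets $P \to [\bsc]$. I set $P_Q := P \times_{[\bsc]} Q$; since consecutive embeddings are stable under pullback (as noted after Definition~\ref{defn.consecutive}), $P_Q \hookrightarrow P$ is consecutive. I define $X_Q := X_{|P_Q}$, which by Lemma~\ref{lemma.strata-smooth} inherits the structure of a stratified space, with the inclusion $X_Q \hookrightarrow X$ conically smooth (Example~\ref{ex.inc-is-smooth}). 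In the special case $Q = \{[U]\}$, the sub-poset $P_Q$ consists of those strata of $X$ whose local type is $[U]$; all such strata have local dimension ${\sf dim}(U) - \depth(U)$ and local depth zero, so $X_{[U]}$ is a smooth manifold and the output lies in $\mfld$, agreeing with Definition~\ref{class-stratum}.

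For the action on morphisms, let $f\colon X \hookrightarrow X'$ be a conically smooth open embedding. The naturality statement in Lemma~\ref{lem.unique-locals} says that the composite $X \to X' \to [\bsc]$ coincides with the classifying map of $X$, so $f$ restricts set-theoretically to $f_Q\colon X_Q \to X'_Q$. I would verify that this restriction is an open embedding of underlying topological spaces (from subspace topologies), and then that it is conically smooth as a map between stratified spaces: combining the conically smooth inclusion $X_Q \hookrightarrow X$ with $f$ gives a conically smooth composite $X_Q \to X'$ factoring through the substratified space $X'_Q \hookrightarrow X'$, and a chart-local check shows the factored map $X_Q \to X'_Q$ is conically smooth with respect to the inherited atlas. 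Functoriality at the discrete level is then immediate from set-theoretic functoriality of restriction. To promote this to a $\Kan$-enriched functor, I use Convention~\ref{defn.enrichment}: a $p$-simplex of $\snglr(X,X')$ is a conically smooth open embedding $\widetilde{f}\colon \Delta^p_e \times X \hookrightarrow \Delta^p_e \times X'$ over $\Delta^p_e$. The stratifying poset of the product $\Delta^p_e \times X$ is canonically $P$, and the consecutive sub-poset $P_Q$ identifies the substratified space $\Delta^p_e \times X_Q$ with $(\Delta^p_e \times X)_{|P_Q}$. Applying the previous step fiberwise shows that $\widetilde{f}$ sends $\Delta^p_e \times X_Q$ into $\Delta^p_e \times X'_Q$, and the factored restriction $\widetilde{f}_Q\colon \Delta^p_e \times X_Q \to \Delta^p_e \times X'_Q$ is a conically smooth open embedding over $\Delta^p_e$ by the same chart-local check. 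Compatibility with faces and degeneracies is formal, producing the required map of Kan complexes $\snglr(X, X') \to \snglr(X_Q, X'_Q)$, and functoriality in $X$ and $X'$ at each simplicial level is automatic.

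The main obstacle in the argument is the chart-local check invoked above: namely, that a conically smooth map $g\colon Y \to Z$ whose image lies in a substratified space $Z'_{|Q'} \hookrightarrow Z$ determined by a consecutive sub-poset $Q' \hookrightarrow [\bsc]$ is itself conically smooth as a map $Y \to Z'_{|Q'}$ with respect to the inherited atlas. By Proposition~\ref{prop.basics-form-a-basis}, this reduces to the analogous assertion for basics $U$ and $V$, which in turn follows by induction on depth from the inductive structure of Definition~\ref{defn.conical-smoothness}, using that the atlas on $V_{|Q'}$ is by construction the restriction of the atlas on $V$ to the strata indexed by $Q'$. This verification is routine but technical, and is the only step that is not essentially formal.
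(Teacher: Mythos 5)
Your proposal is correct and follows essentially the same route as the paper's proof: invoke Lemma~\ref{lem.unique-locals} to see that conically smooth open embeddings respect the classifying map to $[\bsc]$, define $(-)_Q$ by pullback and restriction via Lemma~\ref{lemma.strata-smooth}, and promote to a $\Kan$-enriched functor using the identification $\Delta^p_e \times X_Q \cong (\Delta^p_e \times X)_{|P_Q}$. The paper is terser (it asserts the conical smoothness of the restricted map with a one-line justification and treats the special case $[U]$ first with ``mutatis mutandis'' for general $Q$), whereas you flag the factorization-through-a-substratified-space step explicitly; both are sound.
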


\begin{proof}
Lemma~\ref{lem.unique-locals} tells us that conically smooth open embeddings respect the map to $[\bsc]$. 
Hence, since $X_{[U]}$ is defined via pull-back, any conically smooth open embedding $j: X \into X'$ defines a map $X_{[U]} \into X'_{[U]}$. This map is also conically smooth, since it is given by restricting $j$ to a sub-stratified space.
In particular, any conically smooth open embedding from $\Delta^p_e \times X \into \Delta^p_e \times X'$ defines an open embedding $(\Delta^p_e \times X)_{[\RR^p \times U]} \into (\Delta^p_e \times X')_{[\RR^p \times U]}$. By functoriality of pull-backs, these assignments respect the face and degeneracy maps of the morphism Kan complexes.

The second statement follows mutatis mutandis.
\end{proof}

By construction, there is a natural equivalence of spaces
	\[
	\Ent_U(X)~ \simeq~ \snglr(U,X)
	\]
between the fiber of $\Ent(X) \xra{\tau_X} \bsc$ over $U$ and the Kan complex of embeddings of $U$ into $X$.
It is useful to think of an element of this embedding space as a very small neighborhood of a point in $X$ at whose singularity type is $[U]$, together with a parametrization of this neighborhood (which one might think of as a choice of \emph{framing} about the point).
Our first goal is to make this intuition precise -- this is Lemma~\ref{exit-class}.

After Lemma~\ref{derivative}, there is a map of $\infty$-groupoids $\bsc_{[U]} \to \sB\sO(\RR^d)$ for $d={\sf dim}(U)-\depth(U)$.  
\begin{lemma}\label{exit-class}
Let $X$ be a conically smooth stratified space.
Let $U = \RR^i\times \sC(Z)$ be a basic.
There is a natural equivalence of maps
\[
\Bigl(\Ent_{[U]} (X) \xra{(\tau_X)_|} \bsc_{[U]} \to \bsc_{[\RR^i]}\Bigr)~ \simeq ~\Bigl( \Sing\bigl(X_{[U]}\bigr)\xra{\tau_{X_{[U]}}} \sB\sO(\RR^i)\Bigr)
\]
from the restriction of the tangent classifier in the sense of Definition~\ref{def:tangents} to the map from the singular complex of the smooth manifold $X_{[U]}$ classifying its tangent bundle.  
\end{lemma}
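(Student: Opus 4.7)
The plan is to exhibit both maps as right fibrations over $\bsc_{[\RR^i]}\simeq \sB\sO(\RR^i)$ and to compare them by restriction to the deepest stratum $\RR^i\subset U$. By Theorem~\ref{theorem.basics-easy}(1) and Lemma~\ref{derivative}, one has $\bsc_{[U]}\simeq \sB\Aut(U)$ with $\Aut(U)\simeq \sO(\RR^i)\times\Aut(Z)$, and the functor $\bsc_{[U]}\to\bsc_{[\RR^i]}$ of the statement is induced by projection onto the first factor, equivalently by the derivative-at-the-origin homomorphism of Lemma~\ref{lemma.deriv-linear} followed by restriction to the smooth stratum of $U$. The right fibration $\Ent_{[U]}(X)\to\bsc_{[U]}$ is classified by the Kan complex $\snglr(U,X)$ with $\Aut(U)$ acting by precomposition, and I will identify this classifying data with the tangent bundle of the smooth manifold $X_{[U]}$.

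The first step is to construct an $\Aut(U)$-equivariant restriction map
\[
r\colon \snglr(U,X) \longrightarrow \mfld(\RR^i, X_{[U]}),\qquad \phi \longmapsto \phi_{|\RR^i},
\]
well-defined and valued in smooth open embeddings of manifolds by Proposition~\ref{prop.stratification-functorial}; $\Aut(U)$ acts on the target through its projection to $\sO(\RR^i)$, and equivariance is immediate from the construction of that projection. After passing to homotopy quotients by $\Aut(U)$, $r$ yields a comparison of right fibrations over $\sB\sO(\RR^i)$ from $\Ent_{[U]}(X)$ to $\mfld(\RR^i,X_{[U]})/\!\!/\sO(\RR^i)$. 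The latter is classically equivalent, via the Gram--Schmidt deformation retraction onto the frame bundle, to $\Sing(X_{[U]})$ equipped with its tangent classifier $\tau_{X_{[U]}}$.

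The main technical ingredient, and the principal obstacle, is to show that the fiber of $r$ over any smooth embedding $j\colon\RR^i\hookrightarrow X_{[U]}$ is a Kan complex equivalent to $\Aut(Z)$; equivalently, that $r$ is a principal $\Aut(Z)$-bundle up to homotopy. The essential geometric input is the existence and essential uniqueness of conically smooth tubular neighborhoods along strata (Proposition~\ref{tubular-neighborhoods}), together with Corollary~\ref{rem.unique-locals}, which identifies the local link of $X$ transverse to the $[U]$-stratum with the compact stratified space $Z$. A parametrized version of tubular neighborhoods applied over the contractible base $\RR^i$ identifies the space of extensions of $j$ to a conically smooth embedding of $U$ with the space of trivializations of the link bundle along $j(\RR^i)$, and this is an $\Aut(Z)$-torsor. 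Granting this, $r$ descends to the claimed equivalence of right fibrations, and by construction the resulting map to $\sB\sO(\RR^i)$ coincides with the tangent classifier of the smooth manifold $X_{[U]}$. All naturality in $X$ and $U$ follows from the corresponding naturality of tubular neighborhoods and of the stratum functor.
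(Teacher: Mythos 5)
Your approach is genuinely different from the paper's, though the two are closely related. The paper works with evaluation at a single point $0\in U$: it shows that the resulting map $\Ent^{0}_{[U]}(X)\to\Sing(X_{[U]})$ is a Kan fibration (via isotopy extension) and then proves its fibers are \emph{contractible} by a hands-on cone construction, using that based conically smooth embeddings $(0\in U)\hookrightarrow (x\in X)$ form a base for the topology about $x$ to produce a cone point for any finite diagram in the fiber. You instead restrict along all of $\RR^i\subset U$ and aim to exhibit $r\colon \snglr(U,X)\to \Emb(\RR^i,X_{[U]})$ as a homotopy $\Aut(Z)$-torsor, then pass to homotopy quotients by $\Aut(U)\simeq\sO(\RR^i)\times\Aut(Z)$ and invoke the classical identification $\Emb(\RR^i,M)/\!\!/\sO(i)\simeq\Sing(M)$. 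Your route is cleaner to state and factors the comparison through the link bundle, whereas the paper's point-evaluation argument is more elementary and avoids having to identify the $\Aut(Z)$-action explicitly. You correctly flag the torsor claim as the technical crux, and your sketch is plausible, but the phrase ``existence and essential uniqueness of conically smooth tubular neighborhoods'' slightly overstates what Proposition~\ref{tubular-neighborhoods} actually gives (existence only). To close the gap you would need to show that the natural map from extensions of $j$ to trivializations of the link bundle over $j(\RR^i)$ is an equivalence, and this in turn requires a parametrized shrinking argument of exactly the flavor the paper deploys (based small embeddings form a base for the topology, plus the isotopy extension theorem, Proposition~\ref{isotopy-extension}); it is not a formal consequence of the tubular-neighborhood statement as cited. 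So the outline is sound and the route is legitimate, but the heart of the argument would end up recapitulating, in the parametrized setting, the same cone-type construction the paper carries out.
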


\begin{proof}
Recall from Definition~\ref{other-endos} the $\Kan$-group $\Aut^0(U)$ of based automorphisms of $U$.
There is likewise the sub $\Kan$-enriched category $\bsc^0_{[U]}\subset \bsc_{[U]}$ consisting of the same objects and those $p$-simplices of morphisms $\Delta^p_e\times U'\xra{f} \Delta^p_e\times U''$ for which $f(t,0)=(t,0)$.
Lemma~\ref{derivative} gives that each of the functors ${\sf BAut}^0(U) \xra{\simeq} \bsc^0_{[U]}\xra{\simeq} \bsc_{[U]}$ is an equivalence of $\infty$-categories (in fact, of $\infty$-groupoids).
Consider the restricted right fibrations $\Ent^{\sf bdl}_{[U]} := \Ent_{[U]}(X)_{|{\sf BAut}(U)} \to {\sf BAut}^0(U)$ and $\Ent^0_{[U]} := \Ent_{[U]}(X)_{|\bsc^0_{[U]}} \to \bsc^0_{[U]}$ -- each of the functors $\Ent^{\sf bdl}_{[U]}(X) \xra{\simeq} \Ent^0_{[U]}(X) \xra{\simeq} \Ent_{[U]}(X)$ is an equivalence of $\infty$-groupoids.  
Evaluation at $0\in U$ gives functors $\Ent^{\sf bdl}_{[U]}(X) \to \Sing(X_{[U]})$ and $\Ent^0_{[U]}(X) \to \Sing(X_{[U]})$ to the singular complex of the $[U]$-stratum.  
The isotopy extension theorem gives that each of these functors is a Kan fibration between Kan complexes.
We now argue that each of these Kan fibration is trivial.
We do this by demonstrating that, for each $x\in X$, each map $K\to \Ent^{\sf bdl}_{[U]}(X)_x$ to the fiber from a simplicial set with finitely many non-degenerate simplices can be extended to a map $K^\tl \to \Ent^0_{[U]}(X)_x$.

Fix $x\in \Sing(X_{[U]})$, and let $K \to \Ent^{\sf bdl}_{[U]}(X)_x$ be such a map.
Explicitly, this is represented by the data of a fiber bundle $E\to |K|$, equipped with a section $z$, whose fibers are (non-canonically) based isomorphic to $(0\in U)$, together with a map $E \xra{f} |K|\times X$, over $|K|$ that restricts over each $t\in |K|$ as a conically smooth open embedding sending $z(t)$ to $x$.  
So, the image of $f$ contains a neighborhood of $|K|\times\{x\}$.
Because of the finiteness assumption on $K$, and because based conically smooth open embeddings $(0\in U) \to (x\in X)$ form a base for the topology about $x$ (Proposition~\ref{prop.basics-form-a-basis}), then there is such an embedding $\varphi\colon U\hookrightarrow X$ for which $\varphi(U)\subset f_t(E_t)$ for each $t\in |K|$.  
We are left with a map $f^{-1}\varphi\colon |K|\times U \to E$ over $|K|$ that is fiberwise a based conically smooth open embedding.  
The topological space $|K^\tl|\times U \underset{|K|\times U} \coprod E$ over $|K^\tl|$, with its given section, together with its fiberwise based conically smooth open embedding to $|K^\tl|\times X$, represents a map $K^\tl \to \Ent^0_{[U]}(X)_x$ extending the composite map $K\to \Ent^{\sf bdl}_{[U]}(X)_x \to \Ent^0_{[U]}(X)_x$.  
We have established the sequence of equivalences of $\infty$-groupoids
\[
\Ent_{[U]}(X) \xla{\simeq} \Ent^0_{[U]}(X) \xra{\simeq} \Sing(X_{[U]})~.
\]
The left equivalence lies over an equivalence $\bsc_{[U]} \xla{\simeq}\bsc^0_{[U]}$ by construction.
The right equivalence lies over the projection $(-)_{[U]}\colon \bsc^0_{[U]}  \to \bsc^0_{[\RR^i]}\simeq \sB\sO(\RR^i)$, as seen using a choice of exponential map $\sT X_{[U]} \dashrightarrow X_{[U]}$.  
\end{proof}

Finally, the following justifies why $\tau$ is a generalization of the usual tangent-classifying map from (\ref{eqn.classical-structures}).

\begin{cor}[Classical tangents]\label{classical-tangent}
\label{corollary.classical-tangent}
Let $X$ be an ordinary smooth $n$-manifold.  
There is a natural equivalence of spaces
\[
\bigl(\Ent(X) \xra{\tau_X} \bsc_{n,0}\bigr)~\simeq~\bigl(X\xra{\tau_X} \sB\sO(n)\bigr)
\]
between the tangent classifier in the sense of Definition~\ref{def:tangents} and the classical classifying map for the tangent bundle of the smooth manifold $X$.  
\end{cor}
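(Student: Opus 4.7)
The strategy is to deduce the corollary as a direct specialization of Lemma~\ref{exit-class} to the case of a smooth $n$-manifold. The plan is to first identify the relevant sub-$\infty$-categories, then invoke Lemma~\ref{exit-class} with $U=\RR^n$, and finally check that the target of the equivalence agrees on the nose with the classical tangent classifier.

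First I would establish that when $X$ is a smooth $n$-manifold, $\Ent(X)$ is \emph{entirely} supported over the single component $\bsc_{[\RR^n]}\subset \bsc_{n,0}$. Indeed, any morphism $U\to X$ in $\snglr$ from a basic $U$ exhibits $U$ as an open subset of an $n$-manifold, so $U$ has pure dimension $n$ and depth $0$; by Theorem~\ref{theorem.basics-easy}(2), (3) every such $U$ is equivalent to $\RR^n$. Hence the right fibration $\tau_X\colon \Ent(X)\to \bsc$ factors through the full subcategory $\bsc_{[\RR^n]}$, and there is a canonical identification $\Ent(X)\simeq \Ent_{[\RR^n]}(X)$. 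Moreover, by Lemma~\ref{lem.unique-locals} the stratum $X_{[\RR^n]}$ is all of $X$.

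Next I would combine Theorem~\ref{theorem.basics-easy}(1) with Lemma~\ref{derivative} to identify $\bsc_{[\RR^n]}$ with $\sB\sO(n)$: the endomorphism monoid of $\RR^n\in\bsc$ is canonically equivalent to $\sO(\RR^n)\times \Aut(\emptyset)=\sO(n)$, and taking the classifying space of the resulting $\Kan$-group yields $\bsc_{[\RR^n]}\simeq \sB\sO(n)$. In particular the canonical map $\bsc_{[\RR^n]}\to \bsc_{[\RR^n]}$ appearing in Lemma~\ref{exit-class} is the identity up to equivalence.

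Finally I would apply Lemma~\ref{exit-class} with $U=\RR^n$. This yields a natural equivalence of maps of spaces
\[
\bigl(\Ent_{[\RR^n]}(X)\xra{(\tau_X)_|} \bsc_{[\RR^n]}\bigr)~\simeq~\bigl(\Sing(X_{[\RR^n]})\xra{\tau_{X_{[\RR^n]}}} \sB\sO(n)\bigr).
\]
Combining this with the two identifications above ($\Ent(X)\simeq \Ent_{[\RR^n]}(X)$ and $X_{[\RR^n]}=X$, together with $\bsc_{n,0}\simeq\bsc_{[\RR^n]}\simeq \sB\sO(n)$) gives the claimed equivalence $(\Ent(X)\xra{\tau_X}\bsc_{n,0})\simeq (X\xra{\tau_X}\sB\sO(n))$. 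Naturality in $X$ is inherited from the naturality statements in Lemma~\ref{exit-class} and Proposition~\ref{prop.stratification-functorial}.

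The only subtle point, and hence the main thing to verify carefully, is that the classifying map produced by Lemma~\ref{exit-class} agrees with the \emph{classical} tangent classifier of $X$ under the identification $\bsc_{[\RR^n]}\simeq \sB\sO(n)$. This reduces to tracing through Lemma~\ref{lemma.deriv-linear}: the deformation retraction $D_0\colon \bsc^0(\RR^n,\RR^n)\to {\sf GL}(\RR^n)$ sends a self-embedding $f$ to its ordinary derivative at the origin, and under the Gram--Schmidt retraction to $\sO(n)$ this is precisely the cocycle classifying the frame bundle of $X$; so no extra argument beyond unpacking definitions is required.
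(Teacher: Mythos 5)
Your proposal is correct and follows the same route the paper intends: the corollary is stated as the immediate specialization of Lemma~\ref{exit-class} to $U=\RR^n$ and $X$ a smooth $n$-manifold, once one observes that $\Ent(X)=\Ent_{[\RR^n]}(X)$, $X_{[\RR^n]}=X$, and $\bsc_{[\RR^n]}\simeq\sB\sO(n)$ via Theorem~\ref{theorem.basics-easy}(1). Your concern in the last paragraph about whether the equivalence really produces the classical classifying map is already addressed inside the proof of Lemma~\ref{exit-class} (via the choice of exponential map $TX_{[U]}\dashrightarrow X_{[U]}$), so no extra tracing through $D_0$ is needed beyond what that lemma already supplies.
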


\begin{example}\label{ordinary-tangents}
We explicate $\Ent(X)$ in some special cases.  
With the exception of the first, these expressions are validated through Remark~\ref{exit-path-category}.  
\begin{itemize}

\item Let $M$ be an ordinary smooth manifold.  Then there is a canonical equivalence of quasi-categories $\Ent(M) \simeq \Sing(M)$.  
\item There is a canonical equivalence of $\infty$-categories $\Ent(\RR_{\geq 0}^{\times p}) \simeq ([1]^{\times p})^{\op}$.

\item Consider the standard unknot $S^1\subset S^3$.  Regard this pair as a conically smooth stratified space via Example~\ref{example:Edn'}.  
Then $\Ent(S^1\subset S^3) \simeq \Sing(S^1) \star \Sing(S^1)$ is the $\infty$-categorical join;
and we see that the classifying space $\sB\bigl(\Ent(S^1\subset S^3)\bigr) \simeq S^1 \star S^1\cong S^3$ is the topological join.  
\item More generally, consider a properly embedded smooth submanifold $W\subset M$, which we regard as a stratified space whose singularity locus is $W$.  
Write $\mathsf{Link}_{W}(M)$ for the unit normal bundle of this embedding, with respect to a chosen complete Riemannian metric on $M$.  
Then there is a canonical equivalence of $\infty$-categories
\[
\Ent(W\subset M) \simeq \Sing(W) \underset{\Sing\bigl(\mathsf{Link}_{W}(M)\bigr)}\bigstar \Sing(M\smallsetminus W)
\]
to the \emph{parametrized join}: For $\cI\leftarrow \cK \to \cJ$ a diagram of $\infty$-categories,
\[
\cI\underset{\cK}\bigstar \cJ:= \cI \underset{\cK\times\{0\}}\coprod \cK\times [1] \underset{\cK\times\{1\}}\coprod \cJ
\]
is the iterated pushout of $\infty$-categories. 
\end{itemize} 
\end{example}

\subsection{Proof of propositions~\ref{localization} and~\ref{refinements-localize}}\label{prove-prop-loc}

Let $\w{X} \to X$ be a refinement between stratified spaces.
By definition of the notion of a refinement, this map is in particular a homeomorphism between underlying topological spaces.
In what follows, we therefore identify these underlying topological spaces.  
Denote by $\opens(\w{X}) = \opens(X)$ the poset of open subsets of this common underlying topological space, ordered by inclusions of subsets.  
Notice the solid functors from the discrete enter-path categories,
\[
\xymatrix{
\Entd(\w{X})   \ar@{-->}[rr]  \ar[d]_-{\sf Image}
&&
\Entd(X)  \ar[d]^-{\sf Image}
\\
\opens(\w{X})  \ar[rr]^-{=}
&&
\opens(X) .
}
\]
Both of these solid functors are fully faithful.
The image of the first functor consists of those open subsets $\w{O}\subset \w{X}$ for which the inherited stratification on $\w{O}$ is abstractly isomorphic to a basic, while the image of the second functor consists of those open subsets $O\subset X$ for which the inherited stratification on $O$ is abstractly isomorphic to a basic.  
Now, by inspection, for $\w{U} \to U$ a refinement with $\w{U}$ a basic, then $U$ too is a basic.
Therefore, there is a factorization, which is necessarily unique, as indicated by the dashed arrow in the above diagram among posets.  
We denote the values of the functor
\[
\Entd(\w{X}) \longrightarrow \Entd(X)~,\qquad  (\w{U}\hookrightarrow \w{X})\mapsto (U\hookrightarrow X)~,
\]
by omitting the tilde.

We will now prove the following result, which immediately implies Proposition~\ref{localization} as the case that the refinement $\w{X} \xra{=}X$ is the identity morphism between stratified spaces.  

\begin{lemma}\label{strength}
For each conically smooth refinement $r\colon \w{X}\ra X$ between stratified spaces, the composite functor $\Entd(\w{X}) \to \Entd(X) \ra \Ent(X)$ witnesses a localization
\[
\Entd(\w{X})[W_r^{-1}] \xra{~\simeq~} \Ent(X)
\]
where $W_r\subset \Entd(\w{X})\subset \Entd(X)$ is the subcategory consisting of the same objects $(\w{U}\hookrightarrow \w{X})$ but only those morphisms $(\w{U}\underset{\rm over~\w{X}}{\hookrightarrow}\w{V})$ for which $U$ and $V$ are abstractly isomorphic as stratified spaces.  
\end{lemma}

\begin{proof}
From Lemma~\ref{basic-facts}, it is immediate that the functor $\Entd(\w{X}) \to \Ent(X)$ factors through the named localization, and in fact it does so conservatively.
To show that this functor is an equivalence, we show it is an equivalence on underlying $\infty$-groupoids, and that it is an equivalence on spaces of morphisms.  

From Lemma~\ref{exit-class}, the underlying $\infty$-groupoid of $\Ent(X)$ is equivalent to the coproduct 
\[
\Ent(X)^{\sim}~\simeq~ \underset{[U]} \coprod X_{[U]}
\]
of the underlying spaces of the indicated smooth manifolds which are defined in Definition~\ref{class-stratum} -- here, the coproduct is indexed by isomorphism classes of basics.
The underlying $\infty$-groupoid of $\Entd(\w{X})[W_r^{-1}]$ is the classifying space $\sB W_r$.  
In light of the above coproduct, consider the full subcategory $W_r^{[U]}\subset W_r$ consisting of those $(\w{V}\hookrightarrow \w{X})$ for which there is a refinement $\w{V}\to U$.
We thus seek to show that the functor $W_r^{[U]} \to X_{[U]}$ witnesses an equivalence from the classifying space $\sB W_r^{[U]} \simeq X_{[U]}$.  
We will do this by first showing that the functor $X_{[U]}\cap - \colon W_r^{[U]} \to W_{X_{[U]}}$ is final, and therefore induces an equivalence on classifying spaces, then observing a canonical equivalence of spaces $\sB W_{X_{[U]}} \simeq X_{[U]}$.  
We use Quillen's Theorem A (Corollary~4.1.3.3 of~\cite{HTT}).
Let $(R\hookrightarrow X_{[U]})\in W_{X_{[U]}}$ be an object.  
Consider the slice category $(W_r^{[U]})^{(R\hookrightarrow X_{[U]})/}$.
An object is an object $(\w{U}'\hookrightarrow \w{X})$ of $W_r^{[U]}$ for which there is an inclusion $R\subset U'$, and a morphism is an inclusion of such.
Because such $(\w{U}'\hookrightarrow \w{X})$ form a base for the topology about $(R\hookrightarrow X_{[U]})$, this category is filtered. 
In particular, the classifying space of $(W_r^{[U]})^{(R\hookrightarrow X_{[U]})/}$ is contractible.  
It follows that the functor $X_{[U]}\cap - \colon W_r^{[U]} \to W_{X_{[U]}}$ is final, and therefore induces an equivalence on classifying spaces, as desired.  
Now, let $M$ be a smooth manifold.
The category $W_M$ is a basis for the standard Grothendieck topology on $M$, so in particular it defines a hypercover of $M$ (Definition 4.1 of \cite{dugger-isaksen}). 
It follows from Theorem 1.3 of~\cite{dugger-isaksen} that the canonical map of topological spaces
\[
\underset{(R\hookrightarrow M)\in W_M} \colim~ R~{}~\xra{~\simeq~}~{}~ M
\]
is a homotopy equivalence.  
Because each term $R$ in this colimit is contractible, 
this colimit is identified as the classifying space
\[
\sB W_M~\simeq~ \underset{(R\hookrightarrow M)\in W_M} \colim~ R~.
\]
Applying this to the case $M=X_{[U]}$, we conclude that $\sB W_{X_{[U]}} \simeq X_{[U]}$.
In summary, we have verified that the map between maximal $\infty$-subgroupoids
\[
\bigl(\Entd(\w{X})[W_r^{-1}]\bigr)^{\sim} \xra{~\simeq~} \bigl(\Ent(X)\bigr)^{\sim}
\]
is an equivalence.  
We now show that the functor induces an equivalence on spaces of morphisms.
Consider the diagram of spaces
\[
\xymatrix{
\bigl(\Entd(\w{U})[W_{r_{|\w{U}}}^{-1}]\bigr)^{\sim}  \ar[r]  \ar[d]_-{(\w{U}\hookrightarrow \w{X})}
&
\Ent(U)^{\sim}  \ar[d]^-{(U\hookrightarrow X)}
\\
\bigl(\Entd(\w{X})[W_r^{-1}]\bigr)^{(1)} \ar[r]  \ar[d]_-{{\sf ev}_1}
&
\Ent(X)^{(1)}  \ar[d]^-{{\sf ev}_1}
\\
\bigl(\Entd(\w{X})[W_r^{-1}]\bigr)^{\sim}  \ar[r]
&
\Ent(X)^{\sim}
}
\]
where a superscript ${}^{(1)}$ indicates a space of morphisms, and the upper vertical arrows are induced by the respective functors $\w{U} \hookrightarrow \w{X}$ and $U\hookrightarrow X$.
Our goal is to show that the middle horizontal arrow is an equivalence.
We will accomplish this by showing that the diagram is a map of fiber sequences, for we have already shown that the top and bottom horizontal maps are equivalences.

Now, for $c\in \cC$ an object of an $\infty$-category, and for $\cB\subset \cC$ a full $\infty$-subcategory, evaluation at the target ${\sf ev}_1\colon {\sf Ar}(\cB_{/c})\to \cB_{/c}$ is a coCartesian fibration, whose fiber over $(b\to c)\in \cB_{/c}$ is canonically identified as the $\infty$-category
\[
(\cB_{/c})_{/(b\to c)}~\simeq~ \cB_{/b}~.
\]
There results a fiber sequence on maximal $\infty$-subgroupoids
\[
(\cB_{/b})^{\sim}  \longrightarrow (\cB_{/c})^{(1)} \longrightarrow (\cB_{/c})^\sim~.
\]
Specializing to the case that $(\cB \subset \cC):=(\Bsc\subset \Snglr)$ and $(b\to c):=(U\hookrightarrow X)$ gives that the right vertical sequence is a fiber sequence.

We now show that the left vertical sequence is a fiber sequence.  
The space of morphisms of $\bigl(\Entd(\w{X})[W_r^{-1}]\bigr)^{(1)}$ is the classifying space of the subcategory of the functor category 
\[
\Fun^{W_r}\bigl([1],\Entd(\w{X})\bigr) ~\subset ~\Fun\bigl([1],\Entd(\w{X})\bigr)
\]
consisting of the same objects but only those natural transformations by $W_r$.  
Our aim now is to show that the hypothesis of Quillen's Theorem B applies to the evaluation map 
\begin{equation}\label{22}
{\sf ev}_1: \Fun^{W_r}\bigl([1],\Entd(\w{X})\bigr) \longrightarrow W_r~.
\end{equation}
Then by Quillen's Theorem B, the fiber of the lower-left vertical map over $(\w{U} \to \w{X})$  is the classifying space of the slice category $(W_r)_{/(\w{U}\hookrightarrow \w{X})}\simeq W_{r_{|\w{U}}}$.  
From above, we identify this classifying space $\sB W_{r_{|\w{U}}}\simeq \bigl(\Entd(\w{U})[W_{r_{|\w{U}}}^{-1}]\bigr)^\sim$ as the maximal $\infty$-subgroupoid of the localization.  
In this way we establish that the left vertical sequence is a fiber sequence once we verify that Quillen's Theorem B applies to the evaluation functor~(\ref{22}).  

To apply Quillen's Theorem B, we must show that each morphism $(\w{U}\underset{\rm over~\w{X}}{\hookrightarrow} \w{V})$ in $W_r$ induces an equivalence between classifying spaces $\sB\bigl((W_r)_{/(\w{U}\hookrightarrow \w{X})}\bigr) \simeq \sB\bigl((W_r)_{/(\w{V}\hookrightarrow \w{X})}\bigr)$.  
This map between spaces is canonically identified as $\sB W_{r_{|\w{U}}} \to \sB W_{r_{|\w{V}}}$.
Through the previous analysis of this proof, this map is further identified as the map between spaces $\underset{[B]} \coprod U_{[B]} \to \underset{[B]} \coprod V_{[B]}$ induced from the conically smooth embedding $U\hookrightarrow V$ between open subsets of $X$.
Because $U$ and $V$ are abstractly isomorphic as stratified spaces, then Lemma~\ref{basic-facts} gives that each such inclusion is isotopic through stratified open embeddings to an isomorphism.  
We conclude that Quillen's Theorem B applies.

\end{proof}

We now prove Proposition~\ref{refinements-localize}, whose statement we recall here:
\begin{itemize}
\item[]
Let $\w{X}\xra{r} X$ be a refinement between conically smooth stratified spaces. Then there is a canonical functor
\[
\Ent(\w{X}) \longrightarrow \Ent(X)
\]
which is a localization.  

\end{itemize}

\begin{proof}[Proof of Proposition~\ref{refinements-localize}]
Consider the functors
\[
\Entd(\w{X}) \longrightarrow \Ent(\w{X}) \longrightarrow \Ent(X)~.
\]
By Lemma \ref{strength}, both the functor $\Entd(\w{X}) \ra \Ent(\w{X})$ and the composite $\Entd(\w{X}) \ra \Ent(X)$ are localizations. By the asymmetric 2-of-3 property of localizations, this implies that the functor $\Ent(\w{X}) \ra \Ent(X)$ is a localization.

\end{proof}

\section{Categories of basics}
Recall from the introduction the following.
\begin{definition}\label{def.disc-B-mfld}
An \emph{$\infty$-category of basics} is a right fibration $\cB\to \bsc$.  
Given an $\infty$-category of basics $\cB$, the $\infty$-category of $\cB$-manifolds is the pullback
\[
\mfld(\cB)~:=~ \snglr\underset{\RFbn_{\bsc}}\times \bigl((\RFbn_{\bsc})_{/\cB}\bigr)~,
\]
and there is the further pullback $\infty$-category
We will use the notation for the pullback
\[
\mfldd(\cB)~:=~\snglrd\underset{\snglr}\times \mfld(\cB)~.
\]
\end{definition}

\begin{remark}
By definition, there is a fully faithful inclusion of $\infty$-categories, $\cB \into \mfld(\cB)$. The intuition is as follows: In examples, an object $\ov{U}\in \cB$ can be thought of as an object $U \in \bsc$ together with some structure $g$. Meanwhile, an object of $\mfld(\cB)$ is specified by a natural transformation from the representable functor $\snglr(-,U)$ to the presheaf defined by $\cB$. This natural transformation should be thought of as pulling back structures: given any morphism $f: U' \to U$, one obtains a pullback structure $f^* g \in \cB(U')$.
\end{remark}

We will now give examples of categories of basics. 
As we mentioned in the introduction, any $\infty$-category of basics $\cB \to \bsc$ factors as an essentially surjective functor followed by a fully faithful functor. The latter aspect of this factorization corresponds to a class of singularity types, and the former to tangential structures on these singularity types.  
\subsection{Specifying singularity type}

\begin{definition}\label{def.sieve}
Let $\sC$ be a $\Kan$-enriched category. 
A full subcategory $\sL\subset \sC$ is a \emph{sieve} if given any objects $c\in \sC$ and $d\in \sL$ for which $\hom_{\sC}(c,d)$ is nonempty, then the object $c$ is contained in $\sL$.
\end{definition}

Let $\sB\subset \bsc$ be a sieve.  Let $X$ be a conically smooth stratified space.  Denote by $X_\sB\subset X$ the sub-stratified space which is the open subspace 
	\[
	X_\sB = 
	\bigcup_{\sB \ni U\xra{\varphi} X }
	\varphi(U)\subset X
	\] 
where the union is taken over the set of conically smooth open embeddings from objects of $\sB$.
Because $\sB$ is a sieve, the collection $\{U\hookrightarrow X\mid U\in \sB\}$ is a basis for the topology of $X_{\sB}$.  
Conversely, given a conically smooth stratified space $X$, the full subcategory $\sB_X\subset \bsc$ consisting of those $U$ for which $\snglr(U,X)\neq \emptyset$, is a sieve.  
\begin{remark}
It is useful to think of a sieve $\sB\subset \bsc$ as a list of singularity types, this list being \emph{stable} in the sense that a singularity type of an arbitrarily small deleted neighborhood of a point in a member of this list is again a member of the list.  
\end{remark}

\begin{definition}[$\sB$-manifolds]\label{def:sieve}
Let $\sB\subset \bsc$ be a sieve.  
A $\sB$-manifold is a conically smooth stratified space for which $X_\sB\into X$ is an isomorphism.  Equivalently, a conically smooth stratified space $X$ is a $\sB$-manifold if $\snglr(U,X) = \emptyset $ whenever $U\notin \ob \sB$.
\end{definition}

\begin{example}
Let $-1\leq k \leq n$.
The inclusion $\bsc_{\leq k,n}\subset \bsc$ is a sieve.
A $\bsc_{\leq k, n}$-manifold is a conically smooth stratified space of pure dimension $n$ and depth at most $k$.  
We point out that the inclusion $\bsc_{k,n} \subset \bsc$ is \emph{not} a sieve whenever $k>0$.
\end{example}

\begin{example}
Let $\sB\subset \bsc_1$ be the full subcategory spanned by $\RR$ and $\sC(\{1,2,3\})$.  Then $\sB$ is a sieve and a $\sB$-manifold is a (possibly open) graph whose vertices (if any) are exactly trivalent.  
\end{example}

\begin{example}[Boundaries and permutable corners]\label{boundary}
Let $\sD_n^\partial\subset \bsc$ be the full subcategory spanned by the two objects $\RR^n$ and $\RR^{n-1}\times \sC(\ast)$.
Then $\sD_n^\partial$ is a sieve and a $\sD_n^\partial$-manifold is precisely a smooth $n$-manifold with boundary.

As a related example, let $\sD_n^{\un{\partial}}\subset \bsc$ be the full subcategory spanned by the objects $\{\RR^{n-k}\times \sC(\Delta^{k-1})\}_{0\leq k \leq n}$ where it is understood that $\Delta^{-1}=\emptyset^{-1}$. (See Example~\ref{example.simplices} to understand $\Delta^{k-1}$ as a stratified space.) 
By induction, conically smooth open embeddings $\RR^{n-k}\times \sC(\Delta^{k-1}) \hookrightarrow \Delta^n$ form a basis for its topology.  It follows that $\sD_n^{\un{\partial}}\subset \bsc$ is a sieve.  
A $\sD_n^{\un{\partial}}$-manifold is what we called an $n$-manifold with \emph{permutable corners} in Example~\ref{example.perm.corners}.
\end{example}

\begin{example}[Embedded submanifolds]\label{example:Edn'}
Here we follow up on Example~\ref{example.embedded} where we stated that a properly embedded submanifold $L^d \subset M^n$ of a smooth $n$-manifold can be regarded as a conically smooth stratified space of dimension $n$, whose locus of positive depth is precisely $L$.

Let $\sD_{d\subset n}^{\kink}\subset \bsc$ be the sieve with the two objects $\{\RR^n~,~\RR^{n-d}\times \sC(S^{n-d-1})\}$, the second of which we will write as $\RR^{d\subset n}$ for now.  
The morphism spaces (actually, Kan complexes) are as follows:

\begin{itemize}
\item $\sD_{d\subset n}^{\kink}(\RR^n,\RR^n) = \Emb(\RR^n,\RR^n)$ -- the space of smooth embeddings.  This space is a monoid and receives a homomorphism from $\sO(n)$ which is an equivalence of underlying spaces.
\item $\sD_{d\subset n}^{\kink}(\RR^{d\subset n},\RR^n) = \emptyset $.
\item $\sD_{d\subset n}^{\kink}(\RR^n,\RR^{d\subset n}) = \Emb(\RR^n,\RR^n \smallsetminus \RR^d)$ -- the space of smooth embeddings which miss the standard embedding $\RR^d\times\{0\}\subset \RR^n$.  This space receives a map from $\sO(n)\times S^{n-d-1}$ as a $\bigl(\sO(n),\sO(d)\times \diff(S^{n-d-1})\bigr)$-bimodule, and the map is an equivalence of underlying spaces.  
\item Cylindrical coordinates $\RR^d\times \RR_{\geq 0}\times S^{n-d-1}\xra{\sf Cyl} \RR^n$ induce a homeomorphism of underlying topological spaces $\RR^{d\subset n}\approx \RR^n$ -- this homeomorphism canonically lifts to a $\PL$ isomorphism, but not a smooth isomorphism.  
The morphism space $\sD_{d\subset n}^{\kink}(\RR^{d\subset n},\RR^{d\subset n})$ is the space of those continuous open embeddings $f\colon \RR^n \to \RR^n$ for which the diagram
\[
\xymatrix{
\RR^d\times \RR_{\geq 0} \times S^{n-d-1}  \ar[d]^{\sf Cyl}  \ar@{-->}[r]^{\w{f}}
&
\RR^d\times \RR_{\geq 0}\times S^{n-d-1}  \ar[d]^{\sf Cyl}
\\
\RR^n  \ar[r]^f  
&
\RR^n
}
\]
can be filled with $\w{f}$ a smooth map between manifolds with boundary.  
This space is a monoid and receives a homomorphism from $\sO(d)\times \Aut(S^{n-d-1})$ which is an equivalence of underlying spaces.  
\end{itemize}
Write $\sD_n^{{\PL}}$ for the $\sf Kan$-enriched category with the single object $\RR^n_{\PL}$ and whose morphism space is $\Emb^{\PL}(\RR^n,\RR^n)$ -- the space of $\sf PL$ self-embeddings of $\RR^n$.  
Notice the evident enriched functor 
\[
\sD_{d\subset n}^{\kink} \longrightarrow \sD_n^{\PL}~.
\]  
\noindent
A $\sD_{d\subset n}^{\kink}$-manifold is the data of 
\begin{itemize}
\item a smooth $n$-manifold $\w{M}$ with boundary $\partial \w{M}$,
\item a smooth $d$-manifold $L$,
\item a smooth fiber bundle $\partial \w{M} \to L$ whose fibers are diffeomorphic to a sphere.
\end{itemize}
Such data in particular determines the pushout topological space $M := L \sqcup_{\partial \w{M}} \w{M}$.
This topological space $M$ is a topological $n$-manifold, and, directly from its defining expression, it is equipped with a canonical $\sf PL$ structure.
Moreover, this $\sf PL$ manifold $M$ is equipped with a properly embedded \emph{smooth} submanifold $L\subset M$ as well as a smooth structure on $M\smallsetminus L$, in addition to further smoothness along $L$ which, among other things, provides a tubular neighborhood of $L$ and thus a smooth link about $L$.  
We refer to a $\sD_{d\subset n}^{\kink}$-manifold as a \emph{kink submanifold $L\subset M$}.  
Examples of such come from properly embedded smooth $d$-manifolds in a smooth $n$-manifold.
Not all $\sD_{d\subset n}^{\kink}$-manifolds are isomorphic to ones of this form -- this difference will be addressed as Example~\ref{example:Ekn}.  
\end{example}

\subsection{Examples of $\oo$-categories of basics}

By construction, there is a fully faithful functor $\cB \to \mfld(\cB)$.
There results restricted Yoneda functor
\begin{equation}\label{B-tangents}
\tau \colon \mfld(\cB) \longrightarrow \Psh\bigl(\mfld(\cB)\bigr) \longrightarrow \Psh(\cB)\underset{\rm Rec~\ref{right-fibration}}\simeq \RFbn_{\cB}~.
\end{equation}

\begin{example}
Let $\Theta\colon \bsc^{\op} \to \Kan$ be a fibrant functor between $\Kan$-enriched categories. The unstraightening construction applied to the simplicial nerve of $\Theta$ gives a right fibration 
	\[
	\cB_\Theta \to \bsc
	\]
whose fiber over $U$ is a Kan complex canonically equivalent to $\Theta(U)$.  
\end{example}

\begin{example}\label{over-X}
Let $X$ be a conically smooth stratified space.  Then $\Ent(X) \to \bsc$ is an $\infty$-category of basics.  The data of a stratified space $Y$ together with a conically smooth open embedding $Y\xra{f} X$ determines the $\Ent(X)$-manifold $\bigl(\Ent(Y),\Ent(f)\bigr)$.  
In general, it is not the case that every $\Ent(X)$-manifold arises in this way. 
\end{example}

\begin{definition}\label{def.coherent-sieve}
Let $\cC$ be an $\infty$-category. An $\infty$-subcategory $\cL \subset \cC$ is a \emph{sieve} if the inclusion $\cL \to \cC$ is a right fibration.  Equivalently, $\cL\subset \cC$ is a sieve if it is a full $\oo$-subcategory and for each $c\in \cL$ the space $\cC(c',c)\neq\emptyset$ being nonempty implies that $c'$ is contained in $\cL$.
\end{definition}

Note that an full $\infty$-subcategory $\cL \subset \cC$ being a sieve is a condition on its objects.

\begin{remark}
The simplicial nerve of a sub-$\Kan$-enriched category $\sL \subset \sC$ is a sieve in the sense of Definition~\ref{def.sieve} if and only if it is a sieve as $\infty$-categories. 
\end{remark}

\begin{example}[$\sD_n$]\label{example:D_n}
Define the $\infty$-category of basics 
	\[
	\sD_n\to \bsc
	\]
as the simplicial nerve of $\bsc_{0,n}$.  
A $\sD_n$-manifold is precisely an ordinary smooth manifold.  
\end{example}

\begin{example}[Framed 1-manifolds with boundary]\label{I}
Consider the sieve $\cI'\subset \bsc_{\leq 1, 1}$ whose set of objects is $\{\RR,C(\ast)\}$.  We point out that $\sC(\ast) = \RR_{\geq 0}$.
Define by $\cI\to \cI'$ the (unique) right fibration whose fiber over $\RR$ is a point $\{\RR\}$ and whose fiber over $\RR_{\geq 0}$ is two points $\{\RR_{\geq 0}\} \sqcup \{\RR_{\leq 0}\}$.
As the notation suggests, the space of morphisms between two objects of $\cI$ is the space of smooth open embedding \emph{which preserve orientation}.  
An $\cI$-manifold is an oriented smooth $1$-manifold with boundary.
\end{example}

\begin{remark}\label{strict-I}
We point out that the presented definition of $\cI$ describes it as a $\Kan$-enriched category (with three objects and with explicit morphism spaces).  
\end{remark}

\begin{example}[Framed manifolds]\label{fr}
Denote the right fibration  $\sD_n^{\fr} := (\sD_n)_{/\RR^n} \to \sD_n$.  
This is the unstraightening the topological functor $\sD_n^{\op} \to \Kan$ given by $\sD_n(-,\RR^n)$. For example, its value on $\RR^n$ (the only object) is $\Emb(\RR^n,\RR^n) \simeq \sO(n)$.  As so, it is useful to regard a vertex of $\sD_n^{\fr}$ as a framing on $\RR^n$, that is, a trivialization of the tangent bundle of $\RR^n$.  Because $\sD_n$ is an $\infty$-groupoid, the over-$\infty$-category $\sD_n^{\fr} \simeq \ast$ is equivalent to $\ast$. 
A $\sD_n^{\fr}$-manifold is a smooth manifold together with a choice of trivialization of its tangent bundle.  A morphism between two is a smooth embedding together with a path of trivializations from the given one on the domain to the pullback trivialization of the target.  
Composition is given by composing smooth embeddings and concatenating paths.  
\end{example}

\begin{example}
Let $G\xra{\rho} {\sf GL}(\RR^n)$ be a map of topological groups.  There results a Kan fibration between Kan complexes ${\sB}G \to {\sf BGL}(\RR^n)$.  
Set $D^G_n := \sB G$ and define
the right fibration $\sD_n^G \to \sD_n$ through the equivalence of Kan complexes ${\sf BGL}(\RR^n) \xra{\simeq} \sD_n$.
A $\sD_n^G$-manifold is a smooth manifold with a (homotopy coherent) $G$-structure on the fibers of its tangent bundle.  A morphism of $\sD^G_n$-manifolds is a smooth embedding together with a path from the fiberwise $G$-structure on the domain to the pullback $G$-structure under the embedding.  
Examples of such a continuous homomorphism are the standard maps from ${\sf Spin}(n)$, $\sO(n)$, and ${\sf SO}(n)$.  The case $\ast \to {\sf GL}(\RR^n)$ of the inclusion of the identity subgroup gives the category of basis $\sD_n^{\fr}$ of Example~\ref{fr}. 
\end{example}

\begin{example}[Submanifolds]\label{example:Ekn}
Recall the sieve $\sD_{d\subset n}^{\mathsf{Kink}}$ of Example~\ref{example:Edn'}.  
There was a map $\sD_{d\subset n}^{\sf Kink} \to \sD_n^{\PL}$ to the monoid of $\sf PL$ self-embeddings of $\RR^n$.
Also note the standard map $\sD_n \to \sD_n^{\PL}$.    
Define the $\infty$-category $\sD_{d\subset n}$ as the pullback
\[
\xymatrix{
\sD_{d\subset n}  \ar[r]  \ar[d]
&
\sD_n  \ar[d]
\\
\sD_{d\subset n}^{\sf Kink}  \ar[r]
&
\sD_n^{\PL}
}
\]
though we warn that the horizontal maps do not lie over $\bsc$.  
Nevertheless, $\sD_{d\subset n} \to \sD_{d\subset n}^{\sf Kink}$ is a right fibration, and therefore $\sD_{d\subset n}$ is an $\infty$-category of basics.  
We describe $\sD_{d\subset n}$ explicitly.  
The map $\sD_{d\subset n} \to \sD_{d\subset n}^{\sf Kink}$ is an equivalence on the first three of the four mapping spaces (according to the order presented in Example~\ref{example:Edn'}).  
To identify the remaining space of morphisms, we explain the homomorphisms 
\[
\sO(d) \times \sO(n-d) \xra{~\simeq~}\Emb\bigl((\RR^d\subset \RR^n),(\RR^d\subset \RR^n)\bigr) \xra{~\simeq~}\sD_{d\subset n}(\RR^{d\subset n},\RR^{d\subset n})
\]
which are equivalences of spaces.
The middle space consists of those smooth self-embeddings of $\RR^n$ which restrict to self-embeddings of $\RR^d$ and $\RR^n\smallsetminus \RR^d$.
The first map is the evident one, and it is an equivalence of spaces with homotopy inverse given by taking the derivative at the origin $0\in \RR^n$.  
The second map is the evident one, and it is an equivalence of spaces via smoothing theory.   
Using smoothing theory~(\`a la \cite{kirbysieb}), a $\sD_{d\subset n}$-manifold is a smooth manifold equipped with a properly embedded smooth $d$-manifold. 
Provided $n-d\leq 3$, the inclusion $\sO(n-d)\xra{\simeq} \Diff(S^{n-d-1})$ is a homotopy equivalence (which is Smale's conjecture for the case $n-d=3$) and thus the map $\sD_{d\subset n}\xra{\simeq} \sD_{d\subset n}^{\mathsf{Kink}}$ is an equivalence of $\infty$-categories.  For this range of $d$ then, there is no distinction between a $\sD_{d\subset n}$-manifold and a $\sD_{d\subset n}^{\mathsf{Kink}}$-manifold.  The case $(n,d) = (3,1)$ is of particular interest .  
\end{example}

\begin{example}
Let $A$ be a set whose elements we call colors.  Denote by $\sD_{d\subset n}^A\to \sD_{d\subset n}$ the right fibration whose fiber over $\RR^n$ is a point and whose fiber over $\RR^{d\subset n}$ is the set $A$.  
A $\sD_{d\subset n}^A$-manifold is a collection $\{L_\alpha\}_{\alpha\in A}$ of pairwise disjoint properly embedded smooth $d$-submanifolds of a smooth manifold $M$.

As a related example, let $S\subset A\times A$ be a subset and denote by $\sD_n^S$ the $\infty$-category of basics over the objects $\RR^n$ and $\RR^{n-1\subset n}$ whose fiber over the first object is the set $A$ of colors, and whose fiber over the second object is $S$.    The edges (and higher simplices) are given from the two projections $S \to A$.  A $\sD_n^S$-manifold is a smooth manifold together with a hypersurface whose complement is labeled by $A$, and the colors of two adjacent components of this complement are specified by $S$.
We think of such a geometric object as a \emph{defect} of dimension $n$ and indexed by $S$.  
\end{example}

\begin{example}[Framed submanifolds]\label{Ekn-framed}
Recall the $\infty$-category of basics $\sD_{d\subset n}$ of Example~\ref{example:Ekn}.
Define the $\infty$-category $\sD_{d\subset n}^{\fr}$ as the pullback in the diagram of $\infty$-categories
\[
\xymatrix{
\sD_{d\subset n}^{\fr}  \ar[r]  \ar[d]  
&
\sD_n^{\fr}  \ar[d]
\\
\sD_{d\subset n} \ar[r]
&
\sD_n
}
\]
-- though we warn that the horizontal maps do not lie over $\bsc$.  
Nevertheless, both of the projections $\sD_{d\subset n}^{\fr} \to \sD_{d\subset n} \to \bsc$ are right fibrations, and therefore $\sD_{d\subset n}^{\fr}$ is an $\infty$-category of basics.  
 
Observe that $\sD_n^{\fr}$ embeds into $\sD_{d\subset n}^{\fr}$ as the fiber over $\RR^n$.  The fiber of $\sD_{d\subset n}^{\fr}$ over the other vertex $\RR^{d\subset n}$ is what one can justifiably name the Kan complex of \emph{framings} of $\RR^{d\subset n}$.  Via~\textsection\ref{sec:endomorphisms}, this terminology is justified through the identification of the fiber over $\RR^{d\subset n}$ being canonically equivalent to the product $\sO(k) \times \Aut(S^{n-k-1})$ which is the Kan complex of choices of `trivializations of the tangent stalk of $\RR^{d\subset n}$ at it center'.    
Consider the map of $\infty$-categories $\sD_{d\subset n} \to \Delta^1$ determined by $\{\RR^n\}\mapsto 0$, $\{\RR^{d\subset n}\}\mapsto 1$.  The composite map $\sD_{d\subset n}^{\fr} \to \sD_{d\subset n} \to \Delta^1$ is an equivalence of $\infty$-categories.  This is analogous to (and restricts to) the equivalence $\sD_n^{\fr} \simeq \ast$~.  
A $\sD_{d\subset n}^{\fr}$-manifold is a framed manifold $M$ and a properly embedded smooth $d$-submanifold $L$ equipped with a null-homotopy of the Gauss map of this submanifold: $L \to {\sf Gr}_d(\RR^n)$ -- this null-homotopy is equivalent to a trivialization of both the tangent and normal bundle of $L$, compatible with the trivialization of $M$.  
\end{example}

\begin{example}[Specified intersections] 
Fix a smooth embedding $e\colon S^{n-k-1}\sqcup S^{n-l-1} \hookrightarrow S^{n-1}$.  Regard the datum of $e$ as a singular $(n-1)$-manifold, again called $e$, whose underlying space is $S^{n-1}$ with singularity locus the image of $e$.  
Consider the sieve $\sD_{k,l\subset_e n}'\to \bsc$ whose set of objects is $\{\RR^n, \RR^{k\subset n}, \RR^{l\subset n}, \sC(e)\}$.  
As with Example~\ref{example:Ekn}, there is a functor $\sD_{k,l\subset_e n}' \to \sD_n^{\PL}$.
Define the $\infty$-category of basics $\sD_{k,l\subset_e n} : = \sD_{k,l\subset_e n}' \underset{\sD_n^{\PL}}\times \sD_n$.  
A $\sD_{k,l\subset_e n}$-manifold is a smooth manifold $M$ together with a pair of properly embedded smooth submanifolds $K,L\subset M$ of dimensions $k$ and $l$, respectively, whose intersection locus is discrete and of the form specified by $e$.  
As a particular example, if $k+l=n$ and $e$ is the standard embedding into the join, then a $\sD_{k,l\subset_e n}$-manifold is a pair of submanifolds (of dimensions $k$ and $l$) of a smooth manifold which intersect transversely as a discrete subset. 
\end{example}

\begin{example}[Manifolds with corners]\label{example.mfld-corners}
This example will be foreshadow of \S\ref{section.corners}, where \emph{singular} manifolds with corners is developed.
Let $I$ be a finite set.
Consider the poset $\cP(I)$ of subsets of $I$, ordered by reverse inclusion.
For each $J\subset I$, consider the continuous map $\RR_{>0}^{I\smallsetminus J}\times \RR_{\geq 0}^I \to \cP(I)$ given by $(I\xra{x}\RR_{\geq 0})\mapsto \{i\mid x_i = 0\}$ -- it is a conically smooth stratified space, and as so it is (non-canonically) isomorphic to a basic.

Consider the $\Kan$-enriched category $\sD_{\langle n \rangle} \to \bsc$ for which an object is a finite subset $J \subset \{1,\dots,n\}=:\un{n}$, and for which the Kan complex of morphisms from $J$ to $J'$ is that of conically smooth open embedding $\RR_{>0}^{\un{n}\smallsetminus J} \times \RR_{\geq 0}^J \hookrightarrow \RR_{>0}^{\un{n}\smallsetminus J'}\times \RR_{\geq 0}^{J'}$ over the poset $\cP(\un{n})$.  
There is an evident functor $\sD_{\langle n\rangle} \to \bsc$.  
In \S\ref{section.corners} it is explained that this functor is a right fibration, and so $\sD_{\langle n \rangle}$ is an $\infty$-category of basics.  
A $\sD_{\lag n \rag}$-manifold is a smooth $n$-manifold corners, in the sense of~\cite{laures}.  
\end{example}

\begin{construction}[Stratum-wise structures]\label{stratum-wise-const}
We give a brief general account of \emph{independent} structures -- these structures are simply structures on each stratum of a stratified space separately, requiring no compatibility.  
The data required to specify independent structures is far simpler than for general structures.  
Consider the maximal $\oo$-subgroupoid $\bsc^{\sf eq}\subset \bsc$.  
From Theorem~\ref{basic-facts}, it can be described as the $\infty$-subcategory of depth-preserving morphisms.  
Let $\cE^0\to \bsc^{\sf eq}$ be any Kan fibration.  
We now construct a right fibration $\cE \to \bsc$ -- this construction is completely formal.

We define the map of simplicial sets $\cE\to \bsc$ by defining for each $p\geq 0$ and each $\Delta[p] \xra{\sigma} \bsc$ a the set $\cE_\sigma[p]$ of $p$-simplices in $\cE$ over $\sigma$; that these sets assemble to define a right fibration among simplicial sets is clear from the construction.
For $p=0$, define $\cE[0] = \cE^0_\sigma[0]$ using that $\bsc^{\sf eq}[0] = \bsc[0]$.  
Assume $\cE_{\sigma'}[p']$ has been defined whenever $p'<p$. Let $0\leq k \leq p$ be maximal such that the map $\Delta\{0,\dots,k\} \xra{\sigma_|} \bsc$ factors through $\bsc^{\sf eq}$.  
If $k=p$ define $\cE_\sigma[p] = \cE^0_\sigma[p]$.
If $k<p$ define $\cE_\sigma[p] = \cE_{\sigma_{|\Delta\{k+1,\dots,p\}}}[p-k-1]$.  
\end{construction}

\clearpage

\part{Structures are sheaves are 1-excisive functors}

Now we are equipped to give the proofs of the main theorems from the introduction:
\begin{itemize}
\item Theorem~\ref{theorem.smooth-basic-sheaf}: presheaves on $\bsc$ are equivalent to sheaves on $\snglr$.
\item Theorem~\ref{theorem.exit-tangent}: sheaves relative to $X$ are equivalent to constructible sheaves on $X$.
\item Theorem~\ref{theorem.structured-versions} generalizes the previous two theorems to the case of conically smooth stratified spaces with arbitrary structure $\cB$.
\item Theorem~\ref{theorem.1-excision}: sheaves on $\cB$-manifolds are equivalent to 1-excisive functors, and {\em covariant} functors out of $\cB$ (i.e., co-presheaves) are precisely the $\amalg$-excisive functors on $\mfld(\cB)$.
\end{itemize}

\section{Sheaves and constructible sheaves}\label{intuitions}
The following definition is from~\cite{toen-vezzosi-segal-topoi}, and subsequently used in~\cite{HTT} (cf. Definition~\ref{def.sieve}).
\begin{defn}
For $\cC$ an $\infty$-category, a \emph{sieve} is a fully faithful right fibration $\cU \to \cC$.  
For $X\in \cC$, a \emph{sieve on $X$} is a sieve $\cU_X\to \cC_{/X}$ for the slice $\infty$-category.  
\end{defn}

We now make concrete what we mean by sheaf on the site of conically smooth stratified spaces. To this end, consider the discrete category of conically smooth stratified spaces, $\snglrd$.
For every stratified space $X$, there is a functor 
\[
 {\sf Image}\colon \snglrd_{/X} \to \opens(X),
 \qquad
 (\varphi: U \into X) \mapsto \varphi(U)
\]
which maps the category of open embeddings $U \into X$ to the poset of open subsets $O \subset X$, simply by taking the image of an open embedding.
Because objects of $\snglrd$ are $C^0$ stratified spaces equipped with a section of the sheaf ${\sf Sm}$, the functor ${\sf Image}$ is a Cartesian fibration whose fibers are equivalent to the terminal category. (The functor, in particular, is an equivalence.)
And so, there is a \emph{standard} Grothendieck topology on $\snglrd$ where a sieve $\cU\subset \snglrd_{/X}$ on a conically smooth stratified space $X$ is a covering sieve if its image in $\opens(X)$ is a covering sieve in the standard sense. For the following definition, note that a covering sieve $\cU \ra \snglrd_{/X}$ is adjoint to a functor $\cU^{\triangleright} \ra \snglrd$, where $\cU^{\tr}:=\cU\times[1]\amalg_{\cU\times\{1\}}\{1\}$ is the right-cone on $\cU$ and the functor assigns $X$ to the adjoined final object.

\begin{defn}
The full $\oo$-subcategory
\[
\Shv(\snglrd)~\subset~ \Psh(\snglrd)
\]
of \emph{sheaves} on $\snglrd$ consists of those functors $\cF\colon \snglrd^{\op} \to \spaces$ such that, for each covering sieve $\cU \subset \snglrd_{/X}$, the composition $(\cU^{\op})^\tl \to \snglrd^{\op} \xra{\cF} \spaces$ is a limit diagram. In particular, the canonical map of spaces $\cF(X) \xra{\simeq} \underset{U\in \cU}{\sf lim}\cF(U)$ is an equivalence.  
More generally, a contravariant functor $\cF: \snglrd^{\op} \to \cC$ to an $\infty$-category $\cC$ is called a $\cC$-valued sheaf if the composition $(\cU^{\op})^\tl \to \snglrd^{\op} \xra{\cF} \cC$ is a limit diagram in $\cC$.
\end{defn}

\begin{remark}
Because a conically smooth stratified space is, by definition, finite dimensional, a sheaf in the above sense will satisfy the the stronger condition of descent for hypercovers (Definition 6.5.3.2 of \cite{HTT} or Definition 4.1 of \cite{dugger-isaksen}). See \S6.5.4 and Corollary 7.2.1.17 of \cite{HTT}. Descent versus hyperdescent may diverge for infinite dimensional spaces, such as the unbounded Ran space $\Ran(X)$ (Definition \ref{def.ran}).
\end{remark}

\begin{defn}
\label{definition.sheaf}
The $\infty$-category of \emph{sheaves on $\snglr$} is the pullback in the diagram among $\infty$-categories
\begin{equation}\label{sheaf-square}
\xymatrix{
\Shv(\snglr)  \ar[r]  \ar[d]
&
\Psh(\snglr)  \ar[d]
\\
\Shv(\snglrd)  \ar[r]
&
\Psh(\snglrd)~.
}
\end{equation}
So an object of $\Shv(\snglr)$ is a functor $\snglr^{\op} \xra{\cF}\spaces$ for which the composition $\snglrd^{\op} \to \snglr^{\op} \xra{\cF} \spaces$ is a sheaf.  
An analogous pullback defines the category of $\cC$-valued sheaves on $\Snglr$.
\end{defn}

\begin{defn}[Constructible sheaves]\label{def.cbl-sheaf}
Let $X \xra{S} P$ be a stratified topological space.  
The $\infty$-category of ($S$-)constructible sheaves on $X$ is the full $\infty$-subcategory
\[
\Shv^{\sf cbl}(X) \subset \Shv(X)
\]
of those $\cF\colon \opens(X)^{\op}\to \spaces$ where, for each $p\in P$, the restriction $\cF_{|S^{-1}p}\colon \opens(S^{-1}p)^{\op} \to \spaces$ is a \emph{locally constant} sheaf (see Definition A.1.12 of~\cite{HA}.)
\end{defn} 

We record a variation of Definition~\ref{definition.sheaf} that accounts for $\cB$-manifolds.
To present this definition we make a couple notational observations.  
The natural projection $\mfld(\cB)\to \Snglr$ gives an adjunction $ \Psh(\Snglr) \rightleftarrows \Psh\bigl(\mfld(\cB)\bigr)$ given by restriction and right Kan extension.  
Because $\mfld(\cB) \to \Snglr$ is a right fibration, each open cover $\cU$ of the underlying stratified space of a $\cB$-manifold $X$, which we regard as a poset by inclusion, canonically determines a functor $\cU^{\tr} \to \mfld(\cB)$.  
\begin{defn}
\label{definition.B-sheaf}
For $\cB$ an $\infty$-category of basics, the $\infty$-category of \emph{sheaves on $\mfld(\cB)$} is the pullback in the diagram among $\infty$-categories
\begin{equation}\label{sheaf-B}
\xymatrix{
\Shv\bigl(\mfld(\cB)\bigr)  \ar[r]  \ar[d]
&
\Psh\bigl(\mfld(\cB)\bigr) \ar[d]
\\
\Shv(\snglr)  \ar[r]
&
\Psh(\snglr)~.
}
\end{equation}
So an object of $\Shv\bigl(\mfld(\cB)\bigr)$ is a functor $\mfld(\cB)^{\op} \xra{\cF}\spaces$ for which, for each open cover $\cU$ of the underlying stratified space of a $\cB$-manifold $X$, the composition $(\cU^{\op})^{\tl} \to \mfld(\cB)^{\op} \xra{\cF} \spaces$ is a limit diagram.  
An analogous pullback defines the category of $\cC$-valued sheaves on $\mfld(\cB)$.
\end{defn}

\subsection{Constructible sheaves and $\Ent(X)$: 
proof of Theorem~\ref{theorem.exit-tangent}(Cbl)}\label{sec.proof-cstbl}
Fix a conically smooth stratified space $X$. 
Recall the enter-path $\infty$-category $\Ent(X)$ from Definition~\ref{defn.E(X)} -- it comes with a natural right fibration to $\bsc$, whose fiber over $U$ is the space of embeddings from $U$ to $X$.  Also recall the $\infty$-category of constructible sheaves $\shv^{\cbl}(X)$ from Definition~\ref{def.cbl-sheaf}. We prove here the equivalence (Cbl) in Theorem~\ref{theorem.exit-tangent}:
\[
 \shv^{\cbl}(X) \simeq \psh(\Ent(X)).
\]
In a nutshell, we prove Theorem~\ref{theorem.exit-tangent}~(Cbl) by showing that both sides of the equivalence respect coverings, crossing with $\RR$, and applying the cone $C$.  
\subsubsection{Characterizing the tangent classifier}
For each sieve $\cU \subset \snglrd_{/X}$, we have a composite functor 
 \[
 \cU \subset \snglrd_{/X} \to \snglr_{/X} \xra{\Ent} \Psh(\bsc)_{/\Ent(X)}
 \] 
and hence a map
\begin{equation}\label{E-sheaf-map}
\underset{O\in \cU} \colim~ \Ent(O) \longrightarrow \Ent(X)~.
\end{equation}
 of right fibrations over $\bsc$.

\begin{lemma}\label{E-covers}
For each covering sieve $\cU \subset \snglrd_{/X}$ the map~(\ref{E-sheaf-map}) is an equivalence in $\RFbn_{\bsc}$.  
\end{lemma}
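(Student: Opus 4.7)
The plan is to exploit the equivalence $\RFbn_{\bsc}\simeq \Psh(\bsc)$ of Theorem~\ref{right-fibration}. Under this equivalence, the map in question corresponds to a morphism of presheaves on $\bsc$, and since colimits in $\Psh(\bsc)$ are computed objectwise, it suffices to show that for each basic $U$ the induced map of spaces
\[
\underset{O\in \cU}\colim~\snglr(U,O) \longrightarrow \snglr(U,X)
\]
is an equivalence. The advantage of this reduction is that each transition map $\snglr(U,O)\hookrightarrow\snglr(U,O')$ (for $O\subset O'$ in $\cU$) is a monomorphism of Kan complexes, and the map $\snglr(U,O) \to \snglr(U,X)$ identifies the source with the sub-Kan complex of embeddings $U\hookrightarrow X$ whose image lies in $O$.

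The main mechanism is the scaling family $\gamma_{t,0}\colon U\to U$ from \S\ref{sec.conical-smoothness}: these are conically smooth open self-embeddings isotopic to $\id_U$ whose images $\gamma_{t,0}(U)$ form a neighborhood basis of the cone-point $0\in U$ as $t\to 0$. Given any $\phi\in \snglr(U,X)$, since $\cU$ covers $X$ some $O\in \cU$ contains $\phi(0)$, so by continuity there is a $t_\phi>0$ for which $\phi\circ \gamma_{t,0}(U)\subset O$ for all $0<t\leq t_\phi$. The path $s\mapsto \phi\circ \gamma_{s,0}$ (for $s\in[t,1]$) is an isotopy in $\snglr(U,X)$ from $\phi$ to an embedding factoring through $O$. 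This scaling-to-fit operation provides $\pi_0$-surjectivity of the map, and, in parametrized form, furnishes the homotopies needed for higher $\pi_k$.

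To execute the parametrized version, I would invoke Quillen's Theorem A (or its $\infty$-categorical counterpart for colimits): it suffices to show that for each vertex $\phi\in \snglr(U,X)$, the slice of the Grothendieck construction $\int_\cU \snglr(U,-)$ at $\phi$ has weakly contractible nerve. The scaling family $\phi\circ\gamma_{t,0}$ provides an explicit cofinal sub-poset (indexed by $t\in(0,t_\phi]$ together with the poset of $O\in\cU$ containing $\phi\circ\gamma_{t,0}(U)$), and its contractibility follows because the scaling path witnesses a deformation retraction in the spirit of Lemma~\ref{deriv-linear}. The analogous arguments for $p$-simplices $\Phi\colon \Delta^p_e\times U\to \Delta^p_e\times X$ proceed by applying $\gamma_{t,0}$ fiberwise and extracting a uniform $t$ via compactness of the image of $\Phi$ at the cone-section $\Delta^p_e\times\{0\}$.

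The main obstacle will be the precise bookkeeping for parametrized families: a single basic $U$ embedded in $X$ need not fit inside any single member of the cover, so one must allow the choice of $O\in \cU$ to vary in families, and the scaling must be uniform enough on each compact simplex to witness descent. I anticipate the cleanest route combines (i) reducing via Proposition~\ref{basics.basis} to covering sieves generated by basic-open charts, so that the contractibility of the slice nerves follows from the filtered behavior of shrinking basics about a point, and (ii) using the compactness of $\Delta^p$ against the local-base property of $\{\gamma_{t,0}(U)\}_{t>0}$ to produce the required lifts. An alternative strategy — possibly simpler — is to recognize the statement as a descent property for the tangent-classifier functor $\Ent\colon \snglr^{\op}\to \Psh(\bsc)$, verify it for the tautological basis of basic charts (where $\Ent$ is essentially representable), and extend by Proposition~\ref{basics.basis}.
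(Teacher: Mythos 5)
Your reduction to an objectwise statement is the correct first step, and the scaling idea via $\gamma_{t,0}$ does capture the geometric reason the lemma holds — any embedding from a basic can be isotoped to land in a member of the cover. But your route diverges from the paper's after that point, and the divergence is where the difficulty concentrates.

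The paper's proof factors through Lemma~\ref{exit-class}: instead of the fiber $\Ent_U(X)\simeq \snglr(U,X)$ over a single vertex $U$, it uses the fiber $\Ent_{[U]}(X)$ over the full sub-$\infty$-groupoid $\bsc_{[U]}$, which Lemma~\ref{exit-class} identifies with $\Sing(X_{[U]})$ — the singular complex of an honest smooth manifold, the $[U]$-stratum of $X$. Under this identification, the diagram $\{\Ent_{[U]}(O)\}_{O\in\cU}$ becomes $\{\Sing(O_{[U]})\}$, the singular-complex image of an actual open cover of the topological space $X_{[U]}$. At that point the objectwise colimit comparison is literally the classical statement that an open cover of a paracompact space realizes the homotopy type, for which the paper cites Segal or Dugger--Isaaksen. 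No scaling, no Quillen A; the topological descent result does the entire job.

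The gap in your proposal is that you never make contact with such a descent result. Showing $\underset{O\in\cU}\colim\snglr(U,O)\to\snglr(U,X)$ is an equivalence is not just a matter of showing every $\phi$ can be shrunk into some $O$: that gives essential surjectivity on $\pi_0$ and, less obviously, some local data, but the homotopy colimit statement requires an argument that the entire diagram of sub-Kan-complexes of $\snglr(U,X)$ (which live in a mapping space, not in a manifold) has the right homotopy type. Your plan to feed this to Quillen's Theorem A via contractibility of slices of the Grothendieck construction is the right shape of argument, but the contractibility claim is exactly the hard content, and the one-parameter family $\phi\circ\gamma_{t,0}$ alone does not obviously supply a deformation retraction of the full slice poset of pairs $(t,O)$ that you describe. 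You acknowledge the bookkeeping obstacle yourself, and that obstacle is real; your closing alternative — verify for basics and ``extend by Proposition~\ref{basics.basis}'' — is not a complete mechanism either, since the statement concerns arbitrary covering sieves of arbitrary $X$, not just basics. The cleanest repair is the paper's: pass to $[U]$-fibers, invoke Lemma~\ref{exit-class} to land in singular complexes of genuine open covers of the strata, and let the classical result finish.
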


\begin{proof}
Let $\cU\subset \snglrd_{/X}$ be a covering sieve.
Because colimits of presheaves are taken objectwise, the map of right fibrations $\underset{O\in \cU} \colim \Ent(O) \to \Ent(X)$ is an equivalence if and only if the restriction of this map $\underset{O\in \cU} \colim \Ent_{[U]}(O) \to \Ent_{[U]}(X)$ is an equivalence of spaces for each isomorphism class of a basic $[U]$.  
Let $[U]$ be such an isomorphism class.
Through Lemma~\ref{exit-class}, this map of spaces is an equivalence if and only if the map $\underset{O\in \cU}\colim \Sing(O_{[U]})\to \Sing(X_{[U]})$ is an equivalence.
Because the collection $\{O_{[U]}\to X_{[U]}\}$ is a covering sieve, and because $X_{[U]}$ admits partitions of unity (Lemma~\ref{part-o-1}), the result follows from Proposition~4.1 of~\cite{segal-sheaves}. If one does not want to invoke the partitions of unity result, one can also appeal to Theorem~1.1 of~\cite{dugger-isaksen}.  
\end{proof}

Consider the composite functor
\begin{equation}\label{Psh-E}
\Psh\bigl(\Ent(-)\bigr)\colon \snglrd^{\op} \longrightarrow \snglr^{\op} \xra{~\tau~} \RFbn_{\bsc}^{\op} \xra{~\Psh~} (\Cat)^{\Psh(\bsc)/} \longrightarrow \Cat~.
\end{equation}
Here is a direct consequence of Lemma~\ref{E-covers}.  
\begin{cor}\label{psh-E-sheaf}
\label{cor.psh-E-sheaf}
The functor $\Psh\bigl(\Ent(-)\bigr)$ is a $\Cat$-valued sheaf.  
\end{cor}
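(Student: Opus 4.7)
The plan is to deduce the sheaf property of $\Psh\bigl(\Ent(-)\bigr)$ from Lemma~\ref{E-covers} by observing that the functor $\Psh \colon \Cat^{\op} \to \Cat$ carries colimits to limits, so that the equivalence of right fibrations provided by that lemma becomes, after applying $\Psh$, exactly the descent equivalence required by Definition~\ref{definition.sheaf}.

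More precisely, I would fix a stratified space $X$ and a covering sieve $\cU \subset \snglrd_{/X}$, and verify that the canonical map
\[
\Psh\bigl(\Ent(X)\bigr) \longrightarrow \lim_{O \in \cU}\, \Psh\bigl(\Ent(O)\bigr)
\]
is an equivalence in $\Cat$. Lemma~\ref{E-covers} identifies the target of this map with $\Psh$ applied to the colimit $\underset{O \in \cU}{\colim}\, \Ent(O) \xra{\simeq} \Ent(X)$ taken in the $\infty$-category $\RFbn_{\bsc}$ of right fibrations over $\bsc$, which in turn corresponds via Theorem~\ref{right-fibration} to a colimit in $\Psh(\bsc)$.

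The remaining content is formal: the functor
\[
\Psh \colon \Cat^{\op} \longrightarrow \Cat, \qquad \cC \longmapsto \Fun(\cC^{\op}, \spaces),
\]
carries colimits in its source to limits in its target, since for any target $\infty$-category $\cD$, natural transformations from a colimit are equivalent to compatible families of natural transformations. Applying this to the colimit diagram in $\RFbn_{\bsc}$ of Lemma~\ref{E-covers}, and remembering that the factorization of $\Psh\bigl(\Ent(-)\bigr)$ through $(\Cat)_{\Psh(\bsc)/}$ is automatic from functoriality of $\Psh$ with respect to the structure maps $\Ent(O) \to \bsc$, we obtain the required equivalence in $(\Cat)_{\Psh(\bsc)/}$ and hence in $\Cat$.

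There is no real obstacle here beyond Lemma~\ref{E-covers} itself; the only subtlety to be attentive to is verifying that the limit in $(\Cat)_{\Psh(\bsc)/}$ agrees with the underlying limit in $\Cat$, which holds because the forgetful functor $(\Cat)_{\Psh(\bsc)/} \to \Cat$ creates limits.
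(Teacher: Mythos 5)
There is a gap in the argument, and it concerns exactly the point you flag as ``the only subtlety.'' You invoke the fact that $\Psh\colon \Cat^{\op}\to\Cat$ carries colimits to limits, but that statement is about colimits computed in $\Cat$. The colimit in Lemma~\ref{E-covers},
\[
\underset{O\in\cU}\colim~\Ent(O)~\xra{~\simeq~}~\Ent(X),
\]
is taken in $\RFbn_{\bsc}\simeq\Psh(\bsc)$, and colimits in $\RFbn_{\bsc}$ do \emph{not} in general agree with colimits of the underlying $\infty$-categories in $\Cat$: the fully faithful inclusion $\RFbn_{\bsc}\hookrightarrow(\Cat)_{/\bsc}$ is a \emph{right} adjoint (its left adjoint is right-fibration replacement $\fR$, as noted after Theorem~\ref{right-fibration}), so it preserves limits but need not preserve colimits. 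Concretely, already over $\bsc=\ast$, where $\RFbn_\ast\simeq\spaces$, the pushout $\ast\amalg_{S^0}\ast$ is $S^1$ in $\spaces$ but is the one-object free monoid $\sB\NN$ in $\Cat$; these have genuinely different presheaf categories. So one cannot pass from a colimit in $\RFbn_{\bsc}$ to an application of ``$\Fun(-,\spaces)$ sends $\Cat$-colimits to limits'' without further argument.

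The repair uses two ingredients the paper already has in hand, and this is presumably what the authors intend by ``direct consequence.'' First, Lemma~\ref{lemma.rfbn-over} gives an equivalence $\Psh\bigl(\Ent(O)\bigr)\simeq\RFbn_{\Ent(O)}\simeq(\RFbn_{\bsc})_{/\Ent(O)}$, natural in $O$. Second, $\RFbn_{\bsc}\simeq\Psh(\bsc)$ is an $\infty$-topos, so colimits in it satisfy descent: for $\Ent(X)\simeq\underset{O\in\cU}\colim\,\Ent(O)$ in $\RFbn_{\bsc}$, one has
\[
(\RFbn_{\bsc})_{/\Ent(X)}~\simeq~\underset{O\in\cU}\lim~(\RFbn_{\bsc})_{/\Ent(O)}.
\]
Combining these yields $\Psh\bigl(\Ent(X)\bigr)\simeq\underset{O\in\cU}\lim\,\Psh\bigl(\Ent(O)\bigr)$, and the upgrade to a limit in $(\Cat)_{\Psh(\bsc)/}$ follows as you say since the coslice forgetful functor creates limits. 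The rest of your outline is fine; the only change needed is to route the argument through the slice identification and topos descent rather than through colimit-preservation of $\Fun$ out of $\Cat$.
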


Because both $\RR$ and $\RR_{\geq -\infty}$ are basics, Corollary~\ref{basic-products} tells us that the inclusion $\RR \into \RR_{\geq \infty}$ induces a natural transformation between the direct product functors $(\RR \times - ) \to (\RR_{\geq -\infty} \times -)$. Hence we have a diagram 

\begin{equation}\label{E-R-maps}
\xymatrix{
\Ent(-) \ar[r]  \ar[d]
&
\Ent(\RR\times -)   \ar[d]
\\
[1]\times \Ent(-)   \ar[r]
&
\Ent(\RR_{\geq -\infty}\times -)
}
\end{equation}
which is a diagram in the $\infty$-category of functors, $\fun(\snglr,\Cat)$. The lower-left corner is the product with the nerve of the poset $\{0 < 1\}$, and the right vertical arrow is the inclusion of $\{0\} \hookrightarrow [1]$.  
\begin{lemma}\label{E-R-invariance}
In the diagram~(\ref{E-R-maps}), the horizontal arrows are equivalences of $\Cat$-valued functors.
\end{lemma}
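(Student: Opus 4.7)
The plan is to verify both equivalences locally on basics, using that every functor in the square~(\ref{E-R-maps}) satisfies descent along open covers of stratified spaces, and then to compute the local values by a stratum-wise analysis using Lemma~\ref{exit-class}.

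For the descent reduction, the functor $X \mapsto \Ent(X)$ is a $\Cat_\infty$-valued cosheaf by Lemma~\ref{E-covers}. The same conclusion holds for $X\mapsto \Ent(\RR\times X)$ and $X\mapsto \Ent(\RR_{\geq -\infty}\times X)$, since each open cover $\cU$ of $X$ produces open covers $\{\RR\times O\}_{O\in\cU}$ and $\{\RR_{\geq -\infty}\times O\}_{O\in\cU}$ whose sieves are cofinal in the natural sieves on the target; combined with Proposition~\ref{prop.basics-form-a-basis}, this means the horizontal arrows are equivalences of $\Cat_\infty$-valued functors if and only if they are equivalences when evaluated on each basic $X=U$.

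For the top arrow on a basic $U = \RR^i \times \sC(Z)$, note that $\RR \times U = \RR^{i+1} \times \sC(Z)$ is again a basic. The functor $\Ent(U) \to \Ent(\RR \times U)$ sending $(V \hookrightarrow U) \mapsto (\RR \times V \hookrightarrow \RR \times U)$ lies over the functor $\RR\times(-) \colon \bsc \to \bsc$. By Lemma~\ref{lemma.only-equivalences}, any basic openly embedded in $\RR \times U$ is isomorphic to $\RR \times V$ for some basic $V$, since the Euclidean factor must absorb the extra direction (depths match, forcing the splitting); this gives essential surjectivity on isomorphism classes. On maximal sub-$\infty$-groupoids, Corollary~\ref{cor.max-groupoid-bsc} together with Lemma~\ref{exit-class} identifies the $[\RR\times V]$-stratum fiber of $\Ent(\RR\times U)$ with $\Sing\bigl((\RR\times U)_{[\RR\times V]}\bigr) = \Sing(\RR \times U_{[V]})$, which is equivalent to $\Sing(U_{[V]}) \simeq \Ent_{[V]}(U)$ by contractibility of $\RR$. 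Passage to mapping spaces reduces, via Theorem~\ref{basic-facts}, to the same stratum-wise comparison, since every non-invertible morphism in $\Ent(U)$ is determined up to isotopy by its source and target together with the inclusion of basic embeddings, and these data transform naturally under $V\mapsto \RR\times V$.

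For the bottom arrow on a basic $U$, apply the cosheaf property of $\Ent(-)$ to the open cover of $\RR_{\geq -\infty}\times U$ by the two subsets $\RR\times U$ (the open interior stratum of the $\RR_{\geq -\infty}$-factor) and a neighborhood of $\{-\infty\}\times U$ (whose basic opens have the form $\RR_{\geq -\infty}\times V$, by Lemma~\ref{lemma.only-equivalences}), with intersection $\RR\times U$. The first piece contributes $\Ent(\RR\times U)\simeq \Ent(U)$ by the top arrow already verified; the second contributes another copy of $\Ent(U)$ indexed by basics of the form $\RR_{\geq -\infty}\times V$. The gluing is precisely the pushout computing $[1]\times \Ent(U)$, with the unique non-identity morphism in $[1]$ realized by the inclusion $\RR\times V\hookrightarrow \RR_{\geq -\infty}\times V$. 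The main obstacle is ensuring this gluing data is exactly $[1]\times\Ent(U)$ rather than a more exotic $\infty$-category: one must confirm that no further morphisms are produced between basics near $\{-\infty\}\times U$ and basics in $\RR\times U$ beyond those classified by the $[1]$-edge, which reduces to the observation that any conically smooth open embedding $\RR_{\geq -\infty}\times V \hookrightarrow \RR\times U$ is forbidden on depth grounds, again via Theorem~\ref{basic-facts}.
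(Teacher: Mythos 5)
Your treatment of the top horizontal arrow is in the spirit of the paper's proof: both reduce to a stratum-wise comparison via Lemma~\ref{exit-class} (the paper simply cites that lemma directly). The descent reduction to basics is a reasonable preliminary move, though the paper does not need it, and your remark that the top arrow ``lies over $\RR\times(-)\colon\bsc\to\bsc$'' is the right reading of the un-displayed definition of that arrow.

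The bottom horizontal arrow, however, has a genuine gap. You propose to compute $\Ent(\RR_{\geq-\infty}\times U)$ by the cosheaf property with respect to the cover of $\RR_{\geq-\infty}\times U$ by $\RR\times U$ and ``a neighborhood of $\{-\infty\}\times U$'' with intersection $\RR\times U$. But any open neighborhood of the $\{-\infty\}$-stratum is of the form $[-\infty,a)\times U$, which is abstractly isomorphic to $\RR_{\geq-\infty}\times U$ itself, and its overlap with $\RR\times U$ is $(-\infty,a)\times U\cong\RR\times U$. So the pushout computed by Lemma~\ref{E-covers} reads $\Ent(\RR\times U)\amalg_{\Ent(\RR\times U)}\Ent(\RR_{\geq-\infty}\times U)$, which is $\Ent(\RR_{\geq-\infty}\times U)$ tautologically; the argument is circular. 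Relatedly, the assertion that the second piece ``contributes another copy of $\Ent(U)$ indexed by basics of the form $\RR_{\geq-\infty}\times V$'' is false: the second piece is all of $\RR_{\geq-\infty}\times U$ up to isomorphism, and its basic opens include all $W\hookrightarrow\RR\times U$ as well, not only those of the form $\RR_{\geq-\infty}\times V$. Finally, even if you could isolate the $\{-\infty\}$-touching basics as a co-sieve and the rest as a sieve, exhibiting $\Ent(\RR_{\geq-\infty}\times U)$ as $[1]\times\Ent(U)$ requires identifying the cross mapping spaces, and the depth argument from Theorem~\ref{basic-facts} only shows that these morphisms point in one direction---it does not compute them. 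The paper sidesteps all of this by writing an explicit inverse functor $\Ent(\RR_{\geq-\infty}\times X)\to[1]\times\Ent(X)$, sending $(U\xra{f}\RR_{\geq-\infty}\times X)$ to the pair $\bigl(\nu(U_{|\{-\infty\}}),\ U_{|\RR}\xra{f_|}\RR\times X\bigr)$, where $\nu$ records whether $U$ meets the $\{-\infty\}$-stratum; you would need to supply something of that nature to complete the bottom-arrow argument.
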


\begin{proof}
That $\Ent(-)\to \Ent(\RR\times -)$ is an equivalence follows from Lemma~\ref{exit-class}. 
For each morphism $U \xra{f} \RR_{\geq -\infty}\times X$ from a basic, $U_{|\RR}\xra{f_|} \RR\times X$ is again morphism from a basic.  
Let $\nu: \spaces \to \{0,1\}$ be an indicator function for nonempty spaces, so $\nu(X) = 0$ if $X=\emptyset$, while $\nu(X) = 1$ otherwise.
The assignment $(U\xra{f} \RR_{\geq -\infty}\times X) \mapsto \bigl(\nu\left(U_{|\{-\infty\}}\right), U_{|\RR}\xra{f_|} \RR\times X\bigr)$ is an inverse to the bottom horizontal functor.
\end{proof}

In what follows, let $\Ent(X)^\triangleright$ denote the $\infty$-category obtained by adjoining a terminal vertex to $\Ent(X)$. Put another way, it is the join of the simplicial set $\Ent(X)$ with a 0-simplex.

\begin{lemma}\label{psh-E-cones}
Let $X$ be a compact conically smooth stratified space.
There is an equivalence of $\infty$-categories
\[
\Ent(X)^\triangleright \xra{~\simeq~}\Ent\bigl(\sC(X)\bigr)~.
\]
\end{lemma}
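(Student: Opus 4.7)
The plan is to exhibit $\Ent(\sC(X))$ as admitting a terminal object $t$ whose essential image within the category is contractible, and to identify the complementary full subcategory with $\Ent(X)$. First I would verify that $t := (\sC(X),\id_{\sC(X)})$ is terminal in $\Ent(\sC(X)) \simeq \bsc_{/\sC(X)}$: for any $(U,\phi)$, the mapping space is the homotopy fiber of post-composition with $\id_{\sC(X)}$ over $\phi \in \bsc(U,\sC(X))$, which is contractible. Applying Lemma~\ref{exit-class} to the stratum $\sC(X)_{[\sC(X)]}=\{\ast\}$ (a single point, the cone point) then gives $\Ent_{[\sC(X)]}(\sC(X)) \simeq \ast$, so any object $(V,\psi)$ with $V\cong \sC(X)$ is canonically equivalent to $t$.

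Next I would identify the full subcategory $\cC^{\neq t} \subset \Ent(\sC(X))$ on objects $(V,\psi)$ with $V \not\cong \sC(X)$ as $\Ent(\sC(X)\setminus\{\ast\})=\Ent(\RR_{>0}\times X)$. By Lemma~\ref{lem.unique-locals}, any embedding $V\hookrightarrow \sC(X)$ hitting the cone point must satisfy $[V]=[\sC(X)]$ (since basics form a base for the topology and the cone point has unique singularity type $[\sC(X)]$); conversely $V\not\cong \sC(X)$ forces $\depth(V) < \depth(\sC(X))$ by Theorem~\ref{basic-facts}(2)--(3), so $\psi$ must factor through $\sC(X)\setminus\{\ast\} \cong \RR_{>0}\times X$. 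Then Lemma~\ref{E-R-invariance} yields a canonical equivalence $\Ent(\RR_{>0}\times X)\simeq \Ent(X)$.

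Finally I would assemble the join. Since the right fibration $\Ent(\sC(X))\to \bsc$ factors through the down-closed subcategory of $\bsc$ consisting of the classes $[V]$ with $[V]\leq [\sC(X)]$, and since Theorem~\ref{basic-facts}(5) gives no morphisms in $\bsc$ out of $[\sC(X)]$ to a strictly lower class, it follows that there are no morphisms in $\Ent(\sC(X))$ from any object in the class of $t$ to any object in $\cC^{\neq t}$. This exhibits $\Ent(\sC(X))$ as a join $\cC^{\neq t}\star \cC^{\sim t}$; combined with $\cC^{\sim t}\simeq \ast$ and $\cC^{\neq t}\simeq \Ent(X)$, this gives the desired equivalence $\Ent(X)^{\triangleright}\xra{\simeq}\Ent(\sC(X))$. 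Concretely the functor is built by applying $\Ent$ to the conically smooth open embedding $\RR\times X \cong \RR_{>0}\times X\hookrightarrow \sC(X)$, and sending the adjoined cone point to $t$, with the natural transformation witnessing terminality providing the join structure.

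The main obstacle I expect is the last step: formalizing that the right fibration over $\bsc$ genuinely decomposes as a join, as opposed to merely having the correct objects and mapping spaces. One approach is to invoke the general fact that if $\cC$ has a terminal object $t$ and there are no non-equivalences out of the full subgroupoid $\cC^{\sim t}$, then $\cC\simeq (\cC\setminus \cC^{\sim t})\star \cC^{\sim t}$; alternatively one can check essential surjectivity and full-faithfulness of the explicit functor constructed above directly, using Lemma~\ref{exit-class} for essential surjectivity on each fiber of $\Ent(\sC(X))\to \bsc$ and the terminality computation for fully-faithfulness at the cone vertex.
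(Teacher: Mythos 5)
Your proof is correct and leans on the same three ingredients as the paper's: Lemma~\ref{exit-class} (applied to the one-point stratum $\sC(X)_{[\sC(X)]}=\{\ast\}$) to show the cone-vertex fiber is contractible; Lemma~\ref{E-R-invariance} to identify the remaining objects with $\Ent(X)$; and the partial order on $[\bsc]$ from Theorem~\ref{basic-facts} to rule out morphisms out of the cone vertex. The difference is only one of packaging. The paper constructs the comparison functor directly -- as the quotient of $\Ent(\RR_{\geq -\infty}\times X)\simeq[1]\times\Ent(X)$ that collapses $\{1\}\times\Ent(X)$ -- and then verifies it is an equivalence first on maximal sub-$\infty$-groupoids (via Corollary~\ref{cor.max-groupoid-bsc}) and then on the remaining mapping spaces, whereas you analyze $\Ent(\sC(X))$ internally: identify its terminal object $(\sC(X),\id)$ (automatic since $\Ent(\sC(X))\simeq\bsc_{/\sC(X)}$ is a slice), show the full subgroupoid on objects equivalent to it is contractible, identify the complement with $\Ent(\RR_{>0}\times X)$, and assemble a join. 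Your terminality observation makes explicit something the paper uses only implicitly in its ``mapping spaces are consistently either empty or terminal, by inspection'' step, and is a nice anchor for the join decomposition; both routes are sound and depend on exactly the same supporting lemmas.
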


\begin{proof}
We construct a functor $\Ent(\RR_{\geq -\infty}\times -)\longrightarrow \Ent\bigl(\sC(X)\bigr)$ by declaring it to extend the functor $\Ent(\RR\times X) \to \Ent\bigl(\sC(X)\bigr)$ induced from the standard inclusion, and to be the constant functor $\{\sC(X)\xra{=}\sC(X)\}$ elsewhere -- this indeed defines a functor, in light of Lemma~\ref{E-R-invariance}.  
By construction, this functor factors as $\Ent(X)^\triangleright \to \Ent\bigl(\sC(X)\bigr)$ as in the statement of the lemma.  
This functor restricts as the equivalence $\Ent(X) \xra{\simeq} \Ent(\RR\times X)$ of Lemma~\ref{E-R-invariance}.  
After Corollary~\ref{cor.max-groupoid-bsc}, on maximal $\oo$-subgroupoids this functor is the map of spaces
\[
\ast \amalg \underset{[U]}\coprod \Ent_{[U]}(X) \to  \Ent_{[\sC(X)]}\bigl(\sC(X)\bigr) \amalg \underset{[U]}\coprod \Ent_{[U]}(\RR\times X)
\]
that lies over the map of indexing sets $[U]\mapsto [\RR\times U]$.  
It remains to show this functor induces an equivalence between mapping homotopy types whose source or target is not a cofactor in the indexed coproduct.  
But these mapping spaces are consistently either empty or terminal, by inspection. 
\end{proof}

\subsubsection{Characterizing constructible sheaves}
Any $\infty$-category has a Yoneda embedding $\cC \to \psh(\cC)$. By the Grothendieck construction (\ref{eqn.right-fibrations}), one obtains a functor $\cC_{/-}: \cC \to \RFbn_{\cC}$, which sends an object $X$ to the right fibration $\cC_{/X} \to \cC$. 
Let $\cC = \snglrd$ and consider the composite functor 
\[
\Psh\bigl(\snglrd_{/-}\bigr) \colon \snglrd^{\op} \xra{~\snglrd_{/-}~} (\RFbn_{\snglrd})^{\op} \xra{~\Psh~} (\Cat)_{\Psh\bigl(\snglrd\bigr)/}\longrightarrow \Cat
\]
which sends an object $X$ to the category of presheaves on $\snglrd_{/X}$. We also have the functors $\shv(-)$ and $\shv^{\cbl} (-) : \snglrd^{\op} \to \Cat$, which take an object $X$ to the $\infty$-category of sheaves on $X$, and of constructible sheaves on $X$ (with respect to the stratification $X \to P$). Note that for every $X$, $\shv^{\cbl}(X) \subset \shv(X) \subset \psh(\snglrd_{/X})$ is a sequence of full subcategories. 

In Corollary~\ref{cor.psh-E-sheaf}, we showed that presheaves on $\Ent(-)$ form a sheaf. Now we show that categories of constructible sheaves also glue together:

\begin{lemma}\label{shv-sheaf}
The functor $\Shv^{\sf cbl}(-)$ is a $\Cat$-valued sheaf. 
\end{lemma}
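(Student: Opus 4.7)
The plan is to deduce the sheaf property of $\Shv^{\sf cbl}(-)$ from two ingredients: the (already known) sheaf property of $\Shv(-)$ on underlying topological spaces, and the fact that constructibility is a property that can be checked on an open cover.

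First I would note that $\Shv(-) \colon (\snglrd)^{\op} \to \Cat$ is a $\Cat$-valued sheaf. Via the underlying-topological-space functor $\snglrd \to \Top$ — which by definition of the Grothendieck topology on $\snglrd$ sends covering sieves to covering sieves of the underlying space — this reduces to the standard descent statement for $\spaces$-valued sheaves on a topological space. Moreover, for any morphism $f\colon O \hookrightarrow X$ in $\snglrd$, pullback $f^\ast$ preserves constructibility: since $f$ is a stratified open embedding, $O \cap X_p$ is a stratum of $O$ for each $p \in P$, and the pullback of a locally constant sheaf is locally constant. Thus $\Shv^{\sf cbl}(-)$ is well-defined as a subfunctor of $\Shv(-)$.

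Now fix a covering sieve $\cU \subset \snglrd_{/X}$; I must show that
\[
\Shv^{\sf cbl}(X) \longrightarrow \lim_{O \in \cU} \Shv^{\sf cbl}(O)
\]
is an equivalence of $\infty$-categories. Since $\Shv^{\sf cbl}(Y) \subset \Shv(Y)$ is a full subcategory for each $Y$, and the inclusion on the right is induced objectwise (limits of full subcategories are full), fully faithfulness is inherited from the sheaf property of $\Shv(-)$. Thus only essential surjectivity requires argument: given $\cF \in \Shv(X)$ such that $\cF_{|O} \in \Shv^{\sf cbl}(O)$ for every $O \in \cU$, I must show $\cF \in \Shv^{\sf cbl}(X)$.

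By Definition~\ref{def.cbl-sheaf}, I must verify that $\cF_{|X_p}$ is locally constant for every $p \in P$. Fix such a $p$ and a point $x \in X_p$; pick $O \in \cU$ with $x \in O$. Since $f\colon O \hookrightarrow X$ is a stratified open embedding, the stratum of $O$ through $x$ equals $O \cap X_p$, and by hypothesis $\cF_{|O \cap X_p} = (\cF_{|X_p})_{|O \cap X_p}$ is locally constant. Since the family $\{O \cap X_p\}_{O \in \cU}$ forms an open cover of $X_p$, and since ``locally constant'' is itself a local condition on sheaves (a sheaf is locally constant iff its restriction to each member of any open cover is so), we conclude that $\cF_{|X_p}$ is locally constant, as required. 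The only non-formal input is this last locality of the locally constant condition, which is immediate from the pointwise definition: every point of $X_p$ admits a neighborhood inside some $O \cap X_p$ on which $\cF$ is constant.
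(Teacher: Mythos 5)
Your proof is correct and takes essentially the same approach as the paper: both reduce the sheaf condition for $\Shv^{\sf cbl}(-)$ to descent for $\Shv(-)$ together with the observation that constructibility (i.e., local constancy along each stratum) is a local condition. The paper packages the descent step via the localization $\Psh(\snglrd_{/X}) \rightleftarrows \Psh(\cU)$ and restricts twice, while you split the equivalence into fully faithfulness (inherited from $\Shv(-)$) and essential surjectivity (locality of local constancy) — a minor organizational difference.
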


\begin{proof}
This is formal.
Let $s\colon \cU \into \snglrd_{/X}$ be a covering sieve.
Restrictions give the diagram consisting of left adjoints among presentable $\infty$-categories
\[
\xymatrix{
\Psh(\snglrd_{/X})  \ar[rr]^-{s^\ast}  \ar[d]
&&
\Psh(\cU)  \ar[d]^-\simeq
\\
\underset{O\in \cU}{\sf lim} \Psh(\snglrd_{/O})  \ar[rr]^-{(s^\ast_{|O})_{O\in \cU}}  
&&
\underset{O\in \cU}{\sf lim} \Psh(\cU_{/O}) ~.
}
\]
The right vertical functor is an equivalence because $\underset{O\in \cU}\colim \, \cU_{/O} \xra{\simeq} \cU$ is an equivalence of $\infty$-categories.  
Because $s$ is a sieve, the bottom horizontal functor is an equivalence.  
Because $s$ is fully faithful, the top horizontal functor is a left adjoint of a localization.
Since the right adjoint $s_*$ is the right Kan extension functor, the local objects are precisely those presheaves $\cF$ on $\snglrd_{/X}$ for which 
\[
\cF(Y) \xra{\simeq} \underset{O\in \cU_{/Y}} {\sf lim} \cF(O)
\]
is an equivalence of spaces for any object $Y \in \snglrd_{/X}$. 
Hence the top horizontal arrow restricts to an equivalence $\shv(\snglrd_{/X}) \simeq \shv(\cU)$. Moreover, the $\infty$-category of constructible sheaves is a full subcategory of both of these $\infty$-categories, and a sheaf on $\snglrd_{/X}$  is constructible if and only if its restriction to $\cU$ is. So we see that the top horizontal functor restricts to an equivalence $\Shv^{\sf cbl}(X) \xra{\simeq} \Shv^{\sf cbl}(\cU)\simeq \underset{O\in \cU}{\sf lim} \Shv^{\sf cbl}(O)$.  
\end{proof}

Analogous to (\ref{E-R-maps}), there is the diagram in $\Fun(\snglrd^{\op},\Cat)$
\begin{equation}\label{shv-R's}
\xymatrix{
\Shv^{\sf cbl}(\RR_{\geq -\infty}\times -)  \ar[r]  \ar[d]
&
\Shv^{\sf cbl}(-)^{[1]}  \ar[d]^-{{\sf ev}_1}
\\
\Shv^{\sf cbl}(\RR\times -)  \ar[r] 
&
\Shv^{\sf cbl}(-)~.
}
\end{equation}
The left vertical functor is restriction along the natural transformation $\RR\times -\hookrightarrow \RR_{\geq -\infty}\times -$, which we also utilized in constructing (\ref{E-R-maps}).
The upper-right functor assigns $X$ to the functor $\infty$-category $\fun([1], \shv^{\cbl}(X))$, and the right vertical functor is evaluation at $1\in [1]$.
The bottom horizontal functor is pulling back along the natural transformation transformation $- \into \RR\times -$.
The top horizontal functor is restriction along the pair of natural transformations $- \into (\RR \into \RR_{\geq -\infty})\times -$.
The following is the analogue of Lemma~\ref{E-R-invariance} for constructible sheaves.

\begin{lemma}\label{Shv-R-invt}
The horizontal arrows in~(\ref{shv-R's}) are equivalences.
\end{lemma}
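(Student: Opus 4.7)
The plan is to reduce to the case $X=U$ of a basic via the sheaf property, and then verify each horizontal arrow separately. First, I would observe that $\Shv^{\cbl}(\RR\times(-))$, $\Shv^{\cbl}(\RR_{\geq -\infty}\times(-))$, and $\Shv^{\cbl}(-)^{[1]}$ are all $\Cat$-valued sheaves on $\snglrd$ -- this follows from Lemma~\ref{shv-sheaf} together with the observation that the endofunctors $\RR\times(-)$ and $\RR_{\geq -\infty}\times(-)$ preserve open hypercovers. Since the horizontal maps of~(\ref{shv-R's}) are natural transformations between these sheaves, it suffices to establish the equivalence on a basis for $\snglrd$, i.e., on basics.

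For the bottom arrow, I would argue that restriction $\iota^*$ along the zero-section $\iota\colon U\hookrightarrow \RR\times U$ is an equivalence, with inverse the pullback $\pi^*$ along the projection $\pi\colon \RR\times U\to U$. The identity $\pi\iota=\id_U$ gives $\iota^*\pi^*\simeq\id$ immediately. For the reverse composition, the linear homotopy $H\colon(\RR\times U)\times[0,1]\to\RR\times U$, $((t,x),s)\mapsto((1-s)t,x)$, is a stratified homotopy from $\iota\pi$ to the identity on $\RR\times U$ -- stratum-preserving because the stratification of $\RR\times U$ is constant in the $\RR$-factor. Since each stratum $\RR\times U_p$ is a trivial $\RR$-bundle over $U_p$ and constructible sheaves are locally constant on strata, pullback along $H$ yields a natural equivalence $\pi^*\iota^*\cF\simeq\cF$.

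For the top arrow, the strategy is to exploit the closed-open decomposition $\RR_{\geq -\infty}\times U=(\{-\infty\}\times U)\sqcup(\RR\times U)$ via recollement. The standard recollement for constructible sheaves identifies $\Shv^{\cbl}(\RR_{\geq -\infty}\times U)$ with the $\infty$-category of triples $(\cF_{-\infty},\cF_\RR,\alpha)$: constructible sheaves on each piece together with a specialization morphism $\alpha\colon \cF_{-\infty}\to i^*j_*\cF_\RR$ in $\Shv^{\cbl}(U)$, where $i,j$ are the closed and open inclusions. Applying the bottom-arrow equivalence to $\cF_\RR$, and using local constancy in the $\RR$-direction to identify $i^*j_*\cF_\RR$ with the restriction of $\cF_\RR$ to $\{0\}\times U$, such a triple reduces to the data of a morphism in $\Shv^{\cbl}(U)$ -- that is, an object of $\Shv^{\cbl}(U)^{[1]}$. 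Commutativity of~(\ref{shv-R's}) follows because evaluation at $1\in[1]$ corresponds to the composite restriction through $\RR\times U$.

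The principal obstacle will be a rigorous treatment of the nearby-cycles functor $i^*j_*$ in the top arrow's recollement. A cleaner alternative, which I would ultimately pursue, is to invoke Lurie's Theorem~A.9.3 of~\cite{HA} (already used in Remark~\ref{exit-path-category}) to replace $\Shv^{\cbl}(Y)\simeq\Psh(\Sing_S(Y)^{\op})$ throughout, then directly verify $\Sing_S(\RR\times X)\simeq\Sing_S(X)$ and $\Sing_S(\RR_{\geq -\infty}\times X)\simeq[1]\times\Sing_S(X)$. The first is a trivial Kan fibration because a $p$-simplex of $\Sing_S(\RR\times X)$ is a stratified $p$-simplex of $X$ together with an unconstrained continuous map $\Delta^p\to\RR$, whose space is contractible. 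The second follows similarly: a $p$-simplex decomposes as a monotone map $[p]\to[1]$ (the cut point between $\{-\infty\}$-vertices and $\RR$-vertices), a stratified simplex in $X$, and a continuous map $\Delta^p\to\RR_{\geq -\infty}$ taking value $-\infty$ on the $\{-\infty\}$-face, whose space of such is contractible. This parallels Lemma~\ref{E-R-invariance} exactly.
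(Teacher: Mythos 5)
Your ``cleaner alternative'' is exactly the paper's proof: the authors reduce to the observations that $\RR\to\ast$ and $\RR_{\geq-\infty}\to[1]$ induce equivalences on exit-path $\infty$-categories $\Sing(\RR)\simeq\ast$ and $\Sing_{[1]}(\RR_{\geq-\infty})\simeq\Sing_{[1]}([1])\simeq[1]$, and then conclude via the identification of $\Shv^{\cbl}$ with presheaves on exit paths (Lurie's Theorem~A.9.3, which is external and causes no circularity). Your explicit trivial-fibration argument for $\Sing_S(\RR\times X)\to\Sing_S(X)$ and $\Sing_S(\RR_{\geq-\infty}\times X)\to[1]\times\Sing_S(X)$ just spells out what the paper leaves to ``simple inspection.'' The earlier linear-homotopy and recollement attempts in your proposal are correct in spirit but superfluous once you switch to the exit-path argument; you were right to abandon the recollement version rather than wrestle with $i^*j_*$.
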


\begin{proof}
The result follows immediately after showing that the stratified maps $\RR\to \ast$ and $\RR_{\geq -\infty}\to [1]$ are a equivalences on constructible shapes.
That the first map is an equivalence on constructible shapes is obvious, because $\RR$ is contractible.  
That the second map is an equivalence on constructible shapes follows, for instance, because the functor among $\infty$-categories $\Sing_{[1]}(\RR_{\geq -\infty}) \xra{\simeq} \Sing_{[1]}([1])$ is essentially surjective and fully faithful, by simple inspection.
\end{proof}

Restriction along the stratified transformation $-\to \ast$ to the terminal stratified space gives the transformation of functors $\snglrd^{\op} \to \Cat$
\[
\Shv^{\sf cbl}(\ast) \longrightarrow \Shv^{\sf cbl}(-)
\]
from the constant functor at $\Shv^{\sf cbl}(\ast) = \spaces$.  
Denote the pullback $\infty$-category
\[
\xymatrix{
\Shv^{\sf cbl}(X)^{[1]_\ast}   \ar[r]  \ar[d]
&
\Shv^{\sf cbl}(X)^{[1]}  \ar[d]
\\
\spaces    \ar[r]
&
\Shv^{\sf cbl}(X)~.
}
\]
\begin{lemma}\label{Shv-cones}
Let $X$ be a compact conically smooth stratified space.
There is a canonical equivalence of $\infty$-categories
\[
\Shv^{\sf cbl}\bigl(\sC(X)\bigr)\xra{~\simeq~} \Shv^{\sf cbl}(X)^{[1]_\ast}~.
\]
\end{lemma}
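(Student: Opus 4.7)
The plan is to identify $\Shv^{\cbl}(\sC(X))$ with its recollement along the closed-open decomposition $\sC(X) = \{\ast\}\sqcup(\RR_{>0}\times X)$ and match the result with $\Shv^{\cbl}(X)^{[1]_{\ast}}$. Let $i\colon\{\ast\}\hookrightarrow \sC(X)$ and $j\colon\RR_{>0}\times X\hookrightarrow\sC(X)$ denote the evident closed and open inclusions, and let $\pi\colon \RR_{>0}\times X\to X$ be the projection.

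First I would construct the comparison functor $R\colon \Shv^{\cbl}(\sC(X))\to\Shv^{\cbl}(X)^{[1]_{\ast}}$. Given $\cG\in\Shv^{\cbl}(\sC(X))$, extract: (a) the space $V:=i^{\ast}\cG\in\spaces$, the stalk at the cone point, which coincides with $\cG(\sC(X))$ because the nested cone-neighborhoods $\sC_{<r}(X)$ are cofinal at $\ast$ and each inclusion $\sC_{<r}(X)\subset \sC_{<s}(X)$ is an equivalence on constructible shapes; (b) the constructible sheaf $\cF\in\Shv^{\cbl}(X)$ corresponding to $j^{\ast}\cG$ under the $\RR_{>0}$-variant of Lemma~\ref{Shv-R-invt}; (c) the natural specialization map $i^{\ast}\cG\to i^{\ast}j_{\ast}j^{\ast}\cG\simeq \Gamma(X,\cF)$, which by adjunction (constant-sheaf $\dashv$ global-sections) corresponds to a morphism $\alpha\colon V_{X}\to \cF$ in $\Shv^{\cbl}(X)$. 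The triple $(V,\cF,\alpha)$ is the desired object of the pullback $\Shv^{\cbl}(X)^{[1]_{\ast}}$.

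Next I would construct an inverse $L$. Given $(V,\cF,\alpha\colon V_{X}\to \cF)$, define a constructible sheaf on $\sC(X)$ by gluing: on the open stratum take $\pi^{\ast}\cF$, at the cone-point declare the stalk to be $V$, and use $\alpha$ to specify the comparison over the intersection. Concretely, $L(V,\cF,\alpha)$ can be described on the basis of opens $\{\RR_{>0}\times X\}\cup \{\sC_{<r}(X)\}_{r>0}$ by assigning $(\pi^{\ast}\cF)(-)$ and, at each $\sC_{<r}(X)$, the fiber product $V\times_{\Gamma(X,\cF)}\Gamma(\RR_{>0}\times X,\pi^{\ast}\cF)$ induced by $\alpha$. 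That these values glue into a sheaf follows from descent (Lemma~\ref{shv-sheaf}), and that they satisfy the constructibility condition follows because $\pi^{\ast}\cF$ is constructible on the open stratum while the value at cone-neighborhoods is locally constant along $\{\ast\}$. Then I would verify $R\circ L\simeq\id$ by direct computation on $i^{\ast}$ and $j^{\ast}$, and $L\circ R\simeq\id$ by checking that a constructible $\cG$ on $\sC(X)$ is reconstructed from $(i^{\ast}\cG,j^{\ast}\cG,\text{specialization})$ -- this is exactly the recollement for constructible sheaves on a two-stratum space.

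The main obstacle is establishing the recollement rigorously in the $\infty$-categorical setting -- in particular, showing that $L(V,\cF,\alpha)$ lands in $\Shv^{\cbl}$ and that the stalk computation $i^{\ast}L\simeq V$ and the recovery of $\alpha$ as the specialization of $L(V,\cF,\alpha)$ are genuine coherent equivalences rather than merely pointwise bijections. In lieu of invoking a general recollement theorem, the argument can proceed by applying the sheaf property (Lemma~\ref{shv-sheaf}) to the open cover of $\sC(X)$ by $\RR_{>0}\times X$ and the cofinal family $\{\sC_{<r}(X)\}_{r>0}$; cofinality makes the contribution of the cone-point neighborhoods reduce to a single space (the stalk $V$), while $\RR_{>0}$-invariance (the obvious analogue of Lemma~\ref{Shv-R-invt}) collapses the contribution of the open stratum and of the intersection pieces $(0,r)\times X$ to $\Shv^{\cbl}(X)$ and $\Gamma(X,-)$ respectively. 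The descent datum then reads precisely as the pullback $\Shv^{\cbl}(X)\times_{\spaces}\spaces^{[1]}$, which is the definition of $\Shv^{\cbl}(X)^{[1]_{\ast}}$.
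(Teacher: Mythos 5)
Your approach is correct, and it lands in the same place as the paper, but the route is genuinely different.

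The paper's proof is very short: it writes $\sC(X)$ as the pushout $\ast \underset{\{-\infty\}\times X}\coprod \bigl(\RR_{\geq-\infty}\times X\bigr)$ of conically stratified spaces, asserts that $\Shv^{\sf cbl}$ carries this to a pullback of $\infty$-categories, and then applies Lemma~\ref{Shv-R-invt} to identify $\Shv^{\sf cbl}(\RR_{\geq-\infty}\times X) \simeq \Shv^{\sf cbl}(X)^{[1]}$, with the two legs of the pullback matching the defining diagram of $\Shv^{\sf cbl}(X)^{[1]_\ast}$. The key ingredient is a descent statement along this closed (quotient) pushout, which the paper treats as manifest for conically stratified spaces. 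You instead work with the open cover of $\sC(X)$ by the open stratum $\RR_{>0}\times X$ together with the cofinal family $\{\sC_{<r}(X)\}_{r>0}$ of cone-neighborhoods, appealing to Lemma~\ref{shv-sheaf} (which is an open-cover descent statement the paper has actually proved) plus $\RR$-invariance. That makes your argument somewhat more self-contained: you never need to know that $\Shv^{\sf cbl}$ takes the quotient pushout $\ast\coprod_X(\RR_{\geq-\infty}\times X)$ to a pullback. The trade-off is that you have to carry out the reconstruction by hand -- build the inverse functor $L$ on the basis, then check it returns the right values and recovers the specialization map -- whereas the paper gets all of this for free once descent along the pushout is granted.

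Two small remarks on the details. First, your observation that $i^{\ast}\cG \simeq \cG(\sC(X))$ via cofinality of the cone-neighborhoods is correct and useful, but note that it is also an immediate consequence of the identification of $\Shv^{\sf cbl}(X)^{[1]_\ast}$ (the stalk at the cone point is exactly what the $\spaces$-factor records). Second, your formula $V\times_{\Gamma(X,\cF)}\Gamma(\RR_{>0}\times X,\pi^{\ast}\cF)$ for $L(V,\cF,\alpha)$ on cone-neighborhoods $\sC_{<r}(X)$ collapses to $V$ once you apply $\RR$-invariance to the second factor -- it may be cleaner to record this collapse explicitly, since it is what makes $i^{\ast}L \simeq {\sf id}$ transparent. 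Both approaches are sound; the paper's is quicker to state, yours exposes the descent mechanism more explicitly.
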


\begin{proof}
Witness the conically stratified space $\sC(X) \cong \ast \underset{\{-\infty\}\times X} \coprod (\RR_{\geq -\infty}\times X)$ as the pushout among stratified spaces, each of which is conically stratified.
There results an equivalence $\Shv^{\sf cbl}\bigl(\sC(X)\bigr) \xra{\simeq} \Shv^{\sf cbl}\bigl(\ast) \underset{\Shv^{\sf cbl}(X)}\times \Shv^{\sf cbl}(\RR_{\geq -\infty}\times X)$.
Apply Lemma~\ref{Shv-R-invt}.
\end{proof}

\subsubsection{Comparing characterizations}

There is a composite transformation of functors $\snglrd^{\op} \to \Cat$
\[
\Gamma\colon \Psh\bigl(\Ent(-)\bigr) \xra{\iota_\ast} \Psh\bigl(\snglr_{/-}\bigr) \xra{|\snglrd} \Psh\bigl(\snglrd_{/-}\bigr)
\]
in which the transformation $\iota_\ast$ is given by right Kan extension along the transformation $\iota \colon \bsc_{/-} \to \snglr_{/-}$ over the constant transformation $\bsc \to \snglr$.  
Through the equivalence $\Psh\bigl(\Ent(-)\bigr) \simeq \RFbn_{\Ent(-)}$ of Theorem~\ref{right-fibration}, $\Gamma$ evaluates as
\[
\Gamma_X\colon \bigl(\cE\to \Ent(X)\bigr)\mapsto \Bigl( O\mapsto \Map_{\Ent(X)}\bigl(\Ent(O), \cE\bigr)\Bigr)~.
\]

\begin{lemma}\label{Gamma-compares}
There is a canonical factorization
\[
\Gamma\colon \Psh\bigl(\Ent(-)\bigr)\longrightarrow \Shv^{\sf cbl}(-)~\subset~\Psh(\snglrd_{/-})
\]
through the constructible sheaves.  
Furthermore, this factorization sends the square~$\Psh$(\ref{E-R-maps}) to the square~(\ref{shv-R's}).  
\end{lemma}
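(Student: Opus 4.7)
The proof splits into three claims: that $\Gamma_X(\cE)$ is a sheaf on $\snglrd_{/X}$; that this sheaf is constructible along the stratification of $X$; and that the resulting $\Gamma$ intertwines the square $\Psh(\ref{E-R-maps})$ with the square $(\ref{shv-R's})$. The first and third claims are mostly formal; the real content lies in the second.

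For the sheaf property, use the right-fibration picture of Theorem~\ref{right-fibration} together with Lemma~\ref{lemma.rfbn-over} to present $\Gamma_X(\cE)(O)\simeq \Map_{/\Ent(X)}(\Ent(O),\cE)$. For any covering sieve $\cU$ on an open $O\subset X$, Lemma~\ref{E-covers} yields the equivalence $\underset{V\in\cU}\colim\,\Ent(V)\xra{\simeq}\Ent(O)$ in right fibrations over $\bsc$, and by Lemma~\ref{lemma.rfbn-over} this is equivalently a colimit in right fibrations over $\Ent(X)$. Applying $\Map_{/\Ent(X)}(-,\cE)$ converts this colimit into the required limit $\Gamma_X(\cE)(O)\xra{\simeq}\underset{V\in\cU}\lim\,\Gamma_X(\cE)(V)$, which is the sheaf condition.

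For constructibility, the method is to compute stalks. Fix $x\in X_p$, and consider the cofinal family of basic charts $j\colon U=\RR^i\times\sC(Z)\hookrightarrow X$ centered at $x$ whose cone-point depth equals the local depth at $x$; such charts exist and are cofinal by Proposition~\ref{prop.basics-form-a-basis}. Iterating Lemma~\ref{E-R-invariance} and Lemma~\ref{psh-E-cones} gives $\Ent(U)\simeq\Ent(Z)^{\triangleright}$, whose adjoined cone point is terminal and corresponds to the identity embedding $U\hookrightarrow U$; under $\Ent(j)$, this terminal object maps to the vertex $j\in\Ent(X)$. Consequently $\Map_{/\Ent(X)}(\Ent(U),\cE)\simeq\cE_j$, the fiber of $\cE$ above $j$. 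Passing to the filtered colimit over shrinking basic charts, the stalk of $\Gamma_X(\cE)$ at $x$ is canonically identified with $\cE_j$. As $x$ moves within the stratum $X_p$, nearby basic charts are related by isotopies supported in $X_p$ and thus become canonically equivalent in $\Ent(X)$ (applying Lemma~\ref{lemma.only-equivalences}); hence these stalks vary locally constantly, which is exactly the constructibility condition along the stratum $X_p$.

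For the compatibility of squares, naturality of $\Gamma$ in the stratified-space variable immediately produces a commutative square of $\Cat$-valued presheaves on $\snglrd$. Under the canonical identifications $\Psh([1]\times\Ent(X))\simeq \Psh(\Ent(X))^{[1]^{\op}}$ and $\Shv^{\sf cbl}(X)^{[1]}$, the outer maps and the evaluation map ${\sf ev}_1$ all descend from the same inclusions $X\hookrightarrow\RR\times X\hookrightarrow\RR_{\geq-\infty}\times X$, and the two squares match by direct inspection. The principal obstacle in the argument above is the claim that for $U$ a basic, $\Ent(U)$ has a terminal object whose image in $\Ent(X)$ under $\Ent(j)$ is $j$ itself; this underlies the entire stalk computation, and verifying it requires a careful unpacking of the equivalence in Lemma~\ref{psh-E-cones} to track the cone point through the iterated cone/product reduction.
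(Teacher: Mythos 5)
Your proposal takes a genuinely different route to the constructibility claim than the paper does. The paper's proof restricts to each stratum $X_{[U]}$ via an explicit commutative diagram and then invokes Lemma~\ref{exit-class}, identifying $\Ent(X_{[U]})\simeq\Sing(X_{[U]})$, so that the restricted sheaf is automatically locally constant because $X_{[U]}$ is a smooth manifold. You instead identify a terminal object of $\Ent(U)$ — the identity embedding, as one extracts by tracking the cone point through Lemmas~\ref{E-R-invariance} and~\ref{psh-E-cones}, as you acknowledge — which yields the clean stalk formula $\Gamma_X(\cE)\bigl(j(U)\bigr)\simeq \cE_j$ for any basic chart $j$ via the fact that mapping spaces over a right fibration out of a category with a terminal object reduce to the fiber over that object's image. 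That observation is correct and is an attractive simplification of the local computation; your sheaf-property argument and your remarks on the comparison of squares are fine and essentially match the paper's.

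The issue is the final sentence of your constructibility step: ``these stalks vary locally constantly, which is exactly the constructibility condition'' conflates two different statements. Constructibility requires that the pullback sheaf $\Gamma_X(\cE)|_{X_p}$ be locally constant \emph{as a sheaf on $X_p$}; having equivalent stalks along $X_p$, and even having equivalences between the values $\cE_j$ over basic charts centered at a common point (which is what your appeal to Lemma~\ref{lemma.only-equivalences} / Theorem~\ref{basic-facts} actually supplies), is necessary but not obviously sufficient. To close the gap you must compare the value of the pullback sheaf on a small ball $R\subset X_p$ with those stalks — concretely, show that basic charts $j'$ with $j'(U')\subset V$ and $j'(U')\cap X_p\supset R$ are cofinal in the diagram computing the pullback $\Gamma_X(\cE)|_{X_p}(R)$. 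This cofinality is plausible (it follows from the existence of tubular neighborhoods of $X_p$, Proposition~\ref{tubular-neighborhoods}, giving conical neighborhoods of $R$ inside any $V$), but it is not automatic; and it is essentially the content that Lemma~\ref{exit-class} and the paper's stratum-restriction diagram package up. Filling in this step would bring your argument back into contact with the paper's, at which point the two approaches converge.
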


\begin{proof}
In this proof we will make use of Lemma~\ref{lemma.rfbn-over}: there is an equivalence of $\infty$-categories $(\RFbn_\cC)_{/\cE} \xla{\simeq} \RFbn_\cE$, where $\cE\to \cC$ is an arbitrary right fibration among small $\infty$-categories.

Let $X$ be a conically smooth stratified space.
Let us explain the diagram of $\infty$-categories
\[
\xymatrix{
\snglrd_{/X}  \ar[rr]^-{\Ent_{|\snglrd}}  \ar[dd]_{(-)_{[U]}}
&&
(\RFbn_{\bsc})_{/\Ent(X)}  \ar[d]^-{|\bsc_{[U]}}
\\
&&
(\RFbn_{\bsc_{[U]}})_{/\Ent_{[U]}(X)} 
\\
\snglrd_{/X_{[U]}}  \ar[rr]^-{\Ent_{|\snglrd}}
&&
(\RFbn_{\bsc_{[\RR^i]}})_{/\Ent(X_{[U]})}  \ar[u]^-{\simeq}_{(-)^\ast_{[U]}}
}
\]
-- that this diagram commutes will be manifest from its description.
The horizontal functors the composition of $\Ent$ followed by restriction along $\snglrd \to \snglr$.  
The top right vertical functor is given by pulling back along the fully faithful embedding $\bsc_{[U]}\to \bsc$.
The left vertical functor is the $[U]$-stratum functor, after Proposition~\ref{prop.stratification-functorial}. 
The bottom right vertical functor is restriction along the $[U]$-stratum functor.  
That this bottom right vertical functor is an equivalence follows from Lemma~\ref{exit-class}.

Now, that $\Gamma$ factors through $\Shv(-)$ is immediate after Lemma~\ref{E-covers}. 
From the above paragraph, we arrive at a commutative diagram
\[
\xymatrix{
\Shv(X)  \ar[dd] 
&&
\Psh\bigl(\Ent(X)\bigr) \ar[ll]_-\Gamma    \ar[d]
\\
&&
\Psh\bigl(\Ent_{[U]}(X)\bigr)  
\\
\Shv(X_{[U]})
&&
\Psh\bigl(\Ent(X_{[U]})\bigr) \ar[ll]_-\Gamma     \ar[u]_-{\simeq}~.
}
\]
By Lemma~\ref{exit-class}, the bottom horizontal functor factors through $\Shv^{\sf cbl}(X_{[U]}) \simeq \Psh\bigl(\Sing(X_{[U]})\bigr)$.
This proves that $\Gamma$ factors through $\Shv^{\sf cbl}(-)$.  
It is routine to verify that $\Gamma$ sends $\Psh(\ref{E-R-maps})$ to (\ref{shv-R's}).  
\end{proof}

We now prove Theorem~\ref{theorem.exit-tangent}(Cbl), which we restate here for the reader's convenience.
\begin{theorem}[Theorem~\ref{theorem.exit-tangent}(Cbl)]
For each conically smooth stratified space $X$ there is natural equivalences of $\infty$-categories
\[
\Shv^{\sf cbl}(X)~\underset{\rm Cbl}\simeq~\Psh\bigl(\Ent(X)\bigr).
\]
\end{theorem}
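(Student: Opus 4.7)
The plan is to show that the natural transformation $\Gamma\colon \Psh\bigl(\Ent(-)\bigr) \to \Shv^{\sf cbl}(-)$ constructed in Lemma~\ref{Gamma-compares} is an equivalence pointwise. Since both the source (by Corollary~\ref{psh-E-sheaf}) and the target (by Lemma~\ref{shv-sheaf}) are $\Cat$-valued sheaves on $\snglrd$, and since the collection $\{U \hookrightarrow X \mid U \in \bsc\}$ forms a basis for the topology of $X$ (Proposition~\ref{prop.basics-form-a-basis}), it suffices to check that $\Gamma_U$ is an equivalence for every basic $U = \RR^i \times \sC(Z)$. Granted this, the sheaf property applied to a covering sieve of $X$ by basics, on which $\Gamma$ is a pointwise equivalence, gives that $\Gamma_X$ is a limit of equivalences, hence an equivalence.

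For the basic case, we induct on $\depth(U)$. In the base case $\depth(U) = 0$ we have $U = \RR^i$, an ordinary smooth manifold; by Corollary~\ref{classical-tangent} (and Lemma~\ref{exit-class}), $\Ent(U)$ is an $\infty$-groupoid canonically equivalent to $\Sing(U)$, so $\Psh\bigl(\Ent(U)\bigr) \simeq \Psh\bigl(\Sing(U)\bigr)$, which is the $\infty$-category of locally constant space-valued sheaves on $U$. Since the stratifying poset of $U$ is a point, $\Shv^{\sf cbl}(U) = \Shv^{\sf lc}(U)$ agrees with the same $\infty$-category, and unwinding the formula $\Gamma_U(\cE)(O) = \Map_{\Ent(U)}(\Ent(O), \cE)$ identifies $\Gamma_U$ with the standard equivalence.

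For the inductive step, suppose $\Gamma$ is an equivalence on all basics of depth $<k$ and let $U = \RR^i \times \sC(Z)$ have depth $k$. By Lemma~\ref{E-R-invariance}, the projection induces an equivalence $\Psh\bigl(\Ent(U)\bigr) \simeq \Psh\bigl(\Ent(\sC(Z))\bigr)$, and by Lemma~\ref{Shv-R-invt} an equivalence $\Shv^{\sf cbl}(U) \simeq \Shv^{\sf cbl}(\sC(Z))$; by the second claim of Lemma~\ref{Gamma-compares}, $\Gamma$ intertwines these reductions, so we reduce to the case $U = \sC(Z)$ with $Z$ compact of depth $<k$. Now Lemma~\ref{psh-E-cones} gives $\Ent(\sC(Z)) \simeq \Ent(Z)^\triangleright$, and hence $\Psh\bigl(\Ent(\sC(Z))\bigr) \simeq \Psh\bigl(\Ent(Z)\bigr)^{[1]_\ast}$, while Lemma~\ref{Shv-cones} gives $\Shv^{\sf cbl}(\sC(Z)) \simeq \Shv^{\sf cbl}(Z)^{[1]_\ast}$. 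Both cone decompositions arise by restricting along the $\RR_{\geq -\infty}$--square, and Lemma~\ref{Gamma-compares} says $\Gamma$ sends the square $\Psh(\text{\ref{E-R-maps}})$ to the square~(\ref{shv-R's}). Consequently $\Gamma_{\sC(Z)}$ is obtained by applying $(-)^{[1]_\ast}$ to $\Gamma_Z$, which is an equivalence by induction, completing the proof.

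The main obstacle is verifying that $\Gamma$ genuinely intertwines the cone decompositions, i.e., that the restriction of $\Gamma_{\sC(Z)}$ along $\{\ast\} \to [1]_\ast$ picks out the ``stalk at the cone point'' of a constructible sheaf, while restriction along the other vertex recovers $\Gamma_{\RR \times Z}$. This is what the final sentence of Lemma~\ref{Gamma-compares} asserts, and its verification rests on unwinding the right Kan extension formula defining $\Gamma$: that $\iota_\ast$ along $\bsc_{/\sC(Z)} \to \snglr_{/\sC(Z)}$ evaluated on a basic neighborhood of the cone point recovers the homotopy type of $\Ent(\sC(Z))$ mapping out of the cone vertex, and evaluated on an open $O \subset \sC(Z) \smallsetminus \{\ast\}$ reduces to the computation on $\RR \times Z$ via Lemma~\ref{E-R-invariance}. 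Once this naturality is in hand, the induction goes through mechanically.
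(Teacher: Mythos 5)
Your proposal is correct and follows essentially the same route as the paper's proof: reduce to basics via the sheaf property (Corollary~\ref{psh-E-sheaf}, Lemma~\ref{shv-sheaf}, Lemma~\ref{basics-basis}), then induct on depth using the $\RR$-invariance lemmas, the cone lemmas, and the fact that $\Gamma$ carries the square~$\Psh$(\ref{E-R-maps}) to the square~(\ref{shv-R's}). You spell out the depth-zero base case and the mechanics of the inductive step in somewhat more detail than the paper, but the argument is the same.
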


\begin{proof}[Proof of Theorem~\ref{exit-tangent}(Cbl)]
We show that $\Gamma_X\colon \Psh\bigl(\Ent(X)\bigr) \to \Shv^{\sf cbl}(X)$ is an equivalence.
Corollary~\ref{psh-E-sheaf} and Lemma~\ref{shv-sheaf} give that the domain and codomain of $\Gamma$ are sheaves, so we can assume $X$ has bounded depth.  
After Lemma~\ref{basics-basis}, again through Lemmas~\ref{psh-E-sheaf} and~\ref{shv-sheaf} we can assume $X=U=\RR^i\times \sC(Z)$ is a basic.
After Lemmas~\ref{E-R-invariance},~\ref{psh-E-cones},~\ref{Shv-R-invt}, and~\ref{Shv-cones}, the second statement of Lemma~\ref{Gamma-compares} gives that the statement is true for $X=\RR^i\times \sC(Z)$ provided it is true for $Z$.  
The result follows by induction on $\depth(X)$, which we have assumed is finite.  
\end{proof}

\subsection{Presheaves on basics and sheaves: 
proofs of Theorem~\ref{sheaves-basics} and Theorem~\ref{exit-tangent}(Rel)}

Consider the ordinary full subcategory $\bscd\subset \snglrd$ consisting of the basic stratified spaces.  
For each sieve $\cU\subset \snglrd_{/X}$ on a conically smooth stratified space $X$ consider the pullback $\cU_{|\bscd} := \bscd  \underset{\snglrd}\times \cU \subset \bscd_{/X}$. 

\begin{defn}[$\shv(\bscd)$]
We denote by
\[
\Shv(\bscd)\subset \Psh(\bscd)
\]
the full subcategory consisting of those presheaves $\cF\colon (\bscd)^{\op} \to \spaces$ for which the canonical map $\cF(U) \xra{\simeq} \underset{(V\to U) \in  \cU_{|\bscd}}{\sf lim} \cF(V)$ is an equivalence for each covering sieve $\cU\subset \snglrd_{/U}$ of an object $U\in \bscd$.
\end{defn}

Like the adjunction~(\ref{extend-basics}), restriction and right Kan extension define an adjunction
\begin{equation}\label{iota-delta}
\iota^\ast \colon \Psh(\snglrd) \rightleftarrows \Psh(\bscd)\colon \iota_\ast~.
\end{equation}

\begin{lemma}\label{basics-basis}
The adjunction~(\ref{iota-delta}) restricts to an equivalence of $\infty$-categories $\Shv(\snglrd) \simeq \Shv(\bscd)$.  
\end{lemma}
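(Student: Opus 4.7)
The plan is to recognize this as an instance of the classical comparison lemma for dense subsites: the fully faithful inclusion $\iota\colon \bscd \hookrightarrow \snglrd$ together with Proposition~\ref{basics.basis}, which says that every stratified space admits a covering sieve generated by basics, is exactly the hypothesis needed to conclude that restriction along $\iota$ induces an equivalence $\Shv(\snglrd)\simeq \Shv(\bscd)$.

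First I would record that since $\iota$ is fully faithful, the counit $\iota^\ast \iota_\ast \to \id_{\Psh(\bscd)}$ is an equivalence on all presheaves. Consequently $\iota_\ast$ is already fully faithful on $\Psh(\bscd)$, and it suffices to prove: (a) $\iota^\ast$ carries $\Shv(\snglrd)$ into $\Shv(\bscd)$; (b) $\iota_\ast$ carries $\Shv(\bscd)$ into $\Shv(\snglrd)$; and (c) the unit $\cG \to \iota_\ast \iota^\ast \cG$ is an equivalence whenever $\cG$ is a sheaf on $\snglrd$.

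For (c), I would observe that by definition $(\iota_\ast \iota^\ast \cG)(X) \simeq \lim_{(V \to X)\in \bscd_{/X}} \cG(V)$, and by Proposition~\ref{basics.basis} the sieve $\cU_X\subset \snglrd_{/X}$ generated by $\bscd_{/X}$ is a covering sieve of $X$. A standard cofinality argument then identifies $\lim_{\cU_X} \cG \simeq \lim_{\bscd_{/X}} \cG$ for any sheaf $\cG$: given an open embedding $Y\hookrightarrow X$ in $\cU_X$, basics $V\hookrightarrow Y$ over $X$ form a covering sieve of $Y$, so the sheaf condition lets us compute $\cG(Y)$ as a further limit over basics, exhibiting $\bscd_{/X}\hookrightarrow \cU_X$ as initial with respect to the sheaf $\cG$. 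The sheaf axiom on $\cG$ then gives $\cG(X)\xra{\simeq} \lim_{\cU_X}\cG \simeq \lim_{\bscd_{/X}}\cG$. For (a), the same cofinality argument applied to a basic $U$ and a covering sieve $\cV\subset \snglrd_{/U}$ shows $\cG(U)\simeq \lim_{\cV_{|\bscd}}\cG$, which is the sheaf condition for $\iota^\ast\cG$ on $\bscd$. For (b), I would compute $(\iota_\ast \cF)(X) \simeq \lim_{\bscd_{/X}}\cF$ and verify the sheaf condition against a covering sieve $\cU\subset \snglrd_{/X}$ by interchanging limits: the resulting iterated limit is indexed by pairs $(V\hookrightarrow Y\hookrightarrow X)$ with $V\in\bscd$ and $Y\in \cU$, and since basics refine $\cU$ (again by Proposition~\ref{basics.basis}) this is cofinally computed by $\bscd_{/X}$ itself, once one uses the sheaf condition of $\cF$ on each basic $V$.

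The main obstacle, and the only place where something nontrivial must be argued rather than quoted, is the cofinality step: showing that the inclusion $\bscd_{/X}\hookrightarrow \cU_X$ (respectively $\cV_{|\bscd}\hookrightarrow \cV$) becomes an equivalence after applying any sheaf. In the $\infty$-categorical setting this is a Quillen Theorem~A style assertion, and the cleanest route is to argue by iteratively refining: any object of the larger sieve is covered by objects of the smaller sub-sieve, and since the functor in question is already known to convert the cover into a limit diagram (the sheaf axiom), an inductive/transfinite argument on the complexity of the cover establishes the equivalence. This is precisely the content of the $\infty$-categorical comparison lemma for dense subsites, which I would either invoke directly or spell out in the present setting using that $\bscd$ is stable under the relevant restriction of open embeddings.
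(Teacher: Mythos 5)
Your proposal is correct and takes essentially the same approach as the paper: the paper's proof consists of a single sentence invoking Proposition~\ref{prop.basics-form-a-basis} (basics form a basis for the Grothendieck topology on $\snglrd$) and then tacitly appealing to the comparison lemma for dense subsites, which is exactly the argument you spell out in detail.
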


\begin{proof}
Proposition~\ref{prop.basics-form-a-basis} gives that $\bscd$ is a basis for the standard Grothendieck topology on $\snglrd$.  
\end{proof}

Recall the adjunction
$
\iota^\ast \colon \Psh(\snglr) \rightleftarrows \Psh(\bsc)\colon \iota_\ast
$
of~(\ref{extend-basics}).  
Because $\iota$ is fully faithful, the right adjoint $\iota_\ast$ is fully faithful, and so the adjunction is a localization.  
Explicitly, the value of $\iota_\ast$ on a right fibration $\cE\to \bsc$ is the assignment
\[
X\mapsto \Map_{\bsc}\bigl(\Ent(X),\cE\bigr)~.
\]

\begin{proof}[Proof of Theorem~\ref{sheaves-basics}]
Let $\cF\in \Psh(\snglr)$ be a presheaf.  
We must argue that the following two conditions on $\cF$ are equivalent:
\begin{enumerate}
\item The restriction $\iota^\ast \cF$ is a sheaf.

\item The unit map $\cF\xra{\simeq} \iota_\ast \iota^\ast \cF$ is an equivalence.
\end{enumerate}
Lemma~\ref{E-covers} gives that $ \iota^\ast \iota^\ast \iota_\ast \cF$ is a sheaf.  
And so~(2) implies~(1).  
Suppose $\iota^\ast\cF$ is a sheaf.  
After Lemma~\ref{basics-basis}, then the map $\cF \to \iota_\ast \iota^\ast \cF$ is an equivalence if and only if $\iota^\ast \cF \to  \iota^\ast \iota_\ast \iota^\ast \cF$ is an equivalence.
Because $\iota^\ast \iota_\ast \simeq {\sf id}_{\bsc}$, (1) implies~(2) 
\end{proof}

\begin{proof}[Sketch proof of Theorem~\ref{exit-tangent}(Rel)]
This proof is identical to that of Theorem~\ref{sheaves-basics} just given, replacing $\bsc$ and $\snglr$ by $\bsc_{/X}$ and $\snglr_{/X}$, respectively, as with their discrete versions.  
We leave the details to the reader.
\end{proof}

We are now prepared to prove Theorem~\ref{theorem.structured-versions}, which we now recall for the reader's convenience.
\begin{theorem}[Theorem~\ref{theorem.structured-versions}]
The following statements are true concerning an $\infty$-category of basics $\cB$.
\begin{enumerate}
\item There is a natural equivalence of $\infty$-categories
 $
  \Shv\bigl(\mfld(\cB)\bigr) ~\simeq~\Psh(\cB)~.
 $
\item For each $\cB$-manifold $X= (X\xra{S}P, \cA,g)$ there are natural equivalences of $\infty$-categories
 $
  \Shv^{\sf cbl}(X)~\underset{\rm Cbl}\simeq~\Psh\bigl(\Ent(X)\bigr)~\underset{\rm Rel}\simeq~\Shv\bigl(\mfld(\cB)_{/X}\bigr)~.  
 $
\end{enumerate}
\end{theorem}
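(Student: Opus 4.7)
The plan is to reduce both statements to the unstructured versions, Theorems~\ref{theorem.sheaves-basics} and~\ref{theorem.exit-tangent}, by exploiting the fact that the projection $\mfld(\cB)\to\snglr$ is a right fibration. This last assertion follows because $\mfld(\cB)$ is defined as the pullback of the right fibration $(\RFbn_{\bsc})_{/\cB}\to\RFbn_{\bsc}$ along the tangent classifier $\tau\colon\snglr\to\RFbn_{\bsc}$, and right fibrations are stable under pullback (Proposition~\ref{prop.right-fibrations-pull-back}).

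For statement (1), I would apply Lemma~\ref{lemma.rfbn-over} to the two right fibrations $\cB\to\bsc$ and $\mfld(\cB)\to\snglr$ to obtain equivalences
\[
\Psh(\cB)~\simeq~\Psh(\bsc)_{/\cB}\qquad\text{and}\qquad\Psh\bigl(\mfld(\cB)\bigr)~\simeq~\Psh(\snglr)_{/F_\cB}~,
\]
where $F_\cB\in\Psh(\snglr)$ is the presheaf representing $\mfld(\cB)$. By unwinding the definition of $\mfld(\cB)$, this presheaf sends $X$ to $\Map_{\bsc}\bigl(\Ent(X),\cB\bigr)$, so $F_\cB\simeq\iota_\ast\cB$. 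By Theorem~\ref{theorem.sheaves-basics}, $\iota_\ast\cB$ is already a sheaf on $\snglr$. Because $\mfld(\cB)\to\snglr$ is a right fibration, any open hypercover of the underlying stratified space of a $\cB$-manifold lifts canonically to a hypercover in $\mfld(\cB)$; this identifies the sheaf condition on $\mfld(\cB)$ of Definition~\ref{definition.B-sheaf} with the sheaf condition on $\snglr$ in the slice over $\iota_\ast\cB$. Thus the equivalence above restricts to $\Shv\bigl(\mfld(\cB)\bigr)\simeq\Shv(\snglr)_{/\iota_\ast\cB}$. Applying Theorem~\ref{theorem.sheaves-basics} once more identifies the right-hand side with $\Psh(\bsc)_{/\cB}\simeq\Psh(\cB)$, completing the argument.

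For statement (2), the equivalence $\Shv^{\sf cbl}(X)\simeq\Psh\bigl(\Ent(X)\bigr)$ is Theorem~\ref{theorem.exit-tangent}(Cbl) applied to the underlying stratified space of $X$, since the constructibility condition is intrinsic to the stratification and independent of the $\cB$-structure. For the Rel equivalence, the right fibration $\mfld(\cB)\to\snglr$ induces an equivalence of slice $\infty$-categories $\mfld(\cB)_{/X}\xra{~\simeq~}\snglr_{/X_0}$, where $X_0$ denotes the underlying stratified space of $X$: this is a standard consequence of the right fibration property, as each conically smooth open embedding $Y\hookrightarrow X_0$ admits a unique (up to contractible choice) lift to a morphism of $\cB$-manifolds via pullback of the $\cB$-structure. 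Open hypercovers correspond under this equivalence, so $\Shv\bigl(\mfld(\cB)_{/X}\bigr)\simeq\Shv(\snglr_{/X_0})$, and the right-hand side is identified with $\Psh\bigl(\Ent(X)\bigr)$ by Theorem~\ref{theorem.exit-tangent}(Rel).

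The main obstacle I anticipate is carefully matching the sheaf conditions under the equivalences supplied by Lemma~\ref{lemma.rfbn-over}: one must verify that a presheaf on $\mfld(\cB)$, viewed as a morphism of presheaves on $\snglr$ to $\iota_\ast\cB$, satisfies descent in the sense of Definition~\ref{definition.B-sheaf} precisely when its source is a sheaf on $\snglr$. This reduces, via the unique lifting property of right fibrations applied to hypercovers, to the observation that $\iota_\ast\cB$ itself is a sheaf by Theorem~\ref{theorem.sheaves-basics}, so limits in $\Psh(\snglr)$ computing descent are preserved when passing to the slice. The verification is routine given this preparation, but it is the crucial technical point that allows the structured theorem to bootstrap off the unstructured one.
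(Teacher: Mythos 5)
Your proposal is correct and follows essentially the same route as the paper: the paper's proof is extremely terse (it cites the pullback square relating $\cB$, $\mfld(\cB)$, $\bsc$, $\snglr$, the fact that the vertical maps are right fibrations, Observation~\ref{E-no-B}, and then asserts the statements follow from Theorems~\ref{theorem.sheaves-basics} and~\ref{theorem.exit-tangent} together with the definitions of the structured sheaf categories), and your write-up is exactly a careful unpacking of that argument, making explicit the use of Lemma~\ref{lemma.rfbn-over}, the identification of the presheaf classifying $\mfld(\cB)\to\snglr$ with $\iota_\ast\cB$, and the matching of descent conditions on the slice.
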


We first make an observation, which follows immediately from $\cB\to \Bsc$, and  $\mfld(\cB)\to \snglr$, being right fibrations for each $\infty$-category of basics $\cB$. 
\begin{observation}\label{E-no-B}
For each $\infty$-category of basics $\cB$, and each $\cB$-manifold $(X,g)$, the canonical projections
\[
\mfld(\cB)_{/(X,g)} \xra{~\simeq~} \snglr_{/X}    \qquad \text{ and } \qquad    \cB_{/(X,g)} \xra{~\simeq~} \Bsc_{/X}
\]
are equivalences of $\infty$-categories. 

\end{observation}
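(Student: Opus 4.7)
The plan is to deduce both equivalences from a single general fact about right fibrations of $\infty$-categories: if $p\colon \cE\to \cC$ is a right fibration and $e\in \cE$ is an object, then the induced functor on slices
\[
\cE_{/e}\longrightarrow \cC_{/p(e)}
\]
is a trivial Kan fibration of simplicial sets, hence in particular an equivalence of $\infty$-categories. This is a standard consequence of the characterization of right fibrations via right lifting against inner and right-horn inclusions; see Proposition~2.1.2.1 of~\cite{HTT}, where it is shown that slicing a right fibration over any object again yields a right fibration to the corresponding slice, and that the fibers of $\cE_{/e}\to\cC_{/p(e)}$ are contractible because they classify factorizations of a fixed morphism through $e$ in the fiber $\cE_{p(e)}$, which is a Kan complex with a terminal object given by $\mathrm{id}_e$.

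First I would recall that both $\cB\to \Bsc$ and $\mfld(\cB)\to \snglr$ are right fibrations. For $\cB\to \Bsc$ this is the definition of an $\infty$-category of basics (Definition~\ref{def.category-of-basics}). For $\mfld(\cB)\to \snglr$, it follows from the pullback diagram defining $\mfld(\cB)$: it is obtained by pulling back the right fibration $(\RFbn_{\Bsc})_{/\cB}\to \RFbn_{\Bsc}$ along the tangent classifier $\tau\colon \snglr \to \RFbn_{\Bsc}$, and right fibrations are stable under pullback by Proposition~\ref{prop.right-fibrations-pull-back}. (The functor $(\RFbn_{\Bsc})_{/\cB}\to \RFbn_{\Bsc}$ is the forgetful map from an overcategory and thus corresponds under the straightening of Theorem~\ref{right-fibration} to the representable presheaf on $\RFbn_{\Bsc}$, which is a right fibration.)

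Given these inputs, the two equivalences claimed in the observation are immediate applications of the general fact, with $(\cE\to \cC, e)$ taken to be $\bigl(\mfld(\cB)\to\snglr, (X,g)\bigr)$ and $\bigl(\cB\to \Bsc, g(U_0)\bigr)$, respectively; the latter case is applied at each basic lying over $X$, or equivalently one simply observes that $\cB_{/(X,g)}$ is, by construction of $(X,g)$ as a lift of $\tau_X$, identified with $\Bsc_{/X}\underset{\RFbn_{\Bsc}}\times (\RFbn_{\Bsc})_{/\cB}$, which upon unwinding is the slice of the right fibration $\cB\to \Bsc$ at the object classifying $g$.

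There is essentially no main obstacle here: everything reduces to the fact that slicing a right fibration over an object of its total space produces a trivial fibration to the corresponding slice of the base, which is already in place in~\cite{HTT}. The only care required is to make sure that the slices appearing in the statement are formed in the $\infty$-categorical sense compatible with the equivalence $\Psh(\cC)\simeq \RFbn_\cC$ of Theorem~\ref{right-fibration}, so that the identification of $\cB_{/(X,g)}$ with the slice of the right fibration $\cB\to\Bsc$ over the object $g\in \cB$ lifting $U\in\Bsc$ is unambiguous; this is a matter of definitions, and Lemma~\ref{lemma.rfbn-over} (which we have already used above) gives exactly the compatibility needed.
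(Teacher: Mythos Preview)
Your approach matches the paper's: the observation is stated to follow immediately from $\cB\to\Bsc$ and $\mfld(\cB)\to\snglr$ being right fibrations, and your general fact about slices of right fibrations is exactly the right tool. Your argument for the first equivalence $\mfld(\cB)_{/(X,g)}\xra{\simeq}\snglr_{/X}$ is clean and correct.

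However, your treatment of the second equivalence is muddled. The object $(X,g)$ is a $\cB$-manifold, not an object of $\cB$ in general, so you cannot directly apply the slice-of-right-fibration fact to $\cB\to\Bsc$ ``at the object $(X,g)$'' or ``at the object classifying $g$''---there is no such object. The phrase ``applied at each basic lying over $X$'' does not give what is needed either: that would produce equivalences $\cB_{/(U,b)}\simeq\Bsc_{/U}$ for individual basics, not the global statement. Your pullback expression $\Bsc_{/X}\underset{\RFbn_{\Bsc}}\times(\RFbn_{\Bsc})_{/\cB}$ is also not quite right as written, since it makes no reference to the particular lift $g$.

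The clean way to obtain the second equivalence is to base-change the first. The square
\[
\xymatrix{
\cB \ar[r] \ar[d] & \mfld(\cB) \ar[d] \\
\Bsc \ar[r] & \snglr
}
\]
is a pullback (this is noted in the proof of Theorem~\ref{structured-versions}). Since the trivial fibration $\mfld(\cB)_{/(X,g)}\to\snglr_{/X}$ lies over $\snglr$, pulling back along $\Bsc\hookrightarrow\snglr$ gives
\[
\cB_{/(X,g)}~=~\cB\underset{\mfld(\cB)}\times\mfld(\cB)_{/(X,g)}~\xra{\simeq}~\Bsc\underset{\snglr}\times\snglr_{/X}~=~\Bsc_{/X},
\]
which is the desired equivalence.
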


\begin{proof}[Proof of Theorem~\ref{structured-versions}] 
Notice that the square among $\infty$-categories
\[
\xymatrix{
\cB \ar[r]  \ar[d]
&
\mfld(\cB)  \ar[d]
\\
\Bsc  \ar[r]
&
\snglr
}
\]
is a pullback, and the vertical functors are right fibrations.  
After Theorem~\ref{theorem.sheaves-basics}, Statement~(1) follows quickly from the definition of $\Shv\bigl(\mfld(\cB)\bigr)$.

The proof of Statement~(2)(Cbl) is identical to that of Theorem~\ref{theorem.exit-tangent}(Cbl) after the identification $\cB_{/X} \simeq \Bsc_{/X}$ of Observation~\ref{E-no-B}. 

Statement~(2)(Rel) follows in the same way as Statement~(1), again using Observation~\ref{E-no-B}.  

\end{proof}

\subsection{Excisive functors: proof of Theorem~\ref{theorem.1-excision}}

\begin{lemma}\label{collar-pushout}
Let $\cB$ be an $\infty$-category of basics, $X$ be a $\cB$-manifold, and $X\xra{f}\ov{\RR}$ be a collar-gluing.
Then the diagram of right fibrations over $\cB$
\begin{equation}\label{diag.collar-pushout}
\xymatrix{
\Ent(\RR\times\partial)  \ar[r]  \ar[d]
&
\Ent(X_{\leq \infty})  \ar[d]
\\
\Ent(X_{\geq -\infty})  \ar[r]
&
\Ent(X)
}
\end{equation}
is a pushout diagram.
\end{lemma}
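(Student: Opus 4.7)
The plan is to reduce the statement to the sheaf property of $\Ent(-)$ established in Lemma~\ref{E-covers}, applied to the two-element open cover $\{X_{\geq -\infty},\, X_{\leq \infty}\}$ of $X$. Because $\cB\to\bsc$ is a right fibration (so colimits in $\RFbn_\cB$ lie over the same colimits in $\RFbn_{\bsc}$), it suffices to verify the pushout in $\RFbn_{\bsc}$, and via the Grothendieck construction (Theorem~\ref{right-fibration}) equivalently in $\Psh(\bsc)$, where colimits are computed pointwise in $\spaces$.

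First, I would let $\cU\subset \snglrd_{/X}$ denote the covering sieve generated by $\{X_{\geq -\infty},\, X_{\leq \infty}\}$, so by Lemma~\ref{E-covers} the natural map
\[
 \underset{O\in \cU}\colim~\Ent(O)~\xra{~\simeq~}~\Ent(X)
\]
is an equivalence in $\RFbn_{\bsc}$.
Next, I would decompose this sieve. Let $\cU_{\geq}, \cU_{\leq}, \cU_{\cap}\subset \cU$ be the full subcategories on those $(O\hookrightarrow X)$ with $O\subset X_{\geq -\infty}$, $O\subset X_{\leq \infty}$, and $O\subset X_{\geq -\infty}\cap X_{\leq \infty}=\RR\times \partial$, respectively. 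Because any morphism $(O_1\hookrightarrow X)\to (O_2\hookrightarrow X)$ in $\snglrd_{/X}$ forces $O_1\subset O_2$ as subspaces of $X$, no morphism connects an object of $\cU_{\geq}\smallsetminus \cU_{\cap}$ to one of $\cU_{\leq}\smallsetminus \cU_{\cap}$. This yields an identification of $\cU$ with the pushout $\cU_{\geq}\coprod_{\cU_{\cap}} \cU_{\leq}$ in $\Cat$.

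Third, I would apply Lemma~\ref{E-covers} separately to the covering sieves $\cU_{\geq}\subset \snglrd_{/X_{\geq -\infty}}$, $\cU_{\leq}\subset \snglrd_{/X_{\leq \infty}}$, and $\cU_{\cap}\subset \snglrd_{/\RR\times \partial}$, identifying the colimits of $\Ent$ restricted to each with $\Ent(X_{\geq -\infty})$, $\Ent(X_{\leq \infty})$, and $\Ent(\RR\times \partial)$, respectively. Since left Kan extension along a pushout of indexing categories splits as a pushout of left Kan extensions in the cocomplete $\infty$-category $\RFbn_{\bsc}\simeq\Psh(\bsc)$, combining these identifications gives the desired equivalence
\[
 \Ent(X_{\geq -\infty})\underset{\Ent(\RR\times \partial)}\coprod \Ent(X_{\leq \infty})~\xra{~\simeq~}~\Ent(X)~.
\]

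The main obstacle will be the third step: making precise that colimits indexed by a pushout of $\infty$-categories split as pushouts of colimits, together with a careful check that the decomposition of $\cU$ in Step~2 is genuine (that the \emph{discrete} sieve structure really gives $\cU=\cU_{\geq}\coprod_{\cU_{\cap}}\cU_{\leq}$ in $\Cat$, with no intervening higher cells). A slick alternative is to dualize via Theorem~\ref{theorem.exit-tangent}(Cbl): the pushout claim for $\Ent$ is equivalent to the pullback
\[
 \Shv^{\sf cbl}(X)~\simeq~\Shv^{\sf cbl}(X_{\geq -\infty})\underset{\Shv^{\sf cbl}(\RR\times \partial)}\times \Shv^{\sf cbl}(X_{\leq \infty})~,
\]
which is an instance of the sheaf property of $\Shv^{\sf cbl}(-)$ (Lemma~\ref{shv-sheaf}) applied to the open cover $\{X_{\geq -\infty},X_{\leq \infty}\}$ of $X$; Yoneda on $\bsc$ then transports this pullback of presheaf categories back to the claimed pushout of right fibrations.
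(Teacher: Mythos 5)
Your proof is correct, but it takes a genuinely different route from the paper. The paper's argument is one line of ideas: the three-object span category $\bigl(X_{\geq-\infty}\la \RR\times\partial\ra X_{\leq\infty}\bigr)$ includes \emph{finally} into the covering sieve $\cU$ generated by $\{X_{\geq-\infty},X_{\leq\infty}\}$ (the comma category under any $O\in\cU$ is either a point or the full span, hence contractible), so $\colim_\cU\Ent(-)$ reduces to the pushout over the span, and then Lemma~\ref{E-covers} identifies the total colimit with $\Ent(X)$. You instead decompose the indexing category: $\cU\simeq\cU_{\geq}\coprod_{\cU_\cap}\cU_{\leq}$ in $\Cat$, and split the colimit. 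Both approaches are valid, and the two ``obstacles'' you flag at the end do in fact hold: (i) the legs $\cU_\cap\hookrightarrow\cU_{\geq},\cU_{\leq}$ are monomorphisms of simplicial sets (full subcategory inclusions), the strict pushout of simplicial sets is $\cU_{\geq}\cup\cU_{\leq}=\cU$ (every simplex of $\cU$ factors through one or the other, since any chain of inclusions has a maximal element), $\cU_{\geq}\cap\cU_{\leq}=\cU_\cap$, and $\cU$ is already a quasi-category, so the strict pushout is the homotopy pushout; (ii) colimits indexed by a pushout $\cA\coprod_\cD\cB$ split as pushouts of colimits in any cocomplete $\infty$-category, via the identification $\Fun(\cA\coprod_\cD\cB,\cE)\simeq\Fun(\cA,\cE)\times_{\Fun(\cD,\cE)}\Fun(\cB,\cE)$ and mapping into constant diagrams. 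The paper's finality argument is somewhat more economical because its only real check is the contractibility of two kinds of comma categories; your argument requires the two auxiliary verifications above. Your ``slick alternative'' via $\Shv^{\sf cbl}$ and $\infty$-topos descent also works and is not circular (Theorem~\ref{theorem.exit-tangent}(Cbl) is proved before this lemma), but note that the reduction $\lim_{\cU^{\op}}\Shv^{\sf cbl}(-)\simeq$ pullback over the span secretly uses the same finality fact that the paper's direct argument uses, so it is not really shorter.
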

\begin{proof}
Any collar-gluing gives rise to an open cover, as depicted in diagram~(\ref{diag.collar-cover}), hence determines a sieve $\cU \subset \snglrd_{/X}$. The category $\cU$ receives a functor from the category 
$\bigl(X_{\geq -\infty}  \leftarrow \RR\times \partial \to X_{\leq \infty}\bigr)$, which is \emph{final}.
So
\[
\underset{O\in \cU} \colim~ \Ent(O)  ~{}~
\simeq~{}~ \Ent(X_{\geq -\infty}) \underset{\Ent(\RR\times \partial)} \coprod \Ent(X_{\leq \infty})~.
\]
We then apply Lemma~\ref{E-covers} to the lefthand side of this equivalence.
\end{proof}

We mirror Definition~\ref{def.finitary} with the following.
\begin{notation}[Seq-finite presheaves]\label{def.finite-presheaves}
Let $\cC$ a small $\infty$-category.
The $\infty$-category of \emph{seq\text{-}finite} presheaves, $\cC\subset \Psh_{\sf seq\text{-}fin}(\cC)\subset \Psh(\cC)$, is the smallest full subcategory containing the representable presheaves that is closed under finite pushouts and sequential colimits.  
\end{notation}

After Theorem~\ref{open-handles}(1), Lemma~\ref{collar-pushout} has the following immediate consequence.  
\begin{prop}\label{tangent-finite}
The tangent classifier factors 
\[
\tau \colon \snglr \longrightarrow \Psh_{\sf seq\text{-}fin}(\bsc)~\subset~\Psh(\bsc)
\]
through the seq-finite presheaves.  
\end{prop}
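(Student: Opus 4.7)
The plan is to combine the handlebody decomposition theorem with the pushout property of collar-gluings to express $\Ent(X)$ as a sequential colimit of finite iterated pushouts of representable presheaves.

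First I would unwind what it means for $\tau(X)$ to lie in $\Psh_{\sf seq\text{-}fin}(\bsc)$ by using the equivalence $\Psh(\bsc)\simeq \RFbn_{\bsc}$ of Theorem~\ref{right-fibration}. Under this equivalence the representable presheaf corresponding to $U\in\bsc$ is precisely the right fibration $\bsc_{/U}\to\bsc$, which by definition of the tangent classifier (\ref{eqn.tangent-classifier}) agrees with $\Ent(U)\to\bsc$. So the statement to prove is that the right fibration $\Ent(X)\to\bsc$ can be built from the right fibrations $\Ent(U)\to\bsc$, $U\in\bsc$, by a combination of finite pushouts and sequential colimits (taken in $\RFbn_\bsc$, equivalently in $\Psh(\bsc)$).

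Next I would apply Theorem~\ref{theorem.handlebody}(1) to produce a sequence of open subsets $X_0\subset X_1\subset\cdots\subset X$ with $X=\bigcup_{i\geq 0} X_i$ and each $X_i$ finitary. Since this sequence defines a covering sieve, Lemma~\ref{E-covers} yields an equivalence
\[
\underset{i\geq 0}\colim\,\Ent(X_i)~\xra{~\simeq~}~\Ent(X)
\]
in $\RFbn_\bsc$. This reduces the problem to showing that each $\Ent(X_i)$ lies in $\Psh_{\sf seq\text{-}fin}(\bsc)$, since the latter is closed under sequential colimits by definition.

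To handle the finitary pieces, I would induct on the number of handle attachments in a presentation of $X_i$. The base case is that $X_i$ is a basic $U$, in which case $\Ent(U)$ is representable as observed above. For the inductive step, a finitary stratified space arising from one additional handle attachment is by construction expressed as a collar-gluing $X_i=Y_{\geq-\infty}\underset{\RR\times\partial}\bigcup Y_{\leq\infty}$ in which the pieces have strictly fewer handles. Lemma~\ref{collar-pushout} then gives a pushout diagram of right fibrations
\[
\xymatrix{
\Ent(\RR\times\partial) \ar[r]\ar[d] & \Ent(Y_{\leq\infty})\ar[d]\\
\Ent(Y_{\geq-\infty})\ar[r] & \Ent(X_i)~,
}
\]
and by induction all three corners lie in $\Psh_{\sf seq\text{-}fin}(\bsc)$, so $\Ent(X_i)$ does too.

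The main obstacle is the bookkeeping around finitariness: one must verify that the inductive definition of ``finitary'' really does produce collar-gluing decompositions whose pieces are themselves finitary with strictly smaller handle count, so that the induction terminates at basics. This is exactly the content that makes Theorem~\ref{theorem.handlebody} usable here, and once it is in hand the argument is essentially formal.
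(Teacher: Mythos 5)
Your proposal is correct and is essentially the paper's own argument: reduce via Theorem~\ref{open-handles}(2) and Lemma~\ref{E-covers} to the finitary case, then use Lemma~\ref{collar-pushout} to see that collar-gluings produce pushouts of enter-path categories, with basics giving representables. The paper itself records no proof, asserting the statement as an immediate consequence of precisely these ingredients.

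One small point on presentation. Rather than ``inducting on the number of handle attachments'' (which requires first arguing that a well-founded handle count exists on finitary stratified spaces), the cleanest phrasing mirrors the definition of \emph{finitary} as a smallest full subcategory with a closure property: observe that the full subcategory of $\snglrd$ spanned by those $X$ with $\Ent(X)\in\Psh_{\sf seq\text{-}fin}(\bsc)$ contains $\bscd$ (since $\Ent(U)\simeq\bsc_{/U}$ is representable) and is closed under collar-gluings (by Lemma~\ref{collar-pushout} and the closure of $\Psh_{\sf seq\text{-}fin}(\bsc)$ under finite pushouts), and hence contains $\snglrd^{\sf fin}$. This avoids any explicit induction and is the formulation the paper uses in the proof of Theorem~\ref{theorem.1-excision}. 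Your version works too; this is just tidier.
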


We now prove Theorem~\ref{theorem.1-excision}, which we restate for the reader's convenience.
\begin{theorem}[Theorem~\ref{theorem.1-excision}]
For each $\infty$-category of basics $\cB$, and for each $\infty$-category $\cC$ that admits finite pushouts and sequential colimits, there is a natural equivalence among $\infty$-categories
 \[
  \Fun(\cB,\cC)~\simeq~\Fun_{\amalg\text{-}{\sf exc}}\bigl(\mfld(\cB) , \cC\bigr)~.
 \]
\end{theorem}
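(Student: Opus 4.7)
The strategy is to realize the equivalence as restriction $\rho$ along the fully faithful inclusion $\cB \hookrightarrow \mfld(\cB)$, with inverse $\lambda$ built from the tangent classifier. By Proposition~\ref{tangent-finite} together with Observation~\ref{E-no-B} (which identifies $\cB_{/X}\simeq\bsc_{/X}$, so the tangent classifier of a $\cB$-manifold can be taken indifferently in $\RFbn_\cB$ or $\RFbn_\bsc$), the tangent classifier factors as $\tau\colon\mfld(\cB)\to\Psh_{\sf seq\text{-}fin}(\cB)$. By construction $\Psh_{\sf seq\text{-}fin}(\cB)$ is the free $\infty$-category generated from $\cB$ under finite pushouts and sequential colimits, so precomposition with the Yoneda embedding induces an equivalence $\Fun^{\sf seq\text{-}fin}\!\bigl(\Psh_{\sf seq\text{-}fin}(\cB),\cC\bigr) \simeq \Fun(\cB,\cC)$ (the superscript denoting functors that preserve finite pushouts and sequential colimits). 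Given $G\colon\cB\to\cC$, write $\widetilde{G}$ for its unique such extension and set $\lambda(G):=\widetilde{G}\circ\tau$.

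Next I would verify the two easy halves. To see that $\lambda$ factors through $\Fun_{\amalg\text{-}\sf exc}$: Lemma~\ref{collar-pushout} shows $\tau$ sends each collar-gluing to a pushout in $\Psh(\cB)$, which $\widetilde{G}$ preserves; and for a sequential open cover $X=\bigcup_i X_i$, the inclusions $\{X_i\hookrightarrow X\}$ are cofinal in the covering sieve they generate, so Lemma~\ref{E-covers} yields $\tau(X)\simeq\colim_i\tau(X_i)$, again preserved by $\widetilde{G}$. To see $\rho\lambda\simeq \mathrm{id}$: for a basic $U\in\cB$ the $\infty$-category $\Ent(U)\simeq \cB_{/U}$ has the identity on $U$ as a terminal object, so $\tau(U)$ is the Yoneda image of $U$ and hence $\widetilde{G}(\tau(U))\simeq G(U)$ naturally in $U$.

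The substantive content is the remaining identification $\lambda\rho\simeq \mathrm{id}$ on $\amalg$-excisive functors. Fix $F\in\Fun_{\amalg\text{-}\sf exc}(\mfld(\cB),\cC)$; by the two previous observations, $F$ and $\lambda(\rho F)$ are both $\amalg$-excisive and agree on the subcategory $\cB\hookrightarrow\mfld(\cB)$. To conclude they agree on all of $\mfld(\cB)$ I would invoke Theorem~\ref{open-handles}: every $\cB$-manifold admits a sequential open cover by finitary $\cB$-manifolds, and every finitary $\cB$-manifold is obtained from basics by a finite sequence of collar-gluings. The plan is then an induction on the number of handles in such a presentation, using the collar-gluing excision axiom to reduce the value of a functor on $X$ to values on strictly simpler pieces; the sequential-colimit axiom then extends the identification from finitary to arbitrary $\cB$-manifolds.

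The main obstacle will be formalizing this induction, since one must show that in each presentation furnished by Theorem~\ref{open-handles} the three constituents $X_{\geq -\infty}$, $X_{\leq\infty}$, $\RR\times\partial$ of a collar-gluing are themselves $\cB$-manifolds admitting a handle decomposition strictly simpler than that of $X$. The first point is automatic because $\mfld(\cB)\to\snglr$ is a right fibration and $\cB$-structures therefore restrict along open embeddings; the second is precisely what the open handle decomposition provides. Given this, the induction plus cofinality of the finitary filtration in the sequential colimit completes the verification that $\rho$ and $\lambda$ are mutually inverse equivalences between $\Fun(\cB,\cC)$ and $\Fun_{\amalg\text{-}\sf exc}(\mfld(\cB),\cC)$.
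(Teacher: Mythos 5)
Your proof is essentially the paper's proof, merely phrased with explicit mutually inverse functors $\rho$ and $\lambda$ rather than as the adjunction $(\iota_!,\iota^\ast)$: your $\lambda$ is exactly the left Kan extension $\iota_!$, which the paper also realizes as $\colim\circ\Psh_{\sf seq\text{-}fin}(-)\circ\tau$ via Proposition~\ref{tangent-finite}, and the three verifications you propose (collar-gluings via Lemma~\ref{collar-pushout}, sequential covers via Lemma~\ref{E-covers}, and identity on basics via the terminal object of $\bsc_{/U}$) are the same as in the paper. The one place you can sharpen your argument is the final step, where you worry about extracting a well-founded ``number of handles'' from Theorem~\ref{open-handles}; no such count is needed. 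Let $\cS\subset\mfld(\cB)$ be the full subcategory on which the counit (your comparison $\lambda\rho F\to F$) is an equivalence. You have shown $\cS\supset\cB$ and that $\cS$ is closed under collar-gluings and sequential unions. Since $\mfld^{\sf fin}(\cB)$ is \emph{defined} (Definition~\ref{defn.B-collar-finitary}) as the smallest such subcategory, $\cS\supset\mfld^{\sf fin}(\cB)$ automatically, and then Theorem~\ref{open-handles}(2) plus closure under sequential unions gives $\cS=\mfld(\cB)$. This closure argument is what the paper uses and dissolves the obstacle you flagged; everything else in your write-up is correct and matches the paper's route.
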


\begin{proof}[Proof of Theorem~\ref{theorem.1-excision}]
The fully faithful inclusion $\iota\colon \bsc \to \snglr$ gives the (a priori, partially defined) adjunction
\[
\iota_!\colon \Fun(\bsc,\cC)\rightleftarrows \Fun(\snglr,\cC)\colon \iota^\ast
\]
with right adjoint given by restriction along $\iota$ and with left adjoint given by left Kan extension.
While $\iota^\ast$ is everywhere defined, we argue that $\iota_!$ is defined.
For this, notice that its value of $\iota_!$ on $\cB\xra{\cA} \cC$ is the composition
\[
\iota_! \cA\colon \snglr \xra{\Ent} \Psh_{\sf seq\text{-}fin}(\bsc) \xra{\Psh_{\sf seq\text{-}fin}(\cA)}\Psh_{\sf seq\text{-}fin}(\cC) \xra{\colim}\cC
\]
where the leftmost arrow is through Proposition~\ref{tangent-finite}, and the rightmost arrow exists because $\cC$ admits finite pushouts and sequential colimits, by hypothesis.

We will now show that this $(\iota_!,\iota^\ast)$-adjunction restricts to an equivalence 
\[
\Fun(\bsc,\cC) ~ \simeq~ \Fun_{\amalg\text{-}{\sf exc}}(\snglr,\cC)~.
\]  
The $(\iota_!, \iota^\ast)$-adjunction is a colocalization because $\iota$ is fully faithful.  
That $\iota_!$ factors through the $\amalg$-excisive functors follows from Lemma~\ref{E-covers} and then Lemma~\ref{collar-pushout}.  
It remains to show the counit of the $(\iota_!,\iota^\ast)$-adjunction is an equivalence.  
The full $\infty$-subcategory of $\snglr$ for which this is the case contains $\bsc$, and, manifest from the definition of $\amalg$-excisive, is closed under the formation of collar-gluings and sequential unions.  
The result follows from Theorem~\ref{open-handles}.  
\end{proof}

\clearpage

\part{Differential topology of conically smooth stratified spaces}

Here are three classical results in differential topology. In each of the following, let $M$ be a smooth manifold.
\begin{enumerate}
\item \textbf{Tubular neighborhoods:} For $W\subset M$ a properly embedded submanifold of $M$, there is a sphere bundle $\pi:\Link_{W}(M) \ra W$ associated to a tubular neighborhood of $W$.  Setting $\sC(\pi)\to W$ to be the fiberwise open cone on $\Link_{W}(M)\ra M$, the locus of cone points defines a section $W \to \sC(\pi)$, and one obtains an open embedding $\sC(\pi) \hookrightarrow M$ under $W$. 

\item \textbf{Handlebody decompositions:} If $M$ is the interior of a compact manifold with boundary, then $M$ admits a finite open handlebody decomposition.  
\item \textbf{Compact exhaustions:} There is a sequence of codimension-zero compact submanifolds with boundary $M_0 \subset M_1\subset \dots \subset M$ whose union is $M$.
\end{enumerate}
In Part 3 we establish versions of~(1),~(2), and~(3) for conically smooth stratified spaces.

To arrive at a notion of tubular neighborhoods for conically smooth stratified spaces, we first develop a notion of stratified spaces with corners, and show that every conically smooth stratified space can be resolved by a smooth manifold with corners. The rough idea is to replace a stratum $X_k \subset X$ by a spherical blow-up -- as an example, if $X_k$ is a smooth submanifold, we replace $X_k \subset X$ by the sphere bundle associated to its normal bundle. In general this results in a stratified space \emph{with boundary}. Iterating this spherical blow-up stratum by stratum, one arrives at an ordinary smooth manifold with corners. This story is the topic of \S\ref{sec.boundary-corners}, where we call this resolution \emph{unzipping}. Then the existence of collar neighborhoods of corners, which is classical, gives the existence of tubular neighborhoods of $X_k\subset X$, which is our stratified version of~(1). See Corollary~\ref{unzip-functor}.

We prove the stratified analogue of~(2) in Theorem~\ref{open-handles} by induction on depth, with base case that of ordinary smooth manifolds.
The inductive step relies on the decomposition
\[
\sC(\pi_k) \underset{\RR\times \Link_k(X)}\bigcup (X\smallsetminus X_k) \cong X
\]
which we now explain. 
Here, $X$ is a conically smooth stratified space of depth $k$, and $\Link_k(X)$ is the \emph{link} about the depth $k$ stratum of $X$. This is the generalization of the sphere bundle associated to a normal bundle, and is in fact a fiber bundle of stratified spaces as defined in Definition~\ref{def.maps}. The bundle is equipped with a projection $\Link_k(X) \xra{\pi_k} X_k$, and $\sC(\pi_k)$ is the fiberwise cone. In fact, the existence of an open embedding $\sC(\pi_k) \hookrightarrow X$ is another way to state the stratified version of~(1).  
Finally, the existence of conically smooth proper functions $X\xra{f}\RR$ gives~(3) without much trouble.

\begin{remark}\label{pseudoisotopy}
We comment on our usage of conically smooth structures on stratified spaces. In the presence of smooth structures on ordinary manifolds, (1) follows easily from the existence of open tubular neighborhoods, and from the homomorphism $\sO(d)\times\sO(n-d)\to \Emb\bigl((\RR^d\subset \RR^n), (\RR^d\subset \RR^n)\bigr)$ being a homotopy equivalence (as seen by taking derivatives, then ortho-normalizing). (2) follows easily through Morse theory, and (3) follows easily from the existence of proper smooth functions, and transversality.  
Without the presence of smooth structures, work of Kirby--Siebenmann~\cite{kirbysieb} and others accomplishes (2) and (3); however there is a genuine obstruction to (1). This is essentially because the continuous homomorphism ${\sf Homeo}(\DD^n) \xra{\nsim} {\sf Homeo}(\RR^n)$ is not a weak homotopy equivalence (see~\cite{browder}); in general the situation is marbled with pseudo-isotopy.

Accordingly, we acknowledge that much of our use of $C^\infty$ structures on stratified spaces is merely for convenience until the aspects which give us the stratified version of (1) (and we further emphasize that this can be accomplished only in the presence of \emph{$C^\infty$ structures along strata} (see Definition~\ref{def:cone-sm})).
Because of the inductive nature of stratified spaces, our methods are such that the singular version of (2), and thereafter the practice of using the singular version of (3), relies on the singular version of (1).  
Through this line of conclusion, we see the results of this article as depending fundamentally on conically smooth structures (at least along strata) of stratified spaces -- it is this local regularity that prompts the care we have taken in this article.  

\end{remark}

\section{Resolving singularities to corners}\label{sec.boundary-corners}

\subsection{Partitions of unity and generalities on bundles}\label{sec.bdl-unity}
We generalize some familiar notions from classical differential topology to the stratified setting. This is useful for several reasons: for instance, our main result on resolutions of singularity, Theorem~\ref{tot-unzip}, identifies the links of singularities as a {\em bundle} of stratified spaces (see Definition~\ref{def.maps}).

We quickly record two facts about stratified spaces which are akin to standard facts in differential topology.  
\begin{lemma}[Conically smooth partitions of unity]\label{part-o-1}
Every conically smooth stratified space admits a conically smooth partition of unity.
\end{lemma}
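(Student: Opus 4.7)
The plan is to prove the lemma by combining paracompactness of stratified spaces (built into the definition of $\snglrdC$) with the existence of conically smooth bump functions on basics, following the standard template from smooth manifold theory but with induction on depth to handle the cone directions. First I would construct, for each basic $U = \RR^i \times \sC(Z)$ and each nested pair of relatively compact open sets $V \subset \overline V \subset W \subset U$, a conically smooth function $\chi \colon U \to [0,1]$ that is identically $1$ on $V$ and supported inside $W$.

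The bump functions are built by induction on the depth of $U$. On $U = \RR^i$ (depth zero) this is the classical construction. On $U = \RR^i \times \sC(Z)$ of positive depth I would work separately on two families. For bumps supported in $U \setminus (\RR^i \times \{\ast\}) \cong \RR^i \times \RR_{>0} \times Z$, the inductive hypothesis applied to $Z$ suffices, since this open subset has strictly smaller depth. For bumps meeting the cone stratum I would take products of the form $\chi(u,[s,z]) = \phi(u)\,\psi(s)$, where $\phi \in C^\infty(\RR^i)$ is a standard smooth bump and $\psi \colon \RR_{\geq 0} \to [0,1]$ is smooth with $\psi \equiv 1$ near $0$ and $\psi \equiv 0$ for large $s$. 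Conical smoothness of such a product is verified directly against Definition~\ref{def:cone-sm}: the conjugate $\gamma^{-1} \circ f_\Delta \circ \gamma$ limits as $t \to 0$ to the usual derivative of $\phi$ in the $\RR^i$ direction combined with the right-derivative of $\psi$ at $0$, both of which exist by smoothness, and the higher-derivative conditions are inherited inductively from smoothness of $\phi$ and $\psi$ together with the fact that $\gamma_{t,u}$ acts separately on the Euclidean and cone coordinates.

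With conically smooth bumps in hand, the rest is the familiar paracompactness argument. Given any open cover of $X$, I would use Proposition~\ref{prop.basics-form-a-basis} to refine it to a cover by chart images $\{\phi_\alpha \colon U_\alpha \hookrightarrow X\}$ with $U_\alpha$ basic, then use paracompactness of $X$ to pass to a locally finite refinement, and then invoke the standard shrinking lemma for paracompact Hausdorff spaces to produce relatively compact nested open sets $V_\alpha \subset \overline{V_\alpha} \subset W_\alpha \subset U_\alpha$ whose images $\{\phi_\alpha(V_\alpha)\}$ still cover $X$. Pulling back the bumps from the first step along $\phi_\alpha$ and extending by zero gives conically smooth functions $\chi_\alpha \colon X \to [0,1]$; the extension by zero is conically smooth because $\chi_\alpha$ already vanishes on $U_\alpha \setminus W_\alpha$, so the sheaf condition for conically smooth functions forces smoothness on the overlap with the complement of $\phi_\alpha(\overline{W_\alpha})$. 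Local finiteness of the cover makes $\sigma := \sum_\alpha \chi_\alpha$ a locally finite sum, hence conically smooth by Proposition~\ref{c-infty-compose}, and $\sigma$ is strictly positive because the $\phi_\alpha(V_\alpha)$ cover; normalizing to $\chi_\alpha/\sigma$ yields the desired partition of unity.

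The main obstacle is the verification of conical smoothness of the product bumps at the cone stratum in the first step; every other argument is parallel to the smooth case. The cleanest way to isolate the point is to show that the class of conically smooth real-valued functions on a basic is closed under products and contains the coordinate projections together with all radial functions $\psi(s)$ with $\psi \in C^\infty(\RR_{\geq 0})$. This is a short induction on depth using the explicit form of $\gamma$ in~\S\ref{sec.conical-smoothness}, after which the remainder of the proof is formal.
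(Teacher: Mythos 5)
Your overall strategy---paracompactness plus conically smooth local bump functions supported in basic charts---is exactly the one the paper uses, and your paracompactness/shrinking/normalization argument in the second half is the same standard routine. The one place your construction differs is in the local bump itself: you take the product $\phi(u)\psi(s)$ and then must verify conical smoothness of the product, which you rightly flag as the main technical obstacle and which forces an auxiliary induction on depth. The paper sidesteps this by using the cone--join identification $\RR^i\times \sC(Z)\cong \sC(S^{i-1}\star Z)$ and taking the \emph{radial} bump $\phi' \circ {\sf pr}_{\RR_{\geq 0}}$ on $\sC(S^{i-1}\star Z)$, which is conically smooth essentially by inspection (it is a conically smooth projection to $\RR_{\geq 0}$ followed by an ordinary smooth bump) and requires no separate depth induction---so if you want to tighten your write-up, replacing the product bump with the radial one eliminates the verification you identify as the crux.
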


\begin{proof}
This follows the classical arguments founded on the existence of smooth bump functions -- the essential observations are these:
\begin{itemize}
\item Conically smooth open embeddings $U\hookrightarrow X$ from basics to a stratified space, form a basis for the underlying topology of $X$ -- this is Proposition~\ref{prop.basics-form-a-basis}.
\item For any $\epsilon>0$, there is a conically smooth map $\varphi'\colon \RR_{\geq 0} \to [0,1]$ which is $1$ on $[0,\epsilon/2]$ and $0$ outside $[0,\epsilon)$.  So the composition $\varphi\colon \RR^i\times \sC(Z) \approx \sC(S^{i-1}) \times \sC(Z) \approx \sC(S^{i-1}\star Z) \xra{\sf pr} \RR_{\geq 0} \xra{\varphi'} [0,1]$ is a conically smooth map whose support is in an $\epsilon$-neighborhood of $0 \in U$.  
\end{itemize}
The remaining points are typical given that $C^0$ stratified spaces are paracompact, by definition. 
\end{proof}

\begin{lemma}\label{propers}
For any conically smooth stratified space $X$, there exists a conically smooth proper map $f\colon X \to \RR$.
\end{lemma}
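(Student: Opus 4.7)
The plan is to mimic the classical argument from ordinary differential topology, using two ingredients already available: a countable locally finite cover by relatively compact basics, and conically smooth bump functions of the type produced in the proof of Lemma~\ref{part-o-1}. Because a $C^0$ stratified space is paracompact by definition (Definition~\ref{defn.stopen}) and locally compact (since each basic $\RR^i\times \sC(Z)$ is, as $Z$ is compact), a standard paracompactness argument, combined with the fact that basics form a basis (Proposition~\ref{prop.basics-form-a-basis}), yields a countable locally finite open cover $\{W_i\}_{i\in\NN}$ of $X$ by opens with $\ov{W_i}$ compact, together with opens $V_i\subset \ov{V_i}\subset W_i$ whose union still covers $X$.

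Next, I would construct conically smooth functions $\psi_i\colon X\to [0,1]$ with $\psi_i\equiv 1$ on $V_i$ and $\mathsf{supp}(\psi_i)\subset W_i$. This is the same construction used in the proof of Lemma~\ref{part-o-1}: working in a basic chart $\RR^i\times \sC(Z)\approx \sC(S^{i-1}\star Z)\xra{\sf pr}\RR_{\geq 0}$, a conically smooth cut-off in the cone coordinate pulls back to a conically smooth bump. Having these bump functions $\psi_i$, I define
\[
f\colon X \longrightarrow \RR, \qquad f(x) := \sum_{i=1}^\infty i\,\psi_i(x).
\]
Local finiteness of $\{W_i\}$ ensures that this sum is locally a finite sum of conically smooth functions, so $f$ is conically smooth by Proposition~\ref{c-infty-compose}.

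For properness, note that if $f(x)\leq N$ and $x\in V_j$ then $\psi_j(x)=1$, so $j\leq \sum_i i\,\psi_i(x) = f(x)\leq N$; hence $f^{-1}([-N,N])\subset \bigcup_{j\leq N} V_j \subset \bigcup_{j\leq N}\ov{W_j}$, which is a finite union of compact sets and therefore compact. Thus $f^{-1}([-N,N])$ is a closed subset of a compact set, hence compact, for every $N$, proving that $f$ is proper. The only step I expect to require any care is the initial choice of the nested covers $\{V_i\subset W_i\}$ together with verifying local compactness intrinsically; paracompactness and local compactness of the underlying topological space of $X$ reduce this to a routine exercise, so no substantial obstacle should arise.
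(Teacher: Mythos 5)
Your proof is correct and follows the classical route; the paper's proof is similar in spirit but differs in one detail worth noting. The paper's argument chooses a conically smooth proper map $f_\alpha\colon U_\alpha\to\RR$ on each basic of a chosen cover and a subordinate partition of unity $\phi_\alpha$, then asserts $f=\sum_\alpha\phi_\alpha f_\alpha$ is proper; as literally stated this skips a step, since $\phi_\alpha$ vanishes near the frontier of $U_\alpha$ exactly where $f_\alpha$ blows up, so properness of $\sum\phi_\alpha f_\alpha$ does not follow without, e.g., a monotone additive shift indexed by a well-ordering of the cover. Your construction $f=\sum_i i\,\psi_i$ with $\psi_i\equiv 1$ on $V_i$ and $\mathsf{supp}(\psi_i)\subset W_i$ (relatively compact) builds that weighting in from the start, and your verification that $f^{-1}([-N,N])\subset\bigcup_{j\le N}\ov{W_j}$ is exactly the right argument. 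Both proofs rely on the same inputs the paper already provides -- paracompactness, local compactness, basics forming a basis (Proposition~\ref{prop.basics-form-a-basis}), and the conically smooth cut-offs used in Lemma~\ref{part-o-1} -- so there is no gap in your version. The only point to make explicit, which you acknowledge, is that $X$ is second countable (built into Definition~\ref{defn.stopen}), hence Lindel\"of, so a countable locally finite refinement by relatively compact basics exists; once that is said, the argument is complete.
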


\begin{proof}
The statement is true for $X = U = \RR^i\times \sC(Z)$ a basic because it is true for $\RR^i$ and $\sC(Z)$ in a standard way.  
Choose a cover $\{U_\alpha \hookrightarrow X\}$ of $X$ by basics.  
Choose a conically smooth proper map $f_\alpha \colon U_\alpha \to \RR$ for each $\alpha$.
Choose a conically smooth partition of unity $\varphi_\alpha$ subordinate to this cover.  
The expression $f := \sum_\alpha \varphi_\alpha \cdot f_\alpha$ is a conically smooth proper map $X\to \RR$.  
\end{proof}

In what follows, recall the notion of a conically smooth fiber bundle from Definition~\ref{def.maps}.

\begin{lemma}\label{bundles-over-basics}
Let $E\xra{\pi} B$ be a conically smooth fiber bundle.
The following statements are true:
\begin{enumerate}
\item Let $B'\xra{f} B$ be a conically smooth map.
Then the pullback stratified space $f^\ast E := B'\underset{B}\times E$ exists, and the projection $f^\ast E \to B'$ is a conically smooth fiber bundle. 

\item
For each isomorphism $\alpha \colon B\cong B_0\times \RR^i\times \sC(Z)$, there is a conically smooth fiber bundle $E_0\to B_0$ and an isomorphism 
\[
\w{\alpha}\colon E~\cong~ E_0\times\RR^i\times\sC(Z)
\]
over $\alpha$ to a product conically smooth fiber bundle.  

\item 
For $E' \to E$ a conically smooth fiber bundle then the composition $E'\to E\to B$ is a conically smooth fiber bundle.  

\end{enumerate}

\end{lemma}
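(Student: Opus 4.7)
I would construct the pullback via local trivializations. Choose a basis $\{V_\beta \hookrightarrow B\}$ of trivializing opens for $\pi$, with isomorphisms $\psi_\beta \colon F_\beta \times V_\beta \xrightarrow{\cong} \pi^{-1}(V_\beta)$ over $V_\beta$. The preimages $f^{-1}(V_\beta)$ form an open cover of $B'$ by stratified open subspaces, and I would assemble $f^{\ast}E$ by gluing the products $F_\beta \times f^{-1}(V_\beta)$ along the transition data $\psi_\beta^{-1}\circ\psi_{\beta'}$ pulled back along $f$. Conical smoothness of $f$ ensures the resulting object is a stratified space, and the projections assemble to a conically smooth map $f^{\ast}E \to B'$. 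The products $F_\beta \times f^{-1}(V_\beta)$ are tautologically its local trivializations, so $f^{\ast}E \to B'$ is a conically smooth fiber bundle, and locality gives the pullback universal property.

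\textbf{Part (2).} This is the heart of the lemma, and I expect it to be the main obstacle. The plan is to exploit the conically smooth contraction of the basic factor of $B$. Define a conically smooth map
\[
H \colon [0,1] \times B \longrightarrow B, \qquad H\bigl(t,\, b_0,\, u,\, [s,z]\bigr) \;=\; \bigl(b_0,\, tu,\, [ts,z]\bigr),
\]
so that $H_1 = \mathrm{id}_B$ while $H_0$ factors as $B \xrightarrow{p} B_0 \hookrightarrow B$, where $p$ is the projection. Setting $E_0 := \pi^{-1}(B_0\times\{0\}\times\{\ast\})$ and observing that the trivializing opens for $\pi$ restrict to trivializing opens on the substratified space $B_0\times\{0\}\times\{\ast\}\hookrightarrow B$, the map $E_0 \to B_0$ is itself a conically smooth fiber bundle. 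Part (1) produces the pullback bundle $H^{\ast}E \to [0,1]\times B$, which restricts to $E$ at $t=1$ and to the product $E_0\times B$ (with structure pulled back along $p$) at $t=0$. The desired isomorphism $E\cong E_0\times\RR^i\times\sC(Z)$ follows once I establish the auxiliary parametrized trivialization principle: any conically smooth fiber bundle over $[0,1]\times X$ is canonically isomorphic to the product extending its fiber over $\{0\}\times X$. I would prove this by patching flow-type trivializations along the $[0,1]$-direction using conically smooth partitions of unity (Lemma~\ref{part-o-1}) together with the conically smooth isotopy extension theorem provided by tubular neighborhoods. The obstacle is precisely this parametrized trivialization: because the fibers of $\pi$ can be singular, moving trivializations along an interval requires the stratified isotopy extension theory to interact coherently with the cone-based local models.

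\textbf{Part (3).} I would exhibit a basis of trivializing opens for $E' \to B$ by a two-stage trivialization built from parts (1) and (2). Given $b\in B$, the bundle structure on $\pi$ together with Proposition~\ref{prop.basics-form-a-basis} provides a basic neighborhood $U \hookrightarrow B$ of $b$ and an isomorphism $E_{|U}\cong F\times U$ for some stratified space $F$. The restriction $E'_{|F\times U}$ is then a conically smooth fiber bundle over $F\times U$, and writing $U=\RR^i\times\sC(Z)$, I apply part (2) to this bundle with the role of ``$B_0$'' played by $F$. This yields a conically smooth fiber bundle $E'_0 \to F$ and an isomorphism $E'_{|F\times U} \cong E'_0\times U$ over $F\times U$. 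Composing with the projection to $U$ exhibits $U$ as a trivializing open for the composite $E' \to B$ at $b$, with fiber $E'_0$. Since such basic neighborhoods $U$ form a basis for the topology of $B$, this verifies the local triviality condition, completing the proof.
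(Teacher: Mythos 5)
Your parts (1) and (3) follow the paper's proof essentially verbatim: (1) is local in $B'$ so reduces to trivial bundles, and (3) applies (2) twice after localizing to a basic. Both are fine.

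Your part (2) adopts the same high-level strategy as the paper — use the conically smooth contraction of the basic factor toward its cone locus to compare $E$ with a pullback bundle from $B_0$ — but you have deferred the entire difficulty to what you call the "parametrized trivialization principle" (that a conically smooth fiber bundle over $X\times[0,1]$ is trivial along the interval) without proving it, and your proposed proof of that principle is not what the paper does. The paper's argument, after using its $\Lambda$-map to reduce to the case $B = B_0\times\RR$, proves this trivialization principle \emph{directly}: for $B_0$ compact it considers the largest connected $0\in D\subset\RR$ on which a trivialization exists and shows $D$ is both open and closed using only compactness and the definition of a conically smooth fiber bundle; for $B_0$ non-compact it chooses a proper map $B_0\to\RR$ (Lemma~\ref{propers}) and patches over compact pieces. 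Your proposed route — "conically smooth partitions of unity together with the conically smooth isotopy extension theorem" — invokes Proposition~\ref{isotopy-extension}, which appears much later in the paper and whose proof chain runs through regular neighborhoods and the unzip machinery; you would need to verify carefully that none of that depends on the present lemma, a non-trivial circularity check that the paper's self-contained connectedness argument avoids entirely. A smaller cosmetic point: parametrizing your contraction by $[0,1]$ rather than $\RR$ introduces corner strata into the base of the auxiliary bundle, which is an avoidable complication; the paper's map $\phi$ is parametrized by all of $\RR$ precisely so that the auxiliary base remains corner-free.
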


Taking $B_0 = \ast$, we obtain the following:

\begin{corollary}
Suppose $B\cong U$ is isomorphic to a basic.
Then there is an isomorphism
$
E~\cong~ U\times \pi^{-1}(0)
$
over $U$.  
\end{corollary}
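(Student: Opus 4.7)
The plan is to invoke Lemma~\ref{bundles-over-basics}(2) directly, specialized to the case of a trivial base factor. Fix an isomorphism $B \cong U = \RR^i \times \sC(Z)$, and rewrite it as the isomorphism
\[
\alpha \colon B \xra{~\cong~} \ast \times \RR^i \times \sC(Z),
\]
i.e., take $B_0 = \ast$ in the statement of part (2). This is the key move: any identification of the base with a basic automatically presents the basic as a product in which one factor is a point.

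Applying Lemma~\ref{bundles-over-basics}(2), one obtains a conically smooth fiber bundle $E_0 \to \ast$ -- which is simply the datum of a stratified space $E_0$ -- together with an isomorphism
\[
\w{\alpha} \colon E ~\cong~ E_0 \times \RR^i \times \sC(Z) ~=~ E_0 \times U
\]
lying over $\alpha$. Thus $E$ is already identified with a product $E_0 \times U$ over $U$; it remains only to pin down the stratified space $E_0$.

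To identify $E_0$, note that since $\w{\alpha}$ lies over $\alpha$, its restriction to any single fiber is an isomorphism onto the corresponding fiber of the projection $E_0 \times U \to U$. Choosing the distinguished point $0 = (0,\ast) \in U$, restriction of $\w{\alpha}$ yields
\[
\pi^{-1}(0) ~\cong~ E_0 \times \{0\} ~\cong~ E_0.
\]
Substituting this identification into the product decomposition of the previous step produces the desired isomorphism $E \cong U \times \pi^{-1}(0)$ over $U$.

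There is no serious obstacle: the corollary is essentially a trivial specialization of part (2) of the lemma, with the only content being the observation that the $B_0 = \ast$ case exhibits the total space of the bundle as the product of $U$ with a single fiber. All the real work (existence of the splitting $E \cong E_0 \times B_0$ compatible with the product structure of the basic) is already encoded in the lemma.
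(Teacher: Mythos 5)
Your proof is correct and matches the paper's approach exactly: the paper derives this corollary by taking $B_0 = \ast$ in Lemma~\ref{bundles-over-basics}(2), which is precisely your move. The only addition you make is the (entirely routine) identification of $E_0$ with $\pi^{-1}(0)$, which the paper leaves implicit.
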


\begin{proof}[Proof of Lemma~\ref{bundles-over-basics}.]
These arguments are conically smooth versions of the standard ones in differential topology.

Statement~(1) is local in $B'$, so we can assume $B'\xra{f} B$ factors through a basic $U\hookrightarrow B$ with respect to which there is an isomorphism $E_{|U} \cong E_0\times U$ over $U$.  
So we can assume $E\to B'$ is a projection $E_0\times B\to B$. 
This makes the problem obvious.  

We now prove statement~(2).
Choose a conically smooth map $\RR\times \bigl(\RR^i\times \sC(Z)\bigr) \xra{\varphi} \RR^i\times \sC(Z)$ having the following properties.
\begin{itemize}
\item There is an equality $\varphi_t(x) = x$ whenever $t$ belongs to $\RR_{\leq 0}$ or $x$ is near the origin $0\in \RR^i\times\sC(Z)$.  

\item The collection of images $\{{\sf Im}(\varphi_t)\mid t\in \RR\}$ is a basis for the topology about the origin $0\in \RR^i\times\sC(Z)$.  

\item There is an inclusion of the closure $\ov{{\sf Im}(\varphi_s)} \subset {\sf Im}(\varphi_t)$ whenever $0<t<s$, and this closure is compact. 

\end{itemize}
Choose a trivializing open cover $\cU_0$ of $B = B_0\times \RR^i\times \sC(Z)$.
Choose a conically smooth map $B_0\xra{\lambda} \RR$ such that, for each $b\in B_0$, the closure of the image $\ov{{\sf Im}(\varphi_{\lambda(b)})}$ lies in a member of $\cU_0$.
(Such a $\lambda$ exists, as in the proof of Lemma~\ref{part-o-1}.)
Consider the conically smooth map $\Lambda\colon \RR\times \bigl(B_0\times \RR^i\times \sC(Z) \bigr) \to B_0\times\RR^i\times \sC(Z)$ over $B_0$ given by $\Lambda_t\colon (b,x)\mapsto \bigl(b,\varphi_{t\lambda(b)}(x)\bigr)$.
Consider the pullback $\Lambda^\ast E \to \RR\times \bigl(B_0\times \RR^i\times \sC(Z)\bigr)$.  
From the choice of $\lambda$, in a standard way there is an isomorphism 
\[
E_{|B_0\times\{0\}}\times \RR^i\times \sC(Z)~\cong~\Lambda_1^\ast E
\]
over $B_0\times \RR^i\times \sC(Z)$.  
Because $\varphi_0$ is the identity map, this reduces statement~(2) to the case that $\RR^i\times \sC(Z)$ is simply $\RR$.  

Suppose $B_0$ is compact.  
Consider the largest connected subset $0\in D\subset \RR$ for which there exists an isomorphism $E_{|B_0\times D}\cong E_{|B_0\times\{0\}} \times D$.  
For $t$ in the closure of $D$, compactness of $B_0$ and the definition of a conically smooth fiber bundle grants the existence of $\epsilon>0$ for which there is an isomorphism $E_{|B_0\times(t-\epsilon,t+\epsilon)}\cong E_{|B_0\times\{t\}}\times (t-\epsilon,t+\epsilon)$.  
Without loss in generality, we may assume $t$ is the supremum of $D$, so $(t-\epsilon,t)\subset D$.  
Then we obtain isomorphisms that patch together as an isomorphism $E_{|B_0\times D\cup (t-\epsilon,t+\epsilon)}\cong E_{|B_0\times\{0\}} \times \bigl(D\cup (t-\epsilon,t+\epsilon)\bigr)$ over $D\cup (t-\epsilon,t+\epsilon)$.  
We conclude that $D$ is both open and closed, so $D=\RR$, thereby concluding (2) when $B_0$ is compact.  

For a non-compact $B_0$, choose a proper conically smooth map $B_0\xra{p}\RR$.
(Such a $p$ exists by way of Lemma~\ref{propers}.)
For each $a\in \ZZ$, the subspace $p^{-1}(a-1,a+1)\subset B_0$ lies in a compact subspace, and so there exists an isomorphism $\alpha_a\colon E_{|p^{-1}(a-1,a+1)\times \RR} \cong E_{|p^{-1}(a-1,a+1)\times \{0\}}\times \RR$ over $p^{-1}(a-1,a+1)\times \RR$.  
These $\alpha_a$ patch together as an isomorphism $\alpha_\infty\colon E\longrightarrow E_{|B_0\times\{0\}} \times \RR$ over $E\times \RR$.  

Statement~(3) is local in $B$, so we can assume $B$ is a basic.  
For this case, statement~(2) yields an isomorphism $E\cong E_0\times B$ over $B$.  
Another application of statement~(2) yields an isomorphism $E'\cong E'_0\times B$ over this isomorphism.  
In particular, $E'\to B$ is a conically smooth fiber bundle.  
\end{proof}

\subsection{Corners}
\label{section.corners}
The spherical blow-up of a properly embedded smooth submanifold $W\subset M$ results in a manifold with boundary.
A choice of collar for this boundary is a choice of a tubular neighborhood of $W$; and such a choice gives an identification of the boundary of this spherical blow-up, which we will take to be the \emph{link}, with the unit normal bundle $W\subset M$.  
This is an especially useful way to arrive at this link, for it is canonical. (See also~\cite{arone-kankaanrinta}.)

Likewise, the simultaneous spherical blow-up of a finite collection $\{W_\lambda\subset M\}$ of mutually transverse properly embedded smooth submanifolds results in a manifold with corners;
and the corners of this spherical blow-up, also referred to as links, are identified as the unit normal bundles of the various intersection loci -- these identifications are parametrized by collarings of corners, the spaces of which is weakly contractible.
Again, constructing links in this way is without choices.  
Toward proving the existence of tubular neighborhoods of singular strata, in this section we will introduce the notion of a stratified space \emph{with corners}.

\smallskip

Let $I$ be a finite set.
We will use the notation $\cP(I)$ for the poset of subsets of $I$, ordered by reverse inclusion: $T\leq T'$ means $T\supset T'$.
For each $T\subset I$ notice the inclusion $\cP(T)\subset \cP(I)$, and the natural identification $\cP(I)_{\leq T} \cong \cP(I\smallsetminus T)$.  
\begin{example}\label{corners}\label{example.corners}
Let $I$ be a finite set and let $T\subset I$ be a subset.
There is a stratification $(\RR_{>0})^{I\smallsetminus T}\times (\RR_{\geq 0})^T\to \cP(T)$ given by $(I\xra{x}\RR_{\geq 0})\mapsto x^{-1}(0)$.
The standard smooth structure on $\RR_{\geq 0}$ gives this stratified topological space a conically smooth structure.  
\end{example}

\begin{definition}[Topologically coCartesian]\label{top-cart}
Let $X=(X\to P)$ be a conically smooth stratified space.
A continuous map $X\xra{c}\cP(I)$ is \emph{topologically coCartesian} if it factors through a map of posets $P\xra{\ov{c}} \cP(I)$, and if the following condition is satisfied.  
\begin{itemize}
\item[]
For each subset $T\subset I$, the inclusion of the stratum $X_T\hookrightarrow X$ extends to a conically smooth open embedding
\[
(\RR_{\geq 0})^T\times X_T \hookrightarrow X
\]
over the inclusion of posets $\cP(T)\hookrightarrow \cP(I)$. 
\end{itemize}
\end{definition}

\begin{example}
If $P=\ast$ so $X$ is a smooth manifold, there is a unique topologically coCartesian map. It sends all of $X$ to $\emptyset \in \cP(I)$. 

Let $(X \to P)$ be a conically smooth stratified space, and $I=\ast$ be a singleton set. Then a topologically coCartesian map to $\cP(\ast)$ determines two subspaces of $X$: a boundary $X_\ast = c^{-1}(\ast)$, and an interior, $X_\emptyset = c^{-1}(\emptyset)$. Both are themselves stratified spaces.
\end{example}

\begin{remark}
We point out two easy facts about a topologically coCartesian map $X\xra{c} \cP(I)$.
\begin{itemize}
\item The factorization $P\xra{\ov{c}} \cP(I)$ is unique, because $X\to P$ is surjective, as noted in Remark~\ref{remark.surjective-stratification}. 
\item For each conically smooth open embedding $f\colon Y \hookrightarrow X$, the composition $Y \xra{f} X \xra{c} \cP(I)$ is a topologically coCartesian map -- this amounts to the fact that conically smooth open embeddings $(\RR_{\geq 0})^T\times Z \hookrightarrow (\RR_{\geq 0})^T\times Z$, under $Z$ and over the identity map of posets, form a base for the topology about $Z$ (after Proposition~\ref{basics.basis}).  
\end{itemize}
\end{remark}

\begin{definition}[$\mfld^{\langle I\rangle}$]\label{def.angle-corners}
Let $I$ be a finite set.
The $\Kan$-enriched category $\bsc^{\langle I\rangle}$ is as follows.
An object is a basic $U$ together with a topologically coCartesian map $U\xra{c} \cP(I)$. 
The Kan complex of morphisms from $\bigl(c\colon U\to P\xra{\ov{c}} \cP(I))$ to $\bigl(d\colon V\to P\xra{\ov{d}} \cP(I)\bigr)$ is the pullback
\[
\xymatrix{
\bsc^{\langle I\rangle}\bigl((U,c),(V,d)\bigr)  \ar[r]  \ar[d]
&
\bsc(U,V)  \ar[d]
\\
\Map_{\cP(I)}(P,Q)  \ar[r]
&
\Map(P,Q)
}
\]
where $\Map_{\cP(I)}(P,Q)$ consists of those maps respecting $\overline c$ and $\overline d$. Because the bottom horizontal map is an inclusion of sets, the top horizontal map of Kan complexes is an inclusion of components. 
Composition is evident.  
After the facts just above, it is straightforward to verify that the projection $\bsc^{\langle I \rangle} \to \bsc$ is an $\infty$-category of basics, thereby defining the $\infty$-category
\[
\snglr^{\langle I\rangle} := \mfld(\bsc^{\langle I \rangle})  
\]
of {\em conically smooth stratified spaces with $\langle I \rangle$-corners}. 
\end{definition}

Given a conically smooth stratified space with $\lag I \rag$-corners $\ov{X} = (X,c)$, the pre-image ${\sf int}(\ov{X}) := c^{-1}(\emptyset)$ is a stratified subspace of $X$. This defines a functor ${\sf int}: \snglr^{\lag I \rag} \to \snglr$.

\begin{definition}[$\mfld^{\langle I\rangle}(\cB)$]
For $\cB$ an $\infty$-category of basics, the $\infty$-category of {\em $\cB$-manifolds with $\langle I \rangle$-corners} is the pullback 
\[
\xymatrix{
\mfld^{\langle I\rangle}(\cB) \ar[r]\ar[d] 
& \snglr^{\langle I \rangle} \ar[d]^{\sf int} \\
\mfld(\cB) \ar[r]
& \snglr ~.
}
\]
We will typically denote $\cB$-manifolds with $\langle I \rangle$-corners as $\ov{X}$.  
When $I=\{\ast\}$, we use the notation
\[
\mfld^\partial(\cB) ~:=~\mfld^{\langle \{\ast\} \rangle}(\cB)
\]
and refer to its objects as $\cB$-manifolds \emph{with boundary}.  
\end{definition}

\begin{remark}
Let $I$ be a finite set.
Regard the poset $\cP(I)$ as a stratified topological space, by way of its identity map. 
The forgetful functor from conically smooth stratified spaces with $\langle I\rangle$-corners to stratified topological spaces factors through the overcategory of the space $\cP(I)$. This is used in Corollary \ref{unzip-functor}.
\end{remark}

\begin{definition}[Faces]\label{faces}
Let $I$ be a finite set.  
As a consequence of Proposition~\ref{prop.stratification-functorial}, for each subset $T\subset I$, the assignment $\ov{X}\mapsto (\ov{X}_{T}\into \ov{X}_{\leq T})$ determines a natural transformation of functors
\[
\partial_T\into \ov{\partial}_T\colon \mfld^{\langle I \rangle}(\cB) \longrightarrow \mfld(\cB)\subset \mfld^{\langle I\smallsetminus T\rangle}(\cB)~.
\]
We refer to the value $\partial_T \ov{X}$ as the \emph{$T$-face} (of $\ov{X}$), and the value $\ov{\partial}_T \ov{X}$ as the \emph{closed $T$-face} (of $\ov{X}$).
We use the special notation ${\sf int}(\ov{X}):= \partial_\emptyset \ov{X}$ which we refer to as the \emph{interior} of $\ov{X}$.  
\end{definition}

\begin{example}[Basic corners]\label{basic-corner}
We follow up on Example~\ref{corners}.
Let $I$ be a finite set, and let $T\subset I$ be a subset.
Let $X$ be a conically smooth stratified space.
Consider the stratified space $(\RR_{\geq 0})^T \times X$.
The composite continuous map
\[
c\colon (\RR_{\geq 0})^T \times X \to (\RR_{\geq 0})^T \to \cP(T) \to \cP(I)
\]
is manifestly topologically coCartesian, thereby witnessing this conically smooth stratified space as having $\langle I\rangle$-corners.   
\end{example}

\begin{remark}
We point out that the action of $\Sigma_I$ on the stratified space $(\RR_{\geq 0})^I$ does \emph{not} lift to an action as a stratified space \emph{with $\langle I\rangle$-corners}.  
This observation has the consequence that, there are stratified spaces with ``permutable corners,'' in the sense of Example~\ref{example.perm.corners}, that do not admit a $\langle I\rangle$-structure.
\end{remark}

\begin{prop}[Basic corners form a basis]\label{corner-basis}
Let $I$ be a finite set.
Let $\ov{X} = (X,c)$ be a stratified space with $\langle I \rangle$-corners.
Then the collection
\[
\Big\{\varphi\bigl( (\RR_{\geq 0})^T\times U\bigr)\Big\}
\]
is a basis for the topology of $X$ --
this collection is indexed by subsets $T\subset I$, basics $U$, and morphisms  $\varphi\colon (\RR_{\geq 0})^T\times U \hookrightarrow X$ of conically smooth stratified spaces with $\langle I \rangle$-corners (after Example~\ref{basic-corner}).
\end{prop}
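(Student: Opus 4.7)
The plan is to show that for any point $x \in X$ and any open neighborhood $V \subset X$ of $x$, there exists a morphism $\phi \colon (\RR_{\geq 0})^T \times U \hookrightarrow \overline{X}$ of stratified spaces with $\langle I \rangle$-corners (where the source carries the $\langle I \rangle$-structure of Example~\ref{basic-corner}) such that $x \in \phi\bigl((\RR_{\geq 0})^T \times U\bigr) \subset V$. Once this is established, the collection forms a basis by definition.

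First I would set $T := \overline{c}(S(x)) \in \cP(I)$, so that $x$ lies in the stratum $X_T = c^{-1}(T) \subset X$. By the topologically coCartesian hypothesis on $c$, there is a conically smooth open embedding $\psi \colon (\RR_{\geq 0})^T \times X_T \hookrightarrow X$ extending the inclusion $X_T \hookrightarrow X$ and lying over $\cP(T) \hookrightarrow \cP(I)$; in particular $\psi(0,x) = x$, so $\psi^{-1}(V)$ is an open neighborhood of $(0,x)$ in the product $(\RR_{\geq 0})^T \times X_T$. By elementary product topology, there exist $\epsilon > 0$ and an open neighborhood $O \subset X_T$ of $x$ with $[0,\epsilon)^T \times O \subset \psi^{-1}(V)$.

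Next, I would note that $X_T$ inherits the structure of a stratified space from $X$ by Lemma~\ref{lemma.strata-smooth}, and then apply Proposition~\ref{prop.basics-form-a-basis} to $X_T$ to obtain a basic $U$ together with a conically smooth open embedding $\iota \colon U \hookrightarrow X_T$ satisfying $x \in \iota(U) \subset O$. I would then choose a conically smooth self-embedding $h \colon (\RR_{\geq 0})^T \hookrightarrow (\RR_{\geq 0})^T$ fixing the origin with image contained in $[0,\epsilon)^T$, constructed coordinatewise as a product of conically smooth self-embeddings $\RR_{\geq 0} \hookrightarrow \RR_{\geq 0}$ of the desired form. Setting $\phi := \psi \circ (h \times \iota)$ yields a conically smooth open embedding $(\RR_{\geq 0})^T \times U \hookrightarrow X$ whose image lies inside $V$ and contains $x$.

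Finally, I would verify that $\phi$ is a morphism in $\snglr^{\langle I \rangle}$, which amounts to checking that the composite $c \circ \phi$ equals $(\RR_{\geq 0})^T \times U \to (\RR_{\geq 0})^T \to \cP(T) \hookrightarrow \cP(I)$ specified in Example~\ref{basic-corner}. At a point $(s,u)$, the factorwise construction of $h$ ensures $h(s)^{-1}(0) = s^{-1}(0)$, and since $\psi$ lies over $\cP(T) \hookrightarrow \cP(I)$ one computes $c(\phi(s,u)) = s^{-1}(0) \in \cP(T)$, as required. I do not expect a substantive obstacle: the statement is a formal combination of the topologically coCartesian property (which supplies the $(\RR_{\geq 0})^T$-factor), Proposition~\ref{prop.basics-form-a-basis} applied to the stratum $X_T$ (which supplies the basic $U$-factor), and the elementary observation that conically smooth self-embeddings of $(\RR_{\geq 0})^T$ fixing the origin form a local base there. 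The only mild care required is the coordinatewise choice of $h$, which is what preserves the $\langle I \rangle$-structure.
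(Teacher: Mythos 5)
Your proof is correct and fills in the paper's terse argument: the paper's own proof consists of the single sentence that the claim is ``immediate after Proposition~\ref{basics.basis} and Definition~\ref{top-cart},'' which is exactly the two ingredients you combine (the topologically coCartesian extension over $\cP(T)\hookrightarrow\cP(I)$ supplying the $(\RR_{\geq 0})^T$ factor, and Proposition~\ref{prop.basics-form-a-basis} applied to the stratum $X_T$ supplying the basic $U$). Your care in choosing $h$ coordinatewise so that $h(s)^{-1}(0)=s^{-1}(0)$ is precisely the check needed to see $\phi$ lands in $\snglr^{\langle I\rangle}$, and it is the right detail to make explicit.
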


\begin{proof}
This is immediate after Proposition~\ref{basics.basis} and Definition~\ref{top-cart} of topologically coCartesian.  
\end{proof}

\begin{example}\label{example.n-corners-manifold}
Proposition~\ref{corner-basis} immediately gives some familiar identifications.
In the case $I=\emptyset$, there is an identification $\mfld^{\langle \emptyset \rangle}(\cB) \simeq \mfld(\cB)$.  
In the case $I=\ast$, and $\cB = \sD$, there is an identification $\mfld^{\langle \ast\rangle}(\sD_n) \simeq \mfld^\partial_n$, which we use to justify the $\partial$-superscript notation. 

More generally, as the case $I=\{1,\dots,n\}$ and $\sB=\sD_n$, there is an identification $\mfld^{\langle I \rangle}_n \simeq \mfld_{\langle n\rangle}$ with $\langle n \rangle$-manifolds in the sense of~\cite{laures} (see Example~\ref{example.perm.corners}), which we use to justify the $\langle-\rangle$-notation.  
In particular, every point of a manifold with $\lag I \rag$-corners has a neighborhood covered by a chart of the form $\RR_{\geq 0}^T \times \RR^{n - |T|}$ for some $T \subset I$. 
\end{example}

\begin{example}\label{intersection-corners}
Let $\w{M}$ be an ordinary smooth $n$-manifold.
Consider an \emph{ordered} pair of transverse codimension zero submanifolds with boundary $W_1\pitchfork W_2\subset \w{M}$.
Then the intersection $\ov{X}= W_1\cap W_2\subset \w{M}$ canonically inherits the structure of an $n$-manifold with $\langle \{1,2\}\rangle$-corners.  
Here are examples of some of its faces: $\partial_{\{1,2\}} \ov{X} = \partial W_1\cap \partial W_2$, and $\partial_{\{1\}} \ov{X} = \partial W_1 \cap {\sf int}(W_2)$.  
\end{example}

\begin{observation}[Products]\label{corners-product}
Let $I$ and $J$ be finite sets.
The canonical map $\cP(I\sqcup J) \xra{\cong} \cP(I)\times \cP(J)$ is an isomorphism.  
There results a functor
\[
\snglr^{\langle I \rangle}\times \snglr^{\langle J\rangle} \xra{~\times~} \snglr^{\langle I \sqcup J \rangle}
\]
canonically lifting the product functor $\snglr\times \snglr \xra{\times} \snglr$, and that restricts to a functor $\bsc^{\langle I \rangle} \times \bsc^{\langle J\rangle} \to \bsc^{\langle I \sqcup J\rangle}$.  
\end{observation}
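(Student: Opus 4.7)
The plan is to construct the product functor explicitly and verify it descends to the corners setting. First I would verify the easy poset claim: the assignment $T \mapsto (T \cap I, T \cap J)$ gives an isomorphism $\cP(I \sqcup J) \cong \cP(I) \times \cP(J)$ with inverse $(S, S') \mapsto S \sqcup S'$. Both are order-reversing for reverse-inclusion, and the isomorphism is natural in the subsets $T \subset I \sqcup J$, so in particular $\cP(T) \subset \cP(I \sqcup J)$ corresponds to $\cP(T \cap I) \times \cP(T \cap J)$.

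Next, given $(X, c_X) \in \snglr^{\langle I \rangle}$ and $(Y, c_Y) \in \snglr^{\langle J \rangle}$, define the candidate product as the stratified space $X \times Y$ (which exists and is a stratified space by Lemma~\ref{products}/Corollary~\ref{snglr-products}) equipped with the continuous map
\[
c_{X \times Y} \colon X \times Y \xra{c_X \times c_Y} \cP(I) \times \cP(J) \xra{\cong} \cP(I \sqcup J)~.
\]
By Lemma~\ref{dim-depth} and the naturality of stratifications with respect to products (Example~\ref{example.product-poset}), this map factors through the product stratifying poset of $X \times Y$, so the first clause of Definition~\ref{top-cart} is satisfied.

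The main content is checking that $c_{X \times Y}$ is topologically coCartesian. Fix $T \subset I \sqcup J$ and write $T_I := T \cap I$, $T_J := T \cap J$. The $T$-stratum decomposes as $(X \times Y)_T = X_{T_I} \times Y_{T_J}$. By hypothesis, there are conically smooth open embeddings
\[
(\RR_{\geq 0})^{T_I} \times X_{T_I} \hookrightarrow X \qquad \text{and} \qquad (\RR_{\geq 0})^{T_J} \times Y_{T_J} \hookrightarrow Y
\]
over the respective inclusions $\cP(T_I) \hookrightarrow \cP(I)$ and $\cP(T_J) \hookrightarrow \cP(J)$. Taking the product (and reshuffling factors via the canonical identification $(\RR_{\geq 0})^{T_I} \times (\RR_{\geq 0})^{T_J} \cong (\RR_{\geq 0})^T$) yields a conically smooth open embedding
\[
(\RR_{\geq 0})^T \times (X \times Y)_T \hookrightarrow X \times Y
\]
that lies over $\cP(T_I) \times \cP(T_J) \cong \cP(T) \hookrightarrow \cP(I \sqcup J)$. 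This verifies the coCartesian condition. That the product of two morphisms of corners-structured stratified spaces is again such a morphism amounts to: (a) the product is conically smooth (Proposition~\ref{c-infty-compose}), and (b) composition with $c_{X \times Y}$ reproduces the analogous stratum-preserving data, which is immediate from the diagonal form of $c_X \times c_Y$. Functoriality on Kan-enriched morphism spaces follows from Observation~\ref{admits-pullbacks} and Remark~\ref{remark.kan-enrichments}.

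Finally, the lift of the ordinary product: forgetting $c_{X \times Y}$ recovers the underlying stratified space $X \times Y$, so the constructed functor sits over $\snglr \times \snglr \xra{\times} \snglr$ by inspection. For the restriction to basics, Corollary~\ref{basic-products} states that the product of two basics is a basic, and the $\langle I \sqcup J \rangle$-structure we built on it is exactly what identifies it as an object of $\bsc^{\langle I \sqcup J \rangle}$. The only real obstacle is bookkeeping: one must be careful that the coordinate reshuffling $(\RR_{\geq 0})^{T_I} \times (\RR_{\geq 0})^{T_J} \cong (\RR_{\geq 0})^T$ is compatible with the poset isomorphism $\cP(T_I) \times \cP(T_J) \cong \cP(T)$ simultaneously for all $T$, so that compositions of such local charts are preserved; this is automatic from the naturality of the boolean isomorphism but worth stating once and then invoking.
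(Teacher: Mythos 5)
The paper offers no proof here: the statement is labeled an \emph{Observation} and the authors evidently regard it as immediate. Your argument is correct and is the natural verification: equip $X\times Y$ with $c_{X\times Y}=(c_X\times c_Y)$ post-composed with the boolean isomorphism, identify $(X\times Y)_T\cong X_{T\cap I}\times Y_{T\cap J}$, and take the product of the two given coCartesian extensions after reordering the $\RR_{\geq 0}$ factors; functoriality and the restriction to $\bsc^{\langle I\sqcup J\rangle}$ then follow from closure of conically smooth open embeddings under products and Corollary~\ref{basic-products}.

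One small correction to the citations: Lemma~\ref{dim-depth} concerns the depth-dimension map and is not what you need for the factorization through a poset map. The relevant facts are that $X\times Y$ is stratified over $P_X\times P_Y$ (Example~\ref{example.product-poset}) and that $c_X$, $c_Y$ factor through $P_X\to\cP(I)$ and $P_Y\to\cP(J)$ by hypothesis, so $c_{X\times Y}$ factors through $P_X\times P_Y\to\cP(I)\times\cP(J)\cong\cP(I\sqcup J)$. With that substitution the write-up is complete.
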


\begin{definition}[Interior depth]\label{def:corner-depth}
Let $\ov{X}$ be a conically smooth stratified space with corners.  
We define the \emph{interior depth} of $\ov{X}$ as 
	\[
	\depth^{\lag - \rag}(\ov{X}) := \depth\bigl({\sf int}(\ov{X})\bigr) 
	\]
	the depth of the interior.  
The \emph{depth $\langle k \rangle$} stratum $\ov{X}_{\langle k \rangle} \subset \ov{X}$ is the largest sub-stratified space with corners whose interior ${\sf int}(\ov{X}_{\langle k \rangle}) = {\sf int}(\ov{X})_{k}$ is the depth $k$ stratum of the interior.  
For $k\geq -1$, we use the notation
	\[
	\snglr^{\lag I \rag}_{\leq \lag k\rag} ~\subset ~ \snglr^{\lag I \rag}
	\]
for the full subcategory consisting of those conically smooth stratified spaces with interior depth $\leq k$.  
\end{definition}

\begin{warning}
The interior depth of a conically smooth stratified space with corners $\ov{X}=(\ov{X},c)$ does \emph{not} typically agree with the depth of the underlying stratified space $\ov{X}$.  
For instance, as a stratified space, $(\RR_{\geq 0})^I$ has depth $|I|$, the cardinality of $I$, while as a stratified space with corners via Example~\ref{basic-corner}, it has interior depth zero.  
\end{warning}

We will make use of interior depth to induct on it, which is premised on the following example.  
\begin{example}
Let $U = \RR^i\times(\RR_{\geq 0})^{I}\times \sC(Z)$ be a basic with corners.
Then $\depth^{\lag -\rag}(U) = {\sf dim}(Z) +1$.  
Consider the stratified space with corners $\w{U}:=\RR^i\times (\RR_{\geq 0})^{I}\times \RR_{\geq 0}\times Z$.
Then 
\[
\depth^{\lag-\rag}(\w{U}) = \depth^{\lag-\rag}(Z) \leq {\sf dim}(Z)< \depth^{\lag-\rag}(U)~.
\]   
\end{example}

\begin{example}
Let $\ov{Y} = \RR^I_{\geq 0} \times X$ be a stratified space with $\lag I \rag$-corners as in Example~\ref{basic-corner}. Then $\ov{Y}_{\lag  k \rag} = \RR^I_{\geq 0} \times {\sf int}(X)_{\geq k}$, where ${\sf int}(X)_{\geq k}$ is the sub-stratified-space of ${\sf int}(X)$ of depth $\geq k$.
\end{example}

\subsection{Unzipping}\label{sec:unzip}
Here we use conical smoothness to associate to each $n$-dimensional conically smooth stratified space an $n$-manifold with corners in the sense of~\cite{laures} (see Example~\ref{example.mfld-corners}).
We call this association \emph{unzipping}, and see it as a functorial resolution of singularities. 
This construction appears very close to that previously introduced in~\cite{almp}, which is based on unpublished work of Richard Melrose.
As we will see later on, unzipping serves as our conduit for efficiently transferring results in classical differential topology to the conically smooth setting. 

The basic picture is that the conically smooth stratified space $\sC(Z)$ admits a conically smooth map from $Z \times \RR_{\geq 0}$ collapsing $Z \times \{0\}$ to the cone point. We view this map as resolving the singularity at the cone point by a stratified space with boundary $Z \times \{0\}$. By iterating this process for loci of all depths, one obtains from $X$ a conically smooth stratified space $\unzip(X)$ whose local structure is no longer determined by arbitrary singularity types, but by $\lag I\rag$-corners. (See Figure~\ref{image.unzipping}.) 

We characterize this construction using a universal property in Definition~\ref{def.unzip}. In Lemma~\ref{unzip-facts}, we show that the local construction satisfies this universal property, and that the local construction can be glued together. We exhibit the functor $\unzip$ in Theorem~\ref{tot-unzip}. 

\begin{figure}
		\[
		\xy
		\xyimport(8,8)(0,0){\includegraphics[height=0.8in]{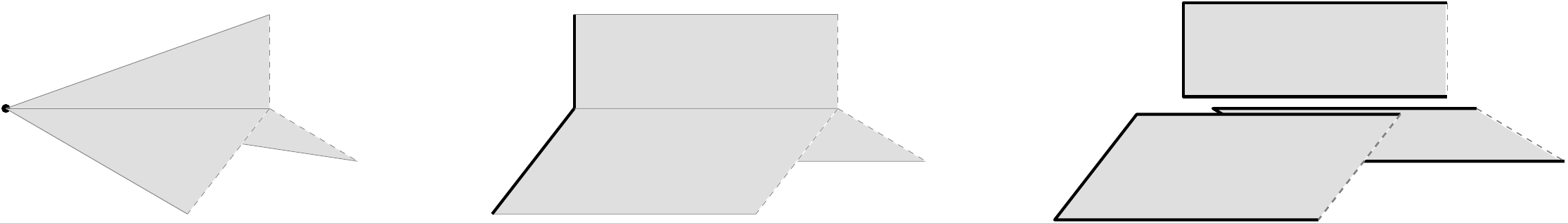}}
		\endxy
		\]
	\begin{image}[Unzipping]\label{image.unzipping}
The leftmost space is the open cone $X = \sC(Y)$ on a compact graph $Y$ with three exterior vertices and one interior vertex. By successively resolving the strata of greatest depth, can resolve $X$ by $\RR_{\geq 0} \times Y$, then by a space whose only singularities are $\lag I \rag$-corners.
	\end{image}
\end{figure}

\begin{definition}[$\unzip_k$]\label{def.unzip}
Let $k$ be a finite cardinality.
Let $\ov{X} = (X,c_X)$ be a conically smooth stratified space with $\lag I\rag$-corners, whose interior depth is at most $k$.  
Consider the category ${\sf Res}_{\ov{X}}^k$ defined as follows.
An object is a conically smooth stratified space with $\lag \ast\sqcup I\rag$-corners $(\ov{Y}\xra{c_Y} \cP(\ast \sqcup I))$, and a conically smooth map $p\colon \ov{Y} \to X$, satisfying the following conditions.
\begin{itemize}
\item The map $\ov{Y}\xra{p} X \xra{c_X} \cP(I)$ agrees with the projection $\ov{Y} \xra{c_Y} \cP(\ast \sqcup I) \cong \cP(\ast)\times \cP(I) \to \cP(I)$.  
\item The two subspaces
\[
\ov{\partial}_\ast \ov{Y} , p^{-1}\ov{X}_{\langle k\rangle}~\subset Y
\]
agree. 
\end{itemize}
We will typically denote such an object simply as $\ov{Y} \xra{p} X$.  
A morphism from $(Y,c_Y,p)$ to $(Y',c_{Y'}, p')$ is a conically smooth map $\ov{Y} \to \ov{Y}'$ over $\cP(\ast \sqcup I)\times X$. 
Composition is evident.  
A \emph{$k$-unzip} of $X$,
\[
\pi_k\colon \unzip_k(X) \longrightarrow X~,
\]
is a final object of ${\sf Res}_X^k$.  
\end{definition}

\begin{definition}[Links]\label{def.link}
For $\unzip_k(X)\xra{\pi_k} X$ a $k$-unzip, we will use the notation 
 \[
 \Link_k(X) \xra{\pi_k} X_k
 \]
for the restriction $(\pi_k)_|\colon \ov{\partial}_{\{\ast\}}\unzip_k(X) \to X_k$, and refer to the domain as a \emph{link} (of $X_k\subset X$).  
\end{definition}

\begin{example}
Let $W\subset M$ be a properly embedded smooth submanifold of an ordinary smooth manifold.  
Via Example~\ref{example:Ekn}, regard this data as a conically smooth stratified space, with singular locus $W$, and with depth equal to the codimension, $d$.  
Consider a smooth map $(0,1]\xra{\gamma} M$ that meets $W$ transversely at $1$, and only at $1$.  
This data is an example of an object of ${\sf Res}_{W\subset M}^d$.  
Another example of an object of ${\sf Res}_{W\subset M}^d$ is the spherical blow-up of $M$ along $W$, which is also a $d$-unzip. (See~\cite{arone-kankaanrinta}.) So a link of $W\subset M$ is given by a unit normal bundle.  
\end{example}

\begin{lemma}[$\unzip_k$ facts]\label{unzip-facts}
Let $I$ be a finite set, and let $X=(X,c)$ be a conically smooth stratified space with $\lag I\rag$-corners.
Let $k$ be a finite cardinality, and suppose the interior depth of $X$ is at most $k$.
\begin{enumerate}
\item If a $k$-unzip of $X$ exists, then it is unique, up to unique isomorphism in ${\sf Res}_X^k$.

\item If the interior depth of $X$ is less than $k$, then the identity map 
\[
X\xra{=} X~, 
\]
together with the corner structure $X\xra{c} \cP(I) \hookrightarrow \cP(\ast \sqcup I)$, is a $k$-unzip of $X$.  
\item Let $O\xra{f}X$ be a conically smooth map among stratified spaces with $\lag I\rag$-corners.
If $\unzip_k(X)\to X$ is a $k$-unzip of $X$,  
then the pullback stratified topological space 
\[
\unzip_k(O):= O\underset{X}\times \unzip_k(X)
\]
has a canonical structure of a conically smooth stratified space with $\cP(\ast \sqcup I)$ corners, and the projection $\unzip_k(O) \to O$ is a $k$-unzip of $O$.  
In particular, the full subcategory of $\snglrd^{\lag I\rag}_{\leq k}$ consisting of those $X$ that admit a $k$-unzip is a sieve; and on this sieve, 
\[
\unzip_k\colon \snglrd^{\lag I\rag}_{\leq \lag k\rag}~ \dashrightarrow ~ \snglrd^{\lag \ast \sqcup I\rag}_{<\lag k\rag}
\]
can be made into a covariant functor.

\item
Let $\cU$ be a hypercovering sieve of $X$.  
Suppose each $O\in \cU$ admits a $k$-unzip, $\unzip_k(O)\to O$.
Then the colimit stratified topological space 
\[
\unzip_k(X):= \underset{O\in \cU}\colim \unzip_k(O)
\]
admits a structure of a conically smooth stratified space with $\lag \ast \sqcup I\rag$-corners, and the map $\unzip_k(X) \to X$ is a $k$-unzip.  
\item 
If $\unzip_k(X) \to X$ is a $k$-unzip and $M$ is a manifold with $\lag J\rag$-corners,
then the product 
\[
M\times \unzip_k(X) \to M\times X 
\]
is a $k$-unzip.    
\item
For $Z$ a nonempty compact conically smooth stratified space of dimension $(k-1)$, the quotient map
\[
\RR_{\geq 0}\times Z \to \sC(Z)
\]
is a $k$-unzip.  
\end{enumerate}
\end{lemma}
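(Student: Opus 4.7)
The goal is to verify that $\pi \colon \RR_{\geq 0} \times Z \to \sC(Z)$ is a final object of the category ${\sf Res}_{\sC(Z)}^k$ of Definition~\ref{def.unzip}. First I would confirm $\pi$ belongs to this category: equip $\RR_{\geq 0} \times Z$ with the $\lag \ast \rag$-corner structure of Example~\ref{basic-corner} whose $\ast$-face is $\{0\} \times Z$; then its interior depth equals $\depth(Z) \leq k-1 < k$, and since $Z$ is nonempty the closed $\ast$-face $\{0\} \times Z$ coincides with $\pi^{-1}(\{\ast\}) = \pi^{-1}\bigl(\sC(Z)_{\lag k \rag}\bigr)$, as the cone point is the unique depth-$k$ stratum of $\sC(Z)$. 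Conical smoothness of $\pi$ is immediate from Lemma~\ref{cone-transformation}.

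Given an arbitrary object $(\ov{Y} \xra{p} \sC(Z)) \in {\sf Res}_{\sC(Z)}^k$, I would construct a lift $\w{p} = (\rho, \sigma) \colon \ov{Y} \to \RR_{\geq 0} \times Z$ in two components. The radial component $\rho$ is forced to be the composite $\ov{Y} \xra{p} \sC(Z) \to \RR_{\geq 0}$ with the tautological conically smooth projection $[s,z] \mapsto s$ (which sends the cone point to $0$). On the interior ${\sf int}(\ov{Y}) = \ov{Y} \smallsetminus \ov{\partial}_\ast \ov{Y}$ the map $p$ lands in $\sC(Z) \smallsetminus \{\ast\} \cong \RR_{>0} \times Z$, so the angular component $\sigma$ is uniquely determined there by composing $p$ with the projection to $Z$. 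Uniqueness of $\w{p}$ and the commutation $\pi \circ \w{p} = p$ are automatic on the interior, and the extension is forced on $\ov{\partial}_\ast \ov{Y}$ by continuity together with the requirement that $\w{p}$ preserve the $\lag \ast \rag$-corner structure (so $\w{p}(\ov{\partial}_\ast \ov{Y}) \subset \{0\} \times Z$).

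The crux is extending $\sigma$ conically smoothly across $\ov{\partial}_\ast \ov{Y}$. This is local, so I would work in a collar chart $\RR_{\geq 0} \times U \hookrightarrow \ov{Y}$ supplied by the topologically coCartesian condition of Definition~\ref{top-cart}, with $U = \RR^i \times \sC(Z')$ a basic of depth $< k$. Using the isomorphism $\RR_{\geq 0} \times U \cong \RR^i \times \sC(\ast \star Z')$ coming from Corollary~\ref{basic-products} and the identification $\sC(\ast) \times \sC(Z') \cong \sC(\ast \star Z')$, the local restriction of $p$ is a conically smooth map between basics sending cone stratum to cone stratum. Writing $p(t,u) = [S(t,u), \zeta(t,u)]$ for $t > 0$, the scaling-equivariance of the derivative $D_0 p$ (Lemma~\ref{deriv-linear}) combined with $S(t,u) > 0$ for $t > 0$ (which follows from $p^{-1}(\ast) = \ov{\partial}_\ast \ov{Y}$) shows that $\zeta$ extends conically smoothly to $t = 0$ by the limit $\zeta(0,u) := \lim_{t \to 0^+} \zeta(t,u)$, yielding the local extension of $\sigma$. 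The main obstacle, and the step requiring the most care, is verifying this extension is intrinsic to $\ov{Y}$—independent of the chosen collar—and globally conically smooth; both follow from the chain rule (Lemma~\ref{smooth-compose}) applied to conically smooth transitions between collars, since such transitions preserve the $\lag \ast \rag$-face and hence commute with the $t \to 0^+$ limit. Once the patched $\sigma$ is in hand, $\w{p} = (\rho, \sigma)$ is conically smooth by construction, and $\pi \circ \w{p} = p$ holds globally by agreement on the dense interior and continuity at the boundary.
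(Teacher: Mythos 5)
Your proposal addresses only item~(6) of the six-part lemma. The paper dispatches items~(1)--(5) quickly (uniqueness of final objects; inspection of definitions; universal properties of pullbacks; the sheaf property of $C^\infty(-,X)$ combined with pullbacks of open covers being open covers; and applying~(3) to a product projection), but they are part of the statement and your proof says nothing about them.

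For item~(6), your strategy is in essence the same as the paper's: confirm that $\pi\colon\RR_{\geq 0}\times Z\to\sC(Z)$ is an object of ${\sf Res}^k_{\sC(Z)}$, then build the lift $\w p$ to it from an arbitrary object $(\ov Y\xra{p}\sC(Z))$ by extracting radial and angular components, working locally in a collar chart $\RR_{\geq 0}\times U\hookrightarrow\ov Y$ and extending the angular part across the boundary using conical smoothness of $p$, finally patching. So this is not a genuinely different route.

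There are, however, two substantive problems with the way you justify the crux step. First, Lemma~\ref{deriv-linear} is the wrong tool: that lemma concerns the deformation retraction of origin-preserving self-embeddings $\bsc^0(U,U)$ onto $\mathsf{GL}(U)$; the map $p$ at hand is neither a self-embedding nor origin-preserving in the relevant sense, and ``scaling-equivariance of $D_0p$'' is not what supplies the extension. What actually does the work is the continuous extension $\w{D}p$ of $\gamma^{-1}p\gamma$ guaranteed by Definition~\ref{def:cone-sm}, which the paper exploits directly through a commutative diagram built around $\gamma$ (with $\gamma(s,t,w)=(s,st,w)$, scaling the $\RR_{\geq 0}$-collar coordinate) and the map $\iota\colon(v,t,z)\mapsto(t,v,(1,z))$; the dashed arrow in that diagram exists precisely because $p^{-1}(\ast)=\{0\}\times U$. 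Second, your phrase ``sending cone stratum to cone stratum'' under the identification $\RR_{\geq 0}\times U\cong\RR^i\times\sC(\ast\star Z')$ glosses over an important distinction: the cone stratum of $\RR^i\times\sC(\ast\star Z')$ is only the copy $\RR^i\times\{\text{cone pt}\}$ (where both radial coordinates vanish), whereas the constraint $p^{-1}(\ast)=\ov{\partial}_\ast\ov Y$ asserts that the entire codimension-one face $\{0\}\times U$ collapses to $\ast$. The extension of the angular component must be established along that whole face, not merely near the cone point of $\sC(\ast\star Z')$, and this is exactly what the paper's conjugation by the collar-scaling $\gamma$ --- rather than by the basic's own cone-scaling --- is engineered to do. As written, your proposal does not establish that $\lim_{t\to 0^+}\zeta(t,u)$ exists uniformly across the face, nor that the resulting extension is conically smooth there; you should replace the appeal to Lemma~\ref{deriv-linear} with a careful use of the collar-adapted extension $\w{D}p$ and the $\iota$-composition.

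Finally, a minor point: citing Lemma~\ref{cone-transformation} for the conical smoothness of the quotient $\pi\colon\RR_{\geq 0}\times Z\to\sC(Z)$ is not quite on target either; that lemma concerns the existence of an atlas on $\RR^i\times\sC(Z)$ given one on $Z$. The conical smoothness of $\pi$ is better seen directly from the construction of the atlas on $\sC(Z)$ and the Definition~\ref{def:cone-sm} test.
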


Statements (5) and (6) say $k$-unzips exist for basic corners, as defined in Proposition~\ref{corner-basis}. Moreover, (4) tells us we can glue $k$-unzips together, so we conclude that a $k$-unzip exists for any $X$ as in the hypothesis of the Lemma. By the universal property for $\unzip_k$ as a final object, this assignment is functorial, so we have the following:

\begin{cor}
The $k$-unzip defines a functor
\[
\unzip_k\colon \snglrd^{\lag I\rag}_{\leq \lag k\rag} \to \snglrd^{\lag \ast \sqcup I\rag}_{<\lag k\rag}.
\]
\end{cor}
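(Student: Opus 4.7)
The plan is to verify that every object of $\snglrd^{\lag I \rag}_{\leq \lag k \rag}$ admits a $k$-unzip, after which the desired functor is supplied directly by statement (3) of Lemma~\ref{unzip-facts}. For existence, I would first handle the local case. By Proposition~\ref{corner-basis}, each $X$ in $\snglrd^{\lag I \rag}_{\leq \lag k \rag}$ admits a basis for its topology by open embeddings from stratified spaces of the form $(\RR_{\geq 0})^T \times U$, where $T \subset I$ and $U = \RR^i \times \sC(Z)$ is a basic with $\depth(U) \leq k$. For such a local model, if $\depth(U) < k$ then statement (2) of the lemma gives that the identity map serves as a $k$-unzip; if $\depth(U) = k$ (so that ${\sf dim}(Z) = k - 1$), statement (6) produces a $k$-unzip $\RR_{\geq 0} \times Z \to \sC(Z)$, and statement (5), applied with the manifold with $\lag T \rag$-corners $\RR^i \times (\RR_{\geq 0})^T$, promotes this to a $k$-unzip of the full basic corner.

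Next I would globalize via statement (4): fix a hypercovering sieve $\cU \subset \snglrd^{\lag I \rag}_{/X}$ consisting of basic corners. Statements (3) and (1) together furnish canonical gluing data on overlaps, since for any inclusion of members $O \hookrightarrow O'$ of $\cU$ the pullback of the $k$-unzip of $O'$ along $O \hookrightarrow O'$ is itself a $k$-unzip of $O$, hence uniquely isomorphic to the one constructed directly. Statement (4) then assembles $\unzip_k(X) \to X$ as the colimit, equipped with its canonical $\lag \ast \sqcup I \rag$-corner structure. Having established existence for every $X$, statement (3) declares that the full subcategory of $\snglrd^{\lag I \rag}_{\leq \lag k \rag}$ on those objects admitting a $k$-unzip is a sieve carrying a canonical covariant functor $\unzip_k$ to $\snglrd^{\lag \ast \sqcup I \rag}_{<\lag k \rag}$; since this subcategory is now all of $\snglrd^{\lag I \rag}_{\leq \lag k \rag}$, the corollary follows.

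The main point of care I anticipate lies in verifying that the colimit formula from (4) genuinely produces a stratified space with $\lag \ast \sqcup I \rag$-corners, rather than merely an underlying stratified topological space, which amounts to checking that the transition isomorphisms on overlaps respect the corner stratification $c\colon \unzip_k(X) \to \cP(\ast \sqcup I)$. Once this is granted, no further coherence difficulty arises: all cocycle conditions on triple overlaps are forced by the uniqueness clause (1), and functoriality with respect to arbitrary morphisms in $\snglrd^{\lag I \rag}_{\leq \lag k \rag}$ follows from the same pullback-plus-uniqueness argument already used to establish (3).
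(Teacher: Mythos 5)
Your proposal is correct and follows essentially the same route the paper takes: use parts (2), (5), (6) of Lemma~\ref{unzip-facts} to establish existence of $k$-unzips on the basic corners of Proposition~\ref{corner-basis}, use part (4) to glue these to a $k$-unzip of an arbitrary $X$, and then invoke parts (3) and (1) (the universal property of a final object) for functoriality. The only remark worth adding is that the point of care you flag at the end -- whether the colimit underlying (4) genuinely carries a $\lag\ast\sqcup I\rag$-corner structure rather than merely being a stratified topological space -- is already part of the assertion of Lemma~\ref{unzip-facts}(4) and so does not need re-verification here.
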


For the next result, we use the notation $\StTop_{/\cP(I)}$ for the category of stratified topological spaces whose stratifying poset is equipped with a map to $\cP(I)$. We also make use of the functor $\snglrd^{\lag I\rag}\to \StTop_{/\cP(I)}$ given by forgetting the atlas.

\begin{cor}\label{unzip-functor}
The diagram of categories
\[
\xymatrix{
&
\snglrd^{\lag \ast \sqcup I\rag}_{<\lag k\rag}  \ar[d]
\\
\snglrd^{\lag I\rag}_{\leq \lag k\rag}  \ar[r]  \ar[ur]^-{\unzip_k}
&
\StTop_{/\cP(I)}
}
\]
commutes up to a natural transformation $\unzip_k \xra{\pi_k} 1$ exhibiting $k$-unzips. 
Second, the restriction of this natural transformation 
\[
\Link_k(X) \xra{\pi_k} X_k
\]
as defined in Definition~\ref{def.link}, is a \emph{bundle} of \emph{compact} conically smooth stratified spaces with $\lag I \rag$-corners.   
Finally, the canonical transformation 
\[
(-)_k \underset{\Link_k(-)} \coprod \unzip_k(-) \xra{~\cong~} (-)
\]
is an isomorphism.  
\end{cor}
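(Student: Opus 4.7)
The plan is to treat all three assertions as local statements on $X$ and reduce them to explicit verification on basic corners, leveraging the universal-property structure of Lemma~\ref{unzip-facts}. The functoriality part is essentially formal: the preceding corollary already produces $\unzip_k$ as a functor, and the structure map of each $k$-unzip assembles into the required natural transformation $\pi_k\colon \unzip_k \Rightarrow 1$ because any morphism $f\colon X\to Y$ in $\snglrd^{\lag I \rag}_{\leq \lag k\rag}$ induces a map of final objects in ${\sf Res}^k$, unique by Lemma~\ref{unzip-facts}(1). That the underlying continuous map lies over $\cP(I)$ in $\StTop^\delta_{/\cP(I)}$ is built into Definition~\ref{def.unzip}, so the triangle commutes.

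For the bundle claim, by Proposition~\ref{corner-basis} it suffices to work on a basic chart $\phi\colon (\RR_{\geq 0})^T \times U \hookrightarrow X$ with $U = \RR^i\times \sC(Z)$ and ${\sf dim}(Z)+1 = k$. Applying Lemma~\ref{unzip-facts}(5) and (6) (together with Observation~\ref{corners-product} to absorb the $(\RR_{\geq 0})^T$ factor) identifies the $k$-unzip on this chart with $(\RR_{\geq 0})^T\times \RR^i \times \RR_{\geq 0}\times Z$, whose $\{\ast\}$-face is precisely $(\RR_{\geq 0})^T\times \RR^i\times Z$, mapping by projection to the $k$-stratum $(\RR_{\geq 0})^T\times \RR^i$. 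This is a trivial bundle with compact fiber $Z$. Transition maps between such charts respect this product structure up to conical smoothness, and Lemma~\ref{bundles-over-basics} then upgrades this local triviality to the conclusion that $\sL_k(X)\to X_k$ is a conically smooth fiber bundle of compact stratified spaces with $\lag I\rag$-corners.

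For the pushout decomposition, on the basic chart above the claim reduces to the identity
\[
\bigl((\RR_{\geq 0})^T\!\times \RR^i\bigr) \underset{(\RR_{\geq 0})^T\!\times \RR^i\times Z}\coprod \bigl((\RR_{\geq 0})^T\!\times \RR^i\times \RR_{\geq 0}\times Z\bigr) \;\cong\; (\RR_{\geq 0})^T\!\times \RR^i\times \sC(Z),
\]
which is the definition $\sC(Z) = \ast\coprod_{\{0\}\times Z}\RR_{\geq 0}\times Z$ crossed with $(\RR_{\geq 0})^T\times \RR^i$. Since pullback along an open embedding commutes with this pushout, Lemma~\ref{unzip-facts}(3) ensures these local isomorphisms are compatible; Lemma~\ref{unzip-facts}(4) then lets one reassemble them via a hypercover to obtain the global isomorphism, as both sides compute as the colimit over the same hypercover. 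The main obstacle is verifying that the set-theoretic pushout $X_k \coprod_{\sL_k(X)} \unzip_k(X)$ actually inherits, without ambiguity, the conically smooth atlas and $\cP(I)$-stratification present on $X$ along the gluing locus — not merely the correct underlying topological space. This reduces to checking that the collar provided by the $\{\ast\}$-face structure on $\unzip_k(X)$ matches the cone-neighborhood of $X_k\subset X$, which is precisely the content of the identification $\sC(Z) = \ast\coprod_{\{0\}\times Z}\RR_{\geq 0}\times Z$ at the level of maximal atlases, and so follows once more from the basic case.
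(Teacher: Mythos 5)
Your proposal is correct and follows essentially the same strategy as the paper's own proof: reduce each claim to a local statement on basic corners $(\RR_{\geq 0})^T \times U$ via Proposition~\ref{corner-basis}, invoke the parts of Lemma~\ref{unzip-facts} (especially (3), (4), (5), (6)) to verify compatibility and reduce to the cone case $\sC(Z)$, and then inspect directly. Your invocation of Lemma~\ref{bundles-over-basics} for the bundle claim is a small detour the paper doesn't take (local triviality is already what the definition of a conically smooth fiber bundle asks for), and the paper doesn't bother to flag the "does the pushout inherit a well-defined conically smooth structure" subtlety that you address at the end, but both are reasonable and you resolve them correctly.
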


\begin{proof}[Proof of Corollary~\ref{unzip-functor}.]
Lemma~\ref{unzip-facts}(2)(5)(6) gives that $(\RR_{\geq 0})^T\times U$ admits a $k$-unzip for each basic $U$ of depth at most $k$ and each subset $T\subset I$.  
Through Lemma~\ref{corner-basis}, Lemma~\ref{unzip-facts}(4) gives the first statement of the corollary, after Lemma~\ref{unzip-facts}(3).  
By way of Lemma~\ref{unzip-facts}(3), the second statement is true if and only if it is true locally.
Using Lemma~\ref{corner-basis}, it is enough to verify the second statement for $(\RR_{\geq 0})^T\times U$ for $U$ a basic.
From Lemma~\ref{unzip-facts}(5) we need only verify the statement for $\sC(Z)$ for each compact conically smooth stratified space $Z$.
This case follows by inspecting the statement of Lemma~\ref{unzip-facts}(6).

Likewise, both sides of the arrow in the third statement send open covers to open covers, and the statement is true for $(\RR_{\geq 0})^T\times \RR^i\times \sC(Z)$, by inspection.  
\end{proof}

\begin{proof}[Proof of Lemma~\ref{unzip-facts}]
Statement~(1) is standard.
Statement~(2) follows by inspecting definitions.
Statement~(3) follows quickly from universal properties.
Statement~(4) follows because pullbacks of open covers are open covers, and conically smooth maps $C^\infty(-,X)$ between manifolds is a \emph{sheaf}.  
Statement~(5) follows using~(3) on the projection $M\times X\to X$.

Statement~(6) is farther away.
Let $Z$ be a compact $(k-1)$-dimensional conically smooth stratified space.
Let $\ov{Y} \xra{p} \sC(Z)$ be an object of ${\sf Res}_{\sC(Z)}^1$. 
By continuity, any two morphisms $f,g\colon \ov{Y} \to \RR_{\geq 0}\times Z$ in ${\sf Res}_{\sC(Z)}^1$ must be equal.
So we concentrate on the existence of such a morphism.
If the interior depth of $\ov{Y}$ is less than $k$, then $p$ factors through $\RR_{>0}\times Z$, and the morphism to $\RR_{\geq 0}\times Z$ is manifest.
So suppose the interior depth of $\ov{Y}$ is equal to $k$.  
It is enough to work locally, so we suppose $\ov{Y}= \RR_{\geq 0}\times U$ for $U$ a basic.

For $W$ a stratified topological space, there is the stratified homeomorphism $\gamma$ of $\RR_{>0}\times \RR_{\geq 0}\times W$ given by $\gamma(s,t,w)= (s,st,w)$, and in the case that $W\neq\emptyset$ it descends to a homeomorphism of the quotient $\RR_{>0}\times \sC(W)$.  
From Definition~\ref{def:cone-sm} of \emph{conical smoothness}, the dashed arrow in the diagram 
	\[
	\xymatrix{
	\RR_{\geq 0} \times (\RR_{\geq 0}\times U)  \ar@{-->}[rr]^{\w{\sD}p}
	&&
	\RR_{\geq 0} \times \sC(Z)
	\\
	\RR_{>0} \times (\RR_{\geq 0}\times U)  \ar[u] \ar[rr]^{\gamma^{-1} p\gamma}
	&&
	\RR_{>0} \times  \sC(Z). \ar[u]
	}
	\]
exists and is conically smooth.  
Let us explain the commutative diagram:
\[
\xymatrix{
\scriptstyle
\RR_{\geq 0} \times U  \ar@(d,l)[ddrrr]_{p}  \ar[rr]^-\iota
&&
\scriptstyle
\RR_{\geq 0}\times (\RR_{>0} \times U)  \ar[d]  \ar@{-->}[rr]
&&
\scriptstyle
\RR_{\geq 0} \times (\RR_{>0}\times Z)  \ar[d]   \ar[rr]^-{\sf pr}
&&
\scriptstyle
\RR_{\geq 0} \times Z  \ar@(d,r)[ddlll]^{\pi_k} 
\\
&&
\scriptstyle
\RR_{\geq 0} \times \RR_{\geq 0}\times U \ar[rr]^-{\w{\sD}p}  
&&
\scriptstyle
\RR_{\geq 0} \times \sC(Z)
&&
\scriptstyle
\\
&&
&
\scriptstyle
\sC(Z)
&
&.&
}
\]
The arrow labeled by $\iota$ is given by $\iota\colon (v,t,z)\mapsto \bigl(t,v,(1,z)\bigr)$.
The inner vertical arrows are the inclusions.  
All other arrows are as indicated.  
The dashed arrow exists because of the criterion $\{0\}\times U = p^{-1}(\ast)$;
that it satisfies the desired properties is manifest from its construction.
\end{proof}

Let $X=(X\to P)$ be a $n$-dimensional conically smooth stratified space.
After Lemma~\ref{dim-depth}, there is the map of posets $\depth\colon P\to [n]^{\op}$.  
Notice the map of posets ${\sf Max}\colon \cP(\un{n}) \to [n]^{\op}$.  
In the next result, for $\ov{M} = \bigl(\ov{M}\xra{c}\cP(\un{n})\bigr)$ an $\lag \un{n}\rag$-manifold, we will use the notation $\ov{\partial}_{\leq k}\ov{M} = c^{-1}\bigl({\sf Max}^{-1} k\bigr)\subset \ov{M}$.  Also recall the notation $\mfldd_{\lag n\rag}$ from Example~\ref{example.n-corners-manifold}.
\begin{theorem}[$\unzip$]\label{tot-unzip}
There is a functor 
\[
\pi\colon \unzip \colon \snglrd_n \longrightarrow \mfldd_{\lag n\rag}
\]
equipped with a natural transformation $\unzip \to 1$ by conically smooth maps that satisfies the following properties.
\begin{itemize}
\item For each $n$-dimensional $X=(X\to P)$, smooth maps among manifolds with $\lag n\rag$-corners $\ov{M} \to \unzip(X)$ are in bijection with conically smooth maps $\ov{M} \to X$ over the map of posets ${\sf Max}\colon \cP(\un{n}) \to [n]^{\op}$.

\item For each $0\leq k \leq n$, the restriction $\pi\colon \ov{\partial}_{\leq k} \unzip(X) \to X_k$ is a bundle of compact $\lag k-1\rag$-manifolds.  
\end{itemize}
\end{theorem}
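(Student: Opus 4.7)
The plan is to construct $\unzip$ as an iterated composite of the $k$-unzip functors of Corollary~\ref{unzip-functor}. View an $n$-dimensional stratified space $X$ as an object of $\snglrd^{\lag\emptyset\rag}_{\leq \lag n\rag}$. Set $X^{(n+1)} := X$ and inductively define $X^{(k)} := \unzip_k(X^{(k+1)})$ for $k = n, n-1, \dots, 1$, labeling at each stage the newly adjoined corner coordinate by $k$. By Corollary~\ref{unzip-functor}, the object $X^{(k)}$ lies in $\snglrd^{\lag \{k, k+1, \dots, n\}\rag}_{\leq \lag k-1\rag}$, so after $n$ iterations $\unzip(X) := X^{(1)}$ lies in $\snglrd^{\lag \un{n}\rag}_{\leq \lag 0\rag} = \mfldd_{\lag n\rag}$. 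Concatenating the conically smooth natural transformations $\unzip_k \to 1$ from Corollary~\ref{unzip-functor} yields the claimed natural transformation $\pi\colon \unzip \to 1$.

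For the universal property, induct downward on $k$, showing that conically smooth maps $\ov{M} \to X^{(k)}$ from an $\lag \un{n}\rag$-manifold $\ov{M}$, compatible with the projection $\cP(\un{n}) \to \cP(\{k, \dots, n\})$ and with ${\sf Max}\colon \cP(\un{n}) \to [n]^{\op}$, correspond bijectively to conically smooth maps $\ov{M} \to X$ over ${\sf Max}$. The inductive step invokes the universal property of $\unzip_k$ as the final object of ${\sf Res}^k_{X^{(k+1)}}$: given a map $\ov{M} \to X^{(k+1)}$ satisfying the inductive hypothesis, the preimage of the closed depth-$k$ stratum $X^{(k+1)}_{\lag k\rag}$ coincides with the closed face $\ov{\partial}_{\{k\}}\ov{M}$, since both are cut out by the condition ${\sf Max} = k$. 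This exhibits $\ov{M}\to X^{(k+1)}$ as an object of ${\sf Res}^k_{X^{(k+1)}}$ and produces the unique lift to $X^{(k)}$, completing the inductive step.

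For the bundle statement, Corollary~\ref{unzip-functor} already provides that each $\sL_k(X^{(k+1)}) \to X_k$ is a bundle of compact stratified spaces of dimension $k-1$ with corners inherited from $X^{(k+1)}$. Applying the remaining functors $\unzip_{k-1}, \dots, \unzip_1$ fiberwise -- justified by Lemma~\ref{unzip-facts}(3)(5), which records compatibility of $k$-unzips with pullback and with products, together with the local triviality of bundles over basics from Lemma~\ref{bundles-over-basics} -- turns these fibers into $\lag k-1\rag$-manifolds and assembles them into a bundle over $X_k$. This total space is identified with $\ov{\partial}_{\leq k}\unzip(X)$, since the latter is precisely the locus of $\unzip(X)$ whose corner-label $T$ satisfies $\max T = k$, and under the composite $\pi$ such points map to $X_k$.

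The main obstacle will be the bookkeeping needed to verify that $\ov{\partial}_{\leq k}\unzip(X)$ genuinely coincides with the iterated fiberwise unzip of $\sL_k(X^{(k+1)})$. This requires tracking, at each stage of the iteration, which face of the intermediate object $X^{(j)}$ descends from which stratum of $X$, with repeated appeal to the pushout decomposition $X_k \underset{\sL_k(-)}\coprod \unzip_k(-) \xra{\cong} (-)$ from Corollary~\ref{unzip-functor} to isolate the appropriate boundary piece at each step. Once this bookkeeping is in place, the functoriality, universal property, and bundle property fall out formally from Corollary~\ref{unzip-functor} and Lemma~\ref{unzip-facts}.
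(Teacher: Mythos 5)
Your proof is essentially the same as the paper's, which simply defines $\unzip := \unzip_1 \circ \cdots \circ \unzip_n$ (exactly your iterated construction $X^{(1)} = \unzip_1(\cdots\unzip_n(X))$) and then states that the two bullet-point properties ``follow from Corollary~\ref{unzip-functor}.'' Where the paper's proof is two sentences, you spell out the downward induction and the identification of $\ov{\partial}_{\leq k}\unzip(X)$ as the iterated fiberwise unzip of $\sL_k(X^{(k+1)})$ -- you correctly flag the face-tracking bookkeeping as the only nontrivial point, which the paper elides, so your expansion is in the right spirit even if not fully carried out.
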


\begin{proof}[Proof of Theorem~\ref{tot-unzip}.]
There are canonical identifications $\snglr_n \simeq \snglr_{\leq \lag n \rag,n}^{\lag \emptyset \rag}$, and $\mfld_n^{\lag n\rag} = \snglr_{\leq \lag 0\rag,n}^{\lag n\rag}$~.  
Define 
\[
\unzip: =  \unzip_1 \circ \cdots \circ \unzip_n~.
\] 
That $\unzip$ possesses the named property follows from Corollary~\ref{unzip-functor}.  
\end{proof}

\begin{lemma}\label{unzip-continuous}
There is a preferred lift of $\unzip$ and of $\unzip_k$ to a $\sf Kan$-enriched functor.  
\end{lemma}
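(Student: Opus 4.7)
The plan is to verify that $\unzip_k$ (and hence $\unzip$) satisfies the hypothesis of Remark~\ref{remark.kan-enrichments}: that it respects products with the cosimplicial manifold $\Delta^\bullet_e$ of Definition~\ref{def.extended-simplex}, naturally in the cosimplicial structure. From this the Kan-enriched lift will be essentially immediate.

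The first step will be to exhibit, for every $X$ in the relevant sieve and every $p \geq 0$, a canonical isomorphism
\[
\unzip_k(\Delta^p_e \times X) ~\cong~ \Delta^p_e \times \unzip_k(X)
\]
of stratified spaces with $\lag \ast \sqcup I \rag$-corners. This will follow directly from Lemma~\ref{unzip-facts}(5), which asserts that for any manifold with corners $M$ (and $\Delta^p_e$ is such a manifold, with $\lag\emptyset\rag$-corners), the product $M \times \unzip_k(X) \to M \times X$ is a $k$-unzip, combined with the uniqueness statement Lemma~\ref{unzip-facts}(1). Naturality of this identification in the cosimplicial variable is automatic, because the cosimplicial maps $[p] \to [q]$ induce conically smooth open embeddings $\Delta^p_e \into \Delta^q_e$, along which $\unzip_k$ is covariantly functorial by Lemma~\ref{unzip-facts}(3).

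The second step will be to enrich $\unzip_k$ on morphisms. Given a $p$-simplex $f\colon \Delta^p_e \times X \into \Delta^p_e \times Y$ in the morphism Kan complex, I will apply the pullback construction of Lemma~\ref{unzip-facts}(3) to obtain a conically smooth open embedding between $k$-unzips $\unzip_k(\Delta^p_e \times X) \into \unzip_k(\Delta^p_e \times Y)$. Transporting through the identifications of the first step yields an open embedding $\Delta^p_e \times \unzip_k(X) \into \Delta^p_e \times \unzip_k(Y)$ over $\Delta^p_e \times Y$, hence in particular over $\Delta^p_e$, which is the required $p$-simplex in the target Kan complex. Compatibility with face and degeneracy maps, and with horizontal composition of $p$-simplices, then follows formally from the functoriality clause of Lemma~\ref{unzip-facts}(3) together with the canonicity of the identifications above.

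Finally, the Kan enrichment of $\unzip$ itself will be obtained by composing the enriched lifts of the individual $\unzip_k$, through the decomposition $\unzip = \unzip_1 \circ \cdots \circ \unzip_n$ used in the proof of Theorem~\ref{tot-unzip}. The one point requiring minor attention is that each intermediate stage carries the correct corner structure and interior depth to serve as input for the next $\unzip_k$, but this is already tracked by Lemma~\ref{unzip-facts}(3) and Corollary~\ref{unzip-functor}. I do not anticipate a genuine obstacle beyond this bookkeeping: the substantive content of the lemma is concentrated in the compatibility of $\unzip_k$ with products by smooth manifolds, which Lemma~\ref{unzip-facts}(5) supplies directly.
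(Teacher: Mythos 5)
Your plan is correct and takes essentially the same route as the paper: the paper's proof likewise reduces the enrichment to Lemma~\ref{unzip-facts}(5) applied to $\Delta^\bullet_e\times(-)$, the functoriality of $\unzip_k$ from Corollary~\ref{unzip-functor}, and the observation that the resulting map lies over $\Delta^p_e$ whenever the input does. One small inaccuracy in your Step~1: the cosimplicial maps $\Delta^p_e\to\Delta^q_e$ are not open embeddings (cofaces are closed embeddings and codegeneracies are surjections), but this does not affect the argument since Lemma~\ref{unzip-facts}(3) is stated for arbitrary conically smooth maps, which is all that the simplicial structure maps require.
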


\begin{proof}
The details are straightforward so we leave them to the interested reader.  
Let $X$ and $Y$ be conically smooth stratified spaces with $\lag I\rag$-corners, each with interior depth at most $k$.
Corollary~\ref{unzip-functor} gives the map
\[
\snglrd^{\lag I \rag}(\w{X},\w{Y}) \longrightarrow  \snglrd^{\lag \ast \sqcup I\rag}\bigl(\unzip_k(\w{X}) , \unzip_k(\w{Y})\bigr)
\]
functorially among conically smooth stratified spaces $\w{X}$ and $\w{Y}$ with $\lag I \rag$-corners of depth bounded above by $k$.
Apply this to the case $\w{X} = \Delta^\bullet_e\times X$ and $\w{Y}=\Delta^\bullet_e\times Y$.
Use Lemma~\ref{unzip-facts}(5), and notice that a map $\Delta^p_e\times \unzip_k(X)\xra{\unzip_k(f)}\Delta^p_e\times \unzip_k(Y)$ lies over $\Delta^p_e$ whenever $f$ does.  
\end{proof}

The construction of $\unzip_k(X)$ can be improved from one concerning $(X_k\subset X)$, where $X_k$ is the deepest stratum of a conically smooth stratified space $X$, to a construction $\unzip_Y(X)$ concerning a pair $(Y\subset X)$, where $Y$ is a closed constructible sub-stratified space. In what appears below, $\Link_Y(X)$ is the corresponding generalization of $\Link_k(X)$ from Definition~\ref{def.link}.

\begin{prop}\label{Y-unzip}
Let $Y\hookrightarrow X$ be a proper constructible embedding among conically smooth stratified spaces.
Then there is a stratified manifold with corners $\unzip_Y(X)$ fitting into a diagram among conically smooth stratified spaces (with corner structure forgotten)
\[
\xymatrix{
\Link_Y(X)  \ar[r]  \ar[d]_-{\pi_Y}
&
\unzip_Y(X)  \ar[d]
\\
Y  \ar[r]
&
X
}
\]
satisfying the following conditions.
\begin{itemize}
\item The diagram is a pullback, and is a pushout.

\item Each arrow is a conically smooth proper and weakly constructible bundle.

\item The map $\Link_{Y}(X) \to \unzip_Y(X)$ is the inclusion of the closed face of $\unzip_Y(X)$.

\item The restriction $\unzip_{Y}(X)_{|X\smallsetminus Y} \to X\smallsetminus Y$ is an isomorphism.

\end{itemize}
Furthermore, this diagram is unique, up to unique isomorphism.  

\end{prop}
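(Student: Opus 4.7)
The plan is to combine induction on $\depth(X)$ with a local-to-global gluing argument, iterating the single-stratum construction $\unzip_k$ from the previous section.

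\textbf{Uniqueness.} Given two candidates $(\unzip_Y(X), \Link_Y(X))$ and $(\unzip_Y(X)', \Link_Y(X)')$, the identity condition on $X\setminus Y$ pins down the construction there, while the pullback-and-pushout property combined with the bundle structure rigidifies the construction over $Y$ up to canonical isomorphism. The sheaf property of conically smooth stratified maps (Lemma~\ref{sheaf}) then glues the local canonical isomorphisms into a unique global isomorphism, so the pair is determined up to unique isomorphism.

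\textbf{Reduction to basics.} Since all four listed conditions are local on $X$, an open cover $\{O_\alpha \hookrightarrow X\}$ together with local pairs (uniquely determined over $Y\cap O_\alpha$) glue via a colimit in $\snglrd$, using Lemma~\ref{some-pushouts} for pushout compatibility. By Proposition~\ref{basics.basis}, it then suffices to produce $\unzip_Y(U)$ for each basic $U = \RR^i \times \sC(Z)$ and each proper constructible $Y\hookrightarrow U$. I would then induct on $\depth(U) = \dim(Z)+1$; the base case $\depth(U)=0$ is trivial, as $Y$ is then a union of connected components of the smooth manifold $U$, and we set $\unzip_Y(U) := U$, $\Link_Y(U) := Y$.

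\textbf{Inductive step on a basic.} If $Y$ avoids the cone-locus $\RR^i\times\{\ast\}$, then $Y$ lies in the open sub-basic $\RR^i\times\RR_{>0}\times Z$ of strictly smaller depth, and the inductive hypothesis applies directly. Otherwise, apply $\unzip_k$ for $k=\depth(U)$ via Lemma~\ref{unzip-facts}(6) to obtain the resolution $\unzip_k(U)=\RR^i\times\RR_{\geq 0}\times Z$ of strictly smaller interior depth. Setting $Y' := \pi_k^{-1}(Y) \subset \unzip_k(U)$, which remains a proper constructible sub-stratified-space by the pullback stability of these classes, I define $\unzip_Y(U) := \unzip_{Y'}(\unzip_k(U))$ using the inductive hypothesis, with the natural map to $U$ being $\pi_k$ composed with the inductive projection; $\Link_Y(U)$ is then the closed $\{\ast\}$-face of the iterated construction.

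\textbf{Main obstacle.} The delicate point is to verify the pushout condition together with the bundle structure for this iteratively constructed object. The bundle property of $\Link_Y(U) \to Y$ would follow by combining the bundle property of $\pi_k\colon \sL_k(U)\to U_k$ from Corollary~\ref{unzip-functor} with the inductive bundle structure on $\Link_{Y'}(\unzip_k(U)) \to Y'$, composed via Lemma~\ref{bundles-over-basics}(3) after checking that this composition produces a constructible bundle (examining behavior stratum by stratum of $Y$); properness is inherited from compactness of the fibers of $\pi_k$ together with the inductive hypothesis. The pushout identity $Y \sqcup_{\Link_Y(U)} \unzip_Y(U) \cong U$ would follow by iterated application of Lemma~\ref{some-pushouts} combined with the already-established single-stratum pushout identity $(-)_k \sqcup_{L_k(-)} \unzip_k(-) \cong (-)$ of Corollary~\ref{unzip-functor}. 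Functoriality of the assignment $(Y\subset X) \mapsto \unzip_Y(X)$ then falls out formally from the universal pullback-pushout characterization.
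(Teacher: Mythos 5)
Your overall strategy — reduce to basics and iterate $\unzip_k$ — is the paper's in spirit, but the inductive step has a genuine gap. After applying $\unzip_k$, the ambient $\unzip_k(U) = \RR^i\times\RR_{\geq 0}\times Z$ is a stratified space \emph{with corners}, so the corner-free proposition cannot be invoked for the pair $(Y'\subset\unzip_k(U))$; one must first prove a corners-adapted version, and the paper opens its proof by doing exactly this, imposing the extra hypothesis $\ov{Y\cap{\sf int}(X)} = Y$. That hypothesis is not a formality: it is precisely what rules out your choice of $Y' := \pi_k^{-1}(Y)$. Writing $Y=\RR^i\times\sC(L)$ for a closed constructible $L\subset Z$ (this is the general case: any nonempty closed constructible subspace of a basic contains the cone stratum, so your "avoids the cone-locus" branch is vacuous), one computes
\[
\pi_k^{-1}(Y)\;=\;\RR^i\times\bigl(\{0\}\times Z\;\cup\;\RR_{>0}\times L\bigr)~,
\]
which contains the entire closed $\{\ast\}$-face $\RR^i\times\{0\}\times Z$ of $\unzip_k(U)$, including the codimension-$1$ locus $\{0\}\times(Z\smallsetminus L)$ that lies over $Y$ but is already resolved by $\unzip_k$ and along which no further blow-up should occur. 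In particular $\ov{\pi_k^{-1}(Y)\cap{\sf int}} = \RR^i\times\RR_{\geq 0}\times L\subsetneq\pi_k^{-1}(Y)$, so $\pi_k^{-1}(Y)$ fails the closure hypothesis and the iterated construction is ill-posed. The correct subspace is this closure $\RR^i\times\RR_{\geq 0}\times L$; Lemma~\ref{unzip-facts}(5) then reduces the iteration to the pair $(L\subset Z)$, which is the paper's inductive reduction, and which visibly drops the \emph{interior} depth — the invariant the paper actually inducts on (inducting on $\depth(U)$ can be made to work afterward, since $\depth(Z)<\depth(U)$, but interior depth is the natural choice once one is in the corners category).

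A second, smaller error: the base case fails. If $\depth(U)=0$ and $Y\subset U$ is a union of connected components, setting $\unzip_Y(U):=U$ and $\Link_Y(U):=Y$ does not satisfy the closed-face condition: a codimension-$0$ subspace cannot be the closed $\{\ast\}$-face of a $\langle\ast\rangle$-cornered structure on $U$, since the topologically coCartesian condition of Definition~\ref{top-cart} would require an open embedding $\RR_{\geq 0}\times Y\hookrightarrow U$. The correct assignment is $\unzip_Y(U):=U\smallsetminus Y$ with $\Link_Y(U):=\emptyset$.
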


\begin{proof}
This argument follows the same pattern as the other parts of this section, so we will be brief.

First, we extend the statement of the proposition for $Y\hookrightarrow X$ replaced by a proper constructible embedding among conically smooth stratified spaces \emph{with corners} for which 
\begin{equation}\label{close-interior}
\ov{Y\cap {\sf int}(X)} ~= ~Y~.
\end{equation}
So the closure of the intersection with the interior is agin $Y$.
The modification to the conclusion of the proposition is clear.  

In a routine way, the problem is local in $X$.
So we can assume $X$ has finite interior depth, and therefore $Y$ too has finite interior depth.
We proceed by induction on the interior depth of $Y$. 
Should $Y$ have depth zero, then $Y\subset X$ is identified as a (union of components of the) interior-deepest stratum of $X$.
Again, working locally in $X$, we can assume $Y$ is exactly the interior-deepest stratum of $X$.  
This case follows from Lemma~\ref{unzip-facts} and its Corollary~\ref{unzip-functor}.  

We proceed by induction on the interior depth of $Y$, and assume the interior depth of $Y$ is positive.  
Again, because the problem is local in $X$, we can assume $X =\RR^i\times \sC(Z)\times \RR_{\geq 0}^S$ is a basic with corners.
In this case the condition~(\ref{close-interior}) easily gives an identification $Y=\RR^i\times \sC(L)\times \RR^S$ for some closed constructible sub-stratified space $L\subset Z$.
Because the interior depth of $Y$ is positive, then, in particular, $L$ is not empty.  
In this case we declare $\Link_Y(X) := \RR_{\geq 0}\times \Link_{L}(Z)\times \RR^S$ and $\unzip_{Y}(X):=\RR_{\geq 0}\times \unzip_{L}(Z)\times \RR^S$.  
These declarations canonically fit into the desired square satisfying the listed properties.  
The functoriality of these declarations for open embeddings among $(Y\hookrightarrow X)$ follows the same argument proving Lemma~\ref{unzip-facts}.

\end{proof}

\begin{definition}\label{def.Y-link}
Let $Y\hookrightarrow X$ be a conically smooth embedding which is proper and constructible.  
The \emph{unzip along $Y\subset X$} is the conically smooth stratified space with corners $\unzip_Y(X)$ from Proposition~\ref{Y-unzip}, equipped with the proper and constructible bundle $\unzip_Y(X) \to X$.
The \emph{link of $Y\subset X$} is the stratified space $\Link_Y(X)$, given as the face of $\unzip_Y(X)$ as in Proposition~\ref{Y-unzip}. Note it is equipped with the proper and constructible bundle 
\[
\Link_Y(X) \xra{~\pi_Y~} Y~.
\]
\end{definition}

\section{Collar-gluing}\label{collargluingssection}
Gluing manifolds with boundary along common boundary is a useful way to construct manifolds, and it accommodates inductive arguments by dimension.  
In this subsection we give this construction for conically smooth stratified spaces, and show that a stratified space can be constructed by gluing a regular neighborhood of its deepest stratum to its remainder.  
We use this to prove Theorem~\ref{open-handles}, which states that conically smooth stratified spaces (like smooth manifolds) admit open handlebody decompositions.  

\subsection{Vector fields}
Using the functor $\unzip$, we give a notion of a vector field on a conically smooth stratified space.
This allows us to flow along vector fields for sufficiently small non-negative times.  
\emph{Parallel} vector fields are those for which flow by a given positive time is an \emph{automorphism}; so we think of these as infinitesimal automorphisms.  

\begin{notation} We fix the following notation:
\begin{itemize}
\item  For $M$ a smooth manifold,
$\Theta(M) := \Gamma(\sT M\to M)$ is the vector space of smooth vector fields on $M$.
So for each smooth map $E\xra{f} B$ there is a linear restriction map $\Theta(B)\xra{f^\ast} \Gamma(f^\ast \sT B \to E)$ as well as a linear derivative map $\Theta(E)\xra{\sD f} \Gamma(f^\ast \sT B \to E)$.  

\item Let $\ov{M}$ be a smooth manifold with corners as in Definition~\ref{example.mfld-corners}.  We write $\Theta(\ov{M})$ for the vector space of smooth vector fields on $\ov{M}$.  This sheaf is determined by declaring 
\[
\Theta\bigl(\RR^i\times \RR_{\geq 0}^j\bigr) ~:=~ \Gamma\bigl(\sT \RR^{i+j}_{|\RR^i\times \RR_{\geq 0}^j} \longrightarrow \RR^i\times \RR_{\geq 0}^j\bigr)
\]
to be the smooth sections of $\sT \RR^{i+j}$ defined near $\RR^i\times \RR_{\geq 0}^j$.  
We write 
\[
\Theta^{\parallel}(\ov{M})~\subset~ \Theta^{\sf in}(\ov{M})~ \subset ~\Theta(\ov{M})
\]
for the sub-vector spaces consisting of those vector fields that are tangent to each face $\partial_S\ov{M}$ of $\ov{M}$, and the sub-vector space consisting of those vector fields that point inward along each face.  Locally, these sub-sheaves are determined by declaring a section $\RR^i\times \RR_{\geq 0}^j\xra{V} \RR^{i+j}$ to belong to $\Theta^{\parallel}(\RR^i\times \RR_{\geq 0}^j)$, respectively to $\Theta^{\sf in}(\RR^i\times \RR_{\geq 0}^j)$, if, for each subset $S\subset\{1,\dots,j\}$, the restriction $V_{|\RR^i\times \RR_{>0}^S\times\{0\}}$ is tangent to $\RR^i\times \RR_{>0}^S\times\{0\}$, and respectively projects non-positively to each coordinate of $\RR^{S\smallsetminus j}$ the complementary coordinates.
In particular, for each face $\partial_S\ov{M}\subset \ov{M}$, there is a linear restriction map $(-)_{|\partial_S}\colon \Theta^{\parallel}(\ov{M}) \to \Theta(\partial_S \ov{M})$.  

\item For $X=(X\to P)$ a conically smooth stratified space, recall the constructible bundle $\unzip(X) \xra{\pi} X$ of~\S\ref{sec:unzip}.  For each linear subposet $S\subset P$, the restriction of $\pi$ to the face $\partial_S\unzip(X)\subset \unzip(X)$ is a smooth fiber bundle $\pi_{S} \colon \partial_S \unzip(X) \to X_{{\sf Min}(S)}$. (We set $X_{{\sf Min}(S)} = \emptyset$ for $S=\emptyset$.)

\end{itemize}
\end{notation}

\begin{definition}[Vector fields]\label{def.parallel-vect-flds}
Let $X=(X\to P)$ be a conically smooth stratified space.
The vector space $\Theta(X)$ of \emph{parallel vector fields on $X$} is the limit in the diagram among vector spaces
\[
\xymatrix{
\Theta(X)  \ar[dd]_-{(-)_{|X_-}}  \ar[rrrr]^-{\w{(-)}}
&&&&
\Theta^{\parallel}\bigl(\unzip(X)\bigr) \ar[d]^-{(-)_{|\partial_- }}
\\
&&&&  
\underset{S\underset{\rm linear}\subset P} \prod \Theta\bigl(\partial_S\unzip(X)\bigr)  \ar[d]^-{\sD\pi_{-}}
\\
\underset{p\in P}\prod \Theta(X_p) \ar[rrrr]^-{(\pi_-)^\ast}
&&&&
\underset{S\underset{\rm linear} \subset P} \prod \Gamma\bigl(\pi_S^\ast \sT X_{{\sf Min}(S)}\to \partial_S\unzip(X)\bigr)~.
}
\]
Likewise, for $\ov{X} = (\ov{X}\to P,c)$ a conically smooth stratified space with corners, 
the vector space $\Theta^{\sf in}(\ov{X})$ of \emph{inward vector fields on $\ov{X}$} is the limit in the diagram among vector spaces
\[
\xymatrix{
\Theta^{\sf in}(\ov{X})  \ar[dd]_-{(-)_{|\ov{X_-}}}  \ar[rrr]^-{\w{(-)}}\ar[dr]
&&&
\Theta^{{\sf in}}\bigl(\unzip(\ov{X})\bigr) \ar[d]^-{(-)_{|\partial_- }}
\\
&
\underset{\begin{subarray}{c} S \subset P \\ {\rm linear} \end{subarray}}\prod \Theta^{\sf in}\bigl(\partial_S\unzip(\ov{X})\bigr)  \ar[d]^-{\sD\pi_{-}}  \ar[rr]^-{\sD(\partial_-\hookrightarrow \unzip(\ov{X}))}
&&
\underset{\begin{subarray}{c} S \subset P \\ {\rm linear} \end{subarray}}\prod \Gamma\bigl(\sT \unzip(\ov{X})_{|\partial_S} \to \partial_S\bigr)
\\
\underset{p\in P}\prod \Theta^{\sf in}(\ov{X_p}) \ar[r]^-{(\pi_-)^\ast}
&
\underset{\begin{subarray}{c} S \subset P \\ {\rm linear} \end{subarray}}\prod \Gamma\bigl(\pi_S^\ast \sT \ov{X}_{{\sf Min}(S)}\to \partial_S\unzip(\ov{X})\bigr)
&&
.
}
\]

\end{definition}

Definition~\ref{def.parallel-vect-flds} is designed precisely for the following lemma:

\begin{lemma}[Flows]\label{flows}
For $X=(X\to P)$ a conically smooth stratified space, and for $V$ a parallel vector field on $X$, there is a conically smooth map
\[
\gamma^V \colon X\times \RR\dashrightarrow X~,
\]
defined on an open neighborhood of $X\times\{0\}$, satisfying the following conditions.
\begin{itemize}
\item For each conically smooth map $X\xra{\epsilon} \RR$, the composition 
\[
X\xra{({\sf id}_X,\epsilon)} X\times \RR \overset{\gamma^V}\dashrightarrow X
\]
is an isomorphism, provided it is defined.  

\item There is an equality 
\[
\gamma^V_t\bigl(\gamma^V_s(x)\bigr) = \gamma^V_{s+t}(x)
\]
whenever defined.  

\item For each point $x\in X$ belonging to a stratum $x\in X_p\subset X$,
\[
\frac{\sf d}{{\sf d}t}\gamma^V_t(x)_{|t=0}~ = ~V_{|X_p}(x)~.
\]  

\end{itemize}
Likewise, for $\ov{X}=(\ov{X}\to P,c)$ a conically smooth stratified space with corners, and for $V$ an inward vector field on $\ov{X}$, there is a conically smooth map 
\[
\gamma^V\colon \ov{X}\times \RR_{\geq 0} \dashrightarrow \ov{X}
\]
defined on an open neighborhood of $\ov{X}\times\{0\}$, satisfying the following conditions.  
\begin{itemize}
\item For each conically smooth map $\ov{X}\xra{\epsilon} \RR_{\geq 0}$, and for each linear subposet $S\subset P$, the composition 
\[
\partial_S\unzip(\ov{X})\xra{({\sf id},\epsilon_{|\partial_S})} \partial_S\unzip(\ov{X})\times \RR_{\geq 0} \overset{\gamma^V}\dashrightarrow \unzip(\ov{X})
\]
is a refinement onto its image, provided it is defined.

\item There is an equality 
$
\gamma^V_t\bigl(\gamma^V_s(x)\bigr) = \gamma^V_{s+t}(x)
$
whenever defined.  

\item For each point $x\in \ov{X}$ belonging to a stratum $x\in \ov{X_p}\subset \ov{X}$,
 \[
 \frac{{\sf d}}{{\sf d}t}\gamma^V_t(x)_{|t=0} = V_{|\ov{X}_p}(x)~.
 \]  

\end{itemize}

\end{lemma}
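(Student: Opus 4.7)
}
The plan is to reduce both statements to classical ODE theory on the smooth manifold with corners $\unzip(\ov X)$, and then to descend the resulting flow along the proper constructible bundle $\pi\colon \unzip(\ov X) \to \ov X$ of Corollary~\ref{unzip-functor}. By construction of $\chi(X)$ and $\chi^{\sf in}(\ov X)$ in Definition~\ref{def.parallel-vect-flds}, a parallel (resp.\ inward) vector field $V$ on $\ov X$ comes packaged together with a parallel (resp.\ inward) lift $\w V \in \chi^{\parallel}(\unzip(\ov X))$ (resp.\ $\chi^{\sf in}(\unzip(\ov X))$) that is $\pi_S$-related to $V_{|X_{{\sf Min}(S)}}$ on each face $\partial_S\unzip(\ov X)$. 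Build $\gamma^V$ from the flow $\w\gamma^{\w V}$ of $\w V$.

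On $\unzip(\ov X)$ classical ODE applies: a parallel vector field $\w V$ integrates to a smooth flow $\w\gamma^{\w V}\colon \unzip(\ov X)\times \RR \dashrightarrow \unzip(\ov X)$ defined on a neighborhood of the zero section, and tangency to every face forces the flow to preserve each face; an inward $\w V$ integrates to a smooth flow $\w\gamma^{\w V}\colon \unzip(\ov X)\times \RR_{\geq 0} \dashrightarrow \unzip(\ov X)$ on a neighborhood of the zero section (the inwardness condition guarantees integral curves do not immediately escape through a codimension-one face). The cocycle identity and the identity $\tfrac{d}{dt}\w\gamma^{\w V}_t(\w x)_{|t=0} = \w V(\w x)$ are standard.

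The crux is the descent of $\w\gamma^{\w V}$ along $\pi$. The map $\pi$ is an isomorphism on the open interior and, on each face, is the conically smooth fiber bundle $\pi_S\colon \partial_S\unzip(\ov X) \to \ov X_{{\sf Min}(S)}$; two points of $\unzip(\ov X)$ are $\pi$-identified exactly when they lie in a common fiber of some $\pi_S$. The defining equation $D\pi_S \circ \w V_{|\partial_S} = \pi_S^{\ast} V_{|\ov X_{{\sf Min}(S)}}$ says precisely that $\w V_{|\partial_S}$ is $\pi_S$-related to $V_{|\ov X_{{\sf Min}(S)}}$, so by uniqueness of solutions to ODEs the composite $\pi_S \circ \w\gamma^{\w V}_t$ is constant on fibers of $\pi_S$ and equals a flow of $V_{|\ov X_{{\sf Min}(S)}}$ on $\ov X_{{\sf Min}(S)}$. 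Consequently $\pi\circ \w\gamma^{\w V}_t$ factors uniquely as a continuous map $\gamma^V_t\colon \ov X \dashrightarrow \ov X$, and patching over time gives $\gamma^V\colon \ov X\times \RR \dashrightarrow \ov X$ (respectively, $\ov X\times \RR_{\geq 0} \dashrightarrow \ov X$). Conical smoothness of $\gamma^V$ is then verified inductively on depth via the pushout description $\ov X \cong \ov X_k \underset{\sL_k(\ov X)}\coprod \unzip_k(\ov X)$ of Corollary~\ref{unzip-functor}, with the base case being an ordinary smooth manifold (or manifold with corners), where smoothness of the flow is classical. I expect this descent step to be the main obstacle: one must check that the ambiguity in lifting a conically smooth function $\epsilon\colon \ov X \to \RR$ (resp.\ $\RR_{\geq 0}$) to $\unzip(\ov X)$ does not affect the composite with $\w\gamma^{\w V}$, which again reduces to fiberwise constancy.

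With $\gamma^V$ in hand, the three listed properties follow: the cocycle identity and the prescribed derivative at $t=0$ descend verbatim from the corresponding properties of $\w\gamma^{\w V}$ because $\pi$ is stratum-preserving and an isomorphism over the interior; in the parallel case each $\w\gamma^{\w V}_t$ is a face-preserving diffeomorphism of $\unzip(\ov X)$, so its descent along $\pi$ is an isomorphism of $\ov X$, which after composition with $({\sf id},\epsilon)$ yields the desired self-isomorphism; in the inward case the face-respecting restrictions $\w\gamma^{\w V}_t\colon \partial_S\unzip(\ov X)\times\RR_{\geq 0}\dashrightarrow \unzip(\ov X)$ are injective smooth immersions for small $t>0$, and unwinding the definition of a refinement from Definition~\ref{def.maps} shows that the composite with any conically smooth $\epsilon\colon \ov X \to \RR_{\geq 0}$ descends to a refinement onto its image.
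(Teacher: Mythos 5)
Your proposal is correct and follows essentially the same route as the paper: integrate the lift $\w V$ on the manifold with corners $\unzip(\ov X)$ by classical ODE theory, use the $\pi_S$-relatedness built into the definition of parallel/inward vector fields to establish fiberwise constancy of the flow over each face, and then descend through the iterated pushout expression of Corollary~\ref{unzip-functor}. The only stylistic difference is that you make the uniqueness-of-ODE-solutions argument for fiberwise constancy explicit, whereas the paper summarizes this compatibility with ``by design'' and a commutative diagram.
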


\begin{definition}
A map $\gamma^V$ as in Lemma \ref{flows} is a \emph{flow} of the vector field $V$. 
\end{definition}

\begin{proof}[Proof of Lemma~\ref{flows}]
This result is classical for smooth manifolds (compare Lemma 2.4 from \cite{morsetheory}), from the existence and uniqueness of smooth solutions to smooth first order ordinary differential equations with smooth dependence on initial conditions.
We thus have such a flow $\gamma^{V_{|X_p}}$ for each stratum $X_p\subset X$.   
We have likewise for smooth manifolds with corners, since we demand tangency along faces.
We thus have such a flow $\gamma^{\w{V}}$ for $\unzip(X)$, and, for each linear subposet $S\subset P$ it restricts to a flow $\gamma^{\w{V}_{|\partial_S}}$ on the face $\partial_S\unzip(X)$.  
By design, for each conically smooth map $X\xra{\epsilon} \RR$, and for each linear subposet $S\subset P$, the diagram
\[
\xymatrix{
X_{{\sf Min}(S)}  \ar[d]_-{\gamma^{V_{|X_p}}}  
&&
\partial_S\unzip(X)  \ar[rr]  \ar[ll]_-{\pi_S}     \ar[d]^-{\gamma^{\w{V}_{|\partial_S}}}
&&
\unzip(X)   \ar[d]^-{\gamma^{\w{V}}}  
\\
X_{{\sf Min}(S)} 
&&
\partial_S \unzip(X)   \ar[rr]  \ar[ll]_-{\pi_S}
&&
\unzip(X)
}
\]
commutes.
Iterating the pushout expression of Corollary~\ref{unzip-functor}, it follows that $\gamma^{\w{V}}\colon \unzip(X) \times \RR\dashrightarrow \unzip(X)$ descends to a conically smooth map $\gamma^V\colon X\times \RR\dashrightarrow X$.  This map satisfies the named conditions because $\gamma^{\w{V}}$ does. This proves the first statement.

The second statement follows the same logic, premised on the case that $\ov{X}$ is a smooth manifold with corners, which follows because it is the \emph{inward} vector fields that are considered.
\end{proof}

\begin{remark}[Tangent bundles for $\lag I \rag$-manifolds]
One advantage of unzipping a stratified space is that one can resolve the strata of $X$ to all be of the form $\RR_{\geq 0}^T \times \RR_{>0}^k$. In particular, one can define a tangent bundle for $\unzip(X)$.

Classically, a manifold $X$ with boundary has a tangent bundle -- in the interior, it agrees with the usual notion, and along the boundary, the tangent bundle is a direct sum of $\sT \del X$ (the tangent bundle of $\del X$ as a smooth manifold) with a trivial line bundle (its normal bundle). Moreover, the trivial line bundle comes with a natural orientation pointing inward.

The situation remains analogous for manifolds with $\lag I \rag$-corners. One can define a vector bundle on all of $\ov{X}$ such that, along each stratum $X_T \subset \ov{X}$, we have
 \[
 \sT \ov{X} _{| X_T} \cong \sT  X_T \oplus \un{\RR}^T.
 \]
Note that the trivial bundle $\RR^T$ is no longer oriented for $|T| \geq 2$, as the preferred inward pointing sections do not have a preferred order. The construction of the tangent bundle is as follows: First, note that we define the tangent bundle on $\RR_{\geq 0}^T$ to be the trivial rank $|T|$ vector bundle as usual. To understand the setting of manifolds with $\lag I \rag$-corners, one simply has to unravel the atlas structure on products of basics as in \S\ref{sec.products}. Let $f$ be a transition between charts of the form $\RR^{T}_{\geq 0}$. Then the transition maps from one trivializing bundle $\RR_{\geq 0}^T \times \RR^{|T|}$ to another is given fiberwise on the $\RR^{|T|}$ by values of the derivative $\sD f$. By the continuity of $\w{\sD}f$ and conical smoothness of the transition functions, one sees that these fiber-wise functions are conically smooth and linear on the fibers.

Note also that one has a notion of a Riemannian metric on an $\lag I \rag$-manifold once we have defined a tangent bundle.
\end{remark}

\begin{example}
Let $\ov{X} \in \mfld_{\lag 1 \rag}$. Then the interior of $\ov{X}$ is a smooth 1-manifold, and hence is equipped with the usual tangent bundle. We know that an open chart along the boundary $\del_*\ov{X}$, is given by conically smooth maps $\sC(*) \cong \RR_{\geq 0} \into \ov{X}$ for whom the derivative $\sD f$ is injective. (See Definition~\ref{defn.basics}). Recall that if a map $\RR_{\geq 0} \to \RR_{\geq 0}$ is conically smooth, it has a right derivative at the cone point $0$; the injectivity of $\sD f$ forces this right derivative to be a non-zero number, and continuity of $\w{\sD}f$ forces the number to be positive. Given two charts covering a boundary point of $\ov{X}$, we can write a transition function for the tangent bundle in the usual way, where all the transition functions along the boundary stratum are given by positive numbers. Hence we have a non-vanishing section of a trivial line bundle defined on $\del_* \ov{X}$. This trivial line bundle is the tangent bundle of $\del_\ast \ov{X}$.
\end{example}

\subsection{Tubular neighborhoods}
By way of the functor $\unzip$, we prove the existence of tubular neighborhoods of singular strata.  
\begin{lemma}\label{face-collars}
Let $\ov{X}$ be a $n$-dimensional conically smooth stratified space with $\langle I\rangle$-corners.
Let $\ast \in I$ be an element.
Then there is a morphism of $n$-dimensional conically smooth stratified spaces with $\langle I \rangle$-corners
\[
\RR_{\geq 0} \times \ov{\partial}_\ast \ov{X} \hookrightarrow \ov{X}
\]
under the closed $\ast$-face $\ov{\partial}_\ast \ov{X}$.  
\end{lemma}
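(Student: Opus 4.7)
The plan is to produce a conically smooth inward vector field $V$ on $\ov{X}$ that is strictly transverse to $\ov{\partial}_\ast \ov{X}$ and tangent to every other closed face, then flow along it and reparametrize to obtain the collar.

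\emph{Step 1 (constructing $V$).} Cover $\ov{X}$ by basic corner charts $\phi_\alpha\colon \RR^i \times \RR_{\geq 0}^{T_\alpha} \times \sC(Z_\alpha) \hookrightarrow \ov{X}$ as in Proposition~\ref{corner-basis}. On the charts with $\ast \in T_\alpha$ let $V_\alpha := \partial/\partial x_\ast$, where $x_\ast$ is the $\ast$-coordinate of $\RR_{\geq 0}^{T_\alpha}$; on the charts with $\ast \notin T_\alpha$ set $V_\alpha := 0$. Every point of $\ov{\partial}_\ast \ov{X}$ is contained in a chart with $\ast \in T_\alpha$, so patching the $V_\alpha$ via a conically smooth partition of unity (Lemma~\ref{part-o-1}, applied after $\unzip$ to accommodate corners) gives an inward vector field $V \in \chi^{\sf in}(\ov{X})$ in the sense of Definition~\ref{def.parallel-vect-flds}, tangent to every closed face $\ov{\partial}_S \ov{X}$ with $S \not\ni \ast$, and whose restriction to $\ov{\partial}_\ast \ov{X}$ is everywhere a strictly positive combination of the local $\partial/\partial x_\ast$ fields—hence pointwise non-zero and transverse to $\ov{\partial}_\ast \ov{X}$.

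\emph{Step 2 (flowing and applying the inverse function theorem).} By Lemma~\ref{flows} the vector field $V$ has a flow $\gamma^V\colon \ov{X}\times \RR_{\geq 0} \dashrightarrow \ov{X}$ defined on an open neighborhood of $\ov{X} \times \{0\}$. Restricting the domain yields
\[
\phi\colon \ov{\partial}_\ast \ov{X} \times \RR_{\geq 0} \dashrightarrow \ov{X},\qquad \phi(y,t) = \gamma^V_t(y),
\]
which is conically smooth and restricts to the identity on $\ov{\partial}_\ast \ov{X} \times \{0\}$. At each $(y,0)$ its derivative is the identity along $\ov{\partial}_\ast\ov{X}$ and sends $\partial/\partial t$ to $V(y)$, which is transverse to $\ov{\partial}_\ast\ov{X}$ by Step~1. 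Lemma~\ref{ivt-again} therefore makes $\phi$ an open embedding on some open neighborhood $\cU \subset \ov{\partial}_\ast \ov{X} \times \RR_{\geq 0}$ of $\ov{\partial}_\ast \ov{X} \times \{0\}$. Tangency of $V$ to the other closed faces, together with the standard compatibility between $\phi$ and the stratifications along each face of $\unzip(\ov{X})$ in Lemma~\ref{flows}, shows that $\phi$ is a morphism in $\snglr^{\langle I\rangle}$ when $\RR_{\geq 0}\times \ov{\partial}_\ast\ov{X}$ is given the product $\langle \ast\rangle\sqcup\langle I\setminus\ast\rangle$-corner structure of Observation~\ref{corners-product}.

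\emph{Step 3 (reparametrizing to all of $\RR_{\geq 0}$).} It remains to replace the partially defined collar by one defined on all of $\RR_{\geq 0} \times \ov{\partial}_\ast \ov{X}$. Choose a conically smooth strictly positive function $\epsilon\colon \ov{\partial}_\ast \ov{X} \to \RR_{>0}$ (again by Lemma~\ref{part-o-1}) small enough that $\{(y,s) \mid 0\leq s < \epsilon(y)\} \subset \cU$. Fix a conically smooth diffeomorphism $h\colon \RR_{\geq 0} \xra{\cong} [0,1)$ with $h(0)=0$ and set
\[
\Phi(t,y) \;:=\; \phi\bigl(y,\, \epsilon(y)\, h(t)\bigr).
\]
This is a conically smooth open embedding $\RR_{\geq 0} \times \ov{\partial}_\ast \ov{X} \hookrightarrow \ov{X}$ under $\ov{\partial}_\ast \ov{X}$, and it respects the $\langle I\rangle$-corner structure by construction.

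\emph{Main obstacle.} The only nontrivial point is Step~1: one must ensure that the global vector field is simultaneously (i) inward everywhere on $\ov{X}$, (ii) tangent to every closed face not containing $\ast$, and (iii) strictly transverse to $\ov{\partial}_\ast\ov{X}$. The local model $V_\alpha = \partial/\partial x_\ast$ is designed precisely to satisfy (i)--(iii) in each basic chart, and since the conditions are preserved by convex combinations with non-negative coefficients, the partition-of-unity patching works; the sum is non-zero along $\ov{\partial}_\ast \ov{X}$ because every chart that touches $\ov{\partial}_\ast \ov{X}$ contributes a strictly inward summand.
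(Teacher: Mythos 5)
Your proof is correct and uses the same basic strategy as the paper: construct a vector field that is tangent to the non-$\ast$ faces and strictly inward along $\ov{\partial}_\ast\ov{X}$, patch it by partitions of unity, flow along it, and reparametrize to get the collar. The packaging differs slightly. The paper first proves the interior-depth-zero case (an ordinary manifold with corners) by classical means, then reduces the general case to it by passing to $\unzip(\ov{X})$, where it lists explicit conditions (1), (2), (3) on a vector field there and checks that the flow descends. You bypass this two-step reduction by invoking $\chi^{\sf in}(\ov{X})$ from Definition~\ref{def.parallel-vect-flds} and the flow of Lemma~\ref{flows} directly; this is legitimate (and arguably cleaner) because $\chi^{\sf in}$ already encodes the descent condition that the paper writes out as condition (3), and your local model $V_\alpha = \partial/\partial x_\ast$ is a product vector field on $\RR^i\times\RR_{\geq 0}^{T_\alpha}\times\sC(Z_\alpha)$ and hence lifts to and descends along $\unzip$ trivially. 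Your Step 2 invocation of Lemma~\ref{ivt-again} serves the role of the paper's direct assertion that condition (2) forces the flow to be an open embedding near the face.

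One caution on Step 1: the parenthetical \emph{``applied after $\unzip$ to accommodate corners''} is potentially misleading. The partition of unity should be constructed on $\ov{X}$ itself --- Lemma~\ref{part-o-1} applies, since $\ov{X}$, with corner data forgotten, is a stratified space --- and then pulled back along $\pi\colon\unzip(\ov{X})\to\ov{X}$. If one instead chose a partition of unity on $\unzip(\ov{X})$ that is not constant along the fibers of $\pi$, then for a fiber-varying $\psi_\alpha$ one has $D\pi(\psi_\alpha V_\alpha)=\psi_\alpha\cdot D\pi(V_\alpha)$, which no longer factors through the stratum, and the patched field would fail the descent requirement built into $\chi^{\sf in}(\ov{X})$. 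With a partition of unity coming from $\ov{X}$, the descent condition is linear in the $\psi_\alpha$ and hence preserved by the convex combination, which is exactly the substance of the closing observation in your \emph{``Main obstacle''} paragraph.
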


\begin{cor}\label{corner-collars}
Let $\ov{X}$ be a $n$-dimensional conically smooth stratified space with $\langle I\rangle$-corners.
Let $T \subset I$.
Then there is a morphism of $n$-dimensional conically smooth stratified spaces with $\langle I \rangle$-corners
\[
(\RR_{\geq 0})^T \times \partial_T \ov{X} \hookrightarrow \ov{X}
\]
under the $T$-face $\partial_T\ov{X}$.  
\end{cor}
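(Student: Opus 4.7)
The plan is to induct on the cardinality $|T|$, with Lemma~\ref{face-collars} providing the base case $|T|=1$ (and $|T|=0$ being trivial, realized by the identity map).

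For the inductive step, fix an element $\ast\in T$ and write $T = \{\ast\}\sqcup T'$ with $T'\subset I\smallsetminus\{\ast\}$. By Lemma~\ref{face-collars} applied to $\ov{X}$ with respect to $\ast\in I$, we obtain a morphism of stratified spaces with $\lag I \rag$-corners
\[
\RR_{\geq 0}\times \ov{\partial}_{\ast}\ov{X}~\hookrightarrow~\ov{X}
\]
under $\ov{\partial}_{\ast}\ov{X}$, where $\ov{\partial}_{\ast}\ov{X}$ is naturally a stratified space with $\lag I\smallsetminus\{\ast\}\rag$-corners (so its product with $\RR_{\geq 0}$ acquires the $\lag I\rag$-corner structure via Observation~\ref{corners-product}). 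Applying the inductive hypothesis to $\ov{\partial}_{\ast}\ov{X}$ with the subset $T'\subset I\smallsetminus\{\ast\}$, we obtain an embedding
\[
(\RR_{\geq 0})^{T'}\times \partial_{T'}\bigl(\ov{\partial}_{\ast}\ov{X}\bigr)~\hookrightarrow~\ov{\partial}_{\ast}\ov{X}
\]
of stratified spaces with $\lag I\smallsetminus\{\ast\}\rag$-corners. Taking the product of this with $\RR_{\geq 0}$ and composing with the collar from Lemma~\ref{face-collars} yields a morphism
\[
\RR_{\geq 0}\times(\RR_{\geq 0})^{T'}\times \partial_{T'}\bigl(\ov{\partial}_{\ast}\ov{X}\bigr)~\hookrightarrow ~\ov{X}
\]
of stratified spaces with $\lag I\rag$-corners.

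To conclude, one identifies $\partial_{T'}\bigl(\ov{\partial}_{\ast}\ov{X}\bigr)\cong \partial_T\ov{X}$: unwinding Definition~\ref{faces} and Definition~\ref{def.angle-corners}, the corner stratification of $\ov{\partial}_{\ast}\ov{X}$ over $\cP(I\smallsetminus\{\ast\})$ is obtained from the restriction of $c\colon\ov{X}\to\cP(I)$ to points whose label contains $\ast$, with $\ast$ deleted; the $T'$-stratum of this restriction then consists exactly of those points with $c$-label equal to $T'\sqcup\{\ast\}= T$. Combined with the canonical identification $\RR_{\geq 0}\times(\RR_{\geq 0})^{T'}\cong(\RR_{\geq 0})^{T}$ (as basics with $\lag T\rag$-corners via Example~\ref{basic-corner}, which via Observation~\ref{corners-product} respects the ambient $\lag I\rag$-corner structure), we obtain the asserted morphism
\[
(\RR_{\geq 0})^{T}\times \partial_T\ov{X}~\hookrightarrow~\ov{X}
\]
under $\partial_T\ov{X}$.

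The only subtle point is the bookkeeping of corner structures at the identification of $\partial_{T'}(\ov{\partial}_{\ast}\ov{X})$ with $\partial_T\ov{X}$ and the compatibility of the iterated product of copies of $\RR_{\geq 0}$ with the $\lag I\rag$-corner structure on $\ov{X}$; this is a routine consequence of Observation~\ref{corners-product} and of the fact that the embedding supplied by Lemma~\ref{face-collars} is by construction a morphism of $\lag I\rag$-cornered stratified spaces.
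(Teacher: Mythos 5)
Your argument is correct and is essentially what the paper intends by leaving this as a corollary of Lemma~\ref{face-collars} with no separate proof: iterate the lemma once for each element of $T$, managing the corner structure through Observation~\ref{corners-product}. One small point of bookkeeping you gloss over in the base case $|T|=1$: the lemma gives a collar of the \emph{closed} face $\ov{\partial}_\ast\ov{X}$, whereas the corollary asserts an embedding under the \emph{open} face $\partial_\ast\ov{X}$, so one restricts to the open subspace $\RR_{\geq 0}\times\partial_\ast\ov{X}\subset\RR_{\geq 0}\times\ov{\partial}_\ast\ov{X}$ — this restriction is implicit in your identification $\partial_{T'}(\ov{\partial}_\ast\ov{X})=\partial_T\ov{X}$ in the inductive step, and is worth mentioning once explicitly.
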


\begin{proof}[Proof of Lemma~\ref{face-collars}]
Suppose the interior depth of $\ov{X}$ is zero.
Then the result follows from classical methods.
For instance, consider the short exact sequence of vector bundles over the manifold with corners $\ov{\partial}_\ast \ov{X}$
\[
0 \longrightarrow \sT \ov{\partial}_\ast \ov{X} \longrightarrow \sT \ov{X}_{|\ov{\partial}_\ast \ov{X}} \longrightarrow N \ov{\partial}_\ast \ov{X} \longrightarrow 0
\]
-- this quotient is 1-dimensional and oriented.  
Choose a vector field $V\colon \ov{X} \dashrightarrow \sT \ov{X}$, defined in a neighborhood of the closed $\ast$-face $\ov{\partial}_\ast \ov{X}$, satisfying the following conditions.
\begin{enumerate}
\item For each $\ast\notin T \subset I$, $V$ is tangent to the $T$-face, which is to say the restriction $V_{|}\colon \partial_T\ov{X} \to \sT \ov{X}_{|\partial_T\ov{X}}$ factors through $\sT \partial_T\ov{X}$.  
\item The composition
\[
\ov{\partial}_\ast \ov{X} \xra{V_{|}} \sT \ov{X}_{|\ov{\partial}_\ast \ov{X}} \to N \ov{\partial}_\ast\ov{X}
\]
is positive. 
\end{enumerate} 
Such a choice is possible locally (i.e., for the case $\ov{X} \cong \RR^i\times (\RR_{\geq 0})^{n-i}$), and a global choice is facilitated from local choices through smooth partitions of unity.  
Flowing along $V$ gives a smooth map 
\[
\gamma^V\colon \RR_{\geq 0}\times \ov{\partial}_\ast \ov{X} \dashrightarrow \ov{X}
\]
defined in a neighborhood of $\ov{\partial}_\ast \ov{X}$.
Condition~(1) on $V$ grants that $\gamma^V$ is \emph{stratified}.
Condition~(2) on $V$ grants that $\gamma^V$ is a stratified \emph{open embedding} near $\ov{\partial}_\ast \ov{X}$.
Stratified smooth open self-embeddings $e\colon \RR_{\geq 0}\times \ov{\partial}_\ast \ov{X} \hookrightarrow \RR_{\geq 0}\times \ov{\partial}_\ast \ov{X}$ under $\ov{\partial}_\ast \ov{X}$ form a basis for the topology about $\ov{\partial}_\ast \ov{X}$.
So there exists such a self-embedding $e$ that composes with $\gamma^V$ as a desired smooth open embedding
\[
\gamma^V\circ e\colon \RR_{\geq 0}\times \ov{\partial}_\ast \ov{X} \hookrightarrow \ov{X}~.
\]

Now suppose $\ov{X} = (c\colon \ov{X}\to P \xra{\ov{c}} \cP(I))$ is a general $n$-dimensional conically smooth stratified space with $\langle I\rangle$-corners.
Consider  $\unzip(\ov{X})$ as an $n$-manifold with $\langle \un{n}\sqcup I\rangle$-corners. 
Then one can choose a vector field $V\colon \unzip(\ov{X})\dashrightarrow \sT \unzip(\ov{X})$, defined in a neighborhood of the closed $\ast$-face $\ov{\partial}_\ast \unzip(\ov{X})$, satisfying conditions (1) and (2) as in the above case in addition to the following condition.
\begin{enumerate}
\item[(3)] Let $p\in P$ and consider the stratum $X_p$ -- it is a smooth manifold. 
Consider the restriction $\pi_{|X_p}\colon \unzip(\ov{X})_{|X_p} \to X_p$ -- it is a conically smooth map which is a bundle of compact manifolds with corners.
There is diagram involving vector bundles
\[
\xymatrix{
\sT \unzip(\ov{X})_{|X_p}  \ar[rr]^-{\sD\pi_{|X_p}}    \ar[d]
&&
\sT X_p  \ar[d]
\\
\unzip(\ov{X})_{|X_p}  \ar[rr]^-\pi  \ar@(l,l)[u]^-{V_{|X_p}}
&&
X_p  \ar@(r,r)@{-->}[u]_-{\ov{V}_p}
}
\]
whose straight square commutes, and whose left upward arrow is the restriction of $V$.  
We require that the composition $\sD\pi_{|X_p}\circ V_{|X_p}$ factors through $X_p$, as indicated.
\end{enumerate}
Such a choice is possible locally over $\ov{X}$, and a global choice is facilitated through paracompactness and conically smooth partitions of unity (Lemma~\ref{part-o-1}).  
As in the above case, flowing along $V$ gives a map $\gamma^V\colon \RR_{\geq 0}\times \ov{\partial}_\ast \unzip(\ov{X})\dashrightarrow \unzip(\ov{X})$, defined in a neighborhood of $\ov{\partial}_\ast \unzip(\ov{X})$.  Condition~(1) gives that $\gamma^V$ is \emph{stratified}, where it is defined; condition~(2) gives that this map is a stratified \emph{open embedding} near $\ov{\partial}_\ast \unzip(\ov{X})$; condition~(3) gives that $\gamma^V$ descends to a stratified map
\[
\ov{\gamma}_V\colon \RR_{\geq 0}\times \ov{\partial}_\ast \ov{X} \dashrightarrow \ov{X}~,
\]
defined in a neighborhood of $\ov{\partial}_\ast \ov{X}$, where it is an open embedding.  
Stratified conically smooth open self-embeddings of $\RR_{\geq 0}\times \ov{X}$ under $\{0\}\times \ov{X}$ being a basis for the topology about $\ov{\partial}_\ast \ov{X}$, there is such a self-embedding $\ov{e}$ for which the composite
\[
\ov{\gamma}_V\circ \ov{e}\colon \RR_{\geq 0}\times \ov{\partial}_\ast \ov{X} \hookrightarrow \ov{X}
\]
is everywhere defined and is a conically smooth open embedding.  
This proves the result.  
\end{proof}

Let $X$ be a conically smooth stratified space of depth at most $k$.  
Recall from Definition~\ref{def.link} the conically smooth map $\Link_k(X)\xra{\pi_k} X_k$ -- Corollary~\ref{unzip-functor} states that it is a bundle of compact $(k-1)$-dimensional conically smooth stratified spaces.  
Example~\ref{bundle-examples} exhibits the fiberwise cone $\sC(\pi_k) \to X_k$, which is equipped with a conically smooth section.  
\begin{prop}[Tubular neighborhoods of strata]\label{tubular-neighborhoods}
Let $X$ be a conically smooth stratified space of dimension less or equal $n$ and of depth at most $k$.
There is a conically smooth open embedding
\[
\sC(\pi_k) ~\hookrightarrow~ X
\]
under $X_k$.  
\end{prop}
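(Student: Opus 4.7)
The plan is to produce the tubular neighborhood by unzipping the deepest stratum, collaring the resulting boundary face, and then descending the collar through the unzip quotient. First I would invoke Theorem~\ref{tot-unzip} together with Corollary~\ref{unzip-functor} to obtain the $k$-unzip $\pi\colon \unzip_k(X)\to X$. By construction, $\unzip_k(X)$ is a stratified space with $\langle\ast\rangle$-corners whose closed $\ast$-face is exactly the link $\sL_k(X)$, whose $\ast$-face projects onto $X_k$ via $\pi_k$, and whose interior depth is strictly less than that of $X$. The final assertion of Corollary~\ref{unzip-functor} moreover expresses $X$ as the pushout
\[
X_k \underset{\sL_k(X)}\coprod \unzip_k(X)~\xra{~\cong~}~ X~,
\]
in which the left leg is $\pi_k$ and the restriction $\pi\colon \unzip_k(X)\smallsetminus \sL_k(X) \to X\smallsetminus X_k$ is an isomorphism.

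Next I would apply Lemma~\ref{face-collars} to the closed $\ast$-face of $\unzip_k(X)$ to obtain a conically smooth open embedding
\[
e\colon \RR_{\geq 0}\times \sL_k(X) ~\hookrightarrow~ \unzip_k(X)
\]
which restricts to the identity on $\{0\}\times \sL_k(X) = \sL_k(X)$. Composing with $\pi$ gives a conically smooth map $\pi\circ e\colon \RR_{\geq 0}\times \sL_k(X) \to X$ whose restriction to $\{0\}\times \sL_k(X)$ factors through $\pi_k\colon \sL_k(X) \to X_k \hookrightarrow X$. Together with the inclusion $X_k\hookrightarrow X$, this assembles a commutative square exhibiting a map out of the pushout
\[
\sC(\pi_k)~=~X_k\underset{\{0\}\times \sL_k(X)}\coprod \bigl(\RR_{\geq 0}\times \sL_k(X)\bigr) ~\longrightarrow~ X~,
\]
which is conically smooth (because each summand is) and which lies under $X_k$ by construction.

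Finally I would verify that this descended map is an open embedding. On the open locus $\RR_{>0}\times \sL_k(X)\subset \sC(\pi_k)$, it is the composition of the open embedding $e$ with the isomorphism $\pi_{|}\colon \unzip_k(X)\smallsetminus \sL_k(X) \xra{\cong} X\smallsetminus X_k$, hence an open embedding there; on $X_k\subset \sC(\pi_k)$ it is the identity. The only remaining point is that the two pieces fit together as an open embedding across $X_k$, which amounts to checking that the images of fiberwise truncated cones $\{[t,\ell]\mid t<\epsilon\}$ form a neighborhood base around each $x\in X_k$ inside $X$. The main obstacle is this last verification: it cannot be read off directly from $e$, but follows from the pushout presentation of $X$ together with the fact that $\pi$ is a quotient map whose fibers over $X_k$ are precisely the fibers of $\pi_k$. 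Working locally in a trivializing chart for the bundle $\sL_k(X)\xra{\pi_k} X_k$ supplied by Lemma~\ref{bundles-over-basics}, one may reduce to the case $X\cong \RR^i\times \sC(Z)$ where the pushout presentation becomes tautological and the embedding property is manifest.
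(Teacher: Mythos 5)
Your proof is correct and takes essentially the same approach as the paper's: both invoke Lemma~\ref{face-collars} to collar the $\ast$-face of $\unzip_k(X)$ and then descend the resulting open embedding through the pushout identification $X_k\coprod_{\sL_k(X)}\unzip_k(X)\cong X$ of Corollary~\ref{unzip-functor}. The paper states the final descent step more tersely, while you supply the (correct, and worth having) verification that the induced map on pushouts is in fact an open embedding across $X_k$ by a local reduction to basics.
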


\begin{proof}
From Lemma~\ref{face-collars} there is the conically smooth open embedding 
\[
\RR_{\geq 0} \times \Link_k(X) \hookrightarrow \unzip_k(X)
\]
under $\Link_k(X)$.  
There results a conically smooth open embedding
\[
\sC(\pi_k) \cong X_k \underset{\Link_k(X)}\coprod \bigl(\RR_{\geq 0}\times \Link_k(X)\bigr)\hookrightarrow X_k\underset{\Link_k(X)}\coprod \bigl(\unzip_k(X)\smallsetminus X_k\bigr) \underset{\rm Cor~\ref{unzip-functor}}\cong X
\]
under $X_k$.
\end{proof}

\begin{remark}\label{C-0-failure}
We see Proposition~\ref{tubular-neighborhoods} as a focal statement that summarizes the attributes of \emph{conically smooth} stratified spaces.  
Indeed, while the statement of Proposition~\ref{tubular-neighborhoods} has an evident $C^0$ version, that statement is false.
For, consider a fiber bundle $E\to B$ among topological manifolds, equipped with a section, whose fibers are based-homeomorphic to $(0\in \RR^n)$.
Regard $(B\subset E)$ as a $C^0$ stratified space whose singularity locus is $B$.  
The fiber bundle $E\to B$ is classified by a map $B\to {\sf BHomeo}_0(\RR^n)$.
There is the subgroup ${\sf Homeo}_0(\DD^n)\subset {\sf Homeo}_0(\RR^n)$ consisting of those origin preserving homeomorphisms that extend (necessarily, uniquely) to a homeomorphism of the closed $n$-disk.  
Because 
\[
{\sf Homeo}_0(\RR^n)/{\sf Homeo}_0(\DD^n)~\simeq~ \Top(n)/{\sf Homeo}(S^{n-1})
\]
is not contractible, it is possible to choose $E\to B$ so that the spherical fibration $E\smallsetminus B \to B$ is not concordant to a sphere bundle.  
\end{remark}

We record an improvement of Proposition~\ref{tubular-neighborhoods} from a statement concerning a deepest stratum $X_d\subset X$ to a statement concerning a closed constructible subspace $Y\subset X$.
Recall from Definition~\ref{def.Y-link} the link $\Link_Y(X) \xra{\pi_Y} Y$.

\begin{prop}[Regular neighborhoods]\label{regular-neighborhoods}
Let $Y\hookrightarrow X$ be a proper constructible embedding among stratified spaces.
Then there is a conically smooth map
\[
\sC(\pi_Y) \longrightarrow X
\]
under $Y\hookrightarrow X$ such that
\begin{enumerate}
\item the map is a refinement onto its image,
\item the image is open, and
\item $\sC(\pi_Y)$ is the fiberwise open cone of the constructible bundle $\Link_Y(X) \xra{\pi_Y} Y$.  
\end{enumerate}
\end{prop}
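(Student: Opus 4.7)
The plan is to mirror the proof of Proposition~\ref{tubular-neighborhoods} verbatim, with the pair $(X_k \subset X,\ \sL_k(X) \xra{\pi_k} X_k)$ replaced throughout by $(Y \subset X,\ \Link_Y(X) \xra{\pi_Y} Y)$. Every required ingredient has already been generalized: Proposition~\ref{Y-unzip} exhibits $\unzip_Y(X)$ as a stratified space with corners whose closed face is $\Link_Y(X)$, together with a proper constructible bundle $\unzip_Y(X) \to X$ that restricts to an isomorphism over $X \smallsetminus Y$ and realizes $X$ as the pushout $Y \coprod_{\Link_Y(X)} \unzip_Y(X)$.

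First I would apply Lemma~\ref{face-collars} to the closed face $\Link_Y(X) \hookrightarrow \unzip_Y(X)$, extracting a conically smooth open embedding
\[
c\colon \RR_{\geq 0} \times \Link_Y(X) ~\hookrightarrow~ \unzip_Y(X)
\]
under $\Link_Y(X)$. Because $\pi_Y$ is a proper constructible bundle and the inclusion $\{0\} \times \Link_Y(X) \hookrightarrow \RR_{\geq 0} \times \Link_Y(X)$ is a constructible embedding, Lemma~\ref{some-pushouts} provides the fiberwise cone
\[
\sC(\pi_Y) ~=~ Y \underset{\Link_Y(X)}\coprod \bigl(\RR_{\geq 0} \times \Link_Y(X)\bigr)
\]
as a stratified space, and certifies that it agrees with the topological pushout. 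Assembling $\id_Y$ and $c$ along $\pi_Y$ then produces the candidate conically smooth map
\[
\sC(\pi_Y) ~\longrightarrow~ Y \underset{\Link_Y(X)}\coprod \unzip_Y(X) ~\cong~ X,
\]
with the right-hand identification supplied by the pushout square of Proposition~\ref{Y-unzip}. This map sits under $Y \hookrightarrow X$ by construction, verifying~(3) tautologically.

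To see~(1) and~(2), note that away from the cone-locus section the map is the composite of the restriction $c|_{\RR_{>0} \times \Link_Y(X)}$, which takes values in $\unzip_Y(X) \smallsetminus \Link_Y(X)$ since $c$ is an open embedding under $\Link_Y(X)$, with the isomorphism $\unzip_Y(X) \smallsetminus \Link_Y(X) \cong X \smallsetminus Y$ of Proposition~\ref{Y-unzip}; this composite is itself a conically smooth open embedding. Along $Y$ the map is the identity. Hence the image is open in $X$ and the map restricts to an isomorphism on each stratum, which is the definition of a refinement onto its image from Definition~\ref{def.maps}. The main obstacle will be confirming that the two pieces genuinely glue into a conically smooth refinement rather than merely a continuous bijection onto an open set: this amounts to checking that $c|_{\{0\} \times \Link_Y(X)}$ agrees with the face inclusion $\Link_Y(X) \hookrightarrow \unzip_Y(X)$ and that, after projecting along $\unzip_Y(X) \to X$, this matches the pushout identification $\Link_Y(X) \xra{\pi_Y} Y \subset X$. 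The first assertion is exactly the ``under $\Link_Y(X)$'' clause of Lemma~\ref{face-collars}; the second is the commutativity of the square of Proposition~\ref{Y-unzip}. Once this compatibility is unwound, the conical smoothness and the homeomorphism-onto-open-image properties both reduce to Lemma~\ref{some-pushouts} applied to the data $(c,\pi_Y)$.
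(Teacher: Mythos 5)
Your proposal breaks down at the very first step, where you invoke Lemma~\ref{face-collars} to produce a conically smooth \emph{open embedding}
$c\colon \RR_{\geq 0}\times \Link_Y(X) \hookrightarrow \unzip_Y(X)$.
Lemma~\ref{face-collars} collars a single closed $\ast$-face of a stratified space with $\lag I \rag$-corners. But when $Y$ has positive depth, $\Link_Y(X)=\unzip_Y(X)_{|Y}$ is not a single face of $\unzip_Y(X)$: it is a union of positive-codimension faces meeting along corners (already in the proof of Proposition~\ref{Y-unzip}, $\unzip_Y(X)$ is built recursively as a product with additional $\RR_{\geq 0}$ factors, producing higher-order corners). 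A collar of a union of faces cannot be an open embedding of stratified spaces. Concretely, the stratified space $\RR_{\geq 0}\times \Link_Y(X)$ contains ``crease'' strata lying over the corners of $\Link_Y(X)$, and for $t>0$ these are forced to map into strata of $\unzip_Y(X)$ of strictly larger dimension; the model case is $\ov{M}=\RR_{\geq 0}^2$ with $\ov{\partial}\ov{M}=(\{0\}\times\RR_{\geq 0})\cup(\RR_{\geq 0}\times\{0\})$, where the locus $\{(0,0)\}\times\RR_{>0}$ in the domain lands in the open $2$-dimensional stratum of $\RR_{\geq 0}^2$. Thus the candidate map is only a homeomorphism onto an open image that \emph{strictly} refines the stratification --- exactly why the proposition is stated as ``refinement onto its image'' rather than ``open embedding,'' in contrast to Proposition~\ref{tubular-neighborhoods} where $Y=X_k$ is the deepest stratum and $\Link_Y(X)=\sL_k(X)$ genuinely is a single face.

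The consequence is that the rest of your argument, which propagates the open-embedding conclusion through the pushout identification to assert that the composite on $\RR_{>0}\times\Link_Y(X)$ is a conically smooth open embedding into $X\smallsetminus Y$, over-claims, and the verification of (1) and (2) does not go through as written. The paper's proof gets around this by applying $\unzip$ a second time, to $\unzip_Y(X)$ itself, reducing to the model case of a smooth manifold with corners $\ov{M}$ and $\ov{\partial}\ov{M}$ the union of \emph{all} positive-codimension faces. There one constructs the map $\ov{\partial}\ov{M}\times\RR_{\geq 0}\to\ov{M}$ by flowing along an inward vector field that is nonvanishing along every positive-codimension face (locally $\sum_{t\in T}\partial_t$ on $\RR^S\times\RR_{\geq 0}^T$, glued with conically smooth partitions of unity). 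This flow is a refinement onto an open image, not an open embedding, and descending it through the two applications of $\unzip$ yields precisely the statement of the proposition. If you wanted to rescue your outline you would need to replace the appeal to Lemma~\ref{face-collars} with a version of the paper's inward-flow construction that handles a union of faces, and then accept that what you obtain along the way is only a refinement; the rest of the pushout bookkeeping you describe would then work.
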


\begin{proof}
Like in the argument proving Proposition~\ref{tubular-neighborhoods}, the statement is implied upon proving the existence of a conically smooth map 
\[
\Link_Y(X) \times \RR_{\geq 0} \longrightarrow \unzip_Y(X)
\]
under $\Link_Y(X)$ satisfying (1) and (2).   
Also like in that argument, by applying $\unzip$ the statement reduces to showing that, for $\ov{M}$ a smooth manifold with boundary, and for $\ov{\partial}\ov{M}\subset \ov{M}$ the union of its positive codimension faces, there is a conically smooth map
\[
\ov{\partial}\ov{M} \times \RR_{\geq 0} \longrightarrow \ov{M}
\]
under $\ov{\partial}\ov{M}$ satisfying (1) and (2).  
This again follows by the existence of positive-time flows along smooth inward vector fields for smooth manifolds with corners. 
To finish, we must verify that there exists such an inward vector field that is non-vanishing along each positive codimensional face.  
Conically smooth partitions of unity grant the existence of such a vector field, provided the existence of such for a basic corner $\RR^{S}\times \RR_{\geq 0}^T$.  
For this, take the vector field $\sum_{t\in T} \partial_t$.  

\end{proof}

The isotopy extension theorem is a standard consequence of the existence of regular neighborhoods of submanifolds from ordinary differential topology.
Here we state an analogous result for the stratified setting.
We use the notation $\Aut(X)\subset \Snglr(X,X)$ for the sub-Kan complex consisting of the automorphisms; and $\Emb(Z,X)\subset \Strat(Z,X)$ for the sub-Kan complex consisting of those conically smooth maps $Z\times\Delta^p_e \xra{f} X\times \Delta^p_e$ over $\Delta^p_e$ for which, for each $t\in \Delta^p_e$, the map $f_t\colon Z\to X$ is an embedding.  
\begin{theorem}[Isotopy extension theorem]\label{isotopy-extension}
Let $Z\hookrightarrow \w{X}$ be a proper and constructible embedding among stratified spaces with $Z$ compact, and let $\w{X}\to X$ be a refinement.  
Then both of the restriction maps
\begin{eqnarray}
\nonumber
\Aut(X) \longrightarrow \Emb(Z,X)
&
\qquad \text{ and }\qquad
&
\Emb(X,X)\longrightarrow \Emb(Z,X)
\end{eqnarray}
are Kan fibrations.  
\end{theorem}

\begin{proof}
We concern ourselves with the second map.

In a standard way, because ${\sf Strat}$ admits countable coproducts and finite products, the extended cosimplicial stratified space $\Delta^\bullet_e\colon \bdelta\to {\sf Strat}$ induces a functor from countable simplicial sets to simplicial conically smooth stratified spaces.  
We will employ this maneuver implicitly in the coming argument.
In doing so, for $K$ a finite simplicial set, by $k\in K$ we mean an element of a member of the diagram associated to $K$.  

Let $K$ be a finite simplicial set.  
It is sufficient to prove that every diagram among simplicial sets
\begin{equation}\label{kan-problem}
\xymatrix{
K \ar[rr]  \ar[d]_-{\{0\}} 
&&
\Emb(X,X)  \ar[d]
\\
K\times \Delta[1]  \ar[rr]  \ar@{-->}[urr]  
&&
\Emb(Z,X)
}
\end{equation}
can be filled.
Let us unwind this, through standard adjunctions.
Consider a solid diagram among diagrams of stratified spaces
\begin{equation}\label{of-K}
\xymatrix{
Z\times K  \ar[r]  \ar[d]_-{\{0\}}
&
X\times K  \ar[d]^-{\{0\}}  \ar@(u,u)[ddr]^-{e^X}
&
\\
Z\times K\times [0,1]   \ar[r]  \ar@(d,d)[drr]_-{e^Z}
&
X\times K\times[0,1]  \ar@{-->}[dr]^-{\w{e}}
&
\\
&&
X\times K\times[0,1]
}
\end{equation}
in which the horizontal arrows are the standard ones, and the diagonal arrows lie over $K\times[0,1]$ and, for each $(k,t)\in K\times[0,1]$, the restriction $e^Z_{k,t}$ is an embedding and the restriction $e^X_{k,0}$ is an isomorphism. 
The desired filler in~(\ref{kan-problem}) is supplied by finding a filler in~(\ref{of-K}) by an embedding over $K\times[0,1]$.  

For each $k\in K$, the expression
\[
\frac{\sf d}{{\sf d}t} \bigl((e^X_{k,0})^{-1}\circ e^Z_{k,t}\bigr)_{|t=s}
\]
defines a time-dependent parallel vector field $V^Z_{k,s}$ of $X$ which, at time $s$ is defined on the image $e^Z_{k,s}(Z)\subset X$.  
The flow of this time-dependent parallel vector field is a conically smooth map $\gamma^{V^Z}\colon X\times K\times[0,1]\dashrightarrow X\times K\times[0,1]$ over $K\times[0,1]$, which is defined on the image of $e^Z$.  
By construction, the composition with this flow recovers the conically smooth map
\[
Z\times K\times[0,1] \xra{e^Z_{|Z\times K\times\{0\}}\times {\sf id}_{[0,1]}} X\times K\times[0,1]\overset{\gamma^{V^Z}}\dashrightarrow X\times K\times[0,1]~,\qquad (z,k,t)\mapsto e^Z_{k,t}(z)
\]
over $K\times[0,1]$.

Using Proposition~\ref{regular-neighborhoods}, the assumption on $Z\subset X$ implies there is a regular neighborhood $e^Z(Z)\subset O\subset X\times K\times[0,1]$ over $K\times[0,1]$.  
In particular, there is an extension of this $K$-family of time-dependent vector fields $V^Z_{k,s}$ to a $K$-family of time-dependent parallel vector fields of $X$ defined on this neighborhood $O$.  
With a $K\times[0,1]$-family of partitions of unity subordinate to the $K\times[0,1]$-family of open covers $\{O_{|(k,s)},X\smallsetminus e^Z_{k,s}(Z)\}$, one can thereafter easily construct a $K$-family of time-dependent parallel vector fields $V_{k,s}$ on all of $X$ extending the named one along the image of $e^Z$ and which is zero outside of a compact neighborhood of this image.
The flow for this $K$-family of time-dependent parallel vector fields $V_{k,s}$ is a conically smooth map
\[
\gamma^V\colon X\times K\times[0,1] \dashrightarrow X\times K \times[0,1]
\]
over $K\times [0,1]$, which is a priori defined on an open neighborhood of $X\times K\times\{0\}$.  
The construction of $V_{k,s}$ grants that, in fact, this flow is everywhere-defined.  
The desired filler can be taken to be the composition
\[
\w{e} \colon X\times K\times [0,1] \xra{e^X\times {\sf id}_{[0,1]}} X\times K\times[0,1]\xra{~\gamma^V~}X\times K\times[0,1]~.
\]
This demonstrates a filler for~(\ref{of-K}), as desired, and therefore proves that the second map in the proposition is a Kan fibration.  

Because, necessarily, the flow $\gamma^V$ is an isomorphism, then $\w{e}$ too is an isomorphism provided $e^X$ is an isomorphism.  This above argument thus specializes to prove that the first map of the proposition too is a Kan fibration.  

\end{proof}

\subsection{Finitary stratified spaces}\label{sec.finitary}
We introduce a convenient class of \emph{finitary} structured stratified spaces -- these are those conically smooth stratified spaces that admit finite open handlebody decompositions.

\begin{notation}
We will use the notation $\ov{\RR}=[-\infty,\infty]$ for the closed interval.  It contains subspaces $\RR_{\geq -\infty}:= \{-\oo\}\cup\RR$, $\RR$, and $\RR_{\leq \infty} :=\RR\cup\{\oo\}$.  
\end{notation}

\begin{definition}\label{defn.collar-gluing}
A \emph{collar-gluing} is a continuous map $X\xra{f} \ov{\RR}$ from a conically smooth stratified space for which $f^{-1}(0)\subset X$ is a sub-stratified space, together with an isomorphism $\alpha\colon f^{-1}(\RR) \cong \RR\times f^{-1}(0)$ over $\RR$.   
Given such a collar-gluing, we will sometimes denote the sub-stratified spaces of $X$:
\[
X_{\geq -\infty}:=f^{-1}(\RR_{\geq -\infty})~,~\qquad \partial := f^{-1}(0)~,~\qquad X_{\leq \infty}:= f^{-1}(\RR_{\leq \infty})~.
\]  
We will often denote a collar-gluing $(X\xra{f}\ov{\RR}, \alpha)$ simply as $(X\xra{f} \ov{\RR})$, or even as $X=X_{\geq-\infty}\underset{\RR\times\partial} \bigcup X_{\leq \infty}$, depending on emphasis.  
\end{definition}

\begin{remark}\label{cg-as-cbl}
In a collar-gluing $(X\xra{f}\ov{\RR}, \alpha)$ the map $f$ is a \emph{pre-constructible bundle} in the sense of Definition~\ref{def.maps}.
Conversely, given a pre-constructible bundle $f\colon X\ra\ov{\RR}$, the collection of trivializations $\alpha$ forms a contractible groupoid.  In this sense, we regard a collar-gluing simply as a pre-constructible bundle to a closed interval.
\end{remark}

\begin{example}
Let $M$ be an ordinary manifold, and let $\partial \subset M$ be a hypersurface that separates $M$.
The choice of a collaring $\RR\times \partial \hookrightarrow M$ determines a collar-gluing $f\colon M\to \ov{\RR}$ by declaring $f$ to be the projection on the collaring.
\end{example}

\begin{example}
Let $X$ be a conically smooth stratified space of depth at most $k$.
After Proposition~\ref{tubular-neighborhoods}, there is a collar-gluing
\[
\sC(\pi_k) \underset{\RR\times \Link_k(X)} \bigcup X\smallsetminus X_k~ = ~ X~.
\]
\end{example}

A collar-gluing $X\xra{f}\ov{\RR}$ determines the a pullback diagram of stratified spaces
\begin{equation}\label{diag.collar-cover}
\xymatrix{
\RR\times \partial  \ar[r]  \ar[d]
&
X_{\leq \infty}  \ar[d]
\\
X_{\geq -\infty}  \ar[r]
&
X
}
\end{equation}
witnessing an open cover of $X$.

\begin{definition}\label{def.finitary}
The category of \emph{finitary} conically smooth stratified spaces, $\bscd \subset \snglrd^{\sf fin}\subset \snglrd$ is the smallest full subcategory containing the basics that is closed under the formation of collar-gluings, in the following sense:
\begin{itemize}
\item[]
Let $X\xra{f}\ov{\RR}$ be a collar-gluing.  
Suppose $f^{-1}(\RR_{\geq -\infty})$ and $f^{-1}(0)$ and $f^{-1}(\RR_{\leq \infty})$ are \emph{finitary}.
Then $X$ is \emph{finitary}.  
\end{itemize}
The intermediate $\oo$-category $\bsc\subset \snglr^{\sf fin}\subset \snglr$ is the image of $\snglrd^{\sf fin}$.
\end{definition}

We have the likewise definition given specified basics $\cB$.

\begin{definition}\label{defn.B-collar-finitary}
Let $\cB$ be an $\infty$-category of basics.  
A \emph{collar-gluing among $\cB$-manifolds} is a $\cB$-manifold $(X,g)$ together with a collar-gluing $X\xra{f}\ov{\RR}$ of its underlying stratified space.
The $\infty$-category of \emph{finitary} $\cB$-manifolds, $\cB\subset \mfld^{\sf fin}(\cB) \subset \mfld(\cB)$ is the smallest full $\infty$-subcategory containing $\cB$ that is closed under the formation of collar-gluings.  
\end{definition}

\begin{example}
Here some examples of stratified spaces that are finitary.
\begin{itemize}
\item Each basic $U = \RR^i \times \sC(Z)$ is finitary, by definition.
\item Because the product of basics is a basic, it follows that $X\times X'$ is finitary whenever $X$ and $X'$ are finitary. 
\item A compact manifold with boundary $M$ is finitary.
\end{itemize}
Here are some examples of stratified spaces that are \emph{not} finitary:
\begin{itemize}
\item An infinite disjoint union of Euclidean spaces is not finitary.  
\item An infinite genus surface is not finitary.
\item Consider $\RR^3$ with a trefoil knot in a small ball around each vector with integer coordinates.
By way of Example~\ref{example:Edn'} this data determines a conically smooth stratified space.
This stratified space is not finitary.
\end{itemize}
\end{example}

\begin{lemma}\label{finitary-closures}
Let $E\xra{\pi} X$ be a conically smooth fiber bundle. 
If $X$ is finitary and each fiber $\pi^{-1}(x)$ is finitary, then $E$ is finitary.
\end{lemma}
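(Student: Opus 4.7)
The plan is to induct on the finitary structure of $X$, using the trivialization results of Lemma~\ref{bundles-over-basics}. Because "finitary" is the smallest class of stratified spaces containing the basics and closed under collar-gluings, any proof by induction on such a decomposition requires two ingredients: a good behavior on basics (base case), and compatibility of collar-gluings with the construction at issue (inductive step). Here the construction is "pull back the bundle $\pi$"; fortunately both ingredients are available.

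Before the main induction, I would dispatch the following auxiliary claim: if $U$ is a basic and $Y$ is finitary, then $U\times Y$ is finitary. This is itself proved by induction on the finitary structure of $Y$. The base case $Y=V$ is a basic is precisely Corollary~\ref{basic-products}, which says $U\times V$ is again a basic and hence finitary by definition. If instead $Y$ is built as a collar-gluing $Y = Y_{\geq -\infty}\cup_{\RR\times \partial'} Y_{\leq \infty}$ via $g\colon Y\to\ov{\RR}$, then $g\circ{\sf pr}_Y\colon U\times Y\to \ov{\RR}$ is a collar-gluing whose pieces $U\times Y_{\geq -\infty}$, $U\times Y_{\leq \infty}$, and the collar $\RR\times(U\times\partial')$ are all finitary by the inductive hypothesis; thus $U\times Y$ is finitary.

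Now the main induction, on the finitary structure of $X$. For the base case $X=U$ a basic, Lemma~\ref{bundles-over-basics}(2) applied with $B_0=\ast$ produces an isomorphism $E\cong U\times F$, where $F=\pi^{-1}(0)$ is finitary by hypothesis; then $E$ is finitary by the auxiliary claim. For the inductive step, suppose $X=X_{\geq -\infty}\bigcup_{\RR\times\partial} X_{\leq \infty}$ is a collar-gluing with data $f\colon X\to\ov{\RR}$. The composite $f\circ\pi\colon E\to\ov{\RR}$ determines a candidate collar-gluing of $E$ whose pieces are the pullback bundles $E_{\geq-\infty}=\pi^{-1}(X_{\geq-\infty})$, $E_{\leq\infty}=\pi^{-1}(X_{\leq\infty})$, and $E|_{\RR\times\partial}$. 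By Lemma~\ref{bundles-over-basics}(1) each of these is again a conically smooth fiber bundle with the same fibers as $\pi$, and by Lemma~\ref{bundles-over-basics}(2) applied with $B_0=\partial$ and basic factor $\RR$, the restriction $E|_{\RR\times\partial}$ splits as $\RR\times E|_\partial$ over $\RR\times\partial$. This identification is precisely the trivialization data needed to witness $f\circ\pi$ as a collar-gluing of $E$. The inductive hypothesis then applies to $\pi$ restricted over each of $X_{\geq-\infty}$, $X_{\leq\infty}$, and $\partial$ (each of which is finitary, with fibers finitary), showing that $E_{\geq-\infty}$, $E_{\leq\infty}$, and $E|_\partial$ are finitary; hence $E$ is finitary.

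The main obstacle I anticipate is verifying that the candidate map $f\circ\pi\colon E\to\ov{\RR}$ is genuinely a collar-gluing in the sense of Definition~\ref{defn.collar-gluing}, i.e. that the preimage of $0$ is a substratified space and that the preimage of $\RR$ carries the required product structure. Both points reduce to the trivialization statement in Lemma~\ref{bundles-over-basics}(2) over the collar $\RR\times\partial$, so this is more of a bookkeeping issue than a genuine difficulty; the content of the proof really lies in having the right trivialization theorem for conically smooth fiber bundles already in hand.
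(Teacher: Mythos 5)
Your proof is correct and matches the paper's argument in essence: both reduce to the basic case via the trivialization in Lemma~\ref{bundles-over-basics}(2), and both handle the inductive step by pulling back the collar-gluing $f$ along $\pi$ and trivializing over $\RR\times\partial$. The only cosmetic difference is that you isolate the ``basic times finitary is finitary'' step as an explicit auxiliary lemma with its own induction, whereas the paper compresses it into the one-line observation that products distribute over collar-gluings.
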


\begin{proof}
By inspection, products distribute over collar-gluings.  This verifies the assertion for the case of a trivial bundle.

Consider the collection of those stratified spaces $X$ for which the assertion is true.
Through Lemma~\ref{bundles-over-basics}(3), this collection contains the basics, because it is true for trivial bundles.
Let $X\xra{f}\ov{\RR}$ be a collar-gluing among stratified spaces that belong to this collection.
Let $E\xra{\pi} X$ be a conically smooth fiber bundle with finitary fibers.  
Lemma~\ref{bundles-over-basics}(2) gives an isomorphism $E_{|\RR} \cong \RR\times E_{|\partial}$ over the isomorphism $X_{|\RR} \cong \RR\times \partial$.
We thereby exhibit $E$ as a collar-gluing among finitary stratified spaces.
\end{proof}

We will prove the following result in the subsections that follow.  
\begin{theorem}[Open handlebody decompositions]\label{open-handles}
Let $X$ be a conically smooth stratified space.
\begin{enumerate}
\item Suppose there is a compact stratified space with corners $\ov{X}$ and an isomorphism ${\sf int}(\ov{X}) \cong X$ from its interior.
Then $X$ is finitary.  
\item There is a sequence of open subsets $X_0\subset X_1\subset \dots\subset X$ with $\underset{i\geq 0}\bigcup X_i = X$ and each $X_i$ finitary.  
\end{enumerate}
\end{theorem}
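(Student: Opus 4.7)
The plan is to prove part~(1) by induction on the interior depth $d := \depth^{\lag - \rag}(\ov{X})$, and then deduce part~(2) from part~(1) via a proper conically smooth function together with a stratum-wise transversality argument. In the base case $d = 0$, the stratified space $\ov{X}$ is a compact smooth $\lag I \rag$-manifold in the sense of Example~\ref{example.n-corners-manifold}, and $\sf{int}(\ov{X})$ is an ordinary smooth manifold; a Morse function on $\ov{X}$ whose critical points avoid every face and whose gradient is inward-pointing along each face produces a finite handle decomposition in which each handle is diffeomorphic to a basic and each attachment restricts on interiors to a collar-gluing, so $X$ is finitary.

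For the inductive step $d > 0$, write $k = d$ and use Proposition~\ref{tubular-neighborhoods} to exhibit the open cover
\[
X \;=\; \sC(\pi_k) \;\cup\; (X \smallsetminus X_k),
\]
whose intersection is isomorphic to $\RR \times \sL_k(X)$. To realize this as a collar-gluing in the sense of Definition~\ref{defn.collar-gluing} I will construct a conically smooth map $X \to \ov{\RR}$ encoding the fiberwise radial coordinate on the tubular neighborhood, interpolated via a conically smooth partition of unity (Lemma~\ref{part-o-1}). Finitariness of each of the three pieces is then checked as follows. The stratum $X_k$ is the interior of the compact $\lag I \rag$-manifold $\ov{X}_{\lag k \rag}$, which has interior depth $0$, so the base case gives that $X_k$ is finitary. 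The link bundle $\sL_k(X) \to X_k$ is a conically smooth fiber bundle with compact fibers of dimension $k-1$, each of which is the interior of itself with interior depth strictly less than $d$; the inductive hypothesis together with Lemma~\ref{finitary-closures} yields that $\sL_k(X)$, and hence $\RR \times \sL_k(X)$, is finitary. The fiberwise cone $\sC(\pi_k) \to X_k$ is a conically smooth fiber bundle whose fibers are basics, so Lemma~\ref{finitary-closures} again gives that $\sC(\pi_k)$ is finitary. Finally, Corollary~\ref{unzip-functor} identifies $X \smallsetminus X_k$ with the interior of $\unzip_k(\ov{X})$, a compact stratified space with $\lag \{\ast\} \sqcup I \rag$-corners of interior depth $d-1$, so the inductive hypothesis applies.

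For part~(2), Lemma~\ref{propers} supplies a conically smooth proper function $f \colon X \to \RR$. The strategy is to produce a cofinal sequence of real numbers $c_1 < c_2 < \cdots$ with $c_n \to \infty$ such that each $f^{-1}\bigl((-\infty, c_n]\bigr)$ inherits the structure of a compact stratified space with a single corner face; then $X_n := f^{-1}\bigl((-\infty, c_n)\bigr)$ is its interior, part~(1) applies, and properness of $f$ gives $\bigcup_n X_n = X$. The existence of such $c_n$ reduces to arranging each $c_n$ to be a regular value of $f$ on every stratum of $X$; a stratum-wise application of Sard's theorem, combined with the conical smoothness of $f$ near $f^{-1}(c_n)$, produces such $c_n$ and endows the sublevel set with the asserted $\lag \{\ast\} \rag$-corner structure. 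The main obstacle will be the base case of part~(1): while a handle decomposition is classical for compact smooth closed manifolds, accommodating compact smooth $\lag I \rag$-manifolds requires care to ensure the Morse function is adapted to all corner strata. This bookkeeping can be avoided by smoothing the corners of $\ov{X}$ to produce a compact smooth manifold with boundary of the same interior diffeomorphism type and invoking the standard handle decomposition there.
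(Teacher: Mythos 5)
Your overall strategy matches the paper's proof very closely: an induction on depth, with the base case handled by Morse theory on a compact manifold with corners, and the inductive step using the collar-gluing $\sC(\pi_k) \underset{\RR\times\sL_k(X)}{\bigcup}(X\smallsetminus X_k)$ supplied by Proposition~\ref{tubular-neighborhoods}, checking each gluand is finitary via Lemma~\ref{finitary-closures} and the unzip of the compactification. Your proposed shortcut in the base case --- smoothing the corners of $\ov{X}$ to a compact manifold with boundary with the same interior, rather than running a corner-adapted Morse function directly --- is a legitimate alternative, though it trades the paper's explicit double induction on dimension and number of critical points for an appeal to the classical smoothing-of-corners theorem, and you should verify that the smoothing leaves the interior fixed as a smooth manifold; the paper instead arranges its Morse function $f\colon\ov X\to[-1,1]$ to be constant on the top face, Morse with tangent gradient on the other faces, and then inducts on the critical-point count.

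There is, however, a genuine gap in your part~(2). You take a proper conically smooth $f\colon X\to\RR$ and set $X_n := f^{-1}\bigl((-\infty,c_n)\bigr)$, appealing to properness to conclude $f^{-1}\bigl((-\infty,c_n]\bigr)$ is compact. But properness of $f$ only makes preimages of \emph{compact} subsets of $\RR$ compact, and $(-\infty,c_n]$ is not compact: already for $X=\RR$ and $f=\mathrm{id}$, the set $f^{-1}\bigl((-\infty,c_n]\bigr)=(-\infty,c_n]$ is noncompact, so the one-sided sublevel sets do not give compact stratified spaces with boundary, and part~(1) does not apply. The paper avoids this by choosing two sequences of regular values $a_i\to-\infty$ and $b_i\to+\infty$ and taking $X_i := f^{-1}(a_i,b_i)$, the interior of the genuinely compact $f^{-1}([a_i,b_i])$ (Lemma~\ref{cut-manifold} and Lemma~\ref{open-dense}). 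Your argument can be repaired either in that way or by replacing $f$ with a proper map that is bounded below, but as written the construction of the exhaustion fails.
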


We draw an easy corollary of Theorem~\ref{open-handles}.
\begin{cor}
Let $X = (X\to P)$ be a conically smooth stratified space, and let $Q\subset P$ be a consecutive sub-poset.
If $X$ is finitary, then $X_{|Q}$ is finitary. 
\end{cor}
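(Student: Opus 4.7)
The plan is a two-tiered induction: a primary induction on $\depth(X)$ and, within it, a secondary induction on the construction of $X$ as a finitary stratified space. The base of the primary induction is $\depth(X)=0$, where (by Example~\ref{example.first-smooth-manifolds}) $X$ is a disjoint union of smooth manifolds with components labeled by pairwise incomparable elements of $P$; any consecutive $Q\subset P$ then carves out a sub-union of components, which is finitary because $X$ itself has only finitely many components.

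For the primary inductive step, fix $k\geq 1$, assume the assertion for stratified spaces of depth $<k$, and consider finitary $X$ of depth $\leq k$. The secondary base case is $X=U=\RR^i\times\sC(Z)$ a basic, whose stratifying poset is $\sC(P_0)$ with cone point $\ast$ as minimum. If $\ast\in Q$, consecutiveness forces $Q':=Q\smallsetminus\{\ast\}$ to be downward-closed in $P_0$, so $Z_{|Q'}\subset Z$ is closed and hence compact; then $U_Q=\RR^i\times\sC(Z_{|Q'})$ is itself a basic, hence finitary. If $\ast\notin Q$, then $U_Q\cong\RR^{i+1}\times Z_{|Q}$. Because $Z$ is compact, Theorem~\ref{open-handles}(1) makes $Z$ finitary; because $\depth(Z)\leq\dim(Z)=\depth(U)-1<k$, the primary inductive hypothesis yields that $Z_{|Q}$ is finitary. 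A straightforward induction on the collar-gluing construction, invoking Corollary~\ref{basic-products} to handle the case of basics, shows that products with Euclidean space preserve finitariness, so $U_Q$ is finitary.

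For the secondary inductive step, suppose $X = X_{\geq -\infty}\underset{\RR\times\partial}\bigcup X_{\leq\infty}$ is a collar-gluing witnessed by $f\colon X\to\ov{\RR}$, with each of the three pieces finitary. The composition $X_Q \hookrightarrow X\xra{f}\ov{\RR}$ is a stratified map whose pre-images realize a collar-gluing $X_Q=(X_{\geq -\infty})_Q\underset{\RR\times\partial_Q}\bigcup(X_{\leq\infty})_Q$. By Remark~\ref{remark.non-increasing}, each of the three pieces has depth $\leq k$, and each has a strictly simpler finitary construction than $X$; the secondary inductive hypothesis then renders $(X_{\geq -\infty})_Q$, $\partial_Q$, and $(X_{\leq\infty})_Q$ finitary, whence so is $X_Q$. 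The main obstacle I anticipate is the basic case $\ast\notin Q$, which forces the auxiliary appeal to Theorem~\ref{open-handles}(1) in order to feed the compact link $Z$ into the primary inductive hypothesis; everything else is a bookkeeping exercise in pullbacks of posets and restrictions of collar-gluings.
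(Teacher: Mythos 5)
Your proposal takes essentially the same route as the paper: induct on depth; for a basic $U=\RR^i\times\sC(Z)$ split on whether $\ast\in Q$, with the $\ast\notin Q$ case reducing to $Z_Q$ finitary (via compactness of $Z$, Theorem~\ref{open-handles}(1), and the depth induction); and for general finitary $X$ observe that a collar-gluing restricts along $Q$ to a collar-gluing of the restricted pieces, so the assertion reduces to the basic case. The one substantive difference is that where the paper simply cites Lemma~\ref{finitary-closures} to pass from ``$Z_Q$ finitary'' to ``$\RR^{i+1}\times Z_Q$ finitary,'' you re-derive only the special case you need---that products with Euclidean space preserve finitariness---via Corollary~\ref{basic-products} and distributivity of products over collar-gluings; this is precisely the argument inside the proof of Lemma~\ref{finitary-closures}, so your version is a touch more self-contained but not genuinely different.

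One local flaw worth fixing: the justification you give for the depth-$0$ base case does not hold up as stated. You argue that the clopen subspace $X_Q$ is finitary ``because $X$ itself has only finitely many components,'' but having finitely many components does not by itself make a sub-union of components finitary---a connected depth-$0$ manifold can fail to be finitary, so finiteness of the component set is not the operative reason. The correct justification is the same mechanism as your secondary inductive step, applied at depth $0$: the stratifying poset is discrete, any $Q\subset P$ is consecutive, and restricting the collar-gluing construction of $X$ exhibits $X_Q$ as built by collar-gluings from restricted basics, which at depth $0$ are $\RR^n$ or $\emptyset$. Indeed, as the paper's proof implicitly shows, the depth induction is only needed to feed the compact link $Z$ back into the hypothesis during the basic case with $\ast\notin Q$; a separate depth-$0$ base case argument is unnecessary once the collar-gluing reduction is in place.
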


\begin{proof}
Necessarily, $X$ has finite depth.
We will proceed by induction on depth.  
If $X$ has depth zero, the result is tautological. 

Suppose $X = U = (\RR^i\times \sC(Z) \to \sC(P_0))$ is a basic. 
If $Q\subset \sC(P_0)$ contains the cone point, then $U_Q$ is again a basic, and is in particular finitary.
Suppose $Q\subset \sC(P_0)$ does not contain the cone point.  Then $Q\subset P$ and $U_Q \cong \RR^i\times \RR \times Z_Q$.  
After Lemma~\ref{finitary-closures}, $U_Q$ is finitary provided $Z_Q$ is finitary.  
By induction on depth, $Z_Q$ is finitary provided $Z$ is finitary.
Because of Theorem~\ref{open-handles}(1), compactness of $Z$ implies it is finitary.

Now suppose $X$ is general, but finitary.
Observe that a collar-gluing $\bigl(X\xra{f}\ov{\RR}, \RR\times f^{-1}(0) \cong f^{-1}(\RR)\bigr)$ restricts to a collar-gluing $\bigl(X_{|Q}\xra{f_|} \ov{\RR}, \RR\times f^{-1}(0)_{|Q} \cong f^{-1}(\RR)_{|Q}\bigr)$.  
So the assertion is true if and only if it is true for $X$ a basic, which was verified in the previous paragraph.
\end{proof}

\subsection{Proof of Theorem~\ref{open-handles}(1): handlebody decompositions}
Here we prove that compact stratified spaces with boundary admit finite open handlebody decompositions.  
More precisely, we prove Theorem~\ref{open-handles}(1) which asserts that the interior of a compact stratified space with boundary is \emph{finitary} in the sense of Definition~\ref{def.finitary}.  
\begin{proof}[Proof of Theorem~\ref{open-handles}(1)]
Let $\ov{X}$ be a compact stratified space with corners whose interior is identified as $X$.
By compactness, $n:={\sf dim}(X)$ and $k:=\depth(\ov{X})=\depth(X)$ are finite, and so we proceed by induction on $k$.

Suppose $k=0$, so that $X$ is an ordinary smooth manifold, and $\ov{X}$ is a compact manifold with $\langle \un{n} \rangle$-corners.
For this case fix a Riemannian metric on $\ov{X}$.
Let us use the terminology ``open disk''/``closed disk''/``sphere'' for a manifold with corners that is isomorphic to the standard open disk/closed disk/sphere in $\RR^n$ intersected with $\RR^i\times \RR_{\geq 0}^{n-i}$.  
We proceed by induction on the dimension, $n$.
Should $d=0$ then necessarily $X \cong (\RR^0)^{\sqcup I}$ is a finite set, and the assertion is easily verified.  
Consider the collection of smooth maps $f\colon \ov{X} \to [-1,1]$ for which 
\begin{itemize}
\item for each $n\notin T\subset \{1,\dots,n\}$, the $f_|\colon \partial_T\ov{X} \to [-1,1]$ is a Morse function, and the restriction of the gradient $(\nabla f)_{|\partial_T \ov{X}}$ is tangent to $\partial_T\ov{X}$,

\item for each $n\in T\subset\{1,\dots,n\}$, the restriction $f_{|\partial_T\ov{X}}$ is constant.  
\end{itemize}
Each member of this collection has a finite number of critical points;
choose a member $f$ that minimizes this number, $r$.

If $r=0$, then $\ov{X}\cong [-1,1]\times f^{-1}(0)$ as witnessed by gradient flow.
By induction on dimension $n$, ${\sf int}(f^{-1}(0))$ is finitary, and thereafter $X = (-1,1)\times{\sf int}(f^{-1}(0))$ too is finitary.  
Suppose $r=1$ and let $p\in \ov{X}$ be the unique critical point of $f$, with critical value $a\in (-1,1)$.
Consider the locus of those $x\in \ov{X}$ for which the gradient flow from $x$ limits to $p$ in either positive or negative time.
This locus intersects $f^{-1}{-1} \subset \partial_{\{n\}}\ov{X}$ as a $S\subset \partial_{\{n\}}\ov{X}$, and the normal bundle of this sphere is equipped with a trivialization.
Choose a closed tubular neighborhood $S\subset \nu \subset \partial_{\{n\}}\ov{X}$, and denote its sub-sphere bundle as $\nu_0\subset \partial_{\{n\}}\ov{X}$.
Write $C:= \partial_{\{n\}}\ov{X} \smallsetminus (\nu\smallsetminus \nu_0)$ for the complement of the interior -- it is a sub-manifold with corners. 
Gradient flow gives determines the product sub-manifolds with corners $\nu_0\times [-1,1]\subset \ov{X}$ and $C\times [-1,1]\subset\ov{X}$, as well as the sub-manifold with corners $D\subset \ov{X}$ that is the flow of $\nu$.
By construction, $D$ contains the critical point $p$ in its interior which is isomorphic as to an open disk, which is a basic manifold with corners.  
Now, choose a map $\ov{X} \xra{g}[-1,1]$ that restricts to a fiber bundle over $(-1,1)$, and such that $\nu_0\times [-1,1] = g^{-1}0$, thereby witnessing a collar-gluing 
$\ov{X} = \w{C}\times[-1,1]\underset{\nu_0\times [-1,1]}\bigsqcup \w{D}$
-- here we are denoting $D\subset \w{D}\subset \ov{X}$ and $C\subset \w{C}\subset \ov{X}$ for the open sub-manifolds with corners obtained by adjoining collars onto interior faces.
Because $\w{D}$ is a basic, we conclude this case of $r=1$ through the inductive hypothesis on $n$.

Suppose $r>1$.
We can assume there are two distinct critical values $a<b$ of $f$, otherwise locally perturb $f$ through Morse functions in a standard fashion.
Choose $a<e<b$.
By induction on $r$, each of $f^{-1}([-1,b))$, $f^{-1}(a,b) \cong (a,b)\times f^{-1}(e)$, and $f^{-1}((a,1])$ is finitary.
Because $X= f^{-1}([-1,b)) \underset{(a,b)\times f^{-1}(e)}\bigcup f^{-1}((a,1])$ is a collar-gluing, we conclude that $X$ is finitary.  
Now suppose $k>0$.  
Proposition~\ref{tubular-neighborhoods} gives the collar-gluing
\[
\sC(\pi_k) \underset{\RR\times \Link_k(X)} \bigcup (X\smallsetminus X_k)~\cong~ X~.
\]
We will explain that the collar-gluands are each finitely, from which it will follow that $X$ is finitary.  
\begin{itemize}
\item[$X\smallsetminus X_k$:]
Because the map $\unzip_k(\ov{X}) \to \ov{X}$ is proper, and the codomain is compact, then the domain, too, is compact.  
Because this codomain has strictly less interior depth than that of $\ov{X}$, then by induction on $k$, ${\sf int}\bigl(\unzip_k(\ov{X})\bigr)$ is finitary.  
Through the isomorphisms of stratified spaces ${\sf int}\bigl(\unzip_k(\ov{X})\bigr) = {\sf int}\bigl(\unzip_k(\ov{X}) \smallsetminus \partial_k\unzip_k(\ov{X})\bigr) \xra{\cong} X\smallsetminus X_k$ we conclude that $X\smallsetminus X_k$ is finitary.

\item[$\Link_k(X)$:] The deepest locus $\ov{X}_k\subset \ov{X}$ is a closed sub-stratified space with corners.  Because $\ov{X}$ is compact, $\ov{X}_k$ too is compact.  Because the depth of $\ov{X}_k$ is zero, then $\ov{X}_k$ is finitary, by the base case of the induction.  
There is the conically smooth fiber bundles $\Link_k(X) \xra{\pi_k} X_k$.
Because the fibers of $\pi_k$ are compact and have strictly less depth than $X$, then Lemma~\ref{finitary-closures} gives that $\Link_k(X)$ is finitary.  
\item[$\sC(\pi_k)$:] There is the fiberwise cone $\sC(\pi_k) \to X_k$.
Because the fibers of $\pi_k$ are compact, the fibers of $\sC(\pi_k) \to X_k$ are basics, and Lemma~\ref{finitary-closures} gives that $\sC(\pi_k)$ is finitary.  
\end{itemize}
\end{proof}

\subsection{Proof of Theorem~\ref{open-handles}(2): compact exhaustions}
We prove Theorem~\ref{open-handles}(2) which asserts that every stratified space is a sequential open union of finitary stratified spaces.
The arguments follow those in ordinary differential topology.

\begin{definition}\label{def:regular-value}
Let $X=(X\xra{S} P)$ be a conically smooth stratified space and let $f\colon X \to \RR$ be a conically smooth map.  Say $a\in \RR$ is a \emph{regular value for $f$} if, for each $p\in P$, $a$ is a regular value for the restriction $f_|\colon S^{-1}(p)\to\RR$. 
\end{definition}

\begin{lemma}\label{cut-manifold}
Let $f\colon X \to \RR$ be a conically smooth map with dimension ${\sf dim}(X)\leq n$.  
Let $a<b \in \RR$ be regular values for $f$.
Then the subspace $f^{-1}([a,b]) \subset X$ canonically inherits the structure of a conically smooth stratified space with boundary $\partial f^{-1}([a,b]) = f^{-1}(\{a,b\})$.  
\end{lemma}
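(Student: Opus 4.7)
Since the assertion concerns a local atlas structure (Definition~\ref{def.angle-corners} and Proposition~\ref{corner-basis}), I would first reduce to the case in which $X = U = \RR^i \times \sC(Z)$ is a basic and $f\colon U \to \RR$ is conically smooth with $a,b$ regular values, and then induct on $\depth(U)$. The base case $\depth(U)=0$ is the classical fact that the preimage of $[a,b]$ under a smooth function with regular endpoints is a smooth manifold with boundary $f^{-1}(\{a,b\})$, via the implicit function theorem.

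For the inductive step on $U = \RR^i \times \sC(Z)$ of depth $k+1$, the stratum decomposition $U = \RR^i \sqcup (\RR^i \times \RR_{>0} \times Z)$ breaks $f^{-1}([a,b])$ into two pieces: the smooth piece $(f_{|\RR^i})^{-1}([a,b]) \subset \RR^i$, which is a smooth manifold with boundary by the base case; and the open complement $f^{-1}([a,b]) \cap (\RR^i \times \RR_{>0} \times Z)$, which inherits a stratified-space-with-boundary structure by the inductive hypothesis applied to the restriction of $f$ (still conically smooth, still with $a,b$ regular values, and of strictly smaller depth). These two pieces must then be assembled into a single stratified space with boundary by producing a local chart of the form $\RR^j \times \sC(W) \times \RR_{\geq 0}$ near each new boundary point in the cone stratum. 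This is precisely where the hypothesis of conical smoothness of $f$ along $\RR^i$ does its work: by Definition~\ref{def:cone-sm} there is a continuous derivative $Df$ along $\RR^i$ whose restriction to $\RR^i$ recovers $D(f_{|\RR^i})$, and the regular-value hypothesis forces $Df$ to be surjective at each point of $(f_{|\RR^i})^{-1}(\{a,b\})$. The inverse function theorem Lemma~\ref{ivt-again} then supplies the required local conically smooth chart exhibiting the $\RR_{\geq 0}$-collar.

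An alternative, more uniform, route would be to pass to the resolution $\unzip(X) \to X$ of Theorem~\ref{tot-unzip}. The composite $\w{f} := f \circ \pi \colon \unzip(X) \to \RR$ is a conically smooth map from a manifold with $\lag \un{n}\rag$-corners, and via the bundle structure of $\pi_k\colon \sL_k(X) \to X_k$ (Corollary~\ref{unzip-functor}) the regular-value hypothesis on $f$ translates into $\w{f}$ being submersive on every closed face of $\unzip(X)$ along $\w{f}^{-1}(\{a,b\})$. Classical differential topology for smooth manifolds with corners then gives that $\w{f}^{-1}([a,b])$ carries the structure of a manifold with $\lag \un{n}\sqcup\{a,b\}\rag$-corners. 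Since adjoining the two new faces $\{a\},\{b\}$ does not change depth, the restriction $\w{f}^{-1}([a,b]) \to f^{-1}([a,b])$ is again an $n$-unzip in the sense of Definition~\ref{def.unzip}, and descending the corner structure along it yields a stratified-space-with-boundary structure on $f^{-1}([a,b])$ whose boundary is exactly $f^{-1}(\{a,b\})$.

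The main obstacle in either approach is the compatibility at the cone stratum: in the inductive route one must check that the collar produced from $Df$ matches the inductively defined atlas on $\RR^i \times \RR_{>0} \times Z$ after restricting to the complement; in the unzip route one must verify that fiberwise transversality of $\w{f}$ along the faces of $\unzip(X)$ really does follow from stratum-wise regularity of $f$, which hinges on the conical smoothness of $f$ being strong enough that $\w{D}f$ extends continuously to the blow-up. In both cases the essential ingredient is the continuous extension $\w{D}f$ of Definition~\ref{def:cone-sm}, which prevents pathological behavior of $f^{-1}([a,b])$ near the singular strata of $X$.
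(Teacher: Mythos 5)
Your second (``alternative'') route is exactly the paper's own proof: the paper considers $f\pi\colon \unzip(X)\to\RR$, notes that $a,b$ remain regular values for the composite (precisely because on each face $\partial_S\unzip(X)$ the map $\pi$ restricts to a fiber bundle $\pi_S$ onto a stratum $X_{{\sf Min}(S)}$, so $D(f\pi)=Df\circ D\pi_S$ is surjective wherever $Df$ is), invokes the classical corner-preimage theorem, and descends the resulting corner structure to $f^{-1}([a,b])$ along $\pi$. So that route is correct and is the one taken in the paper.

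Your primary route (induction on depth for basics, then patching by the sheaf property of atlases) is a genuinely different argument and is, in outline, sound, but the local-straightening step is stated too compressed to constitute a proof. Surjectivity of $D_pf$ at a point $p$ of the cone stratum does not by itself feed into Lemma~\ref{ivt-again}, which is an equal-dimension statement. What you actually need is a complementary map $\Psi$ --- after rotating coordinates so $\partial f/\partial x_1(p)\neq0$, take $\Psi\colon U\to\RR^{i-1}\times\sC(Z)$ to be projection away from $x_1$ --- and then verify that $D_p(f,\Psi)\colon \RR^i\times\sC(Z)\to\RR\times\RR^{i-1}\times\sC(Z)$ is a bijection. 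That bijectivity does follow, but it uses the linearity identity $D_pf\circ\gamma_{t,v}=\gamma_{t,D_p(f_{|\RR^i})(v)}\circ D_pf$ from the $\mathsf{GL}(U)$ analysis in \S\ref{sec:endomorphisms}, so that $v_1$ can be solved for affinely with slope $\partial f/\partial x_1(p)\neq0$. Only then does Lemma~\ref{ivt-again} produce the local chart $\RR^{i-1}\times\sC(Z)\times\RR_{\geq 0}\hookrightarrow f^{-1}([a,b])$. The unzip route sidesteps this entirely by outsourcing all straightening to the classical theory of manifolds with corners, which is why the paper favors it; the tradeoff is that the inductive route stays entirely inside the atlas formalism and does not invoke Theorem~\ref{tot-unzip}.
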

\begin{proof}
Consider the composite map $\unzip(X) \xra{\pi} X \xra{f} \RR$ -- it is a smooth map from a manifold with $\langle \un{n}\rangle$-corners.  
Both $a$ and $b$ are regular values for $f\pi$; 
and it is classical that $(f\pi)^{-1}([a,b])\subset \unzip(X)$ has the structure of a manifold with $\langle \un{n}\rangle$-corners, with boundary.  
The quotient stratified topological space $f^{-1}([a,b]) = \pi\bigl((f\pi)^{-1}([a,b])\bigr)\subset X$ thus canonically inherits the structure of a stratified space with boundary.  
\end{proof}

\begin{lemma}\label{open-dense}
The set of regular values of a conically smooth proper map $f\colon X\to \RR$ is a dense subspace of $\RR$.
\end{lemma}
\begin{proof}
Write $X = (X \to P)$.  
From Lemma~\ref{dim-depth} there is the coarser stratification $\depth \colon X\to P \to \NN$.  
The statement is true if it is true for $f_{|(a,b)}\colon f^{-1}(a,b)\to (a,b)$ for each $a<b\in \RR$.  
Let $a<b\in \RR$.  
The set of regular values for $f_{|(a,b)}$ is the intersection of the set, indexed by $k\in \NN$, of regular values for the (ordinary) smooth map $f_|\colon f^{-1}(a,b) \cap \depth^{-1}(k) \to \RR$.  
Because $f$ is proper, there are only finitely many $k\in \NN$ for which $f^{-1}(a,b)\cap \depth^{-1}(k)$ is nonempty.  
The statement follows from Sard's theorem.  
\end{proof}

\begin{proof}[Proof of Theorem~\ref{open-handles}(2)]
Use Lemma~\ref{propers} and choose a conically smooth proper map $f\colon X \to \RR$.  
Choose a sequence of regular values $(a_i)$ and $(b_i)$ such that $a_{i+1} <a_i<0<b_i<b_{i+1}$ and $a_i \to -\infty$ and $b_i \to \infty$ -- that such sequences exist follows from Lemma~\ref{open-dense}.  
Through Lemma~\ref{cut-manifold}, the subspace $f^{-1}([a_i,b_i])\subset X$ has the canonical structure of a conically smooth stratified space with boundary, and it is compact because $f$ is proper.  
We conclude the argument by taking $X_i:=f^{-1}(a_i,b_i)\subset X$.  
\end{proof}

\end{document}